\numberwithin{equation}{section}
\newtheorem{thm}{Theorem}[section]
\newtheorem{prop}[thm]{Proposition}
\newtheorem{lem}[thm]{Lemma}
\newtheorem{cor}[thm]{Corollary}
\theoremstyle{remark}
\newtheorem{rem}[thm]{Remark}
\theoremstyle{definition}
\newtheorem{assumption}[thm]{Assumption} 
\newtheorem{definition}[thm]{Definition}
\DeclareMathOperator{\sgn}{sgn}
\DeclareMathOperator{\Real}{Re}
\DeclareMathOperator{\Imaginary}{Im}
\renewcommand{\hat}{\widehat}
\renewcommand{\tilde}{\widetilde}
\renewcommand{\Re}{\operatorname{Re}}
\renewcommand{\Im}{\operatorname{Im}}
\newcommand{\E}{\mathbb{E}}
\renewcommand{\P}{\mathbb{P}}
\newcommand{\bfH}{\boldsymbol{H}}
\newcommand{\bbH}{\mathbb{H}}
\newcommand{\R}{\boldsymbol{R}}
\newcommand{\T}{\boldsymbol{T}}
\newcommand{\bbK}{\mathbb{K}}
\newcommand{\bbL}{\mathbb{L}}
\newcommand{\G}{\boldsymbol{G}}
\newcommand{\bbN}{\mathbb{N}}
\newcommand{\bbR}{\mathbb{R}}
\newcommand{\bbC}{\mathbb{C}}
\newcommand{\bbV}{\mathbb{V}}
\newcommand{\bbD}{\mathbb{D}}
\newcommand{\iu}{\mathrm{i}}
\renewcommand{\phi}{\varphi}
\newcommand{\one}{\mathbbm{1}}
\newcommand{\eps}{\varepsilon}
\renewcommand{\epsilon}{\varepsilon}
\newcommand{\deloc}{\mathrm{deloc}}
\newcommand{\loc}{\mathrm{loc}}
\newcommand{\mob}{\mathrm{mob}}
\newcommand{\Zplus}{\mathbb{Z}_{\ge 1}}
\newcommand{\be}{\begin{equation}}
\newcommand{\ee}{\end{equation}}
\newcommand{\bex}{\begin{equation*}}
\newcommand{\eex}{\end{equation*}}
\newcommand{\TV}{\mathrm{TV}}
\renewcommand{\star}{{00}}
\newcommand{\unn}[2]{[\![#1,#2]\!]}
\title{Mobility Edge for L\'evy Matrices}
\author{Amol Aggarwal}
\author{Charles Bordenave}
\author{Patrick Lopatto}
\begin{document}

\begin{abstract}
\normalsize
L\'evy matrices are symmetric random matrices whose entry distributions lie in the domain of attraction of an $\alpha$-stable law. For $\alpha < 1$, predictions from the physics literature suggest that high-dimensional L\'{e}vy matrices should display the following phase transition at a point $E_{\mob}$. Eigenvectors corresponding to eigenvalues in $(-E_{\mob},E_{\mob})$ should be delocalized, while eigenvectors corresponding to eigenvalues outside of this interval should be localized. Further, $E_{\mob}$ is given by the (presumably unique) positive solution to $\lambda(E,\alpha) =1$, where $\lambda$ is an explicit function of $E$ and $\alpha$.

We prove the following results about high-dimensional L\'{e}vy matrices. 
\begin{enumerate} 
	\item If $\lambda(E,\alpha) > 1$ then eigenvectors with eigenvalues near $E$ are delocalized.
	\item If $E$ is in the connected components of the set $\big\{ x : \lambda(x,\alpha) < 1 \big\}$ containing $\pm \infty$, then eigenvectors with eigenvalues near $E$ are localized.
	\item For $\alpha$ sufficiently near $0$ or $1$, there is a unique positive solution $E = E_{\mob}$ to $\lambda(E,\alpha) = 1$, demonstrating the existence of a (unique) phase transition. 
	\begin{enumerate} 
		\item If $\alpha$ is close to $0$, then  $E_{\mob}$ scales approximately as $|\log \alpha|^{-2/\alpha}$. 
		\item If $\alpha$ is close to $1$, then $E_{\mob}$ scales as $(1-\alpha)^{-1}$. 
	\end{enumerate} 
\end{enumerate} 

Our proofs proceed through an analysis of the local weak limit of a L\'{e}vy matrix, given by a certain infinite-dimensional, heavy-tailed operator $\bm{T}$ on the Poisson weighted infinite tree. In the process, we show that the diagonal entries of this operator's resolvent $\bm{R} (E + \mathrm{i } \eta) = (\bm{T} - E - \mathrm{i} \eta)^{-1}$ remain positive almost surely as $\eta$ tends to zero on $\big\{ E : \lambda(E,\alpha) > 1 \big\}$, but vanish as $\eta$ tends to zero on the connected components of $\big\{ E : \lambda (E,\alpha) < 1 \big\}$ containing $\pm \infty$.

\end{abstract}
\maketitle
\tableofcontents

\chapter{Results and Preliminaries}

\section{Introduction}

\subsection{Background}

L\'evy matrices are symmetric random matrices whose entry distributions lie in the domain of attraction of an $\alpha$-stable law; for $\alpha<2$, their entries have infinite variance. 
They were introduced by Bouchaud and Cizeau in 1994, motivated by various questions about heavy-tailed phenomena in physics and mathematical finance \cite{cizeau1994theory}. The ubiquity of heavy-tailed random variables has made L\'{e}vy matrices widely applicable. Indeed, their study has since yielded insights into spin glasses with power-law interactions \cite{cizeau1993mean}, portfolio optimization \cite{galluccio1998rational,bouchaud1998taming}, option pricing \cite{bouchaud1997option}, neural networks \cite{martin2021predicting}, and a variety of statistical questions \cite{sornette2006critical,heiny2019eigenstructure, bun2017cleaning, laloux1999noise,laloux2000random,bouchaud2009financial}.

The original work \cite{cizeau1994theory} made several predictions about the spectra of L\'evy matrices. The first was that the empirical spectral distribution of a L\'evy matrix of large dimension should converge to a deterministic, heavy-tailed measure $\mu_\alpha$ (depending only on the parameter $\alpha$ and not the precise laws of the matrix entries). This was proved in \cite{arous2008spectrum}; subsequent works showed that  $\mu_\alpha$ admits a density with respect to Lebesgue measure on $\bbR$, which is symmetric and scales as $(\alpha/2) |x|^{-\alpha - 1}$, as $|x|$ tends to $\infty$ \cite{arous2008spectrum,belinschi2009spectral,bordenave2011spectrum}. One already observes from this a sharp contrast between matrices with finite variance entries, such as the Gaussian Orthogonal Ensemble\footnote{Recall that the Gaussian Orthogonal Ensemble is a $N\times N$ symmetric matrix whose upper triangular entries $\{g_{ij}\}_{i,j=1}^N$ are mutually independent Gaussian random variables with mean zero and variances $(1 + \one_{i=j})N^{-1}$.} (GOE), and L\'{e}vy matrices; the global law of the former is compactly supported, while that of the latter is not. 

These differences become more pronounced when one probes eigenvector and local eigenvalue statistics. The authors of \cite{cizeau1994theory} presented detailed predictions for these phenomena with respect to L\'{e}vy matrices, which were later corrected and completed in the more recent work \cite{tarquini2016level} using the replica method. We present only the latter predictions here. 

For all $\alpha \in [1,2)$, the eigenvectors should be completely delocalized,\footnote{A unit vector $\boldsymbol{u} \in \mathbb{R}^N$ is \emph{completely delocalized} if, for any $\varepsilon > 0$, we have $\| \boldsymbol{u} \|_\infty \le N^{-1/2+\eps}$ with high probability for large enough $N$. It is \emph{completely localized} if, for any $\varepsilon > 0$, there exists a constant $m = m(\varepsilon) \ge 1$ such that at least $1-\varepsilon$ of its $\ell^2$-mass is supported on at most $m$ of its entries.} and the local statistics of the eigenvalues should converge to those of the Gaussian Orthogonal Ensemble (GOE) throughout the bulk of the spectrum (around any finite real number). Moreover, for all $\alpha \in (0,1)$, there should exist a sharp phase transition at some real number $E_{\mob} = E_{\mathrm{mob}}(\alpha)$, such that the following holds. First, in the interval $(-E_{\mob}, E_{\mob})$, all eigenvectors should be completely delocalized and the eigenvalues should display GOE statistics. Second, outside of this interval (that is, on $(-\infty, -E_{\mob}) \cup (E_{\mob}, \infty)$), all eigenvectors should be completely localized and the eigenvalues should be distributed as a Poisson point process. The work \cite{tarquini2016level} also predicted a deterministic formula for $E_{\mathrm{mob}}$, given by the (presumably unique) real number $E > 0$ such that $\lambda(E,\alpha) = 1$, where $\lambda = \lambda(E,\alpha)$ is the largest solution of the quadratic equation
\begin{flalign} 
	\label{mobilityintroduction}
\lambda^2 - 2t_{\alpha} K_{\alpha} \Re \ell(E) \cdot \lambda + K_{\alpha}^2 (t_{\alpha}^2 - 1) |\ell (E)|^2 = 0.  
\end{flalign}

\noindent Here, $t_{\alpha}$, $K_{\alpha}$, and $\ell (E)$ are explicit  functions given by \Cref{lambdaEalpha} below (though $\ell(E)$ is a bit intricate). An equivalent, but more complicated, characterization was predicted earlier in \cite{cizeau1994theory}. 

For $\alpha \in (0,1)$, the predicted coexistence of localization and delocalization  differs strongly from the behavior of random matrices whose entries have finite variance. Indeed, the latter display complete eigenvector delocalization and GOE local eigenvalue statistics throughout the entire bulk of the spectrum. This fact, known as the \emph{Wigner--Dyson--Mehta conjecture}, was originally shown in \cite{erdos2010bulk,TV11} for certain classes of complex Hermitian matrices. Over the past decade, its conclusion was vastly generalized through the three-step strategy of \cite{localrelax2} and was proven to hold for wide classes of finite-variance random matrix ensembles; we refer to the surveys \cite{erdos2011survey,survey,erdos2017dynamical} for more information and references on this topic.

The phase transition indicated above is an example of \emph{Anderson localization}, a term describing localization--delocalization transitions triggered by sufficiently high disorder. The real number (or ``energy'') at which this transition occurs  is called a \emph{mobility edge}, denoted by $E_{\mathrm{mob}}$ above. 
Originally proposed in the 1950s to explain puzzling experimental results on doped semiconductors, Anderson localization is now recognized as a general feature of wave transport in disordered media and is one of the most influential ideas in modern condensed matter physics \cite{lee1985disordered,evers2008anderson,lagendijk2009fifty, abrahams201050,anderson1958absence}. 
However, our theoretical understanding of Anderson localization remains severely incomplete.
 
 This phenomenon has traditionally been studied through the tight-binding model \cite{abou1973selfconsistent, abou1974self, anderson1978local, aizenman2015random}, defined on the lattice $\mathbb Z^d$ by the random self-adjoint operator $H_\lambda = T + \lambda V$ on $\ell^2( \mathbb Z^d)$. Here, $T$ is the graph Laplacian of the lattice; $V$, called the \emph{potential}, is a diagonal operator with independent, random entries; and $\lambda \ge 0$ governs the strength of the disorder. In dimensions $d \ge 3$, a mobility edge is expected to occur for small values of $\lambda$, separating a delocalized regime from a localized one \cite{aizenman2015random}. The localized regime has been thoroughly investigated by this point, and it has been established that eigenfunctions are completely localized and Poisson local eigenvalue statistics arise whenever the energy is sufficiently far from zero (and also when $\lambda$ is sufficiently large) \cite{frohlich1983absence,minami1996local,aizenman1993localization,spencer1988localization,gol1977pure,ding2018localization,li2019anderson,carmona1987anderson}. However, for small $\lambda$, it remains open to establish the existence of a delocalized phase with GOE statistics, as well as a sharp transition between the localized and delocalized phases. 
 
 The tight-binding model can also be defined on an infinite regular tree, which allows for technical simplifications. In this context, the existence of delocalized states at small disorder was shown in \cite{klein1998extended,froese2007absolutely,aizenman2006stability}. More generally, \cite{aizenman2013resonant} proved a sufficient condition for the appearance of absolutely continuous spectrum (a signature of delocalization) on the regular tree, which the authors used to show the entire spectrum of the model is absolutely continuous if the disorder is sufficiently small (and if the potential has unbounded support and bounded density). This criterion is given in terms of a fractional moment bound for the model's resolvent, which is almost complementary to the Simon--Wolff criterion for pure point spectrum (a signature of localization) \cite{SCSRP,aizenman1993localization}. This is strongly suggestive of a sharp transition separating the localized and delocalized phases but is too inexplicit to pin it down. In this direction, \cite{bapst2014large} used this criterion  to provide upper and lower bounds on the endpoints for intervals of localization and delocalization; these bounds converge towards each other as the degree $K$ of the graph tends to $\infty$ but do not coincide for any finite $K$. In fact, no exact form for the mobility edge seems to be known for any tight-binding model, even in the most solvable cases, such as the Anderson model on the regular tree with Cauchy disorder \cite{miller1994weak}. 
 
 There are three other prominent random matrix ensembles whose spectra are believed to exhibit a coexistence of localized and delocalized regimes; they are band matrices \cite{bourgade2018survey}, adjacency matrices of sparse graphs \cite{RMM}, and L\'{e}vy matrices \cite{HTRM}. Analogously to the tight-binding model, band matrices are believed to exhibit a mobility edge in dimensions $d \ge 3$ when the band width is sufficiently large (but constant) \cite{RBSM,DSMRM}. Proving this prediction remains open, though recent years have seen progress towards the analysis of band matrices in dimensions $d \ge 7$ with a band width diverging in the dimension of the matrix \cite{DQDRBMHD,DQDRBM,BUQURBM}, and in dimension $d = 1$ \cite{bourgade2018random,bourgade2019random,yang2018random,URBMSM,URBM,STM}; in both cases, no mobility edge appears, as the entire spectrum is either all localized or all delocalized (depending on the band width). Random graphs should exhibit such coexistence when the average vertex degree is constant\footnote{When the average degree diverges at least logarithmically in the number of vertices, the behavior of the eigenvectors was determined in \cite{alt2021completely,alt2021delocalization}.} \cite{bauer2001random, bauer2001exactly,bauer2001core}, though this coexistence is of a different nature from in the other models of Anderson localization mentioned above. Indeed, in this regime, the localized and delocalized phases are not separated by a sharp transition but instead overlap, in that intervals of the spectral support admit some eigenvectors that are delocalized \cite{bordenave2011rank,coste2021emergence,bordenave2017mean,arras2021existence} and others that are localized \cite{chayes1986density,salez2015every}. The endpoints of these intervals should serve as analogs of the mobility edges for this model. In both of these models, there is neither a mathematical proof for the existence of these sharp phase transitions (mobility edges), nor an explicit prediction for their locations.
 
 L\'evy matrices are one of a small number of models predicted to exhibit a mobility edge that can be computed exactly. Partly for this reason, the predictions of \cite{tarquini2016level,cizeau1994theory} have generated substantial interest in L\'evy matrices among both physicists and mathematicians \cite{auffinger2009poisson,arous2008spectrum,benaych2014central,benaych2014central,belinschi2009spectral,biroli2007top,bordenave2011spectrum,bordenave2017delocalization,bordenave2013localization,burda2006random,soshnikov2004poisson,tarquini2016level,biroli2007extreme,aggarwal2021goe}.  The analogy between Anderson localization in heavy-tailed random matrices and the tight-binding model becomes especially transparent upon viewing the tail parameter $\alpha$ as parallel to the inverse disorder strength $\lambda^{-1}$, as reducing $\alpha$ makes the L\'{e}vy matrix ``more noisy.'' The equation \eqref{mobilityintroduction} for the mobility edge predicts for small $\alpha$ that the delocalized phase should be short (shrinking to a point as $\alpha$ tends to $0$), while for large $\alpha$ (close to $1$) that it should be long (extending across the real axis as $\alpha$ tends to $1$). This is similar to the prediction for the tight-binding model on the lattice that, for small $\lambda$ the delocalized region should nearly saturate the spectrum \cite{aizenman2015random}, while for large $\lambda$ it is empty.
 
 We next review rigorous results on the predictions of \cite{cizeau1994theory,tarquini2016level} regarding localization and delocalization. For any $\alpha \in (1, 2)$, the predictions on complete eigenvector delocalization and GOE local statistics throughout the spectrum for $\alpha \in (1, 2)$ were proven in \cite{aggarwal2021goe}. In the case $\alpha \in (0, 1)$, results are less complete. For every $\alpha \in \big( 0, \frac{2}{3} \big)$, \cite{bordenave2013localization} showed the existence of a regime consisting of sufficiently large energies where eigenvectors are partially localized. For almost every $\alpha \in (0, 1)$, \cite{bordenave2017delocalization} established the existence of a region consisting of sufficiently small energies where eigenvectors are partially delocalized. Subsequently, \cite{aggarwal2021goe} showed in this region that eigenvectors are completely delocalized and that local eigenvalue statistics converge to those of the GOE. Building on \cite{aggarwal2021goe}, the article \cite{aggarwal2021eigenvector} demonstrated that the statistics of eigenvector entries in this small-energy region are non-Gaussian, further distinguishing it from those in the finite-variance case (which are known to be typically Gaussian \cite{bourgade2013eigenvector,marcinek2020high}). As all of these results on L\'evy matrices only apply to sufficiently small or large energies, no work thus far has touched on the existence of the predicted sharp localization--delocalization transition, nor its exact characterization given by \eqref{mobilityintroduction}. 
 
 The purpose of the present paper is to address this point.

\subsection{Results and Proof Ideas}\label{s:ideas}

We now proceed to explain our results and the ideas of their proofs. We will be informal here, both with definitions and explanations, referring to \Cref{Results} below for a more detailed exposition. 

Let $\bm{H} = \{ h_{ij} \} $ denote an $N \times N$ L\'{e}vy matrix (see \Cref{matrixh} below) and, for any $z = E + \mathrm{i} \eta \in \mathbb{H}$ in the upper half plane, let $\bm{G} = \bm{G}(z) = \{ G_{ij} \} = (\bm{H} - z)^{-1}$ denote its resolvent. As is common in the context of random operators \cite{aizenman2015random}, we use $\Imaginary G_{jj} (E + \mathrm{i}\eta)$ (which is independent of $j \in [1, N]$, by symmetry) to distinguish localization from delocalization at a point $E \in \mathbb{R}$. More specifically, we equate\footnote{This equivalence can be made precise through the introduction of a certain inverse participation ratio $Q_I$ (\Cref{jwi}) expressible in terms of resolvent entries (\Cref{l:loccriteria}) that is indicative of eigenvector delocalization or localization if it remains bounded or becomes unbounded, respectively (\Cref{westimate}).} eigenvector localization and delocalization around $E$ with the statements that $\lim_{\eta \rightarrow 0} \Imaginary G_{jj} (E + \mathrm{i} \eta) = 0$ and $\lim_{\eta \rightarrow 0} \Imaginary G_{jj} (E + \mathrm{i} \eta) > 0$ in probability, respectively.

Recalling $\lambda = \lambda (\alpha, E)$ from \eqref{mobilityintroduction}, we prove the following results on the large L\'evy matrix $\bm{H}$.
\begin{enumerate}
\item \emph{Localization}: Let $E_{\loc}$ be the largest real number such that $\lambda(E_{\loc},\alpha) = 1$. For any $E \in \mathbb{R}$ with $|E| > E_{\max}$, eigenvectors of $\bm{H}$ with eigenvalue around $E$ are localized. 
\item \emph{Delocalization}: For any $E \in \mathbb{R}$ such that $\lambda (E,\alpha) > 1$, the eigenvectors of $\bm{H}$ with eigenvalue around $E$ are delocalized. Consequently, letting $E_{\deloc}$ be the smallest positive real number such that $\lambda(E_{\deloc},\alpha) = 1$, the eigenvectors of $\bm{H}$ are delocalized around any $E$ with $|E| < E_{\deloc}$.
\end{enumerate}
The previous two results reduce proving the existence of a unique mobility edge to the question of whether $E_{\deloc} = E_{\loc}$, that is, where there is a unique positive solution $E = E_{\mob}$ to $\lambda(E,\alpha) = 1$. This is checked numerically in \cite{tarquini2016level}. As for a rigorous proof, although this is a question about the purely deterministic equation \eqref{mobilityintroduction}, it is complicated by the intricacy of the function $\ell(E)$. Nonetheless, we prove such a statement if $\alpha$ is sufficiently close to $0$ or $1$, giving the following result.
\begin{enumerate}\item[(3)] \emph{Sharp Anderson transition}: There exists a constant $c>0$ such that $E_{\deloc} = E_{\loc}$ if $\alpha \in (0,c)\cup (1-c, 1)$. Denoting this common value by $E_{\mob}$, we have $E_{\mob} \sim |\log \alpha|^{-2/\alpha}$ for $\alpha$ near $0$, and $E_{\mob} \sim (1-\alpha)^{-1}$ for $\alpha$ near $1$.
\end{enumerate}
We elaborate more on these statements and their relation to previous works in \Cref{s:relationprevious} below.

Our proofs are based on analyzing a certain infinite-dimensional operator $\bm{T}$ that is associated with a rooted and randomly weighted tree (which is a variant of the Poisson Weighted Infinite Tree introduced in \cite{aldous1992asymptotics,aldous2004objective}). Each vertex of the tree has infinitely many children, its vertex set is given by $\mathbb{V} = \{0 \} \cup \bigcup_{k = 1}^{\infty} \mathbb{Z}_{\ge 1}^k$, and its root is chosen to be $0$. For each $v \in \mathbb{V}$, the set of edge weights connecting $v$ to its children are given by the entries of a Poisson point process on $\bbR$ with intensity measure formally given by $\frac{\alpha}{2} |x|^{-\alpha - 1 }\, dx$. Then $\bm{T}$ is set to be the adjacency matrix of this tree, and it can be interpreted as the local limit of the matrix $\bm{H}$ as $N$ tends to $\infty$ (upon viewing $\bm{H}$ as the adjacency matrix for a weighted complete graph). As such, information about $\bm{T}$ can be transferred to $\bm{H}$. Indeed, letting $\bm{R} = \bm{R} (z) = \{ R_{uv} \} = (\bm{T} - z)^{-1}$ denote the resolvent of $\bm{T}$ and $ R_{\star} (z) = R_{00}$ denote its diagonal entry at the root, it was shown in \cite{bordenave2011spectrum} that $G_{jj}$ converges to $R_\star$ in law. Hence, to show the above (de)localization statements for $\bm{H}$, it suffices to show them for $\bm{T}$. 

This provides two advantages,\footnote{Analogs of both can be realized for the original $\bm{H}$, at the expense of having to tracking an (effective in $N$) error.} which are related. The first is, since $\bm{T}$ is the adjacency matrix of a tree, off-diagonal entries of $\bm{R}$ admit a simplified product form. Indeed, for any vertex $v \in \mathbb{V}$, we have 
\begin{flalign}
	\label{r0v0} 
	R_{0v} = - R_{00} T_{0 0_+} R_{0_+ v}^{(0)}, \qquad \text{where $0_+$ is the child of $0$ on the path between $0$ and $v$},
\end{flalign}

\noindent where $R_{0_+ v}^{(0)}$ is the $(0_+, v)$ entry of $\big(\bm{T}^{(0)} - z \big)^{-1}$ (and $\bm{T}^{(0)}$ is obtained from $\bm{T}$ by setting its $(0,0_+)$ entry to $0$); see \Cref{rproduct} below. The second is that this tree admits a metric that can be useful for quantifying (de)localization phenomena (and has been in analogous contexts, such as the tight-binding model). 
 
 Given this, the proofs of our main results are composed of three main parts. 
\begin{enumerate}
	\item \emph{Fractional moment criterion}:  We establish a criterion for delocalization (and localization) through the growth (and decay, respectively) of certain fractional moment sums of the resolvent entries $R_{0v}$. This reduces the original question to an analysis of fractional moments for these resolvent entries, though the latter are not always explicit.
	\item \emph{Explicit formula for the mobility edge}: We show that this fractional moment criterion is equivalent to a more explicit one, dependent on whether the quantity $\lambda(E,\alpha)$ is larger or smaller than $1$. This reduces us to an analysis of the deterministic equation \eqref{mobilityintroduction}.
	\item \emph{Small and large $\alpha$ behavior}: We analyze this equation \eqref{mobilityintroduction} for $\alpha$ near $0$ and $1$. We first show $E_{\mob} \sim |\log \alpha|^{-2/\alpha}$ for $\alpha$ near $0$ and $E_{\mob} \sim (1 - \alpha)^{-1}$ for $\alpha$ near $1$; we then show uniqueness of a solution in $E$ to $\lambda(E,\alpha) = 1$ in these regimes. Together, these statements show the existence of a unique mobility edge for $\alpha$ near $0$ and $1$.
\end{enumerate}

\noindent To the best of our knowledge, the random operator $\boldsymbol{T}$ is the first for which a non-trivial mobility edge can be computed and rigorously demonstrated. The first, second, and third parts described above constitute \Cref{MomentFractional}, \Cref{EdgeExplicit}, and \Cref{Scaling} of this paper, respectively. We next elaborate on these parts in more detail. 

\subsubsection{Fractional Moment Criterion} 

\label{MomentFractionals}

For any integer $L \ge 1$, let $\mathbb{V} (L) \subset \mathbb{V}$ denote the set of vertices in $\mathbb{V}$ of distance $L$ from the root $0$, and for any fixed real number $s\in (\alpha, 1)$ and complex number $z \in \mathbb{H} = \{ z \in \mathbb{C}  : \Im (z) > 0 \}$, define (assuming they exist)
\begin{flalign}
	\label{sz0} 
\Phi_{L} (s; z) = \mathbb{E} \Bigg[ \displaystyle\sum_{v \in \mathbb{V}  (L)} \big| R_{0v} (z) \big|^s \Bigg]; \qquad \varphi (s; z) = \lim_{L\rightarrow \infty} \frac{1}{L} \log \Phi_{L} (s; z); 
\end{flalign}

\noindent Thus, $\Phi_L (s; z)$ describes a sum of fractional moments of resolvent entries associated with the $L$-th level of $\mathbb{V}$, and $\varphi$ denotes its exponential growth (or decay) rate. For any real number $E \in \mathbb{R}$, also define the limits of the above quantities as $s$ tends to $1$ and $z$ tends to $E$, given by
\begin{flalign*}
	\varphi (z) = \displaystyle\lim_{s \rightarrow 1} \varphi (s; z); \qquad \varphi(E) = \displaystyle\lim_{\eta \rightarrow 0} \varphi (E + \mathrm{i} \eta).
\end{flalign*}

The fractional moment criterion we show  is then that $\bm{T}$ is delocalized around some real number $E \in \mathbb{R}$ if $\varphi (E) > 0$, and that it is localized around $E$ if $\varphi (E) < 0$ (see \Cref{rimaginary0} and \Cref{p:imvanish} below). The latter statement on localization is a quick consequence of the Ward identity \eqref{sumrvweta}, so here we focus on the former statement about delocalization.

The origin of this delocalization criterion goes back to the work \cite{aizenman2013resonant}, where an analogous statement was shown for the tight-binding model on the regular graph. While the operator $\bm{T}$ seems quite distant from the tight-binding Hamiltonian, our starting observation is that a formulaic analogy between them arises upon expressing their diagonal resolvent entries through the Schur complement identity \eqref{qvv}. For $\bm{T}$, this gives for any $w \sim 0$ (writing $u \sim v$ if $u, v \in \mathbb{V}$ are adjacent) that
\begin{flalign}
	\label{r00zt1}
	R_{00} = -\big( z + T_{0w}^2 R_{ww}^{(0)} + K_{0,v} \big)^{-1}, \qquad \text{where $K_{0,w} = \displaystyle\sum_{\substack{w \sim 0 \\ u \ne w}} T_{0u}^2 R_{uu}^{(0)}$},
\end{flalign}

\noindent while the analogous form for the tight-binding model on the regular graph would be 
\begin{flalign}
	\label{r0020} 
	R_{00} = -\bigg( z + \sum_{u \sim 0} R_{uu}^{(0)} + K_0 \bigg)^{-1}, \qquad \text{for a random potential $K_0$ independent from $\bm{R}$}.
\end{flalign}

\noindent Viewing $K_{0,v}$ as parallel to the potential $K_0$ makes the two formulas appear similar (though the $\{ K_v \}$ are independent in the latter, while the $\{ K_{u,v} \}$ are seriously correlated in the former).

This in mind, we next verify the existence of the limit \eqref{sz0} defining $\varphi (s; z)$, by showing $\Phi_L (s; z)$ is approximately submultiplicative and supermultiplicative in $L$. This proceeds by first (repeatedly) using the resolvent expansion \eqref{r0v0}; applying \eqref{r00zt1}; and taking the expectation over $\{ T_{0u} \}$, conditional on the tree edge weights not incident to $0$. Central in implementing this in \cite{aizenman2013resonant} was the assumption that the potential $K_0$ has bounded density, which directly guarantees the uniform boundedness\footnote{Indeed, if $K$ has bounded density, then $\mathbb{E} \big[ |R_{00}|^s \big] = \mathbb{E} \big[ |A+K_0|^{-s} \big] < \infty$, upon setting $A = z + \sum_{u \sim 0} R_{uu}^{(0)}$.} of $\mathbb{E} \big[ |R_{00}|^s \big]$, conditional on the $R_{uu}^{(0)}$. Fortunately, in the L\'{e}vy setup, we can essentially identify $\Real K_{0,v}$ with an $\frac{\alpha}{2}$-stable law, which does have bounded density.

However, a complication that arises in the L\'{e}vy setting is that the edge weights $T_{0u}$ appearing in \eqref{r0v0} are heavy-tailed. Since $s > \alpha$, this makes $\mathbb{E} \big[ |T_{00_+}|^s \big]$ infinite; what guarantees finiteness of $\mathbb{E}\big[ |R_{0v}|^s \big]$ is the presence of $R_{00}$ in \eqref{r0v0}. Indeed, by \eqref{r0v0} and \eqref{r00zt1}, we have 
\begin{flalign}
	\label{r00v001}
	\mathbb{E} \big[ |R_{0v}|^s \big] = \mathbb{E} \Big[ |R_{00}|^s \cdot |T_{00_+}|^s \cdot \big| R_{0_+ v}^{(0)} \big|^s \Big] = \mathbb{E} \Bigg[ \displaystyle\frac{|T_{00_+}|^s}{\big| z + T_{00_+}^2 R_{0_+ 0_+}^{(0)} + K_{0,v} \big|^s}\cdot \big| R_{0_+ v}^{(0)} \big|\Bigg],
\end{flalign}

\noindent and so when its numerator is large then its denominator is as well.\footnote{As stated in \Cref{salpha} below, $\Phi_L (s; z)$ diverges if $s < \alpha$; this is due to the excess of many small $\{ T_{0v} \}$ (instead of the presence of a few large ones).} However, unlike in \cite{aizenman2013resonant}, this bound is not uniform conditional on the $R_{uu}^{(0)}$; it diverges as $R_{0_+ 0_+}^{(0)}$ decreases. Confronting this requires a precise understanding of the expectations on the right side of \eqref{r00v001} (accessed through various integral estimates in \Cref{G0vMoment}), and a recursive analysis of modified (see \Cref{xil} below) fractional moments weighted by an additional factor of $|R_{00}|^{\chi-s}$ (done in \Cref{EstimateMomentss}).

Next, we seek to show delocalization at $E$ if $\varphi(E) > 0$. In their setup, \cite{aizenman2013resonant} did this through two claims (whose statements we simplify considerably here). First, delocalization occurs if for each $L \ge 1$ there exists at least one $v \in \mathbb{V} (L)$ such that $|R_{0v}|$ is large; such vertices are called ``resonant.'' Second, if $\Imaginary R_{\star}$ is too small (delocalization does not occur), then the exponential divergence of $\Phi_L (s; z)$ for $(s, z) \approx (1, E)$ (equivalently, $\varphi (E) > 0$) can guarantee the existence of at least one resonant $v \in \mathbb{V} (L)$. The former claim is an eventual consequence of a deterministic, elementary linear algebraic inequality (\Cref{rsum} below). The proof of the latter is more involved and proceeds by first using a large deviations argument to show that the divergence of $\Phi_L (s; z)$ for $(s, z) \approx (1, E)$ implies that the number of resonant vertices is large in expectation. It then lower bounds the probability that there exists at least one resonant vertex using a second moment method. 

The proof of the first claim quickly adapts to the L\'{e}vy context, but that of the latter does not. A large devations argument still shows that the number of resonant vertices is large in expectation, and we implement this in \Cref{EstimateN}. However, the second moment method used to show this in probability is obstructed by the infinite and heavy-tailed nature of our tree. 

To overcome this, we introduce a more robust amplification scheme in \Cref{EstimateR}, described as follows. Using the expectation bound, we show for any $M \ge 1$ and $v_0 \in \mathbb{V}$ that there exists some $\mu \ge 0$ such that the below holds if $\varphi (E) > 0$. With probability at least $e^{-\mu M}$, there are at least $e^{(\mu + \delta) M}$ vertices $v \in \mathbb{V}$ at $M$ levels below $v_0$, such that $|R_{v_0 v}|$ is large; moreover, these events are essentially independent as $v_0$ ranges over some fixed level of the tree. Although this probability is exponentially small, it produces sufficiently many opportunities so as to ensure that the number of resonant vertices (of length $M$) likely grows (``amplifies'') as one descends down the tree. We show that ``chaining together'' $\frac{L}{M}$ of these length $M$ resonant vertices typically yields one of length $L$, which produces delocalization.

\subsubsection{Explicit Formula for the Mobility Edge}

\label{LambdaE}

We would next like to convert the inexplicit fractional moment criterion into an explicit one involving the quantity $\lambda(E,\alpha)$ from \eqref{mobilityintroduction}, to which end we will try evaluating the quantity $\Phi_L (s; z)$. 
Recalling \eqref{r0v0}, we have
\be\label{productexpand}
R_{0v} = -R_{00} T_{00_+} R_{0_+ v}^{(0)},
\ee

\noindent where $0_+ \sim 0$ lies on the path between $0$ and $v$. As in \cite{bapst2014large} (which addressed the tight-binding model on the regular graph), we repeatedly expand the fractional moments of $R_{0v}$ in $\Phi_L (s;z)$ using \eqref{productexpand}; apply \eqref{r00zt1} on each diagonal resolvent entry $R_{ww}^{(w_+)}$ in the expansion; and integrate out the randomness $\{ T_{u_- u} \}$ one level at a time. In \Cref{s:transfer}, we find that this expresses $\Phi_L(s;z)$ through the $L$-fold iteration of certain integral transfer operator $T = T_{s,\alpha,z}$, defined by
\begin{flalign}
	\label{t11} 
Tf(x) 
=
\frac{\alpha}{2} \int_{\mathbb{C}} \displaystyle\int_{\mathbb{R}}
f(y)
\left| \frac{1}{z + y} \right|^s
 |h|^{s-\alpha -1 }  p_z\left(x  + h^2 (z+y)^{-1}\right)
\, d h \, dy,
\end{flalign}
where $p_z$ is the density of $-z - R_\star(z)^{-1}$. Thus, the quantity $e^{\varphi (s;z)}$ defining the exponential growth rate of $\Phi_L (s;z)$ becomes the Perron--Frobenius eigenvalue of $T$. Unfortunately, its computation is impeded by the fact that $T$ is defined through the density of the complex random variable $R_{\star}$, which is in general inexplicit; its real and imaginary parts are correlated in an unknown way. 

A simplification arises if we assume in probability that
\begin{flalign}
	\label{eta0rlimit} 
	\displaystyle\lim_{\eta \rightarrow 0} \Im R_{\star} (E + \mathrm{i} \eta) = 0, \qquad \text{namely, localization occurs at $E$}.
\end{flalign}

\noindent In this case, we show that the 
(real) limit of the random variable $R_{\star} (E + \mathrm{i} \eta)$ as $\eta$ tends to zero is explicit, and given by the inverse of a stable law with specific parameters. We denote this limit by  $R_{\loc} = R_{\loc} (E)$ (\Cref{l:boundarybasics} below). Inserting \eqref{eta0rlimit} into \eqref{t11} and taking the associated limit as $z \in \mathbb{H}$ tends to $E \in \mathbb{R}$ yields an operator $T_{s,\alpha,E}$ that is now fully explicit. This operator appeared earlier in \cite{tarquini2016level}, where its Perron--Frobenius eigenvalue $\lambda (\alpha, s, E)$ was found exactly through a Fourier transform computation; we carefully reimplement this in \Cref{TEigenvalue}. We have under this notation that $\lambda (E, \alpha) = \lim_{s \rightarrow 1} \lambda(E, s, \alpha)$; so, assuming \eqref{eta0rlimit}, this would indicate (if one can interchange several limits) that $e^{\varphi (E)} = \lim_{s \rightarrow 1} \lambda (E, s, \alpha) = \lambda (E, \alpha)$.

This already implies that, if $\lambda (E,\alpha) > 1$, then delocalization holds at $E$. Indeed, otherwise \eqref{eta0rlimit} would hold (at least along some sequence $\{ \eta_j \}$ tending to $0$, which suffices), indicating by the above that $\varphi (E) = \log \lambda (E, \alpha) > 0$. By the fractional moment criterion, this implies that delocalization holds at $E$, which is a contradiction.

An analogous route does not show localization, to which end we introduce a bootstrap argument involving two additional statements: (i) an estimate showing localization at $E$ whenever $|E|$ sufficiently large, and (ii) a bound showing uniform continuity of $\varphi (s; E) = \lim_{\eta \rightarrow 0} \varphi (s; E + \mathrm{i} \eta)$ on the domain $\big\{ E : \varphi (s; E) < -\delta \big\}$, for any fixed $\delta < 0$. The first implicitly follows from the estimates discussed in \Cref{MomentFractionals} (this can be seen on a high level by observing from \eqref{r00zt1} that $|z|$ large indicates $R_{00}$ should be small, which by repeated use of \eqref{productexpand} suggests that $R_{0v}$ is small). The second is proven in \Cref{s:lyapunovcontinuity} through a series of restricted moment estimates and resolvent bounds; we refer to the beginning of that section for a more detailed heuristic underlying its proof.

Given these, we proceed at follows. Fixing $E > E_{\max}$ (the case $E < -E_{\max}$ is similar), we seek to show localization at $E$. To do this, observe by (i) that there exists $E_0 > E_{\max}$ sufficiently large so that localization \eqref{eta0rlimit} holds at $E_0$. Hence, $\varphi (E_0) = \log \lambda (E_0, \alpha)$; since $\lambda (x, \alpha) < 1$ for $x > E_{\max}$, it follows from (ii) that there exists a real number $E_1 \in (E, E_0)$ (uniformly bounded away from $E_0$) such that $\varphi (E_1, \alpha) < 0$. By the fractional moment criterion, this verifies \eqref{eta0rlimit} as $E_1$, which yields $\varphi (E_1) = \lambda (E_1, \alpha)$. Repeating this procedure (uniformly decreasing the $E_j$, and applying (i), (ii), and the fractional moment criterion) shows that \eqref{eta0rlimit} holds at $E$, yielding localization there.

\subsubsection{Small and Large $\alpha$ Behavior}

The main difficulty in studying the equation \eqref{mobilityintroduction} lies in the analysis of the $\ell(E)$ term, which is defined (see \eqref{tlrk}) as a certain oscillatory integral. So, we first interpret $\ell(E)$ probabilistically (see \Cref{realimaginaryl}), by expressing
\begin{flalign}
\label{ralphae} 
\Real \ell (E)  = \pi^{-1} \cdot \Gamma (\alpha) \cdot \cos \Big( \displaystyle\frac{\pi \alpha}{2} \Big) \cdot \mathbb{E} \Big[ \big| R_{\loc} (E) \big|^{\alpha} \Big],
\end{flalign}

\noindent where we recall from \Cref{LambdaE} that $R_{\loc} (E)$ is an explicit, real random variable given by the inverse of a stable law law; a similar expression holds for $\Im \ell (E)$. More specifically, we may write \begin{flalign}
	\label{re0} 
	R_{\loc} (E) = - \big( E + a(E)^{2/\alpha} \cdot S - b(E)^{2/\alpha} \cdot T \big)^{-1},
\end{flalign} 

\noindent where $S$ and $T$ are independent, normalized, positive $\frac{\alpha}{2}$-stable laws, and $a = a(E)$ and $b = b(E)$ are certain real numbers satisfying the pair of coupled fixed-point equations (here, $x_+ = \max \{ x, 0 \}$ and $x_- = \max \{ -x, 0 \}$)
\begin{equation}
	\label{ab0} 
a = \E \big[ ( E + a^{2/\alpha} S - b^{2/\alpha} T )_-^{-\alpha/2}  \big]; \qquad
b = \E \big[ ( E + a^{2/\alpha} S - b^{2/\alpha} T  )_+^{-\alpha/2}  \big].
\end{equation} 

The problem of studying solutions to $\lambda(E,\alpha) = 1$ then effectively reduces to understanding the system \eqref{ab0}, which in general seems intricate. 
However, assuming that $\alpha$ is sufficiently close to $0$ or $1$ allows for a fairly complete analysis.

If $\alpha$ is close to $1$, then we can restrict to $E$ large. Indeed, from \eqref{mobilityintroduction} (and the exact expressions for $K_{\alpha}$, $t_{\alpha}$, and $t_1$ from \eqref{tlrk}), it is quickly seen that $\lambda (\alpha, E) \sim 1$ imposes\footnote{More broadly, it imposes an asymmetric scaling between the real and imaginary parts of $\ell (E)$, namely, $\Re \ell (E) \sim (1-\alpha)^2$ and $\Imaginary \ell (E) \sim 1-\alpha$, which eventually must be taken into account when implementing the analysis in full.} $\Real \ell(E) \sim (1-\alpha)^2$. Together with \eqref{ralphae}, this indicates that $\mathbb{E} \big[ |R_{\loc} (E)|^{\alpha} \big] \sim 1-\alpha$; in particular, it must be small, so \eqref{re0} suggests that $E \gg 1$. If $E$ is large, the equations \eqref{ab0} simplify substantially. For example, we will approximate $E + \alpha^{2/\alpha} S - b^{2/\alpha} T \approx E$, which directly from \eqref{ab0} implies $b \approx E^{-\alpha/2}$. In general, we obtain precise asymptotics over how $a(E)$ and $b(E)$ scale with $E$, which we use to show that any solution $E$ to $\lambda(E,\alpha)=1$ scales as $E \sim (1-\alpha)^{-1}$; this is done in \Cref{s:alphanear1}.

The proof of uniqueness for such a solution proceeds by showing that $\lambda(E,\alpha)$ is strictly decreasing in the region $E \sim (1-\alpha)^{-1}$. This is done in \Cref{s:alphanear1uniqueness}, by pinpointing the scaling behavior (and thus realizing the signs) for the derivatives of all functions involved in the computation of $\lambda$. These include $a(E)$, $b(E)$, expectations of the form $\mathbb{E} \big[ (E+x S - yT)^{\gamma} \big]$, and $\lambda(E, \alpha)$.

If $\alpha$ is close to $0$, then from \eqref{ralphae} and  \eqref{mobilityintroduction}, it can be seen that $\lambda (\alpha, E) \sim 1$ imposes $\mathbb{E} \big[ |R_{\loc}(E)|^{\alpha} \big] \approx 2 - c_{\star} \alpha$, for some explicit constant $c_{\star} > 0$ (\Cref{lambdaalpha0} below). To analyze the equations \eqref{ab0} we then make use of (and quantify; see \Cref{w0walphavariation} below) the fact \cite{SDCS} that, for small $\alpha$, the random variables $W = \frac{\alpha}{2} \log S$ and $V = \frac{\alpha}{2} \log T$ are approximately Gumbel laws. It thus becomes convenient to parameterize $E = u^{2/\alpha}$, in which case \eqref{ab0} becomes
\begin{flalign}
	\label{ab02} 
	a = \mathbb{E} \big[ (u^{2/\alpha} + (a e^W)^{2/\alpha} - (be^V)^{2/\alpha})_-^{-\alpha/2} \big]; \qquad b = \mathbb{E} \big[ (u^{2/\alpha} + (ae^W)^{2/\alpha} - (be^V)^{2/\alpha})_+^{-\alpha/2} \big].
\end{flalign}

\noindent Approximating $(W, V)$ by independent Gumbel random variables, the expectations in \eqref{ab02} become explicit, especially upon further approximating $(w^{2/\alpha} + x^{2/\alpha} - y^{2/\alpha})^{\alpha/2} \approx \pm \max \{ w, x, y \}$. 

This enables one to nearly solve to the equations \eqref{ab02} for $(a, b)$ in terms of $u$, and thus to show that any solution $u$ to $\lambda (u^{2/\alpha}, \alpha) = 1$ (or $\mathbb{E}\big[ |R_{\loc} (u^{2/\alpha})|^{\alpha} \big] \approx 2 - c_{\star} \alpha$) must scale as $u \sim |\log \alpha|^{-1}$; we implement this in \Cref{Alpha0Scaling}. The proof of uniqueness for such a solution proceeds by showing that $\lambda (u^{2/\alpha},\alpha)$ is strictly decreasing in the region where $u \sim |\log \alpha|^{-1}$. This is done in \Cref{E0Unique}, again by analyzing how the derivatives of all functions involved in the computation of $\lambda (E, \alpha)$ scale.

\subsection{Notation}\label{s:notation}

We let $\mathbb{H} = \{ z \in \mathbb{C} : \Imaginary z > 0 \}$ and $\bbR_+ = \{ r \in \bbR : r \ge 0 \}$. The notation $\unn{a}{b}$ for $a,b \in \mathbb{Z}$ denotes the set $\{ k \in \mathbb{Z} : a \le k \le b\}$. We define the function $z \mapsto z^a$ for $a \in (0,1)$ and $z\in \bbC \setminus (-\infty, 0)$ by $z^a = r^a \exp( \iu a \theta)$ if $z = r \exp(\iu \theta)$ for $\theta \in ( -\pi, \pi]$. The function $\log$ always denotes the natural logarithm. Given $x \in \bbR$, we set $(x)_+ = \max(x,0)$ and $(x)_- = \max(-x,0)$. For any two functions $f, g \colon \mathbb{R} \rightarrow \mathbb{C}$, we let $f * g : \mathbb{R} \rightarrow \mathbb{C}$ denote their convolution, given by $(f \ast g) (x) = \int_{-\infty}^{\infty} f(y) g(x-y) \, dy$. Moreover, for a Banach space $X$; subset $S \subseteq X$; and complex number $z \in \mathbb{C}$, we let $z \cdot S = \{ zs : s \in S \}$. Given an event $\mathscr E$ in some probability space, we let $\mathscr E^c$ denote its complement.

We will frequently define constants that depend on some number of parameters. These may be introduced either as $C = C(x_1, \dots, x_n)$ or simply as $C(x_1, \dots, x_n)$, for parameters $x_1, \dots, x_n$, and subsequently referred to as $C$ (suppressing the dependence on the parameters in the notation). Such constants may change line to line without being renamed; in all such cases the new constant depends on the same set of parameters as the previous one. We usually write $C>1$ for large constants, and $c>0$ for small constants.

We will often fix constants $\alpha \in (0,1)$ and $s \in (\alpha, 1)$, and sometimes omit notating the dependence on these parameters from our definitions of certain constants for brevity. Similarly, we will often fix $\eps$ and $B$ such that $\eps \in (0,1)$ and $B > 1$ and make claims for all points $z\in \mathbb{H}$ such that $\eps \le |\Re z| \le B$  and $\Im z \in (0,1)$ (which may also be subject to further conditions). We may similarly omit $\eps$ and $B$ from our notation. In statements about any of the parameters $\alpha$, $s$, $\eps$, and $B$, all constants are understood to depend on these parameters, unless explicitly stated otherwise. 

\subsection{Acknowledgements} 

The authors are immensely grateful to Alice Guionnet for extremely valuable and extensive discussions related to this work. They also thank Giulio Biroli, Marco Tarzia, and Horng-Tzer Yau for helpful conversations on this topic. Amol Aggarwal was partially supported by a Clay Research Fellowship, by the NSF grant DMS-1926686, and by the IAS School of Mathematics.
Patrick Lopatto was partially supported by the NSF postdoctoral fellowship DMS-2202891. Amol Aggarwal and Patrick Lopatto also wish to acknowledge the NSF grant DMS-1928930, which supported their participation in the Fall 2021 semester program at MSRI in Berkeley, California titled, ``Universality and Integrability in Random Matrix Theory and Interacting Particle Systems.''

\section{Results}

\label{Results}

	\subsection{L\'{e}vy Matrices and Their Local Limits} 
	
	\label{Stable} 
	
	In this section we recall L\'{e}vy matrices and a heavy tailed operator on an infinite tree that can be viewed as their local limit. 
We begin by recalling a class of heavy-tailed probability distributions defined in \cite[Section 1]{bordenave2011spectrum}.
It contains symmetric power laws and distributions close to such laws.

\begin{definition}\label{LaDef}
For every $\alpha \in (0,1)$, we define $\bbL_{\alpha}$ as the set of probability measures $\mu$ on $\bbR$ that are absolutely continuous with respect to Lebesgue measure and satisfy the following conditions.
\begin{enumerate}
\item 
The function $L: \bbR_+ \rightarrow \bbR$ given by 
\bex
L(t)  = t^{\alpha}\mu\big( (-\infty, t) \cup (t, \infty)\big)
\eex
has slow variation at $\infty$; this means that for any $x > 0$,
\bex
\lim_{t \rightarrow \infty} \frac{L(xt)}{L(t)} = 1. 
\eex
\item We have 
\bex
\lim_{t \rightarrow \infty} \frac{\mu\big (t, \infty) \big) }{\mu\big( (-\infty, t) \cup (t, \infty) \big)} = \frac{1}{2}.
\eex

\end{enumerate}
\end{definition}

We next define the class of random matrices considered in this work. 
	\begin{definition} 
		
		\label{matrixh}
		
		Fix an integer $N \ge 1$, and let $\{ h_{ij} \}_{1 \le i \le j \le N}$ denote mutually independent random variables that each have law $N^{-1 / \alpha} X$, where $X$ is a random variable whose law is contained in $\bbL_\alpha$. Set $h_{ij} = h_{ji}$ for each $1 \le j < i \le N$, and define the $N \times N$ random, real symmetric matrix $\boldsymbol{H} = \{ h_{ij} \}_{1 \le i, j \le N}$, which we call a \emph{L\'{e}vy matrix} (with parameter $\alpha$). 
		
	\end{definition} 

	It will further be useful to introduce notation for the resolvent of a L\'{e}vy matrix.
	
	\begin{definition} 
		
		\label{g} 
		
		Adopt the notation of \Cref{matrixh}, and fix a complex number $z \in \mathbb{H}$. The \emph{resolvent} of $\boldsymbol{H}$, denoted by $\boldsymbol{G} = \boldsymbol{G} (z) = \{ G_{ij} \}_{1 \le i, j \le N} = \big\{ G_{ij} (z) \big\}_{i, j}$, is defined by $\boldsymbol{G} = (\boldsymbol{H} - z)^{-1}$.	

	\end{definition}

	We next recall from \cite[Section 2.3]{bordenave2011spectrum} a description of the local limit of a L\'{e}vy matrix $\boldsymbol{H}$ as a heavy-tailed operator on an infinite tree $\mathbb T$ (which may be viewed as a variant of the Poisson Weighted Infinite Tree introduced in \cite{aldous1992asymptotics,aldous2004objective}). This tree has vertex set $\mathbb{V} = \bigcup_{k = 0}^{\infty} \mathbb{Z}_{\ge 1}^k$, with root $\mathbb{Z}^0$ denoted by $0$, so that the children of any $v \in \mathbb{Z}_{\ge 1}^k \subset \mathbb{V}$ are $(v, 1), (v, 2), \ldots \in \mathbb{Z}_{\ge 1}^{k + 1}$. 
	 For each vertex $v \in \mathbb{V}$, let $\boldsymbol{\xi}_{v} = (\xi_{(v, 1)}, \xi_{(v, 2)}, \ldots )$ denote a Poisson point process on $\mathbb{R}$ with intensity measure $\frac{1}{2} dx$ (where $dx$ is the Lebesgue measure on $\mathbb{R}$), ordered so that $|\xi_{(v, 1)}| \ge |\xi_{(v, 2)}| \ge \cdots $. Let $\mathcal{F} \subset L^2 (\mathbb{V})$ denote the (dense) set of vectors with finite support. For any vertex $v \in \mathbb{V}$, let $\delta_{v} \in \mathcal{F}$ denote the unit vector supported on $v$. Then define the operator $\boldsymbol{T} : \mathcal{F} \rightarrow L^2 (\mathbb{V})$ by setting
	\begin{flalign} 
		\label{t} 
		\begin{aligned}
		\langle \delta_{v}, \boldsymbol{T} \delta_{w} \rangle & = \sgn \xi_{w} \cdot |\xi_{w}|^{-1 / \alpha}, \qquad \text{if $w = (v, j)$ for some $j \in \mathbb{Z}_{\ge 1}$}; \\
		\langle \delta_{v}, \boldsymbol{T} \delta_{w}  \rangle& = \sgn \xi_{v} \cdot |\xi_{v}|^{-1 / \alpha}, \qquad \text{if $v = (w, j)$, for some $j \in \mathbb{Z}_{\ge 1}$}; \\
		\langle \delta_{v}, \boldsymbol{T} \delta_{w} \rangle & = 0, \qquad \qquad \qquad \qquad \text{otherwise},
		\end{aligned}
	\end{flalign} 
	
	\noindent for any vertices $v, w \in \mathbb{V}$, and extending it by linearity to all of $\mathcal{F}$. We abbreviate the matrix entry $T_{v w} = \langle \delta_{v}, \boldsymbol{T} \delta_{w} \rangle$ for any vertices $v, w \in \mathbb{V}$. 
	We further identify $\boldsymbol{T}$ with its closure, which is self-adjoint by \cite[Proposition A.2]{bordenave2011spectrum}. 
	
	\begin{rem}
		
	\label{tuvalpha} 
	
	For any $v \in \mathbb{V}$, let $\mathbb{D} (v)$ denote the set of children of $v$. Then, the sets of random variables $\big\{ |T_{vw}| \big\}_{w \in \mathbb{D}(v)}$ and $\big\{ |T_{vw}^2| \big\}_{w \in \mathbb{D} (v)}$ form Poisson point processes on $\mathbb{R}_{> 0}$ with intensity measures $\alpha x^{-\alpha-1} dx$ and $\frac{\alpha}{2} x^{-\alpha/2 - 1} dx$, respectively.	
	
	\end{rem} 
	
	The following definition provides notation for the resolvent 
	of $\boldsymbol{T}$. 
	
	\begin{definition}
		
	\label{r} 
	
	For any complex number $z \in \mathbb{H}$, the \emph{resolvent} of $\boldsymbol{T}$, denoted by $\boldsymbol{R}: L^2 (\mathbb{V}) \rightarrow L^2 (\mathbb{V})$, is defined by $\boldsymbol{R} = \boldsymbol{R} (z) = (\boldsymbol{T} - z)^{-1}$. For for any vertices $v, w \in \mathbb{V}$, we denote the $(v, w)$-entry of $\boldsymbol{R}$ by $R_{v w} = R_{v w} (z) = \langle \delta_{v}, \boldsymbol{R} \delta_{w} \rangle$. 
	
	\end{definition} 

	The following lemma, which follows from results in \cite{bordenave2011spectrum}, implies that the (diagonal) entries of $\boldsymbol{G}$ converge to those of $\boldsymbol{R}$ as $N$ tends to $\infty$. 
	
	\begin{lem}[{\cite[Theorem 2.2, Theorem 2.3]{bordenave2011spectrum}}] 

	\label{gr} 
	
	Adopt the notation of \Cref{g} and \Cref{r}. For any integer $i \in \unn{1}{N}$, we have the convergence in law $\lim_{N \rightarrow \infty} G_{ii} (z) = R_{\star} (z)$. 
	
	\end{lem}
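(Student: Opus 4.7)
The plan is to prove this lemma via local weak convergence of the weighted adjacency of $\boldsymbol{H}$ to the rooted weighted tree $(\boldsymbol{T},0)$, combined with a truncation argument that turns the diagonal resolvent entry into an asymptotically local functional of the graph.

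First I would establish local weak convergence. View $\boldsymbol{H}$ as the adjacency matrix of a weighted complete graph on $\unn{1}{N}$ with edge weights $h_{ij}$, and fix $i \in \unn{1}{N}$. The entries $\{h_{ij}\}_{j\ne i}$ are i.i.d.\ with common law $N^{-1/\alpha}X$, where $X$ has law in $\mathbb{L}_\alpha$. Classical extreme value theory for regularly varying distributions then gives that the point process $\sum_{j \ne i}\delta_{h_{ij}}$ on $\bbR \setminus \{0\}$ converges in the vague topology to a Poisson point process with intensity $\frac{\alpha}{2}|x|^{-\alpha-1}\,dx$, which matches the law of the edge weights at the root of $\boldsymbol{T}$ by \Cref{tuvalpha}. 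Iterating at the neighbors of $i$, and using that disjoint neighborhoods involve disjoint collections of entries and are therefore asymptotically independent, one obtains convergence of the rooted weighted graph $(\boldsymbol{H},i)$ to $(\boldsymbol{T},0)$ in the local weak sense of rooted edge-weighted networks.

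Next I would localize the diagonal resolvent. For $z = E+\iu\eta \in \mathbb{H}$ with $\eta > 0$, both $\boldsymbol{G}(z)$ and $\boldsymbol{R}(z)$ have operator norm at most $\eta^{-1}$. Fix a threshold $\delta > 0$ and a depth $L \ge 1$, and let $B_L^\delta(i)$ be the ball of radius $L$ in the subgraph obtained by retaining only edges with $|h_{jk}| \ge \delta$. Denote by $G_{ii}^{(L,\delta)}(z)$ the $(i,i)$-entry of the resolvent of the principal submatrix of $\boldsymbol{H}$ indexed by $B_L^\delta(i)$, and analogously define $R_\star^{(L,\delta)}(z)$ for the corresponding truncation of $\boldsymbol{T}$. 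Iterating the Schur complement identity, together with the Ward identity $\Im G_{ii}(z) = \eta \sum_j |G_{ij}(z)|^2$ and fractional moment bounds for off-diagonal resolvent entries, one shows that $G_{ii}^{(L,\delta)}(z) - G_{ii}(z)$ vanishes in probability as one sends $L \to \infty$ then $\delta \to 0$, uniformly in $N$, and that $R_\star^{(L,\delta)}(z) \to R_\star(z)$ almost surely. For fixed $(L,\delta)$, $G_{ii}^{(L,\delta)}(z)$ is a bounded continuous function of finitely many edge weights (bounded because $\eta > 0$), so local convergence immediately yields $G_{ii}^{(L,\delta)}(z) \Rightarrow R_\star^{(L,\delta)}(z)$ as $N \to \infty$; a standard diagonal argument in $(L,\delta)$ then delivers $G_{ii}(z) \Rightarrow R_\star(z)$.

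The main obstacle is making the truncation error above uniform in $N$. Because $\|\boldsymbol{H}\|$ is not bounded in $N$, a Neumann series bound is useless, and one must instead use fractional moment bounds on $|G_{ij}(z)|^s$ for an appropriate exponent $s$, together with the self-averaging expressed by the Ward identity, to control the contribution of resolvent paths exiting $B_L^\delta(i)$. An associated subtle point is to verify that suppressing entries with $|h_{ij}| < \delta$ does not affect the limit: although each entry is small, their cumulative effect is non-negligible and must be estimated via the regular variation hypothesis defining $\mathbb{L}_\alpha$, giving a central-limit-type control of the remainder; this is precisely the analysis carried out in \cite{bordenave2011spectrum}, from which the lemma follows.
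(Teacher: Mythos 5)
The paper does not prove this lemma; it is stated as a direct citation of \cite[Theorems 2.2 and 2.3]{bordenave2011spectrum} and treated as external input, so there is no internal proof to compare against.

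Your sketch captures the correct guiding idea: regard $\boldsymbol{H}$ as the adjacency matrix of a weighted complete graph, establish local weak convergence of the rooted weighted graph $(\boldsymbol{H}, i)$ to $(\boldsymbol{T}, 0)$, and then pass this convergence through the resolvent. However, the argument in the cited reference follows a more operator-theoretic route than the one you outline. Bordenave and Guionnet establish local weak convergence of the associated self-adjoint operators and then invoke the general principle that local convergence of (essentially) self-adjoint operators implies strong resolvent convergence --- relying on a Skorokhod coupling and an essential self-adjointness criterion for the limiting PWIT operator proved in their appendix --- rather than an explicit truncation-by-ball argument with fractional-moment control of the error. Your route is plausible, but making the error $G_{ii}^{(L,\delta)}(z) - G_{ii}(z)$ uniform in $N$ is a genuine additional step (the Ward identity alone gives only an $\ell^2$ bound, not decay off the ball), and the cited proof sidesteps this entirely by working with operator convergence. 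What each approach buys: yours, if pushed through, would give quantitative error bounds; the cited approach gets the qualitative statement more cheaply by leaning on abstract functional analysis.

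One minor imprecision: for $\alpha < 1$, suppressing entries with $|h_{ij}| < \delta$ yields a vanishing error not through any ``central-limit-type control'' --- these entries have infinite variance, and no CLT is available --- but simply because regular variation forces the cumulative contribution of the truncated small weights to vanish as $\delta \to 0$; this is a heavy-tail estimate, not a Gaussian approximation.
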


We close this section with some notation on stable laws. Fix real numbers $\alpha \in (0,1)$, $\sigma \ge 0$, and $\beta \in [-1, 1]$. We say that a random variable $X$ is an \emph{$\alpha$-stable law} with \emph{scaling parameter $\sigma$} and \emph{skewness parameter $\beta$}, or an \emph{$(\alpha; \sigma; \beta)$-stable law}, if for any $t \in \mathbb{R}$ we have 
	\begin{flalign}
		\label{xtsigma}
		\mathbb{E} \big[ e^{\mathrm{i} tX} \big] = \exp \bigg(- \displaystyle\frac{\pi}{2 \sin (\pi \alpha / 2) \Gamma (\alpha)} \cdot \sigma^{\alpha} |t|^{\alpha} \big( 1 - \mathrm{i} \beta \sgn (t) u_{\alpha} \big) \bigg), \quad \text{where} \quad u_{\alpha} = \tan ( \pi \alpha/2 ).
	\end{flalign}
		
	\noindent The constant $\pi \big(2 \sin (\pi \alpha / 2) \Gamma (\alpha)\big)^{-1}$ in \eqref{xtsigma} is chosen so that when $\sigma = 1$ and $\beta=0$, the associated law is in the class $\mathbb{L}_\alpha$ defined in \Cref{LaDef}. See \cite[Theorem 1.2]{nolan2020univariate} for details.
	
We further say that $X$ is \emph{centered} if $\beta = 0$, and that it is \emph{nonnegative} if $\beta = 1$. If $X$ is a nonnegative $\alpha$-stable law with scaling parameter $\sigma$, then $Y \ge 0$ almost surely, and for any $t \ge 0$ we have 
\begin{equation}
		\label{ytsigma} 
		\mathbb{E} [e^{-tX}] = \exp \big( - \Gamma (1 - \alpha) \cdot \sigma^{\alpha} |t|^{\alpha} \big). 
\end{equation}

	We adopt the convention that any reference to a nonnegative $\alpha$-stable law without further qualification always specifies one with scaling parameter $\sigma =1$.

	\begin{rem} 
		
		\label{sumalpha} 
		
		 Any $\alpha$-stable law $X$ is the difference between two nonnegative stable laws. Indeed, if the scaling and skewness parameters of $X$ are $\sigma = \sigma (X) \ge 0$ and $\beta = \beta(X) \in [-1, 1]$, respectively, then by \eqref{xtsigma} we have $X = Y - Z$, where $Y$ and $Z$ are nonnegative $\alpha$-stable laws with scaling parameters 
		 \begin{flalign*}
		 	\sigma (Y) = \sigma \cdot \bigg( \displaystyle\frac{\beta + 1}{2} \bigg)^{1/\alpha}, \quad \text{and} \quad \sigma (Z) = \sigma \cdot \bigg( \displaystyle\frac{1 - \beta}{2} \bigg)^{1 / \alpha}, \quad \text{respectively}.
		 \end{flalign*}
		
	\end{rem}

\subsection{Results for the Poisson Weighted Infinite Tree}	\label{s:PWITresults}

We begin with some preliminary definitions.
\begin{definition}
Fix $\alpha \in (0,1)$ and recall $R_\star$ from \Cref{r}.
We define $y(z) = y_\alpha(z)$ for $z\in \bbH$ by 
\be\label{yrepresentation}
y(z) = \E \left[ \left(- \iu R_\star(z)   \right)^{\alpha/2}  \right].
\ee
\end{definition}
\begin{rem}
Alternatively, $y(z)$ can be characterized through a deterministic equation that does not reference the operator $\R$. Set $\bbK = \{ z \in \bbC: \Re z > 0\}$. For any $z\in \bbH$, we define $\phi_{\alpha, z} \colon \bbK \rightarrow \bbC$ by
\bex
\phi_{\alpha,z}(x) = \frac{1}{\Gamma(\alpha/2)} 
\int_0^\infty t^{\alpha/2 - 1} \exp(\iu tz) \exp\left(  
-\Gamma(1 - \alpha/2) t^{\alpha/2} x
\right)\, dt.
\eex
By \cite[Theorem 1.4]{arous2008spectrum} and \cite[Proposition 3.1]{bordenave2013localization}, 
$y(z)$ is the unique solution to the equation
$ y(z) = \phi_{\alpha,z}( y(z))$ for all $z \in \bbH$. 
\end{rem}
 
It follows from the proof of \cite[Theorem 1.4]{belinschi2009spectral} that $y(z)$ extends continuously to a function on $\overline{\bbH}$, so that $y(E)$ is defined for every $E\in \bbR$. For the convenience of the reader, we provide a proof of the previous statement in \Cref{s:boundaryvalues}, where it is stated as \Cref{l:boundaryvalues}.
\begin{definition}
Because the complex numbers $\iu^{\alpha/2}$ and $(-\iu)^{\alpha/2}$ are linearly independent over $\bbR$, for every $E\in \bbR$ there exists a  unique pair $\big(a(E), b(E)\big) \in \bbR^2$ such that 
\be\label{opaque}
y(E) = (-\iu)^{\alpha/2} a(E) + (\iu)^{\alpha/2} b(E).
\ee
Further, we have
\bex
\big (-iR_\star(z)\big)^{\alpha/2} \in \big\{ \mathrm{i}^{\alpha/2} \cdot a + (-\mathrm{i})^{\alpha/2} \cdot b : a, b \ge 0 \big\}
\eex
because $R_\star(z) \in \bbH$, so  $a(E), b(E) \ge0$. (Recall the definition of $z \mapsto z^{\alpha/2}$ given in \Cref{s:notation}.)
\end{definition}
\begin{definition}
	\label{pkappae} 
	
Let $S_1$ and $S_2$ be independent, nonnegative $\alpha/2$-stable random variables. Set 
\bex
\varkappa_{\loc}(E) = a(E)^{2/\alpha} S_1 - b(E)^{2/\alpha} S_2.
\eex
Then $\varkappa_\loc(E)$ is an $\alpha/2$-stable law whose scaling and skewness parameters are determined through \Cref{sumalpha}. 
We denote the density of $\varkappa_\loc(E)$ with respect to the Lebesgue measure on $\bbR$ by $p_E(x)$. We let $\hat p_E(\xi)$ denote the Fourier transform of the density $p_E(x)$.\footnote{
We define the Fourier transform so that $\hat p_E(\xi)$ is equal to the characteristic function of $\varkappa_{\loc}(E)$. See \Cref{s:fouriertransform} below for details.}
\end{definition} 

 While these quantities depend on $\alpha$, we suppress this dependence in our notation.
 
 \begin{rem}\label{r:abreason}
The definition of $a(E)$ and $b(E)$ through \eqref{opaque} can be understood in the following way.
Suppose that $R_\star(E) = \lim_{\eta \rightarrow 0} R_\star(E + \iu \eta)$ exists and satisfies $\Im R_\star(E) = 0$.
Then we have
\bex
a(E) = \E\left[ \big( R_\star(E) \big)^{\alpha/2}_+   \right], \qquad b(E) = \E\left[ \big( R_\star(E) \big)^{\alpha/2}_-  \right].
\eex
The assumption $\Im R_\star(E) = 0$ essentially implies localization, as noted below in \Cref{l:loccriteria}. So one may think of the definitions of $a(E)$ and $b(E)$ as being the boundary values of fractional moments at $E$, assuming localization at $E$.

The definition of $\varkappa_{\loc}$ arises in a related way. Given $z\in \bbH$, let $(\xi_j)_{j \ge 1}$ be a Poisson point process on $\mathbb{R}_{> 0}$ with intensity measure $\frac{\alpha}{2} x^{-\alpha / 2 - 1} dx$, and let $(R_j(z))_{j \ge 1}$ be mutually independent random variables with law $R_{\star} (z)$. The quantity
\begin{flalign*}
\displaystyle\sum_{j = 1}^{\infty} \xi_j R_j(z),
		\end{flalign*}
	known as the \emph{self energy} at $z$, plays a key role both in our work and in the physics literature \cite{tarquini2016level,cizeau1994theory}. If we consider the boundary value of the self energy at a point $E \in \bbR$, then the localization assumption $\Im R_\star(E) = 0$  and the Poisson thinning property (see \Cref{sumaxi} below) imply that the previous sum is equal in distribution to
\bex
\E\left[ \big( R_\star(E) \big)^{\alpha/2}_+   \right]^{2/\alpha} S_1 - 
\E\left[ \big( R_\star(E) \big)^{\alpha/2}_+   \right]^{2/\alpha} S_2,
\eex
with $S_1$ and $S_2$ as above, yielding the definition of $\varkappa_{\loc}$.
\end{rem}

We next define a quantity which plays a key role in our main results. 

\begin{definition} \label{lambdaEalpha}

	For any $x \in \mathbb{R}$ and $y \in (0, 1)$, define 
	\begin{flalign}
		\label{tlrk}
		\ell (x) = \displaystyle\frac{1}{\pi} \displaystyle\int_0^{\infty} e^{\mathrm{i} x \xi} |\xi|^{\alpha-1} \widehat{p}_E (\xi)\, d \xi; \quad K_{\alpha} = \displaystyle\frac{\alpha}{2} \cdot \Gamma \bigg( \displaystyle\frac{1-\alpha}{2} \bigg)^2 ;
	 \quad t_y = \sin \bigg( \displaystyle\frac{\pi y}{2} \bigg).
	\end{flalign}

	\noindent For any $E\in \bbR$ and $\alpha \in (0,1)$, we  let $\lambda(E, \alpha)$ denote the unique positive solution to the quadratic equation 
	\begin{flalign}\label{mobilityquadratic}
		\lambda(E, \alpha)^2 - 2 t_{\alpha} K_{\alpha} \Real \ell (E) \cdot \lambda(E, \alpha) + K_{\alpha}^2 (t_{\alpha}^2 - 1) \big| \ell (E) \big|^2 = 0,
	\end{flalign}

	\noindent which admits only one positive solution since its constant term is negative (as $t_{\alpha} < 1$ for $\alpha  < 1$).
	
	\end{definition}

We now state a proposition that provides important context for our main theorems. This proposition and the theorem that follows it are proved in \Cref{s:proofmain1}, assuming several preliminary lemmas that are proved in the remainder of the paper.
\begin{prop}\label{p:213} 
Fix $\alpha \in (0,1)$.
\begin{enumerate}
\item There exists a  constant $c(\alpha) > 0$ such that $\lambda(E, \alpha) > 1$ for $|E| < c$.
\item There exists a constant $C(\alpha) > 1$ such that  $\lambda(E, \alpha) < 1$ for $|E| >C$.
\end{enumerate}
\end{prop}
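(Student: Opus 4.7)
The plan begins by reformulating the question as one about the sign of the quadratic \eqref{mobilityquadratic} evaluated at $\lambda = 1$. Writing $Q(\lambda;E) = \lambda^2 - 2 t_\alpha K_\alpha \Re\ell(E)\,\lambda + K_\alpha^2(t_\alpha^2-1)|\ell(E)|^2$, its constant term is non-positive (since $t_\alpha < 1$ for $\alpha \in (0,1)$), so its real roots satisfy $\lambda_- \le 0 \le \lambda_+ = \lambda(E,\alpha)$; then $Q(1;E) = (1-\lambda_-)(1-\lambda_+)$ has the same sign as $1 - \lambda(E,\alpha)$, and hence $\lambda(E,\alpha) > 1 \iff Q(1;E) < 0$. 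For Part (2), it thus suffices to show $\ell(E) \to 0$ as $|E| \to \infty$, which forces $Q(1;E) \to 1 > 0$. Changing variables $\eta = |E|\xi$ in \eqref{tlrk} extracts a prefactor $|E|^{-\alpha}$:
\[
\ell(E) = \frac{|E|^{-\alpha}}{\pi}\int_0^\infty e^{\mathrm{i}\sgn(E)\eta}\,\eta^{\alpha-1}\,\hat{p}_E(\eta/|E|)\,d\eta,
\]
and the inner integral is bounded uniformly in $|E|$ by splitting at $\eta \asymp 1$, using integrability of $\eta^{\alpha-1}$ near the origin with $|\hat{p}_E| \le 1$, and for large $\eta$ the stable-law decay $|\hat p_E(\zeta)| \le \exp(-c_E|\zeta|^{\alpha/2})$ coming from the structure of $\varkappa_{\loc}(E)$, together with an integration-by-parts argument against the oscillation to absorb any polynomial loss from tracking $c_E$ in $|E|$.

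For Part (1), by continuity of $E \mapsto \ell(E)$ at $E = 0$ (via dominated convergence using continuity of $a(E), b(E)$ and uniform decay of $\hat p_E$ nearby), it suffices to prove $Q(1;0) < 0$. The symmetry $E \leftrightarrow -E$ forces $a(0) = b(0) =: a_0 > 0$, so $\varkappa_{\loc}(0) = a_0^{2/\alpha}(S_1-S_2)$ is a symmetric $\alpha/2$-stable law with $\hat p_0(\xi) = \exp(-K_\circ a_0 |\xi|^{\alpha/2})$, where $K_\circ := 2\Gamma(1-\alpha/2)\cos(\pi\alpha/4)$. The substitution $u = K_\circ a_0 \xi^{\alpha/2}$ converts $\ell(0)$ into $\Gamma(2)$ up to constants, yielding $\ell(0) = 2/(\pi\alpha K_\circ^2 a_0^2)$. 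The fixed-point system \eqref{ab0} at $E = 0$ simplifies to $a_0^2 = \E[(S_2-S_1)_+^{-\alpha/2}]$; combining the symmetry $\E[Y_+^{-\alpha/2}] = \tfrac12 \E[|Y|^{-\alpha/2}]$ with the Mellin-transform identity for stable laws evaluates this to $a_0^2 = \tan(\pi\alpha/4)/(\pi\alpha)$. Substituting produces the closed form $\ell(0) = \Gamma(1-\alpha/2)^{-2}\sin(\pi\alpha/2)^{-1}$. Since $\ell(0) > 0$ is real, the quadratic \eqref{mobilityquadratic} factors explicitly to give $\lambda(0,\alpha) = K_\alpha(1 + t_\alpha)\ell(0)$, and the required bound $\lambda(0,\alpha) > 1$ becomes the deterministic gamma-function inequality
\[
\tfrac{\alpha}{2}\,\Gamma\!\big(\tfrac{1-\alpha}{2}\big)^2 \big(1 + \sin(\tfrac{\pi\alpha}{2})\big) > \Gamma\!\big(1-\tfrac{\alpha}{2}\big)^2 \sin(\tfrac{\pi\alpha}{2}).
\]

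I expect the hardest step to be verifying this gamma inequality for every $\alpha \in (0,1)$. The limit $\alpha \to 1^-$ is easy since the left side diverges like $(1-\alpha)^{-2}$ while the right side stays bounded. The limit $\alpha \to 0^+$ is sharp: both sides are asymptotic to $\pi\alpha$, so one must carry out a second-order expansion, using $\psi(1/2) - \psi(1) = -2\log 2$, to see that the left-to-right ratio is $1 + \alpha(\tfrac{\pi}{2} + 2\log 2) + O(\alpha^2) > 1$. In the compact interior one could either establish monotonicity in $\alpha$ after invoking the Legendre duplication formula $\Gamma((1-\alpha)/2)\Gamma(1-\alpha/2) = 2^\alpha \sqrt{\pi}\,\Gamma(1-\alpha)$ to combine the two gamma factors, or fall back on a direct compactness/positivity argument. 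A secondary technical concern is ensuring that the stable-law scale $c_E$ does not collapse too quickly as $|E| \to \infty$ to spoil the Part (2) estimate; this can be read off from the behavior of the solution $(a(E),b(E))$ of \eqref{ab0} at large $|E|$, a scenario already analyzed elsewhere in the paper.
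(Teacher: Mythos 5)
Your Part (1) reaches the same endpoint as the paper's Lemma B.2, and the final gamma-function inequality
\[
\tfrac{\alpha}{2}\,\Gamma\!\big(\tfrac{1-\alpha}{2}\big)^2 \big(1 + \sin(\tfrac{\pi\alpha}{2})\big) > \Gamma\!\big(1-\tfrac{\alpha}{2}\big)^2 \sin(\tfrac{\pi\alpha}{2})
\]
is exactly what the paper arrives at. Your worry about verifying it is misplaced: it follows immediately from the three elementary facts $1+\sin(\pi\alpha/2) > 1$, $\frac{\alpha}{2\sin(\pi\alpha/2)} > \frac{1}{\pi}$, and $\Gamma(\frac{1-\alpha}{2}) \ge \sqrt{\pi}\,\Gamma(1-\frac{\alpha}{2})$ (the last from convexity of $\Gamma$). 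However, your route to $a_0$ has a subtle logical gap: you invoke the fixed-point system \eqref{ab0} at $E=0$, but that system is only established (Lemma \ref{l:boundarybasics}) under the hypothesis $\Im R(E) = 0$, which manifestly \emph{fails} at $E = 0$ (there $R_\star(\iu\eta)$ is purely imaginary). The paper avoids this by computing $a(0)$ directly from the explicit boundary value $y(0) = \sqrt{2\sin(\pi\alpha/2)/(\pi\alpha)}$, obtained via the deterministic self-consistent equation $y(z) = \phi_{\alpha,z}(y(z))$, with no localization assumption. Your arithmetic lands on the correct value, but the derivation as stated leans on a fact that has not been proved.

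Part (2) is where the real gap is, and it is structural. You want to show $\ell(E) \to 0$ purely by estimating the oscillatory integral, but the uniform boundedness of $\int_0^\infty e^{\iu\sgn(E)\eta}\eta^{\alpha-1}\hat p_E(\eta/|E|)\,d\eta$ in $|E|$ is not established and is not elementary. The scale of $\hat p_E$ is governed by $\sigma_E^{\alpha/2} = a(E) + b(E)$; without control on this quantity as $|E|\to\infty$, the integration-by-parts route runs into the non-Lipschitz behavior of $\hat p_E$ at the origin (the derivative blows up like $|\zeta|^{\alpha/2-1}$), and the resulting estimates degenerate. You propose to read off the asymptotics of $a(E), b(E)$ from the fixed-point system, but this is circular: the fixed-point system is only known to hold under localization, and localization at large $|E|$ is precisely what the proposition is feeding into. (The asymptotic analysis ``elsewhere in the paper,'' i.e.\ Lemma \ref{l:abasymptotic}, is also restricted to $\alpha$ near $1$.) The paper breaks this circularity by a genuinely different route: the fifth part of Theorem \ref{limitr0j} gives a quantitative, unconditional bound showing $\varphi(s;z) < -1$ for $|E|$ large uniformly in $\eta \in (0,1)$ (this comes directly from the resolvent-moment recursion, where a factor $|E|^{(s-\alpha)/2 - \chi}$ appears at each level), which forces localization via Proposition \ref{p:imvanish}, which in turn activates Lemma \ref{l:bootstrap} to identify $\lim_\eta\varphi(s;E+\iu\eta)$ with $\log\lambda(E,s,\alpha)$, yielding $\lambda(E,\alpha) \le e^{-1}$. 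That probabilistic scaffolding is precisely what furnishes the uniform control you are missing.
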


We now state our first main theorem. It connects $\lambda(E,\alpha)$ to the behavior of $R_\star(E + \iu \eta)$ as $\eta$ tends to zero. The conclusions of the first and second parts should be understood as signatures of delocalization and localization, respectively; this connection will be made precise below in \Cref{l:loccriteria}. The theorem essentially states that one has delocalization for energies $E$ such that $\lambda > 1$, and localization for $E$ in the connected components of $\lambda < 1$ containing $\infty$ and $-\infty$. We start with a definition of strict lower boundedness in probability. 

\begin{definition}\label{def:lbprob}
Consider a family of real random variables $(X_\eta)_{\eta \geq 0}$ and $c \in \mathbb{R}$, we write 
\emph{$
\liminf_{\eta \to 0} X_\eta > c
$ in probability}
if for all $\omega \in (0,1)$, there exists $\delta > 0$ such that 
\begin{flalign} \label{2141}
\liminf_{\eta \to 0} \mathbb{P} ( X_\eta > c + \delta ) \geq 1 - \omega.
\end{flalign}
\end{definition} 

We now state the theorem.

\begin{thm}\label{t:main1} Fix $\alpha \in(0,1)$.
\begin{enumerate}
\item For all $E \in \bbR$ such that $\lambda(E, \alpha) > 1$, $
 \liminf_{\eta \rightarrow 0} \Imaginary R_{\star} (E + \mathrm{i} \eta) > 0$ in probability.
\item Set
\be\label{eloc}
E_{\mathrm{loc}}(\alpha) = \sup \{ E \in \bbR_+ : \lambda(E,\alpha) =1 \}.
\ee
For all $E \in \bbR$ such that $|E| > E_{\mathrm{loc}}$, we have $\lambda(E, \alpha) < 1$ and $\lim_{\eta \rightarrow 0 }\Im R_\star(E + \iu \eta) = 0$ in probability.
\end{enumerate}
\end{thm}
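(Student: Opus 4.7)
The plan is to follow the three-stage strategy outlined in the introduction: deduce both assertions from (i) the fractional moment criterion for (de)localization via $\varphi(E)$, (ii) the explicit identification $\varphi(E) = \log \lambda(E,\alpha)$ under a localization hypothesis, and (iii) a bootstrap that propagates localization from large $|E|$ inward.

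For Part (1), I would argue by contradiction. Suppose the conclusion fails at some $E$ with $\lambda(E,\alpha)>1$. By \Cref{def:lbprob}, for every $\delta>0$ there is a sequence $\eta_j\downarrow 0$ along which $\P\big(\Imaginary R_\star(E+\iu\eta_j)<\delta\big)$ is bounded away from $0$; that is, $\Imaginary R_\star(E+\iu\eta_j)\to 0$ in probability along $\{\eta_j\}$. This is precisely the ``localization hypothesis'' \eqref{eta0rlimit} of \Cref{LambdaE}, which forces $R_\star(E+\iu\eta_j)$ to converge in law to the explicit real random variable $R_{\loc}(E)$ (the inverse of the stable law built from $a(E),b(E)$, as in \Cref{r:abreason}). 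Consequently, the density $p_z$ entering the transfer operator $T_{s,\alpha,z}$ in \eqref{t11} converges to the explicit density $p_E$ of $\varkappa_{\loc}(E)$, so by the Fourier computation recalled in \Cref{TEigenvalue} (taken from \cite{tarquini2016level}) the Perron--Frobenius eigenvalue of $T_{s,\alpha,z}$ converges to $\lambda(s,E,\alpha)$. Sending $s\to 1$ gives $\varphi(E)=\log\lambda(E,\alpha)>0$. Invoking the fractional moment delocalization criterion (\Cref{rimaginary0}) then yields $\liminf_{\eta\to 0}\Imaginary R_\star(E+\iu\eta)>0$ in probability, contradicting the assumption.

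For Part (2), I first note that \Cref{p:213} gives $\lambda(E,\alpha)<1$ for $|E|$ large, so the set in \eqref{eloc} is bounded and nonempty (by the intermediate value theorem together with \Cref{p:213}(1)); hence $\lambda(E,\alpha)<1$ on $(E_{\loc},\infty)$ and, by symmetry, on $(-\infty,-E_{\loc})$. For the localization claim, I bootstrap from large $|E|$. Fix $E_0$ large enough that the direct fractional-moment estimates of \Cref{MomentFractional} (based on iterating the Schur expansion \eqref{r00zt1}, whose denominator is dominated by $z$ when $|z|$ is large) yield $\varphi(E_0)<0$; the Ward-identity argument encoded in \Cref{p:imvanish} then produces $\lim_{\eta\to 0}\Imaginary R_\star(E_0+\iu\eta)=0$ in probability. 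With localization at $E_0$ in hand, Part (1)'s analysis gives $\varphi(E_0)=\log\lambda(E_0,\alpha)$. I then slide $E_0$ inward: the continuity of $\varphi(s;\cdot)$ on $\{\varphi(s;\cdot)<-\delta\}$ established in \Cref{s:lyapunovcontinuity}, combined with $\lambda(\cdot,\alpha)<1$ uniformly on compact subsets of $(E_{\loc},E_0]$, guarantees the existence of $E_1\in(E,E_0)$ with $\varphi(E_1)<0$. Applying \Cref{p:imvanish} at $E_1$ and then Part (1)'s identification gives $\varphi(E_1)=\log\lambda(E_1,\alpha)$. A finite chain of such moves reaches any prescribed $E$ with $|E|>E_{\loc}$, yielding the stated localization.

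The main obstacle is controlling the interchange of limits $L\to\infty$, $\eta\to 0$, $s\to 1$ connecting the inexplicit Lyapunov exponent $\varphi(E)$ to the explicit $\lambda(E,\alpha)$. The transfer operator $T_{s,\alpha,z}$ in \eqref{t11} is defined through the density of the complex random variable $R_\star(z)$, whose real and imaginary parts are correlated in an unknown way; the limit operator $T_{s,\alpha,E}$ from \cite{tarquini2016level} is only meaningful under the localization boundary condition, so one must justify convergence of Perron--Frobenius eigenvalues as $\eta\to 0$ along a possibly irregular subsequence. A parallel difficulty powers the bootstrap: uniform continuity of $\varphi(s;E)$ on $\{\varphi(s;E)<-\delta\}$ requires tight control of $\Phi_L(s;z)$ under perturbations of $E$ without losing the exponential decay, handled in \Cref{s:lyapunovcontinuity} via restricted moment estimates. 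Once these technical continuity statements and the Fourier computation of $\lambda(s,E,\alpha)$ are in hand, the logical shape of both parts is straightforward.
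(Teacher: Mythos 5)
Your proposal follows the same route as the paper's proof: for Part (1), argue by contradiction, extract a ``localization sequence'' $\eta_j\downarrow0$, identify $\lim_j\varphi(s;E+\iu\eta_j)=\log\lambda(E,s,\alpha)$ via the transfer-operator analysis (this is exactly \Cref{l:bootstrap}), send $s\to1$, and invoke \Cref{rimaginary0} and \Cref{l:upgrade}; for Part (2), start from large $|E|$ using the fifth part of \Cref{limitr0j} with \Cref{p:imvanish}, identify $\varphi$ with $\log\lambda$, and march inward in small steps via \Cref{l:phicontinuity}. Two details deserve tightening, both of which the paper handles with a little more care. First, in Part (1) the negation of the conclusion under \Cref{def:lbprob} gives, for each $\delta$, a ($\delta$-dependent) sequence along which $\P(\Im R_\star<\delta)$ stays bounded \emph{away from} zero; that alone is not convergence to zero in probability. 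To extract a single sequence with $\Im R_\star(E+\iu\eta_j)\to0$ in probability one should pass through the contrapositive of \Cref{l:upgrade}: if no $c>0$ satisfies $\liminf_\eta\P(\Im R_\star>c)>c$, then $\liminf_\eta\P(\Im R_\star>c)\le c$ for every $c$, which \emph{does} yield such a sequence via a diagonal choice; this is the precise structure of the paper's argument. Second, in Part (2) the induction via \Cref{l:phicontinuity} must be run at a fixed $s\in(\alpha,1)$ and propagates localization down to $|E|>E_{\loc,s}=\inf\{E\ge0:\lambda(x,s,\alpha)<1\text{ for all }x\ge E\}$, not directly to $|E|>E_{\loc}$; one then uses $\lambda(E,\alpha)=\lim_{s\to1}\lambda(E,s,\alpha)$ and the continuity of \Cref{l:lambdalemma} to show $E_{\loc,s}\to E_{\loc}$, covering any $|E|>E_{\loc}$ by choosing $s$ near $1$. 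Your phrasing ``$\lambda(\cdot,\alpha)<1$ uniformly on compacts'' glosses over this $s$-dependence, which is the reason the paper cannot work with $\varphi(1;\cdot)$ directly during the bootstrap.
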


We remark that \Cref{t:main1} makes no claim about 
localization or delocalization in any connected components of $\lambda < 1$ avoiding $\infty$ and $-\infty$. Conjecturally, such regions do not exist, and the following two theorems asserts thi for $\alpha$ sufficiently close to $1$ or to $0$, respectively. For such $\alpha$, it implies that delocalization holds for all $E\in \bbR$ such that $|E| < E_\mathrm{loc}$, and establishes the existence of exactly two sharp phase transitions between regions of delocalization to localization at $\pm E_\mathrm{loc}$. Further, it pinpoints  the scalings of these transitions (as $(1-\alpha)^{-1}$ for $\alpha$ near $1$, and as $|\log \alpha|^{-2/\alpha}$ for $\alpha$ near $0$). The first part of the theorem below is proved in \Cref{s:proveuniqueness}, and the second is proved in \Cref{Alpha0Unique}.

\begin{thm}\label{t:main2}
There exists a constant $c>0$ such that the following statements hold.
\begin{enumerate}
\item For every $\alpha \in (1-c, 1)$, the set \be\label{zalpha}
\mathcal Z_\alpha = \{E\in \bbR : E \ge 0, \, \lambda(E, \alpha) = 1\}
\ee consists of a single point $E_{\mob} = E_\mob(\alpha)$. It satisfies
\bex
c(1-\alpha)^{-1} < E_\mob < c^{-1}(1-\alpha)^{-1}.
\eex
\item For every $\alpha \in (0, c)$, the set $\mathcal{Z}_{\alpha}$  consists of a single point $E_{\mob} = E_{\mob} (\alpha)$. It satisfies 
	\begin{flalign}
		\label{ealpha0} 
		\bigg( \displaystyle\frac{1}{|\log \alpha|} - \displaystyle\frac{C \log |\log \alpha|}{|\log \alpha|^2} \bigg)^{2/\alpha} \le E_{\mob} \le \bigg( \displaystyle\frac{1}{|\log \alpha|} + \displaystyle\frac{C \log |\log \alpha|}{|\log \alpha|^2} \bigg)^{2/\alpha}.
	\end{flalign}
\end{enumerate}
\end{thm}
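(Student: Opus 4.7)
The plan is to carry out the two-step scheme sketched in Section~\ref{s:ideas}: in each of the regimes $\alpha$ close to $1$ and $\alpha$ close to $0$, first locate the scale of any $E > 0$ satisfying $\lambda(E,\alpha) = 1$, and then prove uniqueness by showing that $\lambda(\cdot,\alpha)$ is strictly monotone on the relevant interval. The starting point in both cases is the quadratic \eqref{mobilityquadratic} together with the probabilistic representation \eqref{ralphae} of $\Re\ell(E)$ in terms of the fixed-point system \eqref{ab0} for $(a(E),b(E))$.

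\medskip

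\emph{Regime $\alpha \to 1$.} Here $K_\alpha = \tfrac{\alpha}{2}\Gamma\bigl(\tfrac{1-\alpha}{2}\bigr)^2$ blows up like $(1-\alpha)^{-2}$, while $1 - t_\alpha^2 = \cos^2(\pi\alpha/2) \asymp (1-\alpha)^2$ and $\cos(\pi\alpha/2) \asymp (1-\alpha)$. Setting $\lambda = 1$ in \eqref{mobilityquadratic} therefore forces the cross term to balance the constant term, giving $\Re\ell(E) \asymp (1-\alpha)^2$ and hence, by \eqref{ralphae}, $\mathbb{E}\bigl[|R_\loc(E)|^\alpha\bigr] \asymp 1-\alpha$. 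Since $R_\loc(E) = -(E + a(E)^{2/\alpha} S - b(E)^{2/\alpha} T)^{-1}$, smallness of this expectation forces $E$ large, which in turn collapses \eqref{ab0}: replacing the denominator by $E$ to leading order yields $a(E), b(E) \approx E^{-\alpha/2}$, and one iteration sharpens this to pin down $E \asymp (1-\alpha)^{-1}$. Uniqueness (Section~\ref{s:alphanear1uniqueness}) would follow by implicitly differentiating \eqref{ab0} in $E$, extracting the signs and sizes of $a'(E), b'(E)$, computing the three partial derivatives of $(E,x,y) \mapsto \mathbb{E}\bigl[(E + xS - yT)_\pm^{\alpha/2}\bigr]$ coming from the chain rule, and combining these through \eqref{mobilityquadratic} to verify $\partial_E \lambda(E,\alpha) < 0$ throughout the window $E \asymp (1-\alpha)^{-1}$.

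\medskip

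\emph{Regime $\alpha \to 0$.} Here $K_\alpha$, $t_\alpha$, and $\Gamma(\alpha)$ all behave linearly (vanishing or diverging) in $\alpha$, and elementary bookkeeping in \eqref{mobilityquadratic} and \eqref{ralphae} shows that $\lambda(E,\alpha) = 1$ is equivalent, to leading order, to $\mathbb{E}\bigl[|R_\loc(E)|^\alpha\bigr] = 2 - c_\star\alpha + o(\alpha)$ for the explicit constant $c_\star > 0$. I would then substitute $E = u^{2/\alpha}$ so that \eqref{ab0} becomes \eqref{ab02}, and invoke the quantitative small-$\alpha$ comparison from \cite{SDCS}: the law of $\tfrac{\alpha}{2}\log S$ is a controlled perturbation of a Gumbel distribution. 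Inside the resulting Gumbel-type expectations, the pointwise approximation $(w^{2/\alpha} + x^{2/\alpha} - y^{2/\alpha})^{\alpha/2} \approx \pm \max\{w,x,y\}$ makes the right-hand sides of \eqref{ab02} essentially explicit in $(a,b,u)$. Solving the reduced system yields asymptotics for $a(E), b(E)$, and then for $\mathbb{E}\bigl[|R_\loc|^\alpha\bigr]$, as functions of $u$ with controlled errors; inverting the relation $\mathbb{E}\bigl[|R_\loc|^\alpha\bigr] = 2 - c_\star\alpha$ in $u$ would give $u = |\log\alpha|^{-1}\bigl(1 + O\bigl(|\log\alpha|^{-1}\log|\log\alpha|\bigr)\bigr)$, which is precisely \eqref{ealpha0}. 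Uniqueness (Section~\ref{E0Unique}) would follow by showing the map $u \mapsto \lambda(u^{2/\alpha},\alpha)$ is strictly decreasing on $u \asymp |\log\alpha|^{-1}$, by tracking the corresponding derivatives through the same Gumbel approximations.

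\medskip

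\emph{Main obstacle.} The scale identifications reduce fairly cleanly to the asymptotic analysis of \eqref{ab0} in each regime. The genuinely hard step is uniqueness: one must differentiate implicit functions $a(E), b(E)$ defined by a coupled nonlinear system whose ingredients (stable-law expectations, or Gumbel approximations to them) are themselves only known approximately, and then combine several such derivatives inside \eqref{mobilityquadratic} to pin down a single strict sign. Near $\alpha = 1$ the near-cancellation in $1 - t_\alpha^2$ and in $\cos(\pi\alpha/2)$ forces one to compute $\Re\ell$ and $\Im\ell$ to different orders in $(1-\alpha)$ before comparing; near $\alpha = 0$, the Gumbel approximation itself introduces subleading errors that must be shown strictly smaller than the leading monotone behavior of $\partial_u \lambda(u^{2/\alpha},\alpha)$. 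These delicate sign computations, rather than the scale identifications, are where I expect most of the work to lie.
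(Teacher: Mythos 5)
Your proposal follows essentially the same two-step route the paper takes in both regimes — pin down the scale of any solution of $\lambda(E,\alpha)=1$ via the fixed-point system \eqref{ab0}, then establish strict monotonicity of $\lambda(\cdot,\alpha)$ on the resulting window by implicit differentiation — and the Gumbel-approximation plan for $\alpha\to 0$, including the substitution $E=u^{2/\alpha}$ and the reduction $\mathbb{E}[|R_{\loc}|^\alpha]\approx 2-c_\star\alpha$, matches the paper's treatment in Chapter~\ref{Scaling}. One concrete misstep in the $\alpha\to 1$ heuristic is worth flagging: if you replace the argument $E + a^{2/\alpha}S - b^{2/\alpha}T$ of \eqref{ab0} by $E>0$ to leading order, you do get $b(E)\approx E^{-\alpha/2}$, but the negative part vanishes entirely, so $a(E)$ is \emph{not} of the same order; the paper's Lemma~\ref{l:abasymptotic} shows $a(E)=O(E^{-3\alpha/2})\ll b(E)$. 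This asymmetry is exactly what produces $\Re\ell(E)\sim(1-\alpha)^2$ while $\Im\ell(E)\sim(1-\alpha)$ (the footnote in Section~\ref{s:ideas}), and it must be carried through every derivative bound in the uniqueness step: keeping $a\approx b$ would make $F_\alpha\approx G_\alpha$ and hence make the $\Im\ell$-contribution to $|\ell|^2$ look negligible, which would derail the sign computation for $\partial_E\lambda$, even though the crude scaling $E\asymp(1-\alpha)^{-1}$ happens to come out the same either way. A similar caution applies near $\alpha=0$: an error of $o(\alpha)$ in $\mathbb{E}[|R_{\loc}|^\alpha]$ is not quite enough to recover the $\log|\log\alpha|/|\log\alpha|^2$ correction in \eqref{ealpha0}; the paper needs $O(\alpha^2|\log\alpha|^2)$, which is what the quantitative Gumbel comparison of Lemma~\ref{w0walpha} delivers.
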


\subsection{Results for L\'evy Matrices}

	For any $N \times N$ symmetric matrix $\boldsymbol{M}$, let $\Lambda = \Lambda (\boldsymbol{M}) = (\lambda_1, \lambda_2, \ldots , \lambda)$ denote the set of eigenvalues of $\boldsymbol{M}$, and let $\boldsymbol{u}_i = \big( u_i (1), u_i (2), \ldots , u_i (N) \big) \in \mathbb{R}^N$ denote the eigenvector of $\boldsymbol{M}$ with eigenvalue $\lambda_i$ and $\ell^2$ norm equal to one. For any interval $I \subset \mathbb{R}$, let $\Lambda_I = \Lambda_I (\boldsymbol{M}) = \Lambda \cap I$. Following \cite{bordenave2013localization}, we introduce the below measure of eigenvector localization.
	
	\begin{definition}[{\cite[Section 1]{bordenave2013localization}}]
	
	\label{jwi}	
	
	For any interval $I \subset \mathbb{R}$ and index $j \in \unn{1}{N}$, we define $P_I (j) = P_I (j; \boldsymbol{M})$ and $Q_I = Q_I (\boldsymbol{M})$ by
	\begin{flalign*}
		P_I (j) = \displaystyle\frac{N}{|\Lambda_I|} \displaystyle\sum_{\lambda_i \in \Lambda_I} \big| u_i (j) \big|^2; \qquad Q_I = \displaystyle\frac{1}{N} \displaystyle\sum_{j = 1}^N P_I (j)^2.
	\end{flalign*}

	\end{definition} 

	\begin{rem} 
		
		\label{westimate}
	 
		Let us describe the asymptotic behavior of $Q_I$ in two regimes. In the ``maximally delocalized regime,'' we have $u_i (j) = N^{-1/2}$ for each $i, j \in \unn{1}{ N}$, in which case $P_I (j) = 1$ for all $j$, and $Q_I = 1$. In the ``maximally localized regime,'' we have $u_i (j) = \one_{i = j}$. In this case, $P_I (j) = 0$ if $\lambda_j \notin I$ and $P_I (j) = N |\Lambda_I|^{-1}$ otherwise,  so $Q_I = N |\Lambda_I|^{-1} $. In the latter case, for a fixed interval $I\subset \bbR$, we have $|\Lambda_I| < C  |I| N$ for some $C > 0$ (independent of $I$) by \cite[Theorem 1.1]{arous2008spectrum}.\footnote{We let $|I|$ denote the length of $I$.} Then $Q_{I} > C^{-1} | I|^{-1}$. Thus we may distinguish delocalization from localization, for eigenvectors corresponding to eigenvalues in $I$, by examining whether $Q_I$ remains bounded or grows as $|I|$ tends to zero, respectively. 	 
	\end{rem} 

We now introduce the notions of eigenvector localization and delocalization that we adopt in this paper.

\begin{definition}\label{d:loc}
Fix $E \in \bbR$ and a L\'evy  matrix $\bfH = \bfH_N$. We say that \emph{delocalization holds at $E$} for $\bfH$ if there exist $C(E)>1$ and $c(E) >0$ such that for all  $\eps \in (0, c)$, 
		\begin{flalign*}
			\displaystyle\lim_{N \rightarrow \infty} \mathbb{P} \big[ Q_{I(\eps)} \left( \boldsymbol{H} \right) \le C \big] = 1
		\end{flalign*}
for the interval $I(\eps) = [E- \eps, E + \eps]$. We say that \emph{localization holds at $E$} for $\bfH$ if for every $D > 0$, there exists $c(D)>0$ such that for all 
$\eps \in (0, c)$,
		\begin{flalign*}
			\displaystyle\lim_{N \rightarrow \infty} \mathbb{P} \big[ Q_{I(\eps)} \left( \boldsymbol{H} \right) \ge D \big] = 1.
		\end{flalign*}
\end{definition}


The next lemma asserts that the behavior of  $\Imaginary R_{\star} (E + \iu \eta)$ as $\eta$ tends to $0$ controls the behavior of the eigenvectors $\boldsymbol{H}$ with eigenvalues near $E$. Indeed, $\Imaginary R_{\star} (z)$ tending to $0$ in probability will imply localization for these eigenvectors, in the sense of \Cref{d:loc}. Similarly,  $\Imaginary R_{\star} (z)$ being positive with positive probability as $\eta$ tends to zero implies delocalization for these eigenvectors. We now state these two claims as a lemma; it is proved in 
\Cref{a:conclusion} below.

\begin{lem}\label{l:loccriteria}
Fix $\alpha \in (0,1)$ and $E \in \bbR$, and let $\bfH$ be a L\'evy matrix with parameter $\alpha$. 
\begin{enumerate}
\item If $\lim_{\eta \rightarrow 0 }\Im R_\star(E + \iu \eta) = 0$ in probability, then localization holds at $E$ for $\bfH$.
\item If there exists a real number $c > 0$ such that $\liminf_{\eta \rightarrow 0} \mathbb{P} \big[ \Imaginary R_{\star} (E + \mathrm{i} \eta) > c \big] > c$, then delocalization holds at $E$ for $\bfH$.
\end{enumerate}
\end{lem}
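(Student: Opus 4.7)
The plan is to connect both halves of the lemma to the convergence $G_{jj}(z) \to R_\star(z)$ in law from \Cref{gr} via a pointwise control of $P_{I(\eps)}(j)$ by $\Im G_{jj}(E + \iu\eps)$. The elementary inequality $\frac{\eps}{(\lambda_i - E)^2 + \eps^2} \ge \frac{1}{2\eps}$ for $\lambda_i \in I(\eps) = [E - \eps, E + \eps]$ yields
\begin{flalign*}
\sum_{\lambda_i \in I(\eps)} |u_i(j)|^2 \le 2\eps \cdot \Im G_{jj}(E + \iu \eps),
\end{flalign*}
and combining this with the density of states lower bound $|\Lambda_{I(\eps)}| \ge c \eps N$ (with high probability, valid by \cite{bordenave2011spectrum,belinschi2009spectral} since $\mu_\alpha$ has a strictly positive continuous density on $\bbR$) produces the \emph{master estimate} $P_{I(\eps)}(j) \le C \cdot \Im G_{jj}(E + \iu \eps)$ with high probability, for a constant $C$ depending only on $E$ and $\alpha$. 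The symmetry of $\bfH$ makes the family $\{G_{jj}\}$ exchangeable, and \Cref{gr} identifies the common marginal law in the large-$N$ limit with that of $\Im R_\star(E + \iu \eps)$.

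For part (1), I fix $D > 0$ and $\delta \in (0, 1/2)$, and set $A = \{j : P_{I(\eps)}(j) \le \delta\}$. The identity $\frac{1}{N} \sum_j P_{I(\eps)}(j) = 1$ together with $P_{I(\eps)}(j) \le \delta$ on $A$ implies $\frac{1}{N} \sum_{j \in A^c} P_{I(\eps)}(j) \ge 1 - \delta$, and a Cauchy--Schwarz estimate on this restricted sum yields $Q_{I(\eps)} \ge (1-\delta)^2 N / |A^c|$. The master estimate places $A^c$ inside $\{j : \Im G_{jj}(E + \iu\eps) > \delta/C\}$, so by exchangeability and \Cref{gr},
\begin{flalign*}
\E \big[ |A^c|/N \big] \le \P\big[\Im G_{11}(E + \iu\eps) > \delta/C\big] \xrightarrow{N \to \infty} \P\big[\Im R_\star(E + \iu\eps) > \delta/C\big],
\end{flalign*}
and the right-hand side tends to zero with $\eps$ by the hypothesis $\Im R_\star(E + \iu \eta) \to 0$ in probability. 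A Markov bound then shows $|A^c|/N$ is small with high probability, forcing $Q_{I(\eps)} \ge D$ with probability tending to one as $N \to \infty$, provided $\eps$ is chosen sufficiently small depending on $D$.

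For part (2), squaring the master estimate and averaging over $j$ produces $Q_{I(\eps)} \le C^2 \cdot \frac{1}{N}\sum_j \big(\Im G_{jj}(E + \iu\eps)\big)^2$, which exchangeability and \Cref{gr} should compare with $C^2 \E\big[(\Im R_\star(E + \iu\eps))^2\big]$. The main obstacle is that, since $R_\star$ inherits stable-type heavy tails from the operator $\boldsymbol{T}$ (a short tail computation suggests $\P[\Im R_\star > t] \sim C\sqrt{\eps/t}$ up to the cutoff $1/\eps$, so that $\E[(\Im R_\star)^2]$ diverges like $1/\eps$ as $\eps \to 0$), the naive second moment bound fails and the Ward identity $|R_\star|^2 \le \Im R_\star/\eps$ is too weak to save it. I therefore plan to refine the pointwise bound to $S_j \le \min(1, 2\eps \Im G_{jj}(E+\iu\eps))$ for $S_j = \sum_{\lambda_i \in I(\eps)} |u_i(j)|^2$, which truncates automatically at the scale $\Im G_{jj} \sim 1/\eps$ where the individual bound saturates. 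Combining this with the fractional moment control $\E[|R_\star|^s] < \infty$ for $s < 1$ from \Cref{MomentFractional} and the structural constraints that the hypothesis $\liminf_\eta \P[\Im R_\star > c] > c$ imposes on the law of $R_\star$ through its distributional fixed-point equation, I expect to obtain $\frac{1}{N}\sum_j S_j^2 = O(\eps^2)$ with high probability, matching the scale of $|\Lambda_{I(\eps)}|^2/N^2 \asymp \eps^2$ and yielding $Q_{I(\eps)} \le C$ uniformly in small $\eps$. This balance is where the real work of the argument lies.
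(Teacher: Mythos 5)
Your argument for part (1) is correct and takes a genuinely different route from the paper. The paper (in Lemma \ref{huw2} and Proposition \ref{l:localizationmatrix}) first shows $Q_{I(\eta)}(s/2)$ is small with high probability via concentration of $\frac{1}{N}\sum_j(\Im G_{jj})^{s/2}$ and the uniform integrability of $(\Im R_\star)^{s/2}$ from \Cref{expectationqchi}, and then applies the H\"older duality
\begin{flalign*}
1 \le Q_I(s/2)\cdot Q_I(2)^{1-s/2}
\end{flalign*}
to force $Q_I(2)$ large. Your route instead uses the normalization $\frac{1}{N}\sum_j P_I(j)=1$, a pigeonhole/Cauchy--Schwarz inequality $Q_I\ge(1-\delta)^2 N/|A^c|$ with $A = \{j: P_I(j)\le\delta\}$, and a first-moment bound on $|A^c|/N$ via exchangeability and the law convergence of \Cref{gr}. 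This is a cleaner and more elementary argument that avoids fractional moments entirely for the localization direction; the two minor details to tidy up are (i) conditioning on the high-probability event $|\Lambda_{I(\eps)}|\ge c\eps N$ before applying the master estimate, and (ii) ensuring $\delta/C$ is not an atom of $\Im R_\star(E+\iu\eps)$ when invoking portmanteau, both of which are routine.

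Part (2) has a genuine gap, stemming from a misdiagnosis of the tail behavior of $\Im R_\star$. Your heuristic $\mathbb{P}[\Im R_\star > t]\sim C\sqrt{\eps/t}$ with $\E[(\Im R_\star)^2]\asymp 1/\eps$ is exactly the tail computation that holds in the \emph{localized} regime, where $\vartheta_0(z)$ concentrates near zero and $\Im R_\star\approx \eta/((E+\varkappa_0)^2+\eta^2)$ behaves like a regularized delta; but it is not the situation of part (2). The hypothesis $\liminf_\eta\mathbb{P}[\Im R_\star>c]>c$ forces $\sigma(\vartheta_0)=\E[(\Im R_\star)^{\alpha/2}]^{2/\alpha}$ to be bounded below uniformly in small $\eta$, and since by \eqref{kappatheta1} one has the deterministic bound $\Im R_\star = (\eta+\vartheta_0)/|z+\varkappa_0+\iu\vartheta_0|^2 \le \vartheta_0^{-1}$, and $\vartheta_0$ is a nonnegative $\alpha/2$-stable law (\Cref{rrealimaginary}) whose density decays faster than any polynomial at the origin, one obtains $\E[(\Im R_\star)^2]\le\E[\vartheta_0^{-2}]\le C$ uniformly in small $\eta$. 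So the naive second moment bound you discard actually closes the argument, and this is precisely what the paper does in \Cref{l:delocalizationmatrix}. Your proposed circumvention via the truncation $S_j\le\min(1,2\eps\,\Im G_{jj})$ and an appeal to the fractional-moment machinery of \Cref{MomentFractional} is both unnecessary and, as written, not a proof: showing $\frac1N\sum_j S_j^2 = O(\eps^2)$ with high probability is essentially equivalent to the conclusion $Q_{I(\eps)}\le C$, and your proposal ends with an expectation of what should happen rather than an argument. You should instead compute $\E[(\Im R_\star)^2]$ directly from the self-consistent structure and observe that the delocalization hypothesis removes the heavy tail.
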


The following corollary asserts that the result of the previous section, which concerned the operator $\R$ on $\mathbb{T}$, also hold for L\'evy matrices. It is a straightforward consequence of \Cref{t:main1}, \Cref{t:main2}, and \Cref{l:loccriteria}. We prove it in \Cref{s:proofmain1}.

\begin{cor}\label{c:main1}
Fix $\alpha \in(0,1)$, and let $\bfH$ be a L\'evy matrix with parameter $\alpha$. 
\begin{enumerate}
\item For all $E \in \bbR$ such that $\lambda(E, \alpha) > 1$, delocalization holds at $E$ for $\bfH$.
\item Recall the quantity $E_\loc$ defined in \eqref{eloc}.
For all $E\in \bbR$ such that $|E| > E_{\mathrm{loc}}$, 
localization holds at $E$ for $\bfH$.
\item There exists $c>0$ such that if $\alpha\in(0,c)$ or $\alpha \in (1-c, 1)$, delocalization holds at $E$ when $|E| < E_\loc$, and localization holds at $E$ when $|E| > E_\loc$.
\end{enumerate}
\end{cor}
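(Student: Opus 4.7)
The plan is to derive the corollary as a direct combination of \Cref{t:main1}, \Cref{t:main2}, and \Cref{l:loccriteria}; no fresh analysis of $\bm{H}$ itself is required. \Cref{l:loccriteria} serves as the transfer mechanism, converting conclusions about $\boldsymbol{R} = (\boldsymbol{T} - z)^{-1}$ into the $Q_I$-based statements of \Cref{d:loc}.

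For part (1), I would fix $E$ with $\lambda(E,\alpha)>1$, invoke \Cref{t:main1}(1) to obtain $\liminf_{\eta\to 0}\Im R_{\star}(E+\iu\eta)>0$ in probability, and then unpack \Cref{def:lbprob}: taking $\omega = 1/2$ produces some $\delta>0$ with $\liminf_{\eta\to 0}\mathbb{P}\big[\Im R_{\star}(E+\iu\eta)>\delta\big]\geq 1/2$. Setting $c = \min(\delta,1/2)$ satisfies the hypothesis of \Cref{l:loccriteria}(2), which outputs delocalization at $E$ for $\bm{H}$. Part (2) is even more direct: \Cref{t:main1}(2) gives $\Im R_{\star}(E+\iu\eta)\to 0$ in probability for $|E|>E_\loc$, and \Cref{l:loccriteria}(1) immediately promotes this to localization at $E$.

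For part (3), I would use \Cref{t:main2} to fix $c>0$ such that, whenever $\alpha\in(0,c)\cup(1-c,1)$, the set $\mathcal{Z}_\alpha$ from \eqref{zalpha} is the singleton $\{E_{\mob}\}$; in particular $E_\loc = E_{\mob}$. For $|E|>E_\loc$, part (2) already yields localization. For $|E|<E_\loc$, I would first observe that $\lambda(\cdot,\alpha)$ is even in $E$: the symmetry $\bm{H} \stackrel{d}{=} -\bm{H}$ of L\'evy matrices combined with $\boldsymbol{G}(\bar z) = \overline{\boldsymbol{G}(z)}$ gives $R_{\star}(E+\iu\eta)\stackrel{d}{=} -\overline{R_{\star}(-E+\iu\eta)}$, so $\widehat p_E$, $\ell(E)$, and hence $\lambda(E,\alpha)$ depend on $E$ only through $|E|$. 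Since $\lambda(0,\alpha)>1$ by \Cref{p:213}(1) and $\mathcal{Z}_\alpha = \{E_{\mob}\}$ precludes any further crossing of $1$ on $[0,E_{\mob})$, continuity of $\lambda(\cdot,\alpha)$ forces $\lambda(E,\alpha)>1$ on the open interval $(-E_{\mob},E_{\mob})$. Part (1) then delivers delocalization at such $E$.

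The place that requires some care, though it is not really a difficulty, is the continuity (and evenness) of $E\mapsto\lambda(E,\alpha)$ used in part (3); continuity should follow from continuity of $E\mapsto y(E)$ on $\overline{\bbH}$ (stated as \Cref{l:boundaryvalues} below) and inheritance of continuity by $\widehat p_E$ and $\ell$. All the genuine work sits in \Cref{t:main1} and \Cref{t:main2}; this corollary is essentially a packaging step.
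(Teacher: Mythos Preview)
Your proposal is correct and follows essentially the same route as the paper: parts (1) and (2) combine \Cref{t:main1} with the two halves of \Cref{l:loccriteria}, and part (3) uses \Cref{t:main2} together with \Cref{p:213} and continuity of $\lambda(\cdot,\alpha)$ to force $\lambda>1$ on $(-E_{\mob},E_{\mob})$. Two minor remarks: the continuity you need is already packaged as \Cref{l:lambdalemma}(3), so you need not rederive it from $y(E)$; and in part (1) your choice $c=\min(\delta,1/2)$ can hit the boundary case $c=1/2$ with only $\liminf\ge 1/2$, so take e.g.\ $c=\min(\delta,1/3)$ to guarantee the strict inequality $\liminf>c$ required by \Cref{l:loccriteria}(2).
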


The focus of this paper is on the exact computation of the mobility edge for Lévy matrices. We might expect that a quantitative version of Corollary \ref{c:main1} (at least in the delocalized regime) might be obtained by combining the results of the present paper with previous works, notably \cite{bordenave2013localization, bordenave2017delocalization, aggarwal2021goe} but will not pursue this here.

%
%
%
%
%
 
  \subsection{Relation to Previous Work} \label{s:relationprevious}
 
 We now discuss the relation between our results and previous work on L\'evy matrices in the physics and mathematics literature. We first review the physics predictions from \cite{tarquini2016level, cizeau1994theory}. 
 
 These works posited that the set $\mathcal Z_\alpha$ (from \Cref{t:main2}) consists of a single point for each $\alpha \in (0, 1)$, that is, $\mathcal Z_\alpha =  \{E_{\mathrm{mob}}\}$ for some $E_{\mathrm{mob}} > 0$. Further, \cite{cizeau1994theory,tarquini2016level} predicted that eigenvector delocalization holds for $|E| < E_{\mathrm{mob}}$ and eigenvector localization holds for $|E| > E_{\mathrm{mob}}$. Regarding eigenvalues, they predicted that the local eigenvalue statistics are distributed as those of the GOE for energies $|E| < E_{\mathrm{mob}}$, and that these statistics are Poisson for $|E| > E_{\mathrm{mob}}$. The authors of \cite{tarquini2016level} also predicted that $E_\mob$ scales as $(1- \alpha)^{-1}$ for $\alpha$ near $1$. They gave no analytical prediction on how $E_{\mob}$ scales for $\alpha$ near $0$, but the plot \cite[Figure 1]{tarquini2016level} indicates that $E_{\mob}$ is very small  for $\alpha < \frac{1}{4}$ (which is in some sense confirmed by the super exponential decay in $1/\alpha$ of $E_\mob$ in Theorem \ref{t:main2}).

The predictions of \cite{tarquini2016level, cizeau1994theory} were subsequently investigated in the mathematically rigorous works \cite{bordenave2013localization, bordenave2017delocalization, aggarwal2021goe}. 
For all $\alpha \in (0, 2/3)$, \cite{bordenave2013localization} showed that there exists a constant $C(\alpha) > 1$ such that if $\bfH$ is a $N\times N$ L\'evy matrix with $\alpha$-stable entries and resolvent $\G$, then the fractional moment bound 
\bex
\E \left[ \big( \Im G_{11}(E + \iu \eta) \big)^{\alpha/2}  \right] = O(\eta^{\alpha/2 - \delta})
\eex
holds for $|E| > C(\alpha)$ and $\eta \gg N^{-(2+\alpha)/(4\alpha + 12)}$. Using this bound, \cite{bordenave2017delocalization} showed  the following localization result for the same range of $\alpha$. For every $\eps \in (0, \alpha/2)$, there exists $C(\alpha, \eps)>1 $ such that for any compact interval $K \subset  [-C,C]^c$, there exists a constant $ c_1(K) > 0$ such that the following bound holds. For any interval $I\subset K$ such that $|I| \ge N^{-\frac{\alpha}{2 + 3 \alpha}} (\log n)^2$, we likely have $Q_I \ge c_1 | I |^{ - \frac{2\eps}{2 - \alpha}}$. Further, for almost all $\alpha \in(0,1)$, \cite{bordenave2017delocalization} proved that if $\bfH$ is a L\'evy matrix with $\alpha$-stable entries, there exist constants $C(\alpha)>1$ and $c(\alpha) > 0$ such that for every interval $I \subset [-c, c]$ satisfying $|I| \ge n^{-\alpha/(4+\alpha)} (\log N)^{C}$, we likely have $Q_I \le C$. Finally, \cite{aggarwal2021goe} proved that eigenvectors of $\bm{H}$ with eigenvalues $\lambda_k \in [ -c,c]$ are completely delocalized (for a slightly more general class of matrices), and also confirmed the prediction of GOE eigenvalue statistics around energies $E \in [-c,c]$.

Our results pinpoint the exact mobility edge serving as the sharp transition between localization and delocalization, at the expense of not pursuing the effective or optimal (de)localization bounds showcased in the previous works. Indeed, \Cref{c:main1} verifies the (de)localization (in terms of the inverse participation ratio $Q_I$) aspects of the phase diagram predicted in \cite{tarquini2016level,cizeau1994theory}, conditional on the purely deterministic statement that $\mathcal{Z}_{\alpha}$ consists of a unique point (equivalently, that \eqref{mobilityquadratic} admits a unique solution in $E$ for fixed $\alpha\in(0,1)$). The third part of \Cref{c:main1} shows this uniqueness if $\alpha \in (0,c) \cup (1-c, 1)$ for some sufficiently small constant $c > 0$. Further, our \Cref{t:main2} shows that the unique point $E_{\loc} (\alpha) \in \mathcal{Z}_{\alpha}$ scales as $(1-\alpha)^{-1}$ for $\alpha$ close to $1$, as predicted in \cite{tarquini2016level}. For $\alpha$ near $0$, we show that $E_{\loc} (\alpha)$ scales about as $|\log \alpha|^{-2/\alpha}$, which had not been predicted earlier.

While we are unable to show that $\mathcal{Z}_{\alpha}$ reduces to a single point for $\alpha \in (c, 1-c)$, our results still improve on several aspects of previous works in this regime. First, the set of energies $E$ not covered by the first two parts of \Cref{c:main1} presumably reduces to the two mobility edges $\{ -E_{\loc} (\alpha), E_{\loc} (\alpha) \}$ (this is evident numerically \cite{tarquini2016level} and equivalent to the prediction that $\mathcal{Z}_{\alpha}$ consists of a single point). In contrast, there is a considerable gap between the localized phase shown in \cite{bordenave2013localization, bordenave2017delocalization} and the delocalized phase shown in \cite{aggarwal2021goe,bordenave2017delocalization}. Next, the second part of \Cref{c:main1} provides an explicit localized phase for all $\alpha \in (0, 1)$, while the previous localization results of \cite{bordenave2013localization, bordenave2017delocalization} apply for $\alpha \in (0, 2/3)$ only; the first part of \Cref{c:main1} also provides an explicit delocalized phase for all $\alpha \in (0, 1)$, slightly improving the range of $\alpha$ in the results of \cite{aggarwal2021goe,bordenave2017delocalization} (which missed a countable set of $\alpha$ parameters). These hold for all matrices satisfying \Cref{matrixh}, which significantly expands the class of matrices considered in \cite{bordenave2013localization, bordenave2017delocalization, aggarwal2021goe}.

\label{Estimates00}

\section{Proof Outline}\label{s:outline}

In \Cref{Tn}, we begin by characterizing the behavior of $R_\star(z)$ in this limit in terms of the fractional moments of resolvent entries. In \Cref{s:bootstrap}, we relate the fractional moments of the resolvent entries to $\lambda(E,\alpha)$, and state certain continuity properties for these quantities. We conclude in \Cref{s:proofmain1} with the proof of \Cref{t:main1}. 
As stated in our discussion of notation in \Cref{s:notation}, we will omit writing the dependence of all constants on $\alpha \in (0,1)$.

	\subsection{Localization and Delocalization Criteria} 
	
	\label{Tn}

	We first characterize the behavior of $\Imaginary R_{\star} (z)$ as $\eta$ tends to zero in terms of the growth of fractional moments of the off-diagonal resolvent entries (from \Cref{Stable}); these moments are given by the below definition. Throughout this section, we fix  $\alpha \in (0,1)$.

	
	\begin{definition} 
		
		\label{moment1} 
	
	 Recalling that $0$ is the root of $\mathbb{T}$, for any integer $L \ge 1$ define
	\begin{flalign}
		\label{sumsz1} 
		\Phi_{L} (s; z) = \mathbb{E} \Bigg[ \displaystyle\sum_{v \in \mathbb{V}  (L)} \big| R_{0v} (z) \big|^s \Bigg]; \qquad \varphi_L (s; z) = L^{-1} \log \Phi_L (s; z).
	\end{flalign}
	
	\noindent where $\mathbb{V}  (L) = \mathbb{Z}_{\geq 1}^L$ is the set of vertices of $\mathbb T$ at distance $L$ from the root $0$. When they exist, we also define the limits  
	\begin{flalign}
		\label{szl} 
		\varphi (s; z) = \displaystyle\lim_{L \rightarrow \infty} \varphi_L (s; z); \qquad \varphi (s; E) = \displaystyle\lim_{\eta \rightarrow 0} \varphi (s; E + \mathrm{i} \eta).
	\end{flalign}
	
	\end{definition}

	 The following theorem, whose statement resembles that of \cite[Theorem 3.2]{aizenman2013resonant}, indicates that the first limit in \eqref{szl} exists and analyzes its properties. We will establish it in \Cref{MultiplicativeR} below.

	 \begin{thm} 
	 	
	 	\label{limitr0j} 
	 	Fix $s \in (\alpha,1)$, $\eps \in (0,1)$, $B\ge 1$, and $z = E + \iu \eta \in \bbH$ such that $\eps \le |E| \le B$ and $\eta \in (0,1)$. 
	 	The following statements hold. 
	 	
	 	\begin{enumerate} 
	 	
	 		\item The limit $\varphi (s; z) = \lim_{L \rightarrow \infty} \varphi_L (s; z)$ exists and is finite. Moreover, there exists a constant $C( s,\eps,B) > 1$ such that
	 		\begin{flalign}
	 			\label{limitr0j2} 
	 			\big| \varphi_L (s; z) - \varphi (s; z) \big| \le \displaystyle\frac{C}{L}.
	 		\end{flalign}
	 		
	 		\item The function $\varphi (s; z)$ is (weakly) convex and nonincreasing in $s \in (\alpha, 1)$. 
	 		
	 		\item Fix $E\in \bbR$. If the limit $\varphi_L (s; E) = \lim_{\eta \rightarrow 0} \varphi_L (s; E + \mathrm{i} \eta)$ exists, then the limit $\varphi (s; E) = \lim_{\eta \rightarrow 0} \varphi (s; E + \mathrm{i} \eta)$ does also. In this case, the limits in $L$ and $\eta$ in the second equality of \eqref{szl} commute, namely, $\varphi (s; E) = \lim_{L\rightarrow\infty} \varphi_L (s; E )$.
	 		
	 		\item There exist constants $c = c(\varepsilon, B)> 0$ and $C = C(\varepsilon, B) > 1$ (independent of $s$) such that $\varphi (s; z) > -C - c \log (s-\alpha)$. In particular, $\lim_{s \rightarrow \alpha} \varphi (s; z) = \infty$. 
				 		
	 		\item There exist constants $C = C ( s) > 1$ and $c = c ( s) > 1$ (independent of $\varepsilon$ and $B$) such that for $|E| \ge 2$ we have $\varphi (s; z) < C - c \log |E|$. In particular, $\lim_{|E| \rightarrow \infty} \varphi (s; z) = -\infty$.
	 		
	 	\end{enumerate} 
	 	
	 \end{thm}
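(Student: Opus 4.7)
The plan is to prove (1)--(5) through one structural estimate: approximate sub- and supermultiplicativity of $\Phi_L(s;z)$ in $L$. Starting from \eqref{r0v0} and iterating, for any $v \in \mathbb{V}(L+M)$ whose ancestral path passes through a vertex $u \in \mathbb{V}(L)$, one can factor $R_{0v}(z)$ into quantities associated with the segment $0 \to u$, a localized interface piece near $u$ involving diagonal resolvent entries and one edge weight, and a quantity associated with the segment $u \to v$ in the subtree rooted at $u$. Summing over all $v \in \mathbb{V}(L+M)$, using the tree's self-similarity and the independence of the Poisson edge weights across disjoint subtrees, and taking expectations yields the approximate multiplicative sandwich
\bex
c\, \Phi_L(s;z) \Phi_M(s;z) \le \Phi_{L+M}(s;z) \le C\, \Phi_L(s;z) \Phi_M(s;z),
\eex
with constants $c, C$ depending on $s, \eps, B$ but not on $L, M$, or $\eta \in (0,1)$. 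The technical heart is bounding the conditional expectation of the interface piece; for this I would invoke \eqref{r00zt1} and exploit the fact that, conditional on the diagonal resolvent entries $R_{uu}^{(0)}$, the quantity $\Real K_{0,w}$ is essentially an $\alpha/2$-stable law with bounded density. The ratio $|T_{0u}|^s / |z + T_{0u}^2 R_{uu}^{(0)} + K_{0,w}|^s$ then has uniformly bounded expectation despite $|T_{0u}|^s$ having infinite unconditional mean when $s > \alpha$. Applying a standard Fekete-type argument to the resulting near-additive sequence $\log \Phi_L(s;z)$ produces both the existence claim and the quantitative rate $|\varphi_L(s;z) - \varphi(s;z)| \le C/L$ in (1).

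For (2), convexity of $\varphi(s;z)$ in $s$ follows from log-convexity of $\Phi_L(s;z)$, which in turn follows from H\"older's inequality applied pointwise to $\sum_{v \in \mathbb{V}(L)} |R_{0v}(z)|^s$; the nonincreasing property is more delicate, and I would derive it by combining convexity with a bound on $\partial_s \varphi(s;z)$ at a suitably chosen reference point, obtained from an analysis of the transfer operator \eqref{t11} showing that its Perron eigenvalue decreases in $s$ throughout $(\alpha, 1)$. For (3), the constant in the rate bound from (1) is uniform in $\eta \in (0,1)$; taking $\eta \to 0$ inside $|\varphi_L(s;E+\iu\eta) - \varphi(s;E+\iu\eta)| \le C/L$ gives $|\varphi_L(s;E) - \liminf_{\eta \to 0} \varphi(s;E+\iu\eta)| \le C/L$ and the matching $\limsup$ bound, so letting $L \to \infty$ forces the $\liminf$ and $\limsup$ in $\eta$ to coincide, yielding both the existence of $\varphi(s;E)$ and the commutation of the two limits.

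Parts (4) and (5) reduce, via the submultiplicative/supermultiplicative bounds, to estimates on a single level, namely on $\Phi_1(s;z) = \E\big[|R_{00}|^s \sum_{u \sim 0} |T_{0u}|^s |R_{uu}^{(0)}|^s\big]$. For the lower bound in (4), the sum $\sum_u |T_{0u}|^s$ is governed by the Poisson intensity inherited from $\alpha t^{-\alpha-1}\,dt$, and integrating $|t|^s$ against this measure on a bounded interval produces a singularity of order $(s-\alpha)^{-1}$ as $s \to \alpha$. Combining with a uniform positive lower bound on $\E[|R_{00}|^s]$ restricted to the event that the relevant edge weights are bounded yields $\Phi_1(s;z) \gtrsim (s-\alpha)^{-1}$, hence the claimed $\varphi(s;z) \ge -C - c\log(s-\alpha)$. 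For the upper bound in (5), when $|E|$ is large, \eqref{r00zt1} shows that $|R_{00}|$ is typically of order $|E|^{-1}$, since the dominant contribution to $(R_{00})^{-1}$ is $-z$; routine integral estimates against the Poisson structure then give $\Phi_1(s;z) \lesssim |E|^{-c}$ for some $c > 0$, and iterating via the supermultiplicativity completes (5). The main obstacle will be the monotonicity in part (2) and the sharp $(s-\alpha)^{-1}$ singularity in part (4), both of which require a delicate balancing of the heavy-tailed edge weight contribution against the compensating suppression of diagonal resolvent entries.
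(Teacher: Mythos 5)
Your plan for parts (1), (3), (4), and (5) is essentially the paper's own: approximate sub- and supermultiplicativity of $\Phi_L$ via a factorization through an intermediate level, with the interface piece controlled by the Schur complement identity and the bounded density of $\Re K_{0,w}$; a P\'olya--Szeg\H{o}/Fekete argument to extract the limit with the $C/L$ rate; uniformity of that rate in $\eta$ to get part (3); the $(s-\alpha)^{-1}$ one-step singularity from the small-$t$ tail of the Poisson intensity for part (4); and the $|E|^{-1}$ suppression of $R_{00}$ from the Schur complement for part (5). One caveat worth internalizing: the $(s-\alpha)^{-1}$ singularity, and the divergence of $\Phi_L$ when $s<\alpha$, comes from the \emph{small} edge weights $\int_0^1 t^{s-\alpha-1}\,dt$, not from the heavy tail at large $t$; the large-$t$ contribution is what the $|R_{00}|^s$ factor tames, while the small-$t$ accumulation is what drives the pole at $s=\alpha$.

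The gap is in part (2). You are right that convexity is H\"older/log-convexity of $s\mapsto \E[|R_{0v}|^s]$ passed through the sum and the $L\to\infty$ limit. But for monotonicity you propose analyzing the transfer operator $T_{s,\alpha,z}$ and showing its Perron eigenvalue decreases in $s$. This is both unnecessary and unavailable at this stage: the transfer operator analysis occurs in a later chapter, and (more to the point) it relies on Theorem~\ref{limitr0j} itself, so you risk circularity. The monotonicity is in fact elementary. From the deterministic bound $|R_{0v}(z)|\le\eta^{-1}$ (second part of Lemma~\ref{q12}), for $\alpha<s<s'<1$ one has
\begin{flalign*}
|R_{0v}|^s = |R_{0v}|^{s'}\cdot |R_{0v}|^{-(s'-s)} \ge \eta^{\,s'-s}\,|R_{0v}|^{s'},
\end{flalign*}
since $|R_{0v}|^{-1}\ge\eta$. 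Summing and taking expectations gives $\Phi_L(s;z)\ge \eta^{s'-s}\,\Phi_L(s';z)$, hence
\begin{flalign*}
\varphi_L(s;z) \ge \frac{(s'-s)\log\eta}{L} + \varphi_L(s';z),
\end{flalign*}
and the $\eta$-dependent term vanishes as $L\to\infty$ (with $\eta$ fixed), giving $\varphi(s;z)\ge\varphi(s';z)$ directly. No spectral analysis is needed, and the "delicacy" you flag is illusory.
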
 
 
 \begin{rem} 
 	
 	\label{salpha} 

We remark that, unlike in \cite{aizenman2013resonant}, it is necessary to take $s \in (\alpha, 1)$ in \Cref{limitr0j}, as for $s \in (0, \alpha)$ it can be shown that $\varphi_L (s; z)$ diverges (which is suggested by the fourth part of \Cref{limitr0j}). 

\end{rem} 

\begin{definition}
We set
\bex
\varphi (1; z) = \lim_{s \rightarrow 1} \varphi (s; z).
\eex
The existence of the limit follows from the second part of \Cref{limitr0j}.
\end{definition}
 		
 		Now we can state the following theorem, which resembles \cite[Theorem 2.5]{aizenman2013resonant} and essentially states delocalization for $\boldsymbol{T}$ around $E \in \mathbb{R}$ (allowing $E=0$) such that $\varphi (1; E) > 0$. It is proved in \Cref{RLarge}.
	
	\begin{thm} 
	
	\label{rimaginary0} 
	
	Fix $E \in \mathbb{R}$ and a decreasing sequence $\{\eta_j\}_{j=1}^\infty$ such that $\lim_{j\rightarrow\infty} \eta_j =0$. Suppose that
	\be
	\displaystyle\lim_{s \rightarrow 1} \bigg( \displaystyle\liminf_{j \rightarrow \infty} \varphi_s (E + \mathrm{i} \eta_j) \bigg) > 0.
	 \ee 
	 Then there exists a real number $c  > 0$ 
	 such that 
		 \bex
		\liminf_{\eta  \rightarrow 0} \mathbb{P} \big[ \Imaginary R_{\star} (E + \mathrm{i} \eta) > c \big] > c.
		\eex

	\end{thm}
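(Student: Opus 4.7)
The plan is to convert the hypothesis on exponential growth of fractional moments into the existence, with positive probability, of a \emph{resonant} vertex $v \in \mathbb{V}$ for which $|R_{0v}(E + \iu \eta)|$ is anomalously large as $\eta \to 0$; and then to deduce from such a resonance that $\Im R_{00}(E + \iu \eta)$ is bounded below with positive probability. I will proceed in three steps, following the general paradigm of the analogous theorem in \cite{aizenman2013resonant} while adapting it to the heavy-tailed structure of $\bm{T}$.

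\textbf{Step 1: a deterministic resonance criterion.} First I will establish a purely linear-algebraic statement: there exist constants $c_0, C_0 > 0$ such that if for some level $L \ge 1$ and some vertex $v \in \mathbb{V}(L)$ we have $|R_{0v}(z)|^2 \ge C_0 \eta^{-1}$, then $\Im R_{00}(z) \ge c_0$. The basic mechanism is the Ward identity $\eta \sum_{w} |R_{0w}(z)|^2 = \Im R_{00}(z)$, which already controls $\Im R_{00}$ via any single large off-diagonal entry. A slightly more flexible form—needed so that a resonant $v$ deep in the tree yields a lower bound independent of $\eta$—follows by combining Ward's identity with the product expansion \eqref{r0v0} along the path from $0$ to $v$ and the Schur formula \eqref{r00zt1}.

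\textbf{Step 2: expected number of resonant vertices grows exponentially.} Fix $\delta > 0$ small enough that, by hypothesis, $\varphi(s; E + \iu \eta_j) \ge \delta$ holds for all $s$ sufficiently close to $1$ and all $j$ sufficiently large. By \Cref{limitr0j}(1),
\begin{equation*}
\Phi_L(s; E + \iu \eta_j) \;=\; \mathbb{E}\Bigl[\,\sum_{v \in \mathbb{V}(L)} |R_{0v}(E + \iu \eta_j)|^s\Bigr] \;\ge\; e^{L(\delta - o(1))}
\end{equation*}
for all $L$ large. I then split the sum according to whether $v$ is resonant at a threshold of order $\eta_j^{-1/2}$; using a uniform fractional-moment bound on non-resonant terms (whose contribution grows only polynomially in $\eta_j^{-1}$ per vertex), I can subtract off the non-resonant part and conclude that $\mathbb{E}[\,\#\{v \in \mathbb{V}(L) : v \text{ resonant}\}\,]$ is still exponentially large in $L$.

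\textbf{Step 3: amplification to a lower bound in probability.} This is the main obstacle, and where the heavy tails rule out a direct second-moment argument on the count of resonant vertices. Following the strategy indicated in \Cref{MomentFractionals}, I will set up an amplification scheme: for a suitable moderate scale $M$ and a parameter $\mu \ge 0$ determined by Step 2, I show that any subtree of $\mathbb{T}$ rooted at a vertex $u$ contains $\ge e^{(\mu + \delta) M}$ resonant descendants at depth $M$ below $u$ with probability at least $e^{-\mu M}$. Since subtrees attached to distinct children of a given vertex are independent (and exchangeable) conditional on the edge weights incident to that vertex, I chain these events along $L/M$ nested generations: at each generation, the number of candidate subtrees exceeds $e^{\mu M}$, so the probability that \emph{none} of them amplifies is at most $(1 - e^{-\mu M})^{e^{\mu M}}$, which tends to zero. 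Composing a realized chain through \eqref{r0v0} produces, with probability bounded below uniformly in $L$, a single vertex $v \in \mathbb{V}(L)$ for which $|R_{0v}|$ exceeds the threshold of Step 1; that step then delivers $\Im R_{00} \ge c_0$ on this event, which is the desired conclusion along the sequence $\{\eta_j\}$.

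The delicate points in Step 3 will be: controlling the heavy-tailed edge weights $T_{0u}$ appearing in the product expansion (handled by restricting to edges of moderate size and checking that this truncation only reduces the fractional-moment expectation by a constant factor, analogously to the argument behind \eqref{r00v001}); maintaining the independence of descendant subtrees after conditioning on realized resonances at upper levels; and finally upgrading the conclusion from the subsequence $\{\eta_j\}$ to the full $\liminf_{\eta \to 0}$ in the statement, which I expect to handle by noting that the amplification bounds are uniform in the spectral parameter in a neighborhood of $E + \iu \eta_j$, together with the standard fact that $\Im R_\star(E + \iu \eta)$ is a continuous function of $\eta$ for each realization of $\bm{T}$.
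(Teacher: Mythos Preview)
Your proposal has the right overall architecture (fractional moments $\Rightarrow$ resonant vertices $\Rightarrow$ lower bound on $\Im R_{00}$), and your Step~3 amplification scheme is indeed essentially the one the paper uses (Proposition~\ref{mprobability}). However, Steps~1 and~2 contain a genuine gap that cannot be repaired without introducing two ideas you do not mention.

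\textbf{The threshold $\eta^{-1/2}$ cannot be reached.} Your Step~1 says that a single vertex with $|R_{0v}|^2 \ge C_0\eta^{-1}$ forces $\Im R_{00} \ge c_0$ via the Ward identity; this is correct. But your Step~2 tries to produce such a vertex from the growth $\Phi_L(s;z) \ge e^{L\delta}$, and this does not follow: for each fixed $L$, $\Phi_L(s; E+\mathrm{i}\eta)$ remains bounded as $\eta\to 0$, so there is no mechanism to push $|R_{0v}|$ to infinity. The amplification in Step~3 likewise only yields many vertices with $|R_{0v}| \ge e^{t_0 L}$ for some $t_0 \in [-C,0]$ possibly negative (see Proposition~\ref{mprobability}), not vertices crossing the divergent threshold $\eta^{-1/2}$. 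The ``slightly more flexible form'' you allude to in Step~1 is not specified, and no purely deterministic inequality will let a fixed threshold $e^{\delta L}$ substitute for $\eta^{-1/2}$.

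\textbf{What the paper actually does.} The argument is by contradiction: one assumes $\Im R_\star(E+\mathrm{i}\eta_j)\to 0$ in probability and derives an absurdity. Two ingredients close the gap. First, the \emph{resonant delocalization} step (Proposition~\ref{estimater}): among the $e^{(\varphi - st_0)L}$ amplified vertices $w$ with $|R_{0w}^{(w_+)}|\ge e^{t_0 L}$, one uses the localization hypothesis (which makes $R_{vv}$ essentially real with bounded density) to find, with probability $\ge 3/5$, a single child $v=w_+$ for which $|R_{vv}|\ge e^{(\delta-t_0)L}$, hence $|R_{0v}|\ge e^{\delta L}$ for a \emph{fixed} $\delta>0$ independent of $\eta$. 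Second, the \emph{quantile bootstrap}: writing $\xi(a;z)$ for the $a$-quantile of $\Im R_\star(z)$, Corollary~\ref{rsum} gives $\Im R_{00} \ge |R_{0v}|^2\cdot |T_{v\mathfrak c}|^2\Im R_{\mathfrak c\mathfrak c}^{(v)} \ge e^{\delta L}\cdot \xi(\tfrac12;z)$ on an event of probability $\ge \tfrac35$. Thus $\xi(\tfrac35;z)\ge e^{\delta L}\xi(\tfrac12;z)$, which contradicts the monotonicity $\xi(\tfrac35)\le \xi(\tfrac12)$ once $L$ is large. This self-referential comparison of quantiles is what replaces your divergent threshold, and neither the contradiction structure nor the use of the localization hypothesis to manufacture a single large diagonal entry appears in your outline. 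The upgrade from the subsequence $\{\eta_j\}$ to the full $\liminf_{\eta\to 0}$ is also not via pointwise continuity of $\Im R_\star$ in $\eta$ (which would not give a uniform lower bound), but via the stable-law parameter $\sigma(\vartheta_0)$ being bounded below.
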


 	In the complementary case, where $\varphi (1; E) < 0$, it can quickly be proven  that $\Imaginary R_{\star} (E + \mathrm{i} \eta)$ converges to $0$ in probability as $\eta$ tends to $0$. The following proposition is proved in \Cref{s:continuitypreliminary}.

	\begin{prop} \label{p:imvanish}
	Fix $E \in \mathbb{R}$ and $s\in(\alpha, 1)$.
	\begin{enumerate}
	 \item
	 Let $\{\eta_j\}_{j=1}^\infty$ be a decreasing sequence such that $\lim_{j\rightarrow\infty} \eta_j =0$, and suppose that \bex\limsup_{j \rightarrow \infty} \varphi (s; E + \mathrm{i} \eta_j) < 0.\eex
	  Then $\lim_{j\rightarrow \infty} \Imaginary R_{\star} (E + \mathrm{i} \eta_j) = 0$ in probability.
	\item
	 Suppose that $\limsup_{\eta \rightarrow 0} \varphi (s; E + \mathrm{i} \eta) < 0$. Then $\lim_{\eta \rightarrow 0} \Imaginary R_{\star} (E + \mathrm{i} \eta) = 0$ in probability.
	 \end{enumerate}
	\end{prop}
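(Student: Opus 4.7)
The plan is to exploit the Ward identity together with the subadditivity of $x \mapsto x^{s/2}$, which converts the $\ell^2$-sum appearing in Ward into a sum of fractional moments that we can bound via the exponential decay guaranteed by the hypothesis $\varphi(s; \cdot) < 0$.

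Concretely, for $z = E + \iu \eta \in \bbH$, the Ward identity \eqref{sumrvweta} applied at the root $0$ gives
\be \label{pl1}
\Im R_\star(z) = \eta \sum_{v \in \bbV} |R_{0v}(z)|^2.
\ee
Since $s/2 \in (0, 1/2)$, the map $x \mapsto x^{s/2}$ is subadditive on $\bbR_+$. Applying it to \eqref{pl1} and splitting the sum level-by-level gives
\bex
\big( \Im R_\star(z) \big)^{s/2} \le \eta^{s/2} \sum_{v \in \bbV} |R_{0v}(z)|^s = \eta^{s/2} \sum_{L \ge 0} \sum_{v \in \bbV(L)} |R_{0v}(z)|^s,
\eex
and taking expectations yields the key bound
\be \label{pl2}
\E \big[ \big( \Im R_\star(z) \big)^{s/2} \big] \le \eta^{s/2} \sum_{L \ge 0} \Phi_L(s; z).
\ee

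To prove part (1), fix a decreasing sequence $\eta_j \to 0$ and $c > 0$ with $\varphi(s; E + \iu \eta_j) < -2c$ for $j$ sufficiently large. The first part of \Cref{limitr0j} supplies a constant $C$ (depending on $s$ and $E$ but not on $j$) with $\varphi_L(s; E + \iu \eta_j) \le \varphi(s; E + \iu \eta_j) + C/L \le -c$ whenever $L \ge L_0 := C/c$, so $\Phi_L(s; E + \iu \eta_j) \le e^{-cL}$ for such $L$. The finitely many terms with $L < L_0$ are controlled using a uniform bound on $\Phi_0(s; z) = \E\big[ |R_\star(z)|^s \big]$, a standard fractional moment estimate for the resolvent of $\boldsymbol{T}$. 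This gives $\sum_{L \ge 0} \Phi_L(s; E + \iu \eta_j) \le K$ uniformly in $j$, so \eqref{pl2} yields $\E\big[(\Im R_\star(E + \iu \eta_j))^{s/2}\big] \le K \eta_j^{s/2} \to 0$, and Markov's inequality completes the proof. Part (2) follows by applying part (1) to an arbitrary decreasing $\eta_j \to 0$: the continuous limsup bound implies $\limsup_j \varphi(s; E + \iu \eta_j) < 0$, and sequential convergence in probability along every such sequence is equivalent to convergence in probability as $\eta \to 0$.

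The main obstacle, and really the only nontrivial point, is ensuring uniformity in $j$ of the constants in the bound on $\sum_{L \ge 0} \Phi_L(s; E + \iu \eta_j)$. The constant $C$ from \Cref{limitr0j} depends only on $s$, $\eps$, and $B$, so choosing $\eps = |E|/2$ and $B = |E| + 1$ handles it for $E \ne 0$. The uniform bound on $\Phi_0(s; z) = \E[|R_\star(z)|^s]$ is a standard input from the fractional moment estimates established elsewhere in the paper. With these in hand, the argument is indeed the "quick consequence of the Ward identity" forecast in the introduction.
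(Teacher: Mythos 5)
Your approach is essentially the paper's: the Ward identity gives $\Im R_\star(z) = \eta \sum_{v} |R_{0v}(z)|^2$, subadditivity of $x \mapsto x^{s/2}$ converts this into $\eta^{s/2}\sum_L \Phi_L(s;z)$, and exponential decay of $\Phi_L$ (from $\varphi(s;z) < -\delta$ together with the uniform estimate \eqref{limitr0j2}) plus Markov closes the argument. One minor imprecision: for the terms with $L < L_0$ you need bounds on each $\Phi_L$, not just $\Phi_0$; these follow from \Cref{estimatemoment1}, but the cutoff $L_0$ is actually unnecessary, since \eqref{limitr0j2} gives $\Phi_L(s;z) \le e^{C} e^{L\varphi(s;z)}$ uniformly for all $L \ge 1$, so the whole tail is controlled at once with a constant depending only on $\delta$, $s$, $\eps$, $B$.
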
 	
	
	Next, we state a compactness lemma. It is proved in \Cref{ResolventDensity}.
	
	\begin{lem}\label{l:compactness}
	Fix $E\in \bbR$. Then the closure of the set ${\big\{ R_\star(E + \iu \eta) \big\}_{\eta \in (0,1)}}$ in the weak topology is sequentially compact.
	\end{lem}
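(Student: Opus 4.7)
The plan is to establish tightness of the family of laws $\{R_\star(E + \iu \eta)\}_{\eta \in (0,1)}$ on $\bbC$ and then invoke Prokhorov's theorem; tightness on this Polish space is equivalent to the relative sequential compactness claimed in the lemma (and weak limits supported on $\partial \bbH = \bbR$ are unproblematic, since $\bbH$ embeds in the Polish space $\bbC$). By Markov's inequality, I would reduce tightness to a uniform fractional moment estimate of the form
\bex
\sup_{\eta \in (0,1)} \E\bigl[|R_\star(E + \iu \eta)|^{\alpha/2}\bigr] \le C,
\eex
since then, for any $\omega > 0$, choosing $M$ with $CM^{-\alpha/2} < \omega$ gives $\P\bigl[|R_\star(E+\iu\eta)| > M\bigr] < \omega$ uniformly in $\eta \in (0,1)$, and the closed disk $\{w \in \bbC : |w| \le M\}$ is compact.

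To produce this fractional moment bound, I would use the quantity $y(z)$ defined in \eqref{yrepresentation} as an explicit surrogate for $\E\bigl[|R_\star(z)|^{\alpha/2}\bigr]$. The key observation is that because $R_\star(z) \in \bbH$, the rotated quantity $-\iu R_\star(z)$ lies in $\bbK = \{w \in \bbC : \Re w > 0\}$ and so has argument in $(-\pi/2, \pi/2)$. The principal branch of $w \mapsto w^{\alpha/2}$ therefore sends this argument into $(-\pi\alpha/4, \pi\alpha/4) \subset (-\pi/4, \pi/4)$ while preserving modulus, which yields the pointwise inequality
\bex
\Re\bigl(-\iu R_\star(z)\bigr)^{\alpha/2} \ge \cos(\pi\alpha/4) \cdot \bigl|R_\star(z)\bigr|^{\alpha/2}.
\eex
Taking expectations and comparing with \eqref{yrepresentation} gives
\bex
\E\bigl[|R_\star(z)|^{\alpha/2}\bigr] \le \frac{\Re y(z)}{\cos(\pi\alpha/4)} \le \frac{|y(z)|}{\cos(\pi\alpha/4)}
\eex
for every $z \in \bbH$.

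Finally, I would close the argument by continuity. The paper establishes (as \Cref{l:boundaryvalues}) that $y$ extends continuously to $\overline{\bbH}$, so $|y(E + \iu \eta)|$ is uniformly bounded over $\eta \in (0,1]$. Combined with the previous display, this gives the required uniform fractional moment bound, hence tightness, and Prokhorov's theorem then delivers the sequential compactness of the closure. There is no real obstacle in this route; the only genuinely nontrivial ingredient is the continuous boundary extension of $y$, which is supplied separately, after which the argument reduces to an elementary geometric remark about the argument of $-\iu R_\star$ combined with Markov's and Prokhorov's inequalities.
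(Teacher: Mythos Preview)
Your proof is correct and takes a genuinely different route from the paper. The paper first disposes of $E=0$ by appealing to an explicit identification of the law of $R_\star(\iu\eta)$ from prior work, and for $E\neq 0$ uses the stable-law density estimates for $\varkappa_0(z)$ (the content of \Cref{ar} and its corollary \Cref{expectationqchi}) to obtain a uniform bound on $\E[|R_\star|^{\chi}]$ for some $\chi\in(0,1)$; those density estimates genuinely require $|E|$ bounded away from $0$, which is why the case split is needed. Your argument instead leverages the geometric inequality $\Re y(z)\ge \cos(\pi\alpha/4)\,\E[|R_\star(z)|^{\alpha/2}]$ together with the continuous extension of $y$ to $\overline{\bbH}$ (\Cref{l:boundaryvalues}), which handles all $E\in\bbR$ uniformly without any case distinction. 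The trade-off is that the paper's route reuses estimates it needs anyway for the fractional moment machinery, while yours imports the boundary regularity of $y$ from \cite{belinschi2009spectral}; both are short once those inputs are in hand, but yours is cleaner in its treatment of $E=0$.
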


Finally, we state a lemma that upgrades the delocalization statement in \Cref{rimaginary0} to the one in \Cref{t:main1}. It is proved in \Cref{ResolventDensity}.
\begin{lem}\label{l:upgrade} Fix $E \in \bbR$, 
 and suppose there exists $c> 0$ such that 
	 \bex
		 \liminf_{\eta \rightarrow 0} \mathbb{P} \big[ \Imaginary R_{\star} (E + \mathrm{i} \eta) > c \big] > c.
		\eex
Then $
 \liminf_{\eta \rightarrow 0}  \Imaginary R_{\star} (E + \mathrm{i} \eta) > 0$ in probability.
\end{lem}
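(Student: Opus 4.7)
The plan is to argue by contradiction via compactness coupled with the self-similar fixed-point structure of the Poisson Weighted Infinite Tree. Suppose the conclusion fails. By \Cref{def:lbprob}, there exist $\omega \in (0,1)$ and sequences $\eta_n \downarrow 0$, $\delta_n \downarrow 0$ such that $\P(\Imaginary R_\star(E + \iu\eta_n) \le \delta_n) \ge \omega$ for all $n$. Using \Cref{l:compactness}, pass to a subsequence along which $R_\star(E + \iu\eta_n)$ converges in distribution to some random variable $X$ with values in $\overline{\bbH}$. Applying the Portmanteau theorem to the closed sets $\{w : \Imaginary w \le \delta\}$ and sending $\delta \downarrow 0$ yields $\P(\Imaginary X = 0) \ge \omega$, while applying it to $\{w : \Imaginary w \ge c\}$ together with the standing hypothesis yields $\P(\Imaginary X > 0) \ge c > 0$.

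The main technical step is to show that $X$ satisfies the limiting distributional fixed-point equation
\begin{equation*}
X \stackrel{d}{=} -\Bigl(E + \sum_{j \ge 1} T_j^2 X_j\Bigr)^{-1},
\end{equation*}
where $(T_j^2)_{j \ge 1}$ is a Poisson point process on $(0,\infty)$ with intensity $\tfrac{\alpha}{2} x^{-\alpha/2-1}\,dx$ (as in \Cref{tuvalpha}), the $X_j$ are i.i.d.\ copies of $X$ independent of the point process, and the random series converges almost surely. This is obtained by passing the PWIT Schur complement identity $R_\star(E + \iu\eta_n) = -\bigl(E + \iu\eta_n + \sum_j T_j^2 R_j(E + \iu\eta_n)\bigr)^{-1}$ (with $R_j$ i.i.d.\ copies of $R_\star$, independent of the PPP) to the weak limit along $\eta_n$. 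Justifying this passage is the main obstacle: it requires uniform-in-$\eta$ fractional moment control of the form $\E[|R_\star(E+\iu\eta)|^s] \le C$ for some $s > 0$, combined with a splitting of the random series according to the size of the $T_j^2$ weights and separate estimates for the small- and large-weight pieces at appropriately chosen fractional exponents. These ingredients are available from the analysis underlying \Cref{limitr0j} and from the PWIT framework of \cite{bordenave2011spectrum}.

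With the fixed-point equation in hand, the contradiction follows from a short zero-one argument. Taking imaginary parts of the equation gives
\begin{equation*}
\Imaginary X \;=\; \frac{\sum_{j \ge 1} T_j^2\, \Imaginary X_j}{\bigl|E + \sum_{j \ge 1} T_j^2 X_j\bigr|^{2}},
\end{equation*}
so, since $T_j^2 > 0$ almost surely and $\Imaginary X_j \ge 0$, the event $\{\Imaginary X = 0\}$ coincides with $\bigcap_{j \ge 1} \{\Imaginary X_j = 0\}$. Because the Poisson point process almost surely has infinitely many points, this is an intersection of countably many i.i.d.\ events of common probability $q := \P(\Imaginary X = 0)$, forcing $q \in \{0,1\}$. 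Since $q = 1$ would contradict $\P(\Imaginary X > 0) > 0$, we must have $q = 0$, which contradicts the earlier lower bound $\P(\Imaginary X = 0) \ge \omega > 0$. This contradiction proves the lemma.
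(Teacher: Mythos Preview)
Your approach is correct in outline but takes a substantially longer route than the paper. The paper's proof is essentially two lines: the hypothesis gives $\E\big[(\Imaginary R_\star(E+\iu\eta))^{\alpha/2}\big] \ge c \cdot c^{\alpha/2}$ for all small $\eta$, which by \Cref{rrealimaginary} means the scaling parameter $\sigma(\vartheta_0(E+\iu\eta))$ of the nonnegative $\tfrac{\alpha}{2}$-stable self-energy $\vartheta_0$ is uniformly bounded below. Combined with the Schur identity $\Imaginary R_\star = (\eta+\vartheta_0)/|z+\varkappa_0+\iu\vartheta_0|^2$ and the uniform upper bounds on $\sigma(\varkappa_0),\sigma(\vartheta_0)$ from \Cref{q0estimate1}, this yields directly that for any $\omega\in(0,1)$ there is $\delta>0$ with $\P(\Imaginary R_\star>\delta)\ge 1-\omega$ uniformly in small $\eta$.

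Your contradiction-plus-compactness argument works, but the step you flag as the ``main obstacle''---showing that any subsequential weak limit $X$ satisfies the boundary fixed-point equation---is genuinely nontrivial and not something you can just wave at \Cref{limitr0j}. It requires a tail-truncation of the Poisson sum and uniform fractional-moment control to pass to the limit, essentially redoing the argument of \Cref{l:boundarybasics} without the simplifying hypothesis $\Imaginary X=0$. Once that is in place your zero-one argument is clean. The trade-off is that the paper's approach exploits the explicit stable-law structure of $\vartheta_0$ to get uniformity in $\eta$ for free, with no need to extract limits or re-derive the recursive distributional equation at the boundary; your approach is more robust in principle (it would apply to models where the self-energy is not exactly stable) but here pays a significant complexity cost for no gain.
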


	\subsection{Bootstrap for Large Energies}\label{s:bootstrap}
	
	Our proof of the second part of \Cref{t:main1} will be accomplished through a bootstrap argument. Localization is established for sufficiently large $|E|$ (using the fifth part of \Cref{limitr0j} and \Cref{p:imvanish}), and then inductively shown hold for smaller values of $|E|$, until $E_\loc$ is reached. We now collect some lemmas that facilitate the inductive argument. For technical reasons, we cannot work with $\phi(1,z)$ directly during this process; we instead argue using $\phi(s,z)$ and then take $s$ to $1$ at the end. We first state a continuity lemma for $ \varphi (s; z)$. It is proved in \Cref{s:continuityproof}.

	\begin{lem}\label{l:phicontinuity}
	Fix $\alpha \in (0,1)$; $s \in (\alpha,1)$; 
	$\omega, \kappa >0$; and a compact interval $I \subset \bbR \setminus \{0\}$. There exists a constant $\delta(s,\omega,\kappa, I) >0$ such that the following holds. For any $E_1, E_2 \in I$ such that $|E_1 - E_2 | \le \delta$ and 
\bex
\limsup_{\eta \rightarrow 0} \varphi (s; E_1 + \mathrm{i} \eta) < - \kappa,
\eex
we have	
\bex
\limsup_{\eta \rightarrow 0} \varphi (s; E_2 + \mathrm{i} \eta) 
< 
-\kappa + \omega.
\eex
	

	\end{lem}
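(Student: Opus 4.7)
The plan is to reduce the continuity claim to a finite-level comparison of fractional moment sums, then exploit the transfer operator formulation described in \Cref{s:ideas}. By \Cref{limitr0j}(1), $|\varphi_L(s;z) - \varphi(s;z)| \le C/L$ uniformly in $E \in I$ and $\eta \in (0,1)$, with $C = C(s, I)$. Choosing $L_0 = L_0(\omega, I, s)$ with $C/L_0 < \omega/4$, it suffices to prove that for $|E_1-E_2| \le \delta$ and sufficiently small $\eta$,
\[
\Phi_{L_0}(s; E_2+\iu\eta) \le e^{L_0\omega/2}\,\Phi_{L_0}(s; E_1+\iu\eta),
\]
which translates the desired continuity into a quantitative uniform comparison at a single fixed depth.

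For this finite-level comparison, iterate \eqref{r0v0} along the path $(0 = v_0, \dots, v_{L_0} = v)$ from root to $v \in \bbV(L_0)$:
\[
R_{0v}(z) = (-1)^{L_0} \prod_{i=0}^{L_0-1} T_{v_iv_{i+1}} \cdot \prod_{i=0}^{L_0} R_{v_iv_i}^{(S_i)}(z),
\]
where the edge weights $T_{v_iv_{i+1}}$ are $z$-independent and the diagonal subtree resolvents are linked through the Schur complement \eqref{r00zt1}. Integrating out off-path Poisson data level by level expresses $\Phi_{L_0}(s;z)$ as $L_0$ iterations of a transfer operator of the form \eqref{t11}, whose kernel depends on $z$ only through the density $p_z$ associated with the law of $R_\star(z)$. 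The hypothesis $\limsup_{\eta\to 0}\varphi(s; E_1+\iu\eta) < -\kappa$ combined with \Cref{p:imvanish} yields $\Im R_\star(E_1 + \iu\eta) \to 0$ in probability, and by \Cref{r:abreason}, $R_\star(E_1+\iu\eta)$ converges in distribution to the explicit real random variable $R_\loc(E_1)$. Continuity of $y$ on $\overline{\bbH}$ (\Cref{l:boundaryvalues}) and the decomposition \eqref{opaque} show that $E \mapsto a(E), b(E)$ (hence the law of $R_\loc(E)$) is continuous, so for $|E_2 - E_1| \le \delta$ the laws of $R_\loc(E_1)$ and $R_\loc(E_2)$ are close. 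A compactness argument (\Cref{l:compactness}) provides a subsequential limit for $R_\star(E_2+\iu\eta)$ that is pinched near $R_\loc(E_2)$, so the kernels of the transfer operators at $E_1 + \iu\eta$ and $E_2 + \iu\eta$ become close in an appropriate norm as $\eta \to 0$, and continuity of the Perron--Frobenius eigenvalue yields the desired bound $\varphi(s; E_2+\iu\eta) \le \varphi(s; E_1+\iu\eta) + \omega/4$ for small $\eta$.

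The principal obstacle is upgrading qualitative convergence in law of $R_\star(E_j + \iu\eta)$ to quantitative closeness of the transfer operators, uniformly in small $\eta$ and in $E$ near $E_1$. Because $|R_\star|^s$ is unbounded, this requires uniform tail control of the form $\E\big[|R_\star(E+\iu\eta)|^s \one_{|R_\star|>T}\big] \to 0$ as $T \to \infty$, uniformly in small $\eta$ and $E$ in a neighborhood of $E_1$. I expect this to follow from the modified fractional moment estimates of \Cref{EstimateMomentss}, combined with a truncation argument separating an event of bounded resolvent (on which distributional convergence yields the desired norm bound) from its complement (controlled by the tail estimate). The remaining technical work is to verify continuity of the Perron--Frobenius eigenvalue of the transfer operator in its kernel parameter, to propagate the per-step bound through $L_0 = O(1/\omega)$ levels without blowing up, and to extract the explicit dependence of $\delta$ on $(s, \omega, \kappa, I)$.
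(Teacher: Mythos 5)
Your approach — reduce to finite depth $L_0$ via \Cref{limitr0j}(1), then compare $\Phi_{L_0}$ at $E_1$ and $E_2$ through the transfer operator — is genuinely different from the paper's. The paper instead works directly with the finite-$\eta$ resolvents: it truncates to paths with bounded edge weights, applies the resolvent identity $R_{00}(z_1) - R_{00}(z_2) = (z_2-z_1)\sum_w R_{0w}(z_1)R_{w0}(z_2)$ together with Cauchy--Schwarz and the Ward identity, and crucially leverages $\varphi(s;z) < -\delta$ via \Cref{l:qsexpectation} to get $\E[(\Im R_{00})^{s/2}] \lesssim \eta^{s/2}$, which is the extra decay that makes the resolvent comparison non-trivial. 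This all happens at fixed $\eta > 0$, and a separate bootstrap (\Cref{l:goingdown}, \Cref{l:goingup}) handles varying $\eta$.

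There is a genuine gap in your argument, and it is a circularity. The transfer operator with an explicit kernel — the one built from $p_E$, the density of $\varkappa_{\loc}(E)$ — only describes $\Phi_L$ in the boundary limit $\eta \to 0$ \emph{under the hypothesis that $\Im R_\star(E+\iu\eta) \to 0$}; see \Cref{l:PhiEA}, \Cref{l:PhiE}, and \Cref{l:bootstrap}. This hypothesis you correctly derive at $E_1$ from the assumed negativity of $\limsup_\eta\varphi(s;E_1+\iu\eta)$ via \Cref{p:imvanish}. But at $E_2$ you have no such input: the lemma's whole purpose is to establish $\limsup_\eta\varphi(s;E_2+\iu\eta) < 0$, which is exactly what would be needed (again via \Cref{p:imvanish}) to guarantee that the subsequential limits of $R_\star(E_2+\iu\eta)$ are real and hence equal in law to $R_\loc(E_2)$. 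The phrase ``a compactness argument provides a subsequential limit for $R_\star(E_2+\iu\eta)$ that is pinched near $R_\loc(E_2)$'' is the gap: \Cref{l:compactness} gives tightness and hence a subsequential limit, but nothing forces that limit to have vanishing imaginary part, and a limit with nonzero imaginary part does not match $R_\loc(E_2)$ and does not produce the explicit kernel. Relatedly, for $\eta > 0$ the actual kernel involves the density $p_z$ of the complex variable $\varkappa_0(z) + \iu\vartheta_0(z)$, which is not explicit, so a ``kernel closeness'' comparison at small but positive $\eta$ would require controlling $p_{E_2+\iu\eta}$ uniformly — a claim essentially as hard as the original lemma, and one that the paper instead handles by the Ward-identity route rather than any Perron--Frobenius continuity.
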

	
	We next introduce a quantity $\lambda(E,s,\alpha)$, which we will we see is closely connected to $\phi(s;z)$.

	\begin{definition} \label{lambdaEsalpha}
Fix  $\alpha \in (0,1)$ and $s \in (0,1)$. Recall the quantities $\ell(x)$ and $t_y$ defined in \Cref{lambdaEsalpha} and set
	\begin{flalign}
		 K_{\alpha, s} = \displaystyle\frac{\alpha}{2} \cdot \Gamma \bigg( \displaystyle\frac{s-\alpha}{2} \bigg) \cdot \Gamma \bigg( 1 - \displaystyle\frac{\alpha+s}{2} \bigg).
	\end{flalign}

	\noindent Then, let $\lambda(E, s,  \alpha)$ denote the unique positive solution to the quadratic equation 
	\begin{flalign*} 
		\lambda(E, s,  \alpha)^2 - 2 t_{\alpha} K_{\alpha,s} \Real \ell (E) \cdot \lambda(E, s,  \alpha) + K_{\alpha,s}^2 (t_{\alpha}^2 - t_s^2) \big| \ell (E) \big|^2 = 0,
	\end{flalign*}

	\noindent which admits only one positive solution since its constant term is negative (as $t_{\alpha} < t_s$ for $\alpha < s < 1$).
	
	\end{definition}
	
	The next lemma states basic properties of $\lambda(E,\alpha)$ and $\lambda(E,s,\alpha)$. It is proved in \Cref{s:boundaryvalues}.
	\begin{lem}\label{l:lambdalemma}
	For all $\alpha \in (0,1)$, $s\in (\alpha, 1)$, and $E \in \bbR$, the following claims hold.
	\begin{enumerate}
	\item We have $\lambda(E,s,\alpha) > 0$ and $\lambda(E,\alpha) >0$.
	\item The limit $\lambda(E, \alpha) = \lim_{s \rightarrow 1} \lambda(E,s,\alpha)$ holds.
	\item The functions $(E,s) \mapsto \lambda(E, s, \alpha)$ and $E \mapsto \lambda(E, \alpha)$ 
	are continuous.
	\item We have $\lambda(0, \alpha) > 1$.
	
	\end{enumerate}
	
	\end{lem}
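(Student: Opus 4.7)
The plan is to treat the four claims in order, using the quadratic formula together with the boundary-value properties of $y, a, b$ developed in \Cref{l:boundaryvalues}.

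For part (1), the discriminant of the quadratic in \Cref{lambdaEsalpha} is
\begin{equation*}
\Delta = 4 K_{\alpha,s}^2 \bigl[ t_\alpha^2 (\Real \ell(E))^2 + (t_s^2 - t_\alpha^2) |\ell(E)|^2 \bigr],
\end{equation*}
which is nonnegative (since $t_\alpha < t_s$ for $\alpha < s \leq 1$) and vanishes only when $\ell(E) = 0$. Once $\ell(E) \neq 0$, the constant term $K_{\alpha,s}^2(t_\alpha^2 - t_s^2)|\ell(E)|^2$ is strictly negative, so exactly one strictly positive root exists. To rule out $\ell(E) = 0$, I would establish the identity $\Real \ell(E) = \pi^{-1} \Gamma(\alpha) \cos(\pi\alpha/2) \, \mathbb{E}\bigl[|R_\loc(E)|^\alpha\bigr]$ from \eqref{ralphae} by Fourier-inverting the symmetric part of $\xi \mapsto \widehat{p}_E(\xi)$; the right side is strictly positive because $R_\loc(E) \neq 0$ almost surely and $\cos(\pi\alpha/2) > 0$ for $\alpha \in (0,1)$. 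The same reasoning handles $\lambda(E, \alpha)$, which is the $s = 1$ case.

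For parts (2) and (3), note that $s \mapsto t_s$ and $s \mapsto K_{\alpha, s}$ are continuous, with $t_1 = 1$ and $K_{\alpha, 1} = K_\alpha$. So as $s \to 1$, the quadratic defining $\lambda(E, s, \alpha)$ converges coefficientwise to the one defining $\lambda(E, \alpha)$. For continuity in $E$, the continuity of $\ell(E)$ follows from continuity of $\widehat{p}_E(\xi)$ in $E$ (for each fixed $\xi$) combined with dominated convergence using the stable-law decay $|\widehat{p}_E(\xi)| \le \exp(-c|\xi|^{\alpha/2})$ uniform on compact $E$-sets; the continuity of $\widehat{p}_E(\xi)$ in $E$ is inherited from the continuity of $y$ proved in \Cref{l:boundaryvalues}, which transfers to $(a(E), b(E))$ through \eqref{opaque} and hence to the law of $\varkappa_\loc(E)$. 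The positive root of the quadratic is then a continuous function of the coefficients (the discriminant being bounded away from zero by part (1)), giving both claims.

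For part (4), I would compute $\lambda(0, \alpha)$ explicitly. The law of $\bm{T}$ is invariant under $\bm{T} \to -\bm{T}$ by the symmetry of the driving processes $\boldsymbol{\xi}_v$, so $\Real R_\star(\iu \eta)$ is symmetric around zero in law, forcing $y(0) \in \bbR_{>0}$; through \eqref{opaque} this gives $a(0) = b(0)$. Then $\varkappa_\loc(0) = a(0)^{2/\alpha}(S_1 - S_2)$ is symmetric $\alpha/2$-stable, with characteristic function $\widehat{p}_0(\xi) = \exp(-c_1 a(0) |\xi|^{\alpha/2})$, where $c_1 = \pi/(\sin(\pi\alpha/4) \Gamma(\alpha/2))$ (obtained from \eqref{xtsigma}). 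Applying the symmetric Mellin identity $\mathbb{E}[|X|^{-\gamma}] = (\Gamma(\gamma)\cos(\pi\gamma/2))^{-1} \int_0^\infty \xi^{\gamma-1} \widehat{f}(\xi)\, d\xi$ in the fixed-point equation \eqref{ab0} yields $a(0)^2 = \tan(\pi\alpha/4)/(\pi\alpha)$, and a direct evaluation of \eqref{tlrk} gives $\ell(0) = t_\alpha \Gamma(\alpha/2)^2/\pi^2$. Substituting into \Cref{lambdaEalpha} produces
\begin{equation*}
\lambda(0, \alpha) = K_\alpha \, \ell(0) \, (1 + t_\alpha) = \frac{\alpha \, t_\alpha (1 + t_\alpha)}{2 \pi^2} \, \Gamma\bigl(\tfrac{1-\alpha}{2}\bigr)^2 \, \Gamma\bigl(\tfrac{\alpha}{2}\bigr)^2,
\end{equation*}
and it remains to show this exceeds $1$ for every $\alpha \in (0,1)$. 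I would do so by reducing via the reflection identities $\Gamma(\tfrac{\alpha}{2})\Gamma(1 - \tfrac{\alpha}{2}) = \pi/t_\alpha$ and $\Gamma(\tfrac{1-\alpha}{2})\Gamma(\tfrac{1+\alpha}{2}) = \pi/\cos(\pi\alpha/2)$ to an elementary trigonometric inequality, and verifying the endpoint asymptotics $\lambda(0, \alpha) = 1 + (\pi/2 + 2\log 2) \alpha + O(\alpha^2)$ as $\alpha \to 0$ and $\lambda(0, \alpha) \to \infty$ as $\alpha \to 1$, combined with a monotonicity or convexity check in between. This final gamma-function inequality is the main obstacle: all intermediate computations are routine once the structural identities are in place, but rigorously establishing the strict uniform bound $\lambda(0, \alpha) > 1$ across all of $(0,1)$ requires the careful reduction just sketched.
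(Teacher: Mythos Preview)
Your treatment of parts (1)--(3) is correct and essentially matches the paper. For part (3), the paper takes a slightly different route: instead of bounding $\widehat{p}_E$ directly, it uses the probabilistic identity (their \Cref{realimaginaryl}) to rewrite $\Real\ell(E)$ and $\Imaginary\ell(E)$ in terms of $\mathbb{E}\bigl[|R_{\loc}(E)|^\alpha\bigr]$ and $\mathbb{E}\bigl[|R_{\loc}(E)|^\alpha\sgn(-R_{\loc}(E))\bigr]$, then argues continuity of these moments through the continuity of $a(E),b(E)$ and the smoothness of $F_\alpha, G_\alpha$ in their arguments. Your Fourier--dominated-convergence argument is equally valid and arguably more direct, but it does rely on a uniform lower bound for $a(E)+b(E)$ on compacta (to control the stable-law decay of $\widehat{p}_E$), which you should justify via continuity of $y$ and $y(E)\neq 0$.

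For part (4), your computation of $a(0)$, $\ell(0)$, and the closed form $\lambda(0,\alpha) = \tfrac{\alpha\, t_\alpha(1+t_\alpha)}{2\pi^2}\,\Gamma\bigl(\tfrac{1-\alpha}{2}\bigr)^2\Gamma\bigl(\tfrac{\alpha}{2}\bigr)^2$ is correct and coincides with the paper's. However, the paper's final step is much cleaner than your proposed endpoint-plus-monotonicity argument. Rewriting via the reflection formula $\Gamma(\alpha/2) = \pi/\bigl(\Gamma(1-\alpha/2)\sin(\pi\alpha/2)\bigr)$ gives
\[
\lambda(0,\alpha) = \frac{\alpha}{2\sin(\pi\alpha/2)}\cdot\frac{\Gamma\bigl(\tfrac{1-\alpha}{2}\bigr)^2}{\Gamma\bigl(1-\tfrac{\alpha}{2}\bigr)^2}\cdot\bigl(1+\sin(\pi\alpha/2)\bigr),
\]
and then three one-line observations finish it: $\alpha/(2\sin(\pi\alpha/2))>1/\pi$ from $\sin x<x$; the ratio $\Gamma\bigl(\tfrac{1-\alpha}{2}\bigr)/\Gamma\bigl(1-\tfrac{\alpha}{2}\bigr)\geq\Gamma(1/2)/\Gamma(1)=\sqrt{\pi}$ by log-convexity of $\Gamma$ (since $\psi(\tfrac12-\tfrac\alpha2)<\psi(1-\tfrac\alpha2)$); and $1+\sin(\pi\alpha/2)>1$. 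Multiplying gives $\lambda(0,\alpha)>\tfrac{1}{\pi}\cdot\pi\cdot 1=1$. This avoids the global monotonicity analysis you identify as the main obstacle.
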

	
	Finally, we relate $\lambda(E,s,\alpha)$ to $\phi(s;z)$. The following lemma is proved in \Cref{s:provebootstrap}.
	
	\begin{lem}\label{l:bootstrap}
Fix $\alpha \in (0,1)$, $s\in (\alpha,1)$, and $E\in \bbR \setminus \{ 0\}$.
Let $\{\eta_j\}_{j=1}^\infty$ be a decreasing sequence such that $\lim_{j\rightarrow\infty} \eta_j =0$. Suppose 
that
$\lim_{j \rightarrow\infty} \Im  R_\star(E + \iu \eta_j) = 0$ in probability.
Then
\bex
\lim_{j \rightarrow \infty} \phi(s; E+ \iu \eta_j) = \log \lambda(E,s,\alpha).
\eex
\end{lem}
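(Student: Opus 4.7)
My approach follows the transfer-operator framework sketched in \Cref{LambdaE}. Recall that $\Phi_L(s; z)$ can be expressed as the pairing of a test function against the $L$-fold iterate of the integral operator $T_{s,\alpha,z}$ defined in \eqref{t11}, so $e^{\varphi(s;z)}$ equals the Perron--Frobenius eigenvalue of $T_{s,\alpha,z}$. The first task is to identify the weak limit of $R_\star(E+\iu\eta_j)$. By \Cref{l:compactness} every subsequence has a further weakly convergent subsequence, and the hypothesis forces every subsequential limit to be supported on $\bbR$ almost surely. Since $R_\star$ solves the fixed-point recursion coming from \eqref{r00zt1}, imposing reality on the limit and applying the Poisson thinning property (cf.\ \Cref{r:abreason}) would force
\[
R_\loc(E) = -\bigl(E + a(E)^{2/\alpha} S_1 - b(E)^{2/\alpha} S_2\bigr)^{-1} = -\bigl(E + \varkappa_\loc(E)\bigr)^{-1},
\]
where $a(E), b(E)$ solve \eqref{ab0} and $S_1, S_2$ are independent nonnegative $\alpha/2$-stable laws. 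By uniqueness the full sequence converges in law, so the density $p_{E+\iu\eta_j}$ of $-z - R_\star(z)^{-1}$ converges to the explicit density $p_E$ of \Cref{pkappae}.

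Next I would show that $\Phi_L(s; E+\iu\eta_j) \to \Phi_L(s; E)$ as $j \to \infty$ for each fixed $L \ge 1$, where $\Phi_L(s;E)$ denotes the analogous expectation built from $T_{s,\alpha,E}$. Pointwise this is dominated convergence for an $L$-fold integral; the integrable dominator comes from the fractional-moment bounds of \Cref{MomentFractional}, which control the factor $|T_{00_+}|^s / |z + T_{00_+}^2 R^{(0)}_{0_+ 0_+} + K_{0,v}|^s$ appearing in \eqref{r00v001} uniformly in $\eta \in (0,1)$. Combined with the $O(1/L)$ rate in \eqref{limitr0j2}, uniform in $\eta \in (0,1)$ once $|E|$ is bounded away from $0$ and $\infty$, this permits the interchange
\[
\lim_{j\to\infty} \varphi(s; E+\iu\eta_j) = \lim_{L \to \infty} \lim_{j \to \infty} \varphi_L(s; E+\iu\eta_j),
\]
identifying the right-hand side with the logarithm of the Perron--Frobenius eigenvalue of the limit operator $T_{s,\alpha,E}$.

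Finally, the limit operator $T_{s,\alpha,E}$ is now fully explicit: its integral kernel is built from the $\alpha/2$-stable density $p_E$ whose scaling and skewness parameters are determined by \Cref{pkappae} and \Cref{sumalpha}. Reimplementing the Fourier computation of \cite{tarquini2016level}, one finds that $T_{s,\alpha,E}$ admits a two-dimensional invariant subspace on which it acts as a $2 \times 2$ matrix whose characteristic polynomial is the quadratic in \Cref{lambdaEsalpha}; its largest eigenvalue is $\lambda(E,s,\alpha)$, yielding the lemma. The principal obstacle lies in the interchange above: one must verify that the constant in \eqref{limitr0j2} and the moment dominators of \Cref{MomentFractional} are truly uniform as $\eta \downarrow 0$, and that the fractional-moment integrals defining $\Phi_L$ remain finite in the boundary limit. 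These points ultimately rest on the absolute continuity of $R_\loc(E)$, which is automatic from its explicit representation as the inverse of a nondegenerate stable law.
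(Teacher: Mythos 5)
Your proposal is correct and follows essentially the same route as the paper: identify the weak limit $R_\star(E+\iu\eta_j)\to R_\loc(E)$ via the compactness lemma and the real fixed-point equation (the paper's \Cref{l:boundarybasics}), pass to the limit in $\Phi_L$ for fixed $L$ (the paper's \Cref{l:PhiEA}--\Cref{l:PhiE}), interchange the $j$- and $L$-limits using the $O(1/L)$ rate in \eqref{limitr0j2}, and identify the resulting exponential growth rate with the Perron--Frobenius eigenvalue $\lambda(E,s,\alpha)$ of the explicit transfer operator via the Fourier computation (the paper's \Cref{l:pfeigenvector} and \Cref{l:pfcompute}). The technical caveats you flag at the end --- uniformity of the $O(1/L)$ constant in $\eta$, and finiteness of the boundary moment integrals --- are exactly the points the paper addresses through the $\omega$-cutoff machinery in \Cref{s:productexpansion} and the norm bounds of \Cref{l:Tbounds}, and they resolve as you anticipate from the tail estimates on $p_E$.
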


\subsection{Conclusion} \label{s:proofmain1}

We now prove \Cref{p:213}, \Cref{t:main1}, and \Cref{c:main1}, assuming the previously stated preliminary results. For clarity, we separate the proofs of the first and second parts of \Cref{t:main1}.

\begin{proof}[Proof of \Cref{p:213}]
The first part is an immediate consequence of the third and fourth parts of \Cref{l:lambdalemma}. For the second part, fix $s_0 = (\alpha + 1)/2$. By the fifth part of \Cref{limitr0j}, there exists $E_0 = E_0(\alpha)> 1 $ such that 
\be\label{pt1}
\phi(s_0; E + \iu \eta) < -1\quad\text{for}\quad |E| \ge E_0,\,\eta \in (0,1).
\ee
Since $\phi(s;z)$ is nonincreasing in $s$ by the second part of \Cref{limitr0j}, the previous line implies that 
\be\label{pt2}
\phi(s; E + \iu \eta) < -1\quad\text{for}\quad |E| \ge E_0,\,\eta \in (0,1)\, ,  s\in (s_0, 1).
\ee
By the second part of \Cref{p:imvanish}, the bound \eqref{pt1}  implies that for any $E\in \bbR$ such that $|E| \ge E_0$, we have 
\be\label{pt3}
\limsup_{\eta \rightarrow 0} \Im R_{00} (E + \iu \eta) = 0
\ee
in probability.
Now fix $E$ such that $|E| \ge E_0$ and any $s \in (s_0 ,1)$. By \Cref{l:bootstrap} and \eqref{pt3},
\be\label{pt4}
\lim_{\eta \rightarrow 0} \varphi (s; E + \iu \eta) = \log \lambda (E, s, \alpha).
\ee
Then by \eqref{pt2} and \eqref{pt4}, it follows for any $s \in (s_0, 1)$ and $|E| \ge E_0$ that 
\be\label{usemelater}
\log \lambda (E, s, \alpha) \le -1.
\ee
Using the second part of \Cref{l:lambdalemma} and \eqref{usemelater}, we have $\lambda(E,\alpha) \le  e^{-1}$ for $|E|  \ge E_0$. This completes the proof of the second part of the proposition.
\end{proof}

\begin{proof}[Proof of \Cref{t:main1}(1)]

We may suppose $E\neq 0$, since in this case the result is immediate from \cite[Lemma 4.3(b)]{bordenave2011spectrum}. Suppose, for the sake of contradiction, that there exists some $E_0\in \bbR$ such that $\lambda(E_0, \alpha) > 1$ and
\begin{flalign*} 
 \liminf_{\eta \rightarrow 0}  \Imaginary R_{\star} (E_0 + \mathrm{i} \eta) = 0
\end{flalign*}
in probability. 
Then there exists a decreasing sequence $\{\eta_j\}_{j=1}^\infty$ such that $\lim_{j\rightarrow\infty} \eta_j =0$ and
\be\label{contradict2}
\lim_{j \rightarrow\infty} \Im  R_\star(E_0 + \iu \eta_j) = 0
\ee
in probability. 
Then using \Cref{l:bootstrap}, we have
\bex
\lim_{j \rightarrow \infty} \phi(s; E_0+ \iu \eta_j) = \log \lambda(E_0,s,\alpha)
\eex
for all $s \in (\alpha, 1)$. By the second and third parts of \Cref{l:lambdalemma}, and the assumption that $\lambda(E_0, \alpha) > 1$, there exists $\nu>0$ such that 
\bex
\lim_{s \rightarrow 1}\left( \lim_{j \rightarrow\infty} \  \phi(s; E_0+ \iu \eta_j)\right) \ge  \nu.
\eex
Then by \Cref{rimaginary0}, the previous inequality implies that there exists $c>0$ such that 
		\bex
		\liminf_{j \rightarrow \infty} \mathbb{P} \big[ \Imaginary R_{\star} (E + \mathrm{i} \eta_j) > c \big] > c.
		\eex
This contradicts the assumption that 
\eqref{contradict2} holds. We conclude that there exists  $c>0$ such that 
\bex
\liminf_{\eta \rightarrow 0} \mathbb{P} \big[ \Imaginary R_{\star} (E + \mathrm{i} \eta) > c \big] > c.
\eex
Using \Cref{l:upgrade}, the previous line implies the first part of \Cref{t:main1}.
\end{proof}

\begin{proof}[Proof of \Cref{t:main1}(2)]
We consider only positive $E$, since negative $E$ can be handled by  symmetric reasoning. For any $s \in (\alpha ,1)$, set
\bex
E_{\mathrm{loc},s } = \inf \{ E \in \bbR_+ : \lambda(E,s,\alpha) < 1 \text{ for all } x \ge E\}.
\eex
First observe that $E_{\loc, s}$ is finite. Indeed, by \eqref{usemelater}, there exists $E_0 = E_0(\alpha) > 1$ 
such that for any $s\in (\alpha ,1)$ and any $E\in \bbR$ such that $|E| \ge E_0$ we have $\lambda (E ,s , \alpha) < 1$, which gives $E_{\mathrm{loc},s } \le E_0$. Further, for $|E| \ge E_0$, the second part of Theorem 2.14 follows from \eqref{pt3}, so it remains to prove the claim for $E < E_0$.

To this end,  fix $s \in (\alpha , 1)$ and $E' \in ( E_{\loc,s}, E_0)$. 
By the definition of $E_{\loc,s}$, we have $\lambda(E', s, \alpha) < 1$.
Set
\bex
\kappa' = \kappa'(E',s) = \inf_{E \in [E', E_0]} \big( 1 - \lambda(E,s, \alpha) \big),
\qquad \kappa = \kappa(E',s) = \frac{\min ( \kappa', 1 )}{2}.
\eex
Since $E \mapsto \lambda(E,s, \alpha)$ is continuous (by \Cref{l:lambdalemma}) and strictly less than $1$ on the compact interval $[E', E_0]$, we have $\kappa' > 0$.

With $I = [E', E_0]$ and $\omega$ equal to $\kappa$, let $\delta$ be the constant given by \Cref{l:phicontinuity}. 
Let $\{E_i\}_{i=1}^M$ be a set of real numbers such that $E_M = E'$ and 
\bex
E_{i+1} < E_i, \qquad | E_{i+1} - E_i | < \delta
\eex
for all $i \in \unn{0}{M-1}$. 
We claim that for every $i\in \unn{0}{M}$, we have 
\be\label{bob}
\limsup_{\eta \rightarrow 0} \exp\big(\phi(s; E_i + \iu \eta)\big) \le  \lambda(E_i, s , \alpha).
\ee
We proceed by induction on $i$. The base case $i=0$, at $E_0$, follows from \eqref{pt4}. Next, assuming the induction hypothesis holds for some $i\in \unn{0}{M-1}$, we will show it holds for $i+1$. Using the induction hypothesis at $i$, and the definition of $\kappa$, we have 
\bex
\limsup_{\eta \rightarrow 0} \exp\big(\phi(s; E_i + \iu \eta)\big)  \le  \lambda(E_i, s , \alpha) \le 1 - 2\kappa.
\eex
By \Cref{l:phicontinuity} and the definition of $\delta$, 
\begin{align}\label{p19}
\limsup_{\eta \rightarrow 0} \varphi (s; E_{i+1} + \mathrm{i} \eta)
&< \limsup_{\eta \rightarrow 0} \varphi (s; E_i + \mathrm{i} \eta) + \kappa\le  1 - \kappa.
\end{align}
Using \Cref{p:imvanish}, the previous equation implies
\be\label{jcvg}
\lim_{\eta \rightarrow 0} \Im R_\star(E_{i+1} + \iu \eta) = 0.
\ee
Let $\{\eta_j\}_{j=1}^\infty$ be a sequence such that
\be\label{otter}
\lim_{j \rightarrow \infty}
\exp \left( \phi(s; E_{i+1} + \iu \eta_j) \right)= \limsup_{\eta\rightarrow 0} \exp \left( \phi(s; E_{i+1} + \iu \eta) \right).
\ee
Then using \eqref{jcvg} and \Cref{l:bootstrap}, equation \eqref{otter} implies that 
\bex
\limsup_{\eta \rightarrow 0} \exp \big( \phi(s; E_{i+1} + \iu \eta) \big) = \lim_{j \rightarrow \infty}
\exp \big( \phi(s; E_{i+1} + \iu \eta_j) \big) = \lambda(E_{i+1}, s, \alpha).   
\eex 
This completes the induction step and shows that \eqref{bob} holds for all $i\in \unn{0}{M}$. In particular, taking $i=M$, we have 
\be
\limsup_{\eta \rightarrow 0} \exp\big(\phi(s; E' + \iu \eta)\big) \le  \lambda(E', s , \alpha).
\ee
Using \eqref{p:imvanish}, this implies $ \lim_{\eta \rightarrow 0} \Im R_\star(E' + \iu \eta) = 0$.

Since $s \in (\alpha ,1)$ and $E'\in (E_{\loc,s}, E_0)$ were arbitrary, we conclude that for every $s\in (\alpha ,1)$, the localization statement 
\be\label{myloc}
\lim_{\eta \rightarrow 0} \Im R_\star(E + \iu \eta) = 0
\ee
holds in probability for every $E$ such that $|E| > E_{\loc,s}$. 

By the second part of \Cref{p:213} and the continuity of $E\mapsto \lambda(E ,\alpha)$ (given by \Cref{l:lambdalemma}), we have
\bex
E_{\mathrm{loc} } = \inf \{ E \in \bbR_+ : \lambda(E,\alpha) < 1 \text{ for all } x \ge E\}.
\eex
Then the continuity of $\lambda(E, s, \alpha)$ and the equation $\lambda(E,\alpha) = \lim_{s\rightarrow 1} \lambda(E,s,\alpha)$ (provided by the second part of \Cref{l:lambdalemma}) together show that $
\lim_{s\rightarrow 1} E_{\mathrm{loc},s } = E_{\mathrm{loc} }.$
Hence, for any $E$ such that $|E| > E_{\mathrm{loc} }$, there exists some $s\in (\alpha,1)$ such that $|E| > E_{\mathrm{loc},s }$, and by what was shown previously the localization claim \eqref{myloc} holds at $E$. This completes the proof of the second part of \Cref{t:main1}.
\end{proof}


\begin{proof}[Proof of \Cref{c:main1}]
The first part of the corollary follows from the first part of \Cref{t:main1} and the second part of \Cref{l:loccriteria}. The second part of the corollary follows form the second part of \Cref{t:main1} and the first part of \Cref{l:loccriteria}. 
The final part of the corollary follows from \Cref{t:main2}, the first two parts of the corollary, \Cref{p:213}, and the continuity of $\lambda(E,\alpha)$ in $E$ provided by the third part of \Cref{l:lambdalemma}.
\end{proof} 
	
	\section{Miscellaneous Preliminaries} 
	
	\label{Estimates0} 
	
	In this section we collect various miscellaneous results that will be used in what follows. We begin in \Cref{s:fouriertransform} with our conventions for the Fourier transform. In \Cref{ProofSum} we state properties of stable laws and Poisson point processes, and then provide properties of random trees in \Cref{EstimateTreeVertex}. In \Cref{EquationsResolvent} we recall various resolvent identities, and in \Cref{ResolventDensity} and \Cref{ProofSigma} we establish properties of the diagonal resolvent entry $R_{\star} (z)$ from \Cref{r}. Finally, in \Cref{s:rebdd} we show that any real boundary value of $R_\star(z)$ must be equal to $R_\loc(E)$.
	
		\subsection{Fourier Transform}\label{s:fouriertransform}
Our convention for the Fourier transform of a function $f\in L^1(\bbR)$ is 
\be\label{fourier}
\hat f(w) = \int_{-\infty}^\infty \exp(\iu wx) f(x) \, dx.
\ee
This definition is chosen to match the definition of the characteristic function of a random variable $X$ as $\E \left[ \exp( \iu t X)  \right]$. The inversion formula then reads
\be
f(x) =\frac{1}{2\pi} \int_{-\infty}^\infty
\exp(-\iu w x ) \hat f(w)\, dw,
\ee
which holds whenever $f\in L^1(\bbR)$ and $\hat f\in L^1(\bbR)$.
For notational convenience, we sometimes write $\mathcal F [f](w)$ in place of $\hat f(w)$, and $\mathcal F^{-1} [f](x)$ for the inverse transform $(2\pi)^{-1}\int_{\bbR} \exp(-iw x) f(w) \, dw$.

Recall that the convolution of two functions was defined in \Cref{s:notation}. For $f,g \in L^2(\bbR)$, we have under our convention the formulas
\bex
\mathcal F [f \ast g] = \mathcal F [f] \cdot \mathcal F [g], \qquad  \mathcal F [fg] = \frac{1}{2 \pi} \big( \mathcal F[f] \ast \mathcal F[g] \big).
\eex

	\subsection{Properties of Stable Laws and Poisson Point Processes}
	
	\label{ProofSum}
	
	In this section we provide several facts about stable laws and Poisson point processes. We begin with the following lemma stating that a stable law can be represented as a random linear combination of entries sampled from a Poisson point process with the heavy-tailed intensity $\frac{\alpha}{2} x^{-\alpha / 2 - 1} dx$.
	
	\begin{lem} 
		
		\label{sumaxi}
		
		Let $\boldsymbol{a} = (a_j)_{j \ge 1}$ denote a family of mutually independent, identically distributed, real random variables with law $a$. For each $j \ge 1$, set $b_j = \max \{ a_j, 0 \}$ and $c_j = \min \{ a_j, 0 \}$. Set
		\begin{flalign} 
			\label{bc} 
			B =  \mathbb{E} \big[ (b_i)^{\alpha / 2} \big]; \qquad C = \mathbb{E} \big[  (-c_i)^{\alpha / 2} \big].
		\end{flalign}
		
		\noindent If $\boldsymbol{\zeta} = \{ \zeta_j \}_{j \ge 1}$ is a Poisson point process with intensity measure $\frac{\alpha}{2} x^{-\alpha / 2 - 1} dx$, then the random variable $Z = \sum_{j = 1}^{\infty} a_j \zeta_j$ is an $\big( \frac{\alpha}{2}; \sigma; \beta)$-stable law, where 
		\begin{flalign*} 
			\sigma = (B + C)^{2 / \alpha} = \mathbb{E} \big[ |a|^{\alpha / 2} \big]^{2 / \alpha}; \qquad  \beta = \displaystyle\frac{B - C}{B + C}.
		\end{flalign*} 
		
	\end{lem}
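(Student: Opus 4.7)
The plan is to realize $Z$ as a difference of two independent sums of points drawn from Poisson processes on $\mathbb{R}_{>0}$ whose intensities are of the canonical form $c \cdot \frac{\alpha}{2} y^{-\alpha/2-1}dy$, and then identify each summand as a nonnegative $(\alpha/2)$-stable law via the Laplace transform, before invoking \Cref{sumalpha} (with $\alpha$ replaced by $\alpha/2$) to read off the scaling and skewness of the difference.

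First, by the marking theorem for Poisson processes, the pairs $(\zeta_j, a_j)_{j \ge 1}$ form a Poisson point process $\Xi$ on $\mathbb{R}_{>0} \times \mathbb{R}$ with intensity $\frac{\alpha}{2}\zeta^{-\alpha/2-1}d\zeta \otimes \mu_a(da)$, where $\mu_a$ is the law of $a$. Split $\Xi = \Xi_+ \sqcup \Xi_-$ according to whether the mark is positive or negative; by the restriction property, these are independent Poisson processes, so the random variables
\begin{equation*}
Z_+ = \sum_{j : a_j > 0} a_j \zeta_j, \qquad Z_- = \sum_{j : a_j < 0} (-a_j) \zeta_j
\end{equation*}
are independent and satisfy $Z = Z_+ - Z_-$. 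Push forward $\Xi_+$ under $(\zeta, a) \mapsto a\zeta$ and $\Xi_-$ under $(\zeta, a) \mapsto -a\zeta$ to obtain Poisson point processes $N_+, N_-$ on $\mathbb{R}_{>0}$. For any $y > 0$, Fubini gives
\begin{equation*}
\mathbb{E}\big[N_+((y,\infty))\big] = \int_{a > 0} \int_0^\infty \one[a\zeta > y]\, \tfrac{\alpha}{2}\zeta^{-\alpha/2-1}d\zeta\, \mu_a(da) = y^{-\alpha/2} \int_{a>0} a^{\alpha/2}\mu_a(da) = B y^{-\alpha/2},
\end{equation*}
so $N_+$ has intensity $B \cdot \frac{\alpha}{2} y^{-\alpha/2-1}dy$, and analogously $N_-$ has intensity $C \cdot \frac{\alpha}{2} y^{-\alpha/2-1}dy$.

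To identify the law of $Z_+$, I would apply Campbell's formula for the Laplace functional of a Poisson process: for $t \ge 0$,
\begin{equation*}
\mathbb{E}[e^{-tZ_+}] = \exp\bigg(-B \int_0^\infty (1 - e^{-ty}) \tfrac{\alpha}{2} y^{-\alpha/2-1} dy\bigg) = \exp\big(-\Gamma(1-\alpha/2) \cdot B \cdot t^{\alpha/2}\big),
\end{equation*}
where the elementary identity in the second step follows from integration by parts (a gamma function computation after substitution $u = ty$). Comparing with \eqref{ytsigma}, this is exactly the Laplace transform of a nonnegative $\alpha/2$-stable law with scaling parameter $B^{2/\alpha}$; similarly, $Z_-$ is nonnegative $\alpha/2$-stable with scaling $C^{2/\alpha}$. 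Finally, applying \Cref{sumalpha} (with index $\alpha/2$) to $Z = Z_+ - Z_-$ and solving the pair of equations $B^{2/\alpha} = \sigma \cdot \big(\tfrac{1+\beta}{2}\big)^{2/\alpha}$ and $C^{2/\alpha} = \sigma \cdot \big(\tfrac{1-\beta}{2}\big)^{2/\alpha}$ yields the stated values $\sigma = (B+C)^{2/\alpha}$ and $\beta = (B-C)/(B+C)$. The main (rather mild) obstacle is bookkeeping around convergence and measurability: one must justify that the sum defining $Z$ converges a.s.\ and that Campbell's formula applies, but both reduce to finiteness of $\int_0^\infty (1-e^{-ty})(B+C) \tfrac{\alpha}{2} y^{-\alpha/2-1}dy$, which holds because $\alpha/2 \in (0,1)$ and $B, C < \infty$ by hypothesis.
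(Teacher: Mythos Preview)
Your proposal is correct and follows essentially the same route as the paper's proof: both split $Z$ by the sign of $a_j$ using Poisson thinning/marking into independent nonnegative pieces $Z_+$ and $Z_-$, compute their Laplace transforms via the Laplace functional of a Poisson process to identify each as a nonnegative $\alpha/2$-stable law with scaling $B^{2/\alpha}$ and $C^{2/\alpha}$ respectively, and then invoke \Cref{sumalpha}. The only cosmetic difference is that you first push forward to compute the intensity of $N_\pm$ explicitly, whereas the paper keeps the expectation over $a$ inside the exponent; the two computations are equivalent.
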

	
	\begin{proof}
		
		Let $\mathcal{J} = \big\{ j \in \mathbb{Z}_{\ge 1} : a_j = b_j \}$. Then $Z = X - Y$, where $X = \sum_{j \in \mathcal{J}} b_j \zeta_j$ and $Y = -\sum_{k \notin \mathcal{J}} c_k \zeta_k$. Observe that $X$ and $Y$ are independent nonnegative random variables, as $(\zeta_j)_{j \in \mathcal{J}}$ is independent from $(\zeta_{j'})_{j' \notin \mathcal{J}}$ from the Poisson thinning property. Moreover, by the L\'{e}vy--Khintchine representation, we have for any $t \in \mathbb{R}_{\ge 0}$ that  
		\begin{flalign*}
			 \mathbb{E} [e^{-tX}] = \mathbb{E} \Bigg[ \exp \bigg( -t \displaystyle\sum_{j \in \mathcal{J}} a_j \zeta_j \bigg) \Bigg] & = \exp \Bigg( \frac{\alpha}{2} \displaystyle\int_0^{\infty} \mathbb{E} [e^{-xta} - 1]  \cdot  x^{-\alpha/2-1} dx \Bigg) \\ 
			& = \exp \bigg( - \Gamma \Big(1 - \displaystyle\frac{\alpha}{2} \Big) \cdot B |t|^{\alpha / 2} \bigg), 
		\end{flalign*} 
	
		\noindent and similarly
		\begin{flalign*}
			& \mathbb{E} [e^{-tY}] = \mathbb{E} \Bigg[ \exp \bigg( -t \displaystyle\sum_{j \notin \mathcal{J}} a_j \zeta_j \bigg) \Bigg] = \exp \bigg( - \Gamma \Big(1 - \displaystyle\frac{\alpha}{2} \Big) \cdot C |t|^{\alpha / 2} \bigg).
		\end{flalign*}
		
		\noindent  Thus, $X$ and $Y$ are independent, nonnegative $\frac{\alpha}{2}$-stable laws with scaling parameters $B^{2 / \alpha}$ and $C^{2 / \alpha}$, respectively. Since $Z = X - Y$, we deduce the lemma from \Cref{sumalpha}.
	\end{proof} 

	The next lemma bounds the denstiy of a random linear combination of entries sampled from the restriction of the Poisson point process with intensity $\frac{\alpha}{2} x^{-\alpha / 2 - 1} dx$ to the complement of some interval $[u, v]$.

	\begin{lem} 
		
	\label{zuv} 

	For any positive real numbers $u < v$ and $\kappa \in (0, 1/4)$, there exists a constant $c = c(u, v, \kappa) \in (0, 1)$ such that the following holds. Let $\boldsymbol{a} = (a_1, a_2, \ldots )$ denote a family of mutually independent, identically distributed, real random variables with law $a$. Defining $B$ and $C$ as in \eqref{bc}, assume that $B, C \in (\kappa, \kappa^{-1})$. Let $\boldsymbol{\zeta} = (\zeta_1, \zeta_2, \ldots )$ denote a Poisson point process on $\mathbb{R}_{> 0}$ with intensity proportional to $\frac{\alpha}{2} x^{-\alpha / 2 - 1} \cdot \one_{x \notin [u, v]} \cdot dx$, and denote $Z = \sum_{j = 1}^{\infty} a_j \zeta_j$. The following two statements hold for any interval $J \subset \mathbb{R}$.
	
	\begin{enumerate} 
		
		\item We have 
		\begin{flalign}
			\label{estimatezj1} 
		\mathbb{P} [Z \in J] \le c^{-1}  \displaystyle\int_J \displaystyle\frac{dx}{\big( |x| + 1 \big)^{\alpha/2 + 1}}.
		\end{flalign}
	
		\item Further assume for any interval $I \subset \mathbb{R}$ that 
		\begin{flalign}
			\label{ainterval} 
		\mathbb{P} [a \in I] \ge \kappa \displaystyle\int_I \displaystyle\frac{dx}{ \big( |x| + 1 \big)^{\alpha/2 +1}}.
		\end{flalign}

	\noindent Then, we have 
	\begin{flalign}
		\label{estimatezj2} 
		 \mathbb{P} [Z \in J] \ge c  \displaystyle\int_J \displaystyle\frac{dx}{\big( |x| + 1 \big)^{\alpha/2 + 1}}.
	\end{flalign}

	\end{enumerate} 

	\end{lem}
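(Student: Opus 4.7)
The plan is to compare $Z$ with the $\alpha/2$-stable law obtained by filling in the Poisson intensity on $[u,v]$, and then use Fourier/Lévy analysis for the upper bound and a one-big-jump argument for the lower bound. Specifically, let $\boldsymbol{\zeta}^\flat$ be an independent Poisson point process on $[u,v]$ with intensity $\tfrac{\alpha}{2}x^{-\alpha/2-1}\,dx$ and $(a_j^\flat)$ independent iid copies of $a$; set $Z^\flat = \sum_j a_j^\flat \zeta_j^\flat$. By Poisson superposition, $Z' := Z + Z^\flat$ has the law of $\sum_j a_j \zeta_j'$ for $(\zeta_j')$ a Poisson process on $(0,\infty)$ with full intensity, $Z$ and $Z^\flat$ are independent, and (by \Cref{sumaxi}) $Z'$ is an $(\alpha/2;\sigma;\beta)$-stable law with $\sigma = (B+C)^{2/\alpha}$ and $|\beta|$ both controlled in terms of $\kappa$ via $B, C \in (\kappa, \kappa^{-1})$. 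Classical bounds for stable densities in this regime give $c_1(|x|+1)^{-\alpha/2-1} \le p_{Z'}(x) \le c_1^{-1}(|x|+1)^{-\alpha/2-1}$ for some $c_1 = c_1(\kappa)$.

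For the upper bound \eqref{estimatezj1}, I would use Fourier analysis. The characteristic function
\[
\chi_Z(t) = \exp\Bigl(\tfrac{\alpha}{2}\int_{(0,u)\cup(v,\infty)} \bigl(\mathbb{E}[e^{\iu tax}]-1\bigr)\,x^{-\alpha/2-1}\,dx\Bigr)
\]
satisfies $|\chi_Z(t)| \le C\exp(-c|t|^{\alpha/2})$ for large $|t|$, via a rescaling $y = |t|x$ in the small-$x$ contribution and using $B+C \ge 2\kappa$ to produce a negative leading term. Hence $\chi_Z \in L^1$ and $Z$ has a bounded continuous density $p_Z$. The polynomial decay $p_Z(x) \le c^{-1}(|x|+1)^{-\alpha/2-1}$ then follows either from oscillatory-integral analysis of the Fourier inversion (extracting the singular $|t|^{\alpha/2}$ behavior of $\chi_Z$ at $t=0$, inherited from the Lévy symbol) or by combining boundedness of $p_Z$ with the Lévy-measure tail estimate $\nu_Z\bigl(\{|y|>t\}\bigr) \le C(B+C)t^{-\alpha/2}$ (which uses $B+C \le 2\kappa^{-1}$) and a standard tail-density interpolation for smooth infinitely divisible densities.

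For the lower bound \eqref{estimatezj2} under \eqref{ainterval}, I would run a one-big-jump argument. Given $J$ with center $x_0$, set $r = |x_0| + v$ and condition on the event $\mathcal{E}$ that $\boldsymbol{\zeta}$ has exactly one point $\zeta_1 \in (r, 2r)$ and none in $(2r, \infty)$; a direct computation shows $\mathbb{P}[\mathcal{E}]$ is of order $r^{-\alpha/2}$ uniformly in $x_0$. On $\mathcal{E}$, decompose $Z = a_1 \zeta_1 + Z_{\mathrm{rest}}$, where $Z_{\mathrm{rest}}$ collects contributions from $\zeta_j \in (0, u) \cup (v, r)$. An $\alpha/2$-moment estimate on $Z_{\mathrm{rest}}$ (using $B+C \le 2\kappa^{-1}$ and explicit $x^{\alpha/2} \cdot x^{-\alpha/2-1}$ integrals) yields $|Z_{\mathrm{rest}}| \le Cr$ with probability $\ge 1/2$; conditionally, $Z \in J$ becomes $a_1 \in (J - Z_{\mathrm{rest}})/\zeta_1$, an interval of length $\ge c|J|/r$ contained in some $[-C', C']$. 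Applying \eqref{ainterval} gives $\mathbb{P}[a_1 \in (J - Z_{\mathrm{rest}})/\zeta_1] \ge c\kappa|J|/r$, and multiplying by $\mathbb{P}[\mathcal{E}]$ of order $r^{-\alpha/2}$ yields $\mathbb{P}[Z \in J] \ge c' \kappa|J|(|x_0|+1)^{-\alpha/2-1} \ge c'' \int_J(|x|+1)^{-\alpha/2-1}\,dx$ for short intervals $J$; longer intervals follow by summing over dyadic sub-intervals.

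The main technical obstacle I anticipate is the sharp pointwise upper bound on $p_Z$ uniformly across all scales of $x$, which requires simultaneously handling the exponential decay of $\chi_Z$ at large $|t|$ and its non-smooth $|t|^{\alpha/2}$ behavior at $t = 0$, and interpolating cleanly between these regimes to recover the stable-like decay. Quantitative tracking of constants in terms of $u$, $v$, and $\kappa$, in particular through the lower bound on $p_{Z'}$ (which depends on $|\beta|$ being bounded away from $1$, hence on both $B, C > \kappa$), also requires careful bookkeeping.
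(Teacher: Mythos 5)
Your lower bound sketch has a genuine gap at the step ``An $\alpha/2$-moment estimate on $Z_{\mathrm{rest}}$ (using $B+C \le 2\kappa^{-1}$ and explicit $x^{\alpha/2}\cdot x^{-\alpha/2-1}$ integrals) yields $|Z_{\mathrm{rest}}| \le Cr$ with probability $\ge 1/2$.'' The Campbell-type quantity you have in mind is
\begin{equation*}
\mathbb{E}\Bigl[\sum_j |a_j\zeta_j|^{\alpha/2}\Bigr] = (B+C)\cdot\frac{\alpha}{2}\int_{(0,u)\cup(v,r)} x^{\alpha/2}\, x^{-\alpha/2-1}\,dx,
\end{equation*}
and the integrand $x^{\alpha/2}\cdot x^{-\alpha/2-1} = x^{-1}$ is not integrable at the origin: the Poisson process accumulates infinitely many small points, so the small-$x$ piece diverges and the computation is vacuous. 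No substitute exponent rescues it either, since for $\gamma\le\alpha/2$ the integral $\int_0^u x^{\gamma-\alpha/2-1}\,dx$ still diverges, while for $\gamma>\alpha/2$ the hypothesis controls only $\mathbb{E}[|a|^{\alpha/2}]$, not $\mathbb{E}[|a|^\gamma]$. The conclusion $\mathbb{P}[|Z_{\mathrm{rest}}|\le Cr]\ge 1/2$ is true, but you must reach it differently---most naturally by applying the already-proved upper bound of part (1) to $Z_{\mathrm{rest}}$ (a sum of exactly the same type, with the intensity cut off above $r$; the coupling constant is uniform in $r\ge v$), which gives $\mathbb{P}[|Z_{\mathrm{rest}}|>M]\le c^{-1}M^{-\alpha/2}$. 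This is exactly how the paper bounds the analogous quantity $Z_0$.

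For the upper bound you take a substantially heavier route than necessary, and in fact you have already built the tool that makes it immediate. Having set up the Poisson superposition $Z'=Z+Z^\flat$ with $Z,Z^\flat$ independent and $Z'$ a genuine $(\alpha/2)$-stable law with density comparable to $(|x|+1)^{-\alpha/2-1}$, note that $\{Z^\flat=0\}$ is exactly the event that the full process has no points in $[u,v]$, of probability $c_1=c_1(u,v,\alpha)>0$, so by independence $\mathbb{P}[Z\in J]\,c_1=\mathbb{P}[Z\in J,\,Z^\flat=0]\le\mathbb{P}[Z'\in J]$, and you are done. This coupling is the paper's argument and removes all of the Fourier analysis; your ``tail-density interpolation'' alternative in any case does not clearly close (boundedness of $p_Z$ plus the tail bound $\mathbb{P}[|Z|>t]\lesssim t^{-\alpha/2}$ does not by itself force $p_Z(x)\lesssim|x|^{-\alpha/2-1}$), and the oscillatory-integral route, while feasible, is the technical obstacle you yourself flag. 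Your one-big-jump decomposition for the lower bound is, incidentally, genuinely different from the paper's: you put the big $\zeta_1\in(r,2r)$ at scale $r\sim|x_0|$ (an event of probability $\sim r^{-\alpha/2}$) and read off the density of $a$ near the origin, whereas the paper keeps $\zeta_1\in[u/2,u]$ bounded (an event of constant probability) and reads off the density of $a$ at scale $|x_0|$; both bookkeepings give $|J|\,(|x_0|+1)^{-\alpha/2-1}$ and either works once the $Z_{\mathrm{rest}}$ estimate is repaired.
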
 

	\begin{proof} 
				Set $J = [x_0, y_0]$. Let $\boldsymbol{\zeta}' = (\zeta_1', \zeta_2', \ldots )$ denote a Poisson point process on $\mathbb{R}_{> 0}$ with intensity $ \frac{\alpha}{2} x^{-\alpha / 2 - 1} dx$, and set $Z' = \sum_{j = 1}^{\infty} a_j \zeta_j'$. Then, there exists a constant $c_1 = c_1 (u, v) > 0$ such that none of the $\zeta_i'$ are in $[u, v]$ with probability at least $c_1$. In particular, there exists a coupling of $\boldsymbol{\zeta}$ and $\boldsymbol{\zeta}'$ such that $\mathbb{P} [\boldsymbol{\zeta} = \boldsymbol{\zeta}'] \ge c_1$. Since the law of $Z$ is that of $Z'$, conditional on the event $\{ \boldsymbol{\zeta} = \boldsymbol{\zeta}' \}$, this together with the fact that \Cref{sumaxi} implies the existence of a constant $c_2 = c_2 (\kappa) \in (0, 1)$ such that 
		\begin{flalign*}
			\mathbb{P} [Z' \in J] \le c_2^{-1} \displaystyle\int_J \displaystyle\frac{dx}{\big( |x| + 1 \big)^{\alpha/2 + 1}},
		\end{flalign*}
		
		\noindent yields \eqref{estimatezj1}.
		
		To establish \eqref{estimatezj2}, observe there exists a constant $c_3 = c_3 (u) > 0$ such that 
		\begin{flalign*} 
			\mathbb{P} [\mathscr{E}] \ge c_3, \qquad \text{where} \qquad \mathscr{E} = \bigg\{ \displaystyle\frac{u}{2} \le \zeta_1 \le u \bigg\} \cap \bigg\{ \zeta_2 < \displaystyle\frac{u}{2} \bigg\}.
		\end{flalign*}  
	
		\noindent Denoting $Z_0 = \sum_{j = 2}^{\infty} a_j \zeta_j$, we have that $a_1 \zeta_1$ and $Z_0$ are independent conditional on the event $\mathscr{E}$. Moreover, on $\mathscr{E}$, the random variable $\zeta_1$ has density proportional to $ x^{-\alpha / 2 - 1} \cdot \one_{x \in [u/2, u]} \cdot dx$. Now, by the upper bound \eqref{estimatezj1}, there exists a constant $C_1 = C_1 (u, v, \kappa) > 1$ such that $\mathbb{P} \big[ |Z_0| \le C_1 \big| \mathscr{E} \big] \ge \frac{1}{2}$. This, together with the law of $\zeta_1$ and the independence of $(\zeta_1, Z_0)$, implies
		\begin{flalign*}
			\mathbb{P} [Z \in J] & \ge \mathbb{P} [\mathscr{E}] \cdot \mathbb{P} \big[ a_1 \zeta_1 + Z_0 \in [x_0, y_0] \big| \mathscr{E} \big] \\
			& \ge c_3 \cdot \mathbb{P} \big[ a_1 \zeta_1 \in [x_0 - Z_0, y_0 - Z_0] \big| \mathscr{E} \big] \\
			& \ge \displaystyle\frac{c_3}{4} \cdot \mathbb{P} \Big[ a_1 \zeta_1 \in [x_0 - Z_0, y_0 - Z_0] \Big| \mathscr{E} \cap \big\{ |Z_0| \le C_1 \big\} \cap \big\{ |a_1| \ge \kappa \big\} \Big] \ge c_4 \displaystyle\int_J\displaystyle\frac{dx}{\big( |x| + 1 \big)^{\alpha/2 + 1}}
		\end{flalign*}
	
		\noindent for some constant $c_4 = c_4 (u, v, \kappa) > 0$; here, to deduce the last inequality, we applied \eqref{ainterval}. This confirms the lower bound \eqref{estimatezj2}.
	\end{proof}

	We next state a version of Campbell's theorem for Poisson point processes. To that end, let $\mathcal{M} (\mathbb{R}_{> 0})$ denote the space of locally finite measures on $\mathbb{R}_{> 0}$, and (following \cite{daley2008introduction}) let $\mathcal N_{\mathbb{R}_{> 0}}^{\#*}$ denote the set of all simple counting measures; these are boundedly finite integer-value measures $N$ on $\mathbb{R}_{> 0}$ such that $N(\{x\}) =0$ or $1$ for each $x \in \mathbb{R}_{> 0}$. The latter set is endowed with the $\sigma$-algebra $\mathcal B (\mathcal N_{\mathbb{R}_{> 0}}^{\#*})$, the smallest $\sigma$-algebra such that for $A \in \mathcal B(\mathbb{R}_{> 0})$, the map $\mathcal N_{\mathbb{R}_{> 0}}^{\#*} \rightarrow \bbR$ given by $N \mapsto N(A)$ is measurable. Here $\mathcal B(\mathbb{R}_{> 0})$ is the Borel $\sigma$-algebra on $\mathbb{R}_{> 0}$
	
	For any locally finite sequence of points $\Xi = (\xi_1, \xi_2, \ldots ) \subset \mathbb{R}_{> 0}$, we frequently also use $\Xi$ to denote the measure $\sum_{\xi \in \Xi} \delta_{\xi} \in \mathcal N_{\mathbb{R}_{> 0}}^{\#*}$. For any $x \in \mathbb{R}$, we further set $\Xi - \delta_x \in \mathcal N_{\mathbb{R}_{> 0}}^{\#*}$ equal to $\sum_{\xi \in \Xi \setminus x} \delta_{\xi}$ if $x \in \Xi$ and $\sum_{\xi \in \Xi} \delta_{\xi}$ if $x \notin \Xi$.

	\begin{lem}[{\cite[Lemma 13.1.II, Definition 13.1.I (b), and Exercise 13.1.1 (a)]{daley2008introduction}}] 
		
	\label{fidentityxi} 
	
	Let $\Xi$ denote a Poisson point process on $\mathbb{R}_{> 0}$ with intensity measure $\mu \in \mathcal{M} (\mathbb{R}_{> 0})$. For any nonnegative, measurable function $f : \mathbb{R}_{> 0} \times \mathcal N_{\mathbb{R}_{> 0}}^{\#*} \rightarrow \mathbb{R}_{\ge 0}$, we have  
	\begin{flalign*}
		\mathbb{E} \Bigg[ \displaystyle\sum_{j = 1}^{\infty} f(\xi_j, \Xi \setminus \xi_j) \Bigg] =  \displaystyle\int_0^{\infty} \mathbb{E} \big[ f(x, \Xi - \delta_x) \big] d \mu (x).
	\end{flalign*}

	\noindent In particular, if $\mu(dx) =  c x^{-\alpha-1} \cdot \one_{x \in I} \cdot dx$ for some constant $c > 0$ and interval $I \subset \mathbb{R}$, then
	\begin{flalign}
		\label{fxi} 
		\mathbb{E} \Bigg[ \displaystyle\sum_{j = 1}^{\infty} f(\xi_j, \Xi \setminus \xi_j) \Bigg] = c \displaystyle\int_I \mathbb{E} \big[ f(x, \Xi - \delta_x) \big] \cdot x^{-\alpha-1} dx.
	\end{flalign}

	\end{lem}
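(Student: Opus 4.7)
This is Mecke's identity (the refined Campbell theorem) for Poisson point processes, with \eqref{fxi} an immediate specialization. My plan is to reproduce the classical argument in three stages: reduction to product indicators, verification on indicators via discretization, and substitution for the explicit intensity.

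For the reduction, it suffices to verify the identity for $f(x,N) = \mathbf{1}_A(x)\mathbf{1}_{\mathcal{B}}(N)$ with $A \in \mathcal{B}(\mathbb{R}_{>0})$ of finite $\mu$-measure and $\mathcal{B} \in \mathcal{B}(\mathcal{N}_{\mathbb{R}_{>0}}^{\#*})$. Such product indicators generate the product $\sigma$-algebra, and both sides of the claimed identity are nonnegative and $\sigma$-additive in $f$, so linearity on simple functions followed by monotone convergence extends the identity to all nonnegative measurable $f$.

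For the indicator case, I would partition $A = \bigsqcup_k I_k^{(n)}$ into finitely many Borel pieces with $\max_k \mu(I_k^{(n)}) \to 0$, and set $\Xi^{(k)} := \Xi|_{\mathbb{R}_{>0}\setminus I_k^{(n)}}$. By the defining independent-increments property of a Poisson process, $\Xi(I_k^{(n)}) \sim \mathrm{Poisson}(\mu(I_k^{(n)}))$ is independent of $\Xi^{(k)}$. Splitting the expectation $\mathbb{E}\big[\sum_{\xi \in \Xi \cap I_k^{(n)}} \mathbf{1}_{\mathcal{B}}(\Xi - \delta_\xi)\big]$ according to the value of $\Xi(I_k^{(n)})$: the contribution from having $\geq 2$ points is $O(\mu(I_k^{(n)})^2)$ and vanishes in sum as the mesh tends to zero; the single-point contribution is exactly $e^{-\mu(I_k^{(n)})}\int_{I_k^{(n)}}\mathbb{P}[\Xi^{(k)}\in \mathcal{B}]\,d\mu(x)$, since conditional on $\{\Xi(I_k^{(n)})=1\}$ the unique point has law $\mu|_{I_k^{(n)}}/\mu(I_k^{(n)})$ and the rest of the process equals $\Xi^{(k)}$. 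Summing over $k$ and passing to the limit, $e^{-\mu(I_k^{(n)})} \to 1$ and $\Xi^{(k)} \to \Xi$ setwise on every bounded Borel set, so dominated convergence yields $\int_A \mathbb{P}[\Xi \in \mathcal{B}]\,d\mu(x)$. Since the intensities of interest here are non-atomic, $\Xi - \delta_x = \Xi$ almost surely for each fixed $x$, matching the right-hand side of the claim. The specialization \eqref{fxi} is then direct substitution.

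The main technical annoyance, were one forced to prove this from scratch, is uniformly controlling the $O(\mu(I_k^{(n)})^2)$ errors and justifying the passage $\mathbb{P}[\Xi^{(k)} \in \mathcal{B}] \to \mathbb{P}[\Xi \in \mathcal{B}]$ as the mesh tends to zero; neither step is difficult. In practice, since this is precisely the classical Mecke identity, the cleanest route is the one the authors take: invoke \cite[Lemma 13.1.II]{daley2008introduction} directly.
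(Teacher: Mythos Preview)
Your proposal is correct. The paper does not prove this lemma at all; it simply cites \cite[Lemma 13.1.II, Definition 13.1.I (b), and Exercise 13.1.1 (a)]{daley2008introduction}, exactly as you suggest in your final sentence. Your sketch of the discretization argument is a faithful outline of the standard proof of Mecke's identity, so nothing further is needed.
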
 

\begin{rem}
We will often apply \Cref{fidentityxi} for functions $f$ containing an infinite sum built from the random variables $\xi_j$. We define such $f$ to be $0$ on the event where the sum does not converge (observing the fact that this event is measurable).
\end{rem}

	\subsection{Properties of Random Trees}
	
	\label{EstimateTreeVertex} 
	
	In this section we provide properties of various random trees. We begin by recalling the notion of unimodularity introduced in \cite{benjamini2001recurrence,aldous2004objective,aldous2007processes}. 
	
	 We consider edge-weighted graphs; these are graphs together a map from its set of edges to $\bbR$.  A (possibly infinite) graph $G$ is \emph{locally finite} if, for any compact $K \subset \bbR$, each of its vertices has a finite number of adjacent edges with weight in $K$; we let $V = V(G)$ denote the vertex set of $G$. Let $\mathcal{G}_*$ denote the set of isomorphism classes $(G, u_0)$ of \emph{rooted weighted graphs}, namely of a (connected) locally finite weighted graph $G$ with a distinguished vertex $u_0 \in V(G)$. Further let $\mathcal{G}_{**}$ denote the set of isomorphism classes $(G, u, v)$ of \emph{doubly rooted graphs}, that is, of a (connected) locally finite weighted graph $G$ with an ordered pair of distinguished vertices $(u, v)$ of $G$. The spaces $\mathcal{G}_*$ and $\mathcal{G}_{**}$ both admit a complete, separable metric; we refer to \cite[Section 2]{benjamini2015unimodular} for definitions.

	\begin{definition} 
		
		\label{definitionunimodular} 
		
		A measure $\mu$ on $\mathcal{G}_{*}$ is \emph{unimodular} if, for any Borel measurable function $f : \mathcal{G}_{**} \rightarrow \mathbb{R}_{\ge 0} \cup \{ \infty \}$, we have
		\begin{flalign}
			\label{sumfuv} 
			\mathbb{E}_{\mu (G, u_0)} \Bigg[ \displaystyle\sum_{v \in V(G)} f(G, u_0, v) \Bigg] = \mathbb{E}_{\mu (G, u_0)} \Bigg[ \displaystyle\sum_{v \in V(G)} f(G, v, u_0) \Bigg].
		\end{flalign}
	
		\noindent The equality \eqref{sumfuv} is sometimes referred to as the \emph{Mass Transport Principle}.
	\end{definition} 

	\begin{rem} 
		
		\label{tunimodular} 
		
		As observed in \cite[Section 3.2]{benjamini2001recurrence}, the local weak limit of any measure on finite trees is unimodular. Thus, the random tree $\mathbb{T}$ is unimodular.
		
	\end{rem}

	Next, we provide an estimate for the number of vertices on the $k$-th level of a certain random tree (namely, a Galton--Watson tree). Given a real number $\lambda > 0$, we recall that the \emph{Galton--Watson tree} with parameter $\lambda$ is a random rooted tree $\mathcal{T}$ with vertex set $\mathcal{V} = \bigcup_{k = 0}^{\infty} \mathcal{V} (k)$, where the $\mathcal{V} (k)$ are described inductively as follows. The set $\mathcal{Z} (0)$ consists of a single vertex $\{ 0 \}$, given by the root of $\mathcal{T}$. For each integer $k \ge 0$ and vertex $v \in \mathcal{V} (k)$, let $n_v \ge 0$ denote a Poisson random variable with parameter $\lambda$. Then, $v$ has $n_v$ children $\mathcal{D} (v) = \big\{ (v, 1), (v, 2), \ldots , (v, n_v) \big\}$, and we set $\mathcal{V} (k+1) = \bigcup_{v \in \mathcal{V} (k)} \mathcal{D} (v)$. We refer to $\mathcal{V} (k)$ as the \emph{$k$-th level} of $\mathcal{T}$. 
	
	The below lemma bounds the number of vertices in the $k$-th level of a Galton--Watson tree.
	
	\begin{lem} 
		
		\label{treenumber} 
		
		  We have $\mathbb{E} \big[ \big| \mathcal{V} (k) \big| \big] = \lambda^k$. Moreover, if $\lambda \ge 2$ then for any real number $B \ge 1$, we have 
		\begin{flalign}
			\label{vnk} 
			  \mathbb{P} \Big[ \big| \mathcal{V} (k) \big| \ge B \cdot (2\lambda)^k \Big] \le 3 e^{- B/2}.
		\end{flalign}
		
	\end{lem}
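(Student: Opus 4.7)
The expectation claim is an easy induction: conditioning on $\mathcal{V}(k-1)$ and using that every vertex at level $k-1$ spawns an independent Poisson($\lambda$) number of children gives $\mathbb{E}[|\mathcal{V}(k)| \mid \mathcal{V}(k-1)] = \lambda \,|\mathcal{V}(k-1)|$, so iterating from $|\mathcal{V}(0)|=1$ yields $\mathbb{E}[|\mathcal{V}(k)|] = \lambda^k$.

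For the tail bound \eqref{vnk}, the plan is a standard Chernoff argument on the moment generating function. Writing $M_k(t) = \mathbb{E}[\exp(t|\mathcal{V}(k)|)]$ and conditioning on $|\mathcal{V}(k-1)|$, the fact that a sum of $n$ independent Poisson($\lambda$) random variables has MGF $\exp(n\lambda(e^t-1))$ produces the recursion $M_k(t) = M_{k-1}(g(t))$ with $g(t) := \lambda(e^t-1)$. Iterating from $M_0(t) = e^t$ gives $M_k(t) = \exp\bigl(g^{(k)}(t)\bigr)$, where $g^{(k)}$ is the $k$-fold composition of $g$ with itself. Markov's inequality at the parameter $t_k := (2\lambda)^{-k}$ then yields
\begin{equation*}
\mathbb{P}\bigl[|\mathcal{V}(k)| \ge B(2\lambda)^k\bigr] \;\le\; \exp\bigl(g^{(k)}(t_k) - B\bigr),
\end{equation*}
so the task is reduced to bounding $g^{(k)}(t_k)$ by an absolute constant.

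To control the iterated composition, I would start from the convexity inequality $e^t - 1 \le (e-1)t$ valid for $t \in [0,1]$, which gives the linearization $g(t) \le c\lambda t$ for $t \in [0,1]$, where $c := e-1 < 2$. A short induction on $j$ then establishes $g^{(j)}(t_k) \le (c/2)^j (2\lambda)^{j-k}$ for every $j \in \{0,1,\ldots,k\}$: the hypothesis $\lambda \ge 2$ gives $2\lambda \ge 4$, which both keeps each intermediate value inside $[0,1]$ (so the linearization remains applicable at the next step) and makes the right-hand side at most $1$ for $j < k$. At $j = k$ this yields $g^{(k)}(t_k) \le (c/2)^k \le c/2$, and substituting back gives $\mathbb{P}\bigl[|\mathcal{V}(k)| \ge B(2\lambda)^k\bigr] \le e^{c/2 - B}$ for $k \ge 1$. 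One then checks numerically that $e^{c/2 - B} \le 3 e^{-B/2}$ for all $B \ge 1$ (equivalently $e^{1-B/2} \le 3$, which holds at $B=1$ since $e^{1/2} \approx 1.65$). The degenerate case $k=0$ is immediate since $|\mathcal{V}(0)|=1$ and $3 e^{-1/2} > 1$.

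The only real obstacle is keeping the trajectory $\bigl\{g^{(j)}(t_k)\bigr\}_{j=0}^{k}$ trapped inside $[0,1]$ so that the linear upper bound on $g$ can be re-applied at each step; the hypothesis $\lambda \ge 2$ is essential here, as it is exactly what guarantees the geometric contraction factor $c/2 < 1$ at each application of $g$. Apart from this bookkeeping, the argument is routine.
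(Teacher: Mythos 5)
Your proof is correct and follows essentially the same route as the paper: both bound the iterated generating-function recursion $M_k = M_{k-1}\circ g$ at the scale $(2\lambda)^{-k}$, using $\lambda \ge 2$ to keep the iterates in a regime where the one-step map can be bounded and the composite stays $O(1)$, then finish by Markov. The only difference is cosmetic: you work with the MGF and a linear contraction $g(t) \le (e-1)\lambda t$ on $[0,1]$, whereas the paper works with the probability generating function $\mathbb{E}[s^{|\mathcal{V}(k)|}]$ at $s = 1 + (2\lambda)^{-k}$ and tracks the quadratic term $f(1+x) - 1 \le \lambda x + (\lambda x)^2$; your linearization yields the slightly cleaner geometric bookkeeping.
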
 
	
	\begin{proof}
		
		Observe for any integer $\ell \ge 1$ that  
		\begin{flalign}
			\label{vum}
			\big| \mathcal{V} (k) \big| = \displaystyle\sum_{w \in \mathcal{V} (k - 1)} n_w,
		\end{flalign}
		
		\noindent where $n_w$ denotes the number of children of any vertex $w \in \mathbb{V}$. Since $n_w$ is a Poisson random variable with mean $\lambda$, this together with \eqref{vum} implies that $\mathbb{E} \big[ \big| \mathcal{V} (k) \big| \big] = \lambda \cdot \mathbb{E} \big[ \big| \mathcal{V} (k-1) \big| \big]$. Hence, $\mathbb{E} \big[ \big| \mathcal{V} (k) \big| \big] = \lambda^k$ by induction on $k$.
		
		To show \eqref{vnk}, define $f\colon \mathbb{R}_{> 0} \rightarrow \mathbb{R}$ by setting $f(x) = f_{\lambda} (x) = e^{\lambda (x - 1)}$ for each $x > 0$. Then, since $n_w$ is a Poisson random variable with mean $\lambda$, we have $\mathbb{E} [s^{n_w}] = e^{\lambda(s-1)} = f(s)$, for each $s > 0$. Denoting $g_{\ell} (s) = \mathbb{E} [s^{|\mathcal{V} (\ell)|}]$, \eqref{vum} then implies for each integer $\ell \ge 1$ and real number $s > 0$ that
		\begin{flalign}
			\label{vn1} 
			g_{\ell} (s) = \mathbb{E} \Bigg[ \displaystyle\prod_{w \in \mathcal{V} (\ell - 1)} s^{n_w} \Bigg] = \mathbb{E} \Bigg[ \displaystyle\prod_{w \in \mathcal{V} (\ell - 1)} \mathbb{E} [s^{n_w}] \Bigg] = \mathbb{E} \big[  (e^{\lambda (s - 1)})^{|\mathcal{V} (\ell - 1)|} \big] =  g_{\ell - 1} \big( f(s) \big),
		\end{flalign}
		
		\noindent and so $g_{\ell} (s) = f^{(\ell)} (s)$, where $f^{(\ell)}$ denotes the $\ell$-fold composition of $f$.
		
		Since $f(x + 1) - 1 \le \lambda x + (\lambda x)^2$ for any $0 \le x \le \lambda^{-1}$, we have by induction on $\ell$ that $f_{\ell} (1 + s) - 1 \le \lambda^{\ell} s + 2^{\ell} (\lambda^{\ell} s)^2$, if $0 \le s \le (2^{\ell + 1} \lambda^{\ell})^{-1}$, as then it is quickly verified that 
		\begin{flalign*} 
			\lambda \cdot (\lambda^{\ell} s + 2^{\ell} \lambda^{2 \ell} s^2) + \lambda^2 \cdot (\lambda^{\ell} s + 2^{\ell} \lambda^{2 \ell} s^2)^2 \le \lambda^{\ell + 1} s + 2^{\ell + 1} (\lambda^{\ell + 1} s)^2.
		\end{flalign*} 
		
		\noindent Applying this for $\ell = k - 1$ and $s = 2^{-k} \lambda^{-k}$ yields
		\begin{flalign*}
			\mathbb{E} \big[ ( 1 + 2^{-k} \lambda^{-k})^{|\mathcal{V} (k)|} \big] \le f(1 + \lambda^{k-1} s + 2^{k-1} \lambda^{2k-2} s^2) \le f \bigg( 1 + \displaystyle\frac{3}{4 \lambda} \bigg) < 3.
		\end{flalign*}
		
		\noindent Hence, in \eqref{vnk} follows from a Markov estimate.
	\end{proof}
	
	Finally, we introduce the following notation for the tree $\mathbb{T}$ (defined in \Cref{Stable}). The \emph{length} of any vertex $v \in \mathbb{Z}_{\ge 1}^k \subset \mathbb{V}$ is $\ell (v) = k$, and we let $\mathbb{V} (\ell) = \mathbb{Z}_{\ge 1}^\ell$ denote the set of vertices of length $\ell$. We write $v \sim w$ if there is an edge between $v, w \in \mathbb{V}$. The \emph{parent} of any vertex $v \in \mathbb{V}$ that is not the root\footnote{If $v = 0$, then its parent $0_-$ is defined to be the empty set.} (namely, $v \ne 0$) is the unique vertex $v_- \in \mathbb{V}$ such that $v_- \sim v$ and $\ell (v_-) = \ell (v) - 1$. A child of $v$ is any vertex $w \in \mathbb{V}$ whose parent is $v$. We let $\mathbb{D} (v) \subset \mathbb{V}$ denote the set of children of $v$. 
		
	A \emph{path} on $\mathbb{T}$ is a sequence of vertices $\mathfrak{p} = (v_0, v_1, \ldots , v_k)$ such that $v_i$ is the parent of $v_{i + 1}$ for each $i \in [0, k - 1]$. The \emph{length} of this path is $\ell (\mathfrak{p}) = k$, and its \emph{boundary vertices} are $v_0$ and $v_k$, with $v_0$ and $v_k$ being the \emph{starting} and \emph{ending} vertices of $\mathfrak{p}$, respectively. If $\mathfrak{p} \subset \mathbb{V}$ is a path in $\mathbb{T}$, and $v \in \mathfrak{p}$ is not the ending vertex, then let $v_+  = v_+ (\mathfrak{p}) \in \mathfrak{p}$ denote the child of $v$ in $\mathfrak{p}$. We write $v \preceq w$ (equivalently, $w \succeq v$) if there exists a path (possibly of length $0$) with starting and ending vertices $v$ and $w$, respectively; we further write $v \prec w$ (equivalently, $w \succ v$) if $v \preceq w$ and $v \ne w$. For any integer $k \ge 0$ and vertex $v \in \mathbb{V}$ with $\ell (v) = \ell$, we additionally let $\mathbb{D}_k (v) = \big\{ u \in \mathbb{V} (\ell + k) : v \preceq u \big\}$. Observe in particular that $\mathbb{D}_1 (v) = \mathbb{D} (v)$ and $\mathbb D_\ell(0) = \mathbb{V}(\ell)$.
	
	We will sometimes identify $\mathbb{T}$ with the edge-weighted graph whose weights are given by the operator $\T$, as in \eqref{t}.
	For any vertex $w \in \mathbb{V}$, let $\mathbb{T}_- (w)$ denote the set of all edge weights and vertices in connected component containing the root $0$ of $\mathbb{T} \setminus \{ w \}$. That is, it constitutes the subgraph of $\mathbb{T}$ that lies ``above'' $w$, including the weight on the edge $(w_-, w)$. 
	Let $\mathbb{P}_{\mathbb{T}_- (w)}$ and $\mathbb{E}_{\mathbb{T}_- (w)}$ denote the probability measure and expectation conditional on $\mathbb{T}_- (w)$, respectively. 
	We will often abbreviate $\mathbb{P}_w = \mathbb{P}_{\mathbb{T}_- (w)}$ and $\mathbb{E}_w = \mathbb{E}_{\mathbb{T}_- (w)}$. 

	\subsection{Identities for Resolvent Entries} 
	
	\label{EquationsResolvent}
	
	In this section we collect several known identities for entries of the resolvents of adjacency operators on trees. We begin with some notation; in what follows, we recall the notation from \Cref{Stable}.
	
	For any subset of vertices $\mathcal{U} \subseteq \mathbb{V}$, we let $\boldsymbol{T}^{(\mathcal{U})}$ denote the operator obtained from $\boldsymbol{T}$ by setting all entries associated with at least one vertex in $\mathcal{U}$ to $0$. More specifically, we let $\boldsymbol{T}^{(\mathcal{U})} : L^2 (\mathbb{V}) \rightarrow L^2 (\mathbb{V})$ denote the self-adjoint operator defined by first setting, for any vertices $v, w \in \mathbb{V}$,
		\begin{flalign*}
			T_{vw}^{(\mathcal{U})} = \big\langle \delta_v, \boldsymbol{T}^{(\mathcal{U})} \delta_w \big\rangle = T_{vw}, \quad \text{if $v, w \notin \mathcal{U}$}; \qquad  T_{vw}^{(\mathcal{U})} = \big\langle \delta_v, \boldsymbol{T}^{(\mathcal{U})} \delta_w \big\rangle = 0, \qquad \text{otherwise};
		\end{flalign*}
	
		\noindent then extending to $\mathcal{F}$ (finitely supported vectors of $L^2 (\mathbb V)$) by linearity; and next by extending to $L^2 (\mathbb{V})$ by density. For any complex number $z \in \mathbb{H}$, we further denote the associated resolvent operator $\boldsymbol{R}^{(\mathcal{U})} : L^2 (\mathbb{V}) \rightarrow L^2 (\mathbb{V})$ and its entries by 
		\begin{flalign*} 
			\boldsymbol{R}^{(\mathcal{U})} = \boldsymbol{R}^{(\mathcal{U})} (z) = \big( \boldsymbol{T}^{(\mathcal{U})} - z \big)^{-1}; \qquad R_{vw}^{(\mathcal{U})} = R_{vw}^{(\mathcal{U})} (z) = \big\langle \delta_v, \boldsymbol{R}^{(\mathcal{U})} \delta_w \big\rangle, \qquad \text{for any $v, w \in \mathbb{V}$}.
		\end{flalign*}

		The following lemma provides identities and estimates on the entries of $\boldsymbol{R}^{(\mathcal{U})}$. The first statement \eqref{qvv} below is given by \cite[Proposition 2.1]{klein1998extended}; the second follows from the first; and the third is given by \cite[Equation (3.37)]{aizenman2013resonant}. Alternatively, in the context of finite-dimensional operators, the first, second, and third statements below are given by \cite[Equation (8.8)]{erdos2017dynamical}, \cite[Equation (8.34)]{erdos2017dynamical}, and \cite[Equation (8.3)]{erdos2017dynamical}, respectively.

		\begin{lem}[{\cite{klein1998extended,aizenman2013resonant}}]
		
		\label{q12} 
		
		Fix $z = E + \mathrm{i} \eta \in \mathbb{H}$ and $\mathcal{U} \subset \mathbb{V}$.
		
		\begin{enumerate}
			\item For any vertex $v \in \mathbb{V}$, we have the \emph{Schur complement identity}
			\begin{flalign}
				\label{qvv}
				R_{vv}^{(\mathcal{U})} = R_{vv}^{(\mathcal{U})} (z) = - \Bigg( z + \displaystyle\sum_{w \sim v} \big( T_{vw}^{(\mathcal{U})} \big)^2 \cdot R_{ww}^{(\mathcal{U}, v)} \Bigg)^{-1}.
			\end{flalign}
				
			\noindent Moreover, $R_{ww}^{(v)} (z)$ has the same law as $R_{00} (z)$, for any $w \in \mathbb{D}(v)$.
			
			\item For any vertices $v, w \in \mathbb{V}$, we deterministically have 
			\begin{flalign*} 
				\big| R_{vw}^{(\mathcal{U})} (z) \big| \le \eta^{-1}.
			\end{flalign*} 
		
			\item For any vertex $v \in \mathbb{V}$, we have the \emph{Ward identity} 
			\begin{flalign}
				\label{sumrvweta} 
				\displaystyle\sum_{w \in \bbV \setminus \mathcal{U}} \big| R_{vw}^{(\mathcal{U})} \big|^2 = \eta^{-1} \cdot \Imaginary R_{vv}^{(\mathcal{U})}.
			\end{flalign}
		
		\end{enumerate}
		
		\end{lem}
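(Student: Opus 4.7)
The three statements are standard resolvent identities, and the plan is to derive each one by adapting the classical finite-dimensional proofs to the (self-adjoint closure of the) infinite-dimensional operator $\bm{T}^{(\mathcal{U})}$. The self-adjointness issue is already handled by \cite[Proposition A.2]{bordenave2011spectrum}, so I may freely invoke the spectral theorem and functional calculus.

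For the Schur complement identity in part (1), I would decompose $L^2(\mathbb{V}) = \mathbb{C}\delta_v \oplus (\mathbb{C}\delta_v)^\perp$ and write $\bm{T}^{(\mathcal{U})} - z$ in block form with respect to this splitting. Since $T_{vv}^{(\mathcal{U})} = 0$ and the only nonzero entries of row (and column) $v$ of $\bm{T}^{(\mathcal{U})}$ are the $T_{vw}^{(\mathcal{U})}$ for $w \sim v$, setting $\xi = \sum_{w \sim v} T_{vw}^{(\mathcal{U})}\delta_w$ and applying the block inversion formula gives
\[
R_{vv}^{(\mathcal{U})} = -\bigl(z + \bigl\langle \xi, (\bm{T}^{(\mathcal{U},v)} - z)^{-1}\xi\bigr\rangle\bigr)^{-1}.
\]
Because removing $v$ from the tree disconnects it into subtrees, one per neighbor $w$, the operator $\bm{T}^{(\mathcal{U},v)}$ acts diagonally on these components, so $\bigl\langle \delta_w, (\bm{T}^{(\mathcal{U},v)} - z)^{-1}\delta_{w'}\bigr\rangle = 0$ for distinct neighbors $w, w'$ of $v$. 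This collapses the inner product to $\sum_{w \sim v}(T_{vw}^{(\mathcal{U})})^2 R_{ww}^{(\mathcal{U},v)}$, yielding \eqref{qvv}. The distributional claim $R_{ww}^{(v)}(z) \stackrel{d}{=} R_{00}(z)$ for $w \in \mathbb{D}(v)$ follows from the self-similar construction of $\mathbb{T}$: after removing $v$, the subtree rooted at $w$ is isomorphic (as a randomly weighted rooted tree) to a fresh copy of $\mathbb{T}$ rooted at $0$, since the i.i.d.\ Poisson weights on descending edges are independent of $\mathbb{T}_-(v)$.

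For the deterministic bound in part (2), the spectral theorem for the self-adjoint operator $\bm{T}^{(\mathcal{U})}$ gives $\|\bm{R}^{(\mathcal{U})}(z)\|_{\mathrm{op}} \le \sup_{\lambda \in \mathbb{R}}|\lambda - z|^{-1} = \eta^{-1}$, and pairing with the unit vectors $\delta_v, \delta_w$ (and applying Cauchy--Schwarz) yields $|R_{vw}^{(\mathcal{U})}| \le \eta^{-1}$. For the Ward identity in part (3), I would use the resolvent identity
\[
\bm{R}^{(\mathcal{U})}(z) - \bm{R}^{(\mathcal{U})}(\bar z) = (z - \bar z)\,\bm{R}^{(\mathcal{U})}(z)\,\bm{R}^{(\mathcal{U})}(\bar z) = 2\iu\eta\, \bm{R}^{(\mathcal{U})}(z)\bm{R}^{(\mathcal{U})}(z)^{\ast},
\]
take the $(v,v)$ matrix element, and expand the product in the orthonormal basis $\{\delta_w\}_{w \in \mathbb{V}}$. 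This gives $2\iu\Imaginary R_{vv}^{(\mathcal{U})} = 2\iu\eta \sum_{w \in \mathbb{V}} |R_{vw}^{(\mathcal{U})}|^2$; the sum is in fact restricted to $w \in \mathbb{V}\setminus \mathcal{U}$, since $R_{vw}^{(\mathcal{U})} = 0$ whenever $w \in \mathcal{U}$ (rows/columns indexed by $\mathcal{U}$ were zeroed out and decouple from the rest).

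The one place where genuine care is required is ensuring that the block inversion and resolvent identities, which are trivial in finite dimensions, apply to $\bm{T}^{(\mathcal{U})}$. The main potential obstacle is confirming that $\bm{T}^{(\mathcal{U},v)}$ is self-adjoint on the correct domain and that the vector $\xi$ above lies in that domain, so that $(\bm{T}^{(\mathcal{U},v)}-z)^{-1}\xi$ is well-defined. This is essentially already established in \cite[Appendix A]{bordenave2011spectrum}, and once it is in hand the rest of the argument is a direct transcription of the standard finite-dimensional proofs (which are laid out in \cite[Chapter 8]{erdos2017dynamical}), so no new ideas are needed.
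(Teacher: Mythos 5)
Your proposal is correct, and it follows the standard route the paper itself points to (it cites \cite{klein1998extended,aizenman2013resonant} and the finite-dimensional analogs in \cite[Ch.~8]{erdos2017dynamical} rather than giving a proof): block/Schur decomposition with respect to $\mathbb{C}\delta_v\oplus(\mathbb{C}\delta_v)^\perp$ plus the tree-disconnection observation for part (1), the spectral-theorem norm bound for part (2), and the first resolvent identity $R(z)-R(z)^*=2\iu\eta\,R(z)R(z)^*$ for part (3), with the self-adjointness and domain issues correctly delegated to \cite[Appendix~A]{bordenave2011spectrum}. The only point to keep implicit (as both you and the paper do) is that in part (3) the stated restriction of the sum to $\mathbb{V}\setminus\mathcal{U}$ is exact only for $v\notin\mathcal{U}$, since for $v\in\mathcal{U}$ one has $R_{vv}^{(\mathcal{U})}=-z^{-1}\neq 0$.
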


		A quick consequence of the Schur complement identity (and \Cref{tuvalpha}) is the following result from \cite{bordenave2011spectrum}, which states that the entries of $\boldsymbol{R}$ satisfy a recursion in law, sometimes referred to as a \emph{recursive distributional equation}.

		\begin{prop}[{\cite[Theorem 4.1]{bordenave2011spectrum}}]
			
			\label{rdistribution} 
			
			Fix $z \in \mathbb{H}$, and let $R_1, R_2, \ldots $ denote mutually independent random variables with law $R_{\star} (z)$. Further let $(\xi_1, \xi_2, \ldots )$ denote a Poisson point process on $\mathbb{R}_{> 0}$ with intensity measure $\frac{\alpha}{2} x^{-\alpha / 2 - 1} dx$. Then, $R_{\star} (z)$ has the same law as 
			\begin{flalign*}
				-\Bigg( z + \displaystyle\sum_{j = 1}^{\infty} \xi_j R_j \Bigg)^{-1}.
			\end{flalign*}
			
		\end{prop}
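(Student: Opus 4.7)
The plan is to derive Proposition 3.5 as a direct consequence of the Schur complement identity \eqref{qvv} (applied at the root $0$) combined with the structural independence built into the tree $\mathbb{T}$ and the point process description of the edge weights given in \Cref{tuvalpha}. The key identity to exploit is
\begin{flalign*}
R_{\star}(z) = R_{00}(z) = -\Bigg( z + \sum_{w \in \mathbb{D}(0)} T_{0w}^2 \cdot R_{ww}^{(0)}(z) \Bigg)^{-1},
\end{flalign*}
so that it suffices to identify the joint law of the pair $\big( \{T_{0w}^2\}_{w \in \mathbb{D}(0)}, \{R_{ww}^{(0)}(z)\}_{w \in \mathbb{D}(0)} \big)$.

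First, I would invoke \Cref{tuvalpha}, which asserts that $\{T_{0w}^2\}_{w \in \mathbb{D}(0)}$ forms a Poisson point process on $\mathbb{R}_{>0}$ with intensity measure $\frac{\alpha}{2} x^{-\alpha/2-1}\, dx$; after a choice of enumeration this gives the sequence $(\xi_1,\xi_2,\dots)$ appearing in the statement. Next, for the resolvents $R_{ww}^{(0)}(z)$ with $w \in \mathbb{D}(0)$, I would appeal to the second sentence of \Cref{q12}(1), which records that each $R_{ww}^{(0)}(z)$ is distributed as $R_{00}(z) = R_{\star}(z)$; this step relies on the observation that after deleting the edge $(0,w)$, the connected component containing $w$ is itself isomorphic (as a random weighted rooted tree) to $(\mathbb{T}, 0)$, since each vertex of $\mathbb{V}$ has its own independent Poisson point process $\boldsymbol{\xi}_v$ governing its children.

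The independence claims then follow from the same structural observation: the subtrees rooted at distinct children $w, w' \in \mathbb{D}(0)$ are built from disjoint families $\{ \boldsymbol{\xi}_v : v \succeq w \}$ and $\{ \boldsymbol{\xi}_v : v \succeq w' \}$, which are mutually independent, so the collection $\{R_{ww}^{(0)}(z)\}_{w \in \mathbb{D}(0)}$ consists of i.i.d.\ copies of $R_{\star}(z)$. Moreover, none of these subtrees contain the edge $(0,w)$, so the sequence $\{R_{ww}^{(0)}(z)\}_{w \in \mathbb{D}(0)}$ is also independent of $\{T_{0w}\}_{w \in \mathbb{D}(0)}$, and hence of the Poisson point process $(\xi_j)_{j \geq 1}$. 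Substituting into the Schur identity and relabeling then yields the claimed equality in law.

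The only genuine technical point, which I would handle before substituting into the Schur identity, is the almost-sure convergence of $\sum_{j \geq 1} \xi_j R_j$. This can be seen by conditioning on $(R_j)_{j \geq 1}$ and applying \Cref{sumaxi}: since $\alpha < 1$, we have $\mathbb{E}[|R_{\star}(z)|^{\alpha/2}] < \infty$ (each $R_{ww}^{(0)}$ is bounded in absolute value by $(\Imaginary z)^{-1}$ by \Cref{q12}(2), so certainly $|R_{\star}(z)|^{\alpha/2}$ is integrable), and then the conditional sum converges almost surely to an $\frac{\alpha}{2}$-stable law with finite scale parameter, so the unconditional sum is finite almost surely. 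This justifies manipulating the right side of \eqref{qvv} as an infinite sum and completes the identification.
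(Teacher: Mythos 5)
The paper does not actually supply a proof of \Cref{rdistribution}; it quotes the result directly as \cite[Theorem 4.1]{bordenave2011spectrum}, so there is no internal proof to compare against. Your sketch correctly reconstructs the standard argument for the recursive distributional equation: Schur complement at the root, \Cref{tuvalpha} to identify $\{T_{0w}^2\}_{w\in\mathbb{D}(0)}$ as a Poisson point process with intensity $\frac{\alpha}{2}x^{-\alpha/2-1}dx$, the observation (recorded in \Cref{q12}(1)) that each $R_{ww}^{(0)}$ has law $R_\star(z)$, and the fact that the subtrees hanging off distinct children involve disjoint independent families $\boldsymbol{\xi}_v$, giving both the i.i.d.\ property and the independence of the marks from the point process.

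One small cleanup on the convergence step. You say you would condition on $(R_j)_{j\geq 1}$ and then invoke \Cref{sumaxi}, but that lemma already averages over an i.i.d.\ family $(a_j)$, so conditioning works against you: with the $R_j$'s frozen to deterministic, non-identically-distributed values, the hypotheses of \Cref{sumaxi} no longer hold. The clean route is to apply \Cref{sumaxi} unconditionally and separately to the real and imaginary parts of $R_j$ (the lemma is stated for real $a_j$). Since $|R_\star(z)|\le (\Im z)^{-1}$ deterministically by \Cref{q12}(2), both $\mathbb{E}\big[|\Re R_\star|^{\alpha/2}\big]$ and $\mathbb{E}\big[|\Im R_\star|^{\alpha/2}\big]$ are finite, so $\sum_j \xi_j \Re R_j$ and $\sum_j \xi_j \Im R_j$ are genuine $\frac{\alpha}{2}$-stable random variables and in particular almost surely finite. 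With that adjustment the argument is complete and matches the proof in the cited reference.
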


	Next we have the following lemma, which expresses $R_{uv}$ as a product over diagonal resolvent entries. It essentially appears as \cite[Equation (3.4)]{aizenman2013resonant} (where there it is assumed that all nonzero entries of the adjacency matrix $\boldsymbol{T}$ are equal to $1$).
	
	\begin{lem}[{\cite[Equation (3.4)]{aizenman2013resonant}}]
		
		\label{rproduct} 
		
		Fix $v, w \in \mathbb{V}$ with $v \preceq w$ that are connected by a path $\mathfrak{p}$ (with $v$ as the starting vertex and $w$ as the ending vertex) consisting of $m+1$ vertices. For any subset $\mathcal{U} \subset \mathbb{V}$, we have
		\begin{flalign*}
			R_{vw}^{(\mathcal{U})} = (-1)^m \cdot R_{vv}^{(\mathcal{U})} \cdot \displaystyle\prod_{v \prec u \preceq w} T_{u u_-} R_{uu}^{(\mathcal{U}, u_-)} = (-1)^m \cdot R_{ww}^{(\mathcal{U})} \cdot \displaystyle\prod_{v \preceq u \prec w} T_{u u_+} R_{uu}^{(\mathcal{U}, u_+)}.
		\end{flalign*}
	
	\end{lem}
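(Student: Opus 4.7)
The plan is to prove the lemma by induction on the length $m$ of the path $\mathfrak{p}$ from $v$ to $w$. For the base case $m = 0$, we have $v = w$, both products are empty, and the displayed equalities reduce to the tautology $R_{vv}^{(\mathcal{U})} = R_{vv}^{(\mathcal{U})}$.

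For the inductive step, the main ingredient will be the one-step tree recursion
\begin{equation*}
R_{vw}^{(\mathcal{U})} \;=\; -\,R_{vv}^{(\mathcal{U})} \cdot T_{v v_+} \cdot R_{v_+ w}^{(\mathcal{U}, v)},
\end{equation*}
where $v_+$ is the unique child of $v$ lying on $\mathfrak{p}$. I would derive this by applying the block matrix inversion formula to $\boldsymbol{T}^{(\mathcal{U})} - z$, with $\{v\}$ as one block and $\mathbb{V} \setminus \{v\}$ as the other. The Schur formula expresses $R_{vw}^{(\mathcal{U})}$ as $-R_{vv}^{(\mathcal{U})} \sum_{u \sim v} T_{vu} R_{uw}^{(\mathcal{U}, v)}$. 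The crucial tree-specific input is that for distinct neighbors $u, u'$ of $v$, we have $R_{u u'}^{(\mathcal{U}, v)} = 0$, since removing $v$ disconnects the subtrees rooted at the different neighbors of $v$; this same observation also justifies the diagonal Schur identity \eqref{qvv}. By the same reasoning, $R_{uw}^{(\mathcal{U}, v)} = 0$ unless $u$ lies in the connected component of $\mathbb{T}^{(\mathcal{U}, v)}$ containing $w$, which among neighbors of $v$ singles out $u = v_+$. This collapses the Schur sum to a single term and yields the displayed recursion.

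Applying the induction hypothesis to the path of length $m-1$ from $v_+$ to $w$ inside $\boldsymbol{T}^{(\mathcal{U}, v)}$ then gives the first claimed identity, provided one further observes that for every $u$ with $v_+ \preceq u \preceq w$, the superscript $(\mathcal{U}, v, u_-)$ arising from the recursion can be simplified to $(\mathcal{U}, u_-)$. This reduction is again a tree statement: once $u_-$ is removed, the subtree containing $u$ is isolated from every vertex above $u_-$, so additionally removing ancestors of $u_-$ does not change $R_{uu}$. The second equality follows by an identical argument applied from the other end, either by invoking the symmetry $R_{vw}^{(\mathcal{U})} = R_{wv}^{(\mathcal{U})}$ (a consequence of self-adjointness of $\boldsymbol{T}^{(\mathcal{U})}$) and re-running the induction, or by peeling vertices off the end $w$ first instead of the start $v$.

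The main obstacle is purely bookkeeping — producing the one-step recursion cleanly from block matrix inversion, and then verifying the superscript simplification $(\mathcal{U}, v, u_-) \rightsquigarrow (\mathcal{U}, u_-)$ at each step so that the inductive formula closes up into the form stated. Degenerate cases (for instance when $\mathcal{U}$ contains a vertex of $\mathfrak{p}$) should be dispatched separately: then $\boldsymbol{T}^{(\mathcal{U})}$ disconnects $v$ from $w$, so $R_{vw}^{(\mathcal{U})} = 0$ and at least one factor on the right-hand side vanishes. No analytic input beyond the Schur complement identity of \Cref{q12} and the tree topology of $\mathbb{V}$ is required.
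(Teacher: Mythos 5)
Your proof is correct in the case $\mathcal{U} \cap \mathfrak{p} = \emptyset$, which is the only case the paper actually uses; the paper cites this lemma from the literature rather than proving it, and your inductive argument via the one-step off-diagonal Schur/resolvent expansion combined with the tree topology (collapsing the sum to the single neighbor in $w$'s component, then simplifying the superscript $(\mathcal{U},v,u_-)\rightsquigarrow(\mathcal{U},u_-)$ using component separation) is the standard one.

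One point to correct: your treatment of the degenerate case where $\mathcal{U}$ meets $\mathfrak{p}$ is wrong. You assert ``at least one factor on the right-hand side vanishes,'' but it does not. For example take $\mathcal{U}=\{v\}$ and $m=1$, so $w=v_+$. Then the left side is $R_{vw}^{(v)}=0$ since $v$ is isolated in $\boldsymbol{T}^{(v)}$, whereas the right side is $-R_{vv}^{(v)}\,T_{wv}\,R_{ww}^{(v)}=z^{-1}\,T_{wv}\,R_{ww}^{(v)}$, which is generically nonzero: the $T_{uu_-}$ in the displayed product carry no $(\mathcal{U})$-superscript, and $R_{uu}^{(\mathcal{U},u_-)}$ equals $-1/z$ (not $0$) at a vertex $u\in\mathcal{U}$. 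So the displayed identity is genuinely false once $\mathcal{U}$ intersects the path. The correct reading of the lemma is that it carries the implicit hypothesis $\mathcal{U}\cap\mathfrak{p}=\emptyset$ (satisfied in every application in this paper), and the degenerate case is excluded rather than dispatched. A minor terminological nit as well: the transposition identity $R_{vw}^{(\mathcal{U})}=R_{wv}^{(\mathcal{U})}$ follows from $\boldsymbol{T}^{(\mathcal{U})}$ being \emph{real symmetric} (so $\boldsymbol{T}^{(\mathcal{U})}-z$ is complex symmetric and so is its inverse); self-adjointness alone gives $R^{(\mathcal{U})}(z)^* = R^{(\mathcal{U})}(\bar z)$, which is a different statement. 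They coincide here because the operator is real, but the property you are invoking is the symmetry, not the adjoint.
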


	The following corollary of \Cref{q12} and \Cref{rproduct}, which lower bounds $\Imaginary R_{00}$ in terms of other entries of $\boldsymbol{R}$, appears in \cite{aizenman2013resonant}. We include its quick proof.

	\begin{cor}[{\cite[Equation (4.3)]{aizenman2013resonant}}] 
		
		\label{rsum} 
		
		For any integer $k \ge 0$, we have
		\begin{flalign}
			\label{r00sum} 
			\Imaginary R_{00} \ge \displaystyle\sum_{v \in \mathbb{V} (k)} |R_{0v}|^2 \displaystyle\sum_{u \in \mathbb{D} (v)} |T_{vu}|^2 \Imaginary R_{uu}^{(v)}.
		\end{flalign}

	\end{cor}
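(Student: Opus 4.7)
The plan is to proceed by induction on $k$, establishing the slightly stronger intermediate claim
\begin{equation*}
\Imaginary R_{00} \;\ge\; \sum_{v \in \mathbb{V}(k)} |R_{0v}|^2 \cdot \frac{\Imaginary R_{vv}^{(v_-)}}{|R_{vv}^{(v_-)}|^2},
\end{equation*}
with the convention that $R_{00}^{(0_-)} = R_{00}$ (since $0_- = \varnothing$). Granting this, the corollary follows immediately from the Schur complement identity \eqref{qvv}, applied at each $v \in \mathbb{V}(k)$. Indeed, since the neighbors of $v$ in $\mathbb{V} \setminus \{v_-\}$ are exactly $\mathbb{D}(v)$, and since removing $v_-$ does not affect the connected component of $w \in \mathbb{D}(v)$ in $\mathbb{V} \setminus \{v\}$ (so $R_{ww}^{(v_-,v)} = R_{ww}^{(v)}$), we have
\begin{equation*}
\frac{\Imaginary R_{vv}^{(v_-)}}{|R_{vv}^{(v_-)}|^2} \;=\; \Imaginary\!\left(-\frac{1}{R_{vv}^{(v_-)}}\right) \;=\; \eta + \sum_{u \in \mathbb{D}(v)} T_{vu}^2 \,\Imaginary R_{uu}^{(v)} \;\ge\; \sum_{u \in \mathbb{D}(v)} T_{vu}^2 \,\Imaginary R_{uu}^{(v)}.
\end{equation*}

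For the base case $k=0$, the inequality reduces to $\Imaginary R_{00} \ge \Imaginary R_{00}$. For the inductive step, assume the bound at level $k$. Applying the Schur identity to each summand as above yields
\begin{equation*}
\Imaginary R_{00} \;\ge\; \sum_{v \in \mathbb{V}(k)} |R_{0v}|^2 \sum_{u \in \mathbb{D}(v)} T_{vu}^2 \,\Imaginary R_{uu}^{(v)}.
\end{equation*}
Now I invoke \Cref{rproduct} (with $\mathcal{U} = \varnothing$): for any $u \in \mathbb{V}(k+1)$, writing $v = u_-$, the telescoping product forms at levels $k$ and $k+1$ differ by exactly one factor, giving
\begin{equation*}
R_{0u} \;=\; -\,R_{0v}\, T_{uv}\, R_{uu}^{(v)}, \qquad \text{hence}\qquad |R_{0v}|^2 \, T_{vu}^2 \;=\; \frac{|R_{0u}|^2}{|R_{uu}^{(v)}|^2}.
\end{equation*}
Substituting this identity converts the double sum over $v \in \mathbb{V}(k)$ and $u \in \mathbb{D}(v)$ into the single sum
\begin{equation*}
\sum_{u \in \mathbb{V}(k+1)} |R_{0u}|^2 \cdot \frac{\Imaginary R_{uu}^{(u_-)}}{|R_{uu}^{(u_-)}|^2},
\end{equation*}
which is precisely the intermediate claim at level $k+1$. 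This closes the induction and completes the proof.

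There is no serious obstacle: the argument is purely algebraic, combining the Schur complement formula (to convert diagonal imaginary parts into sums over children) with the product expression for off-diagonal resolvent entries (to repackage the resulting $|R_{0v}|^2 T_{vu}^2$ factors into $|R_{0u}|^2$). The only mild care required is in tracking the indices of the removed vertices in the superscripts $(\mathcal{U})$, specifically noticing that $R_{ww}^{(v_-,v)} = R_{ww}^{(v)}$ when $w \in \mathbb{D}(v)$ because $v_-$ lies in a different connected component of $\mathbb{V}\setminus\{v\}$ from $w$.
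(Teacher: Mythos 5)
Your proof is correct and is essentially the same as the paper's: both rely only on the Schur complement identity \eqref{qvv} (applied at each vertex along a level-$k$ path, dropping the $\eta$ term) and the product expansion \Cref{rproduct} (to recognize the resulting telescoping factors as $|R_{0v}|^2$). The paper applies the Schur inequality $k+1$ times in one pass and then uses \Cref{rproduct} once to compress the accumulated product, whereas you package the same computation as an explicit induction with the intermediate statement $\Imaginary R_{00} \ge \sum_{v \in \mathbb{V}(k)} |R_{0v}|^2 \,\Imaginary R_{vv}^{(v_-)} / |R_{vv}^{(v_-)}|^2$; the difference is purely organizational.
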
 
	
	\begin{proof} 
		
		By \eqref{qvv}, we have
		\begin{flalign}
			\label{r00imaginary}
			\begin{aligned} 
			\Imaginary R_{00} & = \Bigg( \eta + \displaystyle\sum_{v \sim 0} |T_{0v}|^2 \Imaginary R_{vv}^{(0)} \Bigg) \cdot \Bigg| z + \displaystyle\sum_{v \sim 0} |T_{0v}|^2 R_{vv}^{(0)} \Bigg|^{-2} \ge \Bigg( \displaystyle\sum_{v \sim 0} |T_{0v}|^2 \Imaginary R_{vv}^{(0)} \Bigg) \cdot |R_{00}|^2.
			\end{aligned} 
		\end{flalign}
		
		\noindent  Applying \eqref{r00imaginary} $k+1$ times, we find 
		\begin{flalign*}
			\Imaginary R_{00} \ge \displaystyle\sum_{u \in \mathbb{V} (k+1)} \Bigg( |T_{u_- u}|^2 \Imaginary R_{uu}^{(u_-)} \cdot \displaystyle\prod_{0 \preceq w \prec u} |T_{ww_-}|^2 \big| R_{ww}^{(w_-)} \big|^2 \Bigg),
		\end{flalign*} 
	
		\noindent where we have set $T_{ww_-} = 1$ for $w = 0$. Together with \Cref{rproduct}, this gives 
		\begin{flalign*}
			\Imaginary R_{00} \ge \displaystyle\sum_{u \in \mathbb{V} (k+1)} |R_{0u_-}|^2 \cdot |T_{u_-} T_u|^2 \cdot \Imaginary R_{uu}^{(u_-)},
		\end{flalign*} 
	
		\noindent which yields the corollary upon setting $v = u_-$.
	\end{proof}

	\subsection{Laws of the Resolvent Entries} 
	
	\label{ResolventDensity}
	
	In this section we analyze various properties of the laws of the diagonal entries of $\boldsymbol{R}$. Throughout this section, we fix a complex number $z = E + \mathrm{i} \eta \in \mathbb{H}$. We assume that $\eta \in (0, 1)$ and that $\varepsilon \le |E| \le B$ for some fixed real numbers $\varepsilon \in (0, 1)$ and $B \ge 1$. In what follows, various constants may implicitly depend on $\varepsilon$ and $B$, even when not stated. Denote
	 \begin{flalign}
	 	\label{kappatheta1} 
	 	\varkappa_0 = \varkappa_0 (z) = - E - \Real \bigg( \displaystyle\frac{1}{R_{00} (z)} \bigg); \qquad \vartheta_0 = \vartheta_0 (z) = -\eta - \Imaginary \bigg( \displaystyle\frac{1}{R_{00} (z)} \bigg),
	 \end{flalign}
 
 	\noindent so that by \eqref{qvv} we have
 	\begin{flalign}
 		\label{kappa0theta0sum} 
 		\varkappa_0 = \displaystyle\sum_{v \sim 0} T_{0v}^2 \Real R_{vv}^{(0)}; \qquad \vartheta_0 = \displaystyle\sum_{v \sim 0} T_{0v}^2 \Imaginary R_{vv}^{(0)}.
 	\end{flalign}
	
	The following lemma then states that $(\varkappa_0, \vartheta_0)$ are (possibly correlated) $\frac{\alpha}{2}$-stable laws; we recall the notation on such variables from \Cref{Stable}.

	\begin{lem} 
		
		\label{rrealimaginary}
		
		For any complex number $z = E + \mathrm{i} \eta \in \mathbb{H}$, the following two statements hold.
		
		\begin{enumerate}
			\item We have  $\varkappa_0$ is an $\frac{\alpha}{2}$-stable law with scaling parameter
			\begin{flalign*} 
				\sigma (\varkappa_0) = \mathbb{E} \big[  |\Real R_{\star}|^{\alpha / 2} \big]^{2 / \alpha},
			\end{flalign*} 
		
			\noindent and skewness parameter
			\begin{flalign*}
				 \beta (\varkappa_0) = \displaystyle\frac{\mathbb{E} \big[ \max \{ \Real R_{\star}, 0 \}^{\alpha / 2} - \max \{ -\Real R_{\star}, 0 \}^{\alpha / 2} \big]}{\mathbb{E} \big[ |\Real R_{\star}|^{\alpha / 2} \big]}.
			\end{flalign*} 
		
			\item We have  $\vartheta_0$ is a nonnegative $\frac{\alpha}{2}$-stable law with scaling parameter $\sigma (\vartheta_0) = \mathbb{E} \big[ (\Imaginary R_{\star})^{\alpha / 2} \big]^{2 / \alpha}$. 	
		\end{enumerate} 	
	\end{lem}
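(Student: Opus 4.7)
The plan is to apply \Cref{sumaxi} directly to the two sums in \eqref{kappa0theta0sum}. The point is that $\varkappa_0$ and $\vartheta_0$ are each of the form $\sum_j \zeta_j X_j$, where $\{\zeta_j\} = \{T_{0v}^2\}_{v \in \mathbb{D}(0)}$ is a Poisson point process on $\bbR_{>0}$ of intensity $\frac{\alpha}{2} x^{-\alpha/2-1} dx$ (by \Cref{tuvalpha}), and $\{X_j\}$ is a family of i.i.d.\ real random variables independent of $\{\zeta_j\}$.

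To justify the inputs to \Cref{sumaxi}, first I would verify the independence structure. By construction, $R_{vv}^{(0)}$ is a function only of the random variables associated with the connected component of $\mathbb{T} \setminus \{0\}$ rooted at $v$, together with the edge weight $T_{vw}$ for $w \in \mathbb{D}(v)$; it does not depend on the edge $(0,v)$ or on any other child of $0$. Consequently, $\big\{R_{vv}^{(0)}\big\}_{v \sim 0}$ are mutually independent and are jointly independent of $\{T_{0v}\}_{v \sim 0}$. Moreover, by the second assertion of \Cref{q12} (first part), each $R_{vv}^{(0)}$ is distributed as $R_\star(z) = R_{00}(z)$. Therefore $\{\Real R_{vv}^{(0)}\}_{v \sim 0}$ (resp.\ $\{\Imaginary R_{vv}^{(0)}\}_{v \sim 0}$) is a family of i.i.d.\ copies of $\Real R_\star(z)$ (resp.\ $\Imaginary R_\star(z)$) that is independent of the Poisson process $\{T_{0v}^2\}$.

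To deduce the first statement, I apply \Cref{sumaxi} with $a_j = \Real R_j$, where $R_j$ are i.i.d.\ copies of $R_\star(z)$. Then $b_j = (\Real R_j)_+$ and $-c_j = (\Real R_j)_-$, so $B = \E\big[(\Real R_\star)_+^{\alpha/2}\big]$ and $C = \E\big[(\Real R_\star)_-^{\alpha/2}\big]$. The conclusion of \Cref{sumaxi} gives that $\varkappa_0$ is an $\tfrac{\alpha}{2}$-stable law with scaling parameter $(B+C)^{2/\alpha} = \E\big[|\Real R_\star|^{\alpha/2}\big]^{2/\alpha}$ and skewness parameter $(B-C)/(B+C)$, which matches the stated formulas. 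For the second statement, I apply \Cref{sumaxi} with $a_j = \Imaginary R_j$. Since $R_j \in \bbH$ almost surely, we have $\Imaginary R_j \ge 0$, so $c_j = 0$ and $C = 0$. Thus $\varkappa_0$ becomes a nonnegative $\tfrac{\alpha}{2}$-stable law (skewness parameter $\beta = B/B = 1$) with scaling parameter $B^{2/\alpha} = \E\big[(\Imaginary R_\star)^{\alpha/2}\big]^{2/\alpha}$.

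There is essentially no hard step; the main thing to be careful about is the independence/i.i.d.\ structure described above, which ultimately rests on the tree structure of $\mathbb{T}$ and the fact that $\boldsymbol{T}^{(0)}$ decouples the subtrees hanging off of the root. One should also note in passing that the sums in \eqref{kappa0theta0sum} converge almost surely: this is immediate from the second assertion once one knows $\sigma(\vartheta_0)$ is finite, which follows from $\E\big[(\Imaginary R_\star)^{\alpha/2}\big] < \infty$ (a standard fact whose proof uses $|R_\star| \le \eta^{-1}$ from \Cref{q12} together with integrability estimates established earlier).
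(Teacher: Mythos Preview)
Your proposal is correct and follows essentially the same approach as the paper: write $\varkappa_0$ and $\vartheta_0$ as $\sum_j \xi_j \Real R_j$ and $\sum_j \xi_j \Imaginary R_j$ with $(\xi_j)$ a Poisson process of intensity $\tfrac{\alpha}{2}x^{-\alpha/2-1}dx$ and $(R_j)$ i.i.d.\ copies of $R_\star$, then apply \Cref{sumaxi}. The paper packages the independence/i.i.d.\ verification into a single citation of \Cref{rdistribution}, whereas you spell it out via \Cref{tuvalpha} and \Cref{q12}; these are equivalent. (Minor typo: in your second-statement paragraph you wrote ``$\varkappa_0$ becomes a nonnegative\ldots'' where you meant $\vartheta_0$.)
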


	\begin{proof}
		
		By \Cref{rdistribution} and \eqref{kappatheta1}, we have that 
		\begin{flalign*}
			\varkappa_0 + \mathrm{i} \vartheta_0 = - z - \displaystyle\frac{1}{R_{\star}} = \displaystyle\sum_{j = 1}^{\infty} \xi_j R_j,
		\end{flalign*}
	
		\noindent where $(\xi_j)_{j \ge 1}$ is a Poisson point process on $\mathbb{R}_{> 0}$ with intensity measure $\frac{\alpha}{2} x^{-\alpha / 2 - 1} dx$, and $(R_j)_{j \ge 1}$ are mutually independent random variables with law $R_{\star} (z)$. In particular,
		\begin{flalign*} 
			\varkappa_0 = \Real \Bigg( \displaystyle\sum_{j = 1}^{\infty} \xi_j R_j \Bigg) = \displaystyle\sum_{j = 1}^{\infty} \xi_j \Real R_j; \qquad \vartheta_0 = \Imaginary \Bigg( \displaystyle\sum_{j = 1}^{\infty} \xi_j R_j \Bigg) = \displaystyle\sum_{j = 1}^{\infty} \xi_j \Imaginary R_j,
		\end{flalign*} 
	
		\noindent and so both statements of the lemma follows from \Cref{sumaxi}.
	\end{proof}

	The following lemma estimates the densities of $\varkappa_0 (z)$ and $\vartheta_0 (z)$. Its first part bounds the density of the former from above and below on any interval (with constants dependent on $\varepsilon$ and $B$); its second part upper bounds the both densities on intervals disjoint from $[-1, 1]$ (with constants independent of $\varepsilon$ and $B$). We establish it in \Cref{ProofSigma} below.

	\begin{lem} 
		
		\label{ar}
		
		There exist constants $C_1 = C_1 (\varepsilon, B) > 1$; $C_2 > 1$ (independent of $\varepsilon$ and $B$); and $c_1 = c_1 (\varepsilon, B) > 0$ such that the following two statements hold. 
		
		\begin{enumerate}

			\item For any interval $I \subset \mathbb{R}$, we have 
			\begin{flalign}
				\label{q00delta1}
				c_1 \displaystyle\int_I \displaystyle\frac{dx}{\big( |x| + 1 \big)^{\alpha / 2 + 1}} \le \mathbb{P} \big[ \varkappa_0 (z) \in I \big] \le C_1 \displaystyle\int_I \displaystyle\frac{dx}{\big( |x| + 1 \big)^{\alpha / 2 + 1}}.
			\end{flalign}

			\item For any interval $I \subset \mathbb{R}$ disjoint from $[-1, 1]$, we have   
			\begin{flalign}
				\label{q00delta2} 
				\mathbb{P} \big[ \varkappa_0 (z) \in I \big] \le C_2 \displaystyle\int_I \displaystyle\frac{dx}{\big( |x| + 1 \big)^{\alpha / 2 + 1}}; \qquad \mathbb{P} \big[ \vartheta_0 (z) \in I \big] \le C_2 \displaystyle\int_I \displaystyle\frac{dx}{\big( |x| + 1 \big)^{\alpha / 2 + 1}}.
			\end{flalign}
		
		\end{enumerate} 
		
	\end{lem}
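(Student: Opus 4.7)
The plan is to express $\varkappa_0$ and $\vartheta_0$ as linear combinations of a heavy-tailed Poisson process and then invoke the general density estimate \Cref{zuv}. By \Cref{rdistribution} (or equivalently, by the Schur identity \eqref{kappa0theta0sum} together with the fact that $\{T_{0v}^2\}_{v \sim 0}$ is a Poisson point process on $\bbR_{>0}$ with intensity $\tfrac{\alpha}{2} x^{-\alpha/2-1} dx$ independent of the children's subtrees), we have $\varkappa_0 + \iu \vartheta_0 = \sum_{j \ge 1} \xi_j R_j$, where $\{\xi_j\}$ is such a Poisson process and $\{R_j\}$ are iid copies of $R_\star(z)$ independent of $\{\xi_j\}$. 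Taking real and imaginary parts puts $\varkappa_0$ (respectively $\vartheta_0$) in the form of the sum $Z$ in \Cref{zuv} with $a = \Re R_\star$ (respectively $a = \Im R_\star$); the restriction of the Poisson intensity to $\bbR_{>0} \setminus [u,v]$ in \Cref{zuv} is absorbed by the same coupling argument used in its proof.

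For the upper bound estimates (the second inequality in \eqref{q00delta1} and both inequalities in \eqref{q00delta2}) I would invoke the first part of \Cref{zuv}. The required moment bounds $B, C \in (\kappa, \kappa^{-1})$ reduce to controlling $\E [|\Re R_\star|^{\alpha/2}]$ and $\E [(\Im R_\star)^{\alpha/2}]$ above and below. The universal upper bound $\E [|R_\star|^{\alpha/2}] \leq C$, uniform in $\varepsilon, B, \eta$, follows from the fixed-point characterization of $y(z) = \E [(-\iu R_\star)^{\alpha/2}]$ together with the continuous extension of $y$ to $\overline{\bbH}$ (the statement of which appears below as \Cref{l:boundaryvalues}); this universality is precisely what delivers the $\varepsilon, B$-free constant $C_2$ in part (2). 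The complementary lower bound on these moments depends on $\varepsilon$ and $B$: it requires $\Re R_\star$ to assume positive and negative values with quantitatively positive probability, which follows from the identity $\Re R_\star = -(E + \varkappa_0)/|z + \varkappa_0 + \iu \vartheta_0|^2$ forcing sign changes on the scale of $|E| \ge \varepsilon$.

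The lower bound in \eqref{q00delta1} is the main obstacle. Invoking the second part of \Cref{zuv} requires the additional hypothesis $\P[\Re R_\star \in I] \ge \kappa \int_I (|x|+1)^{-\alpha/2-1} dx$ for every interval $I$, which is a pointwise density estimate on $\Re R_\star$ rather than a scalar moment bound. My plan is to close the resulting self-consistent loop by bootstrap: having obtained the joint density upper bound on $(\varkappa_0, \vartheta_0)$, transform it through the inversion $R_\star = -(z + \varkappa_0 + \iu \vartheta_0)^{-1}$ to produce a pointwise upper bound on the joint density of $(\Re R_\star, \Im R_\star)$, with the Jacobian of the inversion furnishing the $(|x|+1)^{-\alpha/2-1}$ factor; integrating over a region where $\Im R_\star$ is quantitatively bounded away from $0$ (an event of positive probability by an analogous stable-density argument for $\vartheta_0$) then yields the required marginal lower bound. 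The hardest aspect is arranging all estimates uniformly as $\eta \to 0$, where $R_\star$ can become singular; the compactness from \Cref{l:compactness} is essential to guarantee that any sequential limit of $R_\star(E + \iu \eta)$ remains a nondegenerate complex random variable, so the density bounds persist in the limit. Care must also be taken that the skewness parameter $\beta(\varkappa_0) = (B_\varkappa - C_\varkappa)/(B_\varkappa + C_\varkappa)$ stays bounded away from $\pm 1$, which is why both $B_\varkappa = \E [(\Re R_\star)_+^{\alpha/2}]$ and $C_\varkappa = \E [(\Re R_\star)_-^{\alpha/2}]$ must be lower bounded separately—otherwise one tail of the stable density of $\varkappa_0$ could degenerate and the lower bound in \eqref{q00delta1} would fail.
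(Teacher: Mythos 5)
Your starting point is correct—$\varkappa_0 + \iu \vartheta_0 = \sum_j \xi_j R_j$ via the Schur identity and Poisson structure—but routing through \Cref{zuv} rather than \Cref{sumaxi} is the wrong turn, and it creates a gap that your proposed bootstrap cannot close.

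The paper proceeds differently and more directly: \Cref{sumaxi} (the Poisson-thinning identity) identifies $\varkappa_0$ and $\vartheta_0$ \emph{exactly} as $\frac{\alpha}{2}$-stable laws (this is recorded as \Cref{rrealimaginary}). An $\frac{\alpha}{2}$-stable law has a density comparable above and below to $(|x|+1)^{-\alpha/2-1}$ as soon as its scaling parameter lies in a compact interval $[c,C]$ and its skewness parameter lies in $[c-1, 1-c]$; these are purely scalar moment conditions on $\Re R_\star$, namely two-sided bounds on $\E[(\Re R_\star)_+^{\alpha/2}]$ and $\E[(\Re R_\star)_-^{\alpha/2}]$ (\Cref{q0estimate1} and \Cref{sigmabetaestimate}). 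No pointwise density hypothesis on $\Re R_\star$ is ever required. By contrast, \Cref{zuv} is designed for sums over a Poisson process with part of its intensity removed, where the exact stable-law identification is lost; that is precisely why its lower bound \eqref{estimatezj2} needs the auxiliary assumption \eqref{ainterval}—a pointwise density lower bound on the summand $a$—which you would now have to supply.

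The bootstrap you propose to supply it does not work and is also circular. You plan to obtain a \emph{joint density upper bound} on $(\varkappa_0,\vartheta_0)$, push it through the inversion $R_\star=-(z+\varkappa_0+\iu\vartheta_0)^{-1}$ to an upper bound on the joint density of $(\Re R_\star,\Im R_\star)$, and then ``integrate over a region where $\Im R_\star$ is bounded away from $0$'' to extract a \emph{marginal lower bound} on the density of $\Re R_\star$. Integrating an upper bound yields an upper bound; there is no mechanism here to produce the needed lower bound. Moreover, the joint density bound on $(\varkappa_0,\vartheta_0)$ you begin with is (part of) the very conclusion you are trying to prove, so the loop is circular even setting aside the direction of the inequality. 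The fix is to replace \Cref{zuv} by \Cref{sumaxi}: once $\varkappa_0$ is recognized as a genuine stable law, its density bounds reduce to the moment estimates of \Cref{q0estimate1} and \Cref{sigmabetaestimate}, which are obtained without any density hypothesis on $R_\star$ (the upper bound on $\sigma(\vartheta_0)$ comes from the self-consistency $\vartheta_0 \le 1/\Im R_{00}$ combined with finiteness of negative moments of a nonnegative stable law, and the skewness bound comes from showing $\Re R_{00}$ takes both signs with positive probability when $|E| \ge \varepsilon$).

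One smaller point: your route to the $\varepsilon,B$-independent constant $C_2$ via the continuous extension of $y$ to $\overline{\bbH}$ gives control only on compact subsets of $\overline{\bbH}$; to make it uniform for all $z$ you would still need a decay argument as $|z|\to\infty$. The paper's derivation via $\sigma(\vartheta_0)^{\alpha/2} \le \E[\vartheta_0^{-\alpha/2}]$ is uniform by construction and is the cleaner way to reach the claimed independence from $\varepsilon$ and $B$.
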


\noindent	The reason we stipulate that $|E| \ge \varepsilon$ is that the first part of \Cref{ar} becomes false if $E = 0$. Indeed, in this case, it is known from \cite[Lemma 4.3(b)]{bordenave2011spectrum} that $R_{\star} (\mathrm{i} \eta)$ is almost surely purely imaginary, which implies that \eqref{q00delta1} does not hold.

	The following corollary estimates fractional moments of the diagonal resolvent entries $R_{00}$.
	\begin{cor}
	
	\label{expectationqchi} 
	
	Adopt the notation and assumptions of \Cref{ar}, and let $\delta, \chi \in (0, 1)$ be real numbers. There exists a constant $C = C(\chi, \varepsilon, B) > 1$ such that
	\begin{flalign}
	\label{qchi}
		\mathbb{E} \big[ |R_{00}|^{\chi} \big] \le C; \qquad \mathbb{E} \big[ \one_{|R_{00}| > \delta^{-1}} \cdot |R_{00}|^{\chi} \big] < C \delta^{1-\chi}.
	\end{flalign}

	\end{cor}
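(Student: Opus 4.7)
The plan is to reduce both bounds to the density estimate for $\varkappa_0(z)$ provided by the first part of \Cref{ar}, via a deterministic comparison between $|R_{00}|$ and $|E+\varkappa_0|^{-1}$. From the definitions \eqref{kappatheta1} we have $R_{00}^{-1} = -(E+\varkappa_0) - \mathrm{i}(\eta+\vartheta_0)$, so $|R_{00}|^{-2} \ge (E+\varkappa_0)^2$ and therefore $|R_{00}| \le |E+\varkappa_0|^{-1}$ deterministically.

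For the first bound I would combine this with the upper density bound $\mathbb{P}[\varkappa_0 \in dx] \le C_1 (|x|+1)^{-\alpha/2-1}\, dx$ from \Cref{ar} and the substitution $u = E+x$ to obtain
\begin{equation*}
\mathbb{E}\big[|R_{00}|^\chi\big] \le \mathbb{E}\big[|E+\varkappa_0|^{-\chi}\big] \le C_1 \int_{\mathbb{R}} \frac{|u|^{-\chi}}{(|u-E|+1)^{\alpha/2+1}}\, du.
\end{equation*}
Splitting at $|u| = 2B$, the integrand near $u=0$ is locally integrable since $\chi < 1$, yielding a contribution of size $O(B^{1-\chi}/(1-\chi))$; for $|u| > 2B$ the bound $|u-E| \ge |u|/2$ converts it into $\int |u|^{-\chi - \alpha/2 - 1}\, du$, which converges. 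This produces the first estimate with a constant depending only on $\chi$, $\varepsilon$, $B$.

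For the second bound the key observation is that the event $\{|R_{00}| > \delta^{-1}\}$ forces $|E+\varkappa_0| < \delta$. Applying \Cref{ar} a second time and substituting $u = E+\varkappa_0$ I would bound
\begin{equation*}
\mathbb{E}\big[\one_{|R_{00}| > \delta^{-1}} |R_{00}|^\chi\big] \le \mathbb{E}\big[\one_{|E+\varkappa_0|<\delta}\, |E+\varkappa_0|^{-\chi}\big] \le C_1 \int_{|u|<\delta} |u|^{-\chi}\, du = \frac{2C_1}{1-\chi}\, \delta^{1-\chi}.
\end{equation*}
I do not anticipate any serious obstacle; the only subtle point is that one must invoke the density estimate of \Cref{ar} in the form valid on intervals containing a neighborhood of the origin (i.e., the first part of the lemma, rather than the weaker second part which requires the interval to avoid $[-1,1]$)—which is precisely why the assumption $|E| \ge \varepsilon$ enters, through the dependence of $C_1$ on $\varepsilon$.
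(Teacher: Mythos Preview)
Your proposal is correct and essentially identical to the paper's proof: both use the deterministic bound $|R_{00}| \le |E+\varkappa_0|^{-1}$ derived from \eqref{kappatheta1}, then apply the density estimate \eqref{q00delta1} from the first part of \Cref{ar} to reduce everything to the explicit integrals you compute. Your remark about needing the first (rather than second) part of \Cref{ar}, and hence the assumption $|E|\ge \varepsilon$, is exactly the point.
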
 

	\begin{proof}
		
	Observe, by \eqref{kappatheta1} and \Cref{ar}, that there exists a constant $C_1 > 1$ such that for any $\delta > 0$ we have
	\begin{flalign*}
	\mathbb{E} \big[ \one_{|R_{00}| > \delta^{-1}} \cdot |R_{00}|^{\chi} \big] \le \mathbb{E} \big[ \one_{|E + \varkappa_0| < \delta} \cdot |E + \varkappa_0|^{-\chi} \big] \le C_1 \displaystyle\int_{E - \delta}^{E + \delta} \displaystyle\frac{dx}{(x + E)^{\chi} \big( |x| + 1 \big)^{\alpha/2 + 1}}.
	\end{flalign*} 

	\noindent Thus, the first and second bounds in \eqref{qchi} follow from the fact that there exists a constant $C_2 = C_2 (\chi) > 1$ such that 
	\begin{flalign*}
		& \displaystyle\int_{-\infty}^{\infty} \displaystyle\frac{dx}{(x+E)^{\chi} \big( |x| + 1 \big)^{\alpha/2 + 1}} \le C_2 \qquad \text{and} \qquad \displaystyle\int_{E - \delta}^{E + \delta} \displaystyle\frac{dx}{(x + E)^{\chi} \big( |x| + 1\big)^{\alpha/2 + 1}} \le C_2 \delta^{1-\chi},
	\end{flalign*} 
	
	\noindent respectively. 	
	\end{proof}

\begin{proof}[Proof of \Cref{l:compactness}]
We may suppose that $E\neq 0$, since the $E=0$ case follows from \cite[Lemma 4.3(b)]{bordenave2011spectrum}.
By \eqref{qchi} and Markov's inequality, the set ${\big\{ R_\star(E + \iu \eta) \big\}_{\eta \in (0,1)}}$ is tight. Then the conclusion follows from Prokhorov's theorem.
\end{proof}

\begin{proof}[Proof of \Cref{l:upgrade}]
The assumption implies that $\liminf_{\eta \rightarrow 0} \sigma\big(\vartheta_0(E + \iu \eta)\big)  > 0$, which  (together with \Cref{rrealimaginary}) implies the conclusion.
\end{proof}

	\subsection{Proof of \Cref{ar}}
	
	\label{ProofSigma}
	
	In this section we establish \Cref{ar}. As in the statement of that lemma, here we assume that $z = E + \mathrm{i} \eta \in \mathbb{H}$ with $\varepsilon \le |E| \le B$ and $\eta \in (0, 1)$; various constants might or might not depend on $\varepsilon$ and $B$, and we will specify which do. Throughout, we will make use of equalities (from \eqref{qvv}) 
	\begin{flalign}
		\label{q00equation} 
		\displaystyle\frac{\Imaginary R_{00}}{|R_{00}|^2} = \eta + \displaystyle\sum_{v \sim 0} T_{0v} \Imaginary R_{vv}^{(0)} = \eta + \vartheta_0; \qquad \displaystyle\frac{\Real R_{00}}{|R_{00}|^2} = - E - \displaystyle\sum_{v \sim 0} T_{0v}^2 \Real R_{vv}^{(0)} = -E - \varkappa_0. 
	\end{flalign}

	We begin with the following lemma bounding from above the quantities $\sigma (\varkappa_0)$ and $\sigma (\theta_0)$ from \Cref{rrealimaginary}, and showing that $|R_{00}|$ is unlikely to be very small. In what follows, we recall the parameters $\sigma (\varkappa_0)$ and $\sigma (\vartheta_0)$ from \Cref{rrealimaginary}.
	
	\begin{lem} 
	
	\label{q0estimate1} 
	
	There exists a constant $C > 1$ such that the following two statements hold, for any real number $t \ge 1$ and complex number $z = E + \mathrm{i} \eta \in \mathbb{H}$ with $\eta \in (0, 1)$.
	
	\begin{enumerate} 
		
		\item Adopting the notation of \Cref{ar}, we have $\sigma (\vartheta_0) < C$ and $\sigma (\varkappa_0) < C$.
		
		\item We have 
	\begin{flalign}
		\label{r00estimate} 
		\mathbb{P} \bigg[ \big| R_{00} (z) \big| \ge \displaystyle\frac{1}{|E| + t} \bigg] \ge 1 - Ct^{-\alpha / 2}.
	\end{flalign}

	\end{enumerate} 

	\end{lem}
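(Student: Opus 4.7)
The plan for part (1) is to invoke \Cref{rrealimaginary}, which identifies $\sigma(\varkappa_0)$ and $\sigma(\vartheta_0)$ with $\mathbb{E}\big[|\Re R_{\star}|^{\alpha/2}\big]^{2/\alpha}$ and $\mathbb{E}\big[(\Im R_{\star})^{\alpha/2}\big]^{2/\alpha}$, respectively. Since both $|\Re R_{\star}|$ and $\Im R_{\star}$ are bounded above by $|R_{\star}|$, it suffices to uniformly bound $\mathbb{E}\big[|R_{\star}|^{\alpha/2}\big]$, and I would do this by comparing it with $y(z) = \mathbb{E}\big[(-\mathrm{i} R_{\star})^{\alpha/2}\big]$. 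Since $R_{\star}(z) \in \mathbb{H}$, the complex number $-\mathrm{i} R_{\star}$ has strictly positive real part with $\arg(-\mathrm{i} R_{\star}) \in (-\pi/2, \pi/2)$; therefore $(-\mathrm{i} R_{\star})^{\alpha/2}$ has argument in $(-\pi\alpha/4, \pi\alpha/4) \subset (-\pi/4, \pi/4)$. This forces $\Re (-\mathrm{i} R_{\star})^{\alpha/2} \ge \cos(\pi\alpha/4) \cdot |R_{\star}|^{\alpha/2}$, so $\mathbb{E}\big[|R_{\star}|^{\alpha/2}\big] \le \sec(\pi\alpha/4) \cdot \Re y(z)$. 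By \Cref{l:boundaryvalues}, $y$ extends continuously to $\overline{\mathbb{H}}$, and is therefore bounded on the compact set $\{E + \mathrm{i}\eta : \varepsilon \le |E| \le B,\ \eta \in [0,1]\}$; this yields the claimed uniform bounds $\sigma(\varkappa_0), \sigma(\vartheta_0) \le C$.

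For part (2), I would start from the identity $|R_{00}|^{-2} = (E + \varkappa_0)^2 + (\eta + \vartheta_0)^2$, which is immediate from \eqref{q00equation} and \eqref{kappatheta1}. Because $\vartheta_0 \ge 0$ (it is a nonnegative stable law by \Cref{rrealimaginary}) and $\eta < 1$, this gives the pointwise estimate $|R_{00}|^{-1} \le |E + \varkappa_0| + (\eta + \vartheta_0) \le |E| + |\varkappa_0| + 1 + \vartheta_0$. Consequently the event $\{|R_{00}(z)| < (|E| + t)^{-1}\}$ is contained in $\{|\varkappa_0| + \vartheta_0 \ge t - 1\}$, and hence, for $t \ge 3$, in $\{|\varkappa_0| \ge (t-1)/2\} \cup \{\vartheta_0 \ge (t-1)/2\}$. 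Part (1) combined with the standard power-law tail bound for $(\alpha/2)$-stable laws---namely $\mathbb{P}[|X| \ge s] \le C \sigma(X)^{\alpha/2} s^{-\alpha/2}$ for $s \ge 1$, which one extracts from \eqref{xtsigma} for $\varkappa_0$ and from \eqref{ytsigma} for $\vartheta_0$ via the usual L\'evy tail asymptotics---bounds each of those two probabilities by $C t^{-\alpha/2}$, completing the bound for $t \ge 3$. For $t \in [1, 3]$ the desired inequality is made trivially true by enlarging $C$ so that $C \cdot 3^{-\alpha/2} \ge 1$.

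The only genuine obstacle is the uniform control of $y(z)$ in part (1); once this is granted, everything else reduces to elementary manipulations of \eqref{q00equation} together with standard heavy-tailed bounds. The conceptual point making the $y$-to-$|R_{\star}|$ comparison work is that, because $R_{\star}(z) \in \mathbb{H}$, the branch $(-\mathrm{i} R_{\star})^{\alpha/2}$ is confined to a sector of total opening $\pi\alpha/2 < \pi$, so its real part and modulus are comparable up to the multiplicative constant $\sec(\pi\alpha/4)$; this is what allows boundedness of $y(z)$ to transfer to a bound on $\mathbb{E}\big[|R_{\star}|^{\alpha/2}\big]$ (and thereby on the scaling parameters of $\varkappa_0$ and $\vartheta_0$).
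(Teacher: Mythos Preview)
Your Part~(2) argument is essentially the paper's: both convert $\{|R_{00}| < (|E|+t)^{-1}\}$ into a tail event for $|\varkappa_0| + \vartheta_0$ and invoke the stable-law tail bounds from Part~(1).

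For Part~(1) you take a genuinely different route. The paper argues self-referentially: from $\vartheta_0 \le \Im R_{00}/|R_{00}|^2 \le 1/\Im R_{00}$ one gets
\[
\sigma(\vartheta_0)^{\alpha/2} = \mathbb{E}\big[(\Im R_{00})^{\alpha/2}\big] \le \mathbb{E}\big[\vartheta_0^{-\alpha/2}\big] = c_\alpha\,\sigma(\vartheta_0)^{-\alpha/2},
\]
forcing $\sigma(\vartheta_0) \le c_\alpha^{1/\alpha}$; the bound on $\sigma(\varkappa_0)$ is handled analogously via $|\Re R_{00}| \le |E + \varkappa_0|^{-1}$. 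This produces a constant depending only on $\alpha$, with no reference to $E$ at all. Your argument instead bounds $\mathbb{E}\big[|R_\star|^{\alpha/2}\big]$ via the sector estimate $|R_\star|^{\alpha/2} \le \sec(\pi\alpha/4)\,\Re(-\mathrm{i}R_\star)^{\alpha/2}$ and then invokes continuity of $y$ on $\overline{\mathbb{H}}$. This is correct, but as written it only yields boundedness of $y$ on the compact set $\{\varepsilon \le |E| \le B,\ \eta \in [0,1]\}$, hence a constant $C = C(\varepsilon, B)$. The lemma's phrasing (``for any complex number $z = E + \mathrm{i}\eta \in \mathbb{H}$ with $\eta \in (0,1)$'') and, more concretely, its use in the proof of \Cref{ar}(2)---where $C_2$ is explicitly independent of $\varepsilon, B$---call for a constant uniform in $E$. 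To salvage your approach you would need $y$ bounded on the whole strip $\{0 \le \Im z \le 1\}$, which requires a separate argument for $|E| \to \infty$ not furnished by \Cref{l:boundaryvalues} alone; the paper's bootstrap sidesteps this entirely.
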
 

	\begin{proof}
		
		We begin with the proof of the first statement. To that end, observe from \eqref{q00equation} that 
		\begin{flalign*}
			\vartheta_0 = \displaystyle\sum_{v \sim 0} T_{0v}^2 \Imaginary R_{vv}^{(0)} \le \displaystyle\frac{\Imaginary R_{00}}{|R_{00}|^2} \le \displaystyle\frac{1}{\Imaginary R_{00}},
		\end{flalign*}
		
		\noindent and so 
		\begin{flalign}
			\label{theta0estimate} 
			\sigma (\vartheta_0) = \mathbb{E} \big[ (\Imaginary R_{00})^{\alpha / 2} \big]^{2 / \alpha} \le \mathbb{E} \big[ (\vartheta_0)^{-\alpha / 2} \big]^{2 / \alpha},
		\end{flalign}
		where the equality is justified by the second part of \Cref{rrealimaginary}. Recalling from \Cref{ar} that $\vartheta_0$ is a nonnegative $\frac{\alpha}{2}$-stable law with scaling parameter $\sigma (\vartheta_0)$, the bound \eqref{theta0estimate}  implies the existence of a constant $C_1 > 1$ such that $\sigma (\vartheta_0) \le C_1$ (indeed, recall that a non-negative $\alpha/2$-stable law has all its negative moments finite). The proof of the upper bound on $\sigma (\varkappa_0)$ is entirely analogous (using the second equality in \eqref{q00equation}) and is therefore omitted.
		
		To establish the second statement of the lemma, we may assume that $t > 2$, for otherwise $1 - C t^{-\alpha / 2} < 0$ for any $C > 2$. Then,
		\begin{flalign*}
			\mathbb{P} \bigg[ \big| R_{00} (z) \big| \ge \displaystyle\frac{1}{|E| + t} \bigg] = \mathbb{P} \big[ |z + \varkappa_0 + \mathrm{i} \theta_0| \le |E| + t \big] \ge \mathbb{P} \bigg[ |\varkappa_0| + |\theta_0| \le \displaystyle\frac{t}{2} \bigg]
		\end{flalign*}
		
		\noindent where the first statement follows from \eqref{kappatheta1} and the second from the facts that $t \ge 2$ and $\eta \le 1$. This, together with the first statement of the lemma, implies \eqref{r00estimate}. 
	\end{proof}

	We next require the following lemma bounding from below $\sigma (\varkappa_0)$, and estimating $\beta (\varkappa_0)$; we recall these quantities were defined in \Cref{rrealimaginary}.
	
	\begin{lem} 
		
		\label{sigmabetaestimate} 
		
		There exists a constant $c = c (\varepsilon, B) > 0$ such that  
		\begin{flalign}
			\label{sigmathetakappa} 
			 \sigma (\varkappa_0) \ge c; \qquad c - 1 \le \beta (\varkappa_0) \le 1 - c.
		\end{flalign}
	\end{lem}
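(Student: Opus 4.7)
The plan is to establish the two inequalities separately by combining the Schur complement identity with properties of $\alpha/2$-stable laws.

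\emph{Step 1: $\sigma(\varkappa_0) \ge c$.} By \Cref{rrealimaginary} and \eqref{q00equation},
\[
\sigma(\varkappa_0)^{\alpha/2} = \E\big[|\Real R_\star|^{\alpha/2}\big], \qquad |\Real R_\star| = \frac{|E + \varkappa_0|}{(E+\varkappa_0)^2 + (\eta + \vartheta_0)^2}.
\]
Argue by contradiction: if $\sigma(\varkappa_0)$ were arbitrarily small, then (since $\varkappa_0$ is an $\alpha/2$-stable law with scale tending to zero) $\varkappa_0 \to 0$ in probability, and so $\P[|\varkappa_0| > \varepsilon/2]$ would be arbitrarily small. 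Combining with $\P[\vartheta_0 \le M] \ge 3/4$ for some $M$ large (using $\sigma(\vartheta_0) \le C$ from \Cref{q0estimate1}), the event $\{|\varkappa_0| \le \varepsilon/2, \vartheta_0 \le M\}$ has probability at least $1/2$. On this event, $|E+\varkappa_0| \ge \varepsilon/2$ while the denominator is at most $(B+\varepsilon/2)^2 + (M+1)^2$, so $|\Real R_\star| \ge c_1 > 0$ for some $c_1 = c_1(\varepsilon, B, M)$. This forces $\sigma(\varkappa_0)^{\alpha/2} \ge c_1^{\alpha/2}/2$, contradicting the smallness of $\sigma(\varkappa_0)$.

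\emph{Step 2: $|\beta(\varkappa_0)| \le 1-c$.} Set $E_\pm = \E\big[(\Real R_\star)_\pm^{\alpha/2}\big]$. Since $E_+ + E_- = \sigma(\varkappa_0)^{\alpha/2} \in [c^{\alpha/2}, C^{\alpha/2}]$ by Step 1 and \Cref{q0estimate1}, the claim reduces to showing $\min(E_+, E_-) \ge c' > 0$. Assume $E > 0$ (the case $E<0$ is analogous, interchanging the roles of $E_+$ and $E_-$). For $E_-$: on $\{\varkappa_0 \in [-\varepsilon/2, \varepsilon/2], \vartheta_0 \le M\}$ we have $E + \varkappa_0 \ge \varepsilon/2 > 0$, hence $\Real R_\star < 0$ with $|\Real R_\star|^{\alpha/2} \ge c_2 > 0$. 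For $E_+$: if $\beta(\varkappa_0) \ge 0$, then $E_+ \ge c^{\alpha/2}/2$ via the identity $E_+ = (1+\beta(\varkappa_0))\sigma(\varkappa_0)^{\alpha/2}/2$; if $\beta(\varkappa_0) < 0$, the event $\{\varkappa_0 \in [-M_0, -B-1], \vartheta_0 \le M\}$ for some large $M_0$ yields $\Real R_\star > 0$ with $|\Real R_\star|^{\alpha/2} \ge c_3 > 0$.

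\emph{Main obstacle.} The remaining task is to show that $\P[\varkappa_0 \in [-\varepsilon/2, \varepsilon/2]]$ and $\P[\varkappa_0 \in [-M_0, -B-1]]$ are bounded below uniformly over $(\sigma(\varkappa_0), \beta(\varkappa_0)) \in [c, C] \times [-1, 1]$ and $[c, C] \times [-1, 0]$, respectively. Since the family of $\alpha/2$-stable laws depends continuously on $(\sigma, \beta)$ in the weak topology and the parameter sets are compact, this reduces to checking positivity of the mass at every parameter value, including the boundary cases $\beta = \pm 1$ where the support becomes a half-line. The interval $[-\varepsilon/2, \varepsilon/2]$ always intersects the support in a neighborhood of the origin of positive Lebesgue measure; the interval $[-M_0, -B-1]$ lies in the negative half-line, which is contained in the support whenever $\beta \le 0$, once $M_0$ is sufficiently large.
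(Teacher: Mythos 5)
Your proof is correct and relies on the same core ingredients as the paper's: the Schur complement formula $\Real R_\star = -(E+\varkappa_0)\,|R_\star|^2$, the bound $\sigma(\vartheta_0)\le C$ and (already-known upper bound) $\sigma(\varkappa_0)\le C$ from \Cref{q0estimate1}, and a restriction to the event $\{|\varkappa_0|\le \varepsilon/2,\ \vartheta_0\le M\}$ on which $|\Real R_\star|$ and $\sgn\Real R_\star$ are controlled. The organization differs in two places. First, you obtain $\sigma(\varkappa_0)\ge c$ by contradiction (letting $\sigma(\varkappa_0)\to 0$), whereas the paper argues directly from \eqref{kappa0estimate} and \eqref{r00estimate}. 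Second, for $|\beta(\varkappa_0)|\le 1-c$ you bound $E_-$ and $E_+$ separately with a dichotomy on $\sgn\beta$, while the paper instead runs a two-stage bootstrap: it first proves $\beta\le 1-c_5$ from $\mathbb{P}[\Real R_{00}<-c_4]>c_4$, and then uses that bound on $\beta$ to conclude the left tail of $\varkappa_0$ carries mass, giving $\mathbb{P}[\Real R_{00}>c_7]>c_7$ and hence $\beta\ge c_8-1$. These are equivalent in content. The ``main obstacle'' you flag — uniform positivity of $\mathbb{P}[\varkappa_0\in I]$ over the compact parameter range — is precisely what the paper uses (without comment) in asserting \eqref{kappa0estimate}; your compactness-plus-continuity resolution is sound, though strictly speaking weak convergence alone does not lower-bound interval probabilities, and what one really needs is joint continuity of the stable densities in $(\sigma,\beta,x)$ (which does hold for $\alpha/2<1$ and $\sigma$ bounded away from $0$).
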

	
	\begin{proof}
		
		We first establish the lower bound on $\sigma (\varkappa_0)$. To that end, observe that the upper bound on $\sigma (\varkappa_0)$ from \Cref{q0estimate1} yields a constant $c_1 > 0$ such that 
		\begin{flalign}
			\label{kappa0estimate} 
			\mathbb{P} \bigg[ |\varkappa_0| \le \displaystyle\frac{\varepsilon}{2} \bigg] \ge c_1  \varepsilon.
		\end{flalign}
		
		\noindent Moreover, \eqref{r00estimate} yields a constant $c_2 = c_2 (\varepsilon, B) > 0$ such that for any $\varepsilon \in (0, 1)$ we have 
		\begin{flalign}
			\label{q00estimate}
			\mathbb{P} \bigg[ |R_{00}| \ge \displaystyle\frac{c_2 \varepsilon^{2 / \alpha}}{|E| + 1} \bigg] \ge 1 - \displaystyle\frac{c_1 \varepsilon}{2}.
		\end{flalign}
		
		\noindent Combining the second equality in \eqref{q00equation}, \eqref{kappa0estimate}, \eqref{q00estimate}, and the fact that $\varepsilon \le |E| \le B$ yields 
		\begin{flalign}
			\mathbb{P} \Bigg[ |\Real R_{00}| \ge \displaystyle\frac{\varepsilon}{2} \cdot \bigg( \displaystyle\frac{c_2 \varepsilon^{2 / \alpha}}{B + 1} \bigg)^2 \Bigg] \ge \displaystyle\frac{c_1 \varepsilon}{2}.
		\end{flalign}
		
		\noindent It follows that that there exists a constant $c_3 = c_3 (\varepsilon, B) > 0$ such that $\mathbb{E} \big[ |\Real R_{00}|^{\alpha / 2} \big] > c_3$, which verifies the lower bound on $\sigma (\varkappa_0) = \mathbb{E} \big[ |\Real R_{00}|^{\alpha / 2} \big]^{2 / \alpha}$ from \eqref{sigmathetakappa}. 
		
		It remains to bound $\beta (\varkappa_0)$; let us assume in what follows that $E > 0$, for the proof when $E < 0$ is entirely analogous. To that end, observe that the estimates $\sigma (\vartheta_0) \le C$ and $c \le \sigma (\varkappa_0) \le C$ from \Cref{q0estimate1} and \eqref{sigmathetakappa} together imply the existence of a constant $c_4 = c_4 (\varepsilon, B) > 0$ such that $\mathbb{P} [\Real R_{00} < -c_4] > c_4$. Since $\sigma (\varkappa_0) < C$ and 
		\begin{flalign}
			\label{betakappa01} 
			\beta (\varkappa_0) = 1  - \displaystyle\frac{2}{\sigma (\varkappa_0)^{\alpha / 2}} \cdot \mathbb{E} \big[ \max \{ -\Real R_{00}, 0 \}^{\alpha / 2} \big],
		\end{flalign} 
		
		\noindent this yields a constant $c_5 = c_5 (\varepsilon, B) > 0$ such that 
		\begin{flalign} 
			\label{beta1} 
			\beta (\varkappa_0) \le 1 - c_5,
		\end{flalign}
		
		\noindent verifying the upper bound on $\beta (\varkappa_0)$ from \eqref{sigmathetakappa}. 
		
		Next, by \eqref{beta1}, there exists a constant $c_6 = c_6 (\varepsilon, B) > 0$ such that $\mathbb{P} [\varkappa_0 < -2B] \ge c_6$. This, with the second equality of \eqref{q00equation}, \Cref{q0estimate1}, and the fact that $|E| \le B$, gives a constant $c_7 = c_7 (\varepsilon, B) > 0$ such that $\mathbb{P} [\Real R_{00} > c_7] > c_7$. Again using \eqref{betakappa01} and the bound $\sigma (\varkappa_0) \le C$, this yields a constant $c_8 = c_8 (\varepsilon, B) > 0$ so that $\beta (\varkappa_0) \ge c_8 - 1$, confirming the last bound in \eqref{sigmathetakappa}.	
	\end{proof}

	Now we can establish \Cref{ar}.

		\begin{proof}[Proof of \Cref{ar}]
		
		By \Cref{rrealimaginary}, $\varkappa_0 (z)$ is an $\frac{\alpha}{2}$-stable law with scaling parameter $\sigma (\varkappa_0)$ and skewness parameter $\beta (\varkappa_0)$, and $\vartheta_0 (z)$ is a nonnegative $\frac{\alpha}{2}$-stable law with scaling parameter $\sigma (\vartheta_0)$. By \Cref{q0estimate1} and \Cref{sigmabetaestimate}, there exist constants $C > 1$ and $c = c (\varepsilon, B) > 1$ such that $\sigma (\vartheta_0) \le C$, $c \le \sigma (\varkappa_0) \le C$, and $c - 1 \le \beta (\varkappa_0) \le 1 - c$. The upper bounds $\sigma (\varkappa_0) \le C$ and $\sigma (\vartheta_0) \le C$ imply the second statement \eqref{q00delta2} of the lemma. The upper bound in the first statement \eqref{q00delta1} follows from the bounds $c \le \sigma (\varkappa_0) \le C$; the lower bound there follows from this together with the estimates $c - 1 \le \beta (\varkappa_0) \le 1 - c$.
	\end{proof}

\subsection{Real Boundary Values of the Resolvent}\label{s:rebdd}

We begin with the following definition.

\begin{definition}
	\label{locre} 
Fix $\alpha \in (0,1)$ and $E \in \bbR$, and let $R_\loc(E)$ be the real random variable given by
\bex
R_\loc(E) = - \frac{1}{E + \varkappa_\loc(E)}, 
\eex
where we recall that $\varkappa_\loc$ was defined in \Cref{pkappae}.
\end{definition}
		
The following lemma asserts that any real boundary value of  $R_\star(z)$ is equal to $R_\loc(E)$. We recall the quantities $a(E)$ and $b(E)$ from \eqref{opaque}.

\begin{lem}\label{l:boundarybasics}
Fix $E \in \bbR$, and let $R(E)$ be any limit point (under the weak topology) of the sequence $\big\{ R_\star(E + \iu \eta) \big\}_{\eta > 0}$ as $\eta$ tends to $0$. If $\Im R(E) = 0$, then
\be\label{abrepresentation}
a(E) = \E\left[ \big( R(E) \big)^{\alpha/2}_+   \right], \qquad b(E) = \E\left[ \big( R(E) \big)^{\alpha/2}_-  \right],
\ee
and, abbreviating $a = a(E)$ and $b = b(E)$, 
\begin{equation}\label{fixedpoint}
a = \E \left[ \left( E + a^{2/\alpha} S_1 - b^{2/\alpha} S_2  \right)_-^{-\alpha/2}  \right],\qquad
b = \E \left[ \left( E + a^{2/\alpha} S_1 - b^{2/\alpha} S_2  \right)_+^{-\alpha/2}  \right],
\end{equation} 
where $S_1$ and $S_2$ are independent, nonnegative $\alpha/2$-stable laws and we have used the shorthand notation $(x)_{\pm}^{-\alpha /2} := ((x^{-1} )_{\pm} )^{\alpha /2}$. Further, we have $R(E) = R_\loc(E)$ in distribution.
\end{lem}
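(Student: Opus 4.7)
The plan is to prove the claim in three stages: (i) the identification $a(E) = \E[R(E)_+^{\alpha/2}]$ and $b(E) = \E[R(E)_-^{\alpha/2}]$ via the function $y$; (ii) the distributional identity $R(E) \stackrel{d}{=} R_\loc(E)$ via the recursive distributional equation; and (iii) the coupled fixed-point equations \eqref{fixedpoint} as an immediate corollary. I restrict attention to $E \neq 0$, the degenerate case $E = 0$ being handled by \cite[Lemma 4.3(b)]{bordenave2011spectrum}. Let $\{\eta_k\}_{k \ge 1}$ realize the weak limit $R_\star(E + \iu \eta_k) \Rightarrow R(E)$. The main analytic input everywhere below is uniform integrability of $|R_\star(E + \iu \eta)|^{\alpha/2}$ on $\eta \in (0,1)$: fixing any $\chi \in (\alpha/2, 1)$, \Cref{expectationqchi} gives $\E[|R_\star(E + \iu \eta)|^{\chi}] \le C$ uniformly in $\eta$, which dominates $|R_\star|^{\alpha/2}$ and controls its tails.

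For (i), I would take limits in both sides of $y(E + \iu \eta_k) = \E[(-\iu R_\star(E + \iu \eta_k))^{\alpha/2}]$ from \eqref{yrepresentation}. The left side converges to $y(E)$ by the continuous extension of $y$ to $\overline{\bbH}$ provided by \Cref{l:boundaryvalues}. The right side converges to $\E[(-\iu R(E))^{\alpha/2}]$ by weak convergence together with the uniform integrability above. Since $R(E)$ is real, the branch convention of \Cref{s:notation} yields the pointwise identity $(-\iu x)^{\alpha/2} = (-\iu)^{\alpha/2} x_+^{\alpha/2} + \iu^{\alpha/2} x_-^{\alpha/2}$ for every $x \in \bbR$. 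Taking expectations and comparing with the defining decomposition \eqref{opaque}, the linear independence of $\{(-\iu)^{\alpha/2}, \iu^{\alpha/2}\}$ over $\bbR$ produces \eqref{abrepresentation}.

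For (ii), I would pass the recursive distributional equation (\Cref{rdistribution}) to the limit via the rewriting $R_\star(z) = -(z + \varkappa_0(z) + \iu \vartheta_0(z))^{-1}$ implicit in \eqref{kappatheta1}, where by \Cref{rrealimaginary}, $\varkappa_0$ is an $\alpha/2$-stable law whose scaling and skewness parameters depend on $\E[(\Re R_\star)_{\pm}^{\alpha/2}]$, and $\vartheta_0$ is a nonnegative $\alpha/2$-stable law with scaling $\E[(\Im R_\star)^{\alpha/2}]^{2/\alpha}$. Weak convergence $(\Re R_\star, \Im R_\star)(E + \iu \eta_k) \Rightarrow (R(E), 0)$ combined with uniform integrability lets me pass to the limit in each of these fractional moments; using (i), the scaling and skewness of $\varkappa_0(E + \iu \eta_k)$ converge to those of $\varkappa_\loc(E) = a^{2/\alpha} S_1 - b^{2/\alpha} S_2$ as prescribed by \Cref{sumalpha}, while the scaling of $\vartheta_0(E + \iu \eta_k)$ tends to $0$. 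Slutsky's theorem then yields joint convergence $(\varkappa_0, \vartheta_0)(E + \iu \eta_k) \Rightarrow (\varkappa_\loc(E), 0)$, and the continuous mapping theorem applied to $(x, y) \mapsto -(E + x + \iu y)^{-1}$, continuous off the $\varkappa_\loc(E)$-null set $\{x = -E\}$ (null by absolute continuity of stable densities), gives $R_\star(E + \iu \eta_k) \Rightarrow R_\loc(E)$. Comparing with $R_\star(E + \iu \eta_k) \Rightarrow R(E)$ forces $R(E) \stackrel{d}{=} R_\loc(E)$.

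Finally, (iii) falls out immediately. Writing $R_\loc(E) = -(E + \varkappa_\loc(E))^{-1}$ and unfolding the convention $(x)_{\pm}^{-\alpha/2} = ((x^{-1})_{\pm})^{\alpha/2}$, one checks directly that $(R_\loc)_+^{\alpha/2} = (E + \varkappa_\loc)_-^{-\alpha/2}$ and $(R_\loc)_-^{\alpha/2} = (E + \varkappa_\loc)_+^{-\alpha/2}$; taking expectations and combining $R(E) \stackrel{d}{=} R_\loc(E)$ with (i) produces \eqref{fixedpoint}. The one genuinely delicate step in this plan is justifying the uniform integrability and the passage to the limit in the stable-law parameters; the remainder is a routine application of weak convergence theorems.
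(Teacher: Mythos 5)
Your proposal is correct and follows essentially the same route as the paper: part (i) is carried out exactly as in the paper (continuity of $y$ via \Cref{l:boundaryvalues}, uniform integrability of $|R_\star|^{\alpha/2}$, linear independence of $\iu^{\alpha/2}$ and $(-\iu)^{\alpha/2}$), and parts (ii)-(iii) rely on the same inputs (the recursive distributional equation from \Cref{rdistribution} together with Poisson thinning as packaged in \Cref{rrealimaginary} and \Cref{sumaxi}, plus $\Im R(E)=0$). The only cosmetic difference is that you pass to the limit by tracking convergence of the stable-law parameters and then invoking Slutsky and continuous mapping, whereas the paper manipulates the thinned decomposition $\sum_j \xi_j R_j(z_k)$ directly and takes the limit termwise; both versions need (and you correctly supply) uniform integrability of the fractional moments to justify convergence of the scaling and skewness parameters.
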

\begin{proof}
We prove \eqref{abrepresentation} first.
We may suppose that $E \neq 0$, since otherwise $\Im R(E)$ is almost surely positive and the claim is vacuously true; see \cite[Lemma 4.3(b)]{bordenave2011spectrum}.
Let $\{\eta_k\}_{k=1}^\infty$ be a decreasing sequence such that \bex
\lim_{k\rightarrow \infty} R_\star(E + \iu \eta_k) = R(E).
\eex Set $z_k = E + \iu \eta_k$. 
To show \eqref{abrepresentation}, note that 
\be\label{y1}
y(E) = \lim_{k \rightarrow \infty} \E \left[ \big(- \iu R_\star(z_k)   \big)^{\alpha/2}  \right],
\ee
since $y(z)$ extends continuously to $\overline{\bbH}$.
We have
\bex
R_\star(z_k) = - \frac{1}{z + \varkappa_0(z_k) + \iu \vartheta_0(z_k)},
\eex
and using \Cref{ar}, we find that the sequence $\big( \Re R_\star(z_k) \big)_+^{\alpha/2}$ is uniformly integrable. 
Then 
\bex
\lim_{k \rightarrow \infty} \E\left[ \big( \Re R_\star(z_k) \big)_+^{\alpha/2} \right]^{2/\alpha}
= \E\left[ \big( \Re R(E) \big)_+^{\alpha/2} \right]^{2/\alpha},
\eex
and similarly for the limits of the $\alpha/2$-moments of $\big( \Re R_\star(z_k) \big)_-$, $\big( \Im R_\star(z_k) \big)_+$, and $\big( \Im R_\star(z_k) \big)_-$. By assumption, the latter two limits vanish. Using \eqref{y1}, this gives
\bex
y(E) = 
(-\iu)^{\alpha/2} \E\left[ \big( \Re R(E) \big)_+^{\alpha/2} \right]^{2/\alpha} + (\iu)^{\alpha/2} \E\left[ \big( \Re R(E) \big)_-^{\alpha/2} \right]^{2/\alpha}.
\eex
Because the complex numbers $\iu^{\alpha/2}$ and $(-\iu)^{\alpha/2}$ are linearly independent over $\bbR$, the previous equation implies that \eqref{abrepresentation} holds by the definitions of $a(E)$ and $b(E)$ in \eqref{opaque}.

We recall from \Cref{rdistribution}
that, in distribution,
\be\label{rde2}
R_\star(z_k) = 
 - \left( z_k + \sum_{j=1}^\infty \xi_j R_j(z_k) \right)^{-1}
\ee
for $k \ge 1$.
By the Poisson thinning property (\Cref{sumaxi}),
\begin{align}\label{pthinning}
 \sum_{j=1}^\infty \xi_j R_j(z_k)
&= 
\E\left[ \big( \Re R_\star(z_k) \big)_+^{\alpha/2} \right]^{2/\alpha} S_1
- \E\left[ \big( \Re R_\star(z_k) \big)_-^{\alpha/2} \right]^{2/\alpha} S_2\\
&+ \iu \cdot \E\left[ \big( \Im R_\star(z_k) \big)_+^{\alpha/2} \right]^{2/\alpha} S_3
- \iu \cdot \E\left[ \big( \Im R_\star(z_k) \big)_-^{\alpha/2} \right]^{2/\alpha} S_4\notag
\end{align}
for $k \ge 1$, where $\{S_i\}_{i=1}^4$ are nonnegative $\alpha/2$-stable laws, the pair $(S_1, S_2)$ is independent, and the pair $(S_3, S_4)$ is independent.

Since $\Im R(E) = 0$, we have 
\begin{align}
\lim_{k\rightarrow \infty} \sum_{j=1}^\infty \xi_j R_j(z_k)
&= 
\E\left[ \big( \Re R(E) \big)_+^{\alpha/2} \right]^{2/\alpha} S_1
- \E\left[ \big( \Re R(E) \big)_-^{\alpha/2} \right]^{2/\alpha} S_2.
\end{align}
Then taking the limit as $k$ tends to infinity in \eqref{rde2}, we obtain
\begin{align}\label{fixedpenultimate}
R(E) &= 
-\left(
E 
+
\E\left[ \big( \Re R(E) \big)_+^{\alpha/2} \right]^{2/\alpha} S_1
- \E\left[ \big( \Re R(E) \big)_-^{\alpha/2} \right]^{2/\alpha} S_2
\right)^{-1}\\
&= -\big(
E 
+
a(E) S_1
- b(E) S_2
\big)^{-1}.\notag
\end{align}
We then obtain \eqref{fixedpoint} from \eqref{fixedpenultimate} after separating into positive and negative and taking $\alpha/2$ moments. Since \eqref{fixedpenultimate} also holds for $R_\loc$, it further shows that $R(E) = R_\loc(E)$ in distribution. This completes the proof.
\end{proof}

\newpage 

	\chapter{Fractional Moment Criterion} 
	
	\label{MomentFractional}	

	\section{Integral and Poisson Point Process Estimates}
	
	\label{G0vMoment}

	In this section we provide various estimates for heavy-tailed integrals and functionals of Poisson point processes that will be used in the proof of \Cref{limitr0j} in \Cref{EstimateMomentss} below. To briefly indicate how such quantities arise, let us explain how one might estimate $\Phi_L (s; z)$ at $L = 1$, namely, $\Phi_1 (s; z) = \mathbb{E} \big[ \sum_{v \sim 0} |R_{0v}|^s \big]$ (recall \Cref{moment1}). To that end, we first apply \Cref{rproduct} to express the off-diagonal resolvent entry $R_{0v}$ as a product of diagonal ones. For $v \sim 0$, this yields
	\begin{flalign*}
		|R_{0v}|^s = |R_{00}|^s \cdot |T_{0v}|^s \cdot \big|R_{vv}^{(0)} \big|^s = \displaystyle\frac{|T_{0v}|^s}{\big|z + T_{0v}^2 R_{vv}^{(0)} + K_v \big|^s} \cdot \big| R_{vv}^{(0)} \big|^s, \quad \text{where $K_v = \displaystyle\sum_{\substack{w \sim 0 \\ w \ne v}} T_{0w}^2 R_{ww}^{(0)}$},
	\end{flalign*}

	\noindent where in the last equality we applied the Schur complement identity \eqref{qvv}. Observe that the $K_v$ all have the same law $K$, which by \Cref{tuvalpha} and \Cref{sumaxi} is $\frac{\alpha}{2}$-stable. Hence, 
	\begin{flalign*}
		\Phi_1 (s; z) = \mathbb{E} \Bigg[ \displaystyle\sum_{v \sim 0} \displaystyle\frac{|T_{0v}|^s}{\big| z + T_{0v}^2 R_{vv}^{(0)} + K_v \big|^s} \cdot \big| R_{vv}^{(0)} \big|^s\Bigg].
	\end{flalign*}

	\noindent So, to bound $\Phi_1 (s; z)$ (and more generally $\Phi_L (s; z)$) we estimate more general sums of the form 
	\begin{flalign}
		\label{expectationtjrjkj}
		\mathbb{E} \Bigg[ \displaystyle\sum_{j=1}^{\infty} \displaystyle\frac{|T_j|^s}{|z + T_j^2 R_j + K_j|^{\chi}} \cdot |R|^s \Bigg],
	\end{flalign}

	\noindent where the $(T_j)_{j \ge 1}$ form a Poisson point process with intensity measure $\alpha x^{-\alpha-1} dx$, and the $R_j$ and $K_j$ are random variables. By the Campbell theorem (\Cref{fidentityxi}), the above expectation is given by an explicit heavy-tailed integral. We begin by bounding this integral in \Cref{EstimateIntegral}. We then use these integral estimates to bound quantities of the type \eqref{expectationtjrjkj} in \Cref{EstimateProcess}, conditional on an estimate for inverse moments of the $K_j$, which we prove in \Cref{ProofV}.

	\subsection{Integral Estimates} 
	
	\label{EstimateIntegral} 
	
	In this section we estimate certain integrals that arise from applying the Campbell theorem, \Cref{fidentityxi}, to evaluate expectations of the form \eqref{expectationtjrjkj}. Throughout this section, we fix complex numbers $z, R \in \mathbb{C}$ and real numbers $s \in (\alpha, 1)$ and $\chi \in \big( \frac{s-\alpha}{2}, 1 \big)$. We assume that there exists a constant $\varpi > 0$ such that 
	\begin{flalign}
		\label{chisalphaomega} 
		\displaystyle\frac{s-\alpha}{2} + \varpi \le \chi \le 1 - \varpi.
	\end{flalign}
	
	\noindent In what follows, various constants may depend on $\varpi$ (and $\alpha$), even when not written explicitly, but not on $z$, $R$, $s$, or $\chi$. Denoting
	\begin{flalign}
		\label{zgamma}
		\begin{aligned}
		\gamma = |R|^{-1/2} & \cdot \big(  |z| + 1 \big)^{1/2}; \qquad \mathfrak{Y} = \big( |z| + 1 \big)^{(s-\alpha)/2 - \chi} \cdot |R|^{(\alpha-s)/2}; \\
		& \mathfrak{z} (t) = \displaystyle\frac{t^{s-\alpha-1}}{\big( |z + t^2 R| + 1 \big)^{\chi}}; \qquad \mathfrak{Z} = \displaystyle\int_0^{\infty} \mathfrak{z} (t)\, dt,
		\end{aligned} 
	\end{flalign}

	\noindent we have the following estimate for $\mathfrak{Z}$.
	
	\begin{lem} 
		
		\label{integral3} 
		
		Under \eqref{chisalphaomega} and \eqref{zgamma}, there exists a constant $C > 1$ such that 
		\begin{flalign*} 
			C^{-1} (s-\alpha)^{-1} \cdot \mathfrak{Y} \le \mathfrak{Z} \le C (s-\alpha)^{-1} \cdot \mathfrak{Y}.
		\end{flalign*} 
	
	\end{lem}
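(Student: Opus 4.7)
The plan is to reduce the estimate of $\mathfrak{Z}$ to a canonical scalar integral via successive changes of variable. Setting first $t = u\gamma$, then $w = u^2$ (to linearize the $t^2 R$ in the denominator), then $y = w(|z|+1)$, and finally absorbing the phase of $R$ into $z$ by writing $\tilde z = z \cdot \overline{R/|R|}$ (so that $|\tilde z| = |z|$), a direct computation yields
\begin{equation*}
\mathfrak{Z} = \frac{|R|^{-\beta}}{2} \int_0^\infty \frac{y^{\beta - 1}}{(|\tilde z + y| + 1)^{\chi}}\, dy, \qquad \beta := \frac{s-\alpha}{2}.
\end{equation*}
Writing $J$ for the integral on the right, the lemma reduces to proving $\beta J \asymp (|z|+1)^{\beta - \chi}$ with constants depending only on $\varpi$.

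For the upper bound on $J$ I would split at $y = 2(|z|+1)$. On the tail $y > 2(|z|+1)$ the reverse triangle inequality gives $|\tilde z + y| \ge y/2$, so the integrand is bounded by $2^{\chi} y^{\beta - \chi - 1}$; since $\chi - \beta \ge \varpi$ this contributes $O_{\varpi}((|z|+1)^{\beta - \chi})$. On the bounded region $y \le 2(|z|+1)$ I would write $\tilde z = a + \iu b$ with $a^2 + b^2 = |z|^2$ and split once more into a region away from the possibly resonant point $y = -a$ and an $O(|z|+1)$-wide neighborhood of it. Away from the resonance, $|\tilde z + y| \gtrsim |z|+1$, so the integrand is at most $C(|z|+1)^{-\chi} y^{\beta-1}$, integrating to at most $C\beta^{-1}(|z|+1)^{\beta - \chi}$. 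Near the resonance, substituting $r = y + a$ bounds the integrand by $C(|z|+1)^{\beta - 1}(|r|+1)^{-\chi}$, and integrating this in $r$ over an interval of length $O(|z|+1)$ contributes $C(1-\chi)^{-1}(|z|+1)^{\beta - \chi} \le C\varpi^{-1}(|z|+1)^{\beta - \chi}$ using $\chi \le 1-\varpi$.

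For the lower bound I would restrict $J$ to a region on which the denominator is easily dominated from above. If $|z| \le 2$, I restrict to $y \in [0,1]$, where $|\tilde z + y| + 1 \le 4$ and $\int_0^1 y^{\beta -1}\, dy = \beta^{-1}$; this gives $J \ge c\beta^{-1}$, of the claimed order since $(|z|+1)^{\beta - \chi}$ is then $\asymp 1$. If $|z| > 2$, I restrict to $y \in [0, |z|/2]$: the reverse triangle inequality forces $|\tilde z + y| \ge |z|/2$ throughout, so the denominator is of order $|z|+1$, yielding $J \ge c\beta^{-1}|z|^{\beta - \chi}$, which matches the target.

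The main obstacle is the upper bound in the resonant region, where $|\tilde z + y|$ can become small at $y = -a$ when $\tilde z$ has large negative real part. The $+1$ in the denominator prevents genuine divergence and effectively confines the contribution to an $O(|z|+1)$-wide interval on which $y^{\beta-1} \asymp (|z|+1)^{\beta - 1}$; converting this to the required power $(|z|+1)^{\beta - \chi}$ uses the hypothesis $\chi < 1$, guaranteed by $\chi \le 1 - \varpi$. Once this case is handled uniformly in the argument of $\tilde z$, the remaining bounds follow from elementary manipulations of power-type integrands.
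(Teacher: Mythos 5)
Your proposal is correct and follows essentially the same route as the paper: both normalize variables to a one-dimensional integral, split at the scale $y \asymp |z|+1$ (equivalently $t \asymp \gamma$), bound the tail using $\chi - \beta \ge \varpi$, and control the near-resonance window --- where the denominator is protected only by the $+1$ --- using $\chi \le 1 - \varpi$. The paper packages the two halves as \Cref{integral} and \Cref{integral2} (which it needs again elsewhere with a general $\delta$) and sums them at $\delta = 1$, while you argue directly; the only slip in your outline is the appeal to the reverse triangle inequality in the lower bound for $|z| > 2$, where what you actually need is the ordinary triangle inequality $|\tilde z + y| \le |\tilde z| + y \le 3|z|/2$ to \emph{upper}-bound the denominator.
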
 

	To show the above lemma, for any $\delta > 0$, we define the quantities 
	\begin{flalign*} 
		\mathfrak{Z}_- (\delta) = \displaystyle\int_0^{\delta \gamma} \mathfrak{z} (t)\, dt; \qquad \mathfrak{Z}_+ (\delta) = \displaystyle\int_{\gamma / \delta}^{\infty} \mathfrak{z} (t)\, dt.
	\end{flalign*} 
	
	\noindent Then, \Cref{integral3} will quickly follow from the following two lemmas bounding $\mathfrak{Z}_- (\delta)$ and $\mathfrak{Z}_+ (\delta)$. 
	
	\begin{lem} 
		
		\label{integral} 

	Under \eqref{chisalphaomega} and \eqref{zgamma}, there exists a constant $C > 1$ such that  
	\begin{flalign}
		\label{yz} 
		\mathfrak{Z}_- (\delta) \le C (s-\alpha)^{-1} \cdot \delta^{s-\alpha} \cdot \mathfrak{Y}
	\end{flalign}
	for all $\delta >0$. 
	Further, if $\delta \in (0,1]$, we have
		\begin{flalign}
		\label{yzlower} 
		 C^{-1} (s-\alpha)^{-1} \cdot \delta^{s-\alpha} \cdot \mathfrak{Y} \le \mathfrak{Z}_- (\delta).
	\end{flalign} 
\end{lem}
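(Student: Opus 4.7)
Both parts of the lemma reduce to controlling $(|z+t^2R|+1)^\chi$ across $[0, \delta\gamma]$, where $\gamma$ is calibrated so that $t^2|R| = |z|+1$ precisely when $t = \gamma$. The lower bound is direct: for $\delta \in (0,1]$, the triangle inequality gives $|z+t^2R| \le |z| + t^2|R| \le |z| + \delta^2(|z|+1) \le 2(|z|+1)$ on $[0,\delta\gamma]$, hence $(|z+t^2R|+1)^\chi \le 3^\chi(|z|+1)^\chi$. Substituting this into the integrand, evaluating $\int_0^{\delta\gamma}t^{s-\alpha-1}\,dt = (\delta\gamma)^{s-\alpha}/(s-\alpha)$, and using the identity $\gamma^{s-\alpha}(|z|+1)^{-\chi} = \mathfrak{Y}$ yields \eqref{yzlower}.

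For the upper bound \eqref{yz}, the naive estimate $(|z+t^2R|+1)^\chi \ge 1$ produces only $\mathfrak{Z}_-(\delta) \le \delta^{s-\alpha}(|z|+1)^\chi \mathfrak{Y}/(s-\alpha)$, which overshoots the target by a factor of $(|z|+1)^\chi$ when $|z| \gg 1$ due to possible cancellation between $z$ and $t^2 R$. I address this via a regime split. If $|z| \le 1$, the extra factor is bounded and the naive bound already suffices. If $|z| > 1$ and $\delta \le 1/2$, a brief computation shows $\delta\gamma < (|z|/(2|R|))^{1/2}$, so the integration lies entirely in the \emph{sub-resonant} zone $t^2|R| \le |z|/2$; there $|z+t^2R| \ge |z|/2$ forces $(|z+t^2R|+1)^\chi \ge c(|z|+1)^\chi$, closing the argument as in the lower bound. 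If $|z| > 1$ and $\delta > 1/2$, I bound $\mathfrak{Z}_-(\delta) \le \mathfrak{Z}$ and establish the absolute estimate $\mathfrak{Z} \le C\mathfrak{Y}/(s-\alpha)$; since $\delta^{s-\alpha} \ge 1/2$ in this regime, this bound absorbs into the desired target.

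The absolute estimate is obtained by decomposing $[0,\infty)$ via $\tau = t^2|R|$ into the sub-resonant piece $\tau \le |z|/2$ (contributing $O(\mathfrak{Y}/(s-\alpha))$ as above), a \emph{near-resonant} window $\tau \in [|z|/2, 2|z|]$, and a \emph{super-resonant} tail $\tau \ge 2|z|$. In the super-resonant tail, $|z+t^2R| \ge t^2|R|/2$ reduces the integrand to $O(t^{s-\alpha-1-2\chi}|R|^{-\chi})$, integrable by the hypothesis $\chi > (s-\alpha)/2 + \varpi$, with total contribution $O(\mathfrak{Y})$. In the near-resonant window, completing the square gives $|z+\tau R|^2 \ge |R|^2(\tau - \tau_*)^2$ for $\tau_* = -\Re(z\bar R)/|R|^2$; substituting $v = c|R|^{1/2}|z|^{1/2}(t-t_*)$ with $t_* = \tau_*^{1/2}$ (in the case $\tau_* > 0$ where resonance occurs) reduces the window integral to a bounded multiple of $\int_0^{c|z|}(|v|+1)^{-\chi}\,dv \le C|z|^{1-\chi}/(1-\chi)$, which, combined with the prefactors $t^{s-\alpha-1}$ near $t = t_*$ and the Jacobian $(|R|^{1/2}|z|^{1/2})^{-1}$, yields another $O(\mathfrak{Y})$ contribution with no $(s-\alpha)^{-1}$ factor.

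The main obstacle is the near-resonant analysis: the location $t_*$ of the minimum of $|z+t^2R|$ depends on the relative phase of $z$ and $R$, and in the worst anti-parallel real case $|z+t^2R|$ actually vanishes at $t_*$, making the trivial bound $(|z+t^2R|+1)^\chi \ge 1$ sharp there. Extracting the correct $|z|^{1-\chi}$ scaling from $\int_0^{c|z|}(|v|+1)^{-\chi}\,dv$ (rather than the strictly worse $O(|z|)$) relies on the upper bound $\chi < 1 - \varpi$, while integrability of the super-resonant tail requires $\chi > (s-\alpha)/2 + \varpi$; both conditions in \eqref{chisalphaomega} are essential for closing the estimate.
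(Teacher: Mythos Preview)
Your proof is correct, but the organization differs from the paper's. For the lower bound \eqref{yzlower}, both approaches coincide. For the upper bound \eqref{yz}, the paper makes a single change of variables $v = t/\gamma$ and, for $\delta \le 1$, splits $[0,\delta]$ at $\delta/2$: on $[0,\delta/2]$ one has $|z + (|z|+1)v^2 R/|R|\,| + 1 \ge (|z|+1)/4$, while on $[\delta/2,\delta]$ the factor $v^{s-\alpha-1}$ is frozen at $\sim\delta^{s-\alpha-1}$ and the remaining one-dimensional singular integral in $v$ is handled by a short case analysis on $|z|$. For $\delta > 1$ the paper bounds the increment $\mathfrak{Z}_-(\delta)-\mathfrak{Z}_-(1)$ directly via $|z+(|z|+1)v^2 R/|R|\,| \ge (|z|+1)(v^2-1)$. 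By contrast, you split first on the \emph{parameters} $(|z|,\delta)$ and, in the nontrivial regime $|z|>1$, $\delta>1/2$, abandon the $\delta$-dependence altogether by invoking the full bound $\mathfrak{Z}\le C\mathfrak{Y}/(s-\alpha)$, which you establish through a physically motivated sub/near/super-resonant trichotomy. Your route makes the role of the possible cancellation between $z$ and $t^2R$ more transparent, but it effectively proves \Cref{integral3} inside the proof of \Cref{integral}, so in the paper's logical order it does more work than needed. One small point: in your near-resonant completing-the-square step, the formula $|z+t^2R|^2 \ge |R|^2(\tau-\tau_*)^2$ with $\tau_* = -\Re(z\bar R)/|R|^2$ holds for $\tau = t^2$, not for $\tau = t^2|R|$ as you declared earlier; the subsequent substitution and scaling are consistent with $\tau = t^2$, so the conclusion is unaffected.
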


\begin{lem} 
	
	\label{integral2} 
	
	Under \eqref{chisalphaomega} and \eqref{zgamma}, for all $\delta \in (0,1]$, there exists a constant $C > 1$ such that   
	\begin{flalign}
		\label{yz2} 
		C^{-1} \delta^{2\chi - s + \alpha} \cdot \mathfrak{Y} \le \mathfrak{Z}_+ (\delta) \le C \delta^{2\chi - s + \alpha} \cdot \mathfrak{Y}.
	\end{flalign} 
\end{lem}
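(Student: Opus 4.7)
The plan is to prove both bounds by direct computation after controlling $|z + t^{2} R| + 1$ relative to $t^{2}|R|$ on the integration domain. Since $\delta \le 1$, everywhere in the range $t \ge \gamma/\delta$ we have $t \ge \gamma$, and hence $t^{2}|R| \ge \gamma^{2}|R| = |z| + 1$. In particular, the triangle inequality gives $|z + t^{2} R| + 1 \le |z| + t^{2}|R| + 1 \le 2 t^{2}|R|$ throughout the integration range.

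For the lower bound, I would insert $(|z + t^{2} R| + 1)^{-\chi} \ge (2 t^{2}|R|)^{-\chi}$ into the defining integral, producing
\[
\mathfrak{Z}_{+}(\delta) \;\ge\; 2^{-\chi}\, |R|^{-\chi} \int_{\gamma/\delta}^{\infty} t^{s - \alpha - 1 - 2\chi}\, dt.
\]
Since $s - \alpha - 2\chi \le -2\varpi$, the integral evaluates to $(2\chi + \alpha - s)^{-1} (\gamma/\delta)^{s - \alpha - 2\chi}$; substituting the definitions of $\gamma$ and $\mathfrak{Y}$ then rearranges to the claimed lower bound, with constant depending only on $\varpi$ via $2\chi + \alpha - s \ge 2\varpi$ and $\chi \le 1$.

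For the upper bound, I would split the integral at the threshold $t_{*} := \sqrt{2|z|/|R|}$. On the range $t \ge \max\{\gamma/\delta, t_{*}\}$, the estimate $t^{2}|R| \ge 2|z|$ yields $|z + t^{2} R| \ge t^{2}|R| - |z| \ge t^{2}|R|/2$, so the same direct integration as in the lower bound produces the desired $O(\delta^{2\chi - s + \alpha} \mathfrak{Y})$ contribution. The complementary range $[\gamma/\delta, t_{*}]$ is empty when $\delta \le 1/\sqrt{2}$, so assume $\delta \in (1/\sqrt{2}, 1]$. On this range, the reverse triangle inequality gives $|z + t^{2} R| + 1 \ge t^{2}|R| - |z| + 1$, and after the substitution $v = t^{2}|R|$ this contribution is dominated by
\[
\tfrac{1}{2}\, |R|^{(\alpha - s)/2} \int_{(|z|+1)/\delta^{2}}^{2|z|} v^{(s - \alpha)/2 - 1} (v - |z| + 1)^{-\chi}\, dv.
\]
Bounding the decreasing factor $v^{(s - \alpha)/2 - 1}$ uniformly by its value at the lower endpoint $(|z|+1)/\delta^{2}$, then integrating $(v - |z| + 1)^{-\chi}$ (which is finite at both endpoints thanks to $\chi \le 1 - \varpi$), yields a bound of order $(1-\chi)^{-1} \delta^{2 - s + \alpha} \mathfrak{Y}$. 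Since $\delta \le 1$ and $2\chi \le 2$, we have $\delta^{2 - s + \alpha} \le \delta^{2\chi - s + \alpha}$, so this is absorbed into $O(\delta^{2\chi - s + \alpha} \mathfrak{Y})$ as well.

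The main obstacle is the near-cancellation piece $[\gamma/\delta, t_{*}]$, where $z$ and $t^{2} R$ may partially cancel, making $|z + t^{2} R|$ genuinely much smaller than $t^{2}|R|$. The cruder estimate obtained there is acceptable only because this piece is nonempty solely when $\delta > 1/\sqrt{2}$, so $\delta^{2\chi - s + \alpha}$ is itself bounded below by a positive constant depending on $\varpi$, absorbing the loss in the $\delta$-power. Both inequalities $\chi \ge (s - \alpha)/2 + \varpi$ (needed for the tail integration at infinity) and $\chi \le 1 - \varpi$ (needed for integrability of $(v - |z| + 1)^{-\chi}$) play an essential role.
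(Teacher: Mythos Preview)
Your proof is correct. The lower bound is identical to the paper's. For the upper bound, the paper also performs a near/far split but places the threshold at the $\delta$-dependent point $t = 2\gamma/\delta$ (equivalently $v = 2/\delta$ after the substitution $v = t/\gamma$), so that its near piece $[1/\delta,\,2/\delta]$ is present for every $\delta$ and is handled by showing $\delta^{1-2\chi}\int_{1/\delta}^{2/\delta}(v^2-1)^{-\chi}\,dv \le (1-\chi)^{-1}$ uniformly. Your $\delta$-independent split at $t_* = \sqrt{2|z|/|R|}$ is a legitimate alternative: it makes the near piece vanish entirely for $\delta \le 1/\sqrt{2}$, at the price of an extra case distinction and the observation that when the near piece does appear, $\delta$ is bounded away from zero so the cruder power $\delta^{2 - s + \alpha}$ is harmless. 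Both routes use the two constraints in \eqref{chisalphaomega} in the same essential places.
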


\begin{proof}[Proof of \Cref{integral3}]
	This follows from summing \Cref{integral} and \Cref{integral2} at $\delta = 1$. 
\end{proof}

Now we show \Cref{integral} and \Cref{integral2}.

\begin{proof}[Proof of \Cref{integral}]
	
	Begin by supposing that $\delta \le 1$. We first establish the lower bound in \eqref{yzlower}. Changing variables $v = \gamma^{-1} t$, observe that
	\begin{flalign*}
		\mathfrak{Z}_- (\delta) \ge \displaystyle\int_0^{\delta \gamma} \displaystyle\frac{t^{s-\alpha-1} dt}{\big( |z| + t^2 |R| + 1 \big)^{\chi}} & = \gamma^{s-\alpha} \cdot \big( |z| + 1 \big)^{-\chi} \displaystyle\int_0^{\delta} \displaystyle\frac{v^{s-\alpha-1} dv}{(v^2+1)^{\chi}} \\
		& \ge \displaystyle\frac{1}{2} \cdot R^{(\alpha-s)/2} \cdot \big( |z| + 1 \big)^{(s-\alpha)/2 - \chi} \displaystyle\int_0^{\delta} v^{s-\alpha-1} dt = \displaystyle\frac{\delta^{s-\alpha}}{2 (s-\alpha)} \cdot \mathfrak{Y},
	\end{flalign*} 
	
	\noindent where in the third statement we used the definition of $\gamma$ and the fact that $v^2 + 1 \le 2$ for $v \le \delta$. This confirms the lower bound in the \eqref{yz}.
	
	To establish the upper bound, observe by again changing variables $v = \gamma^{-1} t$ that 
	\begin{flalign*}
		\mathfrak{Z}_- (\delta) & = \gamma^{s-\alpha} \displaystyle\int_{\delta/2}^{\delta} \displaystyle\frac{v^{s-\alpha-1} dv}{\Big( \big| z + (|z| + 1) v^2 \big| + 1 \Big)^{\chi}} + \gamma^{s-\alpha} \displaystyle\int_0^{\delta/2} \displaystyle\frac{v^{s-\alpha-1} dv}{\Big( \big| z + (|z|+1) v^2 \big| + 1 \Big)^{\chi}} \\
		& \le 2 \gamma^{s-\alpha} \cdot \big( |z| + 1 \big)^{-\chi} \cdot \delta^{s-\alpha-1} \displaystyle\int_{\delta/2}^{\delta} \Bigg( \bigg| \displaystyle\frac{z}{|z|+1}+ v^2 \bigg| + \displaystyle\frac{1}{|z|+1} \Bigg)^{-\chi} dv \\ & 
		\qquad + 4 \gamma^{s-\alpha} \cdot \big( |z| + 1 \big)^{-\chi} \displaystyle\int_0^{\delta/2} v^{s-\alpha-1} dv,
	\end{flalign*}

	\noindent where the last statement follows from the fact that $\big| z + (|z|+1)v^2 \big| + 1 \ge (|z|+1)/4$ for $v \le 1/2$. Thus, 
	\begin{flalign}
		\label{zdelta1} 
		\begin{aligned}
			\mathfrak{Z}_- (\delta) & \le 2 R^{(\alpha-s)/2} \cdot \big( |z| + 1 \big)^{(s-\alpha)/2-\chi} \cdot \delta^{s-\alpha-1} \displaystyle\int_0^{\delta/2} \Bigg( \bigg| \displaystyle\frac{z}{|z|+1}+ v^2 \bigg| + \displaystyle\frac{1}{|z|+1} \Bigg)^{-\chi} dv \\
			& \qquad + \displaystyle\frac{4\delta^{s-\alpha}}{s-\alpha} \cdot R^{(\alpha-s)/2} \cdot \big( |z| + 1 \big)^{(s-\alpha)/2-\chi} \\
			& \le \displaystyle\frac{4 \delta^{s-\alpha}}{s-\alpha} \cdot \mathfrak{Y} \cdot \Bigg( 1 + \delta^{-1} \displaystyle\int_0^{\delta/2} \bigg( \Big| \displaystyle\frac{z}{|z|+1} + v^2 \Big| + \displaystyle\frac{1}{|z|+1} \bigg)^{-\chi} dv \Bigg).
		\end{aligned} 
	\end{flalign}
	
	\noindent If $|z| \le 2$, then
	\begin{flalign*}
		\delta^{-1} \displaystyle\int_0^{\delta/2} \Bigg( \bigg| \displaystyle\frac{z}{|z|+1}+ v^2 \bigg| + \displaystyle\frac{1}{|z|+1} \Bigg)^{-\chi} dv \le \big( |z| + 1 \big)^{\chi} \le 3,
	\end{flalign*}
	
	\noindent and inserting this into \eqref{zdelta1} yields the upper bound in \eqref{yz}. Thus, assume instead that $|z| \ge 2$. Denoting $\kappa = z (|z|+1)^{-1}$, we have $|\kappa| \ge \frac{2}{3}$, in which case 
	\begin{flalign*}
		\delta^{-1} \displaystyle\int_0^{\delta/2} \Bigg( \bigg| \displaystyle\frac{z}{|z|+1}+ v^2 \bigg| + \displaystyle\frac{1}{|z|+1} \Bigg)^{-\chi} dv \le \delta^{-1} \displaystyle\int_0^{\delta/2} \displaystyle\frac{dv}{|v^2 + \kappa|^{\chi}} \le \delta^{-1} \displaystyle\int_0^{\delta/2} \bigg| v^2 - \displaystyle\frac{2}{3} \bigg|^{-\chi} dv \le C_1,
	\end{flalign*}
	
	\noindent for some constant $C_1 > 1$ (recall from \eqref{chisalphaomega} that $\chi \le 1 - \varpi$). Inserting this into \eqref{zdelta1} yields the upper bound in \eqref{yz}.	
	
	We now suppose that $\delta > 1$. In light of the previous calculations, it suffices to bound the integral
	\begin{align*}
	\gamma^{s-\alpha} \displaystyle\int_{1}^{\delta} \displaystyle\frac{v^{s-\alpha-1} dv}{\Big( \big| z + (|z| + 1) v^2 \big| + 1 \Big)^{\chi}}&
	\le \gamma^{s-\alpha} \delta^{s - \alpha - 1} \displaystyle\int_{1}^{\delta} \displaystyle\frac{ dv}{\Big( \big| z + (|z| + 1) v^2 \big| + 1 \Big)^{\chi}}\\
	&\le \gamma^{s-\alpha} \delta^{s-\alpha-1} ( |z| + 1)^{-\chi}\int_1^\delta (v^2 - 1)^{-\chi} \, dv\\ 
	& \le C \gamma^{s-\alpha} \delta^{s-\alpha} ( |z| + 1)^{-\chi} (1-\chi)^{-1},
	\end{align*}
which completes the proof.
\end{proof}

\begin{proof}[Proof of \Cref{integral2}] 
	
	We first establish the lower bound in \eqref{yz2}. As in the proof of \Cref{integral}, we change variables $v = \gamma^{-1} z$ to deduce
	\begin{flalign*}
		\mathfrak{Z}_+ (\delta) \ge \displaystyle\int_{\gamma / \delta}^{\infty} \displaystyle\frac{t^{s-\alpha-1} dt}{\big( |z| + t^2 |R| + 1 \big)^{\chi}} & = \gamma^{s-\alpha} \cdot \big( |z| + 1 \big)^{-\chi} \displaystyle\int_{1/\delta}^{\infty} \displaystyle\frac{v^{s-\alpha-1} dv}{(v^2 + 1)^{\chi}} \\
		& \ge \displaystyle\frac{1}{2} \cdot R^{(\alpha-s)/2} \cdot \big( |z| + 1 \big)^{(s-\alpha)/2 - \chi} \displaystyle\int_{1/\delta}^{\infty} v^{s-\alpha-2\chi-1} dv \\
		& \ge \displaystyle\frac{\delta^{s-\alpha-2\chi}}{2 (2\chi - s + \alpha)} \cdot \mathfrak{Y},
	\end{flalign*} 

	\noindent which verifies the lower bound in \eqref{yz2} (upon recalling from \eqref{chisalphaomega} that $\chi \ge \frac{s-\alpha}{2} + \varpi$). 
	
	To establish the upper bound, observe (again setting $v = \gamma^{-1} z$) that 
	\begin{flalign*} 
		\mathfrak{Z}_+ (\delta) & = \gamma^{s-\alpha} \displaystyle\int_{1/\delta}^{2/\delta} \displaystyle\frac{v^{s-\alpha-1} dv}{\Big( \big| z + (|z| + 1) v^2 \big| + 1 \Big)^{\chi}} + \gamma^{s-\alpha} \displaystyle\int_{2/\delta}^{\infty} \displaystyle\frac{v^{s-\alpha-1} dv}{\Big( \big| z + (|z| + 1) v^2 \big| + 1 \Big)^{\chi}} \\
		& \le  \gamma^{s-\alpha} \cdot \big( |z| + 1 \big)^{-\chi} \cdot \delta^{\alpha-s+1} \displaystyle\int_{1/\delta}^{2/\delta} \Bigg( \bigg| \displaystyle\frac{z}{|z| + 1} + v^2 \bigg| + \displaystyle\frac{1}{|z|+1} \Bigg)^{-\chi} dv \\
		& \qquad + 2 \gamma^{s-\alpha} \cdot \big( |z| + 1 \big)^{-\chi}\displaystyle\int_{2/\delta}^{\infty} v^{s-\alpha-2\chi-1} dv,
	\end{flalign*} 
	
	\noindent where in the second statement we used the fact that $\big| z + (|z| + 1) v^2 \big| \ge (|z|+1) v^2/2$ for $v \ge 2$. Thus, again recalling from \eqref{chisalphaomega} that $\chi > \frac{s-\alpha}{2} + \varpi$, there exists a constant $C_1 > 1$ such that 
	\begin{flalign}
		\label{zdelta2} 
		\begin{aligned} 
		\mathfrak{Z}_+ (\delta) \ge C_1 \delta^{2\chi - s + \alpha} \cdot \mathfrak{Y} \cdot \Bigg( 1 + \delta^{1-2\chi} \displaystyle\int_{1/\delta}^{2/\delta} \bigg( \Big| \displaystyle\frac{z}{|z| + 1} + v^2 \Big| + \displaystyle\frac{1}{|z| + 1} \bigg)^{-\chi} dv \Bigg).
		\end{aligned} 
	\end{flalign} 

	\noindent Now, observe that 
	\begin{flalign*}
		\delta^{1-2\chi} \displaystyle\int_{1/\delta}^{2/\delta} \Bigg( \bigg| \displaystyle\frac{z}{|z| + 1} + v^2 \bigg| + \displaystyle\frac{1}{|z| + 1} \Bigg)^{-\chi} dv \le \delta^{1-2\chi} \displaystyle\int_{1/\delta}^{2/\delta} (v^2 - 1)^{-\chi} dv \le \displaystyle\frac{1}{1-\chi},
	\end{flalign*}

	\noindent and inserting this into \eqref{zdelta2} yields the upper bound in \eqref{yz2}.  
\end{proof}

\subsection{Poisson Point Process Estimates}

\label{EstimateProcess}

In this section we establish estimates for quantities of the form \eqref{expectationtjrjkj}. We begin by imposing the following restrictions on the random variables $R_j$ and $K_j$ there. Throughout, $z = E + \mathrm{i} \eta \in \mathbb{H}$ is a complex number, and $s \in (\alpha, 1)$ and $\chi \in \big( \frac{s-\alpha}{2}, 1 \big)$ are real numbers satisfying \eqref{chisalphaomega} for some $\varpi > 0$.

\begin{assumption}
	
	\label{sqk}

Let $\mathcal{T} = (T_1, T_2, \ldots )$ be a Poisson point process with intensity $\alpha x^{-\alpha-1} dx$, and let $\mathcal{R} = (R_1, R_2, \ldots )$ and $\mathcal{S} = (S_1, S_2, \ldots )$ denote sequences of random variables independent from $\mathcal{T}$, with the $R_j \in \overline{\mathbb{H}}$ and the $S_j \in \mathbb{R}_{\ge 0}$. Assume that the pairs $(R_1, S_1), (R_2, S_2), \ldots $ are identically distributed, each with some law $(R, S) \in \overline{\mathbb{H}} \times \mathbb{R}_{\ge 0}$. In particular, $\mathcal P = \sum_i \delta_{T_i,R_i,S_i}$ is a Poisson point process on $\bbR_{\ge 0} \times \overline{\mathbb{H}} \times \mathbb{R}_{\ge 0}$. Further let $\mathcal{K} = (K_1, K_2, \ldots )$ denote a sequence of random variables with all $K_j \in \overline{\mathbb{H}}$ and $K_j = f (\mathcal P_j, R_j, S_j, U_j)$ where $\mathcal{P}_j = \sum_{i \ne  j } \delta_{T_i,R_i,S_i}$, $(U_1, U_2, \ldots)$ are iid uniform $[0,1]$ random variables and $f$ is a measurable function.

Assume there exist constants $A_1, A_2 > 1$ satisfying the below two conditions; conditional on $(T_j, R_j, S_j)$ they essentially state that the law of $\Real K_j$ has a bounded density and that $|K_j|$ has an $\frac{\alpha}{2}$-heavy tail. Here, $\mathbb{P}^j$ denotes the probability measure for $K_j$ conditional on $(T_j, R_j, S_j)$.
	
	\begin{enumerate} 
		\item \label{k11} For any interval $I \subset \mathbb{R}$, we have 
		\begin{flalign*} 
			A_1^{-1} \displaystyle\int_I \displaystyle\frac{dx}{\big( |x| + 1 \big)^{\alpha / 2 + 1}} \le \mathbb{P}^j [ \Real K_j \in I] \le A_1 \displaystyle\int_I \displaystyle\frac{dx}{ \big( |x| + 1 \big)^{\alpha / 2 + 1}}.
		\end{flalign*} 
		
		\item \label{k12} For any interval $I \subset \mathbb{R} \setminus  [-1, 1]$, we have 
		\begin{flalign*} 
			\mathbb{P}^j [\Real K_j \in I] \le A_2 \displaystyle\int_I \displaystyle\frac{dx}{\big( |x| + 1 \big)^{\alpha / 2 + 1}}; \qquad \mathbb{P}^j [\Imaginary K_j \in I] \le A_2 \displaystyle\int_I \displaystyle\frac{1}{\big( |x| + 1 \big)^{\alpha / 2 + 1}}.
		\end{flalign*}
		
	\end{enumerate}

\end{assumption}

Under the above notation, define for any $\delta > 0 $ the expectations 
\begin{flalign*} 
& \mathfrak{I} = \displaystyle\sum_{j=1}^{\infty} \displaystyle\frac{|T_j|^s S_j}{|z + T_j^2 R_j + K_j|^{\chi}}; \qquad 
\mathfrak{I} (\delta)  = \displaystyle\sum_{j=1}^{\infty} \displaystyle\frac{|T_j|^s S_j}{|z + T_j^2 R_j + K_j|^{\chi}} \cdot \one_{|T_j| < \delta}.
\end{flalign*} 

\noindent The following lemma provides upper and lower bounds for the quantities $\mathfrak{I}$ and $\mathfrak{I} (\delta)$. Below, constants might implicitly depend on $\varpi$ and $A_2$ (and $\alpha$), even when not explicitly stated, but we will clarify when they depend on $A_1$ or $s$. In particular, observe that the third statement below removes the dependence on $A_1$ of the constant $C$ from the first part, assuming that $(R_j, S_j, K_j)$ are identically distributed and that $|\Re z| \ge 2$.

\begin{lem} 
	
	\label{expectationsum1} 
	
	The following three statements hold. 
	
	\begin{enumerate} 
		
		\item There exist constants $c > 0$ and $C (A_1) > 1$ such that 
		\begin{flalign*} 
		c (s-\alpha)^{-1} & \cdot \big( |z| + 1 \big)^{(s-\alpha)/2 - \chi} \cdot \mathbb{E} \big[ |R|^{(\alpha-s)/2} S \big] \\
		& \le \mathbb{E} [\mathfrak{I}] \le C (s-\alpha)^{-1} \cdot \big( |z| + 1 \big)^{(s-\alpha)/2 - \chi} \cdot \mathbb{E} \big[ |R|^{(\alpha-s)/2} S \big].
	\end{flalign*} 

	\item  There exists a constant $C > 1$ such that  
	\begin{flalign*} 
		 \mathbb{E} \big[ \mathfrak{I} (\delta) \big] \le C (s-\alpha)^{-1} \cdot \delta^{s-\alpha} \cdot \big( |z| + 1 \big)^{-\chi} \cdot \mathbb{E} [S].
	\end{flalign*} 
	
	\item Assume that $|\Real z| \ge 2$ and that the $(R_j, S_j, K_j)$ are all identically distributed and independent from $\mathcal{T}$. Then there exists some constant $C > 1$ (independent of $A_1$) such that 
	\begin{flalign*}
		\mathbb{E} [\mathfrak{I}] \le C (s-\alpha)^{-1} \cdot \big( |z| + 1 \big)^{(s-\alpha)/2 - \chi} \cdot \mathbb{E} \big[ |R|^{(\alpha-s)/2} S \big].
	\end{flalign*}

	\end{enumerate}

\end{lem}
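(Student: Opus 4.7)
The plan is to apply Campbell's theorem (extending \Cref{fidentityxi} to the marked Poisson point process $\mathcal P=\sum_i\delta_{(T_i,R_i,S_i)}$) to convert the sums into integrals. Since $K_j = f(\mathcal P_j, R_j, S_j, U_j)$ depends only on the configuration with the $j$-th atom removed and on $(R_j,S_j,U_j)$, this produces
\begin{equation*}
\E[\mathfrak{I}] = \alpha \int_0^\infty t^{s-\alpha-1} \E\!\left[\frac{S}{|z + t^2 R + K|^\chi}\right] dt,
\end{equation*}
where $(R,S,K)$ has the joint law from Assumption~\ref{sqk}, so the conditional distribution of $K$ given $(R,S)$ satisfies the stated density bounds; an analogous identity holds for $\E[\mathfrak{I}(\delta)]$ with the integral restricted to $(0,\delta)$.

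The core estimate will be a pointwise two-sided bound
\begin{equation*}
cA_1^{-1}(|z+t^2R|+1)^{-\chi} \;\leq\; \E\!\left[|z+t^2R+K|^{-\chi}\,\big|\,R,S\right] \;\leq\; CA_1(|z+t^2R|+1)^{-\chi}.
\end{equation*}
Setting $W=z+t^2R$, $A=\Real W$, $B_0=\Im W\ge\eta$, and conditioning on $\Im K$ with $B=B_0+\Im K\ge B_0$, the inner expectation equals $\int p(x)((A+x)^2+B^2)^{-\chi/2}\,dx$. The upper bound uses $p(x)\le A_1(|x|+1)^{-\alpha/2-1}$ with the integration split at $|x+A|$ small versus large (using $\chi<1$ and $\alpha/2+1>\chi$ for integrability). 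For the lower bound, I would restrict to $|x|\le 1$ where $p(x)\ge cA_1^{-1}$, and use Assumption~\ref{sqk}(2) to choose $M$ with $\P[\Im K\le M]\ge 1/2$, forcing $B\le C(|W|+1)$ on this event. Substituting into the Campbell integral and invoking \Cref{integral3} yields Part~(1).

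For Part~(2), the change of variables $v=t/\gamma$ with $\gamma=(|z|+1)^{1/2}|R|^{-1/2}$ identifies the truncated integral with $\mathfrak{Z}_-(\delta/\gamma)$, and the upper bound of \Cref{integral} gives $\mathfrak{Z}_-(\delta/\gamma)\le C(s-\alpha)^{-1}(\delta/\gamma)^{s-\alpha}\mathfrak{Y}$; the identity $(\delta/\gamma)^{s-\alpha}\mathfrak{Y}=\delta^{s-\alpha}(|z|+1)^{-\chi}$ then cancels the $|R|$-factors. To eliminate the $A_1$-dependence that Part~(2) disallows, I would refine the density integration by splitting $\E[|W+K|^{-\chi}|R,S]$ into contributions from $|\Real K|\le 1$ and $|\Real K|>1$: on the latter, Assumption~\ref{sqk}(2) supplies density bounds with constant $A_2$ rather than $A_1$, yielding $\le CA_2(|W|+1)^{-\chi}$ via the same integration scheme; on the former, $\P[|\Real K|\le 1]\le 1$ combined with $|W+K|\ge\max(\Im W, |\Real W|-1)$ bounds the contribution by purely geometric quantities.

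Part~(3) extends the $A_1$-free upper bound to the full $t$-integral under $|\Real z|\ge 2$ and the iid hypothesis. The decomposition by $|\Real K|$ is as in Part~(2); on $|\Real K|\le 1$ with $|A|\ge 2$, the estimate $|W+K|^2\ge (|A|-1)^2+B_0^2\ge |W|^2/4$ yields the trivial bound $\le 2^\chi|W|^{-\chi}$. The main obstacle will be the residual case $|A|<2$, which can occur when $t^2\Real R$ nearly cancels $\Real z$; this confines $t$ to a narrow window of width $O(1/\sqrt{|z||\Real R|})$ about $t_0\sim\sqrt{|z|/|\Real R|}$, on which only the weak bound $|W+K|\ge\Im W\ge\eta$ is available. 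Handling this case requires a careful accounting of the weight $t_0^{s-\alpha-1}$ and the window width in the Campbell integral to verify that the contribution is absorbed into the target $(|z|+1)^{(s-\alpha)/2-\chi}\E[|R|^{(\alpha-s)/2}S]$ with an $A_1$-free constant.
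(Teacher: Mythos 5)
Your outline of Parts (1)--(2) is essentially the paper's route: Campbell's formula converts the marked Poisson sum to an integral, the two-sided pointwise bound on $\mathbb{E}\big[|z+t^2R+K|^{-\chi}\,\big|\,R,S\big]$ is \Cref{zxrkestimate}(1), and the reduction via the scale $\gamma$ is \Cref{integral3} (resp.\ \Cref{integral}). Two small remarks. First, the lower bound in Part~(1) requires an $A_1$-free constant $c$; the paper gets this by using \Cref{sqk}(2) to find a set where $|K|\le C$ with probability $\ge\tfrac12$, not by integrating the $A_1^{-1}$ density lower bound over $|x|\le 1$, so your $cA_1^{-1}$ is needlessly weak. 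Second, the paper's own proof of Part~(2) simply reuses the $A_1$-dependent upper bound $C_1(A_1)$ from \Cref{zxrkestimate}(1) and then invokes \Cref{integral}; the $A_1$-removal you attempt there is not what the paper does (and, as discussed next, your mechanism for it does not close).

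The genuine gap is in Part~(3), and you flag it yourself. After fixing $t$ and splitting on $|\Real K|$, the residual set where $|\Real(z+t^2R)|<2$ (where $t^2\Real R$ nearly cancels $\Real z$) only yields $|W+K|\ge\Im W$, and $\Im W$ can be as small as $\Im z$ when $\Im R=0$. Quantifying the window as you suggest gives a contribution of order $|z|^{(s-\alpha)/2-1}|\Real R|^{(\alpha-s)/2}(\Im z)^{-\chi}S$, which for $\Im z\to 0$ cannot be absorbed into $(|z|+1)^{(s-\alpha)/2-\chi}\,\mathbb{E}\big[|R|^{(\alpha-s)/2}S\big]$ with any $A_1$-free constant. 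The paper avoids this entirely by inverting the order of estimation: conditional on $K$ it substitutes $t=|R|^{-1/2}|z+K|^{1/2}v$ and uses the deterministic inequality $|z+t^2R+K|\ge|z+K|\,|v^2-1|$ (valid for arbitrary complex $z+K$ and $R$) to bound the inner $t$-integral by $C(s-\alpha)^{-1}|R|^{(\alpha-s)/2}|z+K|^{(s-\alpha)/2-\chi}$ \emph{without using any density bound on $K$}. Only afterward does it apply \Cref{zxrkestimate}(2) to $\mathbb{E}\big[|z+K|^{(s-\alpha)/2-\chi}\big]$, and the hypothesis $|\Real z|\ge 2$ is what triggers the $A_1$-free constant there. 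The dangerous cancellation thus stays inside the $t$-integral and is handled uniformly by the change of variables; your decomposition instead exposes it as a $t$-region where the conditional density bound on $K$ is not applicable, which is where the argument breaks down.
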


To establish \Cref{expectationsum1}, we require the following lemma bounding negative moments of continuous random variables, which will be established in \Cref{ProofV} below.

\begin{lem}
	
	\label{zxrkestimate} 
	
	There exist constants $c > 0$; $C_1 = C_1 (A_1) > 1$; and $C_2 > 1$ such that the following holds. Fix real numbers $x > 0$ and $\delta \in (0,1]$, a complex number $Q \in \mathbb{H}$ with $|Q| = 1$, and a complex random variable $K$ satisfying the two assumptions in \Cref{sqk}. Setting $V = V(K) = |x Q + K|$, the below statements hold. 
	
	\begin{enumerate} 
		
		\item We have $c (x + 1)^{-\chi} \le \mathbb{E} [ V^{-\chi} ] \le C_1 (x + 1)^{- \chi}$.
		\item If $x |\Real Q| \ge 2$, then $\mathbb{E} [ V^{-\chi} ] \le C_2 (x+1)^{-\chi}$. 
		\item We have $\mathbb{E} \big[ V^{-\chi} \cdot \one_{V \le \delta} \big] \le C_1 \delta^{1 - \chi} \cdot (x + 1)^{-\alpha / 2 - 1}$. 
		
		\item For any finite union of intervals $J \subset \mathbb{R}$ with total length at most $\delta$, we have 
		\begin{flalign*} 
			\mathbb{E} \big[ V^{-\chi} \cdot \one_{\Real K \in J} \big] \le C_1 \delta^{1 - \chi} \cdot (x + 1)^{-\alpha/2 - 1} + C \delta \cdot (x + 1)^{-\chi}.
		\end{flalign*}

	\end{enumerate} 
	
\end{lem}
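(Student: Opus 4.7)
The strategy is to exploit two elementary lower bounds for $V$: writing $xQ = a + \mathrm{i} b$ with $a = x \Real Q \in \mathbb{R}$ and $b = x \Imaginary Q \ge 0$ (so $a^2 + b^2 = x^2$), the constraint $\Imaginary K \ge 0$ yields $V \ge |a + \Real K|$ (from the real part) and $V \ge b + \Imaginary K \ge b$ (from the imaginary part). These reduce the estimates to one-dimensional integrals against the marginal law of $\Real K$, and to elementary tail bounds on $|K|$. Throughout the argument we distinguish \emph{Case A}, where $b \ge x/2$ and hence $V \ge x/2$ deterministically, from \emph{Case B}, where $b < x/2$ and hence $|a| \ge x\sqrt{3}/2$ by the Pythagorean identity.

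For the upper bound in the first statement, Case A gives $V^{-\chi} \le C(x+1)^{-\chi}$ deterministically; in Case B, $V^{-\chi} \le |a + \Real K|^{-\chi}$, and the first part of \Cref{sqk} yields $\mathbb{E}[V^{-\chi}] \le A_1 \int_{\mathbb{R}} |a+r|^{-\chi}(|r|+1)^{-\alpha/2-1}\, dr$. After substituting $u = a + r$ and splitting into $|u| \le |a|/2$, $|a|/2 < |u| \le 2|a|$, and $|u| > 2|a|$, direct evaluation using $(s-\alpha)/2+\varpi \le \chi \le 1 - \varpi$ yields $O(|a|^{-\chi}) = O((x+1)^{-\chi})$. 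For the matching lower bound, the second part of \Cref{sqk} furnishes a constant $t_1 = t_1(A_2)$ with $\mathbb{P}[|K| \le t_1] \ge 1/2$; on this event $V \le x + t_1 \le C(x+1)$, so $\mathbb{E}[V^{-\chi}] \ge c(x+1)^{-\chi}$ with $c$ independent of $A_1$, as required.

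For the second statement, the assumption $|a| \ge 2$ allows the density bound to be avoided entirely in Case B. Applying the tail formula $\mathbb{E}[|a+\Real K|^{-\chi}] = \chi \int_0^\infty s^{-\chi-1}\, \mathbb{P}[|a+\Real K| \le s]\, ds$, observe that for $s \le |a|/2$ the interval $[-a-s, -a+s]$ lies in $\{|r| \ge 1\}$, so the second part of \Cref{sqk} gives $\mathbb{P}[|a+\Real K| \le s] \le CA_2 s (|a|+1)^{-\alpha/2-1}$; for $s > |a|/2$ we use $\mathbb{P} \le 1$. Integrating produces an $A_1$-free bound $O((x+1)^{-\chi})$. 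For the third statement, we invoke the truncated identity $\mathbb{E}[V^{-\chi}\one_{V \le \delta}] = \chi \int_0^\delta s^{-\chi-1}\, \mathbb{P}[V \le s]\, ds + \delta^{-\chi}\, \mathbb{P}[V \le \delta]$, together with $\mathbb{P}[V \le s] \le \mathbb{P}[|a+\Real K|\le s] \le CA_1 s (x+1)^{-\alpha/2-1}$ in Case B (where $|r| \asymp x$ on the relevant interval); Case A reduces to $x \le 2\delta \le 2$, where $(x+1)^{-\alpha/2-1}$ is bounded below and the trivial density bound $\mathbb{P} \le 2A_1 s$ suffices.

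For the fourth statement, write $\mathbb{E}[V^{-\chi}\one_{\Real K \in J}] \le A_1 \int_J |a+r|^{-\chi}(|r|+1)^{-\alpha/2-1}\, dr$ and split $J = J_1 \cup J_2$ with $J_1 = \{r \in J : |a+r| \le 1\}$. On $J_1$, substituting $u = a+r$, the symmetric decreasing rearrangement inequality gives $\int_{J_1}|a+r|^{-\chi}\, dr \le C\delta^{1-\chi}$, while the density weight is at most $C(x+1)^{-\alpha/2-1}$ (in Case B with $|a| \gtrsim x$; Case A and small-$x$ situations are handled by noting that $(x+1)^{-\alpha/2-1}$ is then bounded below by a constant). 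On $J_2$ we further subdivide by $|r| \le x/2$ (where $|a+r| \ge c(x+1)$ in Case B produces the $(x+1)^{-\chi}$ factor, while $(|r|+1)^{-\alpha/2-1} \le 1$) and $|r| > x/2$ (where the density factor is $\le C(x+1)^{-\alpha/2-1}$ and the $|a+r|^{-\chi} \le 1$ bound is absorbed into the first term). The main obstacle throughout is the bookkeeping of constants: the first part of \Cref{sqk} (density upper bound with constant $A_1$) is freely available for the first, third, and fourth statements, while the second statement and the lower bound of the first statement must use only the $A_2$-tail estimate of the second part of \Cref{sqk}, which is why the case structure and the precise range of $s$ for which the Part (\ref{k12}) bound is applicable must be handled with care.
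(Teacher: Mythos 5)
Your overall reduction---using the deterministic lower bounds $V \ge |a + \Real K|$ (with $a = x\Real Q$) and $V \ge b$ (with $b = x\Imaginary Q$) to pass to one-dimensional integrals against the marginal law of $\Real K$---is the same basic mechanism as in the paper. The paper, however, works with the single and slightly stronger bound $V \ge |xQ + \Real K| = \sqrt{(a + \Real K)^2 + b^2}$, which subsumes both of your inequalities at once, and then splits the resulting integral $\int_{\mathbb{R}}|xQ + w|^{-\chi}(|w|+1)^{-\alpha/2-1}\,dw$ into ranges of $|w|$. Your Case A/Case B split on $b \ge x/2$ versus $b < x/2$ is a functionally equivalent bifurcation of the same information, but it creates bookkeeping obligations at the two extremes of $x$ that the write-up does not actually meet.

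Two concrete gaps. First, in Case A with $x$ small, the assertion ``$V^{-\chi}\le C(x+1)^{-\chi}$ deterministically'' is false: from $V\ge x/2$ one gets $(x/2)^{-\chi}/(x+1)^{-\chi}=(2+2/x)^{\chi}\to\infty$ as $x\to 0$, so the Case A branch of your proof of the first part breaks there (the repair is to fall back to the density integral, which is $O(1)$ for bounded $x$, and reserve the deterministic bound for $x\gtrsim 1$). Second, in the fourth part the Case A, large-$x$ situation is not handled: the parenthetical ``Case A $\ldots$ handled by noting that $(x+1)^{-\alpha/2-1}$ is then bounded below'' is wrong when $x$ is large---take $a=0$, $b=x$; then $J_1=\{r\in J:|a+r|\le 1\}$ lies in $[-1,1]$, so the density weight there is $\Theta(1)$, not $O((x+1)^{-\alpha/2-1})$, while $(x+1)^{-\alpha/2-1}$ itself is tiny; similarly, on $J_2\cap\{|r|\le x/2\}$ in Case A the factor $|a+r|^{-\chi}$ need not be $O((x+1)^{-\chi})$ when $|a|$ is small. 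The repair is to avoid the $|a+\Real K|$-replacement in Case A altogether for $x\gtrsim 1$: use $V\ge b\ge x/2$ directly, together with $\mathbb{P}[\Real K\in J]\le A_1\delta$, which lands the Case A contribution in the $\delta(x+1)^{-\chi}$ term. The paper's formulation via $|xQ+w|$ avoids this entirely, since $|xQ+w|\ge b$ holds uniformly in $w$ and the case split never arises.
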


\begin{proof}[Proof of \Cref{expectationsum1}] 

	Throughout this proof, let $\mathbb{E}^j$ denote the expectation for $K_j$, conditional on $(T_j, R_j, S_j)$. In view of the first part of \Cref{zxrkestimate}, there exist constants $c_1 > 0$ and $C_1 = C_1 (A_1) > 1$ such that 
	\begin{flalign}
	\label{ztjrj1} 
		c_1 \cdot \displaystyle\frac{1}{\big( |z + T_j^2 R_j| + 1 \big)^{\chi}} \le \mathbb{E}^j \Bigg[ \displaystyle\frac{1}{|z + T_j^2 R_j + K_j|^{\chi}} \Bigg] \le C_1 \cdot \displaystyle\frac{1}{\big( |z + T_j^2 R_j| + 1 \big)^{\chi}}.
	\end{flalign}
	
	\noindent Further observe from \Cref{fidentityxi} that
	\begin{flalign}
	\label{ztjrj2}
	\begin{aligned}
		\mathbb{E} \Bigg[ \displaystyle\sum_{j=1}^{\infty} \displaystyle\frac{|T_j|^s S_j}{\big( |z + T_j^2 R_j| + 1 \big)^{\chi}} \Bigg] &  = \alpha \displaystyle\int_0^{\infty} \mathbb{E} \Bigg[ \displaystyle\frac{t^s S}{\big( |z + t^2 R| + 1 \big)^{\chi}} \Bigg] \cdot t^{-\alpha-1} dt; \\
		\mathbb{E} \Bigg[ \displaystyle\sum_{j=1}^{\infty} \displaystyle\frac{|T_j|^s S_j}{\big( |z+T_j^2 R_j| + 1 \big)^{\chi}} \cdot \one_{|T_j| \le \delta} \Bigg] & = \alpha \displaystyle\int_0^{\delta} \mathbb{E} \Bigg[ \displaystyle\frac{t^s S}{\big( |z + t^2 R| + 1 \big)^{\chi}}\Bigg] \cdot t^{-\alpha-1} dt.	
	\end{aligned}
	\end{flalign}

	\noindent Moreover, for any deterministic $R_0 \in \mathbb{C}$, there exists a constant $C_2 > 1$ such that 
	\begin{flalign}
	\label{ztjrj3}
	\begin{aligned}
		C_2^{-1} & (s-\alpha)^{-1} \cdot \big( |z| + 1 \big)^{(s-\alpha)/2 - \chi} \cdot |R_0|^{(\alpha-s)/2} \\
		& \le \alpha \displaystyle\int_0^{\infty}  \displaystyle\frac{t^{s-\alpha-1} dt}{\big( |z + t^2 R_0| + 1 \big)^{\chi}} \le C_2 (s-\alpha)^{-1} \cdot \big( |z| + 1 \big)^{(s-\alpha)/2 - \chi} \cdot |R_0|^{(\alpha-s)/2},
		\end{aligned}
	\end{flalign}
	
	\noindent and 
	\begin{flalign}
	\label{ztjrj4}
		\alpha \displaystyle\int_0^{\delta}  \displaystyle\frac{t^{s-\alpha-1} dt}{\big( |z + t^2 R_0| + 1 \big)^{\chi}} \le C_2 (s-\alpha)^{-1} \cdot \delta^{s-\alpha} \cdot \big( |z| + 1 \big)^{-\chi}.
	\end{flalign}
	
	\noindent where the first bound follows from \Cref{integral3} and the second from \Cref{integral} (using $\delta \gamma^{-1}$ there instead of $\delta$). Combining \eqref{ztjrj1}, \eqref{ztjrj2}, \eqref{ztjrj3}, and \eqref{ztjrj4} yields the first two statements of the lemma.
	
	To establish the third, let the joint law of each $(R_j, S_j, K_j)$ be $(R, S, K)$. Then, applying \Cref{fidentityxi} yields
	\begin{flalign}
	\label{estimate2i} 
		\mathbb{E} [\mathfrak{I}] = \alpha \displaystyle\int_0^{\infty} \mathbb{E} \Bigg[ \displaystyle\frac{t^s S}{|z + t^2 R + K|^{\chi}} \Bigg] \cdot t^{-\alpha-1} dt.
	\end{flalign}
	
	\noindent By changing variables $t = R^{-1/2} \cdot |z+K|^{1/2} \cdot v$, we deduce that 
	\begin{flalign*}
		\displaystyle\int_0^{\infty} \displaystyle\frac{t^{s-\alpha-1} dt}{|z + t^2 R + K|^{\chi}} & \le R^{(\alpha-s)/2} \cdot |z + K|^{(s-\alpha)/2} \displaystyle\int_0^{\infty} \displaystyle\frac{v^{s-\alpha-1} dv}{|v^2 - 1|^{\chi}} \\
		& \le C_3 (s-\alpha)^{-1} \cdot R^{(\alpha-s)/2} \cdot |z+K|^{(s-\alpha)/2 - \chi},
	\end{flalign*}
	
	\noindent for some constant $C_3 > 1$. Inserting this into \eqref{estimate2i} gives
	\begin{flalign*}
		\mathbb{E} [\mathfrak{I}] \le C_3 \cdot \mathbb{E} \Big[ |R|^{(\alpha-s)/2} S \cdot \mathbb{E}^j \big[ |z + K|^{(s-\alpha)/2 - \chi} \big] \Big],
	\end{flalign*}
	
	\noindent which together with the second statement of \Cref{zxrkestimate} yields the third statement of the lemma.
\end{proof}

\subsection{Proof of \Cref{zxrkestimate}}

\label{ProofV}

In this section we establish \Cref{zxrkestimate}. 

\begin{proof}[Proof of \Cref{zxrkestimate}]
	
	Observe from the second part of \Cref{sqk} that there exists some constant $C_1 > 1$ such that $\mathbb{P} \big[ |K| \le C_1 \big] \ge \frac{1}{2}$. Restricting to this event, we find 
	\begin{flalign*}
	\mathbb{E} \big[ |xQ + K|^{-\chi} \big] \ge \displaystyle\frac{1}{2} \cdot (x + C_1)^{-\chi} \ge \displaystyle\frac{1}{2C_1} \cdot (x + 1)^{-\chi},
	\end{flalign*}
	
	\noindent thereby establishing the lower bound in the first statement of the lemma.
	
	The remaining parts of the lemma constitute upper bounds. Since $|Q| = 1$ and $Q, K \in \overline{\mathbb{H}}$, we have $|xQ + K| \ge |xQ + \Real K|$, and so \Cref{sqk} implies for any interval $I \subseteq \mathbb{R}_{\ge 0}$ that 
	\begin{flalign}
		\label{vichi}
		\begin{aligned}
		\mathbb{E} \big[ V^{-\chi} \cdot \one_{V \in I} \big] & \le A_1 \displaystyle\int_{-\infty}^{\infty} \displaystyle\frac{\one_{V (w) \in I}}{\big( |w| + 1 \big)^{\alpha / 2 + 1}} \cdot \displaystyle\frac{dw}{|xQ + w|^{\chi}}; \\
		\mathbb{E} [V^{-\chi}] & \le A_2 \displaystyle\int_{-\infty}^{\infty} \displaystyle\frac{\one_{|w| \ge 1}}{\big( |w| + 1 \big)^{\alpha/2 + 1}} \cdot \displaystyle\frac{dw}{|xQ + w|^{\chi}} + \displaystyle\max_{w \in [-1, 1]} |xQ + w|^{-\chi}.
		\end{aligned}
	\end{flalign}
	
	Now let us establish the first part of the lemma. We first take $I = [0, \infty)$ in the first statement of \eqref{vichi}; then, there exists a constant $C_2 = C_2 (A_1) > 1$ such that
	\begin{flalign}
		\label{vchi1} 
		\begin{aligned} 
			\mathbb{E} [V^{-\chi}] & \le 2 A_1 \Bigg( \displaystyle\int_{-\infty}^{\infty} \displaystyle\frac{\one_{|w| < 1}}{\big( |w| + 1 \big)^{\alpha/2 + 1}} \cdot \displaystyle\frac{dw}{|xQ + w|^{\chi}} + \displaystyle\int_{-\infty}^{\infty} \displaystyle\frac{\one_{1 \le |w| < x_0/2}}{\big( |w| + 1 \big)^{\alpha/2 + 1}} \cdot \displaystyle\frac{dw}{|xQ + w|^{\chi}} \\
			& \qquad \quad + \displaystyle\int_{-\infty}^{\infty} \displaystyle\frac{\one_{x_0/2 \le |w| < 2x_0}}{\big( |w| + 1 \big)^{\alpha/2 + 1}} \cdot \displaystyle\frac{dw}{|xQ + w|^{\chi}} + \displaystyle\int_{-\infty}^{\infty} \displaystyle\frac{\one_{|w| \ge 2x_0}}{\big( |w| + 1 \big)^{\alpha/2 + 1}} \cdot \displaystyle\frac{dw}{|xQ + w|^{\chi}} \Bigg) \\
			& \le C_2 \Bigg( \displaystyle\int_{-1}^1 \displaystyle\frac{dw}{|xQ+w|^{\chi}} + (x + 1)^{-\chi} \displaystyle\int_1^{x_0/2} \displaystyle\frac{dw}{(w + 1)^{\alpha/2 + 1}} \\
			& \qquad \quad + (x + 1)^{-\alpha/2 - 1} \displaystyle\int_{-\infty}^{\infty} \one_{x_0/2 \le |w| < 2x_0} \cdot \displaystyle\frac{dw}{|xQ+w|^{\chi}} + \displaystyle\int_{2x_0}^{\infty} \displaystyle\frac{dx}{w^{\alpha/2 + \chi + 1}}\Bigg),
		\end{aligned} 
	\end{flalign} 
	
	\noindent where $x_0 = x + \frac{1}{2}$. Here, for the first integral we used the fact that $|w| + 1 \ge 1$; for the second we used the fact that $|xQ + w| \ge \frac{x_0}{4} \ge \frac{x+1}{8}$ whenever $1 \le |w| \le \frac{x_0}{2}$; for the third we used the fact that $|w| + 1 \ge \frac{x_0}{2}$ on its support; and for the fourth we used the fact that $|w| \ge 2x_0 \ge 2x$ implies $|x+w| \ge \frac{|w|}{2}$. Since there exists a constant $C_3 > 1$ such that 
	\begin{flalign*}
		& \displaystyle\int_{-1}^1 \displaystyle\frac{dw}{|xQ + w|^{\chi}} \le C_3 (x + 1)^{-\chi}; \qquad \qquad \qquad \qquad \quad \displaystyle\int_1^{x_0/2} \displaystyle\frac{dw}{(w + 1)^{\alpha/2 + 1}} \le C_3; \\
		& \displaystyle\int_{-\infty}^{\infty} \one_{x_0/2 \le |w| < 2x_0} \cdot \displaystyle\frac{dw}{|xQ + w|^{\chi}} \le C_3 (x + 1)^{1 - \chi}; \qquad  \displaystyle\int_{2x_0}^{\infty} w^{-\alpha/2 - \chi - 1} \le C_3 (x + 1)^{-\alpha/2 - \chi},
	\end{flalign*}
	
	\noindent inserting these bounds into \eqref{vchi1} yields the first statement of the lemma. The proof of the second statement is entirely analogous, using the second part of \eqref{vichi} and the fact that $|xQ + w| \ge \frac{1}{8} \cdot \big( |x| + 1 \big)$ for $|x \Real Q| \ge 2$ and $|w| \le 1$ (to  bound the second term on its right side). 
	
	We next establish the third statement of the lemma, to which end we take $I = [0, \delta]$. Then, since $|Q| = 1$ and $\delta \le 1$, we have that $|w| + 1 \ge \frac{1}{2} (x + 1)$ whenever $V(w) = |xQ + w| \le \delta$. Hence,
	\begin{flalign*}
		\mathbb{E} \big[ V^{-\chi} \cdot \one_{V \le \delta} \big] & \le 4 A_1 \cdot (x + 1)^{-\alpha / 2 - 1} \displaystyle\int_{-\infty}^{\infty} \one_{V(w) \le \delta} \cdot \displaystyle\frac{dw}{|x \Real Q + w|^{\chi}} \\
		& \le 8 A_1 \cdot (x + 1)^{-\alpha / 2 - 1} \displaystyle\int_0^{\delta} u^{-\chi} du = \displaystyle\frac{8 A_1}{1 - \chi} \cdot (x + 1)^{-\alpha / 2 - 1} \cdot \delta^{1 - \chi},
	\end{flalign*}
	
	\noindent where to deduce the second inequality we changed variables $u = x \Real Q + w$. This establishes the third bound in the lemma. 
	
	To establish the fourth, observe from \eqref{vichi} that 
	\begin{flalign*}
		\mathbb{E} \big[ V^{-\chi} \cdot \one_{\Real K \in J} \big] \le A_1 \displaystyle\int_J \displaystyle\frac{1}{\big( |w| + 1 \big)^{\alpha/2 + 1}} \cdot \displaystyle\frac{dw}{|xQ + w|^{\chi}}.
	\end{flalign*}
	
	\noindent Since $|Q| = 1$, we either have $|xQ + w| \ge \frac{1}{2} (x + 1)$ or $|w| + 1 \ge \frac{1}{2} (x + 1)$. It follows for some constant $C_4 = C_4 (A_1) > 1$ that 
	\begin{flalign*} 
		\mathbb{E} \big[ V^{-\chi} \cdot \one_{\Real K \in J} \big] & \le 2 A_1 \int_{-\delta}^{\delta} \displaystyle\frac{dw}{(x + 1)^{\chi}} + 2 A_1 \cdot (x+1)^{-\alpha/2 - 1} \displaystyle\int_{-\delta}^{\delta} w^{-\chi} dw \\
		& \le C_4 \delta \cdot (x + 1)^{-\chi} + C_4 \delta^{1-\chi} \cdot (x + 1)^{-\alpha/2-1},
	\end{flalign*} 
	
	\noindent where we used the fact that $J$ has length at most $\delta$ (enabling us to replace it by $[-\delta, \delta]$ to upper bound the integrals). This verifies the fourth statement of the lemma.
\end{proof}

	\section{Fractional Moment Estimates}
	
	\label{EstimateMomentss}
	
	In this section we establish \Cref{limitr0j}. We begin in \Cref{EstimateR0} by establishing recursive estimates for a variant of the moment $\Phi_{\ell} (s; z)$ (from \Cref{moment1}), which we refer to by $\Xi_{\ell} (s; z)$ (see \Cref{xil} below). In \Cref{MultiplicativeR} establish \Cref{limitr0j} by showing that the $\Phi_{\ell} (s; z)$ are approximately multiplicative in $\ell$. Throughout this section, we fix a complex number $z = E + \mathrm{i} \eta \in \mathbb{H}$ with $\eta \in (0, 1)$ and $\varepsilon \le |E| \le B$; we will frequently abbreviate $\Phi_{\ell} (s) = \Phi_{\ell} (s; z)$. We further fix $s \in (\alpha, 1)$ and $\chi \in \big( \frac{s-\alpha}{2}, 1 \big)$ satisfying \eqref{chisalphaomega} for some $\varpi > 0$. In what follows, constants might implicitly depend on $\varpi$ (and $\alpha$), and we will mention how they depend on $\varepsilon$, $B$, and $s$.

\subsection{Resolvent Estimates}

\label{EstimateR0} 

In this section we establish estimates for the following variant $\Xi_{\ell}$ of the fractional moment $\Phi_{\ell}$.

\begin{definition}

\label{xil} 

For any integer $\ell > 0$, define $\Xi_\ell (\chi) = \Xi_\ell (\chi; s; z)$ and $\Xi_{-\ell} (\chi) = \Xi_{-\ell} (\chi; s; z)$ by 
\begin{flalign*}
	 \Xi_\ell (\chi) = \mathbb{E} \Bigg[ \displaystyle\sum_{v \in \mathbb{V}(\ell)} |R_{00}|^{\chi} \cdot |T_{00_+}|^s \cdot \big| R_{0_+ v}^{(0)} \big|^s \Bigg]; \qquad \Xi_{-\ell} (\chi) = \mathbb{E} \Bigg[ \displaystyle\sum_{v \in \mathbb{V} (\ell)} |R_{vv}|^{\chi} \cdot |T_{v_- v}|^s \cdot \big| R_{0v_-}^{(v)} \big|^s \Bigg],
\end{flalign*}

\noindent where $0_+$ is the child of $0$ on the path $\mathfrak{p} (0, v) \subset \mathbb{V}$ from $0$ to $v$. At $\ell = 0$, we set $\Xi_0 (\chi) = \Xi_0 (\chi; s; z) = \mathbb{E} \big[ |R_{00}|^{\chi} \big]$. 

\end{definition} 

\begin{rem}

\label{xir}

By \Cref{rproduct}, we have that 
\begin{flalign}
	\label{rproduct2} 
| R_{00} | \cdot |T_{00_+}| \cdot \big| R_{0_+ v}^{(0)} \big| = | R_{0v}| = |R_{vv}| \cdot |T_{v_- v}| \cdot \big| R_{0v_-}^{(v)} \big|,
\end{flalign} 

\noindent so $\Xi_{\ell} (s) = \Xi_{-\ell} (s) = \Phi_{\ell} (s)$.

\end{rem} 


	The following lemma shows that $\Xi_{\ell} (\chi) = \Xi_{-\ell} (\chi)$ holds for general $\chi$. 
	
	\begin{lem} 
		
		\label{xichi} 
		
		For any integer $\ell \ge 0$, we have $\Xi_{\ell} (\chi) = \Xi_{-\ell} (\chi)$. 

	\end{lem}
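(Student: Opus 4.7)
The plan is to derive the identity $\Xi_\ell(\chi)=\Xi_{-\ell}(\chi)$ as a direct consequence of the Mass Transport Principle, using the unimodularity of $\mathbb T$ recorded in \Cref{tunimodular}. The symmetry of the two sides is not apparent in the form presented in \Cref{xil}, since $\Xi_\ell$ involves quantities attached to the root while $\Xi_{-\ell}$ involves quantities attached to $v$. So the first step will be to rewrite both expressions in a manifestly symmetric form.

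Concretely, I will first apply the resolvent product identity \eqref{rproduct2}, which gives
\begin{equation*}
|R_{00}|^\chi \cdot |T_{00_+}|^s \cdot \bigl|R^{(0)}_{0_+ v}\bigr|^s \;=\; |R_{00}|^{\chi-s} \cdot |R_{0v}|^s,
\qquad
|R_{vv}|^\chi \cdot |T_{v_- v}|^s \cdot \bigl|R^{(v)}_{0 v_-}\bigr|^s \;=\; |R_{vv}|^{\chi-s} \cdot |R_{0v}|^s.
\end{equation*}
Combining these with $|R_{0v}|=|R_{v0}|$ (symmetry of $\boldsymbol R$) yields
\begin{equation*}
\Xi_\ell(\chi) \;=\; \mathbb E\Biggl[\sum_{v\in \mathbb V(\ell)} |R_{00}|^{\chi-s}\,|R_{0v}|^s\Biggr],
\qquad
\Xi_{-\ell}(\chi) \;=\; \mathbb E\Biggl[\sum_{v\in \mathbb V(\ell)} |R_{vv}|^{\chi-s}\,|R_{v0}|^s\Biggr].
\end{equation*}

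Next I will apply the Mass Transport Principle \eqref{sumfuv} to the Borel-measurable function $f\colon \mathcal G_{**}\to \mathbb R_{\ge 0}\cup\{\infty\}$ defined by
\begin{equation*}
f(G,u,v) \;=\; \bigl|R^{G}_{uu}(z)\bigr|^{\chi-s}\cdot \bigl|R^{G}_{uv}(z)\bigr|^s \cdot \mathbf 1_{\{d_G(u,v)=\ell\}},
\end{equation*}
where $R^G_{uv}(z)$ denotes the $(u,v)$-entry of the resolvent of the adjacency operator associated to the weighted graph $G$ at the fixed spectral parameter $z\in\mathbb H$. Since $d_\mathbb{T}(0,v)=\ell$ exactly when $v\in\mathbb V(\ell)$, evaluating \eqref{sumfuv} with $u_0=0$ reproduces the left-hand side as $\Xi_\ell(\chi)$ and the right-hand side as $\Xi_{-\ell}(\chi)$, giving the lemma.

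The only genuine obstacle is verifying the Borel-measurability of $f$ on $\mathcal G_{**}$ (and incidentally its invariance under graph isomorphism), because $R^G_{uv}(z)$ depends on the full infinite tree $G$, not just a local neighbourhood. I will address this by approximating $R^G_{uv}(z)$ by the corresponding entries of the resolvents associated to finite truncations of $G$ at distance $N$, which are manifestly continuous in the local-weak topology on $\mathcal G_{**}$, and then passing to $N\to\infty$ using the resolvent identity together with the deterministic bound $|R^G_{uv}(z)|\le (\Im z)^{-1}$ from \Cref{q12}. Nonnegativity of $f$ is immediate, so no integrability issue arises in the application of \eqref{sumfuv} (the two sides are equal as elements of $[0,\infty]$); their finiteness for $s\in(\alpha,1)$ and admissible $\chi$ is supplied by the fractional moment estimates of the preceding section.
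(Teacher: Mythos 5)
Your proof is correct and takes essentially the same route as the paper: both apply the Mass Transport Principle (unimodularity of $\mathbb T$) with a transport function that, after your preliminary application of the product identity \eqref{rproduct2}, coincides with the paper's choice $f(u,v)=|R_{uu}|^{\chi}\,|T_{uw}|^s\,|R^{(u)}_{wv}|^s\,\one_{d(u,v)=\ell}$. The paper omits the measurability discussion that you rightly flag as the only genuine technical point; your truncation argument for it is standard and fine.
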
 
	
	\begin{proof}
		
		We may assume that $\ell \ge 1$. For any $u, v \in \mathbb{V}$, set
		\begin{flalign*}
			f(u, v) = |R_{uu}|^{\chi} \cdot |T_{uw}|^s \cdot \big| R_{w v}^{(u)} \big|^s \cdot \one_{d(v, u) = \ell},
		\end{flalign*}
	
		\noindent where $d(u, v)$ denotes the distance between $u$ and $v$ in $\mathbb{T}$, and $w \sim u$ is the vertex adjacent to $u$ in the path $\mathfrak{p} (u, v) \subset \mathbb{V}$ between $u$ and $v$. Then, the unimodularity of $\mathbb{T}$ (recall \Cref{tunimodular}) gives 
		\begin{flalign*}
			\Xi_{\ell} (\chi) = \mathbb{E} \Bigg[ \displaystyle\sum_{v \in \mathbb{T}} f(0, v) \Bigg] = \mathbb{E} \Bigg[ \displaystyle\sum_{v \in \mathbb{T}} f(v, 0) \Bigg] = \Xi_{-\ell} (\chi),
		\end{flalign*}
		
		\noindent which implies the lemma.
	\end{proof}	

	Next, we have the following two lemmas showing that $\Xi_{\ell} (\chi)$ and $\Xi_{\ell} \big( \frac{\alpha+s}{2} \big)$ are comparable.

\begin{lem} 
	
	\label{chilchil1}
	
	There exists a constant $C_1 = C_1 (\varepsilon, B) > 1$ such that for any integer $\ell \ge 1$ we have 
	\begin{flalign*} 
		C_1^{-1} (s-\alpha)^{-1} \cdot \Xi_{\ell-1} \bigg( \displaystyle\frac{\alpha+s}{2} \bigg) \le \Xi_\ell (\chi) \le C_1 (s-\alpha)^{-1} \cdot \Xi_{\ell-1} \bigg( \displaystyle\frac{\alpha+s}{2} \bigg). 
	\end{flalign*} 

	\noindent Additionally, if $|E| \ge 2$, then there exists a constant $C_2 > 1$ (independent of $\varepsilon$ and $B$) such that for any integer $\ell \ge 1$ we have 
	\begin{flalign*}
		 \Xi_{\ell} (\chi) \le C_2 (s-\alpha)^{-1} \cdot E^{(s-\alpha)/2 - \chi} \cdot \Xi_{\ell-1} \bigg( \displaystyle\frac{\alpha+s}{2} \bigg).
	\end{flalign*}
\end{lem}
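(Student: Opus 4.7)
The strategy is to reduce the estimate on $\Xi_\ell(\chi)$ to the Poisson point process bound \Cref{expectationsum1}. Applying the Schur complement identity \eqref{qvv} at the root, for each $u \sim 0$ we have $R_{00} = -(z + T_{0u}^2 R_{uu}^{(0)} + K_u)^{-1}$ with $K_u := \sum_{w \sim 0,\, w \ne u} T_{0w}^2 R_{ww}^{(0)}$. Partitioning $v \in \mathbb{V}(\ell)$ according to the child $0_+(v) = u$ of $0$ on the path from $0$ to $v$ gives
\begin{equation*}
\Xi_\ell(\chi) = \mathbb{E}\Bigg[\sum_{u \sim 0} \frac{|T_{0u}|^s}{|z + T_{0u}^2 R_{uu}^{(0)} + K_u|^{\chi}}\, S_u \Bigg], \qquad S_u := \sum_{v \in \mathbb{D}_{\ell-1}(u)} \bigl|R_{uv}^{(0)}\bigr|^s.
\end{equation*}
This is precisely $\mathbb{E}[\mathfrak{I}]$ in the notation of \Cref{expectationsum1}, under the identifications $T_j = |T_{0u}|$, $R_j = R_{uu}^{(0)}$, $S_j = S_u$, $K_j = K_u$; by \Cref{tuvalpha} and \Cref{q12}, the $|T_{0u}|$ form a Poisson point process of intensity $\alpha x^{-\alpha-1}\,dx$ and the $R_{uu}^{(0)}$ are i.i.d.\ with law $R_\star(z)$, independent of the $T_{0u}$.

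The main technical step is verifying \Cref{sqk} for $K_u$ conditional on $(T_{0u}, R_{uu}^{(0)}, S_u)$. Since $K_u$ is a sum over Poisson points excluding the single point corresponding to $u$, \Cref{zuv} applies, with the input $a$-law given by $\Real R_{ww}^{(0)}$ (and separately $\Imaginary R_{ww}^{(0)}$). The density hypotheses on $a$ required by \Cref{zuv} are furnished by \Cref{ar}: the two-sided bound \eqref{q00delta1} yields the first assumption of \Cref{sqk} with $A_1 = A_1(\varepsilon, B)$, and the uniform tail bound \eqref{q00delta2} on intervals disjoint from $[-1,1]$ yields the second assumption with $A_2$ independent of $\varepsilon, B$. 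This is the step I expect to be most delicate, since it requires carefully tracking how the density estimates for the full sum $\varkappa_0 + \mathrm{i}\vartheta_0$ (analyzed in \Cref{ar}) descend to the truncated sum $K_u$.

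With \Cref{sqk} in hand, \Cref{expectationsum1}(1) gives constants $c > 0$ and $C_1 = C_1(\varepsilon, B) > 1$ such that, writing $M := \mathbb{E}[|R_{uu}^{(0)}|^{(\alpha-s)/2}\, S_u]$,
\begin{equation*}
c\,(s-\alpha)^{-1}\,(|z|+1)^{(s-\alpha)/2 - \chi}\, M \le \Xi_\ell(\chi) \le C_1\,(s-\alpha)^{-1}\,(|z|+1)^{(s-\alpha)/2 - \chi}\, M.
\end{equation*}
To identify $M$ with $\Xi_{\ell-1}\bigl(\tfrac{\alpha+s}{2}\bigr)$, I use \Cref{rproduct} to write $|R_{uv}^{(0)}|^s = |R_{uu}^{(0)}|^s\,|T_{u u_+(v)}|^s\,|R_{u_+(v),\, v}^{(0,u)}|^s$ for $v \in \mathbb{D}_{\ell-1}(u)$ (trivially when $\ell = 1$), absorb $|R_{uu}^{(0)}|^s \cdot |R_{uu}^{(0)}|^{(\alpha-s)/2} = |R_{uu}^{(0)}|^{(\alpha+s)/2}$, and appeal to the fact that the subtree of $\mathbb{T}^{(0)}$ rooted at $u$ has the same law as $\mathbb{T}$ rooted at $0$ (by construction in \Cref{Stable}); the summation is then exactly $\Xi_{\ell-1}\bigl(\tfrac{\alpha+s}{2}\bigr)$. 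Since $\varepsilon \le |E| \le B$ and $\eta \in (0,1)$, the prefactor $(|z|+1)^{(s-\alpha)/2 - \chi}$ lies between two positive constants depending on $\varepsilon, B$, proving the first claim. For the refined bound when $|E| \ge 2$, I would instead invoke \Cref{expectationsum1}(3), which supplies an upper bound with constant independent of $A_1$ (hence of $\varepsilon, B$); since $(s-\alpha)/2 - \chi < 0$ by \eqref{chisalphaomega} and $|z|+1 \ge |E|$, the prefactor satisfies $(|z|+1)^{(s-\alpha)/2 - \chi} \le C\, E^{(s-\alpha)/2 - \chi}$ for an absolute constant $C$, giving the second claim.
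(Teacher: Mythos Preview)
Your proposal is correct and follows essentially the same route as the paper: expand $\Xi_\ell(\chi)$ via the Schur complement at the root, recognize the resulting sum as $\mathbb{E}[\mathfrak{I}]$ in \Cref{expectationsum1}, apply parts (1) and (3) of that lemma, and identify $\mathbb{E}[|R|^{(\alpha-s)/2}S]$ with $\Xi_{\ell-1}\bigl(\tfrac{\alpha+s}{2}\bigr)$ via \Cref{rproduct} and the self-similarity of the tree.

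One point of confusion to flag: your invocation of \Cref{zuv} to verify \Cref{sqk} for $K_u$ is misplaced. \Cref{zuv} concerns Poisson processes restricted to the complement of a fixed interval $[u,v]$, whereas here you are removing a single point. The cleaner argument---and the one the paper uses---is that by the Slivnyak--Mecke property of Poisson processes, the conditional law of $K_u$ given $(T_{0u},R_{uu}^{(0)},S_u)$ coincides with the unconditional law of the full self-energy $\varkappa_0+\mathrm{i}\vartheta_0$; \Cref{ar} (or equivalently \Cref{sumaxi}, \Cref{q0estimate1}, and \eqref{sigmathetakappa}) then supplies the required density bounds directly. You do ultimately land on \Cref{ar}, so this is not a gap, just an unnecessary detour. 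Your remark that this step is ``most delicate'' is thus overcautious: there is no descent from the full sum to the truncated one to track, because the two laws coincide.
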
 

\begin{proof} 
	
Applying the Schur complement identity \eqref{qvv} (and setting $u = 0_+$), we deduce
\begin{flalign}
	\label{sestimate}
	\Xi_\ell (\chi) & = \mathbb{E} \Bigg[ \displaystyle\sum_{u \in \mathbb{V} (1)} \displaystyle\frac{|T_{0u}|^s}{\big| z + T_{0u}^2 R_{uu}^{(0)} + K_u \big|^{\chi}} \cdot \displaystyle\sum_{v \in \mathbb{D}_{\ell - 1} (u)} \big| R_{uv}^{(0)} \big|^s \Bigg], \quad \text{where} \quad  K_u = \displaystyle\sum_{\substack{w \in \mathbb{V} (1) \\ w \ne u}} T_{0w}^2 R_{ww}^{(0)}.
\end{flalign}

\noindent Observe from \Cref{tuvalpha} and \Cref{sumaxi} that $K_u$ is a stable random variable with some law $K$, and that it is independent from $T_{0u}$, $R_{uu}^{(0)}$, and $R_{uv}^{(0)}$. Applying the first part of \Cref{expectationsum1}, with the $R_j$ there equal to $R_{uu}^{(0)}$ here and the $S_j$ there equal to the inner sum $\sum_{v \in \mathbb{D}_{\ell-1} (u)} \big| R_{0v}^{(u)} \big|^s$ on the right side of \eqref{sestimate} (and using \Cref{tuvalpha}, \Cref{sumaxi}, the first statement of \Cref{q0estimate1}, and \eqref{sigmathetakappa} to verify \Cref{sqk} for $K_u$), yields a constant $C = C (\varepsilon, B) > 1$ such that 
	\begin{flalign*} 
		C^{-1} \cdot \Xi_{\ell-1} \bigg( \displaystyle\frac{\alpha+s}{2} \bigg) & = C^{-1} \cdot \mathbb{E} \Bigg[ \displaystyle\sum_{v \in \mathbb{D}_{\ell-1} (u)} \big| R_{uu}^{(0)} \big|^{(\alpha+s)/2} \cdot |T_{uu_+}|^s \cdot \big| R_{u_+ v}^{(u)} \big|^s \Bigg] \\
		& = C^{-1} \cdot \mathbb{E} \Bigg[ \displaystyle\sum_{v \in \mathbb{D}_{\ell-1} (u)} \big| R_{uu}^{(0)} \big|^{(\alpha-s)/2} \cdot \big| R_{uv}^{(0)} \big|^s \Bigg] \\
		& \le (s-\alpha) \cdot \Xi_\ell (\chi) \\
		& \le C \cdot \mathbb{E} \Bigg[ \displaystyle\sum_{v \in \mathbb{D}_{\ell - 1} (u)} \big| R_{uu}^{(0)} \big|^{(\alpha-s)/2} \cdot \big| R_{uv}^{(0)} \big|^s \Bigg] \\
		& = C \cdot \mathbb{E} \Bigg[ \displaystyle\sum_{v \in \mathbb{D}_{\ell-1} (u)} \big| R_{uu}^{(0)} \big|^{(\alpha+s)/2} \cdot |T_{uu_+}|^s \cdot \big| R_{u_+ v}^{(u)} \big|^s \Bigg] = C \cdot \Xi_{\ell-1} \bigg( \displaystyle\frac{\alpha+s}{2} \bigg).
	\end{flalign*}

	\noindent Here, to obtain the first and sixth statements we used the fact that $\big( R_{u_+ v}^{(u)}, T_{uu_+} \big)$ and $(R_{0w}, T_{00_+})$ have the same law, for $u \in \mathbb{V} (1)$, $v \in \mathbb{D}_{\ell-1} (u)$, and $w \in \mathbb{V} (\ell-1)$ suffix of $v$ (that is $v = (u,w)$); to obtain the second and fifth, we used \Cref{rproduct} (see also \eqref{rproduct2}). This establishes the first statement of the lemma.
	
	The proof of the second is entirely analogous and thus omitted, obtained by replacing the use of the first part of \Cref{expectationsum1} to estimate \eqref{sestimate} with the third part of \Cref{expectationsum1} (using \Cref{tuvalpha}, \Cref{sumaxi}, and the first statement of \Cref{q0estimate1} to verify \Cref{sqk} for $K_u$, with a constant $A_2$ that is independent of $\varepsilon$ and $B$). 
	\end{proof}

	\begin{lem} 
		
		\label{xilchi2}
		
		There exists a constant $C = C(s, \varepsilon, B) > 1$ such that for any integer $\ell > 0$ we have 		
		\begin{flalign*} 
			C^{-1} \cdot \Xi_{-\ell} \bigg( \displaystyle\frac{\alpha+s}{2} \bigg) \le \Xi_{-\ell - 1} (\chi) \le C \cdot \Xi_{-\ell} \bigg( \displaystyle\frac{\alpha+s}{2} \bigg). 
		\end{flalign*} 
	\end{lem}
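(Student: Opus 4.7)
The plan is to deduce \Cref{xilchi2} from \Cref{chilchil1} by invoking the Mass Transport Principle (unimodularity of $\mathbb{T}$) to swap the roles of the two endpoints of paths on $\mathbb{T}$, at the cost of absorbing a factor $(s-\alpha)^{-1}$ into the constant (which is allowed since the target constant is permitted to depend on $s$).

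First I would observe that the argument in the proof of \Cref{xichi} is purely formal: it applies unimodularity (via \Cref{tunimodular}) to the nonnegative measurable function
\begin{flalign*}
f_{\ell', \chi'}(u, v) = |R_{uu}|^{\chi'} \cdot |T_{uw}|^s \cdot \bigl| R_{wv}^{(u)} \bigr|^s \cdot \one_{d(u, v) = \ell'},
\end{flalign*}
where $w \sim u$ is the neighbor of $u$ on the path to $v$, and it uses no property of the exponent $\chi$ from \eqref{chisalphaomega} beyond non-negativity. Thus the identical argument yields the symmetry $\Xi_{\ell'}(\chi') = \Xi_{-\ell'}(\chi')$ for every integer $\ell' \ge 0$ and every real $\chi' \ge 0$, provided the expectations involved are finite.

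Next I would apply this symmetry twice. Taking $(\ell', \chi') = (\ell+1, \chi)$ gives $\Xi_{-\ell-1}(\chi) = \Xi_{\ell+1}(\chi)$, while taking $(\ell', \chi') = \bigl(\ell, (\alpha+s)/2\bigr)$ gives $\Xi_{\ell}\bigl((\alpha+s)/2\bigr) = \Xi_{-\ell}\bigl((\alpha+s)/2\bigr)$. Sandwiched between these, \Cref{chilchil1} applied at index $\ell + 1$ yields
\begin{flalign*}
C_1(\varepsilon, B)^{-1} (s-\alpha)^{-1} \cdot \Xi_{\ell}\bigl((\alpha+s)/2\bigr) \le \Xi_{\ell+1}(\chi) \le C_1(\varepsilon, B) (s-\alpha)^{-1} \cdot \Xi_{\ell}\bigl((\alpha+s)/2\bigr).
\end{flalign*}
Concatenating these three relations yields the double inequality of \Cref{xilchi2} with constant $C(s, \varepsilon, B) = C_1(\varepsilon, B)(s-\alpha)^{-1}$; the $(s-\alpha)^{-1}$ factor is harmless because $s > \alpha$ is fixed.

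The only subtle point is verifying finiteness of the expectations so that the Mass Transport Principle applies nontrivially at the exponent $(\alpha+s)/2$. This is immediate: $\Xi_0\bigl((\alpha+s)/2\bigr) = \mathbb{E}\bigl[|R_{00}|^{(\alpha+s)/2}\bigr]$ is finite by \Cref{expectationqchi} (since $(\alpha+s)/2 < 1$), and inductively each $\Xi_{\ell}\bigl((\alpha+s)/2\bigr)$ remains finite via iteration of \Cref{chilchil1}. Since this is the only conceptual ingredient beyond routine concatenation of previously established lemmas, I do not expect any genuine obstacle in executing the plan.
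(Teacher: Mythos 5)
Your proof is correct, and it fills in precisely the one-line observation the paper makes at the start of its own proof: ``This follows from \Cref{xichi} and \Cref{chilchil1}, but let us also provide an alternative proof, similar to that of \Cref{chilchil1}.'' You route through the unimodularity identity $\Xi_{\ell'}(\chi') = \Xi_{-\ell'}(\chi')$ applied at two exponents, $\chi$ and $(\alpha+s)/2$, reducing the claim to a re-indexed instance of \Cref{chilchil1}; the $(s-\alpha)^{-1}$ factor is absorbed into $C(s,\varepsilon,B)$ exactly as you note. You are also right that the proof of \Cref{xichi} uses nothing about the exponent beyond nonnegativity of the summand, so the identity extends to the exponent $(\alpha+s)/2$ for free. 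One small remark: the Mass Transport Principle \eqref{sumfuv} holds for nonnegative $f$ as an equality in $[0,\infty]$, so finiteness is not needed to invoke the identity itself---only, as you correctly observe, to make the resulting double inequality informative. The paper's fully written-out ``alternative'' proof bypasses unimodularity and instead expands $\Xi_{-\ell}(\chi)$ directly via the Schur complement identity \eqref{qvv}, integrates out the offspring weights with Campbell's theorem \eqref{fxi}, and bounds the resulting integral with \Cref{expectationsum1}, closely paralleling the proof of \Cref{chilchil1}. That route is more self-contained and mirrors the estimates used throughout the section; yours is shorter and more structural. Both are valid.
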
 
	
	\begin{proof} 
		
		This follows from \Cref{xichi} and \Cref{chilchil1}, but let us also provide an alternative proof, similar to that of \Cref{chilchil1}. Again applying the Schur complement identity \eqref{qvv} yields
		\begin{flalign*}
			\Xi_{-\ell} (\chi) = \mathbb{E} \Bigg[ \displaystyle\sum_{u \in \mathbb{V} (\ell - 1)} \displaystyle\sum_{v \in \mathbb{D} (u)} \displaystyle\frac{|T_{uv}|^s}{\big| z + T_{uv}^2 R_{uu}^{(v)} + K_v \big|^{\chi}} \cdot \big| R_{0u}^{(v)} \big|^s \Bigg], \quad \text{where} \quad K_v = \displaystyle\sum_{w \in \mathbb{D} (v)} T_{vw}^2 R_{ww}^{(v)}.
		\end{flalign*} 
	
		\noindent In particular, by \Cref{tuvalpha} and \Cref{sumaxi}, $K_v$ is an $\frac{\alpha}{2}$-stable law $K$. Since $K_v$ is also independent from $T_{uv}$ and $R_{uu}^{(v)}$, we have 
		 \begin{flalign}
		 	\label{xi1}
		 	\Xi_{-\ell} (\chi) = \mathbb{E} \Bigg[ \displaystyle\sum_{u \in \mathbb{V} (\ell - 1)} |T_{u_- u}|^s \cdot \big| R_{0 u_-}^{(u)} \big|^s \cdot \mathbb{E}_u \bigg[ \displaystyle\sum_{v \in \mathbb{D}(u)} \displaystyle\frac{|T_{uv}|^s}{\big| z + T_{uv}^2 R_{uu}^{(v)} + K \big|^{\chi}} \cdot \big| R_{uu}^{(v)} \big|^s  \bigg] \Bigg],
		 \end{flalign}
		
		\noindent where we used \Cref{rproduct} and recalled that $\mathbb{E}_u$ denotes the expectation conditional on $\mathbb{T}_- (u)$. Also observe by \eqref{qvv} that 
		\begin{flalign*}
			R_{uu}^{(v)} = - \big( z + T_{u_- u}^2 R_{u_- u_-}^{(u)} + g_v \big)^{-1}, \quad \text{where} \quad g_v = \displaystyle\sum_{\substack{w \in \mathbb{D} (u) \\ w \ne v}} T_{uw}^2 R_{ww}^{(u)}.
		\end{flalign*}
	
		 Now let $(R_w)_{w \sim u}$ denote a family of independent, identically distributed random variables, each with law $R_{00}$. Then the first part of \Cref{q12} implies, for any $u \in \mathbb{V} (\ell - 1)$, the law of the sequence of random variables $\big( R_{ww}^{(u)} \big)$ is the same as that of $(R_w)$. Hence, exchanging the expectation with the sum in \eqref{xi1} twice, we obtain 
		\begin{flalign}
			\label{xil2} 
			\begin{aligned}
			\Xi_{-\ell} (\chi) & = \mathbb{E} \Bigg[ \displaystyle\sum_{u \in \mathbb{V} (\ell-1)} |T_{u_- u}|^s \cdot \big| R_{0u_-}^{(u)} \big|^s \cdot \displaystyle\sum_{v \in \mathbb{D} (u)} \mathbb{E}_u \bigg[  \displaystyle\frac{|T_{uv}|^s}{\big| z + T_{uv}^2 R_{uu}^{(v)} + K \big|^{\chi} } \cdot \big| R_{uu}^{(v)} \big|^s \bigg] \Bigg] \\
			& = \mathbb{E} \Bigg[ \displaystyle\sum_{u \in \mathbb{V} (\ell-1)} |T_{u_- u}|^s \cdot \big| R_{0u_-}^{(u)} \big|^s \cdot \displaystyle\sum_{v \in \mathbb{D} (u)} \mathbb{E}_u \bigg[  \displaystyle\frac{|T_{uv}|^s}{\big| z + T_{uv}^2 \widetilde{R}_{uu}^{(v)} + K \big|^{\chi} } \cdot \big| \widetilde{R}_{uu}^{(v)} \big|^s \bigg] \Bigg] \\ 
			& = \mathbb{E} \Bigg[ \displaystyle\sum_{u \in \mathbb{V} (\ell-1)} |T_{u_- u}|^s \cdot \big| R_{0u_-}^{(u)} \big|^s \cdot \mathbb{E}_u \bigg[ \displaystyle\sum_{v \in \mathbb{D} (u)}   \displaystyle\frac{|T_{uv}|^s}{\big| z + T_{uv}^2 \widetilde{R}_{uu}^{(v)} + K \big|^{\chi} } \cdot \big| \widetilde{R}_{uu}^{(v)} \big|^s \bigg] \Bigg],
			\end{aligned} 
		\end{flalign} 
	
		\noindent where
		\begin{flalign*}
			\widetilde{R}_{uu}^{(v)} = - \big( z + T_{u_- u}^2 R_{u_- u_-}^{(u)} + \widetilde{g}_v \big)^{-1}, \quad \text{where} \quad \widetilde{g}_v = \displaystyle\sum_{\substack{w \in \mathbb{D} (u) \\ w \ne v}} T_{vw}^2 R_w.
		\end{flalign*}
	
		\noindent We next us apply \eqref{fxi} with 
		\begin{flalign*}
			f(x, \nu) = \displaystyle\frac{|x|^s}{\big|z + x^2 \widetilde{R} (\nu) + K \big|^{\chi}} \cdot \big| \widetilde{R} (\nu) \big|^s, 
		\end{flalign*} 
	
		\noindent where $\nu = (\nu_1, \nu_2, \ldots ) \in \mathbb{R}_{> 0}$ is a point process, and
		\begin{flalign*} 
			\widetilde{R} (\nu) = - \big( z + T_{u_- u}^2 \widetilde{R}_{u_- u_-}^{(u)} + \widetilde{g} (\nu) \big)^{-1}, \quad \text{and} \quad \widetilde{g} (\nu) = \displaystyle\sum_{j = 1}^{\infty} \nu_j^2 R_j,
		\end{flalign*}
	
		\noindent where $R_1, R_2, \ldots $ are independent, identically distributed random variables with law $R_{00}$. Observe in particular from the Schur complement identity \eqref{qvv} that $\widetilde{R} (\nu)$ has the same law as $R_{uu}$, conditional on $(T_{uw})_{w \in \mathbb{D}(u)} = (\nu_1, \nu_2, \ldots)$. Inserting \eqref{fxi} into \eqref{xil2}, we obtain
		\begin{flalign}
			\label{xil3}
			\begin{aligned}
			\Xi_{-\ell} (\chi) & = \alpha \cdot \mathbb{E} \Bigg[ \displaystyle\sum_{u \in \mathbb{V} (\ell - 1)} |T_{u_- u}|^s \cdot \big| R_{0u_-}^{(u)} \big|^s \cdot \displaystyle\int_0^{\infty} \mathbb{E}_u \bigg[ \displaystyle\frac{x^s}{\big| z + x^2 \widetilde{R} (\nu) + K \big|^{\chi}} \cdot \big| \widetilde{R} (\nu) \big|^s \bigg] \cdot x^{-\alpha-1} dx \Bigg] \\
			& = \alpha \cdot \mathbb{E} \Bigg[ \displaystyle\sum_{u \in \mathbb{V} (\ell - 1)} |T_{u_- u}|^s \cdot \big|R_{0u_-}^{(u)} \big|^s \cdot  \displaystyle\int_0^{\infty} \mathbb{E}_u \Bigg[ \displaystyle\frac{x^s}{|z + x^2 R_{uu} + K|^{\chi}} \cdot |R_{uu}|^s \Bigg] \cdot x^{-\alpha-1} dx \Bigg],
			\end{aligned} 
		\end{flalign}
	
		\noindent where in the first expectation $\nu$ is a Poisson point process with intensity measure $\alpha x^{-\alpha-1} dx$. By \Cref{expectationsum1}, there exists a constant $C (s, \varepsilon, B) > 1$ for which  
		\begin{flalign*}
			C^{-1} \cdot \mathbb{E}_u \big[ |R_{uu}|^{(\alpha+s)/2} \big] & \le \displaystyle\int_0^{\infty} \mathbb{E}_u \Bigg[ \displaystyle\frac{x^s}{ |z + x^2 R_{uu} + K|^{\chi}} \cdot |R_{uu}|^s \Bigg] \cdot x^{-\alpha - 1} dx \le C \cdot \mathbb{E}_u \big[ |R_{uu}|^{(\alpha+s)/2} \big].
		\end{flalign*}
	
		\noindent This, with \eqref{xil3}, then yields the lemma. 
	\end{proof}

	\begin{cor}
		
		\label{estimatemoment1}
		
		There exist constants $C_1 = C_1 (s, \varepsilon, B) > 1$; $C_2 = C_2 (s) > 1$ (independent of $\varepsilon$ and $B)$; $c_1 = c_1 (\varepsilon, B)$ (independent of $s)$; and $c_2 > 0$ such that the following hold for any integer $\ell \ge 2$.
		
		\begin{enumerate}
			\item  We have $C_1^{-1} \cdot \Phi_{\ell-1} (s) \le \Phi_{\ell} (s) \le C_1 \cdot \Phi_{\ell-1} (s)$.
			\item We have $C_1^{-1} c_1^{\ell} \cdot (s-\alpha)^{-\ell} \le \Phi_{\ell} (s) \le C_1 c_1^{-\ell} \cdot (s-\alpha)^{-\ell}$.
			\item If $|E| \ge 2$, then  $\Phi_{\ell} (s) \le C_2 c_2^{-\ell} \cdot (s- \alpha)^{-\ell} \left(|E|^{(s-\alpha)/2 - \chi}\right)^{\ell}$. 
		\end{enumerate}
	\end{cor}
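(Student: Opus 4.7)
The plan is to deduce all three parts of the corollary by combining \Cref{xir} (which identifies $\Phi_\ell(s) = \Xi_\ell(s)$) with two applications of \Cref{chilchil1}, instantiated at different values of $\chi$. Note that both $\chi = s$ and $\chi = \tfrac{\alpha+s}{2}$ lie in $(\tfrac{s-\alpha}{2}, 1)$ and satisfy \eqref{chisalphaomega} for sufficiently small $\varpi$.

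First, I apply \Cref{chilchil1} with $\chi = s$, obtaining constants $C = C(\varepsilon, B) > 1$ (independent of $s$) such that
\begin{flalign*}
	C^{-1}(s-\alpha)^{-1}\, \Xi_{\ell-1}\bigl(\tfrac{\alpha+s}{2}\bigr) \;\le\; \Phi_\ell(s) \;\le\; C(s-\alpha)^{-1}\, \Xi_{\ell-1}\bigl(\tfrac{\alpha+s}{2}\bigr).
\end{flalign*}
I then apply the same lemma with $\chi = \tfrac{\alpha+s}{2}$, yielding the recursion
\begin{flalign*}
	C^{-1}(s-\alpha)^{-1}\, \Xi_{\ell-1}\bigl(\tfrac{\alpha+s}{2}\bigr) \;\le\; \Xi_\ell\bigl(\tfrac{\alpha+s}{2}\bigr) \;\le\; C(s-\alpha)^{-1}\, \Xi_{\ell-1}\bigl(\tfrac{\alpha+s}{2}\bigr).
\end{flalign*}
Iterating the second inequality $\ell - 1$ times produces $\Xi_{\ell-1}\bigl(\tfrac{\alpha+s}{2}\bigr) \asymp c^{\pm(\ell-1)}(s-\alpha)^{-(\ell-1)}\, \Xi_0\bigl(\tfrac{\alpha+s}{2}\bigr)$. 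A two-sided estimate on the base case $\Xi_0\bigl(\tfrac{\alpha+s}{2}\bigr) = \mathbb{E}\bigl[|R_{00}|^{(\alpha+s)/2}\bigr]$ follows from \Cref{expectationqchi} (upper bound) and from \eqref{r00estimate} applied with $t$ of order one, which gives $\mathbb{P}[|R_{00}| \ge c(\varepsilon, B)] \ge c(\varepsilon, B) > 0$ and hence $\Xi_0 \ge c'(\varepsilon, B) > 0$. Combining with the first bound proves part (2). Part (1) is then immediate: the two displays above give $\Phi_\ell(s)/\Phi_{\ell-1}(s) \asymp \Xi_{\ell-1}(\tfrac{\alpha+s}{2})/\Xi_{\ell-2}(\tfrac{\alpha+s}{2}) \asymp (s-\alpha)^{-1}$, which is an $s$-dependent constant.

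For part (3), I run the same two-step scheme but substitute the second, $|E|$-dependent estimate of \Cref{chilchil1}, whose constant is independent of $\varepsilon, B$ at the cost of the factor $|E|^{(s-\alpha)/2 - \chi}$ per application. Each iterated step with $\chi = \tfrac{\alpha+s}{2}$ contributes $|E|^{-\alpha}$, and the final step with $\chi = s$ contributes $|E|^{-(\alpha+s)/2}$, so altogether one obtains
\begin{flalign*}
	\Phi_\ell(s) \;\le\; C^\ell\, (s-\alpha)^{-\ell}\, |E|^{-(\alpha+s)/2 - \alpha(\ell-1)}\, \Xi_0\bigl(\tfrac{\alpha+s}{2}\bigr),
\end{flalign*}
which is of the form claimed in part (3). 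For $|E| \ge 2$, the base moment $\Xi_0\bigl(\tfrac{\alpha+s}{2}\bigr)$ is bounded by a constant independent of $\varepsilon, B$: the contribution from $\{\varkappa_0 \in [-1, 1]\}$ forces $|E + \varkappa_0| \ge 1$ and hence $|R_{00}| \le 1$, while the contribution from $\{\varkappa_0 \notin [-1, 1]\}$ is estimated by the uniform density bound of the second part of \Cref{ar}.

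The only genuine technical point is verifying the two-sided estimate on $\Xi_0\bigl(\tfrac{\alpha+s}{2}\bigr)$ uniformly in the relevant parameters; beyond that, the argument is a mechanical combination of \Cref{xir} and \Cref{chilchil1}. Note that \Cref{xilchi2} is not used in this corollary, since part (1) follows directly from the constant-$\chi$ instance of \Cref{chilchil1} at $\chi = \tfrac{\alpha+s}{2}$.
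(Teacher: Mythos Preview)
Your proposal is correct and follows essentially the same approach as the paper: both arguments reduce to \Cref{xir} together with \Cref{chilchil1} instantiated at $\chi = s$ and $\chi = \tfrac{\alpha+s}{2}$, then iterate down to $\Xi_0\bigl(\tfrac{\alpha+s}{2}\bigr)$. You supply a bit more detail than the paper does (the explicit lower bound on $\Xi_0$ via \eqref{r00estimate}, and the $\varepsilon,B$-independent upper bound on $\Xi_0$ for part (3) via the second part of \Cref{ar}), but the structure is the same; the paper's proof of part (1) chains three applications of \Cref{chilchil1} in the form $\Xi_\ell(s)\to\Xi_{\ell-1}(\tfrac{\alpha+s}{2})\to\Xi_{\ell-2}(\tfrac{\alpha+s}{2})\to\Xi_{\ell-1}(s)$, which is equivalent to your ratio argument.
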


	\begin{proof}
		
		By \Cref{xir} and three applications of the first statement of \Cref{chilchil1} (twice at $\chi = s$ and once at $\chi = \frac{\alpha+s}{2}$), we deduce the existence of a constant $C_1 = C_1 (s, \varepsilon, B) > 1$ such that 
		\begin{flalign*}
			\Phi_{\ell} (s) =\Xi_{\ell} (s) \le C_1 \cdot \Xi_{\ell-1} \bigg( \displaystyle\frac{\alpha+s}{2} \bigg) \le C_1^2 \cdot \Xi_{\ell-2} \bigg( \displaystyle\frac{\alpha+s}{2} \bigg) \le C_1^3 \cdot \Xi_{\ell-1} (s) = C_1^3 \cdot \Phi_{\ell-1} (s),
		\end{flalign*}
	
		\noindent and so $\Phi_{\ell} (s) \le C_1^3 \cdot \Phi_{\ell-1} (s)$; entirely analogous reasoning yields $\Phi_{\ell} (s) \ge C_1^{-3} \cdot \Phi_{\ell-1} (s)$. This establishes the first statement of the lemma. To establish the second, observe from $\ell$ applications of the first part of \Cref{chilchil1} that there exists a constant $c = c(\varepsilon, B) > 0$ such that
		\begin{flalign*}
			c^{\ell} \cdot (s-\alpha)^{-\ell} \cdot \Xi_0 \bigg( \displaystyle\frac{\alpha+s}{2} \bigg) \le \Phi_{\ell} (s) \le c^{-\ell} \cdot (s-\alpha)^{-\ell} \cdot \Xi_0 \bigg( \displaystyle\frac{\alpha+s}{2}\bigg).
		\end{flalign*} 
	
		\noindent This, together with the fact that $\Xi_0 (\chi)$ is bounded for any $\chi \in (0, 1)$ (by \Cref{expectationqchi}), yields the second statement of the lemma. The proof of the third is very similar, obtained by repeatedly applying the second part of \Cref{chilchil1} (instead of the first). 
	\end{proof}

	\subsection{Approximate Multiplicativity of Resolvent Moments} 
	
	\label{MultiplicativeR} 
	
	In this section we establish \Cref{limitr0j}, to which end we will first show that the resolvent moments $\Phi_{\ell} (s)$ (from \Cref{moment1}) are approximately multiplicative through the following proposition. Throughout this section, all constants may implicitly depend on $s$, $\varepsilon$, and $B$, even when not explicitly stated.
	
	\begin{prop} 
		
	\label{smultiplicative} 
	
	There is a constant $C = C(s, \varepsilon, B) > 1$ such that, for any integers $\ell, m \ge 1$, we have
	\begin{flalign*} 
		C^{-1} \cdot \Phi_{\ell-1} (s) \cdot \Phi_{m-1} (s) \le \Phi_{\ell + m} (s) \le C \cdot \Phi_{\ell-1} (s) \cdot \Phi_{m-1} (s).
	\end{flalign*} 
	
	\end{prop}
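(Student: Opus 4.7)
The plan is to establish approximate multiplicativity by decomposing paths of length $\ell+m$ through their level-$(\ell-1)$ ancestor and absorbing the resulting heavy-tailed edge weight via a Schur pivot, so that \Cref{expectationsum1} produces the desired factorization. Concretely, for $v \in \mathbb{V}(\ell+m)$ with level-$(\ell-1)$ ancestor $w_-$ and subsequent child $w = (w_-)_+ \in \mathbb{V}(\ell)$ on the path to $v$, I will use \Cref{rproduct} to derive
\begin{flalign*}
|R_{0v}|^s = |R_{0 w_-}|^s \cdot |T_{w_- w}|^s \cdot \big|R^{(w_-)}_{w v}\big|^s,
\end{flalign*}
and then sum over $v \in \mathbb{V}(\ell+m)$ grouped by $(w_-, w)$ to obtain
\begin{flalign*}
\Phi_{\ell+m}(s) = \mathbb{E}\Bigg[\sum_{w_- \in \mathbb{V}(\ell-1)} |R_{0 w_-}|^s \sum_{w \in \mathbb{D}(w_-)} |T_{w_- w}|^s \cdot \Sigma_m(w)\Bigg],
\end{flalign*}
where $\Sigma_m(w) = \sum_{v \in \mathbb{D}_m(w)} |R^{(w_-)}_{wv}|^s$ depends only on the subtree rooted at $w$ and is equidistributed with $\sum_{v' \in \mathbb{V}(m)} |R_{0 v'}|^s$ on an independent copy of $\mathbb{T}$.

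The next step is to extract from $|R_{0 w_-}|$ the single factor $|\rho(w_-)| := |R^{((w_-)_-)}_{w_- w_-}|$ using \Cref{rproduct} and combine it with $|T_{w_- w}|$ via the Schur complement identity \Cref{q12} applied to $\rho(w_-) = -\big(z + T_{w_- w}^2 R^{(w_-)}_{w w} + K_w\big)^{-1}$, where $K_w = \sum_{u \in \mathbb{D}(w_-) \setminus \{w\}} T_{w_- u}^2 R^{(w_-)}_{uu}$. This yields $|\rho(w_-)|^s|T_{w_-w}|^s = |T_{w_-w}|^s / |z + T_{w_-w}^2 R^{(w_-)}_{ww} + K_w|^s$, and by \Cref{tuvalpha}, \Cref{sumaxi}, and \Cref{ar}, the random variable $K_w$ satisfies the conditions of \Cref{sqk}. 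Conditioning on the randomness above $w_-$ and applying \Cref{expectationsum1}(1) with $\chi = s$, the inner expectation becomes
\begin{flalign*}
\mathbb{E}\Bigg[\sum_{w \in \mathbb{D}(w_-)} \frac{|T_{w_- w}|^s \, \Sigma_m(w)}{|z + T_{w_- w}^2 R^{(w_-)}_{w w} + K_w|^s}\Bigg] \asymp (s-\alpha)^{-1} \cdot \mathbb{E}\Big[|R_{00}|^{(\alpha-s)/2} \sum_{v' \in \mathbb{V}(m)} |R_{0v'}|^s\Big],
\end{flalign*}
and using \Cref{rproduct} to rewrite each $|R_{0v'}|^s = |R_{00}|^s |T_{00_+}|^s |R^{(0)}_{0_+ v'}|^s$, the right-hand side equals $(s-\alpha)^{-1}\,\Xi_m\!\big(\tfrac{\alpha+s}{2}\big)$. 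By \Cref{chilchil1} and \Cref{estimatemoment1}(1), this is $\asymp \Phi_{m-1}(s)$ with constants depending on $s, \varepsilon, B$.

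Since the inner expectation is deterministic (independent of the randomness above $w_-$), it factors out, and the remaining outer sum reduces to controlling $\mathbb{E}\big[\sum_{w_- \in \mathbb{V}(\ell-1)} |R_{0 w_-}|^s / |\rho(w_-)|^s\big]$; combined with $\mathbb{E}[|\rho(w_-)|^s] = \mathbb{E}[|R_{00}|^s]$ (bounded between positive constants by \Cref{expectationqchi} and \Cref{q0estimate1}), this recovers $\Phi_{\ell-1}(s)$, delivering the two-sided bound $\Phi_{\ell+m}(s) \asymp \Phi_{\ell-1}(s) \Phi_{m-1}(s)$. The case $\ell = 1$ is handled separately by pivoting directly at the root via the identity $|R_{00}|^s |T_{0 0_+}|^s = |T_{0 0_+}|^s / |z + T_{0 0_+}^2 R^{(0)}_{0_+ 0_+} + K_{0_+}|^s$, and reducing the claim to a single application of \Cref{expectationsum1}(1). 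The main obstacle in carrying out this plan is justifying the factorization of the outer expectation: the quantity $|R_{0 w_-}|$ depends on the full tree, and only after recognizing that all of its dependence on the subtree rooted at $w_-$ is mediated by the scalar $\rho(w_-)$ can the Schur pivot be used to decouple the heavy-tailed $|T_{w_- w}|$ and obtain a product estimate in which the $w$-sum and the outer $w_-$-sum separate up to multiplicative constants.
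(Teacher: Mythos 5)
Your overall strategy---cut at an intermediate level, peel off a diagonal resolvent entry, fold it together with the heavy-tailed edge weight via the Schur complement identity, and then invoke \Cref{expectationsum1}---is the same as the paper's. But the particular factor you peel off is the wrong one, and this breaks the conditioning step that the whole argument hinges on.

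You extract $\rho(w_-)=R_{w_-w_-}^{((w_-)_-)}$, so by \Cref{rproduct} the remainder is $|R_{0w_-}|/|\rho(w_-)|=|R_{0(w_-)_-}|\cdot|T_{(w_-)_-w_-}|$. The factor $R_{0(w_-)_-}$ is a \emph{full-tree} resolvent entry, and through the Schur recursion it depends on the entire subtree rooted at $w_-$ (precisely via the scalar $\rho(w_-)$, as you observe). So it is not $\mathbb{T}_-(w_-)$-measurable. Consequently when you write ``Conditioning on the randomness above $w_-$\ldots the inner expectation factors out'', this is false: the factor $|R_{0w_-}|^s/|\rho(w_-)|^s$ and the inner sum $\sum_w |\rho(w_-)|^s|T_{w_-w}|^s\Sigma_m(w)$ are both random given $\mathbb{T}_-(w_-)$ and are correlated (both are functions of the subtree below $w_-$ through $\rho(w_-)$). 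Naming the obstacle at the end of your proposal is not the same as resolving it; the observation that the dependence is ``mediated by the scalar $\rho(w_-)$'' does not by itself give a product, because you would then have to condition on $\rho(w_-)$ as well, which changes the law of the $(T_{w_-w},R_{ww}^{(w_-)})$ entering \Cref{expectationsum1}.

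The paper's fix is small but essential: extract the \emph{full} diagonal entry $R_{w_-w_-}$ rather than $\rho(w_-)$. Then \Cref{rproduct} gives $R_{0w_-}=-R_{w_-w_-}\,R_{0(w_-)_-}^{(w_-)}\,T_{(w_-)_-w_-}$, and the remainder $R_{0(w_-)_-}^{(w_-)}\,T_{(w_-)_-w_-}$ \emph{is} $\mathbb{T}_-(w_-)$-measurable. The Schur expansion of $R_{w_-w_-}$ now contains the parent term $T_{(w_-)_-w_-}^2R_{(w_-)_-(w_-)_-}^{(w_-)}$, which is deterministic given $\mathbb{T}_-(w_-)$ and is absorbed into the ``$z$'' parameter of \Cref{expectationsum1}. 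This is exactly \Cref{lm1} (with the roles of $\ell$ and $m$ interchanged). Once that is done, the outer sum you are left with is \emph{not} trivially $\asymp\Phi_{\ell-1}(s)$: the paper devotes a separate lemma (\Cref{m2}) to comparing $\mathbb{E}\big[\sum_u|R_{0u_-}^{(u)}|^s|T_{u_-u}|^s(|z+T_{u_-u}^2R_{u_-u_-}^{(u)}|+1)^{-s}\big]$ to $\Phi_{m-1}(s)$, via another \Cref{fidentityxi}/\Cref{integral}/\Cref{integral3} computation. Your proposal waves at this comparison (``combined with $\mathbb{E}[|\rho(w_-)|^s]$\ldots this recovers $\Phi_{\ell-1}(s)$'') but supplies no argument; note moreover that $\mathbb{E}[|\rho|^{-s}]=\infty$ for $s>\alpha/2$, so you cannot uncouple $|R_{0w_-}|^s$ from $|\rho(w_-)|^{-s}$ in that sum either.
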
 

	The above proposition will quickly follow from the following two lemmas.

	\begin{lem}
	
	\label{lm1} 
	
	There is a constant $C = C(s, \varepsilon, B) > 1$ such that, for any integers $\ell, m \ge 1$, we have 
	\begin{flalign*}
		C^{-1} & \cdot \Phi_{\ell-1} (s) \cdot \mathbb{E} \Bigg[ \displaystyle\sum_{u \in \mathbb{V} (m)} \big| R_{0u_-}^{(u)} \big|^s \cdot |T_{u_- u}|^s \cdot \Big( \big| z + T_{u_- u}^2 R_{u_- u_-}^{(u)} \big| + 1 \Big)^{-s} \Bigg] \\
		& \le \Phi_{\ell + m} (s) \le C \cdot \Phi_{\ell-1} (s) \cdot \mathbb{E} \Bigg[ \displaystyle\sum_{u \in \mathbb{V} (m)} \big| R_{0u_-}^{(u)} \big|^s \cdot |T_{u_- u}|^s \cdot \Big( \big| z + T_{u_- u}^2 R_{u_- u_-}^{(u)} \big| + 1 \Big)^{-s} \Bigg].
	\end{flalign*} 
	\end{lem}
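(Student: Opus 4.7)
The plan is to telescope $|R_{0w}|^s$ across the level-$m$ ancestor $u$ of each $w \in \mathbb{V}(\ell+m)$, extract the $\mathbb{T}_-(u)$-measurable factors, and then apply the fractional-moment estimates of \Cref{expectationsum1} to integrate out the subtree rooted at $u$. I will first establish the clean identity
\begin{flalign*}
R_{0w} \;=\; -R_{0u_-}^{(u)} \, T_{u_-u} \, R_{uw}, \qquad w \in \mathbb{D}_\ell(u),
\end{flalign*}
by starting from $R_{0w} = -R_{0u_-} T_{u_-u} R_{uw}^{(u_-)}$ (an immediate consequence of \Cref{rproduct}) and combining the rank-one resolvent perturbation identity with Schur's formula \eqref{qvv} to verify the cross-ratio chain
\begin{flalign*}
\frac{R_{0u_-}}{R_{0u_-}^{(u)}} \;=\; \frac{R_{u_-u_-}}{R_{u_-u_-}^{(u)}} \;=\; \frac{R_{uu}}{R_{uu}^{(u_-)}} \;=\; \frac{R_{uw}}{R_{uw}^{(u_-)}} \;=\; \bigl(1 - T_{u_-u}^2 R_{uu}^{(u_-)} R_{u_-u_-}^{(u)}\bigr)^{-1}.
\end{flalign*}
Summing over $w$ and expanding $|R_{uw}|^s = |R_{uu}|^s |T_{uu_+}|^s |R_{u_+w}^{(u)}|^s$ through a second application of \Cref{rproduct}, together with the Schur identity $|R_{uu}|^s = |Z_u + \tilde K_u|^{-s}$ with $Z_u := z + T_{u_-u}^2 R_{u_-u_-}^{(u)}$ and $\tilde K_u := \sum_{v \in \mathbb{D}(u)} T_{uv}^2 R_{vv}^{(u)}$, will then yield
\begin{flalign*}
\sum_{w \in \mathbb{D}_\ell(u)} |R_{0w}|^s \;=\; \frac{|R_{0u_-}^{(u)}|^s \, |T_{u_-u}|^s}{|Z_u + \tilde K_u|^s} \sum_{u_+ \in \mathbb{D}(u)} |T_{uu_+}|^s \, S_{u_+},
\end{flalign*}
where $S_{u_+} := \sum_{w \in \mathbb{D}_{\ell-1}(u_+)} |R_{u_+w}^{(u)}|^s$; crucially the prefactor $|R_{0u_-}^{(u)}|^s |T_{u_-u}|^s$ and the shift $Z_u$ are $\mathbb{T}_-(u)$-measurable, while the remaining quantities live in the independent subtree below $u$.

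Next I would sum over $u \in \mathbb{V}(m)$, take expectations, and condition on $\mathbb{T}_-(u)$. Decomposing $\tilde K_u = T_{uu_+}^2 R_{u_+u_+}^{(u)} + K_{u_+}$ with $K_{u_+} := \sum_{v \ne u_+} T_{uv}^2 R_{vv}^{(u)}$, the inner conditional expectation takes exactly the form of $\mathbb{E}[\mathfrak{I}]$ studied in \Cref{expectationsum1}, with $T_j = T_{uu_+}$, $R_j = R_{u_+u_+}^{(u)}$, $S_j = S_{u_+}$, $K_j = K_{u_+}$, and the parameter $z$ there replaced by $Z_u \in \mathbb{H}$. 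The hypotheses of \Cref{sqk} on $K_{u_+}$ follow from \Cref{ar} combined with \Cref{zuv} to handle the single-point deletion in the Poisson process underlying $\tilde K_u$. The first part of \Cref{expectationsum1} with $\chi = s$ then yields, with constants depending on $s, \varepsilon, B$,
\begin{flalign*}
\mathbb{E}_u\!\left[ \frac{1}{|Z_u + \tilde K_u|^s} \sum_{u_+} |T_{uu_+}|^s S_{u_+} \right] \asymp (s-\alpha)^{-1} (|Z_u| + 1)^{-(s+\alpha)/2} \, \Xi_{\ell-1}\!\left(\tfrac{\alpha+s}{2}\right),
\end{flalign*}
and combining \Cref{chilchil1} with the first part of \Cref{estimatemoment1} converts $(s-\alpha)^{-1} \Xi_{\ell-1}((\alpha+s)/2)$ into $\Phi_{\ell-1}(s)$ up to constants depending on $s, \varepsilon, B$.

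The hard part will be that this procedure naturally produces the factor $(|Z_u|+1)^{-(s+\alpha)/2}$ in the outer expectation rather than the $(|Z_u|+1)^{-s}$ appearing in the statement, so the two expectations must still be shown comparable. For this, I would apply Campbell's theorem (\Cref{fidentityxi}) to the Poisson process $\{T_{u_-u}\}_{u \in \mathbb{D}(u_-)}$, which has intensity $\alpha x^{-\alpha-1} dx$ by \Cref{tuvalpha}; after this step the resolvent entries $R_{0u_-}^{(u^\ast)}$ and $R_{u_-u_-}^{(u^\ast)}$ for the virtual child $u^\ast$ collapse to $R_{0u_-}$ and $R_{u_-u_-}$ in the unperturbed tree, giving, for either $\gamma \in \{(s+\alpha)/2,\, s\}$,
\begin{flalign*}
\mathbb{E}\!\left[ \sum_{u \in \mathbb{V}(m)} |R_{0u_-}^{(u)}|^s |T_{u_-u}|^s (|Z_u|+1)^{-\gamma} \right] \;=\; \alpha \, \mathbb{E}\!\left[ \sum_{u_- \in \mathbb{V}(m-1)} |R_{0u_-}|^s \int_0^\infty \frac{t^{s-\alpha-1} \, dt}{(|z + t^2 R_{u_-u_-}| + 1)^{\gamma}} \right].
\end{flalign*}
By \Cref{integral3}, the inner integral is comparable to $(s-\alpha)^{-1}(|z|+1)^{(s-\alpha)/2-\gamma} |R_{u_-u_-}|^{(\alpha-s)/2}$, so the two choices of $\gamma$ yield expectations differing only by the factor $(|z|+1)^{(s-\alpha)/2}$, which is bounded between $1$ and $(B+1)^{(s-\alpha)/2}$ since $|z| \in [\varepsilon, B]$. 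Combining this equivalence with the identity of the preceding paragraph completes the proof.
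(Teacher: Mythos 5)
Your proposal follows the same skeleton as the paper's proof: split $R_{0w}$ at the level-$m$ vertex $u$ into a $\mathbb{T}_-(u)$-measurable prefactor times $R_{uw}$, expand once more via \Cref{rproduct} and \eqref{qvv}, apply \Cref{expectationsum1} conditionally, and convert $(s-\alpha)^{-1}\Xi_{\ell-1}\big(\tfrac{\alpha+s}{2}\big)$ into $\Phi_{\ell-1}(s)$ via \Cref{chilchil1} and \Cref{estimatemoment1}. Your explicit cross-ratio derivation of the split identity $R_{0w} = -R_{0u_-}^{(u)} T_{u_-u} R_{uw}$ is sound (the two outer equalities hold because the removed vertex lies in a separated component, and the middle one is Schur's identity applied at $u_-$ and $u$); the paper invokes this decomposition directly from \Cref{rproduct}, but it is the same fact.

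The genuinely valuable content in your proposal is the paragraph on the exponent. You correctly observe that applying the first part of \Cref{expectationsum1} with $\chi = s$ and the shifted spectral parameter $z' = z + T_{u_-u}^2 R_{u_- u_-}^{(u)}$ produces the weight $(|z'|+1)^{(s-\alpha)/2 - s} = (|z'|+1)^{-(s+\alpha)/2}$, whereas both the lemma statement and the paper's displayed bound \eqref{expectationtu} carry the exponent $-s$. These are \emph{not} pointwise comparable, since $|z'|$ is unbounded, so \eqref{expectationtu} as printed does not follow directly from \Cref{expectationsum1}; the mismatch only disappears after one more integration over $T_{u_-u}$. Your resolution --- push both expressions through Campbell's formula in the $T_{u_-u}$ variable and invoke \Cref{integral3}, so that the discrepancy collapses to a factor $(|z|+1)^{(s-\alpha)/2}$ involving the \emph{global} $z$, which is bounded by $(B+2)^{(s-\alpha)/2}$ --- is exactly right and is implicitly the reason \Cref{m2} and \Cref{lm1} fit together to give \Cref{smultiplicative}. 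One minor caveat: in the Campbell step you should spell out the resampling device (replacing the $R_{vv}^{(u)}$ for $v \in \mathbb{D}(u_-) \setminus \{u\}$ by fresh i.i.d.\ copies, as in the proof of \Cref{m2} around \eqref{expectationtw1}), since $R_{u_- u_-}^{(u)}$ and $R_{0u_-}^{(u)}$ vary with $u$ in a way that is correlated with the point process $\{T_{u_- v}\}_{v}$, and \Cref{fidentityxi} requires the $f(x, \nu)$ structure with $\nu$ the decorated remainder process. With that detail included, your proof is complete and is, if anything, more careful than the paper's at the point where the exponents are reconciled.
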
 

	\begin{lem}
	
	\label{m2} 
	
	There is a constant $C = C(s, \varepsilon, B) > 1$ such that, for any integer $m \ge 1$, we have 
	\begin{flalign*} 
		C^{-1} \cdot \Phi_{m-1} (s) \le \mathbb{E} \Bigg[ \displaystyle\sum_{u \in \mathbb{V} (m)} \big| R_{0u_-}^{(u)} \big|^s \cdot |T_{u_- u}|^s \cdot \Big( \big| z + T_{u_- u}^2 R_{u_- u_-}^{(u)} \big| + 1 \Big)^{-s} \Bigg] \le C \cdot \Phi_{m-1} (s).
	\end{flalign*}
	\end{lem}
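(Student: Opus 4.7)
The plan is to recognize that the sum in \Cref{m2} is, up to the conditional expectation of a single stable random variable, essentially $\Phi_m(s)$; then \Cref{estimatemoment1}(1) replaces $\Phi_m(s)$ by $\Phi_{m-1}(s)$ and the proof concludes. To set this up, I would fix $u \in \mathbb{V}(m)$ and rewrite $|R_{0u}|^s$ via a one-step resolvent expansion at $u$. The symmetric analogue of \eqref{r0v0} (derivable either by the single-edge perturbation argument $R = R' - R' V R$ with $V$ supported on the edge $(u_-,u)$, or by combining the two forms of \Cref{rproduct}) gives $R_{0u} = -T_{u_-u}\,R_{uu}\,R_{0u_-}^{(u)}$, and the Schur complement identity \eqref{qvv} at $u$, with the neighbor sum split according to whether the neighbor is the parent $u_-$ or lies in $\mathbb{D}(u)$, yields $R_{uu} = -(Z + \tilde K_u)^{-1}$ for
\[
Z = z + T_{u_-u}^2 R_{u_-u_-}^{(u)}, \qquad \tilde K_u = \sum_{w \in \mathbb{D}(u)} T_{uw}^2 R_{ww}^{(u)}.
\]
Combining these produces $|R_{0u}|^s = |T_{u_-u}|^s\,\bigl|R_{0u_-}^{(u)}\bigr|^s\,|Z + \tilde K_u|^{-s}$. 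The key structural observation is that $T_{u_-u}$, $R_{u_-u_-}^{(u)}$ and $R_{0u_-}^{(u)}$ (hence $Z$) are all $\mathbb{T}_-(u)$-measurable, while $\tilde K_u$ is built entirely from edge weights and resolvents strictly below $u$, so it is independent of $\mathbb{T}_-(u)$ and has the same law as $\tilde K_0 = \varkappa_0 + \mathrm{i}\vartheta_0$ studied in \Cref{rrealimaginary} and \Cref{ar}.

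Next I would integrate out $\tilde K_u$ conditional on $\mathbb{T}_-(u)$. Since $z \in \mathbb{H}$ and $R_{u_-u_-}^{(u)} \in \mathbb{H}$ we have $Z \in \mathbb{H}$, so writing $Z = |Z|\cdot(Z/|Z|)$ places us in the hypotheses of \Cref{zxrkestimate} with $x = |Z|$, $Q = Z/|Z|$, $K = \tilde K_u$, and $\chi = s$ (after fixing any $\varpi \in \bigl(0,\min\{1-s,(s+\alpha)/2\}\bigr)$ in \eqref{chisalphaomega}). The conditions of \Cref{sqk} hold for $\tilde K_u$ thanks to \Cref{ar} (with $A_1 = A_1(\varepsilon,B)$ and $A_2$ universal), so \Cref{zxrkestimate}(1) supplies constants $c,C > 0$ depending only on $s,\varepsilon,B$ with
\[
c\,(|Z|+1)^{-s} \;\le\; \mathbb{E}_u\bigl[|Z+\tilde K_u|^{-s}\bigr] \;\le\; C\,(|Z|+1)^{-s}.
\]

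Finally, using Tonelli to interchange sum and expectation and the tower property to condition on $\mathbb{T}_-(u)$ term by term, the formula for $|R_{0u}|^s$ together with the preceding display yields
\[
\Phi_m(s) \;=\; \mathbb{E}\Bigg[\sum_{u \in \mathbb{V}(m)} |T_{u_-u}|^s\,\bigl|R_{0u_-}^{(u)}\bigr|^s\,\mathbb{E}_u\bigl[|Z+\tilde K_u|^{-s}\bigr]\Bigg] \;\asymp\; \mathbb{E}\Bigg[\sum_{u \in \mathbb{V}(m)} |T_{u_-u}|^s\,\bigl|R_{0u_-}^{(u)}\bigr|^s\,(|Z|+1)^{-s}\Bigg],
\]
and the right-hand side is exactly the quantity appearing in \Cref{m2}. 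Chaining this with \Cref{estimatemoment1}(1), which gives $\Phi_m(s) \asymp \Phi_{m-1}(s)$ for $m \ge 2$ (while $m=1$ is handled directly by $\Phi_0(s) \asymp 1$ via \Cref{expectationqchi} and $\Phi_1(s) \asymp 1$ via \Cref{chilchil1}), closes the proof with constants depending on $s,\varepsilon,B$. The only step that does genuine analytic work is the application of \Cref{zxrkestimate}, but the hypotheses there on $\tilde K_u$ are supplied ready-made by \Cref{ar}, so I do not anticipate a serious obstacle.
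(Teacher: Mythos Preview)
Your argument is correct and in fact cleaner than the paper's. The paper expands one level higher, writing $u_- = w$ and then $R_{0w}^{(u)} = -R_{0w_-}^{(w)} T_{w_-w} R_{ww}^{(u)}$; since $R_{ww}^{(u)}$ depends on which child $u$ is removed, summing over $u \in \mathbb{D}(w)$ requires the iid replacement trick, Campbell's theorem (\Cref{fidentityxi}), and the integral estimate \Cref{integral3}, after which the result is identified with $\Xi_{1-m}\bigl(\tfrac{\alpha+s}{2}\bigr)$ and processed through \Cref{xilchi2}, \Cref{xichi}, and \Cref{estimatemoment1}. By contrast, you expand at the bottom vertex $u$ itself, so the ``below-$u$'' randomness collapses into a single variable $\tilde K_u$ with the law of $\varkappa_0 + \mathrm{i}\vartheta_0$, independent of $\mathbb{T}_-(u)$; then \Cref{zxrkestimate}(1) applies directly, term by term, and a single invocation of \Cref{estimatemoment1}(1) finishes. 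Your route avoids Campbell's theorem and the $\Xi_{-\ell}$ machinery entirely, at the cost of relying on \Cref{estimatemoment1}(1) (which the paper proves earlier using that same machinery). One small point: \Cref{expectationqchi} only gives the upper bound $\Phi_0(s) \le C$; for the matching lower bound in your $m=1$ case you should cite \Cref{q0estimate1}(2), which gives $\mathbb{P}\bigl[|R_{00}| \ge c\bigr] \ge \tfrac12$ and hence $\Phi_0(s) \ge c' > 0$.
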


	\begin{proof}[Proof of \Cref{smultiplicative}]
		
		This follows from \Cref{lm1} and \Cref{m2}.
	\end{proof} 

	Now let us establish \Cref{lm1} and \Cref{m2}.

	\begin{proof}[Proof of \Cref{lm1}] 

	We only establish the upper bound, as the proof of the lower bound is entirely analogous. To that end, by \Cref{rproduct}, we have 
	\begin{flalign*} 
		\Phi_{\ell + m} (s) & = \mathbb{E} \Bigg[ \displaystyle\sum_{u \in \mathbb{V} (m)} \displaystyle\sum_{v \in \mathbb{D}_{\ell} (u)} |R_{0v}|^s \Bigg] \\
		& = \mathbb{E} \Bigg[ \displaystyle\sum_{u \in \mathbb{V}(m)} \big| R_{0u_-}^{(u)} \big|^s \cdot |T_{u_- u}|^s \cdot | R_{uu}|^s \displaystyle\sum_{w \in \mathbb{D} (u)} |T_{uw}|^s \displaystyle\sum_{v \in \mathbb{D}_{\ell-1} (w)} \big| R_{wv}^{(u)} \big|^s \Bigg],
	\end{flalign*} 

	\noindent from which it follows (by the Schur complement identity \eqref{qvv}) that 
	\begin{flalign}
		\label{slm} 
		\begin{aligned} 
		\Phi_{\ell+m} (s) = \mathbb{E} \Bigg[ \displaystyle\sum_{u \in \mathbb{V} (m)} \big| R_{0u_-}^{(u)} \big|^s \cdot |T_{u_- u}|^s \cdot \mathbb{E}_u \bigg[ & \displaystyle\sum_{w \in \mathbb{D} (u)} \displaystyle\frac{|T_{uw}|^s}{\big| z + T_{u_- u}^2 R_{u_- u_-}^{(u)} + T_{uw}^2 R_{ww}^{(u)} + K_{u, w} \big|^{\chi}} \\
		&  \times \displaystyle\sum_{v \in \mathbb{D}_{\ell - 1} (w)} \big| R_{wv}^{(u)} \big|^s \bigg] \Bigg], 
		\end{aligned} 
	\end{flalign} 

	\noindent where we recall that  $\mathbb{E}_u$ denotes the expectation conditional on $\mathbb{T}_- (u)$ and we have denoted
	\begin{flalign*} 
		K_{u, w} = \displaystyle\sum_{\substack{v' \in \mathbb{D} (u) \\ v' \ne w, u_- }} T_{uv'}^2 R_{v' v'}^{(u)}. 
	\end{flalign*} 

	\noindent To estimate the inner expectation in \eqref{slm}, we proceed as in the proof of \Cref{xilchi2}. Specifically, observe that $K_{u, w}$ is an $\frac{\alpha}{2}$-stable random variable (by \Cref{tuvalpha} and \Cref{sumaxi}) with some law $K$ that is independent from $T_{u_- u}$, $T_{uw}$, $R_{u_- u_-}^{(u)}$, $R_{ww}^{(u)}$, and $R_{wv}^{(u)}$. We then apply \Cref{expectationsum1}, with the $z$ there equal to $z + T_{u_- u}^2 R_{u_- u_-}^{(u)}$ here; the $S_j$ there equal to $\sum_{v \in \mathbb{D}_{\ell-1} (w)} \big| R_{wv}^{(u)} \big|^s$; and the $K_j$ there equal to $K$ here (whose conditions are verified by \Cref{tuvalpha}, \Cref{zuv}, and \Cref{ar}). This yields a constant $C_1 > 1$ such that 
	\begin{flalign}
		\label{expectationtu} 
		\begin{aligned} 
		\mathbb{E}_u & \Bigg[ \displaystyle\sum_{w \in \mathbb{D} (u)} \displaystyle\frac{|T_{uw}|^s}{\big| z + T_{u_- u}^2 R_{u_- u_-}^{(u)} + T_{uw}^2 R_{ww}^{(u)} + K_{u, w}\big|^s} \displaystyle\sum_{v \in \mathbb{D}_{\ell-1} (w)} \big| R_{wv}^{(u)} \big|^s \Bigg] \\ 
		  & \le \displaystyle\frac{C_1}{\Big( \big| z + T_{u_- u}^2 R_{u_- u_-}^{(u)} \big| + 1 \Big)^s} \cdot \mathbb{E}_u \Bigg[  \big| R_{u_+ u_+}^{(u)} \big|^{(\alpha-s)/2} \displaystyle\sum_{v \in \mathbb{D}_{\ell-1} (u_+)} \big| R_{u_+ v}^{(u)} \big|^s \Bigg] \\
		  & = \displaystyle\frac{C_1}{\Big( \big| z + T_{u_- u}^2 R_{u_- u_-}^{(u)} \big| + 1 \Big)^s} \cdot \mathbb{E} \Bigg[ | R_{00}|^{(\alpha-s)/2} \displaystyle\sum_{v \in \mathbb{V} (\ell-1)} | R_{0v}|^s \Bigg],
		  \end{aligned} 
	\end{flalign}

	\noindent where in the second statement we  used the fact that $\big( R_{u_+ u_+}^{(u)}, R_{u_+ v}^{(u)} \big)$ for $w \in \mathbb{D} (u)$ and $v \in \mathbb{D}_{\ell-1} (u_+)$ has the same law as $(R_{00}, R_{0v})$ for $v \in \mathbb{V} (\ell-1)$. Hence,
	\begin{flalign}
		\label{expectationtu2} 
		\begin{aligned} 
	\mathbb{E}_u & \Bigg[ \displaystyle\sum_{w \in \mathbb{D} (u)} \displaystyle\frac{|T_{uw}|^s}{\big| z + T_{u_- u}^2 R_{u_- u_-}^{(u)} + T_{uw}^2 R_{ww}^{(u)} + K_{u, w} \big|^s} \displaystyle\sum_{v \in \mathbb{D}_{\ell-1} (w)} \big| R_{wv}^{(u)} \big|^s \Bigg] \\ 
		  & \le \displaystyle\frac{C_1}{\Big( \big| z + T_{u_- u}^2 R_{u_- u_-}^{(u)} \big| + 1 \Big)^s} \cdot \mathbb{E} \Bigg[ \displaystyle\sum_{v \in \mathbb{V} (\ell-1)}| R_{00}|^{(\alpha+s)/2} \cdot |T_{00_+}|^s \cdot | R_{0_+ v}|^s \Bigg] \\
		  & = \displaystyle\frac{C_1}{\Big( \big| z + T_{u_- u}^2 R_{u_- u_-}^{(u)} \big| + 1 \Big)^s} \cdot \Xi_{\ell-1} \bigg( \displaystyle\frac{\alpha+s}{2} \bigg),
		 \end{aligned} 
	\end{flalign}

	\noindent where in the first statement we applied \Cref{rproduct}. Inserting \eqref{expectationtu} and \eqref{expectationtu2} into \eqref{slm} yields
	\begin{flalign*}
		\Xi_{\ell+m} (s) \le C_1 \cdot \Xi_{\ell-1} \bigg( \displaystyle\frac{\alpha+s}{2} \bigg) \cdot \mathbb{E} \Bigg[ \displaystyle\sum_{u \in \mathbb{V} (m)} \big| R_{0u_-}^{(u)} \big|^s \cdot |T_{u_- u}|^s \cdot \Big( \big| z + T_{u_- u}^2 R_{u_- u_-}^{(u)} \big| + 1 \Big)^{-s} \Bigg],
	\end{flalign*} 

	\noindent and so the upper bound in the lemma follows from \Cref{chilchil1} and \Cref{estimatemoment1}; as mentioned previously, the proof of the lower bound is entirely analogous and is thus omitted.
	\end{proof} 

	\begin{proof}[Proof of \Cref{m2}] 

	By \Cref{rproduct}, we have 
	\begin{flalign}
		\label{expectationsumvm} 
		\begin{aligned} 
		\mathbb{E} & \Bigg[ \displaystyle\sum_{u \in \mathbb{V} (m)} \big| R_{0u_-}^{(u)} \big|^s \cdot |T_{u_- u}|^s \cdot \Big( \big| z + T_{u_- u}^2 R_{u_- u_-}^{(u)} \big| + 1 \Big)^{-s} \Bigg] \\
		& = \mathbb{E} \Bigg[ \displaystyle\sum_{w \in \mathbb{V} (m-1)} \displaystyle\sum_{u \in \mathbb{D} (w)} \big| R_{0w}^{(u)} \big|^s \cdot |T_{w u}|^s \cdot \Big( \big| z + T_{w u}^2 R_{ww}^{(u)} \big| + 1 \Big)^{-s} \Bigg] \\
		& = \mathbb{E} \Bigg[ \displaystyle\sum_{w \in \mathbb{V} (m-1)} \big| R_{0w_-}^{(w)} \big|^s \cdot |T_{w_- w}|^s \cdot \mathbb{E}_w \bigg[ \displaystyle\sum_{u \in \mathbb{D} (w)} \displaystyle\frac{|T_{wu}|^s}{\Big( \big| z + T_{wu}^2 R_{ww}^{(u)} \big| + 1 \Big)^s} \cdot \big| R_{ww}^{(u)} \big|^s\bigg] \Bigg].
		\end{aligned} 
	\end{flalign}

	Now, observe from \eqref{qvv} that 
	\begin{flalign*}
		R_{ww}^{(u)} = - \big( z + T_{w_- w}^2 R_{w_- w_-}^{(w)} + g_u \big)^{-1}, \quad \text{where} \quad g_u = \displaystyle\sum_{\substack{v \in \mathbb{D} (w) \\ v \ne u}} T_{wv}^2 R_{vv}^{(w)},
	\end{flalign*}

	\noindent and let $(R_v)_{v \sim u}$ denote a family of independent, identically distributed random variables, each with law $R_{00}$. Then, for any $w \in \mathbb{V} (m-1)$, the law of the $\big( R_{vv}^{(w)} \big)$ is the same as that of the $(R_v)$. So, 
	\begin{flalign}
		\label{expectationtw1} 
		\begin{aligned} 
		\mathbb{E}_w & \Bigg[ \displaystyle\sum_{u \in \mathbb{D}(w)} \displaystyle\frac{|T_{wu}|^s}{\Big( \big| z + T_{wu}^2 R_{ww}^{(u)} \big| + 1 \Big)^s} \cdot \big| R_{ww}^{(u)} \big|^s \Bigg]  = \mathbb{E}_w \Bigg[ \displaystyle\sum_{u \in \mathbb{D}(w)} \displaystyle\frac{|T_{wu}|^s}{\Big( \big| z + T_{wu}^2 \widetilde{R}_{ww}^{(u)} \big| + 1 \Big)^s} \cdot \big| \widetilde{R}_{ww}^{(u)} \big|^s \Bigg],
		\end{aligned} 
		\end{flalign} 

	\noindent where 
	\begin{flalign*}
		\widetilde{R}_{ww}^{(u)} = - \big( z + T_{w_- w}^2 R_{w_- w_-}^{(w)} + \widetilde{g}_u \big)^{-1}, \quad \text{and} \quad \widetilde{g}_u = \displaystyle\sum_{\substack{v \in \mathbb{D} (w) \\ v \ne u}} T_{wv}^2 R_v.
	\end{flalign*}
	
	\noindent Again as in the proof of \Cref{xilchi2}, for any point process $\nu = (\nu_1, \nu_2, \ldots )$ on $\mathbb{R}_{> 0}$, let
	\begin{flalign*} 
		\widetilde{R} (\nu) = - \big( z + T_{w_- w}^2 R_{w_- w_-}^{(w)} + \widetilde{g} (\nu) \big)^{-1}, \quad \text{where} \quad \widetilde{g} (\nu) = \sum_{j = 1}^{\infty} \nu_j^2 R_j,
	\end{flalign*} 

	\noindent and the $(R_j)$ are mutually independent random variables with law $R_{00}$. Then, it follows from applying \eqref{fxi} that
	\begin{flalign}
		\label{expectationtw} 
		\begin{aligned} 
		\mathbb{E}_w \Bigg[  \displaystyle\sum_{u \in \mathbb{D}(w)} \displaystyle\frac{|T_{wu}|^s}{\big| z + T_{wu}^2 \widetilde{R}_{ww}^{(u)} \big|^s} \cdot \big| \widetilde{R}_{ww}^{(u)} \big|^s \Bigg] & = \alpha \displaystyle\int_0^{\infty} \mathbb{E}_w \Bigg[ \displaystyle\frac{x^s}{\Big( \big| z + x^2 \widetilde{R} (\nu) \big| + 1 \Big)^s} \cdot \big| \widetilde{R} (\nu) \big|^s \Bigg] \cdot x^{-\alpha-1} dx \\
		& = \alpha \displaystyle\int_0^{\infty} \mathbb{E}_w \Bigg[ \displaystyle\frac{x^s}{\big( | z + x^2 R_{ww}| + 1 \big)^s} \cdot |R_{ww}|^s \Bigg] \cdot x^{-\alpha-1} dx.
		\end{aligned} 
	\end{flalign}

	\noindent where in the expectation $\nu$ is a Poisson point process with intensity $\alpha x^{-\alpha-1} dx$, and we have used the fact that $\widetilde{R} (\nu)$ has the same law as $R_{ww}$. 
	
	Applying \Cref{integral} in \eqref{expectationtw}, using \Cref{rproduct}, and inserting into \eqref{expectationtw1} and \eqref{expectationsumvm} yields a constant $C > 1$ such that 
	\begin{flalign*}
		& C^{-1} \cdot \Xi_{1-\ell} \bigg( \displaystyle\frac{\alpha+s}{2} \bigg) \\
		& \qquad  = C^{-1} \cdot \mathbb{E} \Bigg[ \displaystyle\sum_{w \in \mathbb{V} (m-1)} \big| R_{0w_-}^{(w)} \big|^s \cdot |T_{w_- w}|^s \cdot |R_{ww}|^{(\alpha+s)/2} \Bigg] \\
		& \qquad \le \mathbb{E} \Bigg[ \displaystyle\sum_{w \in \mathbb{V}(m-1)} \big| R_{0w_-}^{(w)} \big|^s \cdot |T_{w_- w}|^s \cdot \mathbb{E}_w \bigg[ \displaystyle\sum_{u \in \mathbb{D} (w)} \displaystyle\frac{|T_{wu}|^s}{\Big( \big| z + T_{wu}^2 R_{ww}^{(u)} \big| + 1 \Big)^s} \cdot \big| R_{ww}^{(u)} \big|^s \bigg] \Bigg] \\
		& \qquad \le C \cdot \mathbb{E} \Bigg[ \displaystyle\sum_{w \in \mathbb{V} (m-1)} \big| R_{0w_-}^{(w)} \big|^s \cdot |T_{w_- w}|^s \cdot |R_{ww}|^{(\alpha+s)/2} \Bigg] = C \cdot \Xi_{1-\ell} \bigg( \displaystyle\frac{\alpha+s}{2} \bigg).
	\end{flalign*}
	
	\noindent This, together with \Cref{xilchi2}, \Cref{xichi}, and \Cref{estimatemoment1}, yields the lemma. 
	\end{proof}
	
	Now we can establish \Cref{limitr0j}.   
	
	\begin{proof}[Proof of \Cref{limitr0j}]
		
		The first statement of the theorem follows from \Cref{estimatemoment1}, \Cref{smultiplicative}, and the P\'{o}lya--Szeg\"{o} lemma  sequences that are both approximately subadditive and superadditive \cite[Lemma 1.9.1]{steele1997probability}.
		To establish the second, observe for any integer $L \ge 1$ that the function $\varphi_L (s; z)$ is convex in $s$, since $\mathbb{E} \big[ |R_{0v}|^s \big]$ is for any $v \in \mathbb{V} (L)$ (by Young's inequality for products). Thus, $\varphi (s; z)$ is convex in $s$ as well, as it is a limit of convex functions. Further, for $\alpha < s < s' < 1$ we have $\mathbb{E} \big[ |R_{0v}|^s \big] \le \eta^{s' - s} \cdot \mathbb{E} \big[ |R_{0v}|^{s'} \big]$ for each $v \in \mathbb{V} (L)$ (by the second part of \Cref{q12}). Taking logarithms, dividing by $L$, and letting $L$ tend to $\infty$, we deduce that $\varphi (s; z)$ is nonincreasing in $s$; this verifies the second statement of the theorem. 
		The third follows from the uniformity of the constant $C$ from \Cref{smultiplicative} in $\Imaginary z$. 
		The fourth and fifth parts of the theorem follow from the second and third parts of \Cref{estimatemoment1}, respectively.
	\end{proof}

	\section{Restricted Moment Estimates}
	 
	\label{MomentEvent}

	In this section we estimate fractional moments $|R_{0v}|^s$ of the resolvent, upon restricting to certain events. In \Cref{EventR} we define these events and establish the ``restricted fractional moment bounds,'' conditional on several estimates that will be shown in \Cref{ProofB}, \Cref{ProofG}, and \Cref{ProofRY}. Throughout this section, we fix a real number $s \in (\alpha, 1)$ and a complex number $z = E + \mathrm{i} \eta \in \mathbb{H}$ with $\eta \in (0, 1)$ and $\varepsilon \le |E| \le B$. Below, constants may implicitly depend on $s$, $\varepsilon$, and $B$, even when not explicitly stated.
	
	\subsection{Events and Restricted Moments}
	
	\label{EventR}

	In this section we define events on which certain tree weights and resolvent entries are bounded, and we analyze fractional moments of off-diagonal resolvent entries restricted to such events. We begin with defining these events. The first considers the event on which exactly one descendant $w$ of a vertex $v$ has $|T_{vw}| \in [1, 2]$; this will later be useful in specifying paths with large resolvent entries. The second defines events on which the tree weights $|T_{uu_+}|$ are bounded from below, and on which the diagonal resolvent entries are bounded from above; in view of \Cref{rproduct} and \eqref{qvv}, this will also later be useful in locating large off-diagonal resolvent entries.

	\begin{definition} 
		
		\label{eventg} 
		
		For any vertex $v \in \mathbb{V}$, we define the vertex set $\mathcal{Y} (v) \subset \mathbb{V}$ and the event $\mathscr{G}_0 (v)$ by
		\begin{flalign*}
			\mathcal{Y} (v) = \big\{ w \in \mathbb{D} (v) : |T_{vw}| \in [1, 2] \big\}; \qquad \mathscr{G}_0 (v) = \Big\{ \big| \mathcal{Y} (v) \big| = 1 \Big\}.
		\end{flalign*} 
		
		\noindent On the event $\mathscr{G}_0 (v)$, let $\mathfrak{c} = \mathfrak{c} (v)$ denote the unique child of $v$ such that $|T_{v\mathfrak{c}}| \in [1, 2]$. 
		
	\end{definition} 
	
	\begin{definition}
		
		\label{br0vw} 
		
		Fix $z \in \mathbb{H}$; let $v, w \in \mathbb{V}$ be vertices with $v \prec w$; and let $\omega \in (0,1)$ and $\Omega > 1$ be real numbers. For any vertex $u \in \mathbb{V}$ with $v \preceq u \prec w$, define the events $\mathscr{D} (u; v, w) = \mathscr{D} (u; v, w; \omega)$ and $\mathscr{D} (v, w) = \mathscr{D} (v, w; \omega)$ by 
		\begin{flalign*}
			\mathscr{D} (u; v, w) = \big\{   |T_{uu_+}| \ge \omega \big\}; \qquad \mathscr{D} (v, w) = \bigcap_{v \preceq u \prec w} \mathscr{D} (u; v, w).
		\end{flalign*}		
		
		\noindent Next, if $\mathscr{G}_0 (w)$ holds, then denote $w_+ = \mathfrak{c} (w)$. Then, define the events $\mathscr{B}_0 (v, w) = \mathscr{B}_0 (v, w; \omega; \Omega) = \mathscr{B}_0 (v, w; \omega; \Omega; z)$; $\mathscr{B}_1 (v, w) = \mathscr{B}_1 (v, w; \omega; \Omega) = \mathscr{B}_1 (v, w; \omega; \Omega; z)$; and $\mathscr{B} (v, w) = \mathscr{B} (v, w; \omega; \Omega) = \mathscr{B} (v, w; \omega; \Omega; z)$ by
		\begin{flalign*}
			& \mathscr{B}_0 (v, w) = \mathscr{G}_0 (w) \cap \Big\{ \big| R_{ww}^{(w_+)} (z) \big| \le \Omega \Big\} \cap \mathscr{D} (v, w; \omega); \\
			& \mathscr{B}_1 (v, w) = \mathscr{G}_0 (w) \cap \Big\{ \big| R_{vv}^{(v_-, w_+)} (z) \big| \le \Omega \Big\} \cap \mathscr{D} (v, w; \omega),
		\end{flalign*} 
		
		\noindent and $\mathscr{B} (v, w) = \mathscr{B}_0 (v, w) \cap \mathscr{B}_1 (v, w)$. Here we used the convention that $0- = \emptyset$ so that $R_{00}^{(0_-, w_+)} (z) = R_{00}^{(w_+)} (z)$ for example.
		
	\end{definition} 
	
	We next have the following proposition, which indicates that restricting to the events $\mathscr{B} (v, w)$ does not substantially decrease the sum of fractional moments $|R_{vw}|^s$, if $\omega$ and $\Omega$ are sufficiently small and large, respectively. 
	
	\begin{prop} 
		
		\label{expectationr0vsd} 
		
		For any real number $\delta \in (0, 1)$, there exist constants $c > 0$ (independent of $\delta$), $\omega = \omega (\delta) \in (0, 1)$, and $\Omega = \Omega (\delta) > 1$ such that the following holds. For any integer $L \ge 1$ and vertex $v \in \mathbb{V}$, we have
		\begin{flalign*}
			\mathbb{E} \Bigg[ \displaystyle\sum_{w \in \mathbb{D}_L (v)} \big| R_{vw}^{(v_-, w_+)} (z) \big|^s \cdot \one_{\mathscr{B} (v, w; \omega; \Omega)} \Bigg] \ge c \cdot \exp \Big( L \cdot \big( \varphi (s; z) - \delta \big) \Big).   
		\end{flalign*}
		
	\end{prop}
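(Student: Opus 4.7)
The plan is to compare the restricted moment $\Psi_L := \mathbb{E}\bigl[\sum_{w \in \mathbb{D}_L(v)} |R_{vw}^{(v_-,w_+)}|^s \one_{\mathscr{B}(v,w;\omega,\Omega)}\bigr]$ to the unrestricted quantity $\Phi_L(s;z)$, which by \Cref{limitr0j} satisfies $\Phi_L(s;z) \ge c_1 e^{L\varphi(s;z)}$. The goal will be to show $\Psi_L \ge c_2 e^{-L\delta/2}\Phi_L(s;z)$ by bounding the loss contributed by each constraint in the definition of $\mathscr{B}(v,w)$; combined with the lower bound on $\Phi_L$, this yields the desired inequality.

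First I would handle the two ``endpoint'' constraints, each of which should cost only a constant factor independent of $L$. The event $\mathscr{G}_0(w)$ depends only on the Poisson process of edge weights from $w$ to its children, independently of $\mathbb{T}_-(w)$; by \Cref{tuvalpha}, $|\mathcal{Y}(w)|$ is Poisson with mean $\mu = 1-2^{-\alpha}$, so the probability $\mu e^{-\mu}$ can be factored out. For the bounds $|R_{vv}^{(v_-,w_+)}|\le\Omega$ and $|R_{ww}^{(w_+)}|\le\Omega$, I would appeal to \Cref{expectationqchi}: the estimate $\mathbb{E}[\one_{|R_{00}|>\Omega}|R_{00}|^\chi] \le C\Omega^{\chi-1}$, combined with the Schur recursion at the endpoint and the observation that $R_{vv}^{(v_-,w_+)}$ and $R_{ww}^{(w_+)}$ have laws comparable to $R_{00}$ up to removal of a single edge, show that restricting to $|R|\le\Omega$ preserves at least a constant fraction of the moment once $\Omega$ is taken large.

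The bulk of the work lies in the path constraint $\mathscr{D}(v,w;\omega) = \bigcap_u\{|T_{u u_+}|\ge \omega\}$. Using \Cref{rproduct} to expand $|R_{vw}^{(v_-,w_+)}|^s$ as a telescoping product along the path, then applying the Schur identity and the Campbell formula (\Cref{fidentityxi}) level-by-level, as in the proof of \Cref{smultiplicative}, each edge contributes an integral of the form $\int_0^\infty t^{s-\alpha-1}/|z+t^2 R+K|^s\,dt$. By \Cref{integral} combined with \Cref{integral3}, the truncation to $t<\omega$ accounts for at most a fraction $C\omega^{s-\alpha}$ of this integral, uniformly in $s-\alpha$ and the other parameters. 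Hence the per-level multiplicative cost of imposing $|T_{u u_+}|\ge \omega$ is at least $1-C\omega^{s-\alpha}$, and telescoping over $L$ levels produces a cumulative factor at least $(1-C\omega^{s-\alpha})^L \ge e^{-2CL\omega^{s-\alpha}}$. Choosing $\omega$ so that $2C\omega^{s-\alpha} \le \delta/2$, together with $\Omega$ large enough that the endpoint constraints preserve a constant fraction, then yields $\Psi_L \ge c\,e^{L(\varphi(s;z)-\delta)}$. The main obstacle will be making the per-level reduction fully rigorous in the presence of correlations among the diagonal resolvents $R_{uu}^{(\cdot)}$ at successive levels, which are coupled through the tree and through the removed vertex $w_+$ appearing in the superscripts; this will require adapting the inductive scheme of \Cref{MultiplicativeR} to carry the indicator $\one_{\mathscr{B}}$ through the expansion and verifying uniformity of the relevant single-edge estimates under the conditioning.
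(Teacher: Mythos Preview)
Your overall strategy matches the paper's: the proposition is deduced from two intermediate results (\Cref{expectationbr} and \Cref{dexpectationg}) that respectively show the endpoint resolvent bounds cost only a factor $(1-\delta)$ relative to the sum restricted to $\mathscr{D}\cap\mathscr{G}_0$, and that the latter sum is at least $c(1-\delta)^L\Phi_L(s)$; one then invokes \eqref{limitr0j2}. Your identification of the per-edge loss of order $\omega^{s-\alpha}$ for the $\mathscr{D}$ constraint is exactly the content of \Cref{ry0estimate3}, combined with \Cref{ry0estimate2} to peel off one level at a time.

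There is one genuine imprecision worth flagging: your claim that $\mathscr{G}_0(w)$ can simply be factored out by independence from $\mathbb{T}_-(w)$ does not work as stated, because the summand $|R_{vw}^{(v_-,w_+)}|^s$ itself depends on the edge weights $\{T_{wu}\}_{u\in\mathbb{D}(w)}$, both through $w_+=\mathfrak{c}(w)$ and through the Schur sum at $w$ over the remaining children. The paper handles this in \Cref{ryestimate} not by factoring a probability but by conditioning on $\mathscr{G}_0(w)$ and observing that the remaining sum $K=\sum_{u'\ne\mathfrak{c}(w)} T_{wu'}^2 R_{u'u'}^{(w)}$ is built from the thinned Poisson process on $\{|T_{wu}|\notin[1,2]\}$, which by \Cref{zuv} still satisfies \Cref{sqk} and so admits the single-edge estimates of \Cref{zxrkestimate}. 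Similarly, the endpoint bounds $|R_{ww}^{(w_+)}|\le\Omega$ and $|R_{vv}^{(v_-,w_+)}|\le\Omega$ are not reduced to a moment bound on a variable with law $R_{00}$; instead the paper (\Cref{ruw}, \Cref{expectationrvwdelta}) expresses each as a small-interval event on the Schur denominator $|Q+K|$ and applies the third and fourth parts of \Cref{zxrkestimate}. Your final paragraph anticipates that correlations through $w_+$ will require care, and this is precisely where that care is needed.
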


	We will deduce \Cref{expectationr0vsd} as a consequence of the following two propositions. The former will be established in \Cref{ProofB} below and the latter in \Cref{ProofG} below. 
	
	\begin{prop} 
		
		\label{expectationbr}
		
		For any real number $\delta > 0$, there exist constants $c > 0$ and $\Omega = \Omega (\delta) > 1$ such that, for any $\omega \in \big( 0, \frac{1}{2} \big)$, we have
		\begin{flalign*} 
			\mathbb{E} \Bigg[ \displaystyle\sum_{w \in \mathbb{D}_{\ell} (v)} \big| R_{vw}^{(v_-, w_+)} \big|^s \cdot \one_{\mathscr{B} (v, w)} \Bigg] \ge (1 - \delta) \cdot \mathbb{E} \Bigg[ \displaystyle\sum_{w \in \mathbb{D}_{\ell} (v)} \big| R_{vw}^{(v_-, w_+)} \big|^s \cdot \one_{\mathscr{D} (v, w)} \cdot \one_{\mathscr{G}_0 (w)} \Bigg].
		\end{flalign*} 
	\end{prop}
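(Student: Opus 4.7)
Since $\mathscr{B}(v,w) = \mathscr{B}_0(v,w) \cap \mathscr{B}_1(v,w)$, the only extra constraints imposed by $\mathscr{B}$ relative to $\mathscr{D}(v,w) \cap \mathscr{G}_0(w)$ are $\{|R_{ww}^{(w_+)}| \le \Omega\}$ (from $\mathscr{B}_0$) and $\{|R_{vv}^{(v_-, w_+)}| \le \Omega\}$ (from $\mathscr{B}_1$). A union bound therefore reduces the proposition to showing that, for each of the two complementary ``large'' events, the corresponding restricted expectation can be made at most $\frac{\delta}{2}$ times the right-hand-side expectation, provided $\Omega = \Omega(\delta)$ is chosen large enough.

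For the restriction to $\{|R_{vv}^{(v_-, w_+)}| > \Omega\}$, I would apply the starting-vertex form of \Cref{rproduct},
\begin{equation*}
\big|R_{vw}^{(v_-, w_+)}\big|^s = \big|R_{vv}^{(v_-, w_+)}\big|^s \cdot \prod_{v \prec u \preceq w} |T_{uu_-}|^s \big|R_{uu}^{(v_-, w_+, u_-)}\big|^s,
\end{equation*}
which isolates the large factor. Letting $u_1$ denote the first vertex on the path from $v$ to $w$ and using the Schur complement identity \eqref{qvv}, one writes $R_{vv}^{(v_-, w_+)} = -(z+X)^{-1}$ with $X = T_{vu_1}^2 R_{u_1 u_1}^{(v, w_+)} + \sum_{u \in \mathbb{D}(v) \setminus \{u_1\}} T_{vu}^2 R_{uu}^{(v)}$. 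The second sum, by \Cref{sumaxi}, is a stable law whose conditional law (given the subtrees appearing in the product above) has a bounded density by an argument analogous to \Cref{ar}. Together with \Cref{expectationqchi}, this yields a conditional bound of the form $\mathbb{E}[|R_{vv}^{(v_-,w_+)}|^s \one_{|R_{vv}^{(v_-,w_+)}|>\Omega} \mid \mathcal{F}] = O(\Omega^{s-1})$. Taking the full expectation and using the moment estimates of \Cref{estimatemoment1} to compare with the unrestricted expectation would yield the required smallness.

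For the restriction to $\{|R_{ww}^{(w_+)}| > \Omega\}$, I would instead use the ending-vertex form of \Cref{rproduct},
\begin{equation*}
\big|R_{vw}^{(v_-, w_+)}\big|^s = \big|R_{ww}^{(v_-, w_+)}\big|^s \cdot \prod_{v \preceq u \prec w} |T_{uu_+}|^s \big|R_{uu}^{(v_-, w_+, u_+)}\big|^s.
\end{equation*}
The main obstacle here is that the event constrains $R_{ww}^{(w_+)}$, which is computed on the full tree $\mathbb{T}\setminus\{w_+\}$ (and therefore depends on the ancestry above $v$), rather than the $R_{ww}^{(v_-, w_+)}$ appearing in the factorization; these two quantities are strongly correlated. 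I would overcome this by applying \eqref{qvv} to $R_{ww}^{(w_+)}$ directly,
\begin{equation*}
R_{ww}^{(w_+)} = -\bigg(z + T_{ww_-}^2 R_{w_- w_-}^{(w, w_+)} + \sum_{u \in \mathbb{D}(w) \setminus \{w_+\}} T_{wu}^2 R_{uu}^{(w)}\bigg)^{-1},
\end{equation*}
and observing that the sibling-subtree resolvents $R_{uu}^{(w)}$ for $u \in \mathbb{D}(w)\setminus\{w_+\}$ are iid copies of $R_{00}$ by the first part of \Cref{q12}, and independent of the path and side-branch data appearing in the factorized product above. A density estimate on the inner sum, in the spirit of \Cref{ar}, then yields a conditional probability bound of order $\Omega^{-\alpha/2}$, and promoting this to the corresponding fractional-moment estimate $\mathbb{E}[|R_{ww}^{(v_-, w_+)}|^s \one_{|R_{ww}^{(w_+)}|>\Omega}\mid\mathcal{F}'] = O(\Omega^{s-1})$ via \Cref{expectationqchi} and combining with \Cref{estimatemoment1} for the remaining factors would complete the argument.
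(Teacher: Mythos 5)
Your high-level plan is the same as the paper's: write $\mathscr{B}(v,w) = \mathscr{B}_0(v,w) \cap \mathscr{B}_1(v,w)$, note both are contained in $\mathscr{D}(v,w) \cap \mathscr{G}_0(w)$, and apply the elementary inclusion-exclusion inequality $\one_{\mathscr{A}} - \one_{\mathscr{B}_0 \cap \mathscr{B}_1} \le (\one_{\mathscr{A}} - \one_{\mathscr{B}_0}) + (\one_{\mathscr{A}} - \one_{\mathscr{B}_1})$, so that it suffices to show each of $\mathbb{E}[\sum |R_{vw}^{(v_-, w_+)}|^s \one_{\mathscr{B}_i}] \ge (1-\delta) \mathbb{E}[\sum |R_{vw}^{(v_-,w_+)}|^s \one_{\mathscr{D} \cap \mathscr{G}_0}]$ separately, with a suitable $\Omega$. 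Your treatment of $\mathscr{B}_1$ (the constraint on $|R_{vv}^{(v_-,w_+)}|$) also matches the paper, and here the argument is unproblematic: after pulling out the first step via \Cref{rproduct} and applying the Schur identity, the factor $|R_{vv}^{(v_-,w_+)}|^s$ and the constraint $\{|R_{vv}^{(v_-,w_+)}| > \Omega\}$ concern the \emph{same} resolvent, so the bad event coincides exactly with the region where the moment is singular, and the third part of \Cref{zxrkestimate} gives $\mathbb{E}[V^{-s}\one_{V<\Omega^{-1}}] = O(\Omega^{s-1})$.

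The $\mathscr{B}_0$ case is where I see a gap. You correctly flag the obstacle: the constraint is on $R_{ww}^{(w_+)}$, whereas the factor in the factorization is $R_{ww}^{(v_-,w_+)}$, and these are distinct. But your proposed resolution — a conditional probability bound on $\{|R_{ww}^{(w_+)}|>\Omega\}$ followed by a ``promotion'' to a conditional moment bound via \Cref{expectationqchi} — does not actually handle the correlation. Conditional on $\mathbb{T}_-(w)$ (and on $\mathscr{G}_0(w)$), both $R_{ww}^{(w_+)} = -(B + K_u)^{-1}$ and $R_{ww}^{(v_-,w_+)} = -(A + K_u)^{-1}$ are functions of the same sibling sum $K_u = \sum_{u'\in\mathbb{D}(w)\setminus\{w_+\}} T_{wu'}^2 R_{u'u'}^{(w)}$, with $A = z + T_{w_-w}^2 R_{w_-w_-}^{(v_-,w)}$ and $B = z + T_{w_-w}^2 R_{w_-w_-}^{(w)}$. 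Since $A$ and $B$ may be arbitrarily close, the singularity of $|A+K_u|^{-s}$ can lie inside the bad set $\{|B+K_u|<\Omega^{-1}\}$. A probability bound on the bad set does not control the moment there, and \Cref{expectationqchi} gives truncated-moment bounds only when the truncation is on the \emph{same} resolvent as the moment, so it is not applicable. What the paper does instead is a joint estimate: it observes that the bad event is equivalent to $\Re K_u$ falling in an interval $J$ of length $O(\Omega^{-1})$, and invokes the fourth part of \Cref{zxrkestimate}, which gives a bound on $\mathbb{E}[V^{-\chi}\one_{\Re K\in J}]$ that is uniform over the location of $J$ (including when $J$ covers the singularity), yielding the factor $O(\Omega^{s-1})$ after normalizing by the first part. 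A Hölder argument (bound $|R_{ww}^{(v_-,w_+)}|^{s(1+\epsilon)}$ in expectation, then Hölder against the bad-event probability) could also be made to work, but you would need to carry this out explicitly. As a minor point, your stated conditional probability bound of $O(\Omega^{-\alpha/2})$ should be $O(\Omega^{-1})$ — it is the bounded density of $\Re K_u$ near the compensation point, not a tail bound on $|K_u|$, that controls the probability of $\{|R_{ww}^{(w_+)}| > \Omega\}$.
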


	\begin{prop} 
		
		\label{dexpectationg} 
		
		For any real number $\delta > 0$, there exist constants $c > 0$ and $ \omega = \omega (\delta) \in (0, 1)$ such that 
		\begin{flalign*} 
			\mathbb{E} \Bigg[ \displaystyle\sum_{w \in \mathbb{D}_{\ell} (v)} \big| R_{vw}^{(v_-, w_+)} \big|^s \cdot \one_{\mathscr{D} (v,w)} \cdot \one_{\mathscr{G}_0 (w)} \Bigg] \ge c (1 - \delta)^{\ell} \cdot \Phi_{\ell} (s).
		\end{flalign*} 
	\end{prop}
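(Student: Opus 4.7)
The plan is to induct on $\ell$, peeling off the first edge of the path at each step and controlling the multiplicative cost of the event restrictions. By the unimodularity of $\mathbb{T}$ (\Cref{tunimodular}), we may take $v = 0$. For $w \in \mathbb{D}_\ell(0)$ with first-step child $v_1 \in \mathbb{D}(0)$ on the path to $w$, the integrand factorizes, via \Cref{rproduct} applied with $\mathcal{U} = \{w_+\}$ followed by the Schur complement identity \eqref{qvv} at $0$, as
\begin{equation*}
|R_{0w}^{(w_+)}|^s = \frac{|T_{0v_1}|^s \cdot |R_{v_1 w}^{(0, w_+)}|^s}{\bigl|z + T_{0v_1}^2 R_{v_1 v_1}^{(0, w_+)} + K_{v_1}\bigr|^s},
\end{equation*}
where $K_{v_1} = \sum_{u \in \mathbb{D}(0), u \ne v_1} T_{0u}^2 R_{uu}^{(0)}$ is an $\alpha/2$-stable law (by \Cref{tuvalpha} and \Cref{sumaxi}) with bounded real-part density (\Cref{ar}), and is independent of the subtree rooted at $v_1$. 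Under $\mathscr{D}(0, w)$ one has $|T_{0v_1}| \ge \omega$, and the remaining constraints $\mathscr{D}(v_1, w) \cap \mathscr{G}_0(w)$ live entirely inside the subtree at $v_1$.

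Next, summing over $v_1$ and applying the Campbell formula (\Cref{fidentityxi}) converts the first-level sum into an integral against the intensity $\frac{\alpha}{2}|t|^{-\alpha-1}\,dt$ restricted to $|t| \ge \omega$. By the first part of \Cref{expectationsum1}, the corresponding \emph{unrestricted} integral is bounded below by $c(s-\alpha)^{-1}(|z|+1)^{(s-\alpha)/2 - s} \cdot \mathbb{E}[|R_{v_1 v_1}^{(0, w_+)}|^{(\alpha-s)/2} \cdot Y]$, where $Y = \sum_{w \in \mathbb{D}_{\ell-1}(v_1)} |R_{v_1 w}^{(0, w_+)}|^s \one_{\mathscr{D}(v_1, w)} \one_{\mathscr{G}_0(w)}$ is the $(\ell-1)$-level quantity; by the second part of \Cref{expectationsum1}, the excluded contribution from $|t| < \omega$ is at most $C(s-\alpha)^{-1}\omega^{s-\alpha}(|z|+1)^{-s}\,\mathbb{E}[Y]$. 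Hence the restriction costs at most a factor $1 - C' \omega^{s-\alpha}$ per peel; choosing $\omega(\delta)$ small enough that this per-step factor exceeds $1 - \delta$ gives $(1-\delta)^\ell$ after iterating over the $\ell$ levels of the path. The endpoint event $\mathscr{G}_0(w)$ demands that exactly one child of $w$ lie in the window $|T| \in [1, 2]$; by Poisson thinning on the Poisson process of edges of $w$, the number of such children is an independent Poisson random variable of mean $1 - 2^{-\alpha}$, so $\mathbb{P}[\mathscr{G}_0(w)] = (1 - 2^{-\alpha}) e^{-(1 - 2^{-\alpha})}$ is a positive constant, contributing an $O(1)$ factor. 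Comparing the fully unrestricted base quantity to $\Phi_\ell(s)$ via the recursion in \Cref{chilchil1} then completes the bound.

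The principal obstacle is the coupling of the recursion to the endpoint through the removed vertex $w_+$: the diagonal entry $R_{v_1 v_1}^{(0, w_+)}$ appearing in the Schur denominator (and the resolvents inside $Y$) depend on $w_+$, which in turn depends on the endpoint $w$ that is being summed over. This prevents a naive decoupling at each peel, and the ``normalizing factor'' $\mathbb{E}[|R_{v_1 v_1}^{(0,w_+)}|^{(\alpha-s)/2} Y]$ must be controlled uniformly over all choices of the descendant $w_+$. We overcome this by conditioning on the subtree at $v_1$ (which simultaneously determines the relevant diagonal entry and the events inside $Y$) before integrating over $T_{0v_1}$ and $K_{v_1}$, and by invoking the uniform fractional moment bound of \Cref{expectationqchi} to control the $|R_{v_1 v_1}^{(0, w_+)}|^{(\alpha-s)/2}$ factor independently of the location of $w_+$. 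Combined with the Poisson-thinning decoupling of $\mathscr{G}_0(w)$ from the edges of $w$ outside the window $[1,2]$, this uniform control closes the recursion and yields the claimed bound.
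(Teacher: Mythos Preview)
Your peel-from-the-root scheme has a genuine gap precisely at the step you flag as the ``principal obstacle.'' When you invoke the two parts of \Cref{expectationsum1} at a given peel, the lower bound on the unrestricted integral produces a quantity of the form $c\,\mathbb{E}\bigl[\sum_w |R_{v_1 v_1}^{(0,w_+)}|^{(\alpha-s)/2}\,S_w\bigr]$, while the upper bound on the piece with $|t|<\omega$ produces $C\omega^{s-\alpha}\,\mathbb{E}\bigl[\sum_w S_w\bigr]$. For the restriction to cost at most a factor $1 - C'\omega^{s-\alpha}$ you need these to be comparable, i.e.\ $\mathbb{E}\bigl[\sum_w |R|^{(\alpha-s)/2} S_w\bigr] \ge c'\,\mathbb{E}\bigl[\sum_w S_w\bigr]$. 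But $(\alpha-s)/2$ is \emph{negative}: the weight $|R|^{(\alpha-s)/2}$ is small exactly when the heavy-tailed $|R|$ is large, and no uniform pathwise lower bound on it is available. \Cref{expectationqchi} bounds $\mathbb{E}[|R_{00}|^{\chi}]$ from \emph{above} for $\chi\in(0,1)$ --- wrong sign of exponent and wrong direction of inequality --- so it cannot supply the comparison you need. Even granting one such comparison, the recursion does not close: after one peel the quantity you carry is weighted by $|R|^{(\alpha-s)/2}$, not the original form, so the inductive hypothesis does not apply to it. (Your claim that $\mathscr{G}_0(w)$ factors out by independence is also not quite right, since $w_+ = \mathfrak{c}(w)$ is determined by $\mathscr{G}_0(w)$ and $R_{0w}^{(w_+)}$ depends on the remaining children of $w$; this is fixable by conditioning and is a secondary point.)

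The paper avoids this mismatch by decoupling the two difficulties. First, \Cref{ryestimate} peels one step at the \emph{endpoint} $w$ rather than at the root: expanding near $w$ localizes the $w_+$-dependence and the $\mathscr{G}_0(w)$ event, and a single truncation $\{|R_{uu}|\le 1/\varsigma\}$ (whose cost is controlled once by \eqref{rvw2}) turns the dangerous negative power into a constant, reducing the left side to $\mathbb{E}\bigl[\sum_{u\in\mathbb{D}_{\ell-1}(v)}|R_{vu}^{(v_-)}|^s\one_{\mathscr{D}(v,u)}\bigr]$ with no $w_+$ and no $\mathscr{G}_0$. Second, \Cref{ry0estimate} removes the $\mathscr{D}$ restriction level by level, but \emph{not} by a Schur-at-the-root peel: instead \Cref{ry0estimate2} and \Cref{ry0estimate3} bound, respectively, the quantity with restriction starting at level $k$ from below and the piece lost when moving to level $k{+}1$ from above, and both bounds factor as the \emph{same} product $\bigl[\text{unrestricted prefix to }k{-}1\bigr]\times\bigl[\text{restricted suffix from }k\bigr]$. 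Because the two sides share an identical factorization the ratio is a pure constant $c^{-1}C\delta$, and the $(1-\delta)^{\ell}$ emerges without ever carrying an uncontrolled $|R|^{(\alpha-s)/2}$ weight through the induction.
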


		\begin{proof}[Proof of \Cref{expectationr0vsd}]
		
		This follows from \Cref{expectationbr}, \Cref{dexpectationg}, and \eqref{limitr0j2}.
	\end{proof}

	\subsection{Proof of \Cref{expectationbr}}

	\label{ProofB}

	In this section we establish \Cref{expectationbr}, which will follow from the two following two lemmas.

	\begin{lem} 
	
	\label{ruw} 

	For any real number $\delta > 0$, there exists a constants $\Omega = \Omega (\delta) > 1$ such the following two statements hold whenever $\omega \in \big(0, \frac{1}{2} \big)$.
	
	\begin{enumerate} 
		
		\item We have  
	\begin{flalign}
		\label{rvw10}
		\mathbb{E} \Bigg[ \displaystyle\sum_{w \in \mathbb{D}_{\ell} (v)} \big| R_{vw}^{(v_-, w_+)} \big|^s \cdot \one_{\mathscr{B}_0 (v, w; \omega; \Omega)} \Bigg] \ge (1 - \delta) \cdot \mathbb{E} \Bigg[ \displaystyle\sum_{w \in \mathbb{D}_{\ell} (v)}  \big| R_{vw}^{(v_-, w_+)} \big|^s \cdot \one_{\mathscr{D}(v,w; \omega)} \cdot \one_{\mathscr{G}_0 (w)} \Bigg].
	\end{flalign} 

		\item We have   
		\begin{flalign}
			\label{rvw2} 
			\begin{aligned}
			\mathbb{E} \Bigg[ \displaystyle\sum_{w \in \mathbb{D}_{\ell} (v)} \big| R_{vw}^{(v_-)} \big|^s \cdot \big| & R_{ww}^{(v_-)} \big|^{(\alpha-s)/2} \cdot \one_{|R_{ww}^{(v_-)}| \le 1/\Omega} \cdot \one_{\mathscr{D} (v, w; \omega)} \Bigg] \\
			& \ge (1 - \delta) \cdot \mathbb{E} \Bigg[ \displaystyle\sum_{w \in \mathbb{D}_{\ell} (v)} \big| R_{vw}^{(v_-)} \big|^s \cdot \big| R_{ww}^{(v_-)} \big|^{(\alpha-s)/2} \cdot \one_{\mathscr{D} (v, w; \omega)} \Bigg].
			\end{aligned}
		\end{flalign}
		
	\end{enumerate} 

	\end{lem}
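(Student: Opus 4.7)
Both parts of Lemma~\ref{ruw} are ``truncation'' estimates: they assert that restricting the expectation to a specific range of a diagonal resolvent entry $R_{ww}^{(\mathcal{U})}$ (for appropriate $\mathcal{U}$) costs at most a fraction $\delta$. The unified strategy is to apply the Schur complement identity \eqref{qvv} to isolate a ``fresh'' source of randomness in $R_{ww}^{(\mathcal{U})}$ that is, conditionally on everything else, an $\alpha/2$-stable sum whose real part has bounded density by \Cref{tuvalpha}, \Cref{sumaxi}, and \Cref{ar}; the associated bounded-density estimate \Cref{zxrkestimate} then controls the indicator uniformly over the conditioning.

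For Part 1, I would first use Schur to write
\[
R_{ww}^{(w_+)} = -\bigl(z + T_{ww_-}^2 R_{w_-w_-}^{(w)} + \textstyle\sum_{u \in \mathbb{D}(w),\, u \ne w_+} T_{wu}^2 R_{uu}^{(w)}\bigr)^{-1},
\]
and condition on the $\sigma$-algebra generated by all tree data in the subtree of $v$ minus the subtree of $w_+$; this $\sigma$-algebra already determines $R_{vw}^{(v_-,w_+)}$ and the events $\mathscr{G}_0(w)$, $\mathscr{D}(v,w;\omega)$. Under this conditioning, the ``downstream'' Schur sum over $u \neq w_+$ remains (conditionally on $\mathscr{G}_0(w)$, which only fixes $|T_{ww_+}|\in[1,2]$) an $\alpha/2$-stable law with bounded density, by Poisson thinning and \Cref{zuv}. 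Hence the event $\{|R_{ww}^{(w_+)}|>\Omega\}$, which is equivalent to $|z+\text{Schur sum}|<\Omega^{-1}$, has conditional probability at most $C/\Omega$. Choosing $\Omega=\Omega(\delta)$ large absorbs the indicator $\one_{|R_{ww}^{(w_+)}|>\Omega}$ at cost $\delta$, yielding \eqref{rvw10}.

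For Part 2, I would first apply \Cref{rproduct} in the ``from $w$'' direction to refactor
\[
\bigl|R_{vw}^{(v_-)}\bigr|^s \bigl|R_{ww}^{(v_-)}\bigr|^{(\alpha-s)/2} = \bigl|R_{ww}^{(v_-)}\bigr|^{(\alpha+s)/2} \prod_{v \preceq u \prec w}|T_{uu_+}|^s \bigl|R_{uu}^{(v_-, u_+)}\bigr|^s,
\]
so that the product is measurable with respect to tree data that does \emph{not} touch the subtree of $w$, while the exponent on $|R_{ww}^{(v_-)}|$ is the positive number $(\alpha+s)/2$. Via Schur, write $R_{ww}^{(v_-)}=-(z+X+Y)^{-1}$ where $X = T_{ww_-}^2 R_{w_-w_-}^{(v_-,w)}$ is upstream and $Y=\sum_{u\in\mathbb{D}(w)}T_{wu}^2 R_{uu}^{(w)}$ is a downstream $\alpha/2$-stable sum whose real part has bounded density. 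Applying Campbell's formula \Cref{fidentityxi} over the downstream Poisson process, with the integrand expressed through the factored form above, the estimates of \Cref{integral3} and \Cref{zxrkestimate} provide sharp two-sided control; comparing the restricted expectation (where $|z+X+Y|\ge\Omega$) with the unrestricted one then shows that the truncation captures a $(1-\delta)$ fraction provided $\Omega$ is large (depending on $\delta$).

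The main obstacle is managing the correlations: $R_{ww}^{(\mathcal U)}$ shares randomness with $R_{vw}^{(v_-,\mathcal U)}$ and with the indicators $\mathscr{G}_0(w)$, $\mathscr{D}(v,w)$ through the Schur chain along the path from $v$ to $w$. The resolution in both parts is to set up the conditioning so that exactly one ``free'' random ingredient remains -- a stable downstream sum whose real part has bounded density -- so that \Cref{ar} and \Cref{zxrkestimate} apply uniformly. The subtler case is Part 2, where the restriction forces the stable sum into its tail and one must combine the Campbell integration with the cancellation between the $|R_{ww}^{(v_-)}|^{(\alpha+s)/2}$ factor and the joint heavy-tailed behavior of the path product to extract the claimed $(1-\delta)$ bound.
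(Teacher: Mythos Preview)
Your Part~1 argument contains a genuine gap. The conditioning you describe --- on ``all tree data in the subtree of $v$ minus the subtree of $w_+$'' --- already determines the downstream sum $K = \sum_{u' \in \mathbb{D}(w),\,u' \ne w_+} T_{wu'}^2 R_{u'u'}^{(w)}$, since each such $u'$ and its subtree lie in the subtree of $v$ but \emph{not} in the subtree of $w_+$. So $K$ is not fresh under this conditioning, and your claim that it ``remains an $\alpha/2$-stable law with bounded density'' fails. More fundamentally, no conditioning can work as you intend: the weight $|R_{vw}^{(v_-,w_+)}|^s$ depends on $K$ through its factor $|R_{ww}^{(v_-,w_+)}|^s$ (via Schur), and the indicator $\one_{|R_{ww}^{(w_+)}|>\Omega}$ \emph{also} depends on $K$ (again via Schur). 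Hence the weight and the indicator are correlated through $K$, and ``conditional probability $\le C/\Omega$'' does not bound the restricted expectation.

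The paper's fix is to make this correlation explicit rather than try to condition it away. One expands via \Cref{rproduct} to write $|R_{vw}^{(v_-,w_+)}|^s = |R_{v,w_-}^{(v_-,w)}|^s\,|T_{w_-w}|^s\,|z + T_{w_-w}^2 R_{w_-w_-}^{(v_-,w)} + K|^{-s}$, where everything except $K$ is independent of $K$. The indicator $\one_{|R_{ww}^{(w_+)}|>\Omega}$ is the event $\{\Real K \in J\}$ for an interval $J$ of length $2\Omega^{-1}$ determined by the (independent) upstream data. Now parts~1 and~4 of \Cref{zxrkestimate}, applied to the expectation over $K$ with everything else fixed, give
\[
\mathbb{E}_K\big[|z+X+K|^{-s}\,\one_{\Real K \in J}\big] \le C\Omega^{s-1}\cdot \mathbb{E}_K\big[|z+X+K|^{-s}\big],
\]
uniformly over the upstream variable $X$; choosing $\Omega$ so that $C\Omega^{s-1}\le\delta$ yields \eqref{rvw10}. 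The point is that the bounded-density estimate must be applied \emph{inside} the $s$-th moment, not to a separate probability.

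For Part~2 your refactoring is correct and the outline is in the right spirit, but the invocation of Campbell's theorem and \Cref{integral3} is unnecessary. After writing $|R_{vw}^{(v_-)}|^s|R_{ww}^{(v_-)}|^{(\alpha-s)/2} = (\text{upstream}) \cdot |z+X+K'|^{-(\alpha+s)/2}$ with $K' = \sum_{u'\in\mathbb{D}(w)} T_{wu'}^2 R_{u'u'}^{(w)}$, the indicator $\one_{|R_{ww}^{(v_-)}|>\Omega}$ is exactly $\one_{|z+X+K'|<\Omega^{-1}}$, and parts~1 and~3 of \Cref{zxrkestimate} (with $\chi=(\alpha+s)/2$) give the ratio bound $C\Omega^{(\alpha+s)/2-1}$ directly. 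No Poisson integration is needed.
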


	\begin{lem} 
		
	\label{expectationrvwdelta}
	
	For any $\delta > 0$, there exists a constant $\Omega = \Omega (\delta) > 1$ for which the following two statements hold.
	\begin{enumerate} 
		
		\item We have 
	\begin{flalign}
		\label{rv3} 
		\mathbb{E} \Bigg[ \displaystyle\sum_{w \in \mathbb{D}_{\ell} (v)} \big| R_{vw}^{(v_-, w_+)} \big|^s \cdot \one_{\mathscr{B}_1 (v, w; \omega; \Omega)} \Bigg] \ge (1 - \delta) \cdot \mathbb{E} \Bigg[ \displaystyle\sum_{w \in \mathbb{D}_{\ell} (v)}  \big| R_{vw}^{(v_-, w_+)} \big|^s \cdot \one_{\mathscr{D} (v, w; \omega)} \cdot \one_{\mathscr{G}_0 (w)} \Bigg].
	\end{flalign} 

	\item We have 
		\begin{flalign}
			\label{rv4} 
			\mathbb{E} \Bigg[ \displaystyle\sum_{w \in \mathbb{V} (\ell)} |R_{0w}|^s \cdot \one_{\mathscr{D} (0, w; \omega)} \cdot \one_{|R_{00}| \le \Omega} \Bigg] \ge (1 - \delta) \cdot \mathbb{E} \Bigg[ \displaystyle\sum_{w \in \mathbb{V} (\ell)} |R_{0w}|^s \cdot \one_{\mathscr{D} (0, w; \omega)} \Bigg].
		\end{flalign} 
	\end{enumerate} 
	\end{lem}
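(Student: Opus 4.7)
The plan for proving \Cref{expectationrvwdelta} is to control, in both parts, the contribution of the ``bad'' event on which the relevant diagonal resolvent entry is large, via a Schur decomposition and the bounded density of the real part of an $\alpha/2$-stable law. The focus is on eq.~\eqref{rv3}; eq.~\eqref{rv4} follows by the same argument with $v=0$, $v_- = \emptyset$, and with $R_{00}$ replacing $R_{vv}^{(v_-, w_+)}$ throughout. For each $w \in \mathbb{D}_\ell(v)$, denote by $u = v_+(w) \in \mathbb{D}(v)$ the first vertex on the path from $v$ to $w$; by \Cref{rproduct},
\begin{flalign*}
|R_{vw}^{(v_-, w_+)}|^s = |R_{vv}^{(v_-, w_+)}|^s \cdot |Q_w|^s, \qquad Q_w := T_{vu} R_{uw}^{(v, w_+)}.
\end{flalign*}
Decomposing $\sum_{w \in \mathbb{D}_\ell(v)} = \sum_{u \in \mathbb{D}(v)} \sum_{w \in \mathbb{D}_{\ell-1}(u)}$ and letting $\mathcal{G}_u$ denote the $\sigma$-algebra generated by the subtree rooted at $u$ together with the edge weight $T_{vu}$, one observes that $|Q_w|^s$, $\one_{\mathscr{D}(v,w;\omega)}$, and $\one_{\mathscr{G}_0(w)}$ are all $\mathcal{G}_u$-measurable for $w \in \mathbb{D}_{\ell-1}(u)$.

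Next, the Schur complement identity \eqref{qvv} at $v$ yields
\begin{flalign*}
R_{vv}^{(v_-, w_+)} = -(z + A + K)^{-1}, \quad A := T_{vu}^2 R_{uu}^{(v, w_+)}, \quad K := \sum_{u' \in \mathbb{D}(v) \setminus \{u\}} T_{vu'}^2 R_{u'u'}^{(v)},
\end{flalign*}
where $A$ is $\mathcal{G}_u$-measurable. By the Palm property, conditional on $T_{vu}$ the remaining points $(T_{vu'})_{u' \ne u}$ form an independent Poisson process of intensity $\alpha x^{-\alpha-1} dx$ with independent attached subtrees, so $K$ is independent of $\mathcal{G}_u$. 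By \Cref{sumaxi} and \Cref{ar}, $\Re(K)$ has density bounded by $C(|x|+1)^{-\alpha/2-1}$ uniformly in $\eta \in (0,1)$.

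The event $\{|R_{vv}^{(v_-, w_+)}| > \Omega\}$ corresponds to $\{|z+A+K| < \Omega^{-1}\}$, which forces $\Re(K)$ into an interval of length at most $2\Omega^{-1}$ centered at $-\Re(z+A)$. Applying the fourth part of \Cref{zxrkestimate} with $\chi = s$, $x = |z+A|$, $Q = (z+A)/|z+A|$, and $\delta = 2\Omega^{-1}$ gives
\begin{flalign*}
\mathbb{E}\big[|R_{vv}^{(v_-, w_+)}|^s \one_{|R_{vv}^{(v_-, w_+)}| > \Omega} \,\big|\, \mathcal{G}_u\big] \le C_1 \Omega^{-(1-s)} (|z+A|+1)^{-\alpha/2-1} + C_2 \Omega^{-1} (|z+A|+1)^{-s},
\end{flalign*}
while the first part of \Cref{zxrkestimate} produces $\mathbb{E}[|R_{vv}^{(v_-, w_+)}|^s \mid \mathcal{G}_u] \ge c(|z+A|+1)^{-s}$. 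Dividing the two bounds and using that $s - \alpha/2 - 1 < 0$ (since $s < 1$), so that $(|z+A|+1)^{s-\alpha/2-1} \le 1$, one concludes that the ratio is at most $C_3 \Omega^{-(1-s)}$ uniformly in the realization of $\mathcal{G}_u$.

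The final step is to multiply this conditional inequality by the $\mathcal{G}_u$-measurable factor $|Q_w|^s \one_{\mathscr{D}} \one_{\mathscr{G}_0}$, take the outer expectation, and sum over $u$ and $w$, producing
\begin{flalign*}
\sum_w \mathbb{E}\big[|R_{vw}^{(v_-, w_+)}|^s \one_{\mathscr{D}} \one_{\mathscr{G}_0} \one_{|R_{vv}^{(v_-, w_+)}|>\Omega}\big] \le C_3 \Omega^{-(1-s)} \sum_w \mathbb{E}\big[|R_{vw}^{(v_-, w_+)}|^s \one_{\mathscr{D}} \one_{\mathscr{G}_0}\big];
\end{flalign*}
choosing $\Omega = \Omega(\delta)$ with $C_3 \Omega^{-(1-s)} \le \delta$ then yields \eqref{rv3}. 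The main obstacle in this plan is obtaining a ratio that is uniform in $\mathcal{G}_u$ despite the dependence of the lower bound on $|z+A|$, which itself can be large when $|T_{vu}|$ or $|R_{uu}^{(v, w_+)}|$ is large. This is resolved by invoking the refined density bound in \Cref{zxrkestimate}(4) (rather than the cruder uniform $L^\infty$ density bound) so that the numerator decays at the same polynomial rate $(|z+A|+1)^{-\alpha/2-1}$, with the cancellation of $|z+A|$-dependence hinging precisely on the elementary inequality $s < 1 + \alpha/2$.
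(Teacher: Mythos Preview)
Your proposal is correct and follows essentially the same approach as the paper: expand $R_{vw}^{(v_-,w_+)}$ via \Cref{rproduct}, apply the Schur identity \eqref{qvv} at $v$ to isolate $K=\sum_{u'\in\mathbb{D}(v)\setminus\{u\}}T_{vu'}^2R_{u'u'}^{(v)}$ as an independent $\alpha/2$-stable law, and then use \Cref{zxrkestimate} to compare the restricted and unrestricted conditional expectations uniformly in the $\mathcal G_u$-measurable data. The only cosmetic difference is that the paper invokes part~(3) of \Cref{zxrkestimate} (which directly controls $\mathbb E[V^{-s}\one_{V\le\Omega^{-1}}]$) instead of part~(4), yielding a single-term upper bound rather than your two-term one, but both routes produce the same ratio estimate $C\Omega^{-(1-s)}$ and the argument is otherwise identical.
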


	\begin{proof}[Proof of \Cref{expectationbr}]
	For any events $\mathscr A, \mathscr B, \mathscr C$, if $\mathscr B \cap \mathscr C \subset \mathscr A$, then $\one_{\mathscr A} - \one_{\mathscr B \cap \mathscr C } \leq  \one_{\mathscr A}  - \one_{\mathscr B}  + \one_{\mathscr A}  - \one_{\mathscr C}$.   Since $\mathscr{B} (v, w) = \mathscr{B}_0 (v, w) \cap \mathscr{B}_1 (v, w)$,  we  have
	\begin{align*}
	\mathbb{E}& \Bigg[ \displaystyle\sum_{w \in \mathbb{D}_{\ell} (v)}  \big| R_{vw}^{(v_-, w_+)} \big|^s \cdot 
	(
	\one_{\mathscr{D} (v, w; \omega)\cap \mathscr{G}_0 (w)}- \one_{ \mathscr{B} (v, w)}
	) \Bigg] \\
	&\leq \mathbb{E} \Bigg[ \displaystyle\sum_{w \in \mathbb{D}_{\ell} (v)}  \big| R_{vw}^{(v_-, w_+)} \big|^s \cdot  (\one_{\mathscr{D} (v, w; \omega)\cap \mathscr{G}_0 (w)}  -\one_{ \mathscr{B}_0 (v, w)})  \Bigg]\\
	&+ \mathbb{E} \Bigg[ \displaystyle\sum_{w \in \mathbb{D}_{\ell} (v)}  \big| R_{vw}^{(v_-, w_+)} \big|^s \cdot  (\one_{\mathscr{D} (v, w; \omega)\cap \mathscr{G}_0 (w)}  -\one_{ \mathscr{B}_1 (v, w)})   \Bigg].
	\end{align*}The proposition next follows from \eqref{rvw10} and \eqref{rv3}. 
	\end{proof}

	Now we establish \Cref{ruw} and \Cref{expectationrvwdelta}.
	
	\begin{proof}[Proof of \Cref{ruw}]
		
		The proofs of \eqref{rvw10} and \eqref{rvw2} are very similar, so we only establish the first. Applying \Cref{rproduct} and \eqref{qvv} yields
		\begin{flalign}
			\label{rvwb0} 
			\begin{aligned}	
				\mathbb{E} & \Bigg[ \displaystyle\sum_{w \in \mathbb{D}_{\ell} (v)} \big| R_{vw}^{(v_-, w_+)} \big|^s \cdot \one_{\mathscr{B}_0 (v, w)} \Bigg] \\
				& = \mathbb{E} \Bigg[ \displaystyle\sum_{u \in \mathbb{D}_{\ell-1} (v)} \displaystyle\sum_{w \in \mathbb{D}(u)} \big| R_{vu}^{(v_-, w)} \big|^s \cdot |T_{uw}|^s \cdot \big| R_{ww}^{(v_-, w_+)} \big|^s \cdot \one_{\mathscr{B}_0 (v, w)} \Bigg] \\
				& = \mathbb{E} \Bigg[ \displaystyle\sum_{u \in \mathbb{D}_{\ell-1} (v)} \displaystyle\sum_{w \in \mathbb{D} (u)} \displaystyle\frac{|T_{uw}|^s}{\big| z + T_{uw}^2 R_{uu}^{(v_-, w)} + K_u \big|^s } \cdot \big| R_{vu}^{(v_-, w)} \big|^s\cdot \one_{\mathscr{D} (v, w)} \cdot \one_{\mathscr{G}_0 (w)} \cdot \one_{|R_{ww}^{(w_+)}| \le \Omega} \Bigg],
			\end{aligned} 
		\end{flalign}
		
		\noindent where $K_u = K_{u; v, w}$ is defined by
		\begin{flalign*}
			K_u = \displaystyle\sum_{\substack{u' \in \mathbb{D} (w) \\ u' \ne \mathfrak{c} (w)}} T_{wu'}^2 R_{u'u'}^{(w)}.
		\end{flalign*} 
		
		\noindent Observe that $K_{u; v, w}$ is independent from the $T_{uw}$, $R_{uu}^{(v_-, w)}$, $R_{vu}^{(v_-, w)}$, and $\mathscr{D} (v, w)$. Moreover, conditional on $\mathscr{G}_0 (w)$, the random variable $K_{u; v, w}$ satisfies \Cref{sqk} (by \Cref{tuvalpha}, \Cref{zuv}, and \Cref{ar}). Additionally, again by \eqref{qvv} we have the equivalence of events
		\begin{flalign*}
			\Big\{ \big| R_{ww}^{(w_+)} \big| \ge \Omega \Big\} = \big\{ |Q_u + K_u| \le \Omega^{-1} \big\}, \qquad \text{where} \qquad Q_u = z + T_{uw}^2 R_{uu}^{(w)}.
		\end{flalign*} 
		
		Thus, applying the lower bound in the first part of \Cref{zxrkestimate}, and the fourth part of the same lemma (with the $K_u$ there equal to $K$ here and the $J$ there equal to $[-\Omega^{-1} - \Real Q_u, \Omega^{-1} - \Real Q_u]$), and using the independence between $K_u$ and $Q_u$, yields a constant $C > 1$ such that 
			\begin{flalign*}
			\mathbb{E}_w \Big[ \big| z + T_{uw}^2 & R_{uu}^{(v_-, w)} + K_u \big|^{-s} \cdot \one_{\mathscr{G}_0 (w)} \cdot \one_{|R_{ww}^{(w_+)}| > \Omega} \Big] \\
			& \le C \Omega^{s-1} \cdot \mathbb{E}_w \Big[ \big| z + T_{uw}^2 R_{uu}^{(v_-, w_+)} + K_u \big|^{-s} \cdot \one_{\mathscr{G}_0 (w)} \Big],
		\end{flalign*}
	which yields
		\begin{flalign*}
			\mathbb{E}_w \Big[ \big| z + T_{uw}^2 & R_{uu}^{(v_-, w)} + K_u \big|^{-s} \cdot \one_{\mathscr{G}_0 (w)} \cdot \one_{|R_{ww}^{(w_+)}| \le \Omega} \Big] \\
			& \ge (1 - C \Omega^{s-1}) \cdot \mathbb{E}_w \Big[ \big| z + T_{uw}^2 R_{uu}^{(v_-, w_+)} + K_u \big|^{-s} \cdot \one_{\mathscr{G}_0 (w)} \Big].
		\end{flalign*}
		
		\noindent Inserting this into \eqref{rvwb0} and then applying the same reasoning as in \eqref{rvwb0} yields 
		\begin{flalign*}
			\mathbb{E} \Bigg[ & \displaystyle\sum_{w \in \mathbb{D}_{\ell} (v)} \big| R_{vw}^{(v_-, w_+)} \big|^s \cdot \one_{\mathscr{B}_0 (v, w)}\Bigg] \\
			& \ge (1 - C \Omega^{s-1}) \cdot \mathbb{E} \Bigg[ \displaystyle\sum_{u \in \mathbb{D}_{\ell-1} (v)} \displaystyle\sum_{w \in \mathbb{D} (u)} \displaystyle\frac{|T_{uw}|^2}{\big| z + T_{uw}^2 R_{uu}^{(v_-, w)} + K_u \big|^s } \cdot \big| R_{vu}^{(v_-, w)} \big|^s \cdot \one_{\mathscr{D} (v, w)} \cdot \one_{\mathscr{G}_0 (w)} \Bigg] \\
			& = (1 - C \Omega^{s-1}) \cdot \mathbb{E} \Bigg[ \displaystyle\sum_{w \in \mathbb{D}_{\ell} (v)} \big| R_{vw}^{(v_-, w_+)} \big|^s \cdot \one_{\mathscr{D} (v, w)} \cdot \one_{\mathscr{G}_0 (w)} \Bigg],
		\end{flalign*}
		
		\noindent from which we deduce \eqref{rvw10} by taking $\Omega = \Omega (\delta)$ sufficiently large so that $C \Omega^{s-1} \le \delta$. 
		
		As mentioned previously, the proof of \eqref{rvw2} is very similar (by observing that the random variable $K_{u; v, w}' = \sum_{u' \in \mathbb{D}(w)} T_{wu'}^2 R_{u' u'}^{(w)}$ also satisfies \Cref{sqk}, and then applying the first and third parts of \Cref{zxrkestimate}).
	\end{proof} 

	\begin{proof}[Proof of \Cref{expectationrvwdelta}]
		
		The proofs of \eqref{rv3} and \eqref{rv4} are very similar, and so we only establish the former. To that end, we have by \Cref{rproduct} and \eqref{qvv} that
		\begin{flalign}
			\label{b1sum2} 
			\begin{aligned}
			\mathbb{E} & \Bigg[ \displaystyle\sum_{w \in \mathbb{D}_{\ell} (v)} \big| R_{vw}^{(v_-, w_+)} \big|^s \cdot \one_{\mathscr{B}_1 (v, w)} \Bigg] \\
			& = \mathbb{E} \Bigg[ \displaystyle\sum_{u \in \mathbb{D} (v)} \displaystyle\sum_{w \in \mathbb{D}_{\ell-1} (u)} \big| R_{vv}^{(v_-, w_+)} \big|^s \cdot |T_{vu}|^s \cdot \big| R_{uw}^{(v, w_+)} \big|^s \cdot \one_{\mathscr{B}_1 (v, w)} \Bigg] \\
			& = \mathbb{E} \Bigg[ \displaystyle\sum_{u \in \mathbb{D}(v)} \displaystyle\frac{|T_{vu}|^s}{\big| z + T_{vu}^2 R_{uu}^{(v, w_+)} + K_{u; v, w} \big|^s} \cdot \one_{|R_{vv}^{(v_-, w_+)}| \le \Omega}  \displaystyle\sum_{w \in \mathbb{D}_{\ell-1} (u)} \big| R_{uw}^{(v, w_+)} \big|^s \cdot \one_{\mathscr{D} (v, w)} \cdot \one_{\mathscr{G}_0 (w)} \Bigg],
			\end{aligned} 
		\end{flalign}
	
		\noindent where 
		\begin{flalign*}
			K_{u; v, w} = \displaystyle\sum_{\substack{v' \in \mathbb{D} (v) \\ v' \ne u}} T_{vv'}^2 R_{v'v'}^{(v)}.
		\end{flalign*}
	
		\noindent Observe that $K_{u; v, w}$ is independent from $T_{vu}$, $R_{uu}^{(v, w_+)}$, $R_{uw}^{(v, w_+)}$, $\mathscr{D} (v, w)$, and $\mathscr{G}_0 (w)$ Therefore, applying the first and third parts of \Cref{zxrkestimate} (with the $K$ there equal to $K_{u; v, w}$ here) yields a constant $C_1 > 1$ such that 
		\begin{flalign*}
			 \mathbb{E} & \Bigg[ \displaystyle\sum_{u \in \mathbb{V}} \displaystyle\frac{|T_{vu}|^s}{\big| z + T_{vu}^2 R_{uu}^{(v, w_+)} + K_{u; v, w} \big|^s} \cdot \one_{|R_{vv}^{(v_-, w_+)}| \le \Omega} \displaystyle\sum_{w \in \mathbb{D}_{\ell-1} (u)} \big| R_{uw}^{(v, w_+)} \big|^s \cdot \one_{\mathscr{D} (v, w)} \cdot \one_{\mathscr{G}_0 (w)} \Bigg] \\
			& \ge (1 - C \Omega^{s - 1}) \cdot \Bigg[ \displaystyle\sum_{u \in \mathbb{V}} \displaystyle\frac{|T_{vu}|^s}{\big| z + T_{vu}^2 R_{uu}^{(v, w_+)} + K_{u; v, w} \big|^s}  \displaystyle\sum_{w \in \mathbb{D}_{\ell-1} (u)} \big| R_{uw}^{(v, w_+)} \big|^s \cdot \one_{\mathscr{D} (v, w)} \cdot \one_{\mathscr{G}_0 (w)} \Bigg] \\
			& = (1 - C \Omega^{s -1}) \cdot \mathbb{E} \Bigg[ \displaystyle\sum_{w \in \mathbb{D}_{\ell} (v)} \big| R_{vw}^{(v_-, w_+)} \big|^s \cdot \one_{\mathscr{D} (v, w)} \cdot \one_{\mathscr{G}_0 (w)} \Bigg],
		\end{flalign*}
		
		\noindent where the last equality is obtained through the same reasoning as in \eqref{b1sum2}. Then \eqref{rv3} follows from selecting $\Omega$ sufficiently large so that $C \Omega^{1-\chi} < \delta$. 	 
	\end{proof}

	\subsection{Proof of \Cref{dexpectationg}}
	
	\label{ProofG}
	
	In this section we establish \Cref{dexpectationg}, which will be a consequence of the following two lemmas. The first will be established in this section and the latter in \Cref{ProofRY} below.
	
	\begin{lem} 
		The following two claims hold.
		\label{ryestimate} 
		\begin{enumerate}
		\item
		There exists a constant $c > 0$ such that for any sufficiently small $\omega \in \big( 0, \frac{1}{2} \big)$ we have
		\begin{flalign*}
			\mathbb{E} \Bigg[ \displaystyle\sum_{w \in \mathbb{D}_{\ell} (v)} \big| R_{vw}^{(v_-, w_+)} \big|^s \cdot \one_{\mathscr{D} (v, w; \omega)} \cdot \one_{\mathscr{G}_0 (w)} \Bigg] \ge c \cdot \mathbb{E} \Bigg[ \displaystyle\sum_{u \in \mathbb{D}_{\ell-1} (v)} \big| R_{vu}^{(v_-)} \big|^s \cdot \one_{\mathscr{D} (v, u; \omega)} \Bigg]. 
		\end{flalign*}
		\item There exists a constant $C>0$ such that 
		\be
		\E \left[\sum_{w \in \mathbb D_\ell (v)} |R_{vw}^{(v_-, w_+)}|^s \cdot \one_{\mathscr G_0 (w)}\right] \le C \cdot \E\left[ \sum_{u \in \mathbb D_{\ell-1} (v)} | R_{vu}^{(v_-)}|^s \right].
	\ee
		\end{enumerate}
	\end{lem}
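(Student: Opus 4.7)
Both estimates stem from a common resolvent expansion. For $w \in \mathbb{D}_\ell(v)$ with parent $u = w_-$, applying \Cref{rproduct} in the operator $\T^{(v_-, w_+)}$ yields $R_{vw}^{(v_-, w_+)} = -T_{uw} R_{vu}^{(v_-, w)} R_{ww}^{(v_-, w_+)}$, while the Schur identity \eqref{qvv} gives
\begin{flalign*}
R_{ww}^{(v_-, w_+)} = -\bigl(z + T_{uw}^2 R_{uu}^{(v_-, w)} + K_w \bigr)^{-1}, \qquad K_w = \sum_{x \in \mathbb{D}(w) \setminus \{w_+\}} T_{wx}^2 R_{xx}^{(w)}.
\end{flalign*}
On the event $\mathscr{G}_0(w)$, Poisson thinning (\Cref{tuvalpha} and \Cref{sumaxi}) implies the sum defining $K_w$ ranges over the Poisson process restricted to edge weights $|T_{wx}| \notin [1, 2]$; by \Cref{zuv} and \Cref{ar}, the resulting $K_w$ satisfies \Cref{sqk} uniformly, with constants depending only on $\alpha$ and $z$. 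The loop identity from the Schur complement at $u$ further yields $R_{vu}^{(v_-, w)} = R_{vu}^{(v_-)} / (1 - T_{uw}^2 R_{ww}^{(v_-, u)} R_{uu}^{(v_-)})$, allowing us to exchange $R_{vu}^{(v_-, w)}$ for $R_{vu}^{(v_-)}$ at the cost of a factor whose fractional moments must be controlled.

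For the upper bound in part (2), decompose $\mathbb{D}_\ell(v) = \bigsqcup_{u \in \mathbb{D}_{\ell-1}(v)} \mathbb{D}(u)$ and condition on $\mathbb{T}_-(u)$. Following the coupling argument from the proof of \Cref{xilchi2}, replace the subtree diagonal resolvents $\bigl(R_{xx}^{(u)}\bigr)_{x \in \mathbb{D}(u)}$ by independent copies of $R_{00}$, which is legitimate by the first part of \Cref{q12}. Campbell's formula \eqref{fxi} then converts the inner sum into an integral against the Poisson intensity $\alpha t^{-\alpha-1}\,dt$, and the third part of \Cref{expectationsum1} --- with $R_{uu}^{(v_-, w)}$ in the role of $R$, $K_w$ in the role of $K$, and the positive absolute constant $\mathbb{P}[\mathscr{G}_0(w) \mid T_{uw}]$ in the role of $S$ --- yields $\mathbb{E}_u \bigl[\sum_{w \in \mathbb{D}(u)} |R_{vw}^{(v_-, w_+)}|^s \one_{\mathscr{G}_0(w)} \bigr] \le C |R_{vu}^{(v_-)}|^s$. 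Summing over $u$ concludes part (2).

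For the lower bound in part (1), note that $\one_{\mathscr{D}(v, w; \omega)} = \one_{\mathscr{D}(v, u; \omega)} \cdot \one_{|T_{uw}| \ge \omega}$, reducing the claim to the conditional inequality $\mathbb{E}_u \bigl[\sum_{w \in \mathbb{D}(u)} |R_{vw}^{(v_-, w_+)}|^s \one_{|T_{uw}| \ge \omega} \one_{\mathscr{G}_0(w)}\bigr] \ge c |R_{vu}^{(v_-)}|^s$ for all sufficiently small $\omega$. The same coupling and Campbell reduction produces an integral against the Poisson intensity on $\{t \ge \omega\}$, and the lower bound in the first part of \Cref{expectationsum1} (which rests on the lower density bound in \Cref{zxrkestimate}) provides a matching lower estimate on the conditional integrand; integrating against the Poisson intensity on $\{t \ge \omega\}$ then yields the required positive mass, bounded below uniformly in $\omega$ small.

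The principal technical obstacle is the loop factor $(1 - T_{uw}^2 R_{ww}^{(v_-, u)} R_{uu}^{(v_-)})^{-1}$ that arises whenever $R_{vu}^{(v_-, w)}$ is replaced by $R_{vu}^{(v_-)}$: its fractional $s$-moments must be controlled from above (for part 2) and shown to be of order $1$ with positive probability (for part 1). Because $R_{ww}^{(v_-, u)} R_{uu}^{(v_-)}$ involves a product of correlated stable-type resolvent entries whose joint density is not explicit, the cleanest route is to absorb the loop factor back into the denominator, rewriting $z + T_{uw}^2 R_{uu}^{(v_-, w)} + K_w$ directly in terms of $R_{uu}^{(v_-)}$ and a modified stable sum $K_w'$ still satisfying \Cref{sqk}; verifying the required density bounds for this modified quantity (instead of $K_w$ alone) is the main additional work.
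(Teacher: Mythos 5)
Your high-level ingredients (\Cref{rproduct}, Schur, coupling, Campbell, \Cref{expectationsum1}/\Cref{zxrkestimate}) are the right ones, but there is a genuine gap in how you isolate the $w$-dependence, and your route runs into an obstacle that the paper's argument avoids entirely.

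After expanding $R_{vw}^{(v_-, w_+)} = -T_{uw}\, R_{vu}^{(v_-, w)}\, R_{ww}^{(v_-, w_+)}$, you are left with $R_{vu}^{(v_-, w)}$, which still depends on $w$ through the removal of that vertex. You propose to exchange it for $R_{vu}^{(v_-)}$ via a ``loop identity,'' but as written the identity has a sign error: using the Schur complement at $u$ in $\T^{(v_-)}$ and in $\T^{(v_-, w)}$ one finds $1/R_{uu}^{(v_-, w)} - 1/R_{uu}^{(v_-)} = T_{uw}^2\, R_{ww}^{(v_-, u)}$, hence $R_{vu}^{(v_-, w)} = R_{vu}^{(v_-)} \big/ \big(1 + T_{uw}^2\, R_{ww}^{(v_-, u)}\, R_{uu}^{(v_-)}\big)$, with a plus rather than a minus. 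More importantly, you correctly flag the resulting loop factor as ``the main additional work'' but do not resolve it; controlling its fractional moments (above for part~(2), below for part~(1)) against the correlated product $R_{ww}^{(v_-, u)}\, R_{uu}^{(v_-)}$ is not routine and is not done in the proposal.

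The paper sidesteps the loop factor altogether by applying \Cref{rproduct} one further step along the path: $R_{vu}^{(v_-, w)} = -R_{uu}^{(v_-, w)}\, T_{u_- u}\, R_{v u_-}^{(v_-, u)}$, where the crucial observation is that $R_{v u_-}^{(v_-, u)}$ is computed in the operator with $u$ removed and therefore has \emph{no $w$-dependence at all}. All $w$-dependence is then concentrated in $T_{uw}$, $R_{uu}^{(v_-, w)}$, and $K_w$, and the removal of the single $w$-term in the noise is exactly what the Palm/Slivnyak structure in Campbell's formula \eqref{fxi} handles automatically after the coupling step. The price is an extra factor $|R_{uu}|^{(\alpha - s)/2}$ coming out of \Cref{expectationsum1}, which the paper absorbs via the truncation $|R_{uu}| \le 1/\varsigma$ and \eqref{rvw2}. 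Your conditional bound $\mathbb{E}_u\big[\sum_{w} |R_{vw}^{(v_-, w_+)}|^s \one_{\mathscr{G}_0(w)}\big] \le C\, |R_{vu}^{(v_-)}|^s$ omits this $|R|^{(\alpha - s)/2}$ factor and is unlikely to hold as stated at the conditional level. Finally, you invoke the third part of \Cref{expectationsum1}, which is stated only for $|\Re z| \ge 2$; here $E$ is arbitrary with $|E| \in [\varepsilon, B]$, so you would need the first or second part (with the $A_1$-dependent constant) instead.
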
 

		\begin{lem} 
		
		\label{ry0estimate}
		
		For any $\delta > 0$, there exists a constant $\omega = \omega (\delta) > 0$ such that 
		\begin{flalign*} 
			\mathbb{E} \Bigg[ \displaystyle\sum_{w \in \mathbb{D}_{\ell (v)}} \big| R_{vw}^{(v_-)} \big|^s \cdot \one_{\mathscr{D} (v, w; \omega)} \Bigg] \ge (1 - \delta)^{\ell} \cdot \Phi_{\ell} (s).
		\end{flalign*} 
		Moreover, there exists constants $C > 1$ and $c \in (0, 1)$ such that $\omega \ge c \cdot \delta^C$
	\end{lem}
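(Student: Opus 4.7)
The plan is to reduce to $v = 0$ via the unimodularity of the PWIT (\Cref{tunimodular}), so that the conclusion becomes
\[
\tilde{\Phi}_\ell(s;\omega) := \E\Bigg[\sum_{w \in \bbV(\ell)} |R_{0w}|^s \one_{\mathscr{D}(0,w;\omega)}\Bigg] \ge (1-\delta)^\ell \Phi_\ell(s).
\]
I would then induct on $\ell$, but strengthen the induction to a joint statement involving two weighted moments. For $\chi \in \{s, (\alpha+s)/2\}$, define
\[
\Xi_\ell^{\text{rest}}(\chi;\omega) := \E\Bigg[\sum_{w \in \bbV(\ell)} |R_{00}|^\chi \, |T_{00_+}|^s \, |R_{0_+w}^{(0)}|^s \, \one_{\mathscr{D}(0,w;\omega)}\Bigg].
\]
By \Cref{rproduct}, $\Xi_\ell^{\text{rest}}(s;\omega) = \tilde{\Phi}_\ell(s;\omega)$, and the strengthened claim is that $\Xi_\ell^{\text{rest}}(\chi;\omega) \ge (1-\delta)^\ell \Xi_\ell(\chi)$ for both values of $\chi$ simultaneously.

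For the inductive step, I would apply the Schur complement identity from \Cref{q12} to write $|R_{00}|^\chi = |z + T_{00_+}^2 R_{0_+0_+}^{(0)} + K|^{-\chi}$, where $K := \sum_{u \sim 0,\, u \ne 0_+} T_{0u}^2 R_{uu}^{(0)}$ is an $\tfrac{\alpha}{2}$-stable law (\Cref{rrealimaginary}) independent of the subtree rooted at $0_+$, and then integrate over $T_{0,0_+}$ using Campbell's formula (\Cref{fidentityxi}). This yields
\[
\Xi_\ell(\chi) - \Xi_\ell^{\text{rest}}(\chi;\omega) = \int_0^{\omega} \E\Bigg[\frac{t^s Y^{\text{free}}_{\ell-1}}{|z+t^2 R + K|^\chi}\Bigg] \alpha t^{-\alpha-1}\,dt + \int_\omega^{\infty} \E\Bigg[\frac{t^s (Y^{\text{free}}_{\ell-1} - Y^{\text{rest}}_{\ell-1})}{|z+t^2 R + K|^\chi}\Bigg] \alpha t^{-\alpha-1}\,dt,
\]
where $R := R_{0_+0_+}^{(0)}$ and $Y^{\text{free/rest}}_{\ell-1}$ are the analogous sums over $w \in \bbD_{\ell-1}(0_+)$. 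The second part of \Cref{expectationsum1} bounds the first integral by $C\omega^{s-\alpha}\Xi_\ell(\chi)$, which is $\le \tfrac{\delta}{2}\Xi_\ell(\chi)$ provided $\omega \ge c \delta^{1/(s-\alpha)}$. The first part of \Cref{expectationsum1} bounds the second integral by $C(s-\alpha)^{-1}(|z|+1)^{(s-\alpha)/2-\chi}\,\E[|R|^{(\alpha-s)/2}(Y^{\text{free}}_{\ell-1}-Y^{\text{rest}}_{\ell-1})]$. The key observation is that by reindexing the subtree rooted at $0_+$ as an independent PWIT and applying \Cref{rproduct}, one has $\E[|R|^{(\alpha-s)/2} Y^{\text{free}}_{\ell-1}] = \Xi_{\ell-1}((\alpha+s)/2)$ and $\E[|R|^{(\alpha-s)/2} Y^{\text{rest}}_{\ell-1}] = \Xi_{\ell-1}^{\text{rest}}((\alpha+s)/2;\omega)$, so that the induction hypothesis at level $\ell-1$ for $\chi = (\alpha+s)/2$ feeds directly into the level-$\ell$ bound, with the constants in the Campbell formula cancelling exactly between numerator and denominator.

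The main obstacle is closing the induction in a genuinely multiplicative fashion. A naive implementation of the scheme above yields only the additive recursion $\varepsilon_\ell \le C\omega^{s-\alpha} + \varepsilon_{\ell-1}$, where $\varepsilon_\ell := 1 - \Xi_\ell^{\text{rest}}(\chi;\omega)/\Xi_\ell(\chi)$, giving the linear bound $\varepsilon_\ell \lesssim \ell \omega^{s-\alpha}$. This is sufficient for small $\ell$ (where $1 - (1-\delta)^\ell \approx \ell\delta$) but too weak for large $\ell$, where the Bernoulli gap $1 - (1-\delta)^\ell < \ell\delta$ becomes significant. To recover the sharper multiplicative decay $(1-\delta)^\ell$, I would restructure the comparison so that at each inductive step one keeps the restricted $Y^{\text{rest}}_{\ell-1}$ in the integrand throughout, bounding $\tilde{\Phi}_\ell(s;\omega)$ below by $(1 - C\omega^{s-\alpha})$ times $\int_0^\infty \E[t^s Y^{\text{rest}}_{\ell-1}/|z+t^2 R + K|^s]\,\alpha t^{-\alpha-1}\,dt$ (with the tail bound coming from \Cref{expectationsum1}(2)) and then invoking the weighted induction hypothesis applied to the subtree to conclude a genuine multiplicative gain of $(1-\delta)$ per level. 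The quantitative bound $\omega \ge c\delta^C$ in the conclusion then follows by tracking the threshold $C\omega^{s-\alpha} \le \delta$ through the argument, with $C$ determined by $s-\alpha$ and the constants in \Cref{expectationsum1}.
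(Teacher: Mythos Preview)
Your proposal has a genuine gap in the inductive step. You correctly identify that a naive induction on $\ell$ yields only an additive recursion, but your proposed fix does not close either. After the Schur--Campbell expansion at the root, the full integral $A = \int_0^\infty \E[t^s Y^{\text{rest}}_{\ell-1}/|z+t^2R+K|^s]\,\alpha t^{-\alpha-1}\,dt$ equals $\E[\Theta(R)\,Y^{\text{rest}}_{\ell-1}]$, where $\Theta(R) := \int_0^\infty \alpha t^{s-\alpha-1}\,\E_K[|z+t^2R+K|^{-s}]\,dt$ and $(R,Y^{\text{rest}}_{\ell-1})$ carries the joint law of $(R_{00}, \sum_w |R_{0w}|^s\one_{\mathscr{D}})$ in a fresh depth-$(\ell-1)$ tree. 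By \Cref{expectationsum1}, $\Theta(R)$ is only \emph{comparable} to $|R|^{(\alpha-s)/2}$, with distinct constants $c<C$; it is not a constant multiple of it. Your induction hypothesis at level $\ell-1$ controls $\E[|R|^{(\alpha-s)/2}Y^{\text{rest}}_{\ell-1}]/\E[|R|^{(\alpha-s)/2}Y^{\text{free}}_{\ell-1}]$, but what you need is the same ratio with weight $\Theta(R)$. Passing between the two costs a factor $c/C<1$, and since $R$ and $Y_{\ell-1}$ are correlated you cannot avoid this. Over $\ell$ steps the loss compounds to $(c/C)^\ell$, destroying the $(1-\delta)^\ell$ bound. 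The claim that ``the constants in the Campbell formula cancel exactly'' is therefore incorrect: Campbell gives an identity, but integrating out $K$ via \Cref{zxrkestimate} gives only two-sided bounds.

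The paper sidesteps this by changing the induction variable. For fixed $\ell$, define $\Pi_k := \E\bigl[\sum_{u\in\mathbb{V}(k)}\sum_{w\in\mathbb{D}_{\ell-k}(u)}|R_{0w}|^s\one_{\mathscr{D}(u,w;\omega)}\bigr]$, so $\Pi_0$ is the target and $\Pi_\ell = \Phi_\ell(s)$. The key identity $\one_{\mathscr{D}(u_+,w)} - \one_{\mathscr{D}(u,w)} = \one_{|T_{uu_+}|<\omega}\one_{\mathscr{D}(u_+,w)}$ gives $\Pi_{k+1}-\Pi_k$ as a sum over paths with one small edge at level $k$. Two auxiliary lemmas then bound $\Pi_{k+1}-\Pi_k \le C\delta\cdot \Phi_{k-1}(s)\cdot\tilde\Phi_{\ell-k-1}(s;\omega)$ and $\Pi_k \ge c\cdot \Phi_{k-1}(s)\cdot\tilde\Phi_{\ell-k-1}(s;\omega)$ in terms of the \emph{same} product, so the product cancels in the ratio and one gets $\Pi_k \ge (1+c^{-1}C\delta)^{-1}\Pi_{k+1} \ge (1-c^{-1}C\delta)\Pi_{k+1}$ cleanly. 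Iterating in $k$ yields $(1-c^{-1}C\delta)^\ell$; the constants enter only once, through $c^{-1}C$, and are absorbed by rescaling $\delta$. This level-by-level peeling of the restriction (rather than the tree depth) is the missing idea.
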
 
	
	\begin{proof}[Proof of \Cref{dexpectationg}]
		
		This follows from \Cref{ryestimate} and \Cref{ry0estimate}. 
	\end{proof} 

 	Now let us establish \Cref{ryestimate}. 	
	
	\begin{proof}[Proof of \Cref{ryestimate} (Outline)]
		
		Since the joint law of $\big( R_{vw}^{(v_-, w_+)}, \mathscr{D} (v, w), \mathscr{G}_0 (w) \big)$ for $w \in \mathbb{D}_{\ell} (v)$ is the same as that of $\big( R_{0w}^{(w_+)}, \mathscr{D} (0, w), \mathscr{G}_0 (w) \big)$ for $w \in \mathbb{V} (\ell)$, it suffices to address the case when $v = 0$. This proof will proceed similarly to that of \Cref{xilchi2}, so we only outline it. Further, we only prove the first part of the lemma, since the proof of the second part is similar.
		
		Following \eqref{xi1} and \eqref{xil2} there, we find that 
		\begin{flalign}
			\label{expectationwv1} 
			\begin{aligned}
			\mathbb{E} \Bigg[ \displaystyle\sum_{w \in \mathbb{V} (\ell)} \big| R_{0w}^{(w_+)} \big|^s \cdot \one_{\mathscr{D} (0, w)} \cdot \one_{\mathscr{G}_0 (w)} \Bigg] &  = \mathbb{E} \Bigg[ \displaystyle\sum_{u \in \mathbb{V} (\ell - 1)} \big| R_{0u_-}^{(u)} \big|^s \cdot |T_{u_- u}|^s \cdot \one_{\mathscr{D} (0, u)} \\
			& \quad \times \mathbb{E}_w \bigg[ \displaystyle\sum_{w \in \mathbb{D} (u)} \displaystyle\frac{|T_{uw}|^s \cdot \big| \widetilde{R}_{uu}^{(w)} \big|^s}{\big| z + T_{uw}^2 \widetilde{R}_{uu}^{(w)} + K \big|^s} \cdot \one_{|T_{uw}| \ge \omega} \cdot \one_{\mathscr{G}_0 (w)} \bigg] \Bigg],
			\end{aligned} 
		\end{flalign}
	
		\noindent where
		\begin{flalign*}
			\widetilde{R}_{uu}^{(w)} = - \big( z + T_{u_- u}^2 R_{u_- u_-}^{(u)} + \widetilde{g}_w \big)^{-1}; \qquad K = \displaystyle\sum_{\substack{v \in \mathbb{D} (w) \\ v \ne \mathfrak{c} (w)}} T_{wv}^2 R_v; \qquad \widetilde{g}_w = \displaystyle\sum_{\substack{v \in \mathbb{D} (u) \\ v \ne w}} T_{wv}^2 R_v',
		\end{flalign*} 
	
		\noindent and $(R_v)_{v \in \mathbb{D} (w)}$ and $(R_v')_{v \in \mathbb{D} (u)}$ are mutually independent, identically distributed random variable with law $R_{00}$. Observe in particular that $\widetilde{R}_{uu}^{(w)}$ has the same law as $R_{uu}$ (conditional on $\mathbb{T}_- (w)$).
	
		Now, observe (from \Cref{tuvalpha}) that there exists a constant $c_1 > 0$ for which $\mathbb{P}\big[ \mathscr{G}_0 (w) \big] \ge c_0$. Since, conditional on $\mathscr{G}_0 (w)$, the random variable $K$ satisfies \Cref{sqk} by \Cref{zuv} (and \Cref{tuvalpha} and \Cref{ar}), it follows from the first statement of \Cref{zxrkestimate} that there exists a constant $c_2 > 0$ such that 
		\begin{flalign}
			\label{sumtw} 
			\begin{aligned}
			\mathbb{E}_w & \Bigg[ \displaystyle\sum_{w \in \mathbb{D} (u)} \displaystyle\frac{|T_{uw}|^s \cdot \big| \widetilde{R}_{uu}^{(w)} \big|^s}{\big| z + T_{uw}^2 \widetilde{R}_{uu}^{(w)} + K \big|^s} \cdot \one_{|T_{uw}| \ge \omega} \cdot \one_{\mathscr{G}_0 (w)} \Bigg] \\
			& \ge c_2 \cdot \mathbb{E}_w \Bigg[ \displaystyle\sum_{w \in \mathbb{D} (u)} \displaystyle\frac{|T_{uw}|^s}{\Big( \big| z + T_{uw}^2 \widetilde{R}_{uu}^{(w)} \big| + 1 \Big)^s } \cdot \big| \widetilde{R}_{uu}^{(w)} \big|^s \cdot \one_{|T_{uw}| \ge \omega} \Bigg].
			\end{aligned} 
		\end{flalign}	
	
		\noindent Since $\widetilde{R}_{uu}^{(w)}$ does not depend on $T_{uw}$, we may proceed as in \eqref{xil3} (through applying \eqref{fxi}) to deduce 
		\begin{flalign}
			\label{integralsumomega1}
			\begin{aligned} 
			 \mathbb{E}_w \Bigg[ \displaystyle\sum_{w \in \mathbb{D} (u)} & \displaystyle\frac{|T_{uw}|^s}{\Big( \big| z + T_{uw}^2 \widetilde{R}_{uu}^{(w)} \big| + 1 \Big)^s } \cdot \big| \widetilde{R}_{uu}^{(w)} \big|^s \cdot \one_{|T_{uw}| \ge \omega} \Bigg] \\
			 & = \alpha \displaystyle\int_{\omega}^{\infty} \mathbb{E}_w \Bigg[ \displaystyle\frac{x^2}{\big( |z + x^2 R_{uu}| + 1 \big)^s} \cdot |R_{uu}|^s \Bigg] \cdot x^{-\alpha-1} dx.
			 \end{aligned}
		\end{flalign}
		
		Next, let $\varsigma \in (0, 1)$ be a real number (to be determined later). By \Cref{integral} and \Cref{integral3}, there exists a constant $C > 1$ such that 
		\begin{flalign}
			\label{integralomega1}
			\begin{aligned}  
			\displaystyle\int_{\omega}^{\infty} \mathbb{E}_w & \Bigg[ \displaystyle\frac{x^s}{|z + x^2 R_{uu}|^s} \cdot |R_{uu}|^s \Bigg] \cdot x^{-\alpha-1} dx \ge \one_{|R_{uu}| \le 1/\varsigma} \cdot \big(1 - C (\omega \varsigma^{-1})^{s-\alpha} \big) \cdot |R_{uu}|^{(\alpha+s)/2}.
			\end{aligned} 
	\end{flalign}
	
		\noindent By \eqref{expectationwv1}, \eqref{integralomega1}, \eqref{integralsumomega1}, \eqref{sumtw}, \Cref{rproduct}, and \eqref{rvw2}, we deduce 
		\begin{flalign*}
			\mathbb{E} & \Bigg[ \displaystyle\sum_{w \in \mathbb{V} (\ell)} \big| R_{0w}^{(w_+)} \big|^s \cdot \one_{\mathscr{D} (0, w)} \cdot \one_{\mathscr{G}_0 (w)} \Bigg] \\
			& \ge c_2 \big( 1 - 2C (\omega \varsigma^{-1})^{s-\alpha} \big) \cdot \mathbb{E} \Bigg[ \displaystyle\sum_{u \in \mathbb{V} (\ell-1)} \big| R_{0u_-}^{(u)} \big|^s \cdot |T_{u_- u}|^s \cdot |R_{uu}|^{(\alpha+s)/2} \cdot \one_{\mathscr{D} (0, u)} \cdot \one_{|R_{uu}| \le 1/\varsigma,} \Bigg] \\
			& \ge \displaystyle\frac{c_2}{2} \cdot \mathbb{E} \Bigg[ \displaystyle\sum_{u \in \mathbb{V} (\ell-1)} |R_{0u}|^s \cdot |R_{uu}|^{(\alpha-s)/2} \cdot \one_{\mathscr{D} (0,u)} \cdot \one_{|R_{uu}| \le 1/\varsigma}\Bigg] \\
			& \ge \displaystyle\frac{c_2}{2} \cdot \varsigma^{(\alpha-s)/2} \cdot \mathbb{E} \Bigg[ \displaystyle\sum_{u \in \mathbb{V} (\ell-1)} |R_{0u}|^s \cdot \one_{\mathscr{D} (0, u)} \Bigg], 
		\end{flalign*}
	
		\noindent where the second inequality holds by fixing $\omega$ sufficiently small with respect to $\varsigma$, and the third holds by fixing $\varsigma$ sufficiently smal (due to the third by \eqref{rvw2}). This yields the lemma.
	\end{proof}

	\subsection{Proof of \Cref{ry0estimate}} 
	
	\label{ProofRY} 
	
	In this section we establish \Cref{ry0estimate}, which will be a quick consequence of the following two lemmas. 
	
	\begin{lem}
	
	\label{ry0estimate2} 
	
	For any real number $\delta > 0$, there exist constants $c > 0$ (independent of $\delta$) and $\omega = \omega (\delta) > 0$ such that for any integer $k \in \unn{0}{\ell}$ we have  
	\begin{flalign*} 
		\mathbb{E} \Bigg[ \displaystyle\sum_{u \in \mathbb{V} (k)} \displaystyle\sum_{w \in \mathbb{D}_{\ell-k} (u)} |R_{0w}|^s \cdot \one_{\mathscr{D} (u,w)} \Bigg] \ge c \cdot \mathbb{E} \Bigg[ \displaystyle\sum_{u \in \mathbb{V} (k-1)} | R_{0u}|^s \Bigg] \cdot \mathbb{E} \Bigg[ \displaystyle\sum_{u \in \mathbb{V} (\ell-k-1)} |R_{0u}|^s \cdot \one_{\mathscr{D} (0, u)}\Bigg].
	\end{flalign*} 
	\end{lem}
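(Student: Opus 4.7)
The plan is to exploit the resolvent factorization
\begin{equation*}
R_{0w} \;=\; R_{uu}\cdot T_{uu_-}\cdot T_{uu_+}\cdot R_{0u_-}^{(u)}\cdot R_{u_+w}^{(u)},
\end{equation*}
valid for any $u\in\mathbb{V}(k)$, with $u_-\in\mathbb{V}(k-1)$ its parent and $u_+\in\mathbb{D}(u)$ the child of $u$ on the path to $w\in\mathbb{D}_{\ell-k-1}(u_+)$. This identity follows by combining the two ``half-path'' Schur identities $R_{0u}=-R_{uu}T_{uu_-}R_{0u_-}^{(u)}$ and $R_{uw}=-R_{uu}T_{uu_+}R_{u_+w}^{(u)}$ (both consequences of \eqref{qvv}) with the block-Woodbury equality $R_{0w}=R_{0u}R_{uu}^{-1}R_{uw}$, which itself holds because $R_{0w}^{(u)}=0$ (as $0$ and $w$ lie in different components of $\mathbb{T}\setminus\{u\}$). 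Its key virtue is that $R_{0u_-}^{(u)}$ is measurable with respect to $\mathbb{T}_-(u)$ (the weights strictly above $u$), $R_{u_+w}^{(u)}$ is measurable with respect to the subtree rooted at $u_+$, and the entire coupling is absorbed into $R_{uu}=-\bigl(z+T_{uu_-}^2R_{u_-u_-}^{(u)}+T_{uu_+}^2R_{u_+u_+}^{(u)}+K_{u,u_+}\bigr)^{-1}$, where $K_{u,u_+}:=\sum_{v\in\mathbb{D}(u)\setminus\{u_+\}}T_{uv}^2R_{vv}^{(u)}$.

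Taking $|\cdot|^s$, summing, and integrating out the randomness in two stages. First, by \Cref{tuvalpha}, \Cref{sumaxi}, and \Cref{ar}, the variable $K_{u,u_+}$ is an $\alpha/2$-stable law satisfying \Cref{sqk} independently of all remaining quantities, so \Cref{zxrkestimate}(1) yields $\mathbb{E}[|R_{uu}|^s\mid\text{rest}]\ge c\bigl(|z'+T_{uu_+}^2R_{u_+u_+}^{(u)}|+1\bigr)^{-s}$, with $z':=z+T_{uu_-}^2R_{u_-u_-}^{(u)}$. Second, Campbell's theorem \eqref{fxi} integrates the Poisson point process $\{T_{uu_+}\}_{u_+\in\mathbb{D}(u)}$ against the weight $|T_{uu_+}|^s\one_{|T_{uu_+}|\ge\omega}$ (the $u$-level factor of $\mathscr{D}(u,w)$); by \Cref{integral3} and \Cref{integral}, the resulting integral is bounded below by $c(s-\alpha)^{-1}(|z'|+1)^{(s-\alpha)/2-s}|R_{u_+u_+}^{(u)}|^{(\alpha-s)/2}$ minus an $O(\omega^{s-\alpha})(|z'|+1)^{-s}$ piece from $|T_{uu_+}|<\omega$, which is absorbed into half the main term by taking $\omega=\omega(\delta)$ sufficiently small.

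The ``subtree of $u_+$'' factor arising from this double integration is $\mathbb{E}[|R_{u_+u_+}^{(u)}|^{(\alpha-s)/2}\sum_w|R_{u_+w}^{(u)}|^s\one_{\mathscr{D}(u_+,w)}]$; applying \Cref{rproduct}, the tree isomorphism between the subtree of $u_+$ and $\mathbb{T}$, and the moment comparison of \Cref{chilchil1} at $\chi=(\alpha+s)/2$ identifies this, up to a universal multiplicative constant, with the target restricted moment $\mathbb{E}[\sum_{u'\in\mathbb{V}(\ell-k-1)}|R_{0u'}|^s\one_{\mathscr{D}(0,u')}]$. The remaining ``upper'' factor $|T_{uu_-}|^s|R_{0u_-}^{(u)}|^s$ carrying the $(|z'|+1)^{(s-\alpha)/2-s}$ prefactor is then summed over $u\in\mathbb{D}(u_-)$ and $u_-\in\mathbb{V}(k-1)$ and converted to $\Phi_{k-1}(s)$ by a second Campbell-theorem integration on the edge weight $T_{uu_-}$, combined with the unrestricted estimate \Cref{expectationsum1}(1) and the negative fractional moment bound \Cref{expectationqchi} for $R_{u_-u_-}^{(u)}$. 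The main obstacle will be coordinating these two nested Poisson integrations so that the $z'$-dependence emerging from the inner one combines cleanly with the prefactors produced by the outer one, an extension of the bookkeeping already performed in the proofs of \Cref{chilchil1} and \Cref{m2}; the degenerate boundary cases $k\in\{0,\ell\}$ can be verified directly.
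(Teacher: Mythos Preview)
Your overall strategy --- factor at the level-$k$ vertex $u$, apply the Schur identity to $R_{uu}$, integrate the Poisson process of children of $u$ via \Cref{expectationsum1} (equivalently Campbell plus \Cref{integral3} and \Cref{integral}), and split into upper and subtree factors --- matches the paper's. The upper factor is exactly the quantity estimated by \Cref{m2}, which you could cite directly rather than re-derive.

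The gap is in the absorption of the $\omega$-error. The main term emerging from the Poisson integration carries the pointwise factor $|R_{u_+u_+}^{(u)}|^{(\alpha-s)/2}$, which becomes arbitrarily small when $|R_{u_+u_+}^{(u)}|$ is large; the truncation error $O(\omega^{s-\alpha})(|z'|+1)^{-s}$ does not. Hence no choice of $\omega$ absorbs the error uniformly in $R_{u_+u_+}^{(u)}$: your claim fails on the event where $|R_{u_+u_+}^{(u)}|$ exceeds a constant multiple of $\omega^{-2}$. The same obstacle reappears in your subtree step, since converting $\mathbb{E}\big[|R_{00}|^{(\alpha-s)/2}\sum_w |R_{0w}|^s \one_{\mathscr{D}(0,w)}\big]$ to the target restricted moment is not covered by \Cref{chilchil1} (which compares only the unrestricted moments $\Xi_\ell(\chi)$). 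The paper closes both gaps with a single device: it inserts the further restriction $|R_{u_+u_+}^{(u)}| \le 1/\varsigma$ for a parameter $\varsigma$; on this event $|R|^{(\alpha-s)/2} \ge \varsigma^{(s-\alpha)/2}$ holds pointwise, the $\omega$-error is absorbed by choosing $\omega$ small relative to $\varsigma$, and finally \eqref{rv4} shows that this restriction costs only a constant factor in the subtree moment. The $z'$-bookkeeping you flag as the main obstacle is, by contrast, handled cleanly by \Cref{m2}.
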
 
	
	\begin{lem}
		
		\label{ry0estimate3}
		
		For any real number $\delta > 0$, there exist constants $C > 1$ (independent of $\delta$) and $\omega = \omega (\delta) > 0$ such that for any integer $k \in \unn{0}{\ell}$ we have  
		\begin{flalign*} 
			\mathbb{E} \Bigg[ \displaystyle\sum_{u \in \mathbb{V} (k)} & \displaystyle\sum_{w \in \mathbb{D}_{\ell-k} (u)} |R_{0w}|^s \cdot \one_{|T_{uu_+}| < \omega} \cdot \one_{\mathscr{D} (u_+,w)} \Bigg] \\
			& \le C \delta \cdot \mathbb{E} \Bigg[ \displaystyle\sum_{u \in \mathbb{V} (k-1)} | R_{0u}|^s \Bigg] \cdot \mathbb{E}\Bigg[ \displaystyle\sum_{u \in \mathbb{V} (\ell - k- 1)}  | R_{0u}|^s \cdot \one_{\mathscr{D} (0, u)} \Bigg],
		\end{flalign*} 
	
		\noindent where $u_+ \in \mathbb{D} (u)$ denotes the unique child of $u$ in the path $\mathfrak{p} (u, w)$ between $u$ and $w$. Moreover, there exists a constant $c \in (0, 1)$ such that $\omega \ge c \cdot \delta^C$.
	\end{lem}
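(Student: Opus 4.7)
The plan is to mirror the proof of \Cref{ry0estimate2}, decomposing $R_{0w}$ via \Cref{rproduct} at the edge $(u,u_+)$ and integrating out the Poisson sum over $u_+ \in \mathbb{D}(u)$. By \Cref{rproduct} one has $|R_{0w}|^s = |R_{0u}|^s \cdot |T_{uu_+}|^s \cdot \bigl|R_{u_+w}^{(u)}\bigr|^s$, where the last factor depends only on the subtree rooted at $u_+$. Using the second form of \Cref{rproduct} we further write $R_{0u} = (-1)^k R_{uu} \cdot P'_{0u}$, with $P'_{0u} = \prod_{0 \preceq v \prec u} T_{vv_+} R_{vv}^{(v_+)}$ independent of the subtree rooted at $u$ (since each factor $R_{vv}^{(v_+)}$ is a diagonal resolvent with the subtree rooted at $v_+$ disconnected). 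Applying the Schur complement \eqref{qvv} to $R_{uu}$ then expresses the $T_{uu_+}$-dependence entirely via a denominator of the form $|\tilde{z} + T_{uu_+}^2 R_{u_+u_+}^{(u)} + K^{-u_+}|^s$, where $\tilde{z} = z + T_{u_-u}^2 R_{u_-u_-}^{(u)}$ and $K^{-u_+} = \sum_{u' \in \mathbb{D}(u),\, u' \ne u_+} T_{uu'}^2 R_{u'u'}^{(u)}$.

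Conditioning on the $\sigma$-algebra $\mathcal{F}_u$ generated by the tree above $u$ (inclusive), the quantities $P'_{0u}$, $\tilde{z}$, and $R_{u_-u_-}^{(u)}$ become deterministic while the children of $u$ together with their subtrees form an i.i.d.\ Poisson family. We apply the second part of \Cref{expectationsum1} with $T_j = T_{uu_+}$, $R_j = R_{u_+u_+}^{(u)}$, $K_j = K^{-u_+}$, $\chi = s$, and $S_j = S_{u_+} := \sum_{w \in \mathbb{D}_{\ell-k-1}(u_+)} \bigl|R_{u_+w}^{(u)}\bigr|^s \one_{\mathscr{D}(u_+,w)}$; the hypotheses on $K^{-u_+}$ required by \Cref{sqk} follow from \Cref{zuv} and \Cref{ar} via a Slivnyak--Mecke reduction that fixes one Poisson atom. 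This yields the conditional bound
\begin{flalign*}
\mathbb{E}_{\mathcal{F}_u}\Bigg[\sum_{u_+ \in \mathbb{D}(u)} \frac{|T_{uu_+}|^s \one_{|T_{uu_+}|<\omega}}{|\tilde{z} + T_{uu_+}^2 R_{u_+u_+}^{(u)} + K^{-u_+}|^s} \cdot S_{u_+}\Bigg] \le \frac{C \omega^{s-\alpha}}{(s-\alpha)(|\tilde{z}|+1)^{s}} \cdot \mathbb{E}[S_{u_+}],
\end{flalign*}
and by translation invariance of the tree, $\mathbb{E}[S_{u_+}] = \mathbb{E}\bigl[\sum_{v \in \mathbb{V}(\ell-k-1)} |R_{0v}|^s \one_{\mathscr{D}(0,v)}\bigr]$, which supplies the second factor on the right-hand side of the desired inequality.

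It remains to show $\mathbb{E}\bigl[\sum_{u \in \mathbb{V}(k)} |P'_{0u}|^s (|\tilde{z}|+1)^{-s}\bigr] \le C \Phi_{k-1}(s)$. Decomposing the sum as $\sum_{u_- \in \mathbb{V}(k-1)} \sum_{u \in \mathbb{D}(u_-)}$, writing $P'_{0u} = P''_{0u_-} \cdot T_{u_-u} \cdot R_{u_-u_-}^{(u)}$ with $|P''_{0u_-}|^s = |R_{0u_-}|^s/|R_{u_-u_-}|^s$, and conditioning on the tree above $u_-$, Campbell's theorem (\Cref{fidentityxi}) converts the inner Poisson sum over $u \in \mathbb{D}(u_-)$ into an integral of the form $\alpha\int_0^\infty t^{s-\alpha-1} |R_{u_-u_-}^{(u)}|^s (|z+t^2 R_{u_-u_-}^{(u)}|+1)^{-s} \, dt$, which is controlled by \Cref{integral3}; combined with the bounded fractional moments \Cref{expectationqchi} and the identity $R_{0u_-} = (-1)^{k-1} R_{u_-u_-} P''_{0u_-}$, this produces the claimed bound. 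Choosing $\omega$ so that $C\omega^{s-\alpha}/(s-\alpha)$ matches the desired constant times $\delta$ yields $\omega = (c\delta)^{1/(s-\alpha)} \ge c\delta^{C}$ with $C = 1/(s-\alpha)$, as required.

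The main obstacle is the coupling between $R_{0u}$, the edge $T_{uu_+}$, and the subtree rooted at $u_+$; the Schur complement decomposition resolves this by isolating the dependence into a single denominator amenable to \Cref{expectationsum1}. A subsidiary technical point is the final reduction of $\mathbb{E}[\sum_u |P'_{0u}|^s (|\tilde{z}|+1)^{-s}]$ to $\Phi_{k-1}(s)$, which exploits the $(|\tilde{z}|+1)^{-s}$ decay to compensate the otherwise divergent Campbell integral over $T_{u_-u}$ at the parent level.
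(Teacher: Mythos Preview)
Your proposal is correct and follows essentially the same route as the paper. You decompose $R_{0w}$ at the edge $(u,u_+)$, apply the Schur complement to $R_{uu}$, and invoke the second part of \Cref{expectationsum1} (with $\chi=s$ and the $z$ there replaced by $\tilde z = z + T_{u_-u}^2 R_{u_-u_-}^{(u)}$) to extract the factor $\omega^{s-\alpha}(|\tilde z|+1)^{-s}\cdot\mathbb{E}[S_{u_+}]$; the paper does exactly this in \eqref{expectationwt02}--\eqref{sumuw1}. The only cosmetic difference is that you write $|R_{0u}|^s = |P'_{0u}|^s |R_{uu}|^s$ via the second (backward) product formula of \Cref{rproduct}, whereas the paper writes it as $|R_{0u_-}^{(u)}|^s |T_{u_-u}|^s |R_{uu}|^s$ via the first; these are the same since $|P'_{0u}| = |T_{u_-u}|\,|R_{0u_-}^{(u)}|$.

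Your final reduction $\mathbb{E}\bigl[\sum_{u\in\mathbb{V}(k)} |P'_{0u}|^s (|\tilde z|+1)^{-s}\bigr]\le C\Phi_{k-1}(s)$ is exactly \Cref{m2}, which the paper simply cites. Your sketch of how to prove it (Campbell at the parent level, then \Cref{integral3}, landing on $\Xi_{-(k-1)}\bigl(\tfrac{\alpha+s}{2}\bigr)$, then comparison to $\Phi_{k-1}$) is precisely the content of the proof of \Cref{m2}. One small notational slip: in your Campbell integral the resolvent should read $R_{u_-u_-}$ rather than $R_{u_-u_-}^{(u)}$, since the reduced Palm restores the full child configuration (this is the $\tilde R(\nu)$ trick in \eqref{expectationtw}). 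The choice $\omega\asymp\delta^{1/(s-\alpha)}$ and the resulting $\omega\ge c\delta^C$ also match the paper.
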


	\begin{proof}[Proof of \Cref{ry0estimate}]
		
		Let $\delta >0$ be a parameter. Combining \Cref{ry0estimate2} and \Cref{ry0estimate3}, 
		we deduce that there exists a constant $\omega = \omega (\delta) > 0$ such that for any $k \in \unn{0}{\ell}$ we have
		\bex
		\E \left[ \sum_{u \in \bbV(k)} \sum_{w \in {\mathbb D}_{\ell-k} (u)} |R_{0w}|^s \cdot \one_{|T_{uu_+}| < \omega} \cdot \one_{\mathscr{D} (u_+, w)} \right] 
\le c^{-1} C \delta \cdot  \E\left[ \sum_{u \in \bbV(k)} \sum_{w \in {\mathbb D}_{\ell-k} (u)} |R_{0w}|^s \cdot \one_{\mathscr{D}(u,w)} \right],
\eex
where $c$ and $C$ are from \Cref{ry0estimate2} and \Cref{ry0estimate3}, respectively. Since $\big( R_{vw}^{(v_-)} \big)_{w \in \mathbb{D}_m (v)}$ has the same law as $(R_{0w})_{w \in \mathbb{V} (m)}$ for any integer $m \ge 0$, this is equivalent to 
	\begin{flalign*}
\E \Bigg[ \sum_{u \in \mathbb{D}_k (v)} \sum_{w \in {\mathbb D}_{\ell-k} (u)} & | R_{vw}^{(v_-)} |^s \cdot \one_{|T_{uu_+}| < \omega} \cdot \one_{\mathscr{D} (u_+, w)} \Bigg] \\
& \le c^{-1} C \delta \cdot \E\left[ \sum_{u \in \mathbb{D}_k (v)} \sum_{w \in {\mathbb D}_{\ell-k} (u)} | R_{vw}^{(v_-)} |^s \cdot \one_{\mathscr{D}(u,w)} \right],
\end{flalign*}
for any $v \in \bbV$.
		Observe from the definition of $\mathscr D(u, v)$ that 
\bex
\one_{\mathscr{D} (u_+, v)} - \one_{\mathscr{D}(u, w)} = \one_{\mathscr{D}(u_+, w)} \cdot \one_{|T_{uu_+}| < \omega}.\eex

\noindent Thus, adding 
\bex
\E\left[ \sum_{u \in \mathbb{D}_k (v)} \sum_{w \in \mathbb D_{\ell-k} (u)} | R_{vw}^{(v_-)}|^s \cdot \one_{\mathscr{D}(u, w)} \right]
\eex
to both sides of the previous bound gives
\bex
\E \left[ \sum_{u \in \mathbb{D}_k (v)} \sum_{w \in \mathbb D_{\ell-k} (u)} | R_{vw}^{(v_-)} |^s \cdot \one_{\mathscr{D} (u_+, w)} \right] 
\le (1 + c^{-1} C\cdot\delta) \cdot \E\left[ \sum_{u \in \mathbb{D}_k (v)} \sum_{w \in {\mathbb D}_{\ell-k} (u)} | R_{vw}^{(v_-)} |^s \cdot \one_{\mathscr{D}(u,w)} \right].\eex

\noindent This inequality implies that 
\begin{align*}
\E & \left[ \sum_{u \in \mathbb{D}_k (v)} \sum_{w \in {\mathbb D}_{\ell-k} (u)} | R_{vw}^{(v_-)} |^s \cdot \one_{\mathscr{D}(u,w)} \right] \\
&\ge (1 + c^{-1} C\cdot\delta)^{-1} \cdot \E\left[ \sum_{u_+ \in {\mathbb D}_{k+1} (v)} \sum_{w \in {\mathbb D}_{\ell-k-1} (u_+)} | R_{vw}^{(v_-)} |^s \cdot \one_{\mathscr{D}(u_+, w)} \right]\\
&\ge (1 - c^{-1} C\cdot\delta) \cdot \E\left[ \sum_{u \in {\mathbb D}_{k+1} (v)} \sum_{w \in {\mathbb D}_{\ell-k-1} (u)} | R_{vw}^{(v_-)} |^s \cdot \one_{\mathscr{D}(u, w)} \right],\end{align*}

\noindent where in the first inequality we observed that the sum over $u \in \mathbb D_k (v)$ is equivalent to the sum over $u_+ \in \mathbb D_{k+1} (v)$, and in the second inequality we used the fact that $(1+a)^{-1} \ge 1-a$, and we also replaced $u_+$ by $u$. Hence,		
		\begin{flalign*} 
			\mathbb{E} \Bigg[ \displaystyle\sum_{u \in \mathbb{D}_k (v)} \displaystyle\sum_{w \in \mathbb{D}_{\ell-k} (v)} \big| R_{vw}^{(v_-)} \big|^s \cdot \one_{\mathscr{D} (u,w)} \Bigg] \ge (1 - c^{-1} C \cdot \delta) \cdot \mathbb{E}\Bigg[ \displaystyle\sum_{u \in \mathbb{D}_{k+1} (v)} \displaystyle\sum_{w \in \mathbb{D}_{\ell-k-1} (v)} \big| R_{vw}^{(v_-)} \big|^s \cdot \one_{\mathscr{D} (u,w)} \Bigg].
		\end{flalign*} 
	
		\noindent Applying this bound $\ell$ times (to increase the parameter $k$ from $0$ to $\ell$), we deduce that there exists a constant such that
		\begin{flalign*}
			\mathbb{E} \Bigg[ \displaystyle\sum_{w \in \mathbb{D}_{\ell} (v)} \big| R_{vw}^{(v_-)} \big|^s \cdot \one_{\mathscr{D} (v, w)} \Bigg] \ge (1 - c^{-1} C \cdot \delta)^{\ell} \cdot \mathbb{E}\Bigg[ \displaystyle\sum_{w \in \mathbb{D}_{\ell} (v)} \big| R_{vw}^{(v_-)} \big|^s \Bigg] = (1 - c^{-1} C \cdot  \delta)^{\ell} \cdot \Phi_{\ell} (s),
		\end{flalign*} 
	
		\noindent where for the last equality we again used the fact that $R_{vw}^{(v_-)}$ for $w \in \mathbb{D}_{\ell} (v)$ and $R_{0w}$ for $w \in \mathbb{V} (\ell)$ have the same law; this establishes the lemma.
	\end{proof} 

	Next we establish \Cref{ry0estimate2} and \Cref{ry0estimate3}.

	\begin{proof}[Proof of \Cref{ry0estimate2}]

	The proof of this lemma is similar to that of \Cref{smultiplicative}. In particular, observe from \Cref{rproduct} and \eqref{qvv} that 
	\begin{flalign}
		\label{expectationwt0}
		\begin{aligned}
		 \mathbb{E} \Bigg[ & \displaystyle\sum_{u \in \mathbb{V} (k)} \displaystyle\sum_{w \in \mathbb{D}_{\ell-k} (u)} |R_{0w}|^s \cdot \one_{\mathscr{D} (u, w)} \Bigg] \\
		& = \mathbb{E} \Bigg[ \displaystyle\sum_{u \in \mathbb{V}(k)} \big| R_{0u_-}^{(u)} \big|^s \cdot |T_{u_- u}|^s \cdot | R_{uu}|^s \displaystyle\sum_{v \in \mathbb{D} (u)} |T_{uv}|^s \displaystyle\sum_{w \in \mathbb{D}_{\ell-k-1} (v)} \big| R_{vw}^{(u)} \big|^s \cdot \one_{\mathscr{D} (u, w)} \Bigg] \\
		& = \mathbb{E} \Bigg[ \displaystyle\sum_{u \in \mathbb{V} (k)} \big| R_{0u_-}^{(u)} \big|^s \cdot |T_{u_- u}|^s \cdot \mathbb{E}_u \bigg[ \displaystyle\sum_{v \in \mathbb{D} (u)} \displaystyle\frac{|T_{uv}|^s \cdot \one_{|T_{uv}| \ge \omega}}{\big| z + T_{u_- u}^2 R_{u_- u_-}^{(u)} + T_{uv}^2 R_{vv}^{(u)} + K_{u, v} \big|^s} \\
			& \qquad \qquad \qquad \qquad \qquad \qquad \qquad \qquad \qquad \times \displaystyle\sum_{w \in \mathbb{D}_{\ell - k-1} (v)} \big| R_{vw}^{(u)} \big|^s \cdot \one_{\mathscr{D} (v, w)} \bigg] \Bigg], 
			\end{aligned} 
	\end{flalign} 
	
	\noindent where 
	\begin{flalign}
	\label{kuv1}
		K_{u, v} = \displaystyle\sum_{\substack{v' \in \mathbb{D} (u) \\ v' \ne v, u_-}} T_{uv'}^2 R_{v' v'}^{(u)}. 
	\end{flalign} 
	 
	\noindent To estimate the inner expectation in \eqref{slm}; fix a real number $\varsigma \in (0, 1)$ (to be determined later); and observe from \Cref{sumaxi} (and \Cref{tuvalpha} and \Cref{ar}) that $K_{u, v}$ is an $\frac{\alpha}{2}$-stable law satisfying \Cref{sqk}. Moreover, it is independent from the random variables $T_{u_- u}$, $T_{uv}$, $R_{u_- u_-}^{(u)}$, $R_{vv}^{(u)}$, and $R_{vw}^{(u)}$, as well as the event $\mathscr{D} (v, w)$. Thus, (the $\chi = s$ case of) the first and second parts of \Cref{expectationsum1} together yield a constant $C_1 > 1$ such that 
	\begin{flalign*} 
		& \mathbb{E}_u \Bigg[ \displaystyle\sum_{v \in \mathbb{D} (u)} \displaystyle\frac{|T_{uv}|^s}{\big| z + T_{u_- u}^2 R_{u_- u_-}^{(u)} + T_{uv}^2 R_{vv}^{(u)} + K_{u, v}\big|^s} \cdot \one_{|T_{uv}| \ge \omega}  \displaystyle\sum_{w \in \mathbb{D}_{\ell-k-1} (v)} \big| R_{vw}^{(u)} \big|^s \cdot \one_{\mathscr{D} (v, w)} \Bigg] \\
		& \quad \ge \Big( \big| z + T_{u_- u}^2 R_{u_- u_-}^{(u)} \big| + 1 \Big)^{-s} \cdot \mathbb{E}_u \Bigg[ \Big( \big|R_{u_+ u_+}^{(u)} \big|^{(\alpha-s)/2} - C_1 \omega^{s-\alpha} \Big) \cdot \one_{|R_{u_+ u_+}^{(u)}| \le 1/\varsigma} \\
		& \qquad \qquad \qquad \qquad \qquad \qquad \qquad \qquad \qquad \qquad  \times \displaystyle\sum_{w \in \mathbb{D}_{\ell-k-1} (u_+)} \big| R_{u_+ w}^{(u)} \big|^s \cdot \one_{\mathscr{D}(u_+, w)} \Bigg],	
	\end{flalign*} 
	
	\noindent for any child $u_+$ of $u$. Inserting this into \eqref{expectationwt0}; using the fact that $\big( R_{u_+ u_+}^{(u)}, R_{u_+ w}^{(u)} \big)$ for $w \in \mathbb{D}_{\ell-k-1} (u_+)$ has the same law as $(R_{00}, R_{0w})$ for $w \in \mathbb{V} (\ell-k-1)$; and taking $\omega$ sufficiently small so that $\varsigma^{(s-\alpha)/2} > 2 C_1 \omega^{s-\alpha}$ yields
		\begin{flalign}
			\label{sumlk} 
			\begin{aligned}
			\mathbb{E} \Bigg[ \displaystyle\sum_{u \in \mathbb{V} (k)} \displaystyle\sum_{w \in \mathbb{D}_{\ell-k} (u)} |R_{0w}|^s \cdot \one_{\mathscr{D} (0, u)} \Bigg] & \ge \displaystyle\frac{1}{2} \cdot \varsigma^{(s-\alpha)/2} \cdot \mathbb{E} \Bigg[ \displaystyle\sum_{u \in \mathbb{V} (k)} \displaystyle\frac{\big| R_{0u_-}^{(u)} \big|^s \cdot |T_{u_- u}|^s}{\Big( \big| z + T_{u_- u}^2 R_{u_- u_-}^{(u)} \big| + 1 \Big)^s} \Bigg]    \\
			& \qquad \times \mathbb{E} \Bigg[  \displaystyle\sum_{w \in \mathbb{V} (\ell - k-1)} |R_{0w}|^s \cdot \one_{\mathscr{D} (0, w)} \cdot \one_{|R_{00}| \le 1/\varsigma} \Bigg].
			\end{aligned} 
	\end{flalign} 
	
	Next, by \eqref{rv4}, for sufficiently small $\varsigma$ we have 
	\begin{flalign}
		\label{sumwvlk1}
		\mathbb{E} \Bigg[ \displaystyle\sum_{w \in \mathbb{V} (\ell-k-1)} |R_{0w}|^s \cdot \one_{\mathscr{D} (0, w)} \cdot \one_{|R_{00}| \le 1/\varsigma} \Bigg] \ge \displaystyle\frac{1}{2} \cdot \mathbb{E} \Bigg[ \displaystyle\sum_{w \in \mathbb{V} (\ell-k-1)} |R_{0w}|^s \cdot \one_{\mathscr{D} (0, w)} \Bigg].
	\end{flalign}

	\noindent Additionally, by \Cref{m2}, there exists a constant $c_1 > 0$ such that 
	\begin{flalign}
		\label{sumuvk1} 
		\mathbb{E} \Bigg[ \displaystyle\sum_{u \in \mathbb{V} (k)} \displaystyle\frac{\big| R_{0u_-}^{(u)} \big|^s \cdot |T_{u_- u}|^s}{\Big( \big| z + T_{u_- u}^2 R_{u_- u_-}^{(u)} \big| + 1 \Big)^s} \Bigg] \ge c_1 \cdot \mathbb{E} \Bigg[ \displaystyle\sum_{u \in \mathbb{V} (k-1)} |R_{0u}|^s \Bigg].
	\end{flalign}

	\noindent Combining \eqref{sumlk}, \eqref{sumwvlk1}, and \eqref{sumuvk1} then yields the lemma.	 		
	\end{proof}
		
	\begin{proof}[Proof of \Cref{ry0estimate3}]
		
		The proof of this lemma will be similar to that of \Cref{ry0estimate2}. Indeed, following \eqref{expectationwt0}, we obtain
		\begin{flalign}
		\label{expectationwt02}
		\begin{aligned}
		 \mathbb{E} \Bigg[ & \displaystyle\sum_{u \in \mathbb{V} (k)} \displaystyle\sum_{w \in \mathbb{D}_{\ell-k} (u)} |R_{0w}|^s \cdot \one_{|T_{uu_+}| < \omega} \cdot \one_{\mathscr{D} (u_+, w)} \Bigg] \\
		& = \mathbb{E} \Bigg[ \displaystyle\sum_{u \in \mathbb{V} (k)} \big| R_{0u_-}^{(u)} \big|^s \cdot |T_{u_- u}|^s \cdot \mathbb{E}_u \bigg[ \displaystyle\sum_{v \in \mathbb{D} (u)} \displaystyle\frac{|T_{uv}|^s \cdot \one_{|T_{uv}| < \omega}}{\big| z + T_{u_- u}^2 R_{u_- u_-}^{(u)} + T_{uv}^2 R_{vv}^{(u)} + K_{u, v} \big|^s} \\
			& \qquad \qquad \qquad \qquad \qquad \qquad \qquad \qquad \qquad \times \displaystyle\sum_{w \in \mathbb{D}_{\ell - k-1} (v)} \big| R_{vw}^{(u)} \big|^s \cdot \one_{\mathscr{D} (v, w)} \bigg] \Bigg], 
			\end{aligned} 
	\end{flalign}
	
	\noindent where $K_{u, v}$ is as in \eqref{kuv1}. Using the fact that $K_{u, v}$ is independent from $T_{u_- u}$, $R_{u_- u_-}^{(u)}$, $T_{uv}$, $R_{vv}^{(u)}$, $R_{vw}^{(u)}$, and $\mathscr{D} (v, w)$, we obtain from (the $\chi = s$ case of) the second part of \Cref{expectationsum1} that there exists a constant $C_1 > 1$ such that 
	\begin{flalign*}
		 \mathbb{E}_u \Bigg[ \displaystyle\sum_{v \in \mathbb{D} (u)} & \displaystyle\frac{|T_{uv}|^s \cdot \one_{|T_{uv}| < \omega}}{\big| z + T_{u_- u}^2 R_{u_- u_-}^{(u)} + T_{uv}^2 R_{vv}^{(u)} + K_{u, v} \big|^s} \cdot \displaystyle\sum_{w \in \mathbb{D}_{\ell - k-1} (v)} \big| R_{vw}^{(u)} \big|^s \cdot \one_{\mathscr{D} (v, w)} \Bigg] \\
			& \le C_1 \omega^{s-\alpha} \cdot \Big( \big| z + T_{u_- u}^2 R_{u_- u_-}^{(u)} \big| + 1 \Big)^{-s} \cdot \mathbb{E} \Bigg[ \displaystyle\sum_{w \in \mathbb{V} (\ell - k-1)} | R_{0w}|^s \cdot \one_{\mathscr{D} (0, w)} \Bigg], 
	\end{flalign*}	
	
	\noindent where we have used the fact that $R_{vw}^{(u)}$ for $w \in \mathbb{D}_{\ell-k-1} (u)$ has the same law as $R_{0w}$ for $w \in \mathbb{V} (\ell-k-1)$. Inserting this into \eqref{expectationwt02} yields
	\begin{flalign}
	\label{sumuw1}
	\begin{aligned} 
		\mathbb{E} \Bigg[ \displaystyle\sum_{u \in \mathbb{V} (k)} \displaystyle\sum_{w \in \mathbb{D}_{\ell-k} (u)} & |R_{0w}|^s \cdot \one_{|T_{uu_+}| < \omega} \cdot \one_{\mathscr{D} (u, w)}\Bigg] \\
		& < C_1 \omega^{s-\alpha} \cdot \mathbb{E} \Bigg[ \displaystyle\sum_{u \in \mathbb{V} (k)} \big| R_{0u_-}^{(u)} \big|^s \cdot |T_{u_- u}|^s \cdot \Big( \big| z + T_{u_- u}^2 R_{u_- u_-}^{(u)} \big| + 1 \Big)^{-s} \Bigg] \\
		& \qquad \times  \mathbb{E} \Bigg[ \displaystyle\sum_{w \in \mathbb{V} (\ell-k-1)} |R_{0w}|^s \cdot \one_{\mathscr{D} (0, w)} \Bigg].
	\end{aligned} 
	\end{flalign}
	
	Applying \Cref{m2} yields a constant $C_2 > 1$ such that
	\begin{flalign*}
	 \mathbb{E} \Bigg[ \displaystyle\sum_{u \in \mathbb{V} (k)} \big| R_{0u_-}^{(u)} \big|^s \cdot |T_{u_- u}|^s \cdot \Big( \big| z + T_{u_- u}^2 R_{u_- u_-}^{(u)} \big| + 1 \Big)^{-s} \Bigg] \le C_2 \cdot \mathbb{E} \Bigg[ \displaystyle\sum_{u \in \mathbb{V} (k-1)} | R_{0u}|^s \Bigg],
	\end{flalign*}
	
	\noindent which together with \eqref{sumuw1} yields
	\begin{flalign*}
		\mathbb{E} \Bigg[ \displaystyle\sum_{u \in \mathbb{V} (k)} & \displaystyle\sum_{w \in \mathbb{D}_{\ell-k} (u)} |R_{0w}|^s \cdot \one_{|T_{uu_+}| < \omega} \cdot \one_{\mathscr{D} (u, w)}\Bigg] \\
		& < C_1 C_2 \omega^{s-\alpha} \cdot \mathbb{E} \Bigg[ \displaystyle\sum_{u \in \mathbb{V} (k-1)} | R_{0u}|^s \Bigg] \cdot \mathbb{E} \Bigg[ \displaystyle\sum_{w \in \mathbb{V} (\ell-k-1)} |R_{0w}|^s \cdot \one_{\mathscr{D} (0, w)} \Bigg].
	\end{flalign*}
	
	\noindent Now the lemma follows from taking $\omega$ sufficiently small so that $C_1 C_2 \omega^{s-\alpha} < \delta$.
	\end{proof}

	\section{Expected Number of Large Resolvent Entries}
	
	\label{EstimateN}

	In this section we estimate the expectation of the number of vertices $v \in \mathbb{V} (L)$ for which $|R_{0v}|$ is above some given threshold $e^{t L}$. We begin in \Cref{EstimateNL} by using the restricted fractional moment estimate \Cref{expectationr0vsd} to bound expectations of counts for the number of large resolvent entries. We then in \Cref{EstimateNLkappa} provide a result (used in \Cref{ProbabilityNmu0} below) lower bounding the number of moderate resolvent entries, of order $e^{\kappa L}$ for some possibly negative $\kappa$ bounded from below. Throughout this section, we recall the notation and conventions from \Cref{EventR}.

	\subsection{Estimates for the Expected Number of Large $R_{vw}$}
	
	\label{EstimateNL}
	
	In this section we provide lower bounds on the expected number of large off-diagonal resolvent entries. We begin with the following definition, which provides notation for the set and number of vertices for which the off-diagonal resolvent entry $|R_{vw}|$ is bounded below by some threshold $e^{tL}$ (and on which the event $\mathscr{B} (v, w)$ from \Cref{br0vw} holds), where $L$ denotes the distance between $v$ and $w$. 
	
	\begin{definition} 
		
		\label{ns} 
		
		Fix a vertex $v \in \mathbb{V}$; an integer $\ell \ge 1$; and real numbers $t \in \mathbb{R}$, $\omega \in (0, 1)$, and $\Omega > 1$. Define the vertex set $\mathcal{S}_{\ell} (t) = \mathcal{S}_{\ell} (t; v) = \mathcal{S}_{\ell} (t; v; \omega; \Omega) = \mathcal{S}_{\ell} (t; v; \omega; \Omega; z)$ and integer $\mathcal{N}_{\ell} (t) = \mathcal{N}_{\ell} (t; v) = \mathcal{N}_{\ell} (t; v; \omega; \Omega) = \mathcal{N}_{\ell} (t; v; \omega; \Omega; z)$ by 
		\begin{flalign*}
			\mathcal{S}_{\ell} (t) = \Big\{ w \in \mathbb{D}_{\ell} (v) : \big| R_{vw}^{(v_-, w_+)} \big| \cdot \one_{\mathscr{B} (v, w; \omega; \Omega)} \ge e^{t \ell} \Big\}; \qquad \mathcal{N}_{\ell} (t) = \big| \mathcal{S}_{\ell} (t) \big|.
		\end{flalign*} 
	\end{definition} 
	
	Before proceeding, it will be useful to have the following tail estimate on $\mathcal{N}_{\ell} (t)$; it is essentially a consequence of \Cref{treenumber} below, which is a corresponding tail estimate for the number of vertices in a Galton--Watson tree.
	
	\begin{lem} 
	
	\label{nlestimate} 
	
	For any real numbers $\omega \in (0, 1)$, there exists a constant $C = C (\omega) > 1$ such that the following holds. For any integer $\ell \ge  1$, vertex $v \in \mathbb{V}$, and real numbers $t \in \mathbb{R}$ and $A, \Omega \ge 1$, we have 
	\begin{flalign*} 
		\mathbb{P} \big[ \mathcal{N}_{\ell} (t) \ge A \cdot C^{\ell} \big] \le 3e^{-A/2}.
	\end{flalign*} 
	\end{lem}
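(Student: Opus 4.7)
The plan is to dominate $\mathcal{N}_\ell(t)$ by the $\ell$-th generation of a Galton--Watson tree whose mean offspring number depends only on $\omega$ (not on $t$, $\Omega$, or $A$), and then invoke \Cref{treenumber}.

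First, observe that by \Cref{br0vw} we have $\mathscr{B}(v,w;\omega,\Omega) \subseteq \mathscr{D}(v,w;\omega)$, and the latter depends only on the edge weights along the path from $v$ to $w$. In particular, the indicator $\one_{\mathscr B(v,w;\omega,\Omega)}$ vanishes unless $|T_{uu_+}| \ge \omega$ for every $v \preceq u \prec w$. Hence, dropping the conditions involving $\Omega$, $t$, and the off-path resolvent entries,
\begin{equation*}
\mathcal{N}_\ell(t) \le \mathcal{M}_\ell := \big|\big\{ w \in \mathbb{D}_\ell(v) : |T_{uu_+}| \ge \omega \text{ for all } v \preceq u \prec w \big\}\big|.
\end{equation*}
The right side is purely geometric: it counts the vertices at level $\ell$ in the subtree of $\mathbb{T}$ (rooted at $v$) obtained by keeping only those edges whose weight has absolute value at least $\omega$.

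Next, I would use \Cref{tuvalpha}, which states that for each vertex $u$, the multiset $\{|T_{uw}|\}_{w \in \mathbb{D}(u)}$ forms a Poisson point process on $\mathbb{R}_{>0}$ with intensity $\alpha x^{-\alpha-1}\,dx$. The number of children $w$ of $u$ with $|T_{uw}| \ge \omega$ is therefore a Poisson random variable with mean $\lambda_0 := \int_\omega^\infty \alpha x^{-\alpha-1}\,dx = \omega^{-\alpha}$; moreover, these random variables are mutually independent across $u$ by the independence of the Poisson point processes attached to distinct vertices of $\mathbb{V}$. Consequently, the pruned subtree described above has exactly the law of a Galton--Watson tree with $\mathrm{Poisson}(\lambda_0)$ offspring distribution, and $\mathcal{M}_\ell$ has the same law as $|\mathcal{V}(\ell)|$ for this GW tree.

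Finally, set $\lambda = \max(\lambda_0, 2)$, and couple $\mathcal{M}_\ell$ with the $\ell$-th level $|\widetilde{\mathcal{V}}(\ell)|$ of a Galton--Watson tree with $\mathrm{Poisson}(\lambda)$ offspring so that $\mathcal{M}_\ell \le |\widetilde{\mathcal{V}}(\ell)|$ almost surely; this is possible because a $\mathrm{Poisson}(\lambda_0)$ variable is stochastically dominated by a $\mathrm{Poisson}(\lambda)$ variable. Applying \Cref{treenumber} with $B = A \ge 1$ to this enlarged GW tree gives
\begin{equation*}
\mathbb{P}\big[\mathcal{N}_\ell(t) \ge A \cdot (2\lambda)^\ell\big] \le \mathbb{P}\big[|\widetilde{\mathcal{V}}(\ell)| \ge A \cdot (2\lambda)^\ell\big] \le 3 e^{-A/2}.
\end{equation*}
Setting $C = C(\omega) := 2\lambda = 2\max(\omega^{-\alpha}, 2)$ yields the lemma. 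The argument is entirely routine once the domination by a Galton--Watson tree is identified; the only mild subtlety is handling $\omega$ close to $1$ (where $\omega^{-\alpha} < 2$), resolved by inflating $\lambda_0$ up to $2$ so that \Cref{treenumber} applies.
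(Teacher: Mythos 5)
Your proof is correct and follows essentially the same route as the paper's: observe that $\mathscr{B}(v,w;\omega,\Omega) \subseteq \mathscr{D}(v,w;\omega)$, dominate $\mathcal{N}_\ell(t)$ by the $\ell$-th generation of a Galton--Watson tree with $\mathrm{Poisson}(\omega^{-\alpha})$ offspring, and invoke \Cref{treenumber}. Your extra step of inflating $\lambda_0$ to $\max(\omega^{-\alpha},2)$ via stochastic domination is a worthwhile refinement, since \Cref{treenumber} as stated requires $\lambda \ge 2$, a hypothesis the paper's proof passes over without comment.
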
 	

	\begin{proof}

	Let $\mathcal{V} (k) = \big\{ w \in \mathbb{D}_k (v) : \text{$\mathscr{D} (v, w)$ holds} \big\}$, and set $\mathcal{V} = \bigcup_{k=0}^{\infty} \mathcal{V} (k)$. Then, $\mathcal{V} (\ell) \subseteq \mathcal{S}_{\ell} (t)$, as $\mathscr{B} (v, w) \subseteq \mathscr{D} (v, w)$ (by \eqref{br0vw}); hence, $\big| \mathcal{V} (\ell) \big| \ge \mathcal{N}_{\ell} (t)$. Moreover, $\mathcal{V}$ is a Galton--Watson tree (see \Cref{EstimateTreeVertex}) with parameter $\lambda = \alpha \cdot \int_{\omega}^{\infty} x^{-\alpha-1} dx = \omega^{-\alpha}$. Thus, it follows from \Cref{treenumber} that 
	\begin{flalign*} 
		\mathbb{P} \Big[ \big| \mathcal{V} (\ell) \big| \ge A \cdot (2\lambda)^{\ell} \Big] \le 3 e^{-A/2},
	\end{flalign*}
	
	\noindent from which the lemma follows from taking $C = 2 \lambda$ and the fact that $\mathcal{N}_{\ell} (t) \le \mathcal{V} (\ell)$.
	\end{proof} 	
	
	We next state the following proposition that provide lower bounds on $\mathbb{E} \big[ \mathcal{N}_L (t) \big]$.
	
	\begin{prop}
		
		\label{nestimate} 
		
		For any real number $\delta \in (0, 1)$, there exist constants $C = C (\alpha ; s) > 1$ (independent of $\delta$), $L_0 = L_0 (\alpha ; s ; \delta) > 1$, $\omega = \omega (\alpha ; s ; \delta) \in (0, 1)$; and $\Omega = \Omega (\alpha ; s ;  \delta) > 1$ such that the following holds. Fix an integer $L \ge L_0$, and a vertex $v \in \mathbb{V}$; set $\ell = \ell (v)$. Then, there exists a real number $t_0 = t_0 (\alpha; s; z; \ell; L) \in [-C,0]$ such that 
		\begin{flalign}
			\label{expectationm}
			L^{-1} \log \mathbb{E} \big[ \mathcal{N}_L (t_0; v; \omega; \Omega; z) \big] \ge \varphi (s; z) - s t_0 - \delta.
		\end{flalign}

	\end{prop}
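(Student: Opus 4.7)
The plan is to combine the restricted fractional moment estimate \Cref{expectationr0vsd} with a layer-cake decomposition, and then extract an optimal threshold $t_0$ by pigeonhole on a bounded range of slices. To begin, apply \Cref{expectationr0vsd} with $\delta/3$ in place of $\delta$ to obtain parameters $\omega = \omega(\delta) \in (0,1)$ and $\Omega = \Omega(\delta) > 1$, and a constant $c_0 = c_0(\delta) > 0$, such that
\begin{flalign*}
	\mathbb{E}\Bigg[\sum_{w \in \mathbb{D}_L(v)} \big|R_{vw}^{(v_-, w_+)}\big|^s \one_{\mathscr{B}(v,w;\omega;\Omega)}\Bigg] \ge c_0 \exp\big(L(\varphi(s;z) - \delta/3)\big).
\end{flalign*}
Writing the left side via the layer cake as $sL \int_{-\infty}^{\infty} e^{stL} \mathbb{E}[\mathcal{N}_L(t)]\, dt$, the task reduces to localizing the mass of this integral in a bounded window of $t$.

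Next I would truncate the integration range. On the left tail, \Cref{nlestimate} gives $\mathbb{E}[\mathcal{N}_L(t)] \le 3 \cdot (2\omega^{-\alpha})^L$ uniformly in $t$. Since $\varphi(s;z)$ is bounded above (by the fifth part of \Cref{limitr0j}, together with \Cref{estimatemoment1}) by a constant depending only on $\alpha$, $s$, $\varepsilon$, and $B$, we may choose $C = C(\alpha,s) > 1$ so large that the contribution $sL \int_{-\infty}^{-C} e^{stL} \mathbb{E}[\mathcal{N}_L(t)]\,dt \le \tfrac{c_0}{4} e^{L(\varphi(s;z) - \delta/3)}$ for every $L$. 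On the right tail, the deterministic inequality $|R_{vw}| \le \eta^{-1}$ from \Cref{q12} forces $\mathcal{N}_L(t) = 0$ for $t > T := L^{-1} \log \eta^{-1}$; moreover $T \to 0$ as $L \to \infty$, so for $L \ge L_0(\alpha,s,\delta)$ we have $sT \le \delta/3$. Together these yield
\begin{flalign*}
	sL \int_{-C}^{T} e^{stL} \mathbb{E}[\mathcal{N}_L(t)] \, dt \ge \frac{c_0}{2} \exp\big(L(\varphi(s;z) - \delta/3)\big).
\end{flalign*}

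Now I would extract $t_0$ by pigeonhole. Since the integrand is bounded above by $(T + C) \cdot \sup_t e^{stL} \mathbb{E}[\mathcal{N}_L(t)]$, there exists $\hat t_0 \in [-C, T]$ with $e^{s \hat t_0 L} \mathbb{E}[\mathcal{N}_L(\hat t_0)] \ge \tfrac{c_0}{2sL(T+C)} e^{L(\varphi(s;z) - \delta/3)}$. For $L \ge L_0$ the $L^{-1}\log(\cdots)$ correction is at most $\delta/3$, so $F(\hat t_0) := s\hat t_0 + L^{-1}\log \mathbb{E}[\mathcal{N}_L(\hat t_0)] \ge \varphi(s;z) - 2\delta/3$. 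If $\hat t_0 \le 0$ we simply set $t_0 = \hat t_0$. If instead $\hat t_0 \in (0,T]$, I would invoke the monotonicity $\mathcal{N}_L(0) \ge \mathcal{N}_L(\hat t_0)$ to replace it by $t_0 = 0$; the corresponding loss is $F(0) \ge F(\hat t_0) - s\hat t_0 \ge F(\hat t_0) - sT \ge F(\hat t_0) - \delta/3$, which suffices. In either case $t_0 \in [-C,0]$ and $L^{-1} \log \mathbb{E}[\mathcal{N}_L(t_0)] \ge \varphi(s;z) - s t_0 - \delta$.

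The main obstacle is precisely the tension at the right endpoint of the integration range: the deterministic bound $|R_{vw}| \le \eta^{-1}$ is the only a priori tool on the positive-$t$ side (since on $\mathscr{B}$ the intermediate diagonal resolvents entering the product expansion from \Cref{rproduct} are not uniformly controlled), and it introduces a cutoff $T = L^{-1}\log \eta^{-1}$ that shrinks with $L$ only for fixed $\eta$. The clean way around this is the monotonicity shift above, which absorbs the $t > 0$ contribution into the constant $\delta$ at the price of requiring $L \ge L_0(\alpha, s, \delta)$ (implicitly also depending on $\varepsilon$, $B$ per the conventions of this section). The left-tail truncation, by contrast, is routine: it merely requires $C$ large enough so that $s C$ dominates $\log(2\omega^{-\alpha})$ plus the known upper bound on $\varphi(s;z)$.
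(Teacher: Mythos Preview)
The right-tail truncation is a genuine gap. You cut the layer-cake integral at $T = L^{-1}\log\eta^{-1}$ via the deterministic bound $|R| \le \eta^{-1}$, and your monotonicity shift from $\hat t_0 \in (0,T]$ to $t_0 = 0$ costs $s\hat t_0 \le sT$. Making this cost $\le \delta/3$ requires $L \ge 3s\delta^{-1}\log\eta^{-1}$, so your $L_0$ depends on $\eta$; but the proposition demands $L_0 = L_0(\alpha,s,\delta)$ uniform in $z$, and this uniformity is essential downstream (in \Cref{mprobability} one fixes $L$ and then sends $\eta \to 0$). The paper controls the positive-$t$ tail differently, by comparing to the $s'$-moment with $s' = (s+1)/2 > s$ (first part of \Cref{sumlargesmallr}): on $\mathcal{S}_L(C)$ one has $|R|^s \le e^{-(s'-s)CL}|R|^{s'}$, and $\mathbb{E}\big[\sum_w |R|^{s'}\one_{\mathscr{G}_0(w)}\big] \le Ce^{L\varphi(s';z)} \le Ce^{L\varphi(s;z)}$ via the second part of \Cref{ryestimate} and monotonicity of $\varphi$ in $s$, yielding an $\eta$-uniform bound once $C > 2/(s'-s)$. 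The pigeonhole then lands $t_0$ in $[-C,C]$, and $t_0 \le 0$ is recovered by a separate argument again using monotonicity of $\varphi$ in $s$.

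There is also a smaller problem on the left tail: the Galton--Watson bound $\mathbb{E}[\mathcal{N}_L(t)] \le C(2\omega^{-\alpha})^L$ forces your cutoff $C$ to absorb $\alpha\log\omega^{-1}$, so $C$ depends on $\omega(\delta)$, contradicting the stated $C = C(\alpha;s)$ independent of $\delta$. (Also, what is needed here is a \emph{lower} bound on $\varphi(s;z)$, not the upper bound you cite.) The paper instead compares to the $s''$-moment for $s''$ near $\alpha$ (second part of \Cref{sumlargesmallr}), using $\varphi(s'';z) \to \infty$ and convexity of $\varphi$ to make this contribution negligible with $C$ independent of $\omega$.
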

	
	\begin{rem} 
	
	\label{t0} 
	
	The real number $t_0$ from \Cref{nestimate} can in principle depend (fairly discontinuously) on the parameters $(z, \ell, L)$, but the fact that $t_0 \in [-C, 0]$ indicates that it is bounded independently of them.  
	
	\end{rem}

	To establish \Cref{nestimate}, we require the following tail bound, stating that the contribution to the truncated expectation considered in \Cref{expectationr0vsd} does not arise if $\big| R_{vw}^{(v_-, w_+)} \big|$ is either too large or small.
	
	\begin{lem} 
		
		\label{sumlargesmallr} 
		
		Adopting the notation of \Cref{nestimate}, we have  
		\begin{flalign}
			\label{wslcc} 
			\begin{aligned} 
			\mathbb{E} \Bigg[ \displaystyle\sum_{w \in \mathcal{S}_L (C)} \big| R_{vw}^{(v_-, w_+)} \big|^s \Bigg] & \le e^{-L} \cdot \exp \big( L \cdot \varphi (s; z) \big); \\
			\mathbb{E} \Bigg[ \displaystyle\sum_{w \notin \mathcal{S}_L (-C)} \big| R_{vw}^{(v_-, w_+)} \big|^s \Bigg] & \le e^{-L} \cdot \exp \big( L \cdot \varphi (s; z) \big).
			\end{aligned}
		\end{flalign}
	\end{lem}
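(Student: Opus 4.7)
The strategy is a Markov-type estimate that exchanges the exponent $s$ for a nearby $s' \ne s$, combined with the exponential asymptotics $\Phi_L(s';z) \sim e^{L\varphi(s';z)}$ supplied by \Cref{limitr0j} and the monotonicity of $\varphi(\cdot;z)$ in its first argument. Both sums in \eqref{wslcc} are effectively supported on $\mathscr{G}_0(w)$: the set $\mathcal{S}_L(t)$ is by construction contained in $\{\mathscr{B}(v,w)\text{ holds}\}\subseteq\{\mathscr{G}_0(w)\}$ (otherwise the defining indicator would vanish), and I interpret the second sum as implicitly carrying $\one_{\mathscr{B}(v,w)}$, consistent with the lemma's intended use alongside \Cref{expectationr0vsd} in the proof of \Cref{nestimate}. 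Under this reading, $w \notin \mathcal{S}_L(-C)$ combined with $\mathscr{B}(v,w)$ deterministically forces $|R_{vw}^{(v_-,w_+)}| < e^{-CL}$.

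For the first bound, I will fix $s_+ \in (s,1)$, say $s_+ = (1+s)/2$. On $\mathcal{S}_L(C)$, the bound $|R_{vw}^{(v_-,w_+)}| \ge e^{CL}$ together with $s - s_+ < 0$ yields
\bex
|R_{vw}^{(v_-,w_+)}|^{s} \le e^{CL(s-s_+)}\,|R_{vw}^{(v_-,w_+)}|^{s_+}.
\eex
Summing over $w \in \mathcal{S}_L(C) \subseteq \mathbb{D}_L(v)$, majorizing $\one_{\mathcal{S}_L(C)}$ by $\one_{\mathscr{G}_0(w)}$, and invoking the second part of \Cref{ryestimate} (applied with exponent $s_+$) followed by the first part of \Cref{limitr0j}, I arrive at a bound of the form $C_1 \cdot e^{CL(s-s_+) + L\varphi(s_+;z)}$. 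The second part of \Cref{limitr0j} supplies $\varphi(s_+;z) \le \varphi(s;z)$, and fixing $C = C(\alpha,s)$ large enough so that $C(s_+ - s) \ge 1 + L_0^{-1} \log C_1$ yields the claim for all $L \ge L_0$. The second bound is handled symmetrically with $s_- = (s+\alpha)/2 \in (\alpha,s)$: on $\{w \notin \mathcal{S}_L(-C)\} \cap \mathscr{B}(v,w)$ one has $|R_{vw}^{(v_-,w_+)}|^{s} \le e^{-CL(s-s_-)}\,|R_{vw}^{(v_-,w_+)}|^{s_-}$, and the same chain of inequalities, together with $\varphi(s_-;z) \ge \varphi(s;z)$ and the uniform boundedness of $\varphi(s_-;z)$ on the compact strip $\{z : \eps \le |\Re z| \le B,\ \Im z \in (0,1)\}$ (from the fourth and fifth parts of \Cref{limitr0j}), completes the argument.

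The main subtlety is guaranteeing that the multiplicative constants $C_1, C_2$ together with the gap $\varphi(s_-;z) - \varphi(s;z)$ can be absorbed uniformly into the factor $e^{-L}$; this is what forces the threshold $L_0 = L_0(\alpha,s)$ and enables $C$ to be chosen depending only on $\alpha$ and $s$ in the final statement. A secondary, largely cosmetic matter is the interpretation of the second sum as implicitly carrying $\one_{\mathscr{B}}$: without this, the contribution from $w$ on $\mathscr{B}^c$ would have to be controlled separately by combining \Cref{expectationbr} with the upper-bound portion of \Cref{ryestimate}, yielding the same estimate up to constants at the cost of additional bookkeeping.
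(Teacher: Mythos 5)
Your proof of the first bound is essentially identical to the paper's: the same choice $s' = (s+1)/2$, the same deterministic inequality on $\mathcal{S}_L(C)$, the same chain through the second part of \Cref{ryestimate} and \Cref{limitr0j}, and the same monotonicity $\varphi(s';z) \le \varphi(s;z)$. Your observation that the second sum must carry $\one_{\mathscr{B}(v,w)}$ is also correct; the paper's own proof inserts that indicator even though the lemma statement omits it.

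For the second bound you take a genuinely different route. The paper uses a tangent-line argument: it selects $s'' \in (\alpha, (s+\alpha)/2)$ close to $\alpha$ and a slope $t = \partial_s\varphi(s'';z) \le 0$ satisfying $\varphi(s'';z) + t(s-s'') \le \varphi(s;z) - 2$ (possible since $\varphi$ is convex and $\varphi(s'';z)\to\infty$ as $s''\to\alpha$), then bounds the $s$-sum over $\{w\notin\mathcal{S}_L(t)\}$ by $e^{(s-s'')tL}$ times the $s''$-sum and finally passes from threshold $t$ to threshold $-C$ using $C > |t|$ and monotonicity of the sum in the threshold. You instead fix $s_- = (s+\alpha)/2$ once and for all, use the deterministic inequality $|R|^s \le e^{-CL(s-s_-)}|R|^{s_-}$ on $\{w\notin\mathcal{S}_L(-C)\}\cap\mathscr{B}(v,w)$, and choose $C$ large enough to absorb the gap $\varphi(s_-;z) - \varphi(s;z)$. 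This is a simpler and more direct strategy — it avoids the differentiability and tangent-slope bookkeeping entirely — but it requires a \emph{uniform} (in $z$ over the strip) upper bound on $\varphi(s_-;z) - \varphi(s;z)$. That bound is available, but your citation is off: the fourth part of \Cref{limitr0j} is a lower bound on $\varphi$, and the fifth part only covers $|E|\ge 2$, so neither supplies the needed upper control when $|E|<2$. The correct reference is the second part of \Cref{estimatemoment1}, which gives $\log c_1 - \log(s-\alpha) \le \varphi(s;z) \le -\log c_1 - \log(s-\alpha)$ with $c_1 = c_1(\varepsilon,B)$ independent of $s$, whence $\varphi(s_-;z)-\varphi(s;z) \le -2\log c_1 + \log 2$ uniformly; combined with part~4 of \Cref{limitr0j} as a lower bound on $\varphi(s;z)$ this closes your estimate. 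With that citation repaired, your argument is correct.
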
 

	\begin{proof} 
		
		The first and second bounds in \eqref{wslcc} will follow from comparing the $s$-th moments considered in their left sides to an $s'$-th moment for $s' > s$ or an $s''$-th moment for $s'' < s$, respectively. Let us begin by showing the first bound in \eqref{wslcc}. Fix $s' = \frac{s+1}{2} > s$; by \eqref{limitr0j2} and the second part of \Cref{ryestimate}, there exists a constant $C_1 > 0$ such that 
		\begin{align*} 
			\mathbb{E} \Bigg[ \displaystyle\sum_{w \in \mathcal{S}_L (C)} \big| R_{vw}^{(v_-, w_+)} \big|^{s'} \Bigg] &\le \E \left[\sum_{w \in \mathbb D_L (v)} |R_{vw}^{(v_-, w_+)}|^{s'} \cdot \one_{\mathscr G_0 (w)}\right] \\
			&\le C_1 \cdot \E\left[ \sum_{u \in \mathbb D_{L-1} (v)} | R_{vu}^{(v_-)}|^{s'} \right]\\ 
			& \le C_1 \cdot \exp \big( (L-1) \cdot \varphi (s'; z) \big) \\
			&\le C_1 \cdot \exp \big( L \cdot \varphi (s'; z) \big) \le C_1 \cdot \exp \big( L \cdot \varphi (s; z) \big),
		\end{align*} 
	
		\noindent where in the last line we used  the fact that $\varphi$ is nonincreasing in $s$ (by the second part of \Cref{limitr0j}). We also used \Cref{estimatemoment1} to bound $\phi(s;z) < C_1$ in order to replace $L-1$ by $L$ in the exponent. Since $\big| R_{vw}^{(v_-, w_+)} \big| \ge e^{CL}$ for each $w \in \mathcal{S}_L (C)$, it follows that 
		\begin{flalign*}
			e^{C (s' - s) L} \cdot \mathbb{E} \Bigg[ \displaystyle\sum_{w \in \mathcal{S}_L (C)} \big| R_{vw}^{(v_-, w_+)} \big|^s \Bigg] \le C_1 \cdot \exp \big( L \cdot \varphi (s; z) \big),
		\end{flalign*} 
	
		\noindent which yields the first statement of \eqref{wslcc}, after selecting $C = C(s, \varepsilon, B) > \frac{2}{s' - s}$. 
	
		Recall from the second part of \Cref{limitr0j} that $\varphi (s; z)$ is (weakly) convex in $s$; in particular, it is differentiable almost everywhere. Moreover, the fourth part of \Cref{limitr0j} implies that $\lim_{s \rightarrow \alpha} \varphi (s; z) = \infty$. Thus, there exists $s'' \in \big( \alpha, \frac{s+\alpha}{2} \big)$ sufficiently close to $\alpha$ such that the following two properties hold. First, $\varphi (s; z)$ is differentiable at $s''$; set $t = \partial_s \varphi (s''; z) \le 0$, where the nonpositivity of $t$ follows from the second part of \Cref{limitr0j}). Second, $\varphi (s''; z) + t (s'' - s) \le \varphi (s; z) - 2$.
		
		Then, again by \eqref{limitr0j2} we have 
		\begin{flalign*} 
			\mathbb{E} \Bigg[ \displaystyle\sum_{w \notin \mathcal{S}_L (t)} \big| R_{vw}^{(v_-, w_+)} \big|^{s''} \cdot \one_{\mathscr{B} (v, w)}  \Bigg] \le C_1 \cdot \exp \big( L \cdot \varphi (s''; z) \big).
		\end{flalign*} 
		
		\noindent Together with the bound $\big| R_{vw}^{(v_-, w_+)} \big|^s \le e^{(s - s'') tL} \cdot \big| R_{vw}^{(v_-, w_+)} \big|^{s''}$ (as $t \le 0$), we deduce
		\begin{flalign*} 
			\mathbb{E} \Bigg[ \displaystyle\sum_{w \notin \mathcal{S}_L (t)} \big| R_{vw}^{(v_-, w_+)} \big|^s \cdot \one_{\mathscr{B} (v, w)} \Bigg] & \le C_1 \cdot \exp \Big( L \cdot \big( \varphi (s''; z) + t (s-s'')  \big) \Big) \\
			& \le C_1 e^{-2L} \cdot \exp \big( L \cdot \varphi (s; z) \big),
		\end{flalign*} 
		
		\noindent where in the last bound we used the fact that $\varphi (s''; z) + t (s'' - s) \le \varphi (s; z) - 2$. Taking $L$ sufficiently large (and $C > |t|$), this yields the second bound in \eqref{wslcc}. 	
	\end{proof} 

	Now we can establish \Cref{nestimate}. 
	
	\begin{proof}[Proof of \Cref{nestimate}]
		
		Observe that $\big| R_{vw}^{(v_-, w_+)} \big| \cdot \one_{\mathscr{B} (v, w)} \le e^{s (t + \delta/2) L}$ for any $w \in \mathcal{S} (t) \setminus \mathcal{S} \big( t + \frac{\delta}{2} \big)$. Thus, by \Cref{expectationr0vsd} and \Cref{sumlargesmallr}, there exist constants $C_1 > 1$, $c_1 > 0$, $\omega = \omega (\delta) \in (0, 1)$, and $\Omega = \Omega (\delta) > 1$ such that   
		\begin{flalign*}
			\displaystyle\sum_{t \in \mathcal{A}} e^{(t+\delta/2) L} \cdot \mathbb{E} \bigg[ \mathcal{N}_L (t) - \mathcal{N}_L \Big( t + \displaystyle\frac{\delta}{2} \Big) \bigg] & \ge \displaystyle\sum_{t \in \mathcal{A}} \mathbb{E} \Bigg[ \displaystyle\sum_{\substack{v \in \mathcal{S} (t + \delta / 2) \\ v \notin \mathcal{S} (t)}} \big| R_{vw}^{(v_-, w_+)} \big|^s \cdot \one_{\mathscr{B} (v, w; \omega; \Omega)} \Bigg] \\
			& \ge \mathbb{E} \Bigg[ \displaystyle\sum_{v \in \mathcal{S} (-C) \setminus \mathcal{S} (C)} \big| R_{vw}^{(v_-, w_+)} \big|^s \cdot \one_{\mathscr{B} (v, w; \omega; \Omega)} \Bigg] \\
			& \ge c_1 \cdot \exp \Bigg( L \cdot \bigg( \varphi (s; z)  - \displaystyle\frac{\delta}{4} \bigg) \Bigg),
		\end{flalign*} 		
	
		\noindent where 
		\begin{flalign*} 
			\mathcal{A} = \bigg\{ -C, \displaystyle\frac{\delta}{2} - C, \delta - C, \ldots , \displaystyle\frac{D \delta}{2} - C \bigg\}, \qquad \text{and} \qquad D = \bigg\lceil \displaystyle\frac{4C}{\delta} \bigg\rceil.
		\end{flalign*} 
	
		\noindent Hence, there exists some $t_0 = t_0 (\alpha; s; z; \ell; L) \in \mathcal{A}$  such that 
		\begin{flalign*} 
			\mathbb{E} \big[ \mathcal{N}_L (t_0) \big] \ge \displaystyle\frac{c_1}{D} \cdot \exp \Bigg( L \cdot \bigg( \varphi (s; z) - s t_0 - \displaystyle\frac{3 \delta}{4} \bigg) \Bigg).
		\end{flalign*} 
	
		\noindent (Observe that it only depends on $v$ through $\ell$ since the random variables $R_{vw}^{(v_-, w_+)} \cdot \one_{\mathscr{B} (v, w; \omega; \Omega)}$ are identically distributed over $v \in \mathbb{V} (\ell)$.) Taking $L$ sufficiently large, this yields Equation \eqref{expectationm} and the proposition up to the condition that $t_0 \leq 0$ which we now check.
		
To this end, we first prove that, up to increasing our constants $C$ and $L_0$, we have that $t_0 \leq 4 \delta / (1- s)$. Indeed, if this is not the case, there exists $s < s' < 1$ such that $t_0 > 2 \delta / (s'-s)$. Then, if $L \geq C / \delta$ where $C$ is the constant of Theorem \ref{limitr0j2}(1), we would have
$$
\varphi(s) \ge \varphi(s') 
\ge \Phi_L (s'; z) - C L^{-1}
\ge s' t_0 + L^{-1} \log \mathbb{E}[\mathcal N_L (t_0)]  - C L^{-1}.$$
Here, the first inequality follows from Theorem \ref{limitr0j2}(1), the second from Theorem \ref{limitr0j2}(2), and the third follows from only considering $v \in S_L (t_0)$ in the sum defining $\Phi(s'; z)$.   Next, using Equation \eqref{expectationm}, we would get 
$$
\varphi(s) \ge (s' - s) t_0 + \varphi (s) -  \delta  - C /L  > \varphi (s).
$$
where the last inequality comes from $t_0 > 2(s'-s)^{-1} \delta$ and $L \geq C / \delta$.

We  thus have proved that $t_0 \in [-C,4\delta / (1-s)]$. If $t_0 \leq 0$, we are done. If $0 < t_0 \leq 4\delta / (1-s)$, we get from \eqref{expectationm} 
$$
			L^{-1} \log \mathbb{E} \big[ \mathcal{N}_L (t_0; v; \omega; \Omega; z) \big] \ge L^{-1} \log \mathbb{E} \big[ \mathcal{N}_L (t_0; v; \omega; \Omega; z) \big] \geq   \varphi (s; z) - s t_0 - \delta \geq \varphi (s; z) - 5 \delta / (1- s).
$$
Hence, at the cost of changing $\delta$ into $(1-s)\delta / 5$, this concludes the proof.
	\end{proof}

	We next have the following corollary, which lower bounds (though by a quantity that is exponentially small) the probability that $\mathcal{N}_L (t_0)$ is large. In \Cref{ProbabilityNmu0} below (see \Cref{mprobability}), we will explain a way of ``amplifying'' the below corollary to instead provide a high-probability bound that $\mathcal{N}_L (t_0)$ is large.
	
	\begin{cor} 
		
		\label{mudeltak} 
		
		Adopt the notation of \Cref{nestimate}. There is a constant $\mu_0 = \mu_0 (\alpha; s; z; \ell; L) \in [-C, C]$ so that 
		\begin{flalign*} 
			\mathbb{P} \big[ L^{-1} \log \mathcal{N}_L (t_0; v; \omega; \Omega; z) \ge \varphi (s; z) - st_0 + \mu_0 \big] \ge C^{-1} \cdot e^{-(\mu_0 + \delta) L}.
		\end{flalign*}
	\end{cor}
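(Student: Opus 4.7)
The plan is to combine the lower bound on $\mathbb{E}[\mathcal{N}_L(t_0)]$ from \Cref{nestimate} with the upper tail bound from \Cref{nlestimate} via a layer decomposition (dyadic in the logarithmic scale) plus pigeonhole. Write $\tau = \varphi(s;z) - s t_0$ and $X = L^{-1} \log \mathcal{N}_L$ (with $X = -\infty$ if $\mathcal{N}_L = 0$); the conclusion is equivalent to the existence of some $\mu_0 \in [-C,C]$ with $\mathbb{P}[X \ge \tau + \mu_0] \ge C^{-1} e^{-(\mu_0+\delta)L}$.

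First I would apply \Cref{nestimate} with $\delta/2$ in place of $\delta$ (and absorb the resulting adjustment of constants $\omega$, $\Omega$, $L_0$, $C$ into the same letters), so that $\mathbb{E}[\mathcal{N}_L(t_0)] \ge \exp(L(\tau - \delta/2))$ for some $t_0 \in [-C,0]$. Next I would use \Cref{nlestimate} with $A = L^2$ to deduce that the contribution $\mathbb{E}[\mathcal{N}_L \cdot \one_{\mathcal{N}_L \ge C_1^L L^2}]$ is super-exponentially small in $L$ (here $C_1 = C_1(\omega)$ is the constant from \Cref{nlestimate}); more precisely, integrating the tail bound gives a contribution of order $C_1^L e^{-L^2/3}$, which is negligible compared to $e^{L(\tau - \delta/2)}$ once $L \ge L_0$ is enlarged (using the uniform bound $|\tau| \le C$ coming from $t_0 \in [-C,0]$ together with \Cref{limitr0j}). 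Consequently $X$ is effectively bounded above by a constant $C_2 := \log C_1 + 1$.

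Now I would partition the effective range $[-C,C_2]$ of $X - \tau$ into $D = \lceil 4(C+C_2)/\delta \rceil$ subintervals of length $\delta/4$, using the grid points $\mu_k = -C + k\delta/4$. Decomposing the expectation and bounding each slice by its maximum times the tail probability yields
\begin{flalign*}
\mathbb{E}\big[\mathcal{N}_L \one_{\mathcal{N}_L \le C_1^L L^2}\big]
 \le e^{L(\tau - C)} + e^{L\tau} \sum_{k=0}^{D-1} e^{L\mu_{k+1}} \, \mathbb{P}[X \ge \tau + \mu_k].
\end{flalign*}
Combining this with the lower bound from the first step (and using that $C$ can be taken larger than $\delta$, and that the truncated and full expectations differ by a negligible amount for $L$ large), I obtain
\begin{flalign*}
\tfrac{1}{2} e^{-L\delta/2} \le \sum_{k=0}^{D-1} e^{L\mu_{k+1}} \mathbb{P}[X \ge \tau + \mu_k] \le D \, e^{L\delta/4} \cdot \max_k \Big( e^{L\mu_k} \mathbb{P}[X \ge \tau + \mu_k] \Big),
\end{flalign*}
so by pigeonhole some $\mu_0 = \mu_{k_0}$ satisfies $\mathbb{P}[X \ge \tau + \mu_0] \ge (2D)^{-1} e^{-L(\mu_0 + 3\delta/4)} \ge (2D)^{-1} e^{-L(\mu_0+\delta)}$. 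Taking the final constant in the statement to be $\max(C, C_2, 2D)$ then gives the claim.

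I do not anticipate a serious obstacle: the argument is essentially a reverse Markov / layer-cake computation, and the only technical subtlety is ensuring the grid spacing ($\delta/4$) together with the strengthened input ($\delta/2$ from \Cref{nestimate}) add up to the $\delta$ appearing in the conclusion, and that the super-exponential decay from \Cref{nlestimate} truly swamps the polynomial-in-$L$ factors introduced by the truncation. Both points are handled above.
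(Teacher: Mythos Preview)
Your proof is correct and follows essentially the same approach as the paper: both combine the lower bound on $\mathbb{E}[\mathcal{N}_L]$ from \Cref{nestimate} with the upper tail estimate from \Cref{nlestimate} via a layer-cake decomposition over a $\delta$-spaced grid in $\mu$, the only difference being that yours is packaged as a direct pigeonhole argument on the slices while the paper's is phrased by contradiction. Your device of invoking \Cref{nestimate} with $\delta/2$ at the outset to leave room for the $\delta/4$ lost to the grid spacing is a clean way to close the bookkeeping.
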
 
	
	As in \Cref{t0}, we mention that the real number $\mu_0$ from \Cref{mudeltak} can in principle depend quite discontinuously on $(z, \ell, L)$, but that is bounded independently of them. The proof of \Cref{mudeltak} will follow from \Cref{nestimate}, together with the tail estimate \Cref{nlestimate} for $\mathcal{N}_L$.

	\begin{proof}[Proof of \Cref{mudeltak}]
	
		Let $C_1 > 1$ be some constant to be determined later, and assume to the contrary that 
		\begin{flalign}
		\label{nlc}
			\mathbb{P} \big[ L^{-1} \log \mathcal{N}_L (t_0; v; \omega; \Omega; z) \ge \varphi (s; z) - st_0 + \mu \big] \le C_1 \cdot e^{-(\mu + \delta) L}, \quad \text{for each $\mu \in [-2C_1, 2C_1]$},
		\end{flalign}
		
		\noindent which we will show contradicts \Cref{nestimate}. To that end, abbreviate $\mathcal{N} = \mathcal{N}_L (t_0; v; \omega; \Omega; z)$ and denote $\psi = \varphi (s; z) - st_0$. Further define the set 
		\begin{flalign*} 
		\mathcal{M} = \bigg\{ \mu = \displaystyle\frac{k \delta}{2} : k \in \mathbb{Z}, \psi + \mu \ge 0 \bigg\}.
		\end{flalign*} 
		
		\noindent Then, observe that 
		\begin{flalign*}
			\mathbb{E} [ \mathcal{N}] \le \displaystyle\sum_{\mu \in \mathcal{M}} \mathbb{E} \big[ \mathcal{N} \cdot \one_{L^{-1} \log \mathcal{N} \in [\psi + \mu - \delta/2, \psi + \mu]} \big] \le \displaystyle\sum_{\mu \in \mathcal{M}} e^{(\psi + \mu) L} \cdot \mathbb{P} \bigg[ L^{-1} \log \mathcal{N} \ge \psi + \mu - \displaystyle\frac{\delta}{2} \bigg].
		\end{flalign*} 
		
		Applying \eqref{nlc}, we obtain for sufficiently large $L$ that
		\begin{flalign}
		\label{psimu0} 
		\begin{aligned}
			\displaystyle\sum_{\mu \in \mathcal{M}} e^{(\psi + \mu) L} \cdot \mathbb{P} \bigg[ L^{-1} \log \mathcal{N} \ge \psi + \mu - \displaystyle\frac{\delta}{2} \bigg] \cdot \one_{|\mu| \le C_1} & \le \displaystyle\sum_{\mu \in \mathcal{M}} e^{(\psi + \mu) L} \cdot e^{- (\mu + \delta/2) L} \cdot \one_{|\mu| \le C_1} \\
			& \le 4C_1 \delta^{-1} \cdot e^{(\psi - \delta/2) L} \le e^{(\psi - \delta/3) L}.
			\end{aligned}
		\end{flalign}
		
		\noindent Additionally, applying \Cref{nlestimate} (with the $A$ there equal to $e^{\mu L/2}$ here) we obtain for $C_1$ sufficiently large (so that $\psi + \frac{\mu}{2} - 1 \ge C$ for $\mu > C_1$, where $C$ is from \Cref{nlestimate}) sufficiently large that
		\begin{flalign}
		\label{psimu1} 
		\begin{aligned} 
			\displaystyle\sum_{\mu \in \mathcal{M}} e^{(\psi + \mu) L} \cdot \mathbb{P} \bigg[ L^{-1} \log \mathcal{N} \ge \psi + \mu - \displaystyle\frac{\delta}{2} \bigg] \cdot \one_{\mu > C_1} & \le 3 \displaystyle\sum_{\mu \in \mathcal{M}} e^{(\psi + \mu)L} \cdot \exp \bigg( -\displaystyle\frac{e^{\mu L}}{2} \bigg) \cdot \one_{\mu > C_1} \\
			& \le e^{(\psi - \delta/3) L},
		\end{aligned} 
		\end{flalign}
		
		\noindent where the last inequality again follows from taking $C_1$ and $L$ sufficiently large.
		
		Combining \eqref{psimu0} and \eqref{psimu1} (with using the fact that $\mathcal{M}$ cannot contain any $\mu < -C_1$ for $C_1$ sufficiently large) gives 
		\begin{flalign*}
		\mathbb{E} [\mathcal{N}] \le e^{(\psi - \delta/4) L},
		\end{flalign*}		
		
		\noindent for sufficiently large $L$, which contradicts \Cref{nestimate} and establishes the corollary. 		
	\end{proof}

	\subsection{Lower Bound on Smaller Resolvent Entries}
	
	\label{EstimateNLkappa} 
	
	In this section we provide a lower bound on the number of resolvent entries that are not too small. We begin with the following definition that provides a minor modification of the quantity $\mathcal{N}_{\ell} (t)$ from \Cref{ns} (essentially omitting the event $\mathscr{D} (v, w)$ from \Cref{br0vw} that constrains the tree weights $T_{vw}$).
	
	\begin{definition} 
		
	\label{ml} 
	
	Fix a complex number $z \in \mathbb{H}$; an integer $\ell \ge 1$; and real numbers $\kappa \in \mathbb{R}_{> 0}$ and $\Omega > 1$.  Define the vertex set $\mathcal{S}_{\ell}' (\kappa) = \mathcal{S}_{\ell}' (\kappa; \Omega) = \mathcal{S}_{\ell}' (\kappa; \Omega; z)$ and integer $\mathcal{N}_{\ell}' (\kappa) = \mathcal{N}_{\ell}' (\kappa; \Omega; z)$ by 
	\begin{flalign*} 
		\mathcal{S}_{\ell}' (\kappa) = \Big\{ v \in \mathbb{V}(\ell) : \big| R_{0v}^{(v_+)} \big| \cdot \one_{\mathscr{G}_0 (v)} \ge \kappa^{\ell}, \big| R_{vv}^{(v_+)} \big| \le \Omega \Big\}; \qquad \mathcal{N}_{\ell}' (\kappa) = \big| \mathcal{S}_{\ell}' (\kappa) \big|. 
	\end{flalign*}

	\end{definition}  
	
	In this section we establish the following lemma, which states that $\mathcal{N}_{\ell}' (\kappa)$ can be made arbitrarily large by taking $\kappa$ sufficiently small. It will be used in the proof of \Cref{mprobability} below. 
	
	\begin{lem} 
		
		\label{vertexk} 
		
		For any real numbers $\Delta > 1$ and $\varepsilon \in (0, 1)$, there exist constants $\Omega = \Omega (\Delta, \eps) > 1$ and $\kappa = \kappa (\Delta, \eps) > 0$ such that, for any integer $\ell \ge 2$, we have
		\begin{flalign*}
			\mathbb{P} \big[ \mathcal{N}_{\ell}' (\kappa) \ge \Delta^{\ell} \big] \ge 1 - \eps.
		\end{flalign*} 
	\end{lem}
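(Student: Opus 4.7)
The plan is to construct a Galton--Watson subtree of $\mathbb T$ whose level-$\ell$ population witnesses the bound, and then to pass from the resolvent entries $R_{0v}$ on that subtree to $R_{0v}^{(v_+)}$ through a single-edge resolvent identity. Fix $\Delta > 1$ and $\eps > 0$. By \Cref{ar} together with the first part of \Cref{q12} (so that $R_{ww}^{(u)}$ has law $R_\star$ for each $w \in \mathbb D(u)$), choose $\theta_0 \in (0,1)$ and $\Omega_0 > 1$ with $\P[|R_{ww}^{(u)}| \in [\theta_0,\Omega_0]] \ge 1-\eps/20$. By \Cref{tuvalpha}, shrinking $a \in (0,1/2)$ makes the Poisson parameter $\alpha \int_a^{1/2} x^{-\alpha-1}\,dx$ arbitrarily large. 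Declare a child $w$ of a vertex $u$ to be \emph{good} if $|T_{uw}| \in [a, 1/2]$ and $|R_{ww}^{(u)}| \in [\theta_0, \Omega_0]$: by Poisson thinning of the children weights and the independence of subtrees below different children, the good descendants of $0$ form a Galton--Watson tree $\mathcal V$ with $\mathrm{Poisson}(\lambda)$ offspring, where $\lambda$ can be taken as large as desired.

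A Chebyshev bound using the second-moment formula for a Poisson Galton--Watson process (whose variance-to-squared-mean ratio at generation $\ell$ is at most $(\lambda-1)^{-1}$) shows that, for $\lambda$ large, $|\mathcal V(\ell)| \ge (\lambda/2)^\ell$ on an event $\mathscr G_1$ of probability $\ge 1 - \eps/4$. For any $v \in \mathcal V(\ell)$, iterating \Cref{rproduct} along the path from $0$ to $v$ together with the good-child inequalities yields
\bex
|R_{0v}| \;\ge\; |R_{00}|\,(a\theta_0)^\ell,
\eex
and choosing $\theta_1 > 0$ so small that $\P[|R_{00}| \ge \theta_1] \ge 1 - \eps/4$ (via \Cref{ar}) disposes of the prefactor.

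To convert this bound into one on $R_{0v}^{(v_+)}$, I would use the rank-one resolvent identity across the edge $(v, v_+)$: combining $\R^{(v_+)} = \R + \R(\T - \T^{(v_+)})\R^{(v_+)}$ with \Cref{rproduct} applied to $R_{0v_+} = -T_{vv_+} R_{v_+v_+}^{(v)} R_{0v}$ gives, after a one-line calculation,
\bex
R_{0v}^{(v_+)} \;=\; R_{0v}\,\big(1 - T_{vv_+}^2\,R_{v_+v_+}^{(v)}\,R_{vv}^{(v_+)}\big).
\eex
For each $v \in \mathcal V(\ell)$, define the endpoint event
\bex
\mathcal E_v \;=\; \mathscr G_0(v)\cap\big\{|R_{vv}^{(v_+)}|\le\Omega\big\}\cap\big\{|R_{v_+v_+}^{(v)}|\le\Omega\big\}\cap\big\{|1 - T_{vv_+}^2 R_{v_+v_+}^{(v)} R_{vv}^{(v_+)}| \ge c_2\big\}.
\eex
The event $\mathscr G_0(v)$ depends only on children of $v$ with $|T_{vw}|\in[1,2]$, a range disjoint from the one used to build $\mathcal V$, so by \Cref{tuvalpha} it has positive probability depending only on $\alpha$. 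Conditionally on $\mathscr G_0(v)$, the magnitude of $R_{vv}^{(v_+)}$ is controlled by \Cref{zuv} applied to the Schur-complement sum over the Poisson process of children with $|T_{vw}|^2$ outside $[1,4]$, while $R_{v_+v_+}^{(v)}$ has the unconditional law $R_\star$ by \Cref{q12} and is bounded via \Cref{ar}. Conditioned on $T_{vv_+}$ and the tree above $v$, the two factors $R_{vv}^{(v_+)}$ and $R_{v_+v_+}^{(v)}$ depend on disjoint subtrees and are therefore independent, so the non-concentration requirement reduces to a local density estimate for the complex random variable $T_{vv_+}^2\,R_{v_+v_+}^{(v)} R_{vv}^{(v_+)}$ near $1$; that estimate follows from the density bounds of \Cref{ar}. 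Together these contribute $\P[\mathcal E_v\mid\text{tree above level }\ell]\ge p_*>0$ uniformly, and since each $\mathcal E_v$ depends only on $T_{vv_+}$ plus the disjoint subtrees rooted at $v$'s remaining children and at $v_+$, the events $\{\mathcal E_v\}_{v\in\mathcal V(\ell)}$ are conditionally independent.

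Finally, a Chernoff estimate yields $|\{v\in\mathcal V(\ell):\mathcal E_v\}|\ge p_*|\mathcal V(\ell)|/2$ on an event $\mathscr G_2$ of conditional probability $\ge 1 - \exp(-p_*|\mathcal V(\ell)|/8) \ge 1 - \eps/4$ for $\lambda$ sufficiently large. On $\mathscr G_1 \cap \mathscr G_2 \cap \{|R_{00}|\ge\theta_1\}$, each such $v$ satisfies $|R_{vv}^{(v_+)}|\le\Omega$ and $|R_{0v}^{(v_+)}| \ge c_2\theta_1(a\theta_0)^\ell$; taking $\kappa := a\theta_0\sqrt{c_2\theta_1}/2$ gives $|R_{0v}^{(v_+)}|\ge\kappa^\ell$ for every $\ell\ge 2$, so $v\in\mathcal S_\ell'(\kappa)$, and choosing $\lambda$ large enough makes the count exceed $\Delta^\ell$. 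A union bound over the four bad events produces the lemma with total failure probability at most $\eps$. The main obstacle I anticipate is the non-concentration step: quantifying that $T_{vv_+}^2 R_{v_+v_+}^{(v)} R_{vv}^{(v_+)}$ cannot cluster near $1$, uniformly over the conditioning on the tree above $v$, requires a quantitative density bound on the product of two independent copies of $R_\star$ near a generic complex value, which should follow from \Cref{ar} but must be threaded carefully through the disjoint-subtree independence.
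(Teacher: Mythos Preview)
Two dependence issues break the argument, and neither is the non-concentration step you flagged.

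\textbf{The good-descendant tree is not Galton--Watson.} Your goodness condition at a child $w$ includes $|R_{ww}^{(u)}|\in[\theta_0,\Omega_0]$, and by the Schur identity \eqref{qvv} this is a function of the entire subtree rooted at $w$ --- precisely the data $\{T_{ww'},R_{w'w'}^{(w)}\}_{w'\in\mathbb D(w)}$ that also determine how many good children $w$ has. Conditioning on $w$ being good therefore biases the law of the subtree below $w$, so the offspring distribution at a good vertex is not the unconditional Poisson thinning you claim, and the second-moment bound on $|\mathcal V(\ell)|$ has no basis. More fundamentally, the factors $|R_{uu}^{(u_-)}|$ in your product expansion are nested (each depends on the full subtree below $u$, which contains the subtree below $u_+$), so there is no clean independence along the path.

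\textbf{The events $\mathcal E_v$ are not conditionally independent.} You assert that $\mathcal E_v$ depends only on $T_{vv_+}$ and the subtrees at $v$'s other children and at $v_+$. This omits the parent contribution in
\[
R_{vv}^{(v_+)}=-\Big(z+T_{v_-v}^2\,R_{v_-v_-}^{(v)}+\sum_{w\in\mathbb D(v),\,w\ne v_+}T_{vw}^2\,R_{ww}^{(v)}\Big)^{-1},
\]
and $R_{v_-v_-}^{(v)}$ is a function of the whole tree outside the subtree at $v$, hence of the subtree at every other $v'\in\mathcal V(\ell)$. No conditioning on the first $\ell$ levels of edge weights decouples $\mathcal E_v$ from $\mathcal E_{v'}$, so the Chernoff estimate on $|\{v:\mathcal E_v\}|$ is unsupported.

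The paper takes a different route that sidesteps the first problem entirely: it expands $R_{0v}^{(v_+)}$ via \Cref{rproduct} as a product of factors $|R_{uu}^{(u_+)}|$ (removing the \emph{forward} child on the path rather than the parent), builds the skeleton tree $\mathcal U$ from edge weights alone (which is a genuine Galton--Watson tree), and controls $|R_{uu}^{(u_+)}|=|z+K_u+Q_u|^{-1}$ through events $\mathscr P(u)$ whose fresh randomness, once the tree above $u$ is revealed, comes from children of $u$ other than $u_+$. The forward subtree is never touched until the next step, and no conversion identity from $R_{0v}$ to $R_{0v}^{(v_+)}$ is needed.
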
 
	
	To establish this lemma, let $\varpi = \varpi (\Delta, \eps) \in (0, 2^{-2/\alpha})$ and $\Theta = \Theta (\Delta, \eps, \varpi) > 1$ be real numbers to be fixed later. For any vertex $v \in \mathbb{V}$, define the vertex set $\mathcal{A}_v = \mathcal{A}_v (\varpi) \subset \mathbb{V}$ by
		\begin{flalign}
			\label{av0} 
			& \mathcal{A}_v = \mathcal{A}_v (\varpi) = \big\{ w \in \mathbb{D} (v) : |T_{vw}| \in [\varpi, 1] \big\},
		\end{flalign} 
	
		\noindent the set of children of $v$ whose weight to $v$ is at least $\varpi$. Further inductively define the vertex sets $\mathcal{U} (k) \subset \mathbb{V}$ for each integer $k \ge 0$, by setting 
		\begin{flalign}
			\label{u0k} 
			\mathcal{U} (0) = \{ v \}; \qquad \mathcal{U} (k+1) = \bigcup_{w \in \mathcal{U}(k)} \mathcal{A}_w; \qquad \mathcal{U} = \bigcup_{k = 0}^{\infty} \mathcal{U} (k),
		\end{flalign} 
	
		\noindent so that in particular all edge weights connecting vertices in $\mathcal{U}$ are (in magnitude) at least $\varpi$. For any vertex $v \in \mathcal{U}$, define the complex numbers $K_v$ and $Q_v$ by
		\begin{flalign*} 
			K_v = \displaystyle\sum_{w \in \mathbb{D} (v) \setminus \mathcal{A}_v} T_{vw}^2 R_{ww}^{(v)}; \qquad Q_v = z + T_{v_- v}^2 R_{v_- v_-}^{(v)} + \displaystyle\sum_{\substack{w \in \mathcal{A}_v \\ w \ne v_+}} T_{vw}^2 R_{ww}^{(v)}.
		\end{flalign*} 
	
		\noindent Then define events bounding $R_{v_- v_-}^{(v)}$, $K_v$, and $Q_v$, given by
		\begin{flalign}
			\label{p0} 
			\begin{aligned} 
			& \mathscr{P}_0 (v) = \big\{ \Theta^{-1} \le \big| R_{v_- v_-}^{(v)} \big| \le \varpi^{-1} \big\}; \qquad \mathscr{P}_1 (v) = \big\{ |K_v| + |Q_v| \le \Theta \big\}; \\
			& \mathscr{P}_2 (v) = \big\{ |K_v + Q_v| \ge \varpi \big\}; \qquad \qquad \quad  \mathscr{P} (v) = \mathscr{P}_0 (v) \cap \mathscr{P}_1 (v) \cap \mathscr{P}_2 (v).
			\end{aligned} 
		\end{flalign} 
	
		Recall that $\mathbb{P}_v$ denotes the conditional probability with respect to the subtree $\mathbb{T}_- (v)$. Then observe, for sufficiently small $\varpi$ and sufficiently large $\Theta$, we have
		\begin{flalign}
			\label{p02} 
			\mathscr{P}_0 (v) \subseteq \mathscr{P}_2 (v_-); \qquad \mathbb{P}_v \big[ \mathscr{P}_1 (v) \cap \mathscr{P}_2 (v) \big| \mathscr{P}_0 (v) \big] \ge 1 - \displaystyle\frac{\eps}{16},
		\end{flalign}
	
		\noindent where the first statement holds since $R_{v_- v_-}^{(v)} = (K_{v_-} + Q_{v_-})^{-1}$ (by \eqref{qvv}) and the second holds from \eqref{zuv} (and \Cref{tuvalpha} and \Cref{ar}). We then define the vertex sets $\mathcal{V} (k)$, for $k \ge 0$, and $\mathcal{V}_0 (\ell)$ by
		\begin{flalign}
			\label{vk} 
			\mathcal{V} (k) = \bigg\{ w \in \mathcal{U} (k) : \text{$\bigcap_{j = 0}^k \mathscr{P} \big( w_-^{(j)} \big)$ holds} \bigg\}; \qquad \mathcal{V}_0 (\ell) = \big\{ w \in \mathcal{V} (\ell) : \text{$\mathscr{G}_0 (w)$ holds} \big\},
		\end{flalign}
		
		\noindent where the $w_-^{(j)}$ are defined inductively, by setting $w_-^{(0)} = w$ and $w_-^{(j+1)} = \big( w_-^{(j)} \big)_-$ for each $j \ge 0$ (in words, $w^{(j)}$ is the $j$-th ancestor of $w$). 
		
	The following lemma indicates that, to lower bound $\mathcal{N}_{\ell}'$, it suffices to lower bound $\big| \mathcal{V}_0 (\ell) \big|$.

	\begin{lem} 
		
		\label{sv} 
		
		Under the above notation, we have $ \mathcal{V}_0 (\ell)\subseteq \mathcal{S}_{\ell}' (\varpi \Theta^{-2}; \varpi^{-1}; z)$. 
	\end{lem}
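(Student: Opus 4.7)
The plan is to fix $w \in \mathcal{V}_0(\ell)$ and verify the two defining properties of $\mathcal{S}_\ell'(\varpi\Theta^{-2}; \varpi^{-1}; z)$: namely, that $|R_{0w}^{(w_+)}| \ge (\varpi \Theta^{-2})^{\ell}$ and $|R_{ww}^{(w_+)}| \le \varpi^{-1}$, where $w_+ = \mathfrak{c}(w)$ is well-defined by $\mathscr{G}_0(w)$. The starting point is the second form of \Cref{rproduct} applied with $\mathcal{U} = \{w_+\}$, which gives
\begin{equation*}
\big| R_{0w}^{(w_+)} \big| = \big| R_{ww}^{(w_+)} \big| \cdot \prod_{0 \preceq u \prec w} |T_{uu_+}| \cdot \big| R_{uu}^{(u_+)} \big|,
\end{equation*}
where $u_+$ is the child of $u$ on the path from $0$ to $w$. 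Here I first observe that $R_{uu}^{(w_+, u_+)} = R_{uu}^{(u_+)}$, since removing $u_+$ already severs the subtree containing $w_+$ from $u$, so additionally deleting $w_+$ has no effect.

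Next, I would estimate each factor in the product. The membership $w \in \mathcal{U}(\ell)$ forces $u_+ \in \mathcal{A}_u$ for every edge on the path, giving $|T_{uu_+}| \ge \varpi$. Setting $v = u_+$, the vertex $v$ ranges over $\{w_-^{(\ell-1)}, \ldots, w_-^{(0)}\} = \{w_-^{(\ell-1)}, \ldots, w\}$; since $\mathscr{P}_0(v)$ is included in the intersection $\bigcap_{j=0}^{\ell} \mathscr{P}(w_-^{(j)})$ defining $\mathcal{V}(\ell)$, it yields $|R_{uu}^{(u_+)}| = |R_{v_-v_-}^{(v)}| \ge \Theta^{-1}$. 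Hence the product is bounded below by $(\varpi \Theta^{-1})^{\ell}$.

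The main step, and the principal technical obstacle, is controlling the terminal factor $|R_{ww}^{(w_+)}|$ using only $\mathscr{P}_1(w) \cap \mathscr{P}_2(w)$. Applying the Schur complement \eqref{qvv} at $w$ gives
\begin{equation*}
-\big(R_{ww}^{(w_+)}\big)^{-1} = z + T_{w_- w}^2 R_{w_- w_-}^{(w)} + \sum_{u \in \mathcal{D}(w) \setminus \{w_+\}} T_{wu}^2 R_{uu}^{(w)},
\end{equation*}
which equals $K_w + Q_w$ precisely when one interprets $v_+ = w_+$ in the definitions of $K_v, Q_v$ as specifying the vertex whose contribution is to be excluded from the Schur sum, with $w_+$'s term correspondingly removed from $K_w$. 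With this convention, $\mathscr{P}_1(w)$ and $\mathscr{P}_2(w)$ directly yield $\varpi \le |K_w + Q_w| \le \Theta$, hence $\Theta^{-1} \le |R_{ww}^{(w_+)}| \le \varpi^{-1}$.

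Combining the bounds gives $|R_{0w}^{(w_+)}| \ge \Theta^{-1} \cdot (\varpi \Theta^{-1})^{\ell} = \varpi^{\ell}\Theta^{-\ell-1} \ge (\varpi\Theta^{-2})^{\ell}$ for $\ell \ge 1$, together with $|R_{ww}^{(w_+)}| \le \varpi^{-1}$, establishing $w \in \mathcal{S}_\ell'(\varpi\Theta^{-2}; \varpi^{-1}; z)$. The subtlety I would need to justify carefully is that $w_+ = \mathfrak{c}(w) \in \mathcal{Y}(w)$ typically does not lie in $\mathcal{A}_w$, since $[\varpi, 1] \cap [1, 2] = \{1\}$ is a measure-zero constraint on $|T_{ww_+}|$; absent the interpretation above, a fallback argument derives $\Theta^{-1} \le |R_{ww}| \le \varpi^{-1}$ from $\mathscr{P}(w)$ and then converts this to bounds on $|R_{ww}^{(w_+)}|$ via the identity $(R_{ww}^{(w_+)})^{-1} = R_{ww}^{-1} + T_{ww_+}^2 R_{w_+w_+}^{(w)}$ using positivity of the imaginary parts and the fact that $T_{ww_+}^2 \operatorname{Im} R_{w_+w_+}^{(w)} \le \operatorname{Im} K_w \le \Theta$.
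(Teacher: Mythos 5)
Your write-up is correct in substance and takes essentially the same route as the paper: the second form of \Cref{rproduct} with $\mathcal{U}=\{w_+\}$, the simplification $R_{uu}^{(w_+,u_+)}=R_{uu}^{(u_+)}$, the edge bounds $|T_{uu_+}|\ge\varpi$ from $u_+\in\mathcal{A}_u$, and the diagonal bounds $|R_{uu}^{(u_+)}|=|R_{v_-v_-}^{(v)}|\ge\Theta^{-1}$ from $\mathscr{P}_0(v)$ with $v=u_+$. This is exactly what the paper's (terse and slightly mis-typeset) display \eqref{r0vomega} is driving at, and your bookkeeping of the $\ell$ factors is cleaner than the paper's.

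Where your analysis genuinely adds value is the terminal factor $|R_{ww}^{(w_+)}|$. The paper's proof attributes the bound on $|R_{vv}^{(v_+)}|$ to ``$\mathscr{P}_0$,'' but $\mathscr{P}_0(v)$ only constrains $R_{v_-v_-}^{(v)}$, not $R_{vv}^{(v_+)}$; the event that actually does the work is $\mathscr{P}_1(w)\cap\mathscr{P}_2(w)$, read as giving $\varpi\le|K_w+Q_w|\le\Theta$. You are right that one then needs $K_w+Q_w=-\big(R_{ww}^{(w_+)}\big)^{-1}$, and you are also right that the literal definitions in \eqref{p0} do not quite produce this when $w_+=\mathfrak{c}(w)$: since $|T_{w\mathfrak{c}(w)}|\in[1,2]$ while $\mathcal{A}_w$ requires $|T|\in[\varpi,1]$, almost surely $\mathfrak{c}(w)\notin\mathcal{A}_w$, so the exclusion ``$w\ne v_+$'' inside $Q_w$ is vacuous and $\mathfrak{c}(w)$'s term sits inside $K_w$, making $K_w+Q_w=-R_{ww}^{-1}$ instead. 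Your primary interpretation -- that the $v_+$-term should be excised from the Schur sum wherever it lives -- is what makes $\mathscr{P}_1(w)\cap\mathscr{P}_2(w)\Rightarrow\Theta^{-1}\le|R_{ww}^{(w_+)}|\le\varpi^{-1}$ and hence makes the lemma go through; this is clearly the intended reading, and noticing it was the right call.

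The fallback in your last paragraph, however, does not actually work. The identity $\big(R_{ww}^{(w_+)}\big)^{-1}=R_{ww}^{-1}+T_{ww_+}^2 R_{w_+w_+}^{(w)}$ is correct, but ``positivity of the imaginary parts'' cuts the wrong way: $\Im R_{ww}^{-1}<0$ while $\Im\big(T_{ww_+}^2 R_{w_+w_+}^{(w)}\big)>0$, so the two terms can cancel, and a lower bound on $|R_{ww}^{-1}|$ together with the one-sided bound $T_{ww_+}^2\,\Im R_{w_+w_+}^{(w)}\le\Im K_w\le\Theta$ does not keep $\big|\big(R_{ww}^{(w_+)}\big)^{-1}\big|$ away from zero. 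In particular you cannot recover the needed upper bound $|R_{ww}^{(w_+)}|\le\varpi^{-1}$ from $|R_{ww}|\le\varpi^{-1}$ along this route, nor the lower bound, since $T_{ww_+}^2|\Re R_{w_+w_+}^{(w)}|$ is uncontrolled. This is not fatal -- your main interpretation already closes the argument -- but the fallback is not a genuine alternative.
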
 

	\begin{proof} 
	
		Observe for any $v \in \mathcal{V}_0 (\ell)$ that 
		\begin{flalign} 
			\label{r0vomega} 
			\big| R_{0v}^{(v_+)} \big| = \big| R_{00}^{(0+)} \big| \cdot \displaystyle\prod_{0 \preceq u \prec v} | T_{uu_+}| \cdot \big| R_{uu}^{(u_+)} \big| \ge \Theta^{-\ell + 1} \varpi^{\ell} \ge (\Theta^{-2} \varpi)^{\ell},
		\end{flalign} 
	
		\noindent where the first statement holds by \Cref{rproduct}; the second holds since $|T_{uu_+}| \ge \varpi$ (by \eqref{av0}, \eqref{u0k}, and \eqref{vk}) and $\big| R_{uu}^{u_+} \big| \le \Theta^{-1}$ (by the event $\mathscr{P}_0$ from \eqref{p0}, and \eqref{vk}); and the third holds since $\Theta > 1$. Further observe (from the event $\mathscr{P}_0$ in \eqref{p0}, and \eqref{vk}) that for any $v \in \mathcal{V}_0 (\ell)$ we have
		\begin{flalign*} 
			\big| R_{vv}^{(v_+)} \big| \le \Theta, \quad \text{and $\mathscr{G}_0 (v)$ holds}.
		\end{flalign*} 
	
		\noindent This, together with \eqref{r0vomega} implies the lemma. 
	\end{proof}

	Before lower bounding $\big| \mathcal{V}_0 (\ell) \big|$, we lower bound $\big| \mathcal{V} (m) \big|$. 
	
	\begin{lem} 
		
		\label{vestimate}
		For  real numbers $\Delta > 1$ and $\varepsilon \in (0, 1)$. There exist constants $\varpi_0(\Delta, \eps) > 0$ 
		and $\Theta_0(\Delta, \eps, \varpi)$ such that for
		$\varpi \in (0, \varpi_0)$ and $\Theta > \Theta_0$,
		we have for any integer $m \ge 1$ that
		\begin{flalign*} 
			\mathbb{P} \Big[ \big| \mathcal{V} (m) \big| \ge \Delta^m \Big] \ge 1 - \displaystyle\frac{\eps}{2}.
		\end{flalign*} 
	
	\end{lem}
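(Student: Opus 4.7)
The strategy is to stochastically dominate $(|\mathcal{V}(k)|)_{k \geq 0}$ from below by the generation sizes of a supercritical Galton--Watson branching process whose Poisson offspring mean $\nu$ can be made arbitrarily large by taking $\varpi$ small and $\Theta$ large; standard branching-process estimates then yield the conclusion.

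The main per-vertex estimate goes as follows. Fix $v \in \mathcal{V}(k)$ and condition on the $\sigma$-algebra $\mathcal{F}_v$ generated by $\mathbb{T}_-(v)$ together with the event $\mathscr{E}_v := \bigcap_{j=0}^{k} \mathscr{P}(v_-^{(j)})$. By \Cref{tuvalpha} and Poisson thinning, $|\mathcal{A}_v|$ is Poisson with mean $\mu(\varpi) := \varpi^{-\alpha} - 1$, which tends to $\infty$ as $\varpi \to 0$. For each $w \in \mathcal{A}_v$, I will show that
$$
\mathbb{P}\bigl[\mathscr{P}(w) \,\big|\, \mathcal{F}_v, \, |T_{vw}|\bigr] \geq 1 - \eta(\varpi, \Theta), \qquad \eta(\varpi, \Theta) \to 0 \text{ as } \varpi \to 0,\ \Theta \to \infty.
$$
By \eqref{p02} this reduces to lower-bounding $\mathbb{P}[\mathscr{P}_0(w) \mid \mathcal{F}_v]$, which concerns whether $|R_{vv}^{(w)}| \in [\Theta^{-1}, \varpi^{-1}]$. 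Via the Schur complement identity \eqref{qvv}, write $R_{vv}^{(w)} = -(A + \Sigma_w)^{-1}$, where $A := z + T_{v_-v}^2 R_{v_-v_-}^{(v)}$ is $\mathcal{F}_v$-measurable with $|A| \leq |z| + \varpi^{-1}$ (since $\mathscr{P}_0(v) \subset \mathscr{E}_v$ bounds $|R_{v_-v_-}^{(v)}|$ by $\varpi^{-1}$ and $|T_{v_-v}| \leq 1$), while $\Sigma_w := \sum_{u \sim v,\, u \ne w} T_{vu}^2 R_{uu}^{(v)}$ is independent of $\mathcal{F}_v$ and, by \Cref{rrealimaginary} and \Cref{ar}, has real and imaginary parts that are stable laws with densities bounded above and below on compacta; a direct integration then yields the required estimate.

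With the per-child bound in hand, a conditional Chebyshev (or stochastic-domination) argument based on the independence of the subtrees below distinct $w \in \mathcal{A}_v$ given $\mathcal{F}_v$ shows that $Z_v := |\{w \in \mathcal{A}_v : \mathscr{P}(w) \text{ holds}\}|$ is bounded below in distribution by (a suitable thinning of) a Poisson random variable of mean $\nu := \mu(\varpi)(1-\eta)$, with these bounds independent across $v \in \mathcal{V}(k)$. Hence $(|\mathcal{V}(k)|)_{k \geq 0}$ stochastically dominates the generation sizes $(X_k)_{k\ge 0}$ of a Galton--Watson tree with $\mathrm{Poi}(\nu)$ offspring. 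Choosing $\varpi$ small and $\Theta$ large so that $\nu$ is sufficiently large in terms of $\Delta$ and $\varepsilon$, standard estimates for supercritical Galton--Watson processes (Kesten--Stigum concentration of the martingale $X_m/\nu^m$ for large $m$, combined with a direct Chebyshev estimate for small $m$) give $\mathbb{P}\bigl[X_m \geq \Delta^m \text{ for all } m \geq 1\bigr] \geq 1 - \varepsilon/2$, proving the lemma.

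The main technical obstacle is the per-child estimate, because the conditioning on the global event $\mathscr{E}_v$ could in principle severely bias the law of $R_{vv}^{(w)}$. The resolution is that this conditioning affects $R_{vv}^{(w)}$ only through the single $\mathcal{F}_v$-measurable quantity $T_{v_-v}^2 R_{v_-v_-}^{(v)}$, which is already controlled in modulus by $\mathscr{P}_0(v)$; consequently the stable-law density bound of \Cref{ar}, applied to the independent remainder $\Sigma_w$, remains available to absorb the bias and deliver the uniform estimate on $\mathbb{P}[\mathscr{P}_0(w) \mid \mathcal{F}_v]$.
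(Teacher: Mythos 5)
Your overall strategy---stochastic domination by a supercritical Galton--Watson process whose offspring mean is made large via small $\varpi$ and large $\Theta$, followed by standard branching-process concentration---is a reasonable alternative to the paper's route, which instead grows $|\mathcal{V}(k)|$ by a multiplicative factor of roughly $\tfrac{1}{4}\varpi^{-\alpha}$ per level with failure probability exponentially small in $|\mathcal{V}(k)|$ (via Poisson thinning and a Chernoff bound) and then concludes with a telescoping product. The difficulty is that your per-child estimate, the crux of either approach, contains a genuine gap.

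You condition on $\mathcal{F}_v$, which contains the event $\mathscr{E}_v = \bigcap_{j=0}^{k}\mathscr{P}(v_-^{(j)})$, and then assert that $\Sigma_w := \sum_{u \in \mathbb{D}(v),\, u \ne w} T_{vu}^2 R_{uu}^{(v)}$ is independent of $\mathcal{F}_v$, so that the conditioning affects $R_{vv}^{(w)} = -(A + \Sigma_w)^{-1}$ only through the $\mathcal{F}_v$-measurable scalar $A = z + T_{v_-v}^2 R_{v_-v_-}^{(v)}$. This is false. The event $\mathscr{P}(v)$ is part of $\mathscr{E}_v$, and $\mathscr{P}_1(v), \mathscr{P}_2(v)$ are constraints on $K_v$ and $Q_v$, which are built precisely from $\{T_{vu}^2 R_{uu}^{(v)}\}_{u \in \mathbb{D}(v)}$---the very summands of $\Sigma_w$. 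Indeed $K_v + Q_v = A + \sum_{u \in \mathbb{D}(v),\, u \ne v_+} T_{vu}^2 R_{uu}^{(v)}$, so $\mathscr{P}_2(v) = \{|K_v + Q_v| \ge \varpi\}$ is exactly a constraint on $A + \Sigma_{v_+}$, and $\Sigma_w$ differs from $\Sigma_{v_+}$ by a single summand. The unconditioned stable-law density bound of \Cref{ar} is therefore not available for $\Sigma_w$ after this conditioning, and your claimed uniform bound on $\mathbb{P}[\mathscr{P}_0(w) \mid \mathcal{F}_v]$ does not follow.

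The paper avoids this obstacle by exploiting a deterministic implication rather than attempting a conditional probability estimate. For $v \in \mathcal{V}(k)$ the events $\mathscr{P}_1(v)$ and $\mathscr{P}_2(v)$ hold by construction; together they give $\varpi \le |K_v + Q_v| \le \Theta$, and since $R_{vv}^{(v_+)} = -(K_v + Q_v)^{-1}$ by \eqref{qvv}, this forces $\mathscr{P}_0(v_+)$ to hold with no probability estimate at all. The only conditional estimate the paper then needs is the second statement of \eqref{p02}, $\mathbb{P}_v[\mathscr{P}_1(v) \cap \mathscr{P}_2(v) \mid \mathscr{P}_0(v)] \ge 1 - \eps/16$: there $\mathscr{P}_0(v)$ is $\mathbb{T}_-(v)$-measurable, while the randomness governing $\mathscr{P}_1(v), \mathscr{P}_2(v)$ lives entirely in the fresh subtree below $v$, so the estimate is unpolluted by ancestor conditioning. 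This structural observation is the missing ingredient in your argument; to salvage your approach you would either need to reorganize the conditioning as the paper does---so that the per-child event being estimated never shares randomness with the conditioning---or else prove stability of the density bound of \Cref{ar} under the conditioning imposed by $\mathscr{P}_1(v) \cap \mathscr{P}_2(v)$, which is not a small fix.
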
 

	\begin{proof} 
		
		Recall that $\mathbb{T} (k)$ denotes the part of $\mathbb{T}$ above its $k$-th level $\mathbb{V} (k)$, and that $\mathbb{P}_k$ denotes the probability conditional on $\mathbb{T} (k)$. We claim that there exists a constant $c > 0$ such that, for sufficiently small $\varpi$ and large $\Theta$ we have  
		\begin{flalign}
			\label{v0vk}
			\begin{aligned}
			& \mathbb{P} \big[ \mathcal{V} (0) = \{ 0 \} \big] \ge 1 - \displaystyle\frac{\eps}{8}; \qquad \mathbb{P} \bigg[ \big| \mathcal{V} (1) \big| \ge \displaystyle\frac{1}{4 \varpi^{1/\alpha}} \bigg] \ge 1 - \displaystyle\frac{\eps}{4}; \\
			&  \mathbb{P}_k \bigg[ \big| \mathcal{V} (k+1) \big| \ge \displaystyle\frac{1}{4 \varpi^{\alpha}} \cdot \big| \mathcal{V} (k) \big| \bigg] \ge 1 - \exp \big( - c \cdot |\mathcal{V} (k)| \big).
			\end{aligned} 
		\end{flalign}
	
		\noindent This would imply the lemma, as from \eqref{v0vk} we would deduce that    
		\begin{flalign*} 
			\mathbb{P} \Big[ \big| & \mathbb{V} (m) \big| \ge 4^{-m} \varpi^{-\alpha m} \Big] \\ 
			& \ge \mathbb{P} \bigg[ \big| \mathcal{V} (1) \big| \ge \displaystyle\frac{1}{4 \varpi^{\alpha}} \bigg] \cdot \displaystyle\prod_{k = 0}^{m - 1} \mathbb{P}_k \bigg[ \big| \mathcal{V} (k+1) \big| \ge \displaystyle\frac{1}{4 \varpi^{\alpha}} \cdot \mathcal{V} (k) \bigg| \big| \mathcal{V} (k) \big| \ge 4^{-k} \varpi^{-\alpha k} \bigg] \\
			& \ge \bigg( 1 - \displaystyle\frac{\eps}{4} \bigg) \cdot \displaystyle\prod_{k = 0}^{m - 1} \Bigg( 1 - \exp \bigg( -c \cdot \Big( \displaystyle\frac{1}{4 \varpi^{\alpha}} \Big)^k \bigg) \Bigg) \ge 1 - \displaystyle\frac{\eps}{2},
		\end{flalign*} 
	
		\noindent where the last inequality holds by taking $\varpi$ sufficiently small. The lemma then follows from having $4^{-m} \varpi^{-\alpha m} \ge \Delta^m$ by again taking $\varpi$ sufficiently small.
		
		Thus, it suffices to establish \eqref{v0vk}; its first bound follows from the second statement of \eqref{p02}. To establish its second and third, for any vertex $v \in \mathcal{V}$ let $n_v = \big| \mathcal{V} \cap \mathbb{D} (v) \big|$ denote the number of children of $v$ in $\mathcal{V}$. Observe for any vertex $v \in \mathcal{V} (k)$, the event $\mathscr{P}_2 (v)$ holds by \eqref{vk}. Hence, for any child $w \in \mathcal{A}_v$ (recall \eqref{av0}), $\mathscr{P}_0 (w)$ holds by the first statement of \eqref{p02}. Thus, for any $v \in \mathcal{V} (k)$, any child $w \in \mathbb{D} (v)$ of it is in $\mathcal{V} (k+1)$ if and only if $w \in \mathcal{A}_v$ and $\mathscr{P}_1 (w) \cap \mathscr{P}_2 (w)$ both hold. 
		
		Now, we claim (conditional, as above, on $\mathbb{T} (k)$) that 
		\begin{flalign}
			\label{nvomega} 
			\mathbb{P}_k \bigg[ n_v \ge \frac{1}{4} \cdot \varpi^{-\alpha} \bigg] \ge 1 - \frac{\eps}{8}, \qquad \text{for any $v \in \mathcal{V} (k)$}.
		\end{flalign} 
	
		\noindent This would imply the second bound of \eqref{v0vk} (through a union bound with the first bound there). It would also imply the third, as it would follow from a Chernoff estimate that there exists a constant $c_2 > 0$ such that the following holds for any integer $k \ge 1$. With probability at least $1 - e^{-c_2 |\mathcal{V}(k)|}$, the probability that there exist at least $\frac{3}{4} \cdot \big| \mathcal{V} (k) \big|$ vertices $v \in \mathcal{V} (k)$ for which $n_v \ge \frac{1}{4} \cdot \varpi^{-\alpha}$. Summing over all such vertices $v$, it would follow on this event that $\big| \mathcal{V} (k+1) \big| \ge \frac{1}{4} \cdot \varpi^{-\alpha} \cdot \big| \mathcal{V} (k) \big|$, hence showing the third bound of \eqref{v0vk}.
		
		To show \eqref{nvomega}, first observe that $|\mathcal{A}_v|$ (recall \eqref{av0}) is a Poisson random variable with parameter $\alpha \cdot \int_{\varpi}^1 x^{-\alpha-1} dx = \varpi^{-\alpha} - 1 \ge \varpi^{-\alpha/2}$. Thus, for sufficiently small $\varpi$ we have 
		\begin{flalign}
			\label{1av} 
			\mathbb{P} \bigg[ |\mathcal{A}_v| \ge \frac{1}{2} \cdot \varpi^{-\alpha/2} \bigg] \ge 1 - \frac{\eps}{16}.
		\end{flalign} 
	
		\noindent Additionally, by the second statement of \eqref{p02} we have that with probability at least $1 - \frac{\eps}{16}$, at least $\frac{1}{2}$ of the vertices in $w \in \mathcal{A}_v$ satisfy $\mathscr{P}_1 (w) \cap \mathscr{P}_2 (w)$, namely,
		\begin{flalign}
			\label{2av} 
			\mathbb{P} \Bigg[ \bigg| \Big\{ w \in \mathcal{A}_v : \text{$\mathscr{P}_1 (w) \cap \mathscr{P}_2 (w)$ holds} \Big\} \bigg| \ge \displaystyle\frac{1}{2} \cdot |\mathcal{A}_v| \Bigg] \ge 1 - \displaystyle\frac{\eps}{16}.
		\end{flalign}
	
		\noindent Together, \eqref{1av} and \eqref{2av} (with the previously mentioned fact that a vertex $w \in \mathscr{A}_v$ satisfying $\mathscr{P}_1 (w) \cap \mathscr{P}_2 (w)$ is in $\mathcal{V} (k + 1)$) imply \eqref{nvomega} and thus the lemma.
	\end{proof}

	Now we can establish \Cref{vertexk}. 
	
	\begin{proof}[Proof of \Cref{vertexk}]
		
		We adopt the notation from the proof of \Cref{vestimate} and set the $m$ there equal to $\ell - 1$ here. First observe that there exists a constant $c_1 > 0$ such that 
		\begin{flalign}
			\label{g0estimate} 
			\mathbb{P} \big[ \mathscr{G}_0 (v) \big] \ge c_1, \qquad \text{for any $v \in \mathbb{V}$}.
		\end{flalign}
	
		\noindent Now, define the event $\mathscr{U}$ on which $\big| \mathcal{V} (\ell - 1) \big| \ge \Delta^{\ell - 1}$; by \Cref{vestimate}, we have $\mathbb{P} [\mathscr{U}] \ge 1 - \frac{\eps}{2}$. Additionally, by \eqref{nvomega}, we have $\mathbb{P} \big[ n_v \ge \frac{1}{4} \cdot \varpi^{-\alpha} \big] \ge 1 - \frac{\eps}{8}$ for any $v \in \mathcal{V} (\ell - 1)$. We may $\eps < c_1$, so together with \eqref{g0estimate} this implies from a union bound that 
		\begin{flalign*} 
			\mathbb{P} \Bigg[ \big\{ n_v \ge \displaystyle\frac{1}{4 \varpi^{\alpha}} \bigg\} \cap \mathscr{G}_0 (v) \Bigg] \ge \displaystyle\frac{c_1}{2}.
		\end{flalign*}  
		
		Now we may proceed as in the proof of \Cref{vestimate}. Specifically, by a Chernoff estimate, we deduce the existence of a constant $c_2 > 0$ such that the following holds with probability at least $1 - e^{-c_2 |\mathcal{V} (\ell-1)|}$. There exist at least $\frac{c_1}{4} \cdot \big| \mathcal{V} (\ell - 1) \big|$ vertices $v \in \mathcal{V} (k-1)$ such that $n_v \ge \frac{1}{4} \cdot \varpi^{-\alpha}$. Combining this with the event $\mathscr{U}$ and taking $\Delta$ sufficiently large so that $e^{-c_2 \Delta} \le \frac{\eps}{2}$, it follows that 
		\begin{flalign*} 
			\mathbb{P} \bigg[ \big| \mathcal{V} (k) \big| \ge \displaystyle\frac{1}{4} \cdot \varpi^{-1/\alpha} \cdot \Delta^{\ell - 1} \Bigg] \ge \mathbb{P} [\mathscr{U}] - \exp (-c_2 \Delta^{\ell-1}) \ge 1 - \eps,
		\end{flalign*} 
	
		\noindent which gives the lemma.
	\end{proof}

	\section{Delocalization}
	
	\label{EstimateR}

	In this section we establish the delocalization result \Cref{rimaginary0}. We begin in \Cref{ProbabilityNmu0} by first lower bounding the quantity $\mathcal{N}_L (t)$ from \Cref{ns} (counting large off-diagonal resolvent entries) with high probability. Then, in \Cref{RLarge} we use this estimate to show \Cref{rimaginary0}, assuming \Cref{estimater} below, which we will establish in \Cref{ProofEstimateR}. 
	
	\subsection{Lower Bound on $\mathcal{N}_L$} 
	
	\label{ProbabilityNmu0}
	
	In this section we establish the following result that lower bounds $\mathcal{N}_L (t)$ with high probability; it maybe viewed as a variant of \Cref{mudeltak} with $\mu_0 = 0$ (and an improvement to the $\mu_0 = 0$ probability there, increasing $e^{-\delta L}$ to $1 - o(1)$). 
	
	\begin{prop} 
		
		\label{mprobability} 
		
		For any real numbers $\varsigma, \delta \in (0, 1/2)$, there exist constants $C > 1$ (independent of $\varsigma$ and $\delta$), $\omega = \omega (\varsigma, \delta) \in (0, 1)$, $\Omega  = \Omega (\varsigma, \delta) > 1$, and $L_0 = L_0 (\varsigma, \delta) > 1$ such that the following holds. Let $E, \eta \in \mathbb{R}$ be such that $\varepsilon \le |E| \le B$ and $\eta \in (0, 1)$, and set $z = E + \mathrm{i} \eta$; also fix an integer $L \ge L_0$. There exists a real number $t_0 = t_0 (\alpha, s, z, \omega, \Omega, L) \in [-C, 0]$ such that if $\varphi(s;z) \geq \delta$ we have
		\begin{flalign*}
			\mathbb{P} \big[ L^{-1} \log \mathcal{N}_L (t_0 - \delta; 0; \omega; \Omega; z) \ge  (1- \delta) \varphi (s; z) - st_0  - \delta \big] \ge 1 - \varsigma.
		\end{flalign*}
		
	\end{prop}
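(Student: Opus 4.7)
The plan is to amplify the exponentially small probability bound of Corollary \ref{mudeltak} into a high probability statement by chaining Corollary \ref{mudeltak} events across $L/M$ disjoint subtrees at an appropriate intermediate scale $M$. Fix $\delta_1 \ll \delta$ and apply Corollary \ref{mudeltak} at scale $M$ (to be chosen) to produce $t_0 \in [-C,0]$ and $\mu_0 \in [-C,C]$ such that for every $v_0 \in \mathbb{V}$ the event $\mathscr{E}(v_0) := \{ \mathcal{N}_M(t_0;v_0;\omega;\Omega;z) \ge e^{M(\psi+\mu_0)} \}$ has probability at least $C^{-1} e^{-(\mu_0+\delta_1)M}$, where $\psi := \varphi(s;z)-st_0$. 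Because $\mathscr{E}(v_0)$ is measurable with respect to the subtree rooted at $v_0$, the events $\{\mathscr{E}(v_0)\}$ are jointly independent as $v_0$ ranges over any antichain of $\mathbb{T}$.

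Next, apply Lemma \ref{vertexk} at an initial level $M_0$, chosen large in terms of $\varsigma, \Delta, \mu_0$, to obtain with probability at least $1-\varsigma/2$ a seed set $\mathcal{V}_0 \subset \mathbb{V}(M_0)$ of size at least $\Delta^{M_0}$ whose vertices $v_0$ satisfy both $|R_{0v_0}^{(v_{0,+})}| \ge \kappa^{M_0}$ and $|R_{v_0 v_0}^{(v_{0,+})}| \le \Omega$. Inductively define
\begin{equation*}
\mathcal{V}_{k+1} := \bigsqcup_{v_0 \in \mathcal{V}_k,\, \mathscr{E}(v_0)} \mathcal{S}_M(t_0;v_0;\omega;\Omega;z).
\end{equation*}
Conditional on $\mathcal{V}_k$, the events $\{\mathscr{E}(v_0)\}_{v_0 \in \mathcal{V}_k}$ are independent, so a Chernoff bound implies that, except on an event of probability $\exp(-c|\mathcal{V}_k| e^{-(\mu_0+\delta_1)M})$, at least a third of the expected number succeed. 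Hence
\begin{equation*}
|\mathcal{V}_{k+1}| \ge \tfrac{1}{3} |\mathcal{V}_k| \cdot C^{-1} e^{-(\mu_0+\delta_1)M} \cdot e^{M(\psi+\mu_0)} \ge |\mathcal{V}_k|\, e^{M(\psi - 2\delta_1)},
\end{equation*}
provided $e^{M\delta_1} \ge 3C$. Union bounding over $K := \lfloor (L-M_0)/M \rfloor$ iterations, using that $|\mathcal{V}_k|$ remains geometrically large so the failure probabilities sum to at most $\varsigma/2$, yields $|\mathcal{V}_K| \ge |\mathcal{V}_0|\, e^{KM(\psi-2\delta_1)} \ge e^{L(\psi-3\delta_1)}$ with probability at least $1-\varsigma$.

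It remains to convert each $v \in \mathcal{V}_K \subset \mathbb{D}_{L}(0)$ into a vertex counted by $\mathcal{N}_L(t_0 - \delta;0;\omega;\Omega;z)$. Using the telescoping identity of Lemma \ref{rproduct} to splice the $K$ segments of length $M$ composing the path from $0$ to $v$, one converts the segment-wise bounds $|R_{v_0 w}^{(v_{0,-},w_+)}| \ge e^{Mt_0}$ on the events $\mathscr{B}(v_0,w)$ into a single product bound $|R_{0v}^{(v_+)}| \cdot \one_{\mathscr{B}(0,v)} \ge e^{Lt_0 - O(L\delta_1)}$, since the diagonal entries $R_{v_0 v_0}^{(v_{0,+})}$ and $R_{ww}^{(w_+)}$ produced at the splices cancel against their inverses up to the uniformly bounded factors controlled by $\mathscr{B}_0$ and $\mathscr{B}_1$, and the weights $T_{u u_+}$ contribute a factor bounded below by $\omega$. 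Choosing $\delta_1 \le \delta^2/100$ absorbs the $O(L\delta_1)$ error into $\delta$, giving $|\mathcal{V}_K| \le \mathcal{N}_L(t_0-\delta;0;\omega;\Omega;z)$ and thus the claimed lower bound $L^{-1}\log \mathcal{N}_L(t_0-\delta) \ge (1-\delta)\varphi(s;z) - st_0 - \delta$.

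The principal obstacle will be the final splicing step: one must verify carefully that the events $\mathscr{B}_0, \mathscr{B}_1$ composed along the chain indeed imply the event $\mathscr{B}(0,v)$ entering the definition of $\mathcal{N}_L$, and that the telescoping of diagonal resolvent entries with differing superscript removals (e.g.\ $R_{uu}^{(u_+)}$ versus $R_{uu}^{(u_-,u_+)}$) is consistent up to controllable multiplicative errors. A secondary difficulty is that the constant $\mu_0$ furnished by Corollary \ref{mudeltak} is not explicit and may lie anywhere in $[-C,C]$, so the parameters $(M, M_0, \Delta)$ in the amplification must be selected uniformly across this range; the regime $\mu_0 \le 0$ is easier and follows almost directly from a single application of Corollary \ref{mudeltak}, while the regime $\mu_0 > 0$ requires the full iterative scheme described above.
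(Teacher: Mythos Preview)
Your overall amplification strategy matches the paper's: seed with Lemma \ref{vertexk}, apply Corollary \ref{mudeltak} at an intermediate scale $M \approx \delta^2 L$, iterate via independent Chernoff trials across levels, and take $K \approx L/M$ steps. The gap is in the splicing, which you correctly flag as the principal obstacle but do not resolve.

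You claim the junction diagonal entries ``cancel against their inverses up to uniformly bounded factors controlled by $\mathscr{B}_0$ and $\mathscr{B}_1$.'' They do not. Writing the product expansion of Lemma \ref{rproduct} across a junction $w$ between two segments, one obtains
\[
|R_{0v}^{(v_+)}| = |R_{0w_-}^{(w)}| \cdot |T_{w_- w}| \cdot |R_{ww}^{(v_+)}| \cdot |T_{wu}| \cdot |R_{uv}^{(u_-, v_+)}|,
\]
and a \emph{lower} bound on $|R_{ww}^{(v_+)}|$ is required. By the Schur complement \eqref{qvv} this equals $\big| z + T_{w_-w}^2 R_{w_-w_-}^{(w)} + T_{wu}^2 R_{uu}^{(w,v_+)} + \sum_{u' \ne u} T_{wu'}^2 R_{u'u'}^{(w)} \big|^{-1}$. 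The events $\mathscr{B}_0, \mathscr{B}_1$ from the two adjacent segments give \emph{upper} bounds on the on-path diagonal pieces $R_{w_-w_-}^{(w)}$ and $R_{uu}^{(w,v_+)}$, but say nothing about the off-path self-energy $\sum_{u' \ne u} T_{wu'}^2 R_{u'u'}^{(w)}$, which is a heavy-tailed $\frac{\alpha}{2}$-stable sum. Nor do they bound $|T_{w_-w}|$ or $|T_{wu}|$ from above. Without these, $|R_{ww}^{(v_+)}|$ cannot be bounded below, and the chain collapses.

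The paper's fix is structural: it inserts a two-level buffer at every junction. Instead of starting the next segment at the endpoint $w_-$ of the previous one, it steps through $w = \mathfrak{c}(w_-)$ and $u = \mathfrak{c}(w)$ (Definition \ref{zidefinition}), so that $|T_{w_-w}|, |T_{wu}| \in [1,2]$, and it imposes an additional event $\mathscr{G}(w_-)$ (Definition \ref{eventg1}) forcing $\big|\sum_{u' \ne \mathfrak{c}(w)} R_{u'u'}^{(w)} T_{wu'}^2\big| < 5$. This event has uniformly positive probability \eqref{c1e}, is independent across junctions, and combined with the $\mathscr{B}_0, \mathscr{B}_1$ bounds yields the deterministic estimate $|R_{ww}^{(v_+)}| \ge (B + 8\Omega + 6)^{-1}$ of \eqref{rz5}. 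Your construction has no analogue of this buffer, and the segment endpoints in $\mathcal{S}_M(t_0;v_0)$ need not lie in the $\mathfrak{c}(v_0)$-subtree, so the removed child $w_+ = \mathfrak{c}(w)$ from the previous segment's $\mathscr{B}$ may not even sit on the path to the next segment.

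A smaller issue: the constants $t_0$ and $\mu_0$ from Proposition \ref{nestimate} and Corollary \ref{mudeltak} depend on the level $\ell(v_0)$, so each step $k$ of the iteration uses its own pair $(t_k, \mu_k)$; the paper sets the final threshold to be the average $\Theta^{-1}\sum_j t_j$. Using a single $t_0$ throughout, as you do, is not justified.
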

	
	Variants of \Cref{mprobability} were shown in \cite{aizenman2013resonant} through a second moment method. However, we are unaware how to implement this route in our setting, since the tree $\mathbb{T}$ is infinite (and any finite truncation of it that preserves independence would involve a random number of leaves). Instead, to establish \Cref{mprobability}, we ``amplify'' the low probability result \Cref{mudeltak} as follows. 
	
	First, we identify a subset of vertices $\mathcal{Z}_0 \subset \mathbb{V} (M)$ (for some $M \approx \delta^2 L$) on which $\big| R_{0v}^{(v_+)} \big|$ is likely not too small. The events on which $\mathcal{N}_M (t_0; v; \Omega; z) \ge e^{(\varphi (s; z) - st + \mu) M}$ are independent over $v \in \mathcal{Z}_0$. So, from \Cref{mudeltak} (denoting the $\mu_0$ there by $\mu$ here), if $|\mathcal{Z}_0| \gg e^{\mu M}$ then one will likely find many (about $e^{-\mu M} \cdot e^{(\varphi (s; z) - st + \mu) M} \cdot |\mathcal{Z}_0| = e^{(\varphi (s; z) - st) M} \cdot |\mathcal{Z}_0|$) vertices $v$ for which $\mathcal{N}_M (t_0; v; \Omega; z) \ge e^{(\varphi (s; z) - st + \mu) M}$. Considering the vertices in $\mathcal{S}_M (t_0; v; \Omega; z)$ yields a set $\mathcal{Z}_1$ of vertices $w$ with $\ell (w) \approx 2M$ for which $\big| R_{vw}^{(v_-, w_+)} \big| \ge e^{tM}$ and we likely have $|\mathcal{Z}_1| \ge e^{(\varphi (s; z) - st + \mu) M} \cdot |\mathcal{Z}_0|$. Repeating this procedure $\frac{L}{M}$ times, and using \Cref{rproduct}, will then yield with high probability about $e^{(\varphi (s; z) - st) L}$ vertices $v \in \mathbb{V} (L)$ for which $\big| R_{0v}^{(v_-)} \big| \ge e^{tL}$. 
		
	 Now let us implement this in more detail. We let $M = \lfloor \delta^2 L \rfloor$. For each integer $j \ge 0$, we set $M_j = (M+2) j$ and for notational simplicity assume that there exists an integer $\Theta \in [1, 2 \delta^{-2}]$ for which $L = M_{\Theta + 1} - 2 = M \cdot \Theta + M + 2 \Theta$.\footnote{We use this assumption in \Cref{zidefinition} when define the sets $\mathcal Z_k$ there to be $M$ levels apart. To treat general a $L$, one would consider a set defined analogously to the sets $\mathcal F_k$ specifically for vertices at level $L$, and use this set in all remaining arguments in this section.} For each integer $j \ge 1$, let $t_j$ denote the constant $t_0 (\alpha; s; z; M_j; M)$ from \Cref{nestimate} and $\mu_j$ denote the constant $\mu_0 (\alpha; s; z; M_j; M)$ from \Cref{mudeltak}. We further let $C_0 > 1$ denote a constant larger than the $C$ from \Cref{nestimate} and \Cref{mudeltak}. 
	
	We begin with the following two definitions for certain vertex sets on which diagonal resolvent entries are either not too small or more specifically bounded below by the threshold $e^{t_j M}$.

	\begin{definition} 
		
		\label{x0} 
		For any constant $c > 0$, define the vertex set $\mathcal{X}_0 (c) \subset \mathbb{V} (M)$ by setting 
		\begin{flalign*}
			\mathcal{X}_0 (c) = \big\{ v \in \mathbb{V} (M) : \big| R_{0v}^{(v_+)} \big| \cdot \one_{\mathscr{B} (0, v)} \ge c^M \big\}.
		\end{flalign*} 
		
		\noindent By \Cref{vertexk}, we may select $\kappa \in (0, 1)$ sufficiently small so that 
		\begin{flalign}
			\label{x0nu} 
			\mathbb{P} \Big[ \big| \mathcal{X}_0 (\kappa) \big| \ge e^{5 (C_0 + 1) M} \Big] \ge 1 - \displaystyle\frac{\varsigma}{2},
		\end{flalign}
	where $\varsigma\in(0,1)$ was chosen in \Cref{mprobability}.	
	\end{definition}

	\begin{definition} 
		
		\label{xvw}
		
		Recall the notation from \Cref{br0vw}, and fix an integer $j \ge 0$. For any vertex $v \in \mathbb{V} (M_j)$, define the vertex set $\mathcal{X}_1 (v) \subset \mathbb{V}$ by 
		\begin{flalign*}
			\mathcal{X}_1 (v) = \big\{ w \succ v : w \in \mathbb{D} (M_{j+1}), \big| R_{vw}^{(v_-, w_+)} \big| \cdot \one_{\mathscr{B} (v, w)} \ge e^{t_j M} \big\}.
		\end{flalign*}
		
	\end{definition} 
	
	Next we require the following events.
	
	\begin{definition} 
		
		\label{eventg1} 

	Recalling the notation of \Cref{eventg}, abbreviate $w = \mathfrak{c} (v)$ and set
		\begin{flalign*}
			\mathscr{G}_1 (v) = \mathscr{G}_0 (v) \cap \mathscr{G}_0 (w); \qquad \mathscr{G} (v) = \mathscr{G}_1 (v) \cap \Bigg\{ \bigg| \displaystyle\sum_{\substack{u \in \mathbb{D} (w) \\ u \ne \mathfrak{c} (w)}} R_{uu}^{(w)} T_{wu}^2 \bigg| < 5 \Bigg\}.
		\end{flalign*}
		
	\end{definition} 
	
	\noindent By the definition \eqref{t} of the $\{ T_{vw} \}$, and \Cref{ar}, there exists a constant $c_1 = c_1 (\alpha) > 0$ with
	\begin{flalign}
		\label{c1e} 
		\mathbb{P} \big[ \mathscr{G}_0 (v) \big] \ge c_1; \quad \mathbb{P} \big[ \mathscr{G}_1 (v) \big| \mathscr{G}_0 (v) \big] \ge c_1; \quad \mathbb{P} \big[ \mathscr{G} (v) \big| \mathscr{G}_1 (v) \big] \ge c_1, \qquad \text{so that $\mathbb{P} \big[ \mathscr{G} (v) \big] \ge c_1^3$}.
	\end{flalign} 
	
	We next define the vertex sets $\mathcal{Z}_k$ briefly mentioned above.
	
	\begin{definition}
		
		\label{zidefinition}
		
		For each integer $k \ge 0$, define the vertex set $\mathcal{Z}_k$ inductively as follows. Set 
		\begin{flalign*} 
			\mathcal{Z}_0 = \big\{ v \in \mathcal{X}_0 (\kappa) : \text{$\mathscr{G} (v)$ holds} \big\}.
		\end{flalign*} 
		
		\noindent For $k \ge 1$, define $\mathcal{Z}_k \subset \mathbb{V}$ to be the subset of vertices $v \in \mathbb{V} ( M_k - 2)$ satisfying the following four properties, where here we let $v^{(i)}$ denote the $i$-th parent of $v$, that is, $v^{(j)} = (v^{(j-1)})_-$ for each $j \ge 1$, with $v^{(0)} = v$. First, the event $\mathscr{G} (v)$ holds. Second, we have $v^{(M+2)} \in \mathcal{Z}_{k-1}$. Third, we have $v^{(M)} = \mathfrak{c} \big( \mathfrak{c} (v^{(M+2)}) \big)$. Fourth, we have $v \in \mathcal{X}_1 (v^{(M)})$.
		
		For $k \ge 1$, we then define the events
		\begin{flalign*}
			\mathscr{F}_0 = \big\{ |\mathcal{Z}_0| \ge e^{4 (C_0 + 1) M} \big\}; \quad \mathscr{F}_k  = \Bigg\{ |\mathcal{Z}_k| \ge \exp \bigg( kM \cdot \Big( \varphi (s; z) - \displaystyle\frac{s}{k} \displaystyle\sum_{j=1}^k t_j \Big) + M (2C_0 + 1 - 2 k \delta^2) \bigg) \Bigg\}.
		\end{flalign*} 
	\end{definition}

	We then have the following two lemmas. The first lower bounds $\mathbb{P} [\mathscr{F}_k]$, and the second lower bounds $|R_{0v}^{(v_+)}|$ for $v \in \mathcal{Z}_k$. We recall that $\kappa$ was chosen in \eqref{x0nu}.
	
	\begin{lem} 
		
		\label{estimatekf}
		
If $\varphi(s;z) \geq \delta$ and $\delta \in (0,1/2)$,	there exists a constant $c > 0$ (independent of $\delta$) such that we have $\mathbb{P} [\mathscr{F}_0] \ge 1 - \frac{3\varsigma}{4}$ and $\mathbb{P} [ \mathscr{F}_k] \ge \mathbb{P}[\mathscr{F}_{k-1}] - c^{-1} e^{-c \delta^2 M}$. 
	\end{lem}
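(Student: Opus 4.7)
My plan is to treat the base case $k=0$ by combining \eqref{x0nu} with a Chernoff-style filtering argument to pass from $\mathcal{X}_0(\kappa)$ to $\mathcal{Z}_0$, and to handle the inductive step by the amplification scheme sketched at the start of this section, with the $\delta$-parameter inside \Cref{mudeltak} (and \Cref{nestimate}) chosen as $\delta^2$ rather than $\delta$. This particular choice is what lets the per-step amplification factor $e^{M(\varphi - st_k - \delta^2)}$ outpace the target growth $e^{M(\varphi - st_k - 2\delta^2)}$ demanded by $\mathscr{F}_k/\mathscr{F}_{k-1}$, by an exponential factor $e^{M\delta^2}$ that swamps bounded losses once $M\ge L_0(\delta)$.

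\emph{Base case.} By \eqref{x0nu}, the event $\mathscr{U}_0 = \{|\mathcal{X}_0(\kappa)| \ge e^{5(C_0+1)M}\}$ has probability $\ge 1 - \varsigma/2$. For each $v \in \mathbb{V}(M)$, the indicator $\one_{v \in \mathcal{X}_0(\kappa)}$ is measurable with respect to the tree data excluding the subtree rooted at $\mathfrak{c}(v)$, while the residual conditions needed for $\mathscr{G}(v)$ beyond $\mathscr{G}_0(v)$ (the latter already forced by $\mathscr{B}(0,v) \supseteq \mathcal{X}_0(\kappa)$) depend only on that subtree. By \eqref{c1e} each residual condition holds with probability at least $c_1^2$, independently across $v \in \mathcal{X}_0(\kappa)$ conditional on $\mathcal{X}_0(\kappa)$. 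A Chernoff bound then gives $|\mathcal{Z}_0| \ge (c_1^2/2) |\mathcal{X}_0(\kappa)| \ge e^{4(C_0+1)M}$ with conditional failure probability at most $\exp(-c_0 e^{5(C_0+1)M}) \le \varsigma/4$ for $L_0$ large, whence $\mathbb{P}[\mathscr{F}_0] \ge 1 - 3\varsigma/4$.

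\emph{Inductive step.} It suffices to bound $\mathbb{P}[\mathscr{F}_{k-1} \cap \mathscr{F}_k^c]$. Condition on $\mathbb{T}$ restricted through level $M_k$; this determines $\mathcal{Z}_{k-1}$ and, for each $v \in \mathcal{Z}_{k-1}$, the level-$M_k$ vertex $v' = \mathfrak{c}(\mathfrak{c}(v))$. The random variable $\mathcal{N}_M(t_k; v'; \omega; \Omega; z)$ depends only on the subtree rooted at $v'$ (once $v'_-$ is excised), so the events
\[
\mathscr{A}(v) := \big\{ \mathcal{N}_M(t_k; v'; \omega; \Omega; z) \ge e^{M(\varphi(s;z) - st_k + \mu_k)}\big\}
\]
are mutually independent across $v \in \mathcal{Z}_{k-1}$; by \Cref{mudeltak} (with its $\delta$ replaced by $\delta^2$), each has probability $\ge C_0^{-1} e^{-(\mu_k + \delta^2)M}$. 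On $\mathscr{F}_{k-1}$ we have $|\mathcal{Z}_{k-1}| \ge e^{M(2C_0 - 3)}$ (using $t_j \in [-C_0,0]$, $\varphi \ge 0$, and $k \le 2\delta^{-2}$), so the expected number of successful $\mathscr{A}(v)$'s is exponentially large in $M$, and a Chernoff bound retains at least half of that expectation with failure probability $\exp(-c e^{c'M})$. For each such $v$, the $\ge e^{M(\varphi - st_k + \mu_k)}$ candidates $w \in \mathcal{X}_1(v')$ each independently satisfy $\mathscr{G}(w)$ with probability $\ge c_1^2$ (arguing as in the base case), so a second Chernoff bound preserves a fraction $c_1^2/2$ of them in $\mathcal{Z}_k$.

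Multiplying the two counts, the $\mu_k$'s cancel, yielding $|\mathcal{Z}_k| \ge (c_1^2/(4C_0)) |\mathcal{Z}_{k-1}| e^{M(\varphi - st_k - \delta^2)}$ with total failure probability at most $c^{-1} e^{-c\delta^2 M}$ (dominated by the two Chernoff tails, each absorbed into the constant $c$, which is independent of $\delta$ since it only involves $c_1, C_0$, and a universal Chernoff constant). Since the target growth between $\mathscr{F}_{k-1}$ and $\mathscr{F}_k$ is $e^{M(\varphi - st_k - 2\delta^2)}$, the slack $\delta^2 M - \log(4C_0/c_1^2)$ is nonnegative for $M \ge L_0$, so $\mathscr{F}_k$ holds on the complementary event. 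The principal obstacle is tracking and justifying the independence claims used for the Chernoff concentrations: both that the $\mathscr{A}(v)$'s factor over the disjoint subtrees rooted at the distinct $v'$'s (for which the excision of $v'_-$ is essential), and that the $\mathscr{G}$-filtering for $w$-descendants of different $v'$'s is similarly governed by disjoint subtrees, so that no hidden correlations obstruct iterating the amplification up to $k = \Theta \le 2\delta^{-2}$ times.
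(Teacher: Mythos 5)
Your proposal reproduces the paper's amplification scheme essentially step by step: the base case via \eqref{x0nu}, \eqref{c1e}, and a Chernoff bound; the inductive step by conditioning on $\mathbb{T}$ through level $M_k$, invoking \Cref{mudeltak} with the $\delta$ there taken to be $\delta^2$, exploiting conditional independence of the events indexed by the disjoint subtrees rooted at the distinct $v' = \mathfrak{c}(\mathfrak{c}(v))$, running Chernoff to retain half the expectation, and letting the $\mu_k$-factors cancel, with the $e^{M\delta^2}$ slack absorbing all constants once $M \ge L_0$. The independence facts you flag as the main burden do hold, for the reasons you indicate (the excision of $v'_-$ and $w_+$ decouples $\mathcal{N}_M(t_k;v';\cdot)$ from the tree above $v'$ and from the subtree below $\mathfrak{c}(w)$, and distinct $v'$'s root disjoint subtrees). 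One small point of comparison: you place the $\mathscr{G}$-filtering on the newly-discovered vertices $w \in \mathcal{X}_1(v')$, which is exactly what \Cref{zidefinition} demands for membership in $\mathcal{Z}_k$; the paper's written proof instead re-filters $\mathcal{Z}_{k-1}$ for $\mathscr{G}(v)$ (a step that is redundant as stated, since elements of $\mathcal{Z}_{k-1}$ already satisfy $\mathscr{G}$) and does not explicitly filter the new $w$'s. Your accounting is the more coherent of the two and closes that small gap, but the underlying method and all quantitative ingredients are the same.
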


	\begin{lem}
		
		\label{estimatenlf} 
		
		There is a constant $C > 1$ such that for any integer $k \ge 0$ and vertex $v \in \mathcal{Z}_k$ we have 
		\begin{flalign*}
			\big| R_{0v}^{(v_+)} \big| \ge \exp \Bigg( \bigg( \displaystyle\sum_{j=1}^k t_j + C (\log \kappa - \delta^4 k - C_0 - 1) \bigg) \cdot M \Bigg).
		\end{flalign*}
		
	\end{lem}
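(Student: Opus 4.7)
The proof will proceed by induction on $k$. The base case $k=0$ is immediate: for $v \in \mathcal Z_0 \subset \mathcal X_0(\kappa)$, the definition of $\mathcal X_0$ gives $|R_{0v}^{(v_+)}| \ge \kappa^M$, which dominates the claimed bound $\exp(CM(\log\kappa - C_0 - 1))$ once $C \ge 1$ is chosen, since $\log\kappa < 0$ and $-C_0 - 1 < 0$.

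For the inductive step at $k \ge 1$, let $u = v^{(M+2)} \in \mathcal Z_{k-1}$, let $u_+ = \mathfrak{c}(u)$, and let $w = \mathfrak{c}(u_+) = v^{(M)}$. The plan is to establish the factorization
\[
|R_{0v}^{(v_+)}| = |R_{0u}^{(v_+)}| \cdot |T_{uu_+}| \cdot |T_{u_+w}| \cdot |R_{u_+u_+}^{(v_+, u)}| \cdot |R_{wv}^{(w_-, v_+)}|
\]
by applying \Cref{rproduct} to the full path from $0$ to $v$ and grouping it as the sub-path $0 \to u$, the length-$2$ bridge $u \to u_+ \to w$, and the sub-path $w \to v$. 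The identity $w_- = u_+$, together with the observation that $R_{xx}^{(w_-, v_+, x_-)} = R_{xx}^{(v_+, x_-)}$ for $x \succeq w$ (since $w_-$ lies above the subtree at $x$), will make the endpoint diagonal entries at $w$ telescope.

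I then plan to bound each factor individually: $|R_{wv}^{(w_-, v_+)}| \ge e^{t_k M}$ directly from $v \in \mathcal X_1(w)$; $|T_{uu_+}|, |T_{u_+w}| \ge 1$ from $\mathscr G(u) \subseteq \mathscr G_0(u) \cap \mathscr G_0(u_+)$; and $|R_{u_+u_+}^{(v_+, u)}| \ge c(B) > 0$ by the Schur identity
\[
R_{u_+u_+}^{(v_+, u)} = -\Big(z + T_{u_+w}^2 R_{ww}^{(u_+, v_+)} + \sum_{x \in \mathbb D(u_+),\, x \ne w} T_{u_+x}^2 R_{xx}^{(u_+)}\Big)^{-1},
\]
using the bound $<5$ on the last sum from $\mathscr G(u)$, the control on $R_{ww}^{(u_+, v_+)}$ implicit in the $\mathscr{B}$-events that come with $v \in \mathcal X_1(w)$, and $|z| \le B+1$. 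For the remaining factor $|R_{0u}^{(v_+)}|$, the plan is to relate it to $|R_{0u}^{(u_+)}|$ (which is controlled by the inductive hypothesis applied at $u \in \mathcal Z_{k-1}$) via an edge-removal resolvent identity across $(u, u_+)$, yielding $R_{0u}^{(v_+)} = R_{0u}^{(u_+)}\bigl(1 - T_{uu_+}^2 R_{u_+u_+}^{(v_+)} R_{uu}^{(u_+)}\bigr)$, and then showing the correction factor is bounded away from zero using $\mathscr G(u)$, the $\mathscr{B}$-events, and \Cref{q12}.

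Collecting the bounds produces $|R_{0v}^{(v_+)}| \ge c_0(B) \cdot e^{t_k M} \cdot |R_{0u}^{(u_+)}|$, and the inductive hypothesis closes the step once the multiplicative loss $c_0(B)^{-1}$ is absorbed into the $-C\delta^4 M$ slack in the target exponent. This is possible because $M = \lfloor \delta^2 L \rfloor$ forces $C\delta^4 M \gtrsim C\delta^6 L \gg \log(1/c_0(B))$ for $L$ large. The main obstacle I anticipate is precisely this reduction-mismatch step: the induction naturally provides $R_{0u}^{(u_+)}$ while the factorization produces $R_{0u}^{(v_+)}$, and reconciling these requires carefully tracking which vertices remain in which connected components after successive edge- and vertex-removals.
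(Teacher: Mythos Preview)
Your overall plan—induct on $k$, factor $R_{0v}^{(v_+)}$ along the path, and bound each piece—matches the paper's. Your factorization is valid, and so (up to a sign) is the edge-removal identity
\[
R_{0u}^{(v_+)}\;=\;R_{0u}^{(u_+)}\,\bigl(1+T_{uu_+}^{2}\,R_{u_+u_+}^{(v_+)}\,R_{uu}^{(u_+)}\bigr).
\]
The gap is precisely where you flag it: you cannot lower-bound this correction factor in modulus from $\mathscr G(u)$, the $\mathscr{B}$-events, and \Cref{q12}. Those give $|T_{uu_+}|\le 2$ and $|R_{uu}^{(u_+)}|\le\Omega$, but no upper bound on $\bigl|R_{u_+u_+}^{(v_+)}\bigr|$ uniform in $\eta$ (the only deterministic one is $\eta^{-1}$), so nothing rules out the product being arbitrarily close to $-1$.

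The paper avoids this by choosing a factorization in which the inductive piece is $R_{0u}^{(u_+)}$ from the start, with no mismatch: applying the second form of \Cref{rproduct} to $R_{0v}^{(v_+)}$, every diagonal factor $R_{xx}^{(v_+,x_+)}$ collapses to $R_{xx}^{(x_+)}$ (since $v_+$ lies below $x_+$), and the first block of these factors is exactly $R_{0u}^{(u_+)}$. One obtains
\[
\bigl|R_{0v}^{(v_+)}\bigr|\;=\;\bigl|R_{0u}^{(u_+)}\bigr|\cdot|T_{uu_+}|\cdot\bigl|R_{u_+u_+}^{(v_+)}\bigr|\cdot|T_{u_+w}|\cdot\bigl|R_{wv}^{(w_-,v_+)}\bigr|,
\]
and $\bigl|R_{u_+u_+}^{(v_+)}\bigr|$ is lower-bounded by the very Schur computation you wrote for $R_{u_+u_+}^{(v_+,u)}$, now with one extra parent term $T_{uu_+}^{2}R_{uu}^{(u_+)}$ (controlled since $|T_{uu_+}|\le2$ and $|R_{uu}^{(u_+)}|\le\Omega$ from the $\mathscr{B}$-event at $u\in\mathcal{Z}_{k-1}$). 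Equivalently, your route is salvaged by the Schur identity
\[
\bigl(1+T_{uu_+}^{2}\,R_{u_+u_+}^{(v_+)}\,R_{uu}^{(u_+)}\bigr)\cdot R_{u_+u_+}^{(v_+,u)}\;=\;R_{u_+u_+}^{(v_+)},
\]
so your correction factor and your middle diagonal merge into a single bounded-below factor; the fix is to combine them rather than bound them separately.
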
 
	
	Given \Cref{estimatekf} and \Cref{estimatenlf}, we can quickly establish \Cref{mprobability}.
	
	\begin{proof}[Proof of \Cref{mprobability}]
		
		Set $M_k' = M_k - 2$. By \Cref{estimatenlf} (and the fact that $\mathcal{Z}_k \subset \mathbb{V} (M_k')$), there exists constants $C_1 > 1$ and $C_2 > 1$ such that on $\mathscr{F}_k$ we have
		\begin{flalign*} 
		  M_k'^{-1} \log \mathcal{N}_{M_k'} &\Bigg( \displaystyle\frac{1}{k} \displaystyle\sum_{j = 1}^k t_j + \frac{C_1}{k} (\log \kappa - \delta^4 k - C_0 - 1) \bigg) \\
			&\ge M_k'^{-1} \log \mathcal{N}_{M_k'}  \Bigg( \displaystyle\frac{M}{M_k'} \cdot \bigg( \displaystyle\sum_{j = 1}^k t_j + C_2 (\log \kappa - \delta^4 k - C_0 - 1) \Bigg)\Bigg) \\
			& \ge M_k'^{-1} \log |\mathcal{Z}_k| \ge  \Bigg( \varphi (s; z) - \displaystyle\frac{s}{k} \displaystyle\sum_{j=1}^k t_j - 2 \delta^2 \Bigg) \cdot \displaystyle\frac{kM}{M_k'}.
		\end{flalign*} 
		
		\noindent Taking $k = \Theta \le 2 \delta^{-2}$; applying \Cref{estimatekf}; and using the facts that $L = M_{\Theta}'$, $\mathcal{N}_L(x)$ is decreasing in $x$, and $-2 \delta^2 > -\delta$, we deduce the existence of constants $t_0 = \Theta^{-1} \sum_{j=1}^{\Theta} t_j$ and $c_1 > 0$ such that 
		\begin{flalign*}
			\mathbb{P} \big[ L^{-1} \log \mathcal{N}_L (t_0 - \delta) \ge ( 1- \delta) \varphi (s;z) - st_0 - \delta \big] \ge \mathbb{P} [ \mathscr{F}_{\Theta} \big] \ge 1 - \displaystyle\frac{3 \varsigma}{4} - c_1^{-1} \Theta \cdot e^{-c_1 \delta^2 M} \ge 1 - \varsigma,
		\end{flalign*} 
		
		\noindent for sufficiently large $L$. The proposition follows.
	\end{proof}

	Now let us establish \Cref{estimatekf} and \Cref{estimatenlf}.
	
	\begin{proof}[Proof of \Cref{estimatekf}]

		\noindent We induct on $k$; let us first verify the case $k = 0$. By \eqref{x0nu}, \eqref{c1e}, the independence of $\big\{ \mathscr{G} (v) \big\}$ over $v \in \mathbb{V} (M)$, and a Chernoff bound, we deduce the existence of a constant $c_2 \in (0, \kappa)$ with 
		\begin{flalign*}
			\mathbb{P} [\mathscr{F}_0] \ge 1 - \displaystyle\frac{\varsigma}{2} - e^{-c_2 M} \ge 1 - \displaystyle\frac{3 \varsigma}{4}.
		\end{flalign*}
		
		\noindent This establishes the lemma when $k = 0$. To establish it when $k \ge 1$, first observe that since $t_j \geq 0$ and $\varphi(s;z) \geq \delta \geq 2 \delta^2$, we have if $\mathcal F_{k-1}$ holds that $|\mathcal Z_{k-1} |\geq\exp( M (2 C_0 + 1))$. Hence, from \eqref{c1e} and a Chernoff bound, there exists a new constant $c_2 > 0$ such that
		\begin{flalign*}
			\mathbb{P} \bigg[  \Big| \big\{ v \in \mathcal{Z}_{k-1} : \text{$\mathscr{G} (v)$ holds} \big\} \Big| \ge \displaystyle\frac{c_1^3}{2} \cdot |\mathcal{Z}_{k-1}| \cdot \one_{\mathscr{F}_{k-1}}  \bigg] \ge 1 - e^{- c_2 M}.
		\end{flalign*}
		
		\noindent  Additionally, observe from \Cref{mudeltak} that for any $u \in \mathbb{V} (M_k)$ that 
		\begin{flalign}
			\label{upsi} 
			\mathbb{P} \big[ \mathscr{H} (u) \big] \ge 1 - e^{-(\delta^2 + \mu_k) M}, \quad \text{where} \quad \mathscr{H} (u) = \bigg\{ \big| \mathcal{X}_1 (u) \big| \ge \exp \Big( \big( \varphi (s; z) - st_k + \mu_k \big) \cdot M \Big) \bigg\}.
		\end{flalign}
		
		\noindent In particular, applying \eqref{upsi} for any $u = \mathfrak{c} \big( \mathfrak{c} (v) \big)$, where $v \in \mathcal{Z}_{k - 1}$ is such that $\mathscr{G} (v)$ holds, we deduce from a Chernoff bound that there exists a constant $c_3 > 0$ such that
		\begin{flalign}
			\label{zk1u} 
			\mathbb{P} \Bigg[ \bigg| \Big\{ u \in \mathcal{Z}_{k-1} : \text{$\mathscr{G}(u) \cap \mathscr{H} \big( \mathfrak{c} ( \mathfrak{c} (u)) \big)$ holds} \Big\} \bigg| \ge e^{-(2 \delta^2 + \mu_k) M} \cdot |\mathcal{Z}_{k-1}| \cdot \one_{\mathscr{F}_{k-1}} \Bigg] \ge 1 - e^{-c_3 \delta^2 M}.
		\end{flalign} 
		
		\noindent By \Cref{zidefinition}, any $v \in \mathcal{X}_1 \big( \mathfrak{c} (\mathfrak{c} (u)) \big)$ for which the event $\{ u \in \mathcal{Z}_{k - 1} \} \cap \mathscr{G} (u)$ holds satisfies $v \in \mathcal{Z}_k$. Thus, \eqref{zk1u} together with the bound 
		\begin{flalign*} 
			e^{-(2 \delta^2 + \mu_k) M} \cdot |\mathcal{Z}_{k-1}| \cdot \one_{\mathscr{F}_{k-1}} & \cdot \exp \Big( \big( \varphi (s; z) - st_k + \mu_k \big) \cdot M \Big) \\
			& \qquad \quad \ge \exp \Bigg( kM \cdot \bigg( \varphi (s; z) - \displaystyle\frac{s}{k} \displaystyle\sum_{j=1}^k t_j \bigg) - M (2C_0 +1 - 2 k \delta^2) \Bigg) \cdot \one_{\mathscr{F}_{k-1}},
		\end{flalign*} 
		
		\noindent yields the lemma.
	\end{proof}

	\begin{proof}[Proof of \Cref{estimatenlf}]
		
		We induct on $k$; the case $k = 0$ follows from \Cref{x0}, as it indicates that $\big| R_{0v}^{(v_+)} \big| \ge \kappa^M$, for any $v \in \mathcal{Z}_0$. To establish it for $k \ge 1$, fix $v \in \mathcal{Z}_k$, and let $w = v^{(M+1)}$ and $u = v^{(M)} = \mathfrak{c} (w)$ (where we recall the notation above \Cref{zidefinition}). We  have $u_- = w$. Then, \Cref{rproduct} implies 
		\begin{flalign}
			\label{rz1} 
			\big| R_{0v}^{(v_+)} \big| = \big| R_{0w_-}^{(w)} \big| \cdot |T_{w_- w}| \cdot \big| R_{ww}^{(v_+)} \big| \cdot |T_{wu}| \cdot \big| R_{uv}^{(u_-, v_+)} \big|.
		\end{flalign} 
		
		\noindent Since $v \in \mathcal{Z}_k$ and $w_- = v^{(M+2)}$, we have $w_- \in \mathcal{Z}_{k-1}$ and that $\mathscr{G} (w_-)$ holds. The former implies 
		\begin{flalign}
			\label{rz2} 
			\big| R_{0w_-}^{(w)} \big| \ge \exp \Bigg( \bigg( \displaystyle\sum_{j=1}^{k-1} t_j + C ( \log \kappa - (k-1) \delta^4 - C_0 - 1) \bigg) \cdot M \Bigg),
		\end{flalign} 
		
		\noindent and the latter (as $\mathscr{G}_0 (w_-) \cap \mathscr{G}_0 (w) \subset \mathscr{G} (w_-)$, since $w = \mathfrak{c} (w_-)$) gives 
		\begin{flalign}
			\label{rz3} 
			|T_{w_- w}| \ge 1; \qquad |T_{wu}| \ge 1.
		\end{flalign}
		
		\noindent Additionally, from the fact that $v \in \mathcal{X}_1 (u)$ (as $v \in \mathcal{Z}_k$), we obtain
		\begin{flalign}
			\label{rz4}
			\big| R_{uv}^{(u_-, v_+)} \big| \cdot \one_{\mathscr{B} (u, v; \varsigma)} \ge e^{t_0 M}. 
		\end{flalign}  
		
		Next, we have 
		\begin{flalign}
			\label{rz5}
			\big| R_{ww}^{(v_+)} \big| = \Bigg| z + T_{w_- w}^2 R_{w_- w_-}^{(w)} + T_{wu}^2 R_{uu}^{(u_-, v_+)} + \displaystyle\sum_{\substack{u' \in \mathbb{D} (w) \\ u' \ne \mathfrak{c} (w)}} R_{u'u'}^{(w)} T_{u' w}^2 \Bigg|^{-1} \ge \big| B + 8 \Omega + 6 \big|^{-1},
		\end{flalign}
		
		\noindent where we have used the bounds  
		\begin{flalign*} 
			& |z| \le |E| + \eta \le B + 1; \qquad |T_{w_- w}| \le 2; \qquad |T_{wu}| \le 2, \\
			&  \big| R_{w_- w_-}^{(w)} \big| \le \Omega; \qquad \big| R_{uu}^{(u_-, v_+)} \big| \le \Omega; \qquad \Bigg| \displaystyle\sum_{\substack{u' \in \mathbb{D} (w) \\ u' \ne \mathfrak{c} (w)}} R_{u'u'}^{(w)} T_{u' w}^2 \Bigg| \le 5.
		\end{flalign*} 
		
		\noindent Here, the second and third bounds follow from the fact that $\mathscr{G}_0 (w_-) \cap \mathscr{G}_0 (w)$ holds; the fourth and fifth follow from the fact that $\mathscr{B} (0, w_-; \varsigma) \cap \mathscr{B} (u, v; \varsigma)$ holds; and the sixth follows from the fact that $\mathscr{G} (w_-)$ holds. Thus, the lemma follows from combining \eqref{rz1}, \eqref{rz2}, \eqref{rz3}, \eqref{rz4}, and \eqref{rz5}, and then taking $M$ (equivalently, $L$) sufficiently large.
	\end{proof}

	\subsection{Delocalization Through Large Resolvent Entries} 
	
	\label{RLarge}
	
	In this section we establish \Cref{rimaginary0}, which is a delocalization result for $\mathbb{T}$, stating that $\Imaginary R_{\star} (E + \mathrm{i} \eta)$ remains bounded below if $\varphi (1; E) > 0$. Given \Cref{mprobability}, its proof will follow the ideas of \cite{aizenman2013resonant}. In particular, we begin with the following definition from \cite{aizenman2013resonant}, which provides notation for the quantiles of $\Imaginary R_{\star} (z)$ (recall \Cref{r}).
	
	\begin{definition}[{\cite[Definition 4.2]{aizenman2013resonant}}]
		
		\label{axi} 
		
		For any complex number $z = E + \mathrm{i} \eta \in \mathbb{H}$ and positive real number $a > 0$, let $\xi (a; z)$ be the largest value of $\xi \ge 0$ so that
		\begin{flalign*}
			\mathbb{P} \big[ \Imaginary R_{\star} (z) \ge \xi \big] \ge a.
		\end{flalign*} 
		
	\end{definition} 
	
	\begin{rem} 
		
		\label{axi0}
		
		Observe that $\xi (a; z)$ is bounded for any fixed $z \in \mathbb{H}$ and $a \in \mathbb{R}_{> 0}$, due to the deterministic estimate $\Imaginary R_{00} (z) \le  \big| R_{00} (z) \big| \le \eta^{-1}$ that holds by the second part of \Cref{q12}.
		
	\end{rem}
	
	Next, we require the following events on which $|R_{0v}|$ is large and on which a vertex $v$ has some child $w$ on which $|T_{vw}|^2 \cdot \Imaginary R_{ww}^{(v)}$ is not too small.
	
	\begin{definition} 
		
		\label{eventa} 
		
		In what follows, for any real number $\delta > 0$, integer $L \ge 0$, and vertices $v \in \mathbb{V} (L)$ and $w \in \mathbb{V} (L + 1)$ with $v \prec w$, we define the events 
		\begin{flalign}
			\label{av} 
			\mathscr{A}_1 (\delta; v)= \mathscr{A}_1 (\delta; v; z) = \big\{ |R_{0v}(z)| \ge e^{\delta L} \big\}
			\end{flalign}
			and 
			\begin{flalign}
			  \mathscr{A}_2 (v; w)=  \mathscr{A}_2 (v; w ;z) = \Bigg\{ |T_{vw}|^2 \Imaginary R_{ww}^{(v)}(z) \ge \xi \bigg( \displaystyle\frac{1}{2}; z \bigg) \Bigg\}; \qquad \mathscr{A}_2 (v)= \mathscr{A}_2 (v;z) = \bigcup_{v \prec w} \mathscr{A}_2 (v; w).
		\end{flalign}
		
	\end{definition}

	The following proposition is similar to \cite[Theorem 4.6]{aizenman2013resonant}, though the probability bound in \eqref{r0jestimate} is $\frac{3}{5}$ (which could be replaced with any number less than and bounded away from $1$, without substantially affecting the proof), instead of an unspecified constant $c$ as in \cite{aizenman2013resonant}. Its proof, which makes use of \Cref{mprobability}, is given in \Cref{ProofEstimateR} below. 
	
	\begin{prop} 
		
		\label{estimater}
		
		Let $s \in (\alpha, 1)$, $\delta \in (0, 1/8)$, $\varepsilon> 0$, $B > 1$, $\nu \in (8 \delta, 1)$ be real numbers. Let $E \in \mathbb{R}$ be a real number such that $\varepsilon \le |E| \le B$, and let $\{ \eta_j\}_{j=1}^\infty$ be a decreasing sequence of positive reals such that $\lim_{j} \eta_j = 0$. Assume
		\begin{flalign}
			\label{estimatespectrum} 
			\displaystyle\lim_{s \rightarrow 1} \bigg( \displaystyle\liminf_{j \rightarrow \infty} \varphi_s (E + \mathrm{i} \eta_j) \bigg) > \nu \text{ and }  \displaystyle\liminf_{j \rightarrow \infty} \mathbb{P} \big[ \Imaginary R_{\star} (E + \mathrm{i} \eta_j) > u \big] = 0, \quad \text{for any $u > 0$}.
		\end{flalign}
		
		\noindent Then, for any sufficiently large positive integer $L \ge L_0 (\nu) > 1$, there exists $J(L)$ such that for all $j \ge J$, we have		\begin{flalign}
			\label{r0jestimate} 
			\mathbb{P} \Bigg[  \bigcup_{v \in \mathbb{V} (L)} \big( \mathscr{A}_1 (\delta; v; E + \iu \eta_j) \cap \mathscr{A}_2 (v;E + \iu \eta_j) \big) \Bigg] \ge \displaystyle\frac{3}{5}.
		\end{flalign}
		
	\end{prop}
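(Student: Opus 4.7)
The plan is to combine the amplification result \Cref{mprobability} with the independence of subtrees rooted at distinct children of level-$L$ vertices. I first use \Cref{mprobability} to produce an exponentially large set $V^*$ of level-$L$ vertices with lower bounds on truncated off-diagonal resolvents; I then employ conditional independence to find a vertex $v \in V^*$ satisfying $\mathscr{A}_2(v)$; and I finally argue, via the Schur complement identity and the hypothesis $\Imaginary R_{\star}(E+\iu\eta_j)\to 0$, that the truncated bound on $|R_{0v}^{(v_+)}|$ can be upgraded to the required $|R_{0v}| \geq e^{\delta L}$.

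In detail, the first hypothesis in \eqref{estimatespectrum} allows me to fix $s \in (\alpha,1)$ sufficiently close to $1$ that $\varphi(s;E+\iu \eta_j) > \nu$ for all large $j$. Applying \Cref{mprobability} with this $s$, an auxiliary parameter $\delta' > 0$ chosen much smaller than $\delta$, and $\varsigma = 1/10$, produces $t_0 \in [-C,0]$ and constants $\omega, \Omega$ so that, for all sufficiently large $j$ and $L$, with probability at least $9/10$ the set
\[V^* := \mathcal{S}_L(t_0 - \delta';\, 0;\, \omega;\, \Omega;\, E + \iu\eta_j)\]
has cardinality at least $N := \exp\bigl(L((1-\delta')\nu - st_0 - \delta')\bigr) \geq e^{cL}$ for some $c = c(\nu,\delta') > 0$. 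Each $v \in V^*$ satisfies $\mathscr{B}(0,v)$---in particular $|T_{vv_+}| \in [1,2]$---together with $|R_{0v}^{(v_+)}| \geq e^{(t_0 - \delta')L}$.

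Next, for each $v \in V^*$ I consider $\mathscr{A}_2(v;v_+) \subseteq \mathscr{A}_2(v)$. The random variable $R_{v_+ v_+}^{(v)}$ depends only on the subtree rooted at $v_+$; these subtrees are disjoint across distinct $v \in V^*$ and independent of the information determining $V^*$ (which involves only the complement of these subtrees). The definition of $\xi(1/2;z)$ together with $|T_{vv_+}|^2 \geq 1$ yields $\mathbb{P}[\mathscr{A}_2(v;v_+) \mid V^*] \geq 1/2$, and conditional independence gives
\[\mathbb{P}\bigl[\bigcap_{v \in V^*} \mathscr{A}_2(v;v_+)^c \,\big|\, V^*,\, |V^*| \geq N\bigr] \leq (1/2)^N,\]
which tends to $0$ as $L \to \infty$. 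Thus, with probability at least $4/5$, some $v \in V^*$ satisfies $\mathscr{A}_2(v)$.

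The final and most delicate step is to upgrade the truncated bound on $|R_{0v}^{(v_+)}|$ to $|R_{0v}| \geq e^{\delta L}$ for such $v$. The Schur complement identity gives
\[R_{0v} = R_{0v}^{(v_+)} \cdot \bigl(1 - T_{vv_+}^2\, R_{vv}^{(v_+)}\, R_{v_+ v_+}^{(v)}\bigr)^{-1},\]
so $|R_{0v}|$ can exceed $e^{\delta L}$ precisely when the Schur denominator is small, of magnitude at most $|R_{0v}^{(v_+)}|\,e^{-\delta L}$. The second hypothesis in \eqref{estimatespectrum} forces $\Imaginary R_{vv}^{(v_+)}$ and $\Imaginary R_{v_+ v_+}^{(v)}$ to be small in probability, so the imaginary part of the denominator is negligible, and the magnitude condition reduces to smallness of the real part $1 - T_{vv_+}^2\,\Real(R_{vv}^{(v_+)} R_{v_+ v_+}^{(v)})$---a one-dimensional ``resonance'' condition on $R_{v_+ v_+}^{(v)}$. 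The main obstacle is to show that this resonance, which occurs with probability of order $\epsilon$ for a target denominator of magnitude $\leq \epsilon$ (via density estimates akin to those in \Cref{ar}), must hold for some $v \in V^*$ despite being rare per-vertex: the exponential size $|V^*| \geq e^{cL}$ is what enables the amplification. By calibrating $\delta'$ sufficiently small (and hence $c$ large enough) relative to $\delta$ and the upper bound $C$ on $|t_0|$, so that the per-vertex resonance probability exceeds $e^{-cL/2}$, a union bound across the conditionally independent candidates in $V^*$ produces the desired vertex $v$ satisfying both $\mathscr{A}_1(\delta;v)$ and $\mathscr{A}_2(v)$ with probability at least $3/5$.
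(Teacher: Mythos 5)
Your outline follows the paper's high-level architecture---amplify with \Cref{mprobability}, exploit conditional independence of subtrees rooted at distinct level-$L$ vertices, and close with a per-vertex resonance---and the Schur identity $R_{0v}=R_{0v}^{(v_+)}\bigl(1-T_{vv_+}^2R_{vv}^{(v_+)}R_{v_+v_+}^{(v)}\bigr)^{-1}$ is correct. The gap lies in the final resonance step, and it is structural rather than merely technical.

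You place the resonance and the event $\mathscr{A}_2(v;v_+)$ on the \emph{same} random variable $R_{v_+v_+}^{(v)}=R_{\mathfrak{c}(v)\mathfrak{c}(v)}^{(v)}$, and the two conditions pull against each other. The resonance pins $R_{v_+v_+}^{(v)}$ to within some $\epsilon'$ of $1/\bigl(T_{vv_+}^2R_{vv}^{(v_+)}\bigr)$, a point whose imaginary part equals $-\Im R_{vv}^{(v_+)}/\bigl(T_{vv_+}^4\,|R_{vv}^{(v_+)}|^2\bigr)\le 0$; since $\Im R_{v_+v_+}^{(v)}\ge 0$, this forces $\Im R_{v_+v_+}^{(v)}\le\epsilon'$, which is exponentially small in $L$. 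But $\mathscr{A}_2(v;v_+)$ demands $\Im R_{v_+v_+}^{(v)}\ge\xi(1/2;z)/4$. The two events are thus nearly mutually exclusive, and the joint probability is not ``resonance probability times $1/2$'' as the sketch implicitly assumes. Even setting that aside, the claimed per-vertex probability ``of order $\epsilon$'' is unsupported: $\mathcal{S}_L$ only bounds $|R_{vv}^{(v_+)}|$ from above by $\Omega$, so the target $1/\bigl(T_{vv_+}^2R_{vv}^{(v_+)}\bigr)$ can be arbitrarily large, and the density of $R_{v_+v_+}^{(v)}$ near a large value is not bounded below---hitting such a value forces the one-sided stable variable $\vartheta_0$ from \eqref{kappatheta1} to be tiny, which is super-exponentially unlikely.

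The paper dodges both difficulties by putting the resonance in a \emph{different} random variable. It works with $w\in\mathcal{S}_{L-1}(t_0-\delta)$ and $v=\mathfrak{c}(w)$, reserving the single designated child $\mathfrak{c}(v)$ of $v$ exclusively for $\mathscr{A}_2$ (this is the event $\mathscr{E}_2(w)$ in \Cref{r02}), and builds the resonance $\mathscr{E}_3(w)$ in the sum $K_v=\sum_{u\in\mathbb{D}(v),\,u\ne\mathfrak{c}(v)}T_{vu}^2R_{uu}^{(v)}$ over all the \emph{other} children. The quantities $R_{\mathfrak{c}(v)\mathfrak{c}(v)}^{(v)}$ and $K_v$ are conditionally independent, so $\mathscr{E}_2$ and $\mathscr{E}_3$ factor. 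Moreover $K_v$ is a fresh $\frac{\alpha}{2}$-stable variable whose real part has density bounded below on bounded sets (via \Cref{zuv}), and the resonance target $-\Re Q_v$ is explicitly bounded by $B+4\Omega+4C_0+1$ thanks to the $\Omega$- and $C_0$-bounds built into $\mathcal{S}_{L-1}$ and $\mathscr{E}_2$. These two features are exactly what make the per-vertex probability $\mathbb{P}[\mathscr{E}(w)]\gtrsim e^{(2\delta+t_0-\nu)L}$ honestly lower-bounded, so that the Chernoff count over $|\mathcal{R}_1|\gtrsim e^{(\nu-t_0-\delta)L}$ conditionally independent candidates closes. To repair your argument you would need to split the roles in the same way; as written, the sketch cannot deliver a vertex satisfying $\mathscr{A}_1(\delta;v)$ and $\mathscr{A}_2(v)$ simultaneously.
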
 
	
	Given \Cref{estimater}, we can prove  \Cref{rimaginary0}.
	
	
		
%
%
%
	\begin{proof} [Proof of \Cref{rimaginary0}]

	We may assume that $E \ne 0$, for otherwise the result follows from \cite[Lemma 4.3(b)]{bordenave2011spectrum}. We first prove that there exists $c >0$ such that
	\begin{flalign}
			\label{dedeXX} 
	\liminf_{j \rightarrow \infty} \mathbb{P} \big[ \Imaginary R_{\star} (E + \mathrm{i} \eta_j) > c \big]  > c.
	\end{flalign}
	Assume to the contrary that $\liminf_{j \rightarrow \infty} \mathbb{P} \big[ \Imaginary R_{\star} (E + \mathrm{i} \eta_j) > c \big]  = 0$ for every $c > 0$.  By assumption, there $\nu >0$ such that 
\be
		\displaystyle\lim_{s \rightarrow 1} \bigg( \displaystyle\liminf_{j \rightarrow \infty} \varphi_s (E + \mathrm{i} \eta_j) \bigg) > \nu.
\ee 
\noindent Also, by \eqref{r00sum}, we have 
		\begin{flalign}
			\label{r00vimaginary} 
			\Imaginary R_{00} \ge \displaystyle\sum_{v \in \mathbb{V} (k)} |R_{0v}|^2 \displaystyle\sum_{v \prec u} |T_{vu}|^2 \Imaginary R_{uu}^{(v)}. 
		\end{flalign}
		
		\noindent By \Cref{estimater}, there exist constants $\delta = \delta (\alpha, \nu, E) > 0$ and $J, L \in \Zplus$ such that
		\begin{flalign}
			\label{probabilitya} 
			\mathbb{P} \big[ \mathscr{A} (\delta) \big] \ge \displaystyle\frac{3}{5}, \qquad \text{where} \quad \mathscr{A} (\delta) = \bigcup_{v \in \mathbb{V} (L)} \mathscr{A}_1 (\delta; v; E + \iu \eta_j) \cap \mathscr{A}_2 (v; E + \iu \eta_j)
		\end{flalign}
		for $j \ge J$.  Inserting this into \eqref{r00vimaginary}, and using the definitions \eqref{av} of the events $\mathscr{A}_1 (\delta; v)$ and $\mathscr{A}_2 (v)$, we find
		\begin{flalign*}
			\one_{\mathscr{A}(\delta)} \cdot \Imaginary R_{00}(E + \iu \eta_j) \ge \one_{\mathscr{A}(\delta)} \cdot e^{\delta L} \cdot \xi \bigg( \displaystyle\frac{1}{2}; E + \iu \eta_j \bigg).
		\end{flalign*}
		
		\noindent This, together with \eqref{probabilitya} and \Cref{axi}, implies that for all $j \ge J$ we have 
		\begin{flalign*}
			e^{\delta L} \cdot \xi \bigg( \displaystyle\frac{1}{2} ; E + \mathrm{i} \eta_j \bigg) \le   \xi \bigg( \displaystyle\frac{3}{5}, E + \mathrm{i} \eta_j \bigg) \le  \xi \bigg( \displaystyle\frac{1}{2}; E + \iu \eta_j \bigg),
		\end{flalign*}
		
		\noindent where in the last inequality we used the fact that $\xi (a; z)$ is nonincreasing in $a$ (which follows from \Cref{axi}). For $L$ large enough, this is a contradiction and thus establishes \eqref{dedeXX}.

We may now check that $ 
		\liminf_{\eta  \rightarrow 0} \mathbb{P} \big[ \Imaginary R_{\star} (E + \mathrm{i} \eta) > c \big] > c$. Set $z_j  = E + \mathrm{i} \eta_j$. By \eqref{dedeXX}, there exists a constant $c > 0$ such that $\mathbb{E} [\big( \Imaginary R_{\star} (z_{j}) \big)^{\alpha/2} ] > c$ for sufficiently large $j$. Recalling the notation from \eqref{kappatheta1} and \Cref{rrealimaginary}, it follows that $\sigma \big(\vartheta_0 (z_{j}) \big) \ge c^{2/\alpha}$, and so for any $\theta \in (0, 1)$ there exists a real number $\delta = \delta (\theta) > 0$ such that $\mathbb{P} \big[ \vartheta (z_{j}) > \delta \big] > \theta$ for all sufficiently large $j$. In particular, any limit point as $j$ tends to $\infty$ of $\vartheta (z_{j})$ is almost surely positive, which by \eqref{kappatheta1} (and \eqref{q00delta1}) implies the same for $\Imaginary R_{\star} (z_{\eta})$. 
		\end{proof} 

		 	\begin{rem} 
 	
 	\label{rnu} 
 	
 	Although not explicitly stated in \Cref{rimaginary0}, it is quickly verified that its proof shows the following more uniform variant of it. For any real numbers $ \nu, \theta \in (0, 1)$, there exists a constant $\delta = \delta (\varepsilon, B, \nu, \theta) > 0$ such that, if\footnote{Observe in this statement that the uniformity of the lower bound on $\delta$ is lost as $E$ tends to $0$; this seems to be an artifact of our proof method. Indeed, at $E = 0$, \cite[Lemma 4.3(b)]{bordenave2011spectrum} explicitly identifies the law of $R_{\star} (\mathrm{i} \eta)$ for any $\eta > 0$ and observes it to be almost surely positive as $\eta$ tends to $0$.} $\varepsilon \le |E| \le B$ and 
	\be
	\displaystyle\lim_{s \rightarrow 1} \bigg( \displaystyle\liminf_{j \rightarrow \infty} \varphi_s (E + \mathrm{i} \eta_j) \bigg) > \nu,
	\ee 
	then $\mathbb{P} \big[ \Imaginary R_{\star} (E + \mathrm{i} \eta_j) > \delta \big] \ge \theta$ for large enough $j$. 
 	
 	\end{rem} 
	
	\subsection{Proof of \Cref{estimater}}
	
	\label{ProofEstimateR}
	
	In this section we establish \Cref{estimater}. Let $\delta \in (0, 1/8)$ be a parameter, as in the statement of that proposition. Perhaps the ``most unlikely aspect'' of the event described in \eqref{r0jestimate} is the exhibition of some vertex $v \in \mathbb{V} (L)$ for which $\mathscr{A}_1 (\delta; v)$ holds, that is, for which $|R_{0v}| \ge e^{\delta L}$. The heuristic underlying the existence of such a vertex is simlar to the one described in \cite[Section 4.3]{aizenman2013resonant} under the name of \emph{resonant delocalization}. 
	
	To explain it, first observe that \Cref{mprobability} with $s$ close to $1$ yields about $e^{ (\varphi(s; z) - t_0)L}$ vertices $w \in \mathbb{V} (L-1)$ for which 
	\begin{flalign} 
		\label{r0w} 
		\big|R_{0w}^{(w_+)} \big| \ge e^{t_0 L}.
	\end{flalign} 

	\noindent Since $t_0$ could be negative, this does not immediately give an off-diagonal resolvent entry of size $e^{\delta L}$. So, we use the identity (from \Cref{rproduct}) 
	\begin{flalign} 
		\label{r0vw} 
		|R_{0v}| = \big| R_{0w}^{(w_+)} \big| \cdot |T_{wv}| \cdot |R_{vv}|
	\end{flalign}  

	\noindent for such $w$, where $v = w_+$. We will take $v$ so that $|T_{vw}| \ge 1$ (such a child $v$ of $w$ exists with positive probability), and so $|R_{0v}| \ge e^{t_0 L} \cdot |R_{vv}|$. 
	
	Next, since $\lim_{j \rightarrow \infty} \Imaginary R_{\star} (E + \mathrm{i} \eta_j) = 0 $, \Cref{rrealimaginary} shows that $R_{\star}$ becomes a real random variable whose density is bounded below; in particular, $\mathbb{P} \big[ |R_{vv}| \ge e^{(\delta - t_0) L} \big] \sim e^{(t_0 - \delta) L}$.  Since there are about $e^{(\varphi (s; z) - t_0) L}$ vertices $w$ satisfying \eqref{r0w}, and since $e^{(t_0  - \delta)L} \cdot e^{(\varphi (s; z) - t_0) L} = e^{(\varphi (s; z) - \delta)L} \gg 1$ (as $\varphi (s; z) \ge \nu > \delta$), there is likely at least one such vertex $v$ satisfying $|R_{vv}| \ge e^{(\delta - t_0) L}$. Combined with \eqref{r0w} and \eqref{r0vw}, this yields a vertex $v \in \mathbb{V}(L)$ for which $|R_{0v}| \ge e^{\delta L}$. 
	
	Now let us implement this in detail. Let $C_0 > 1$ denote a sufficiently large constant (so that in particular $C_0 > 4C$, where $C$ is from \Cref{mprobability}). Let $L \in \Zplus$ be a parameter. By \eqref{estimatespectrum}, we may fix $s \in (\alpha, 1)$ sufficiently close to $1$ and $J(L)\in \Zplus$ so that
	\begin{flalign}
		\label{r0}  
		\varphi_s (E + \mathrm{i} \eta_j) \ge \nu; \qquad \mathbb{P} \big[ \Imaginary R_{\star} (E + \mathrm{i} \eta_j) > e^{-2C_0 L} \big] < e^{-C_0 L}; \qquad \eta_j < e^{-2C_0 L}
	\end{flalign} 
	for all $j\in \Zplus$ such that $j \ge J$. Set $z = E + \mathrm{i} \eta$, $z_j = E + \iu \eta_j$, and for each integer $j \ge 1$ let $R_k = R_k (z)$ denote mutually independent random variables with law $R_{\star} (z)$. Letting $\boldsymbol{\zeta} = (\zeta_1, \zeta_2, \ldots )$ denote a Poisson point process on $\mathbb{R}_{> 0}$ with intensity $\alpha  x^{-\alpha/2-1} dx$, it follows from \eqref{r0}, \Cref{sumaxi}, \Cref{rrealimaginary}, and \Cref{ar} that 
	\begin{flalign}
		\label{zetajrj} 
		\mathbb{P} \Bigg[ \eta_j + \displaystyle\sum_{k = 1}^{\infty} \zeta_k \Imaginary R_k(z_j) > e^{-C_0 L} \Bigg] < e^{-C_0 L}
	\end{flalign} 
	for sufficiently large $j$.

	Next, recall the real number $t_0 = t_0 (\alpha, s, z, \omega, \Omega, L-1) \in \big[ -\frac{C_0}{4}, 0 \big]$ from \Cref{mprobability}, and the vertex set $\mathcal{S}_L (t) = \mathcal{S}_L (t; 0; \omega; \Omega;z)$ and its size $\mathcal{N}_L (t) = \mathcal{N}_L (t; 0; \omega; \Omega;z)$ from \Cref{ns}. Here we take $L-1$ instead of $L$ in the statement of \Cref{mprobability}, and restrict to $L \ge L_0$, where $L_0$ is given by the same proposition.
	By \Cref{mprobability} and the first bound of \eqref{r0} (and by taking $s \ge 1 - C_0^{-1} \delta$, so that $\big| (s - 1) t_0 \big| \le \frac{\delta}{2}$), we may fix $\omega = \omega (\delta) \in (0, 1)$ and $\Omega = \Omega (\delta) > 1$ such that 
	\begin{flalign} 
		\label{9nl110} 
		\mathbb{P} \big[ L^{-1} \log \mathcal{N}_{L-1} (t_0 - \delta) \ge \nu - t_0 - \delta \big] \ge \frac{9}{10}.
	\end{flalign} 

	We next require the following two vertex sets. 
	
	\begin{definition} 
		
	\label{0r1} 
	
	Further define the vertex set 
	\begin{flalign*}
		 \mathcal{R}_1 = \mathcal{R}_1(z) = \Big\{ w \in \mathcal{S}_{L-1} (t_0 - \delta) : \text{$\mathscr{G}_0 \big( \mathfrak{c} (w) \big)$ holds} \Big\},
	\end{flalign*} 

	\noindent where we recall the event $\mathscr{G}_0$ from \Cref{eventg}; the fact that any $w \in \mathcal{S}_L (t_0 - \delta)$ satisfies $\mathscr{G}_0 (w)$ (by \Cref{br0vw} and \Cref{ns}); and the child $\mathfrak{c} = \mathfrak{c} (w)$ of $w$ from \Cref{eventg}. 
	
	\end{definition} 

	\begin{definition} 
	
	\label{r02} 
	Additionally define the vertex set $\mathcal{R}_2 = \mathcal{R}_2(z)$ consisting of those vertices $v \in \mathbb{V} (L)$ such that the following five statements hold. 
	
	\begin{enumerate} 
		\item There exists a vertex $w \in \mathcal{R}_1$ for which $v = \mathfrak{c} (w)$. 
		\item The event $\mathscr{G}_0 (v)$ holds. 
		\item Denoting $\mathfrak{c} = \mathfrak{c} (v)$, we have the bounds $\Imaginary R_{\mathfrak{c} \mathfrak{c}}^{(v)}  \ge \xi \big( \frac{1}{2}; z \big)$ and $\big| R_{\mathfrak{c} \mathfrak{c}}^{(v)} \big| \le C_0$.
		\item We have 
		\begin{flalign*} 
			\Imaginary K_v \le e^{-C_0 L}; \quad \Imaginary Q_v \le e^{-C_0 L}; \quad \Real K_v \in [- \Real Q_v - e^{(2\delta + t_0 - \nu)L}, e^{(2\delta + t_0 - \nu)L} - \Real Q_v \big], 
		\end{flalign*}
		
		\noindent where
		\begin{flalign}
			\label{kvqv} 
			 K_v =  \displaystyle\sum_{\substack{u \in \mathbb{D} (v) \\ u \ne \mathfrak{c}}} T_{vu}^2 R_{uu}^{(v)}; \qquad Q_v = z + T_{w v}^2 R_{ww}^{(v)} + T_{v \mathfrak{c}}^2 R_{\mathfrak{c} \mathfrak{c}}^{(v)}.
	\end{flalign} 

	\end{enumerate} 

	\noindent We refer to the second, third, and fourth events above as $\mathscr{E}_1 (w)$, $\mathscr{E}_2 (w)$, and $\mathscr{E}_3 (w)$, respectively (indexing these events by $w = v_-$ instead of $v$); we also set $\mathscr{E} (w) = \mathscr{E}_1 (w) \cap \mathscr{E}_2 (w) \cap \mathscr{E}_3 (w)$. These events all depend on the choice of $z \in \bbH$; however, we omit this dependence from the notation.

	\end{definition} 

	The following lemma indicates that the event required in \eqref{r0jestimate} holds if $\mathcal{R}_2$ is nonempty. 
	
	\begin{lem} 
		
		\label{va} 
		
		For any $v \in \mathcal{R}_2$, the event $\mathscr{A}_1 (\delta; v) \cap \mathscr{A}_2 \big( v; \mathfrak{c} (v) \big)$ holds.
		
	\end{lem}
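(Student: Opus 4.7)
The plan is to use the resolvent product identity from \Cref{rproduct} to factorize $R_{0v}$ as
\begin{equation*}
R_{0v} \;=\; -\, R_{vv}\cdot T_{wv}\cdot R_{0w}^{(v)},
\end{equation*}
where $w = v_-$ is the parent of $v$. This factorization follows by writing out both $R_{0v}$ and $R_{0w}^{(v)}$ via \Cref{rproduct} along the path $0 \preceq \cdots \preceq w \prec v$; because removing $v$ only affects the resolvent entries $R_{uu}^{(u_+)}$ at $u=w$ (for every $u\prec w$, the ancestor chain $u_+\preceq w$ lies above $v$, so $R_{uu}^{(v,u_+)}=R_{uu}^{(u_+)}$), the two product expansions differ by exactly the factor $T_{wv}\cdot R_{vv}/R_{ww}^{(v)}$ up to a sign. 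Given this, the verification of $\mathscr{A}_1(\delta;v)$ reduces to lower bounding each of the three factors in the factorization.

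For the factor $|R_{0w}^{(v)}|$ we use that $v = \mathfrak{c}(w)$ by the first condition of \Cref{r02}, so $w\in\mathcal{R}_1\subset\mathcal{S}_{L-1}(t_0-\delta)$, which gives $|R_{0w}^{(v)}|\ge e^{(t_0-\delta)(L-1)}$. For the factor $|T_{wv}|$ we use $\mathscr{G}_0(w)$, which forces $|T_{wv}|=|T_{w\mathfrak{c}(w)}|\in[1,2]$ and in particular $|T_{wv}|\ge 1$. For the factor $|R_{vv}|$, the Schur complement identity \eqref{qvv} yields $R_{vv}=-(Q_v+K_v)^{-1}$ with $Q_v$, $K_v$ as in \eqref{kvqv}, so the event $\mathscr{E}_3(w)$ from \Cref{r02} controls
\begin{equation*}
|Q_v+K_v| \;\le\; |\Real(Q_v+K_v)| + |\Imaginary(Q_v+K_v)| \;\le\; e^{(2\delta+t_0-\nu)L} + 2e^{-C_0L}.
\end{equation*}
Since $C_0$ is large and $\nu>8\delta$, the first term dominates, giving $|R_{vv}|\ge \tfrac12 e^{(\nu-t_0-2\delta)L}$.

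Combining the three bounds, the total exponent is
\begin{equation*}
(t_0-\delta)(L-1) + (\nu - t_0 - 2\delta)L \;=\; (\nu - 3\delta)L - t_0 + \delta.
\end{equation*}
Because $|t_0|\le C_0/4$ is a bounded constant and $\nu-3\delta > 5\delta$, for $L$ sufficiently large (as assumed throughout the proposition) this exponent exceeds $\delta L$, establishing $|R_{0v}|\ge e^{\delta L}$, i.e.\ the event $\mathscr{A}_1(\delta;v)$. Finally, $\mathscr{A}_2\bigl(v;\mathfrak{c}(v)\bigr)$ is immediate: condition two of \Cref{r02} gives $\mathscr{G}_0(v)$ so $|T_{v\mathfrak{c}(v)}|\ge 1$, while condition three gives $\Imaginary R_{\mathfrak{c}(v)\mathfrak{c}(v)}^{(v)}\ge \xi(1/2;z)$, whence the product is at least $\xi(1/2;z)$. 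The one delicate point is that the proof is essentially a deterministic, bookkeeping-heavy calculation in which the parameters $\delta$, $\nu$, $C_0$, and $t_0$ must be tracked carefully to confirm that the loss $-t_0+\delta$ in the exponent is absorbed by the surplus $(\nu-3\delta)L$; this is exactly why the proposition requires $\nu>8\delta$ and $L \ge L_0(\nu)$.
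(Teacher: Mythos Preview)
Your proof is correct and follows essentially the same approach as the paper: factorize $|R_{0v}| = |R_{vv}|\cdot|T_{wv}|\cdot|R_{0w}^{(v)}|$ via \Cref{rproduct}, bound each factor using the membership $w\in\mathcal{S}_{L-1}(t_0-\delta)$, the event $\mathscr{G}_0(w)$, and the Schur complement combined with $\mathscr{E}_3(w)$, then verify $\mathscr{A}_2$ directly from conditions two and three of \Cref{r02}. One minor remark: since $t_0\le 0$, the term $-t_0+\delta$ in your exponent is actually nonnegative rather than a loss, so the final inequality is even easier than you suggest.
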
 

	\begin{proof} 
		
		Abbreviating $\mathfrak{c} = \mathfrak{c}(v)$, we have $|T_{v\mathfrak{c}}| \ge 1$ (since $\mathscr{G}_0 (v)$ holds) and $\Imaginary R_{\mathfrak{c} \mathfrak{c}}^{(v)} \ge \Xi \big( \frac{1}{2}; z \big)$. Together, these imply that $\mathscr{A}_2 \big( v; \mathfrak{c} (w) \big)$ holds, so it remains to verify that $\mathscr{A}_1 (\delta, v)$ does as well. 
		
		To that end, recalling that $v = \mathfrak{c} (w)$ for some $w \in \mathcal{R}_1$, we have 
		\begin{flalign*} 
			|R_{0v}| = \big| R_{ww}^{(v)} \big| \cdot |T_{wv}| \cdot |R_{vv}| \le e^{(t_0 - \delta) (L-1)} \cdot \Bigg| z + T_{wv}^2 R_{ww}^{(v)} + T_{v\mathfrak{c}}^2 R_{\mathfrak{c}\mathfrak{c}}^{(v)} + \displaystyle\sum_{\substack{u \in \mathbb{D}(v) \\ u \ne \mathfrak{c}}} T_{vu}^2 R_{uu}^{(v)} \Bigg|^{-1},
		\end{flalign*} 
	
		\noindent where the first equality follows from \Cref{rproduct}, and the second follows from the fact that $\big| R_{ww}^{(v)} \big| \ge e^{(t_0 - \delta) (L-1)}$ (since $w \in \mathcal{S}_{L-1} (t_0 - \delta)$); the fact that $|T_{vw}| \ge 1$ (since $\mathscr{G}_0 (w)$ holds); and the Schur complement identity \eqref{qvv} for $R_{vv}$. Combining this with the fourth statement of \Cref{r02} (and taking $C_0$ sufficiently large), it follows that $|R_{0v}| \ge \frac{1}{2} \cdot e^{-t_0} \cdot e^{(\nu - 3 \delta)L} \ge e^{\delta L}$, where the last statement holds for sufficiently large $L$, since $\nu \ge 8 \delta$ and $|t_0| \le C_0$. Thus, $\mathscr{A} (\delta; v)$ holds, confirming the lemma.  
	\end{proof} 
		
	Now we can establish \Cref{estimater}.
	
	\begin{proof}[Proof of \Cref{estimater}] 
		
		By \Cref{va}, it suffices to show that $\mathcal{R}_2$ is not empty. We begin by considering an arbitrary point $z = E + \iu \eta$ for $\eta \in (0,1)$.
		
		 Let us first lower bound $|\mathcal{R}_1|$. To that end, recall from \eqref{c1e} that there exists a constant $c_1 > 0$ such that $\mathbb{P} \big[ \mathscr{G}_0 (w) \big] \ge c_1$. Moreover, the events $\big\{ \mathscr{G}_0 (w) \big\}$ over $w \in \mathcal{S}_{L-1} (t_0)$ are mutually independent. Together with \eqref{9nl110}, a Chernoff estimate, and a union bound, this yields a constant $c_2 = c_2 (\nu) > 0$ such that for sufficiently large $L$ we have
		\begin{flalign}
			\label{r1estimate} 
			\mathbb{P} \bigg[ |\mathcal{R}_1| \ge \displaystyle\frac{c_1}{2} \cdot e^{(\nu - t_0 - \delta) L} \bigg] \ge \displaystyle\frac{9}{10} - e^{-c_2 L} \ge \displaystyle\frac{4}{5}.	
		\end{flalign}

		Next, let us use \eqref{r1estimate} to lower bound the proabability that $\mathcal{R}_2$ is nonempty, to which end we lower bound $\mathbb{P} \big[ \mathscr{E} (w) \big]$ from \Cref{r02}. To do this, we condition on $\mathbb{T}_- (w)$ and again use the fact from \eqref{c1e} that $\mathbb{P} \big[ \mathscr{G}_0 (v) \big] \ge c_1$. Next, we condition on those $\{ T_{vu} \}$ for which $|T_{vu}| \in [1, 2]$, and on the event that $\mathscr{G}_0 (v)$ holds. Under this conditioning, we have for sufficiently large $C_0$ that 
		\begin{flalign}
			\label{e1e2} 
			\mathbb{P} \Bigg[ \Imaginary R_{\mathfrak{c} \mathfrak{c}}^{(v)} \ge \xi \bigg( \displaystyle\frac{1}{2}; z \bigg) \Bigg] \ge \displaystyle\frac{1}{2}, \quad \text{and} \quad \mathbb{P} \Big[ \big| R_{\mathfrak{c} \mathfrak{c}}^{(v)} \big| \le C_0 \Big] \ge \displaystyle\frac{3}{4}, \quad \text{so} \quad \mathbb{P} \big[ \mathscr{E}_2 (w) \big| \mathscr{E}_1 (w) \big] \ge \displaystyle\frac{1}{4}.
		\end{flalign}
	
		\noindent Indeed, since $R_{\mathfrak{c} \mathfrak{c}}^{(v)}$ from the $T_{vu}$, the first bound follows from \Cref{axi} for $\xi$ and the second follows from \eqref{q00delta2}; then the third follows from a union bound. 
		
		Next, we us additionally condition on $R_{\mathfrak{c} \mathfrak{c}}^{(v)}$, and on the event that $\mathscr{E}_1 (w) \cap \mathscr{E}_2 (w)$ holds. Then the quantity $Q_v$ from \eqref{kvqv} becomes deterministic and satisfies $|Q_v| \le |B| + 4 \Omega + 4 C_0 + 1$, since $|z| \le |E| + |\eta| \le B + 1$; since $|T_{vw}|, |T_{wu}| \le 2$ (since $\mathscr{G}_0 (w) \cap \mathscr{G}_0 (v)$ holds); since $\big| R_{vv}^{(w)} \big| \le \Omega$ (since $\mathscr{B} (0, w)$ holds, as $w \in \mathcal{S}_{L-1} (t_0 - \delta)$); and since $\big| R_{\mathfrak{c} \mathfrak{c}}^{(v)} \big| \le C_0$ (since $\mathscr{E}_2 (w)$ holds). 
		
		We now specialize to $\eta = \eta_j$ for some $j\in \Zplus$. By \eqref{zetajrj}, \eqref{zuv} (with \Cref{tuvalpha}, \Cref{q0estimate1}, and \Cref{sigmabetaestimate}), and a union bound it follows that there exists a constant $c_3 > 0$ such that 
		\begin{flalign} \label{b27}
			\mathbb{P} \big[ \mathscr{E}_3 (w) \big| \mathscr{E}_1 (w) \cap \mathscr{E}_2 (w) \big] \ge c_3 \cdot e^{(2\delta + t_0 - \nu) L} - 2 e^{-C_0 L} \ge \displaystyle\frac{c_3}{2} \cdot e^{(2 \delta + t_0 - \nu) L}
		\end{flalign}
	if $j$ is sufficiently large. Together with \eqref{e1e2} and the previously mentioned bound $\mathbb{P} \big[\mathscr{E}_1 (w) \big] = \mathbb{P} \big[ \mathscr{G}_0 (w) \big] \ge c_1$, this gives 
		\begin{flalign*} 
			\mathbb{P} \big[ \mathscr{E} (w) \big] \ge \displaystyle\frac{c_1 c_3}{8} \cdot e^{(2 \delta + t_0 - \nu)L} 
		\end{flalign*}
		for $z = E + \iu \eta_j$ when $j$ is sufficiently large.
	
		Now observe that, conditional on the subtree $\mathbb{T}_- (L)$ above $\mathbb{V} (L)$, the events $\big\{ \mathscr{E} (w) \big\}$ are mutually independent over $w \in \mathcal{R}_1$. Hence, since by \Cref{r02} the event $\mathcal{R}_2$ consists of those $\mathfrak{c} (w)$ for which $\mathscr{E} (w)$ holds, it follows from \eqref{r1estimate}, a Chernoff estimate, and a union bound that there exists a constant $c_4 = c_4 (\nu) > 0$ such that
		\begin{flalign*}
			\mathbb{P} \big[ |\mathcal{R}_2| \ge 1] \ge \mathbb{P} \bigg[ |\mathcal{R}_2| \ge \displaystyle\frac{c_1 c_3}{8} \cdot e^{(2 \delta + t_0 - \nu) L} \cdot e^{(\nu - t_0 - \delta) L} \bigg] \ge \displaystyle\frac{4}{5} - e^{-c_4 L} \ge \displaystyle\frac{3}{5},
		\end{flalign*} 
		
		\noindent for sufficiently large $L$. We choose $L_0$ so that \eqref{r1estimate}, \eqref{b27}, and the previous inequality all hold for $L \ge L_0$. Together with \Cref{va}, this implies the proposition.
	\end{proof}

	\newpage
	
	\chapter{Explicit Formula for the Mobility Edge} 

\label{EdgeExplicit} 

\section{Transfer Operator}\label{s:transfer}

This section is devoted to the proof of \Cref{l:bootstrap}. We begin in \Cref{s:productexpansion} by deriving a useful product expansion for $\Phi_L (s; z)$ (recall \Cref{moment1}). We take its limit as $\Imaginary z$ tends to $0$ in \Cref{EtaSmall}, which will imply that $\Phi_L$ may be calculated by iterating a certain integral operator $T$. In \Cref{s:transferoperator}, we estimate the operator norm of powers of $T$ in terms of the largest positive eigenvalue of $T$ (which will be evaluated in \Cref{s:pfeigenvector} explicitly). Finally, in \Cref{s:provebootstrap}, we prove \Cref{l:bootstrap}.

\subsection{Product Expansion}\label{s:productexpansion}

Throughout this section, we fix a complex number $z = E + \mathrm{i} \eta \in \mathbb{H}$ and an integer $L \ge 1$; we frequently abbreviate $R_{vw} = R_{vw} (z)$ for $v, w \in \mathbb{V}$ and $R_{\star} = R_{\star} (z)$ (from \Cref{gr}). Recalling the random variables $\varkappa_0 = \varkappa_0 (z)$ and $\vartheta_0 = \vartheta_0 (z)$ from \eqref{kappatheta1}, we let $V = V(z)$ denote a complex random variable with law 
\begin{flalign*}
	V \sim \varkappa_0 + \mathrm{i} \vartheta_0 \sim -z - R_{\star}^{-1}.
\end{flalign*}

\noindent For each integer $j \in \unn{1}{L}$, let $V_j = V_j (z)$ denote a complex random variable with law $V$, with the $\{ V_j \}$ mutually independent. 

Next, we fix some infinite path $\mathfrak{p} = (v_0, v_1, \ldots ) \in \mathbb{V}$ with $v_0 = 0$, as well as a sequence of real numbers $\bm{t} = (t_1, t_2, \ldots , t_L)$. The following definition recursively introduces two sets of random variables $\{ R_i(z ; L; \bm{t}; \mathfrak{p} )\}_{i=0}^L$ and $\{ S_i(z ; L; \bm{t}; \mathfrak{p}) \}_{i=0}^L$. We often abbreviate $R_i(z ; L) =  R_i(z ; L; \bm{t}) = R_i (z; L; \bm{t}; \mathfrak{p})$, and $S_i(z;L) = S_i(z ; L; \bm{t}) = S_i (z; L; \bm{t}; \mathfrak{p})$.

\begin{definition} 
	
\label{srz} 

For $i \in \unn{0}{L}$, we define the random variables $S_i (z; L)$ recursively, by setting
\begin{flalign} 
	\label{sirecursion}
S_L(z;L) = -  \big( R_{v_L v_L}^{(v_{L-1})} \big)^{-1}  -z, \quad \text{and} \quad S_i(z;L) = V_{i+1}(z) - \frac{t_{i+1}^2}{z + S_{i+1}(z;L)}, \quad \text{for $i \in \unn{0}{L-1}$}.
\end{flalign}

\noindent Moreover, for $i \in\unn{0}{L}$, we define 
\be\label{ridef}
R_i(z;L) = - \big(z + S_i(z;L) \big)^{-1}.
\ee

\end{definition} 

 Observe in particular that
\begin{flalign*}
R_L (z; L) = R_{v_L v_L}^{(v_{L-1})}, \quad \text{and} \quad R_i(z;L) = - \big(z + V_{i+1}(z) + t^2_{i+1} R_{i+1}(z ; L) \big)^{-1}, \quad \text{for $i \in \unn{0}{L-1}$},
\end{flalign*}

\noindent and that the latter equality is analogous in form to the Schur complement identity \eqref{qvv}, with one term (being $t_{i+1}^2 R_{i+1} (z; L)$ here) separated from the sum on the right side (with the remaining part of the sum being $V_{i+1} (z)$ here).

We next require a truncation of the fractional moment sum $\Phi_L$ (from \Cref{moment1}). For any real numbers $s \in (\alpha, 1)$ and $\omega > 0$, define 
\be
\Phi_L^\circ(\omega) 
= \Phi_L^\circ(s;z;\omega)=
\mathbb{E} \Bigg[ \displaystyle\sum_{v \in \mathbb{V}_{L}} \big| R_{0v} \big|^s \cdot \one_{\mathscr{D} (0, v; \omega)} \Bigg],
\ee

\noindent recalling the event $\mathscr{D} (0, v; \omega)$ from \Cref{br0vw}. Observe in particular that $\Phi^\circ_L(s;z;0) = \Phi_L(s;z)$.

The following lemma provides a product expansion for $\Phi^\circ_L (\omega)$.

\begin{lem}\label{l:expansionz}

For any real numbers $s \in (\alpha,1)$ and $\omega > 0$, we have  
\be\label{productN}
\Phi^\circ_{L}(s;z;\omega)
=
\int_{\mathbb{R}^L}
\E \Bigg[
\big| R_L (z; L; \bm{t}) \big|^s \cdot 
\prod_{j=1}^{L}
|t_j|^s \big| R_{j-1}(z; L; \bm{t}) \big|^s \Bigg] \cdot
\prod_{j=1}^{L}
\alpha t_j^{-\alpha -1 } \one_{t_j \ge \omega} \, dt_j
.
\ee
\end{lem}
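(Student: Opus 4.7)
The plan is to expand $|R_{0v}|^s$ as a product over the unique ancestor path from $0$ to $v$ via \Cref{rproduct}, convert the tree sum $\sum_{v \in \mathbb{V}(L)}$ into an iterated integral over edge weights $\bm{t}$ by applying Campbell's theorem (\Cref{fidentityxi}) level by level, and finally identify the remaining expectation with the recursively defined quantities of \Cref{srz}.

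First, for any $v \in \mathbb{V}(L)$ with ancestor path $(v_0 = 0, v_1, \ldots, v_L = v)$, \Cref{rproduct} applied with $\mathcal{U} = \emptyset$ gives
\begin{equation*}
|R_{0v}|^s = |R_{00}|^s \prod_{j=1}^L |T_{v_{j-1} v_j}|^s \bigl|R_{v_j v_j}^{(v_{j-1})}\bigr|^s,
\end{equation*}
and the event $\mathscr{D}(0,v;\omega)$ factors as $\bigcap_{j=1}^L \{|T_{v_{j-1} v_j}| \ge \omega\}$. Writing the sum over $v \in \mathbb{V}(L)$ as a nested sum over paths and using that every factor on the right depends only on $|T|^2$ (through the Schur complements of \Cref{q12}) or $|T|^s$, I can work with the magnitudes $|T_{v_{j-1} v_j}|$, whose distribution at each vertex is a Poisson point process on $\mathbb{R}_{>0}$ of intensity $\alpha x^{-\alpha-1}\,dx$ by \Cref{tuvalpha}.

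Next, I would iteratively apply \Cref{fidentityxi} to peel off one level at a time, starting from $v_1$ and ending at $v_L$. At step $j$, the sum $\sum_{v_j \in \mathbb{D}(v_{j-1})}$ is converted into $\int_0^\infty (\,\cdot\,)\, \alpha t_j^{-\alpha-1}\,dt_j$, with the cut $\one_{t_j \ge \omega}$ accounting for the corresponding factor in $\mathscr{D}(0,v;\omega)$. The key input is the Slivnyak--Mecke property built into \eqref{fxi}: after conditioning on the chosen point $t_j = |T_{v_{j-1}v_j}|$, the residual sum over the remaining children of $v_{j-1}$ is a fresh PPP of the same intensity, independent of $t_j$ and of everything in the subtree rooted at $v_j$. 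Denoting this residual contribution by
\begin{equation*}
K_{j-1} := \sum_{\substack{w \in \mathbb{D}(v_{j-1}) \\ w \ne v_j}} T_{v_{j-1} w}^2\, R_{ww}^{(v_{j-1})},
\end{equation*}
the $L$ random variables $K_0, \ldots, K_{L-1}$ are therefore mutually independent (depending on disjoint subtrees), each distributed as $V$ by \Cref{rdistribution} together with \Cref{sumaxi}.

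Finally, the Schur complement identity \eqref{qvv} gives, for each $j \in \unn{1}{L}$,
\begin{equation*}
R_{v_{j-1} v_{j-1}}^{(v_{j-2})} = -\bigl(z + t_j^2 R_{v_j v_j}^{(v_{j-1})} + K_{j-1}\bigr)^{-1},
\end{equation*}
with the convention that $v_{-1} = \emptyset$, so the $j=1$ case recovers the Schur formula for $R_{00}$. Comparing with \Cref{srz} and using that $R_{v_L v_L}^{(v_{L-1})}$ is distributed as $R_\star(z)$ by \Cref{q12} and is independent of $K_0,\ldots,K_{L-1}$, the joint law of $(K_{j-1}, R_{v_j v_j}^{(v_{j-1})})_{j=1}^L$ conditional on $\bm{t}$ coincides with that of $(V_j, R_j(z;L;\bm{t}))_{j=1}^L$, which by induction on $j$ identifies $R_{v_j v_j}^{(v_{j-1})} \stackrel{d}{=} R_j(z;L;\bm{t})$ for each $j$, and in particular $R_{00} \stackrel{d}{=} R_0(z;L;\bm{t})$. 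Substituting these identifications back into the expanded expression yields \eqref{productN}. The main obstacle is carefully tracking the independence structure at each iteration of the Slivnyak--Mecke peeling, since at step $j$ one must disentangle the PPP at $v_{j-1}$ from both the already-integrated levels above and the not-yet-processed subtree below $v_j$ in order to invoke \Cref{fidentityxi} with the right conditional distributions.
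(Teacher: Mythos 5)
Your proposal is correct and follows essentially the same approach as the paper: product expansion via \Cref{rproduct}, Schur complements, and iterated Slivnyak--Mecke peeling via \Cref{fidentityxi}, terminating in the identification of $\big(K_{j-1}, R_{v_j v_j}^{(v_{j-1})}\big)_j$ with $\big(V_j, R_j(z;L;\bm{t})\big)_j$. The ``main obstacle'' you flag is precisely what the paper spends effort on: at each induction step it replaces the off-path subtree resolvents by fresh i.i.d.\ copies of $R_\star$ and builds the auxiliary quantities $\widetilde R_j$, $\widetilde S_j$, which have the same conditional joint law as the originals but make the function $f(x,\nu)$ fed to Campbell's theorem a genuine deterministic function of the chosen point and the residual process -- this is the concrete device that realizes the decoupling you describe.
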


\begin{proof}

For each $i \in \unn{1}{L}$, set $\bm{t}^{(i)} = (t_1, t_2, \ldots , t_i)$. Set $\Phi_{L, 0}^{\circ} = \Phi_L^{\circ}$ and, for any $i \in \unn{1}{L}$, set 
\begin{flalign}
	\label{lsumi}
	\begin{aligned}
\Phi^\circ_{L,i}(s;z)
=
\int_{\mathbb{R}^i} &
\E \Bigg[
\prod_{j=1}^{i}
|t_j|^s \Big| R_{j-1} \big(z; i; \bm{t}^{(i)} \big) \Big|^s 
 \cdot
\sum_{ v\in \mathbb{D}_{L-i} (v_i)} 
\big| R_{v_i v}^{(v_{i-1})} \big|^s \cdot \one_{\mathscr{D}(v_i,v;\omega)}
\Bigg] \\
& \qquad \times 
\prod_{j=1}^{i}
\alpha t_j^{-\alpha -1 } \one_{t_j \ge \omega} \, dt_j.
\end{aligned}
\end{flalign}

\noindent We will show that for every $i \in \unn{0}{ L-1}$, if $\Phi^\circ_{L, i }= \Phi^\circ_{L} $, then $\Phi^\circ_{L, i +1} = \Phi^\circ_{L} $.
Since the desired conclusion \eqref{productN} is $\Phi^\circ_{L, L} = \Phi^\circ_{L}$ (as $R_L (z; L) = R_{v_L v_L}^{(v_{L-1})}$), this will prove the lemma by induction.

By the product expansion \Cref{rproduct}, for any $w \in \mathbb{D} (v_i)$ and $v \in \mathbb{D}_{L-i-1} (w) \subseteq \mathbb{V}(L)$, we have 
\be\label{tildeproduct}
\big| R^{(v_{i-1})}_{v_i v } \big| = \big| R^{(v_{i-1})}_{v_i v_i} \big| \cdot |T_{v_i w }| \cdot \big| R^{(v_i)}_{wv} \big|.
\ee

\noindent Further, we have by the Schur complement identity \eqref{qvv} that 
\be\label{schur}
R^{(v_{i-1})}_{v_i v_i} = -\big( z + T^2_{v_i w }R^{(v_i)}_{ww} + K_{w} \big)^{-1},\quad \text{where}\quad
K_{w} = \sum_{\substack{u \in \mathbb{D} (v_i) \\ u \neq w}} T^2_{v_i u} R^{(v_i)}_{uu}.
\ee

\noindent Observe that $K_w$ has the same law as $V$, as (by \Cref{sumaxi} and \eqref{kappa0theta0sum}) both are complex $\frac{\alpha}{2}$-stable laws with the same parameters. Inserting these into \eqref{lsumi} gives
\begin{flalign}
	\label{momenti1}
	\begin{aligned} 
\Phi^\circ_{L,i}(s;z) & = \int_{\mathbb{R}^i}
\E \Bigg[ \bigg( \prod_{j=1}^{i}
|t_j|^s \big|R_{j-1}(z; i) \big|^s \bigg)
\cdot 
\displaystyle\sum_{w \in \mathbb{D} (v_i)} \bigg|
\frac{T_{v_i w}}{z + T^2_{v_i w }R^{(v_i)}_{ww} + K_{w}}
\bigg|^s \\
& \qquad \qquad \times 
\sum_{\substack{ v\in \bbV(L)\\ w \preceq v}} 
\big| R_{w v}^{(v_i)} \big|^s \cdot \one_{\mathscr{D} (w, v; \omega)}
\Bigg]  \cdot 
\prod_{j=1}^{i}
\alpha t_j^{-\alpha -1 }\one_{t_j \ge \omega}\, dt_j.
\end{aligned} 
\end{flalign}

\noindent Recall from \Cref{srz} and \eqref{schur} that $R_{i-1}(z;i) = - \big(z + S_{i-1}(z;i) \big)^{-1}$ and
\bex S_{i-1}(z;i) = V_i - t_i^2 \cdot R_i (z; i) = V_{i} + t_i^2 \cdot R_{v_i v_i}^{(v_{i-1})}= V_i + t_i^2 \cdot \big(z + T^2_{v_i w }R^{(v_i)}_{ww} + K_{w} \big)^{-1}.
\eex

\noindent Further set
\begin{flalign*}
	& \tilde S_{i-1}(z;i; t_{i+1}) = 
	\tilde S_{i-1}(z; i ; \bm{t})
	= V_i + t_i^2 \cdot \big( z + t_{i+1}^2 R^{(v_i)}_{v_{i+1},v_{i+1}} + V_{i+1} \big)^{-1}; \\
& \tilde R_{i-1}(z;i; t_{i+1}) = 
\tilde R_{i-1}(z; i; \bm{t})  = -  \big( z + \tilde S_{i-1}(z;i; t_{i+1}) \big)^{-1}.
\end{flalign*}
and define $\tilde R_j(z;i; t_{i+1}) = \tilde R_j(z;i;  \bm{t})$ and $\tilde S_j(z;i;t_{i+1}) = \tilde S_j(z;i; \bm{t}) $ for $j \le i-2$ recursively using the analogues of \eqref{sirecursion} and \eqref{ridef} (replacing $S$ and $R$ there with $\widetilde{S}$ and $\widetilde{R}$ here, respectively). Observe, since $\big(K_w, R_{ww}^{(v_i)} \big)$ has the same law as $\big( V_{i+1}, R_{v_{i+1}, v_{i+1}}^{(v_i)} \big)$, we have that $\widetilde{S}_{i-1} (z; i; t_{i+1})$ has the same law as $S_{i-1} (z; i)$ if $t_{i+1} = T_{v_i w}$. Thus, for $t_{i+1} = T_{v_i w}$, we have that $\big( \widetilde{R}_{i-1} (z; i; t_{i+1}), \widetilde{S}_{i-1} (z; i; t_{i+1}) \big)$ has the same law as $\big( R_{i-1} (z; i), S_{i-1} (z; i) \big)$. It follows for $t_{i+1} = T_{v_i w}$ that the random variables $\big( \widetilde{R}_j (z; i; t_{i+1}), \widetilde{S}_j (z; i; t_{i+1}) \big)_j$ have the same law as $\big( R_j (z; i), S_j (z; i) \big)_j$, as they satisfy the same recursion. Hence, \eqref{momenti1} implies
\begin{flalign}
	\label{momenti2}
	\begin{aligned} 
		\Phi^\circ_{L,i}(s;z) & = \int_{\mathbb{R}^i}
		\E \Bigg[ \bigg( \prod_{j=1}^{i}
		|t_j|^s \big| \widetilde{R}_{j-1}(z; i) \big|^s \bigg)
		\cdot 
		\displaystyle\sum_{w \in \mathbb{D} (v_i)} \bigg|
		\frac{T_{v_i w}}{z + T^2_{v_i w }R^{(v_i)}_{ww} + K_{w}}
		\bigg|^s \\
		& \qquad \qquad \times 
		\sum_{\substack{ v\in \bbV(L)\\ w \preceq v}} 
		\big| R_{w v}^{(v_i)} \big|^s \cdot \one_{\mathscr{D} (w, v; \omega)}
		\Bigg]  \cdot 
		\prod_{j=1}^{i}
		\alpha t_j^{-\alpha -1 }\one_{t_j \ge \omega}\, dt_j.
	\end{aligned} 
\end{flalign}

We evaluate the right side of \eqref{momenti2} through \eqref{fxi}, to which end we must introduce a suitable a function $f \colon \mathbb{R}_{> 0} \times \mathcal N_{\mathbb{R}_{> 0}}^{\#*} \rightarrow \mathbb{R}_{\ge 0}$ to which \Cref{fidentityxi} would apply. Suppose that $\nu \in  \mathcal N_{\mathbb{R}_{> 0}}^{\#*}$ is supported on a finite interval $(0,B)$ for some $B > 0$. Then we can write $\nu = \sum_{k=1}^\infty \delta_{\nu_k}$, where the sequence $\nu_1, \nu_2, \dots$ is decreasing. Suppose also that the sum 
\be\label{Knu}
K(\nu) = \sum_{k=1}^\infty \nu^2_k \cdot R_{k}(z),
\ee
converges, where  $\{R_{k}(z)\}_{k=1}^\infty$ is a collection of independent, identically distributed random variables with law $R_\star(z)$.
We set
\begin{flalign*}
& \tilde S_{i-1}(z;i; t_{i+1};\nu) = \tilde S_{i-1}(z; i ; \bm{t};\nu)= V_i + t_i^2 \cdot \big(z + t_{i+1}^2 R^{(v_i)}_{v_{i+1},v_{i+1}} + K(\nu) \big)^{-1}; \\
& \tilde R_{i-1}(z;i; t_{i+1}, \nu) = \tilde R_{i-1}(z; i; \bm{t};\nu)  = - \big( z + \tilde S_{i-1}(z;i; t_{i+1};\nu)\big)^{-1},
\end{flalign*}

\noindent and define $\tilde R_{j}(z;i; t_{i+1};\nu) = \tilde R_{j}(z;i; \bm{t};\nu)$ and $\tilde S_{j}(z;i;t_{i+1};\nu) = \tilde S_{j}(z;i; \bm{t};\nu) $ for $j \le i-2$ recursively using the analogues of \eqref{sirecursion} and \eqref{ridef} (replacing the $S$ and $R$ there with $\widetilde{S}$ and $\widetilde{R}$ here, respectively). Observe that, if $\nu$ is sampled according to a Poisson point process with intensity measure $\alpha x^{-\alpha-1} dx$, then \Cref{tuvalpha} implies that $\big( \widetilde{S}_j (z; i; t_{i+1}; nu), \widetilde{R}_j (z; i; t_{i+1}; \nu) \big)_j$ has the same law as $\big( \widetilde{S}_j (z; i; t_{i+1}), \widetilde{R}_j (z; i; t_{i+1}) \big)_j$. Now, if \eqref{Knu} diverges set $f = 0$, and when it converges define 
$f \colon \mathbb{R}_{> 0} \times \mathcal N_{\mathbb{R}_{> 0}}^{\#*} \rightarrow \mathbb{R}_{\ge 0}$  by
\begin{flalign*}
f(x, \nu) & =
\int_{\mathbb{R}^i}
\E \Bigg[
\bigg(
\prod_{j=1}^{i} |t_j|^s \big| \tilde R_{j-1}(z; i; x; \nu) \big|^s 
\bigg) \cdot
\bigg|
\frac{x}{z + x^2 R^{(v_i)}_{v_{i+1},v_{i+1}} + K(\nu)}
\bigg|^s \\
& \qquad\qquad \times \sum_{ v\in \mathbb{D}_{L-i-1} (v_{i+1})}
\big| R_{v_{i+1}, v}^{(v_i)} \big|^s \cdot \one_{\mathscr{D} (v_{i+1}, v; \omega)}
\Bigg] \cdot \prod_{j=1}^{i}
\alpha t_j^{-\alpha -1 }\one_{t_j \ge \omega} \, dt_j.
\end{flalign*}

We can then apply \eqref{fxi} to the process $\Xi = \big\{ |T_{v_i u}| \big\}_{u \in \mathbb{D}( v_i)}$ to obtain from \eqref{momenti2} that
\begin{flalign} 
	\label{cutsum12}
	\begin{aligned} 
\Phi^\circ_{L,i}(s;z) & = \mathbb{E} \Bigg[ \displaystyle\sum_{w \in \mathbb{D} (v_i)} f (T_{v_i w}, \Xi \setminus T_{v_i w}) \Bigg] \\
& = \int_{\mathbb{R}^{i+1}}
\E \Bigg[
\bigg(
\prod_{j=1}^{i}
|t_j|^s \big|\tilde R_{j-1}(z; i; t_{i+1}) \big|^s 
\bigg) \cdot 
\bigg|
\frac{t_{i+1}}{z + t_{i+1}^2 R^{(v_i)}_{v_{i+1},v_{i+1}} + V_{i+1}(z)}
\bigg|^s \\
& \qquad \qquad \qquad \times \sum_{ v\in \mathbb{D}_{L-i-1} (v_{i+1})} \left| R_{v_{i+1}, v}^{(v_i)}\right|^s \cdot \one_{\mathscr{D} (v_{i+1}, v; \omega)}
\Bigg] \cdot \alpha t_j^{-\alpha-1} \one_{t_{i+1} \ge \omega} dt_{i+1} \\
& \qquad \qquad \qquad \times \Bigg( \prod_{j=1}^{i} \alpha t_j^{-\alpha -1 } \one_{t_j \ge \omega} dt_j \Bigg),
\end{aligned}
\end{flalign}

\noindent where in the second term the expectation is with respect to a Poisson point process with intensity $\alpha t^{-\alpha-1} dt$ by \Cref{tuvalpha} (and we used the fact that we can exchange the sum and the integrals because all terms are positive). 

Recalling that $V_{i+1}$ has the same law as $K_w$ (by \Cref{tuvalpha}), we find $\big( \widetilde{S}_j (z; i; t_{i+1}), \widetilde{R} (z; i; t_{i+1}) \big)_j$ has the same law as $\big( S_j (z; i+1), R_j (z; i+1) \big)_j$. Thus, the right side of \eqref{cutsum12} is $\Phi^\circ_{L,i+1}(s ; z)$, by \eqref{momenti1}. Hence, \eqref{cutsum12} is the claim $\Phi^\circ_{L,i}(s ; z) = \Phi^\circ_{L,i+1}(s ; z)$. Using the induction hypothesis that $\Phi_{L,i}(s ; z)  = \Phi^\circ_L(s;z)$, this shows the desired equality $\Phi^\circ_{L,i+1}(s ; z) = \Phi^\circ_L(s;z)$.
\end{proof}

\subsection{The Small $\eta$ Limit of the Product Expansion} 

\label{EtaSmall} 

In this section we analyze the limit of $\Phi_L (s; z)$ (from \Cref{moment1}) as $\Imaginary z$ tends to $0$. To that end, we adopt the notation of \Cref{s:productexpansion}; we further fix a complex number $u_L \in \overline{\mathbb{H}}$ and a sequence of complex numbers $\bm{r} = (r_1, r_2, \ldots , r_{L-1}) \in \overline{\mathbb{H}}^{L-1}$. We define quantities $u_i \in \overline{\mathbb{H}}$ recursively for $i\in \unn{0}{L-1}$ by
\be\label{ui}
u_i = r_i - \frac{t_{i+1}^2}{z+u_{i+1}}.
\ee

\noindent Observe that $(u_i, r_i)$ serve as the analogs of $\big( S_i (z; L), V_{i+1} \big)$ in \Cref{srz}. Also observe that $u_i$ is a function of the parameters $\{t_{i+1},\dots, t_L \} \cup \{ r_i, \dots, r_{L-1} \} \cup \{ u_L \}$ only. We additionally set
\bex
F(z; \bm{t}, \bm{r}, u_L)= \frac{1}{z +  u_0} \cdot \prod_{i=1}^L
  \frac{t_i}{z + u_i} .
\eex

\noindent Also let $p_z\colon \bbC \rightarrow \bbR$ denote the density of the complex random variable $\varkappa_0(z) + \iu \vartheta_0(z)$, and let $\mu_z$ denote the measure on $\bbC$ given by $p_z(y) \, dy$.

Under this notation, \Cref{srz} implies that, if $u_L$ is sampled under the measure $\mu_z$ (which has law $-z-R_{\star}^{-1}$, or equivalently of $-z - \big( R_{v_L v_L}^{(v_{L-1})} \big)^{-1}$) then $u_L$ has the same law as $S_L (z; L)$. If the $r_i$ are further sampled independently under $\mu_z$, then $r_i$ has the same law as $V_{i+1}$, and so \eqref{ui} implies that $(\bm{r}, u_L)$ has the same law as $\big( S_1 (z; L), S_2 (z; L), \ldots, S_L (z; L) \big)$. Together with \eqref{productN} and the definition of $R_i (z; L)$ from \Cref{srz}, this implies that
\begin{multline}\label{productN3}
\Phi^\circ_{L}(\omega)
=
\int_{\mathbb{R}^L}
\E \Bigg[
\big| R_L (z; L; \bm{t}) \big|^s \cdot \prod_{j=1}^{L}
|t_j|^s \big|R_{j-1}(z; L) \big|^s \Bigg] \cdot
\prod_{j=1}^{L}
\alpha t_j^{-\alpha -1 }\one_{t_j \ge \omega} \, dt_j \\
=
\int_{\mathbb{C}^L \times \mathbb{R}^L}
\big|F(z; \bm{t}, \bm{r}, u_L)\big|^s
d\mu_z(u_L)  \prod_{i=0}^{L-1}  d\mu_z(r_i ) 
\prod_{j=1}^{L}
\alpha t_j^{-\alpha -1 }\one_{t_j \ge \omega} \, dt_j.
\end{multline}

We next introduce an analog of the right side of \eqref{productN3}, assuming as $\eta$ tends to $0$ the random variable $R_{\star} (E + \mathrm{i} \eta)$ becomes real. To that end, recall from \Cref{pkappae} that $p_E(x) \colon \bbR \rightarrow \bbR$ denote the density of the real random variable $\varkappa_\loc(E)$. 

\begin{definition} 
	
	\label{amoment2} 
	
	For any real numbers $s \in (\alpha, 1)$; $E \in \bbR$; and $\omega > 0$, define 
\begin{flalign}\label{productN2}
\Phi_L^{\loc} (s;E;\omega) =\int_{\mathbb{R}^{2L}}
\big|F(E; \bm{t}, \bm{r}, u_L)\big|^s 
\cdot p_E (u_L) d u_L  \cdot \prod_{i=0}^{L-1}  p_E (r_i) dr_i \cdot
\prod_{j=1}^{L}
\alpha t_j^{-\alpha -1 }\one_{t_j \ge \omega}\, dt_j,
\end{flalign}
and set $\Phi_L^{\loc} (s;E) = \Phi_L^{\loc} (s;E;0)$.

\end{definition}

The next lemma considers the behavior of $ \Phi_L(s;z;\omega)$ as $\eta$ tends to zero. 

\begin{lem}\label{l:PhiEA}
Fix an integer $L \ge 1$ and real numbers $s \in (\alpha,1)$; $E \in \bbR$; and $\omega>0$.
Suppose there exists a decreasing sequence of positive real numbers $(\eta_1, \eta_2, \ldots ) \subset (0, 1)$ tending to $0$ and an almost surely real random variable $R_\star (E)$, such that $\lim_{k \rightarrow \infty} R_\star(E + \iu \eta_k) = R_\star(E)$ weakly. Then,
\bex
\lim_{k \rightarrow \infty} \Phi^\circ_L(s;E + \iu \eta_k; \omega ) =\Phi_L^{\loc} (s ; E ;\omega ).
\eex
\end{lem}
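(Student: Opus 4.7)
The plan is to exploit the product expansion \eqref{productN3} of \Cref{l:expansionz}, which represents $\Phi^\circ_L(s; E+\iu\eta_k; \omega)$ as an integral of $|F(E+\iu\eta_k; \bm t, \bm r, u_L)|^s$ against the product measure $\mu_{E+\iu\eta_k}^{\otimes(L+1)}(du_L, d\bm r) \otimes \prod_j \alpha t_j^{-\alpha-1}\one_{t_j\ge\omega}\,dt_j$. The target quantity $\Phi^{\loc}_L(s; E; \omega)$ in \eqref{productN2} is the same integral with $\mu_{E+\iu\eta_k}$ replaced by the real measure with density $p_E$ (pushed forward to $\bbC$ via $\bbR \hookrightarrow \bbC$) and with $z = E$. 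Thus the lemma reduces to interchanging the limit $k \to \infty$ with integration.

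The first step is to upgrade the hypothesis to weak convergence of measures $\mu_z$. Since $\mu_z$ is the law of $-z - R_\star(z)^{-1}$, the hypothesis $R_\star(E+\iu\eta_k) \Rightarrow R_\star(E)$ with $R_\star(E)$ almost surely real gives, by the continuous mapping theorem (applied away from the zero of the denominator, which is a.s.\ avoided because $R_\star(z)$ has a density), that $-(E+\iu\eta_k) - R_\star(E+\iu\eta_k)^{-1}$ converges weakly to $-E - R_\star(E)^{-1}$. \Cref{l:boundarybasics} identifies $R_\star(E) \stackrel{d}{=} R_\loc(E) = -(E+\varkappa_\loc(E))^{-1}$, so the limit equals $\varkappa_\loc(E)$ in distribution, whose density is exactly $p_E$. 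Hence $\mu_{E+\iu\eta_k} \Rightarrow p_E(x)\,dx$ weakly on $\bbC$, with limit concentrated on $\bbR$.

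Next, I would invoke the Skorokhod representation theorem to construct, on a common probability space, independent copies $(u_L^{(k)}, r_i^{(k)})$ with laws $\mu_{E+\iu\eta_k}$ converging a.s.\ to $(u_L, r_i)$ with laws $p_E(x)\,dx$. For fixed $\bm t$, the integrand $|F(E+\iu\eta_k;\bm t,\bm r^{(k)},u_L^{(k)})|^s$ is a continuous function of its complex arguments away from the poles $\{z + u_i = 0\}$, and these poles are a.s.\ avoided in the limit because each limiting $u_i$ has a density (inherited recursively from $u_L, r_i$ via \eqref{ui}). Almost sure pointwise convergence of the integrand then follows, and I would combine this with the restricted-moment framework from \Cref{EstimateMomentss}---in particular, the uniform bound $\Phi^\circ_L(s; E+\iu\eta; \omega) \le \Phi_L(s; E+\iu\eta) \le C(s, \eps, B)^L$ from \Cref{estimatemoment1}, valid uniformly in $\eta \in (0,1)$---to produce a dominating function via Fatou's lemma. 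Upper bounds come from Fatou applied to the product measure directly; matching lower bounds come from truncating $|F|^s$ below $M$, applying weak convergence of $\mu_z$ to each truncated integrand (which is bounded and continuous on the complement of the pole set), and then letting $M \to \infty$ using the uniform moment bound.

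The main obstacle will be handling the non-uniform integrability near the poles $\{E + u_i = 0\}$ and the heavy tail of the $t_j$-variables. For the first, the integrability $\int |E + u_i|^{-s} p_E(E+u_i)\, du_i < \infty$ follows from $s < 1$ and the boundedness of $p_E$ (an analog of \Cref{ar} on the boundary, which would need to be established or extracted from the preliminary material in \Cref{ResolventDensity}). For the second, the cutoff $t_j \ge \omega > 0$ removes the small-$t_j$ contribution, and the interaction $|t_j|^s / |z + u_j|^s = |t_j|^s |R_j|^s$ is controlled by a Schur-complement argument recursively in $j$, leveraging the same integral bounds (\Cref{integral3} and \Cref{expectationsum1}) used throughout \Cref{MomentFractional}. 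Once both of these integrability inputs are in place, the dominated convergence argument closes the proof.
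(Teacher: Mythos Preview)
Your overall strategy matches the paper's: reduce to weak convergence of $\mu_{z_k}$ via \Cref{l:boundarybasics}, then interchange the limit with the integral in the product expansion \eqref{productN3}. The gap is in the interchange step.

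You cite the bound $\Phi_L(s; E+\iu\eta) \le C^L$ from \Cref{estimatemoment1} and claim it yields ``a dominating function via Fatou's lemma.'' It does neither. A uniform bound on $\E[|F(z_k)|^s]$ does not give uniform integrability of $\{|F(z_k)|^s\}$, and without uniform integrability you cannot conclude $\limsup_k \E[|F(z_k)|^s] \le \E[|F(E)|^s]$. Fatou (either in its classical form after Skorokhod, or the Portmanteau version for weakly convergent measures with l.s.c.\ integrands) gives only the \emph{lower} bound $\liminf_k \ge \Phi_L^{\loc}$; your truncation argument likewise yields only the lower bound, since sending $M\to\infty$ by monotone convergence works on the limit side but not uniformly in $k$.

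The paper's fix is immediate and already available to you: take any $s_0 \in (s,1)$ and note that $\Phi_L(s_0; z_k)$ is also uniformly bounded in $k$ by \Cref{limitr0j}. After normalizing $\alpha t^{-\alpha-1}\one_{t\ge\omega}\,dt$ to a probability measure (its total mass is $\omega^{-\alpha} < \infty$), this reads $\sup_k \E[|F(z_k)|^{s_0}] < \infty$, which gives uniform integrability of $\{|F(z_k)|^s\}$ since $s_0 > s$. Combined with the a.s.\ pointwise convergence from Skorokhod (which you set up correctly), Vitali's theorem finishes the proof. The entire last paragraph about pole integrability and Schur-complement recursions then becomes unnecessary.
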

\begin{proof}
	
	We may assume that $E \ne 0$, for otherwise such a sequence $(\eta_1, \eta_2, \ldots )$ does not exist by \cite[Lemma 4.3(b)]{bordenave2011spectrum}. Throughout this proof, we let $\mu_E$ denote the probability measure on $\mathbb{C}$, which is supported on $\bbR$ and whose restriction to $\bbR$ is the measure $p_E(x)\, dx$. 
	
	By \Cref{l:boundarybasics}, we have $R_\star(E) = R_\loc(E)$ in law. Setting $z_k = E + \mathrm{i} \eta_k$, this implies $\lim_{k \rightarrow \infty} \mu_{z_k} = \mu_E$, which in turn implies the weak convergence
\begin{align}\label{replace!}
\lim_{k \rightarrow \infty} &d\mu_z(u_L) \cdot \prod_{i=0}^{L-1}  d\mu_z(r_i ) \cdot
\prod_{j=1}^{L}
\alpha t_j^{-\alpha -1 }\one_{t_j \ge \omega} \, dt_j = d\mu_E(u_L) \cdot \prod_{i=0}^{L-1}  d\mu_E(r_i ) \cdot
\prod_{j=1}^{L}
\alpha t_j^{-\alpha -1 }\one_{t_j \ge \omega}\, dt_j.
\end{align}

We observe that, up to a normalization (dependent on $\omega$), the measure $\alpha t_j^{-\alpha -1 } \cdot \one_{t_j \ge \omega}\, dt_j$ prescribes the law of a real random variable. Then (up to this normalization), \eqref{productN3} may be viewed as the expectation of the random variable $|F(z)|^s$, where the law of $F(z)$ is induced by the $L$ mutually independent random variables $(\bm{r}, u_L)$, each with law $\mu_z$, and the $L$ mutually random variables $\bm{t}$, each with law proportional to $\alpha t_j^{-\alpha -1 } \cdot \one_{t_j \ge \omega}\, dt_j$. Further observe that the sequence $\{\Phi_{L}(s_0,z_k)\}_{k=1}^\infty$ is uniformly bounded for any fixed $s_0 \in (s, 1)$ by \Cref{limitr0j}. Hence, we have $ \E \big[ |F(E + \iu \eta)|^{s_0} \big] < C$ for some constant $C = C(L) > 1$ independent of $\eta \in (0,1)$. 

It follows that the sequence of random variables $\{ |F(z_k)|^s\}_{k=1}^\infty$ is uniformly integrable, which justifies the exchange of limits (using \eqref{replace!} for the first equality),
\begin{align*}
\lim_{k \rightarrow \infty} & \Phi^\circ_{L} (s;z_k;\omega) \\
&= \int_{\mathbb{C}^L \times \mathbb{R}^L} \lim_{k \rightarrow \infty}
\big|F(z_k; \bm{t}, r_1, \dots , r_{L-1}, u_L)\big|^s
d\mu_z(u_L)  \prod_{i=0}^{L-1}  d\mu_z(r_i )
\prod_{j=1}^{L}
\alpha t_j^{-\alpha -1 }\one_{t_j \ge \omega}\, dt_j
\\
&= \int_{\mathbb{C}^L \times \mathbb{R}^L}
\big|F(E; \bm{t}, r_1, \dots , r_{L-1}, u_L)\big|^s
d\mu_E(u_L) \prod_{i=0}^{L-1}  d\mu_E(r_i )
\prod_{j=1}^{L}
\alpha t_j^{-\alpha -1 }\one_{t_j \ge \omega}\, dt_j.
\end{align*}
The lemma follows after recalling the definition \eqref{productN2} and that of $\mu_E$ in terms of $p_E$.
\end{proof}

The next lemma removes the cutoff $\omega$ from the previous lemma, and computes the boundary value of $\Phi_L(s;z)$ under the same hypotheses as in \Cref{l:PhiEA}.
\begin{lem}\label{l:PhiE}
	
	Adopting the notation and assumptions of \Cref{l:PhiEA}, we have 
\bex
\lim_{k \rightarrow \infty} \Phi_L(s;E + \iu \eta_k ) =\Phi_L^{\loc} (s ; E  ).
\eex
\end{lem}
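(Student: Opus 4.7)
The plan is to remove the truncation $\omega$ from \Cref{l:PhiEA} by bounding the contribution of paths with very small edge weights uniformly in $\eta$, then letting $\omega \downarrow 0$ after the $k \to \infty$ limit already supplied by \Cref{l:PhiEA}. First I would reduce to the case $E \ne 0$: if $E = 0$, then by \cite[Lemma 4.3(b)]{bordenave2011spectrum} any limit of $R_\star(i\eta)$ is purely imaginary (hence not real unless identically zero), so the hypothesis of the lemma is vacuous in that range and we may assume $E \ne 0$.

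Fix $\varepsilon' > 0$; set $\varepsilon = |E|/2$ and $B = 2|E|$, so that $\varepsilon \le |E| \le B$ and $z_k = E + \mathrm{i}\eta_k$ lies in the strip $\{z : \varepsilon \le |\Real z| \le B,\ \Im z \in (0,1)\}$ for all large $k$. The key uniform estimate comes from \Cref{ry0estimate} applied at $v = 0$: for any $\delta > 0$ there is a constant $\omega = \omega(\delta) > 0$ (depending only on $\delta, s, \varepsilon, B$, and in particular not on $\eta$) such that
\[
\Phi_L^{\circ}(s; z_k; \omega) \;\ge\; (1-\delta)^L\, \Phi_L(s; z_k).
\]
Combining this with the uniform upper bound $\Phi_L(s; z_k) \le C(L, s, \varepsilon, B)$ from \Cref{estimatemoment1}, and using $\Phi_L^{\circ}(s;z;\omega) \le \Phi_L(s;z)$, we obtain
\[
0 \;\le\; \Phi_L(s; z_k) - \Phi_L^{\circ}(s; z_k; \omega) \;\le\; \bigl(1 - (1-\delta)^L\bigr)\, C.
\]
Choosing $\delta = \delta(\varepsilon', L, s, \varepsilon, B)$ small enough makes the right-hand side smaller than $\varepsilon'$, uniformly in $k$.

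Next, with this choice of $\omega$ fixed, \Cref{l:PhiEA} gives $\lim_{k \to \infty} \Phi_L^{\circ}(s; z_k; \omega) = \Phi_L^{\loc}(s; E; \omega)$. Combining with the uniform estimate above,
\[
\limsup_{k \to \infty} \Phi_L(s; z_k) \;\le\; \Phi_L^{\loc}(s; E; \omega) + \varepsilon' \;\le\; \Phi_L^{\loc}(s; E) + \varepsilon',
\]
since $\Phi_L^{\loc}(\,\cdot\,;\omega)$ is monotone in $\omega$. For the reverse bound, from $\Phi_L(s; z_k) \ge \Phi_L^{\circ}(s; z_k; \omega)$ and \Cref{l:PhiEA} we get $\liminf_{k\to\infty} \Phi_L(s; z_k) \ge \Phi_L^{\loc}(s; E; \omega)$; letting $\omega \downarrow 0$ and invoking monotone convergence in the integral defining $\Phi_L^{\loc}$ (\Cref{amoment2}) yields $\liminf_{k\to\infty} \Phi_L(s; z_k) \ge \Phi_L^{\loc}(s; E)$. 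Since $\varepsilon' > 0$ was arbitrary, the two one-sided inequalities combine to give $\lim_{k \to \infty} \Phi_L(s; z_k) = \Phi_L^{\loc}(s; E)$.

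The substantive step is Step 2, i.e.\ the uniform-in-$\eta$ comparison of $\Phi_L$ and $\Phi_L^{\circ}$, and this is not an obstacle because the needed bound is already contained in \Cref{ry0estimate}; the constant $\omega(\delta)$ produced there has no dependence on $\Imaginary z$ (only on $\delta, s, \varepsilon, B$), which is exactly the uniformity required to exchange the $k \to \infty$ and $\omega \to 0$ limits. The monotone convergence step on the $\Phi_L^{\loc}$ side is immediate from the non-negativity of the integrand in \eqref{productN2}.
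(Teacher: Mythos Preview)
Your proof is correct and follows essentially the same approach as the paper: the lower bound via $\Phi_L \ge \Phi_L^\circ$, \Cref{l:PhiEA}, and monotone convergence as $\omega \downarrow 0$; the upper bound via the $\eta$-uniform inequality $\Phi_L^\circ(s;z_k;\omega) \ge (1-\delta)^L \Phi_L(s;z_k)$ from \Cref{ry0estimate}, then \Cref{l:PhiEA} and $\delta \to 0$. The only cosmetic difference is that you convert the multiplicative comparison to an additive one using the uniform bound from \Cref{estimatemoment1}, whereas the paper works directly with the multiplicative form and takes $\delta \to 0$ at the end.
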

\begin{proof}

Set $z_k = E + \iu \eta_k$; as in the proof of \Cref{l:PhiEA}, we may assume that $E \ne 0$. For any $\omega > 0$, we have $\Phi_L(s ; z_k) \ge \Phi^\circ_L(s ; z_k ;\omega )$, so we conclude from \Cref{l:PhiEA} that
\bex
\liminf_{k \rightarrow \infty}
\Phi_L(s ; z_k) \ge \liminf_{k \rightarrow \infty} \Phi_L^{\circ} (s; z_k; \omega) =  \Phi_L^{\loc} (s ; E ;\omega ).
\eex

\noindent By the monotone convergence theorem, we make take $\omega$ to zero to conclude that
\be\label{liminf}
\liminf_{k \rightarrow \infty}
\Phi_L(s ; z_k) \ge  \Phi_L^{\loc} (s ; E  ).
\ee
Next, by \Cref{ry0estimate}, there exists for any $\delta >0$ a constant $\omega = \omega(\delta) > 0$ such that
\bex
\Phi^\circ (s; z_k ; \omega) \ge ( 1 - \delta) \cdot \Phi_L(s, z_k).
\eex
Holding $\delta$ fixed, taking $k$ to infinity, and using \Cref{l:PhiEA} gives 
\be\label{limsup}
 \Phi_L^{\loc} (s, E) \ge \Phi_L^{\loc} (s ; E ;\omega) = \displaystyle\lim_{k\rightarrow \infty} \Phi_L^{\circ} (s; z_k; \omega) \ge (1 - \delta) \cdot 
 \limsup_{k\rightarrow\infty} \Phi_L(s, z_k).
\ee
Taking $\delta$ to zero and combining \eqref{liminf} and \eqref{limsup} completes the proof.
\end{proof}

We next show that $\Phi_L^{\loc} (s; E)$ can be computed by iterating a certain integral operator.
We  define a sequence of functions $\{g_i(x;E)\}_{i=0}^L$ in the following way. We set $g_L(x ; E) = p_E(x)$, and for $i \in \unn{0}{L-1}$ we set
\be\label{gidef}
g_i(x;E) = \int_{\mathbb{R}^2} g_{i+1}(y,E)
\left| \frac{t}{E + y} \right|^s \cdot p_E\left( x + \frac{t^2}{E + y } \right) \, dy  \cdot \alpha t^{-\alpha - 1 } \one_{t > 0}\, dt .
\ee
\begin{lem}\label{l:Erecursion}
Fix an integer $L \ge 1$ and real numbers $s \in (\alpha,1)$ and $E \in \bbR$. We have 
\begin{flalign}\label{lsbig}
\Phi_L^{\loc} (s; E) = \int_{-\infty}^{\infty} \left| \frac{1 }{E +x } \right|^s  g_0(x ; E) \, dx.
\end{flalign}
\end{lem}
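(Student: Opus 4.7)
\medskip

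The plan is to derive \eqref{lsbig} by changing variables in the definition \eqref{productN2} of $\Phi_L^{\loc}(s;E)$ so that the integration variables are the $u_i$ rather than the $r_i$, and then recognizing the resulting nested integral as $\int |E+x|^{-s} g_0(x;E)\,dx$ with $g_0$ given by the recursion \eqref{gidef}.

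First, I would perform the substitution $r_i \mapsto u_i$ for each $i \in \unn{0}{L-1}$ (treating $u_L$ and $\bm t$ as parameters). The relation \eqref{ui} gives $r_i = u_i + t_{i+1}^2/(E+u_{i+1})$, so the map $(r_0,\dots,r_{L-1}) \mapsto (u_0,\dots,u_{L-1})$ has Jacobian determinant $1$ (it is upper triangular with unit diagonal when done in the order $i = L-1, L-2,\dots,0$). Applying this to \eqref{productN2} and writing $x = u_0$ transforms the definition into
\begin{align*}
\Phi_L^{\loc}(s;E) = \int_{-\infty}^{\infty} \frac{dx}{|E+x|^s}\, G(x),
\end{align*}
where $G(x)$ is the integral over the remaining variables $u_1,\ldots,u_L$ and $t_1,\ldots,t_L$ of
\begin{align*}
\prod_{j=1}^{L} \frac{|t_j|^s}{|E+u_j|^s}\cdot p_E(u_L) \cdot p_E\!\left(x + \frac{t_1^2}{E+u_1}\right) \prod_{j=1}^{L-1} p_E\!\left(u_j + \frac{t_{j+1}^2}{E+u_{j+1}}\right)
\end{align*}
against the product measure $du_1\cdots du_L \prod_{j=1}^L \alpha t_j^{-\alpha-1}\one_{t_j>0}\,dt_j$.

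Next, I would prove by downward induction on $i \in \unn{0}{L}$ that the partial integral $G_i(x)$, defined by integrating out $u_{i+1},\ldots,u_L$ and $t_{i+1},\ldots,t_L$ in the analogous expression (with the $p_E(x + \cdots)$ factor placed at index $i+1$), coincides with $g_i(x;E)$. The base case $i=L$ gives $G_L(x) = p_E(x) = g_L(x;E)$ directly. For the induction step, isolating the integrals over $u_{i+1}$ and $t_{i+1}$ in $G_i$, the integrand over $u_{i+2},\ldots,u_L$ and $t_{i+2},\ldots,t_L$ is exactly the one defining $G_{i+1}(u_{i+1})$, so
\begin{align*}
G_i(x) = \int_{\mathbb{R}^2} G_{i+1}(u_{i+1}) \left|\frac{t_{i+1}}{E+u_{i+1}}\right|^s p_E\!\left(x + \frac{t_{i+1}^2}{E+u_{i+1}}\right) du_{i+1}\, \alpha t_{i+1}^{-\alpha-1} \one_{t_{i+1}>0}\, dt_{i+1},
\end{align*}
which matches \eqref{gidef}. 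Hence $G_0 = g_0$ and \eqref{lsbig} follows.

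The calculations are elementary changes of variables and an induction; the only subtle point is that all integrands are nonnegative, so Fubini/Tonelli applies without integrability concerns and the partial integrals $G_i$ are well-defined (possibly taking the value $+\infty$, consistently with both sides of \eqref{lsbig} being equal in $[0,\infty]$). No obstacle is anticipated; the argument essentially unwinds the recursive definition of the $u_i$ and shows it is dual to the recursive definition of the $g_i$.
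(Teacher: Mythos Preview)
Your proposal is correct and is essentially the same proof as the paper's, just run in the opposite direction: the paper starts from $\int |E+x|^{-s} g_0(x;E)\,dx$, iterates the recursion \eqref{gidef} to obtain the expanded $2L$-fold integral, and then applies the change of variables $r_i = y_i + t_{i+1}^2/(E+y_{i+1})$ to match the definition of $\Phi_L^{\loc}$, whereas you start from $\Phi_L^{\loc}$, apply the inverse change of variables $r_i \to u_i$, and then contract the iterated integral back to $g_0$ by induction. The core content (unit-Jacobian change of variables plus identification with the iterated recursion) is identical.
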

\begin{proof}
Iterating \eqref{gidef}, we obtain that the right side of \eqref{lsbig} equals
\be
\int_{\mathbb{R}^{2L}}
\left| \frac{1}{E + y_0} \prod_{i=1}^L
  \frac{t_i}{E + y_i}  \right|^s \cdot 
 p_E(y_L) \, dy_L \cdot 
  \prod_{i=0}^{L-1}  p_E\left( y_i + \frac{t_{i+1}^2}{E + y_{i+1}} \right)\, dy_i \cdot  \prod_{j=1}^L \alpha t_j^{-\alpha - 1} \one_{t_j > 0}\, dt_j.
\ee
We now make the change of variables 
$r_i  = y_i + \frac{t_{i+1}^2}{E + y_{i+1}}$ for $i\in\unn{0}{L-1}$ to obtain that the previous line equals
\be
\int_{\mathbb{R}^{2L}}
\left| \frac{1}{E + y_0} \prod_{i=1}^L
  \frac{t_i}{E + y_i} \right|^s
 \cdot p_E(y_L) \, dy_L
 \cdot \prod_{i=0}^{L-1}  p_E(r_i) \, dr_i \cdot \prod_{j=1}^L  \alpha t_j^{-\alpha - 1}\one_{t_j > 0}\, dt_j,
\ee
where we consider the $y_i$ for $i \in \unn{0}{ L -1}$ as functions of $(r_0, \dots , r_{L-1}, y_L)$. Further, these $y_i$ satisfy the relations \eqref{ui}, so (setting $y_L = u_L$) the previous integral is equivalent to
\be
\int_{\mathbb{R}^{2L}}
\big| F(z; \bm{t}, \bm{r}, u_L)\big|^s \cdot 
 p_E(u_L) \, du_L \cdot 
  \prod_{i=0}^{L-1}  p_E(r_i) \, dr_i \cdot \prod_{j=1}^L \alpha t_j^{-\alpha - 1} \one_{t_j > 0}\, dt_j,
\ee

\noindent which by the definition of $\Phi_L^{\loc} (s; E)$ (from \Cref{amoment2}) yields the lemma.
\end{proof}

\subsection{Integral Operator}\label{s:transferoperator}
Given $\alpha \in (0,1)$ and $s \in (\alpha, 1)$,
let $\| \cdot \|=\| \cdot \|_{\alpha,s}$ be the norm on functions $f\colon \bbR \rightarrow \bbR$ defined by
\begin{flalign} 
	\label{xf}
\| f \| = \Big\| f(x) \cdot \big(1 + |x|^{(\alpha-s)/2 + 1} \big) \Big\|_\infty,
\end{flalign}
where $\| \cdot \|_\infty$ is the $L^\infty$ norm. Let $\mathcal X$ be the Banach space of functions $f \colon \bbR \rightarrow \bbR$ such that $\|f \| < \infty$ (more precisely, we consider equivalence classes of functions that differ only on sets of measure zero). The following definition introduces a transfer operator that will be used to analyze $\Phi_L$; in the below, we recall the density $p_E$ from \Cref{pkappae}. 

\begin{definition} 
	\label{toperator} 
We define the operator $T= T_{s,\alpha,E}$ on functions $f \in \mathcal X$ by 
\bex
Tf(x) 
=
\frac{\alpha}{2} \int_{\bbR^2}
f(y)
\left| \frac{1}{E + y} \right|^s
 |h|^{s-\alpha -1 }  p_E\left(x  + h^2 (E+y)^{-1}\right)
\, d h \, dy.
\eex
\end{definition}

\begin{rem}
	\label{tlgl} 
	
Using this notation, we may write \eqref{gidef} as 
\be\label{Erecursionrewrite}
g_i(x ; E) = \big(T g_{i+1} ( \,\cdot \,; E) \big)(x).
\ee

\noindent Through $L$ applications of \eqref{Erecursionrewrite}, it follows that $g_0 = T^L g_L = T^L p_E$.   
\end{rem}

Before analyzing $T$, we state the following useful integral bound.
\begin{lem}\label{l:Tintegral}
Fix $\alpha \in (0,1)$ and $s \in (\alpha/2 , 1/2)$. 
There exists a constant $c(s,\alpha) > 0$ such that, for all $x \in \bbR$,
\bex
\int_{0}^\infty \frac{ r^{(s- \alpha)/2 -1} dr}{1 + | x -r |^{ 1 + \alpha/2} }
\le  
c^{-1} \min(1, |x|^{(s - \alpha)/2 -1}).
\eex
Additionally, if $x \ge 0$, we have 
\be\label{Tintegrallower}
c \min(1,x^{(s-\alpha)/2 -1})
\le 
\int_{0}^\infty \frac{ r^{(s-\alpha)/2 -1} dr}{1 + | x -r |^{ 1 + \alpha/2} },
\ee
and if $x \le 0$, 
\be\label{Tintegrallowerweak}
c \min(1,|x|^{s/2 - \alpha -1})
\le 
\int_{0}^\infty \frac{ r^{(s-\alpha)/2 -1} dr}{1 + | x -r |^{ 1 + \alpha/2} },
\ee
\end{lem}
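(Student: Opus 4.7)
The plan is to prove all three estimates by splitting $\int_0^\infty$ into three natural pieces based on the size of $r$ relative to $|x|$, namely $[0,|x|/2]$, $[|x|/2, 2|x|]$, and $[2|x|,\infty)$, and approximating the integrand by its typical size on each piece. The case $|x| \le 1$ will follow separately from a uniform two-sided bound: integrability at zero (from $(s-\alpha)/2-1 > -1$, under the tacit integrability assumptions on the exponents) together with the decay at infinity controlled by $r^{s/2-\alpha-2}$ (which is integrable as $s/2-\alpha-2 < -1$) yields constants depending only on $s$ and $\alpha$, handling the $\min$ with $1$ in each inequality.

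For the upper bound with $|x|\ge 1$, on the low-$r$ region the condition $|x-r|\ge |x|/2$ forces the denominator $\gtrsim |x|^{1+\alpha/2}$, while $\int_0^{|x|/2} r^{(s-\alpha)/2-1}\,dr \asymp |x|^{(s-\alpha)/2}$, contributing $O(|x|^{s/2-\alpha-1})$. On the middle region the numerator is comparable to $|x|^{(s-\alpha)/2-1}$ and $\int_{|x|/2}^{2|x|}(1+|x-r|^{1+\alpha/2})^{-1}\,dr \le \int_{\bbR}(1+|u|^{1+\alpha/2})^{-1}\,du < \infty$, giving the dominant contribution $O(|x|^{(s-\alpha)/2-1})$. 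On the tail region $|x-r|\ge r/2$ yields an integrand $\lesssim r^{s/2-\alpha-2}$, whose integral from $2|x|$ to $\infty$ is again $O(|x|^{s/2-\alpha-1})$. Since $s/2-\alpha-1 \le (s-\alpha)/2-1$ for $\alpha \ge 0$, the middle region dominates, establishing the upper bound uniformly in $x$.

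For the lower bound when $x \ge 1$, I discard everything except the middle region and restrict further to a unit-length sub-interval centered at $r=x$; there the integrand is bounded below by a constant multiple of $x^{(s-\alpha)/2-1}$, giving the claimed lower bound. When $x\le -1$, we have $|x-r| = |x|+r \ge |x|$, so restricting to $r\in [|x|, 2|x|]$, the integrand is bounded below by a constant multiple of $|x|^{(s-\alpha)/2-1}/|x|^{1+\alpha/2}$, and the interval has length of order $|x|$, producing the asserted $c|x|^{s/2-\alpha-1}$. The only real care in the argument is bookkeeping of the various exponents (including the case $s < \alpha$, where $r^{(s-\alpha)/2-1}$ is decreasing on $[|x|,2|x|]$ and one must use its minimum at $r = 2|x|$ rather than its midpoint value); no substantive analytic obstacle is anticipated, as the result is a routine scaling analysis with fixed-dimension parameters.
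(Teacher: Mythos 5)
Your proof is correct and follows the same strategy as the paper's argument: decompose the range of integration into a head, a window around $|x|$, and a tail; estimate each piece by matching the scales of the numerator and the denominator; and handle $|x|$ bounded by continuity and positivity of the integrand. The only differences are cosmetic — you split at $|x|/2$ and $2|x|$ and estimate directly, while the paper splits at $x-1$ and $x+1$ and substitutes $r=tx$ — and one aside is slightly off: the parenthetical about ``the case $s<\alpha$'' is moot because the integral already diverges at $r=0$ when $s\le\alpha$ (the lemma is only ever invoked with $s\in(\alpha,1)$, and the hypothesis $s \in (\alpha/2,1/2)$ is presumably a misprint for $s/2\in(\alpha/2,1/2)$), and in fact $r^{(s-\alpha)/2-1}$ is decreasing for \emph{every} $s\in(\alpha,1)$, so the monotonicity issue you flag arises uniformly and is already absorbed into your endpoint estimates.
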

\begin{proof}
First, suppose that $x>2$. 
We write
\begin{align*}
\int_{0}^\infty \frac{ r^{(s-\alpha)/2 -1} dr}{1 + | x -r |^{ 1 + \alpha/2} }
&=
\int_{0}^{x-1} \frac{ r^{(s-\alpha)/2 -1} dr}{1 + | x -r |^{ 1 + \alpha/2} } +\int_{x-1}^{x+1} \frac{ r^{(s-\alpha)/2 -1} dr}{1 + | x -r |^{ 1 + \alpha/2} } +\int_{x+1}^{\infty} \frac{ r^{(s-\alpha)/2 -1} dr}{1 + | x -r |^{ 1 + \alpha/2} }.
\end{align*}
For the first integral, we bound
\begin{flalign*}
\int_{0}^{x-1} \frac{ r^{(s-\alpha)/2 -1} dr}{1 + | x -r |^{ 1 + \alpha/2} }
\le
\int_{0}^{x-1} \frac{ r^{(s-\alpha)/2 -1} dr}{ | x -r |^{ 1 + \alpha/2} }.
\end{flalign*}
We now substitute $r = tx$ to obtain
\bex
\int_{0}^{x-1} \frac{ r^{(s-\alpha)/2 -1} dr}{ | x -r |^{ 1 + \alpha/2} }
=
x^{s/2 - \alpha -1 } \int_{0}^{1- x^{-1}} \frac{ t^{(s-\alpha)/2 -1} dt}{ | 1 -t|^{ 1 + \alpha/2} } .
\eex
The latter integral is straightforwardly bounded by splitting it into two pieces at $t=1/2$. We have
\bex
 \int_{0}^{1/2 } \frac{ t^{(s-\alpha)/2 -1} dt}{ | 1 -t|^{ 1 + \alpha/2} }
 \le C, 
\eex
and direct integration gives
\bex
 \int_{1/2}^{1- x^{-1}} \frac{ t^{(s-\alpha)/2 -1} dt}{ | 1 - t|^{ 1 + \alpha/2} } 
\le  C \int_{1/2}^{1- x^{-1}} \frac{  dt}{ | 1 - t|^{ 1 + \alpha/2} } 
\le
 C ( x^{\alpha/2} + 1). 
\eex

\noindent Summing these two estimates, we conclude that 
\begin{flalign}
	\label{integralr}
	\displaystyle\int_0^{x-1} \displaystyle\frac{r^{(s-\alpha)/2 - 1} dr}{1 + |x-r|^{1+\alpha/2}} \le C x^{s/2 - \alpha -1 } \cdot x^{\alpha/2} = C x^{(s-\alpha)/2 -1}.
\end{flalign}
when $x > 1$.
Similar reasoning gives a matching lower bound of $c x^{(s-\alpha)/2 -1}$ for the integral on the left side of \eqref{integralr} if $x > 2$. 

We next bound
\begin{flalign}
	\label{integralr1}
\int_{x-1}^{x+1} \frac{ r^{(s-\alpha)/2 -1} dr}{1 + | x -r |^{ 1 + \alpha/2} }
\le \int_{x-1}^{x+1}  r^{(s-\alpha)/2 -1} dr \le C x^{(s-\alpha)/2 -1},
\end{flalign}
and a matching lower bound follows similarly. Finally, again changing variables $r = tx$,
\begin{align}
	\label{integralr2} 
	\begin{aligned} 
\int_{x+1}^{\infty} \frac{ r^{(s-\alpha)/2 -1} dr}{1 + | x -r |^{ 1 + \alpha/2} }
& \le x^{s/2 -\alpha -1 } \int_{1 + x^{-1}}^\infty
\frac{ t^{(s-\alpha)/2 -1} dt}{ | 1 -t|^{ 1 + \alpha/2} }\\
&\le 
C x^{s/2 -\alpha -1 } 
\left(
\int_{1 + x^{-1}}^2
\frac{  dt}{ | 1 -t|^{ 1 + \alpha/2} }
+
\int_{2}^\infty
\frac{ t^{(s-\alpha)/2 -1} dt}{ | 1 -t|^{ 1 + \alpha/2} }
\right) \le C x^{(s-\alpha)/2 -1},
\end{aligned} 
\end{align}

\noindent and a matching lower bound follows similarly. Summing \eqref{integralr}, \eqref{integralr1}, and \eqref{integralr2} proves both the upper and lower bounds in the lemma for $x>2$.

When $x\in [-2,2]$, the continuity and positivity of
\bex
x \mapsto \int_{0}^\infty \frac{ r^{(s-\alpha)/2 -1} dr}{1 + | x -r |^{ 1 + \alpha/2} },
\eex

\noindent imply that it is bounded above and below by a uniform constant on the compact interval $x \in [-2, 2]$; this verifies the upper and lower bounds in the lemma in this case.

For $x< - 2$, we have
\bex
\int_{0}^\infty \frac{ r^{(s-\alpha)/2 -1} dr}{1 + | x -r |^{ 1 + \alpha/2} }
\le 
\int_{0}^\infty \frac{ r^{(s-\alpha)/2 -1} dr}{1 + | |x| -r |^{ 1 + \alpha/2} },
\eex
which proves the upper bound, since the $x>2$ case was shown above. For the lower bound, we use the change of variables $r = tx$ (and the fact that $|x - r| \ge 1$ for $r \ge 0 > -2 \ge x$) to deduce that 
\bex
\int_{0}^\infty \frac{ r^{(s-\alpha)/2 -1} dr}{1 + | x -r |^{ 1 + \alpha/2} } \ge c \displaystyle\int_0^{\infty} \displaystyle\frac{r^{(s-\alpha)/2 - 1} dr}{1 + |x-r|^{1+\alpha/2}} 
= c |x|^{s/2 -\alpha -1 } \int_{0}^\infty
\frac{ t^{(s-\alpha)/2 -1} dt}{ | 1  + t|^{ 1 + \alpha/2} }
\ge c 
|x|^{s/2 -\alpha -1 },
\eex

\noindent which establishes the lemma.
\end{proof}

We now use the previous lemma to estimate the norm of $T$. We also provide a lower bound on $T^2f$ for nonnegative $f \in \mathcal X$ that are nonzero on a set of positive measure.
\begin{lem}\label{l:Tbounds}
For any $s \in (\alpha, 1)$, there exists a constant $C = C(s) > 1$ such that, for all $f \in \mathcal X$,
\bex
 \| Tf \| \le C \| f\|.
\eex
\item
Further, if $f(x_0) \ge 0$ for all $x\in \bbR$ and $\int_{-\infty}^{\infty} \big| f(x) \big| dx > 0$, then there exists a constant $c_f = c_f (s) > 0$ such that 
\be\label{Tlower}
T^2 f(x) \ge \frac{c_f}{1 + x^{1 + (\alpha-s)/2} }.
\ee
\end{lem}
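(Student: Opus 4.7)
The plan is to rewrite the operator $T$ in a form that makes both bounds direct consequences of \Cref{l:Tintegral} together with tail estimates on $p_E$. Substituting $r = h^2/|E+y|$ and collecting both signs of $h$, I expect
\[
Tf(x) = \frac{\alpha}{2} \int_{\bbR} f(y)\, |E+y|^{-(s+\alpha)/2}\, I_{\sigma(y)}(x)\, dy, \qquad I_{\pm}(x) := \int_0^{\infty} r^{(s-\alpha)/2 - 1}\, p_E(x \pm r)\, dr,
\]
where $\sigma(y) := \sgn(E+y)$. This isolates the $|h|$-singularity inside an integral of exactly the shape handled by \Cref{l:Tintegral}.

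For the upper bound, I will use the standard stable-law tail bound $p_E(u) \le C/(1 + |u|^{1 + \alpha/2})$, together with the upper half of \Cref{l:Tintegral} (applied after the substitution $u = \mp x$), to conclude that $I_{\pm}(x) \le C/(1 + |x|^{1 + (\alpha-s)/2})$ uniformly in $x$. Pairing this with the tail bound $|f(y)| \le \|f\|/(1+|y|^{1+(\alpha-s)/2})$ inherent in the definition of $\|\cdot\|$, I will obtain
\[
|Tf(x)| \le \frac{C \|f\|}{1 + |x|^{1 + (\alpha-s)/2}} \int_{\bbR} \frac{|E+y|^{-(s+\alpha)/2}}{1 + |y|^{1 + (\alpha-s)/2}}\, dy.
\]
The remaining $y$-integral should be finite: the local singularity at $y = -E$ is integrable because $(s+\alpha)/2 < 1$, and the tail contribution decays like $|y|^{-1-\alpha}$. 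This yields $\|Tf\| \le C\|f\|$.

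The lower bound requires a two-sided analogue. First I plan to check, directly from the fixed-point equations \eqref{fixedpoint} in \Cref{l:boundarybasics}, that $a(E), b(E) > 0$: setting either to zero makes the opposite right-hand side a strictly positive expectation (since each of $(E + a^{2/\alpha} S_1 - b^{2/\alpha} S_2)_{\pm}$ is positive on an event of positive probability, as $S_1,S_2$ have full support on $\bbR_+$), contradicting the equation. This makes $\varkappa_{\loc}(E)$ a genuinely two-sided $\alpha/2$-stable law, so $p_E$ is continuous on $\bbR$ with a matching lower bound $p_E(u) \ge c/(1+|u|^{1+\alpha/2})$. A dominated-convergence argument, using the upper-bound envelope just produced, then shows $Tf$ is continuous; since $f \ge 0$ is nontrivial and $p_E > 0$ pointwise, $Tf > 0$ everywhere. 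By continuity I can select $y_- < -E < y_+$ and $\delta > 0$ so that $Tf \ge c_2 > 0$ on small intervals $J_\pm$ around $y_\pm$, each lying strictly on one side of $-E$.

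To finish, I will restrict the outer integral defining $T(Tf)(x)$ to $J_-$ when $x \ge 0$: there $\sigma(y) = -$, so the lower bound half of \Cref{l:Tintegral} gives $I_{-}(x) \ge c/(1+x^{1+(\alpha-s)/2})$, while the factor $\int_{J_-} |E+y|^{-(s+\alpha)/2}\, dy$ is a positive constant. This yields the desired estimate for $x \ge 0$; the case $x \le 0$ is symmetric, using $J_+$ and $I_+$. The main difficulty I foresee lies in this positivity step: verifying $a(E), b(E) > 0$ and the continuity of $Tf$ is what permits one to locate suitable regions on \emph{both} sides of the singular point $y = -E$, and without this two-sidedness one cannot match the slow decay rate $1+(\alpha-s)/2$ prescribed in the statement.
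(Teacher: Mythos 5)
Your proposal is correct and follows the paper's overall route, with a small but genuine variation in the lower bound. The upper bound is the same as the paper's: substitute $r = h^2/|E+y|$, invoke the upper half of \Cref{l:Tintegral} together with the tail bound on $p_E$, and control the remaining $y$-integral using the definition of $\|\cdot\|$ (the local singularity at $y=-E$ is integrable because $(s+\alpha)/2 < 1$, the tail decays like $|y|^{-1-\alpha}$). Incidentally, you correctly land on $\sgn(E+y)$ and $|E+y|^{-(s+\alpha)/2}$; the paper's displays show $\sgn(E-y)$ and $|E-y|$, which appear to be typos.

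For the lower bound, the paper proceeds differently from you. It first combines \emph{both} lower halves of \Cref{l:Tintegral} — including the weaker estimate \eqref{Tintegrallowerweak}, with the worse exponent $s/2-\alpha-1$ — to get a quantitative but suboptimal pointwise bound $Tf(x)\ge c_f/(1+|x|^{1+\alpha-s/2})$; this is enough to conclude $Tf \ge c\,c_f$ uniformly on $[-2E,2E]$, after which a second application of $T$, with the outer $y$-integral restricted to a suitable subinterval chosen according to $\sgn x$, activates the sharp estimate \eqref{Tintegrallower}. You instead argue qualitatively: $Tf>0$ everywhere and is continuous (dominated convergence, using the envelope from the upper bound), so you can pick small intervals $J_\pm$ on opposite sides of $y=-E$ where $Tf$ is bounded below by a constant, and restrict the second outer integral to $J_-$ when $x\ge 0$ and $J_+$ when $x\le 0$. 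Both routes are sound; yours avoids \eqref{Tintegrallowerweak} entirely at the cost of a soft compactness step. You are also right to single out $a(E),b(E)>0$ as the critical input behind the two-sided lower bound on $p_E$ — the paper obtains this implicitly (via \Cref{sigmabetaestimate}, the skewness of $\varkappa_0(z)$ is bounded strictly inside $(-1,1)$ uniformly in $\eta\in(0,1)$, and this passes to the boundary law $\varkappa_{\loc}(E)$), whereas your argument via the fixed-point equations \eqref{fixedpoint} is a direct alternative. One caveat: \eqref{fixedpoint} holds under the localization hypothesis of \Cref{l:boundarybasics}, but since \Cref{l:Tbounds} is only invoked in the proof of \Cref{l:bootstrap}, where that hypothesis is in force, your argument is sound in context.
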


\begin{proof}
Using the fact (from \Cref{ar}) that $|p_E(x)| \le C \cdot \min \big\{ 1, |x|^{-1-\alpha/2} \big\}$, we have
\bex
\big|Tf(x)\big| \le C \int_{\bbR^2}
f(y)
\left| \frac{1}{E + y} \right|^s
 |h|^{s-\alpha -1 }  \frac{1}{ 1 + \big|x  + h^2 (E+y)^{-1} \big|^{1 + \alpha/2} }
\, d h \, dy.
\eex
Set $t  = h^2$, so that $ t^{-1/2} \, dt = 2 \, dh$. Then it follows that 
\bex
 \big| Tf (x) \big| \le \int_{-\infty}^\infty \int_{0}^\infty
f(y)
\left| \frac{1}{E + y} \right|^s
 |t|^{(s-\alpha)/2 -1 }  \frac{1}{ 1 + \big| x  + t (E + y)^{-1} \big|^{1 + \alpha/2} }
\, d t  \, dy.
\eex

\noindent Setting $r = t |E + y|^{-1}$ (implying that $|E + y|\, dr = dt$), we find that 
\begin{flalign}
	\label{tfx0} 
\big| Tf (x) \big| \le C \int_{-\infty}^\infty \int_{0}^\infty
f(y)
|E+ y|^{- (s+\alpha)/2 }   \frac{|r|^{(s-\alpha)/2 -1 }}{ 1 + \left|x\sgn(E-y)  + r \right|^{1 + \alpha/2} }
\, d r \, dy.
\end{flalign}

\noindent Since we also have (from \Cref{ar}) that $\big| p_E (x) \big| \ge c \cdot \min \big\{ 1, |x|^{-1-\alpha/2} \big\}$, the same reasoning yields
\begin{flalign}
	\label{tfx2}
	\big| Tf(x) \big| \ge c \displaystyle\int_{-\infty}^{\infty} \displaystyle\int_0^{\infty} f(y) |E + y|^{-(s+\alpha)/2} \displaystyle\frac{|r|^{(s-\alpha)/2-1}}{1 + \big| x \sgn (E-y) + r \big|^{1+\alpha/2}} dr dy.
\end{flalign} 

Applying \Cref{l:Tintegral} in \eqref{tfx0}, it follows that   
\begin{flalign}
	\label{tfx1} 
\big| Tf(x) \big| \le \frac{C}{1 + |x|^{1 + (\alpha-s)/2}} \int_{-\infty}^\infty 
f(y)
|E+y|^{- (s+\alpha)/2 } 
\, dy.
\end{flalign}
Using the definition of the norm $\| f \|$, we have
\bex
 \int_{-\infty}^{\infty}
f(y)
 |E+y|^{-(s+\alpha)/2}   \, dy \le 
C \|f \| \int_{-\infty}^{\infty}
 \frac{1}{1 + |y|^{1 + (\alpha-s)/2}} 
 |E+y|^{-(s+\alpha)/2}   \, dy
 \le C \cdot \| f\|,
\eex

\noindent where to deduce the last inequality we used the facts that $\frac{s+\alpha}{2} \in (0, 1)$ (to bound the integral around $y = -E$) and $\frac{\alpha-s}{2} + 1 + \frac{s + \alpha}{2} > 1$ (to bound the integral around $y = \infty$). This, together with \eqref{tfx1}, concludes the proof of the upper bound in the lemma.

Now let us establish the lower bound of the lemma; in what follows, we assume that $E \ge 0$, as the proof when $E \le 0$ is entirely analogous. Inserting \eqref{Tintegrallower} and \eqref{Tintegrallowerweak} in \eqref{tfx2}, we deduce 
\begin{flalign*}
\big| Tf(x) \big| \ge\frac{c}{1 + x^{1 + \alpha -s/2}} \int_{-\infty}^\infty
f(y)
|E-y|^{- (s+\alpha)/2 }  \, dy
= 
\frac{c_f}{1 + x^{1 + \alpha -s/2}}, \qquad \text{for any $x \in \mathbb{R}$},
\end{flalign*}
where $c_f = c_f (s) > 0$ is a constant depending on the function $f$. 
This implies that $Tf (x) \ge c \cdot c_f$, uniformly for $x \in [-2E, 2E]$. Inserting this in \eqref{tfx2}, and using \eqref{Tintegrallower}, we obtain
\begin{flalign*} 
	T^2 f(x) & \ge c \displaystyle\int_{\infty}^{\infty} \displaystyle\int_0^{\infty} f(y) |E+y|^{-(s+\alpha)/2} \displaystyle\frac{|r|^{(s-\alpha)/2 - 1}}{1 + \big| x \sgn (E-y) + r \big|^{1 + \alpha/2}} dr dy \\
	& \ge \displaystyle\frac{c}{1 + x^{1 + (\alpha-s)/2}} \Bigg( \one_{x \ge 0} \cdot \displaystyle\int_{E}^{2E} f(y) |E - y|^{-(s+\alpha)/2} dy \\
	& \qquad \qquad \qquad \qquad \qquad + \one_{x < 0} \cdot \displaystyle\int_{-2E}^E f(y) |E-y|^{-(s+\alpha)/2} dy \Bigg) \ge \displaystyle\frac{ c \cdot c_f}{1 + x^{1 + (\alpha-s)/2}},
\end{flalign*} 

\noindent where to obtain the second bound we observe that $\big| x \sgn (E - y) + r \big| = |r - x|$ either if $y > E$ and $x > 0$ or if $y < E$ and $x < 0$. This establishes the lower bound on $\| T^2 f \|$.
\end{proof}

The following lemma shows that $T$ is compact.
\begin{lem}\label{l:compact}
For all $s\in (\alpha,1)$, the operator $T\colon \mathcal X \rightarrow \mathcal X$ is compact. 
\end{lem}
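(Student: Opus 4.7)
My plan is to show that $T$ is in fact a finite-rank operator, of rank at most two, from which compactness follows immediately. The key observation is that the dependence on $y$ in the definition of $T$ (see \Cref{toperator}) can be separated from the dependence on $x$ by an explicit substitution in the inner integral over $h$. The main body of the proof is the rank-two representation; the only technical content is verifying the range of $T$ sits in a two-dimensional subspace of $\mathcal X$.

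First, I will split the domain of integration according to the sign of $E+y$ and on each piece substitute $h = \sqrt{|E+y|}\cdot k$, so that $h^2(E+y)^{-1} = \sgn(E+y)\cdot k^2$, $|h|^{s-\alpha-1} = |E+y|^{(s-\alpha-1)/2}|k|^{s-\alpha-1}$, and $dh = \sqrt{|E+y|}\,dk$. The Jacobian combined with the prefactor $|E+y|^{-s}$ produces $|E+y|^{-(s+\alpha)/2}$, and the kernel $p_E\bigl(x + h^2(E+y)^{-1}\bigr)$ becomes $p_E(x \pm k^2)$, now independent of $y$. After the further substitution $u = k^2$, the inner integral becomes
\begin{equation*}
G_\pm(x) \;=\; \int_0^\infty u^{(s-\alpha)/2 - 1}\, p_E(x \pm u)\, du,
\end{equation*}
and the computation yields
\begin{equation*}
Tf(x) \;=\; \tfrac{\alpha}{2}\bigl( G_+(x)\,\Lambda_+(f) \;+\; G_-(x)\,\Lambda_-(f) \bigr),
\qquad
\Lambda_\pm(f) \;=\; \int_{\{y:\,\pm(E+y)>0\}} f(y)\,|E+y|^{-(s+\alpha)/2}\, dy.
\end{equation*}

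Next, I will verify that $G_\pm \in \mathcal{X}$ and that $\Lambda_\pm$ are bounded linear functionals on $\mathcal{X}$. For $G_\pm$, integrability near $u = 0$ uses $s > \alpha$ (so that $u^{(s-\alpha)/2-1}$ is locally integrable), and the weighted $L^\infty$ decay $|G_\pm(x)|(1 + |x|^{(\alpha-s)/2+1}) \le C$ follows by substituting $v = x \pm u$, bounding $|p_E|$ using \Cref{ar}, and invoking \Cref{l:Tintegral} to control the resulting integral. For $\Lambda_\pm$, the inequality $|f(y)| \le \|f\|(1+|y|^{(\alpha-s)/2+1})^{-1}$ reduces boundedness to checking that
\begin{equation*}
\int_{-\infty}^{\infty} |E+y|^{-(s+\alpha)/2}\bigl(1+|y|^{(\alpha-s)/2+1}\bigr)^{-1}\,dy \;<\; \infty;
\end{equation*}
the local integrability at $y = -E$ holds because $(s+\alpha)/2 < 1$ (since $s,\alpha \in (0,1)$), and the tail decays like $|y|^{-1-\alpha}$, which is integrable.

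Since $Tf$ always lies in the two-dimensional span of $G_+, G_- \in \mathcal{X}$, the operator $T$ has rank at most two and is therefore compact. The only mild obstacle in this plan is careful bookkeeping of the change of variables and of the two integrability windows for $\Lambda_\pm$; no essential analytic difficulty arises beyond estimates already contained in \Cref{ar} and \Cref{l:Tintegral}.
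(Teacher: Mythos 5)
Your proof is correct and follows essentially the same route as the paper: after the same separation-of-variables change of coordinates (the paper uses $u = h^2(E+y)^{-1}$, equivalent to your $h = \sqrt{|E+y|}\,k$), the paper also arrives at the rank-two representation $Tf = I_1(f)F_1 + I_2(f)F_2$ with $F_1 = G_+$, $F_2 = G_-$, and $I_\pm$ proportional to your $\Lambda_\pm$. The paper concludes via a slightly more verbose sequential-compactness argument rather than invoking finite rank directly, but the content is identical.
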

\begin{proof}

Changing integration variables $u = h^2 (E + y)^{-1}$ in the definition \Cref{toperator} of $T$, we deduce
\begin{align*}
Tf(x) 
&=
\frac{\alpha}{2} \int_{\bbR} \int_0^\infty
f(y) |E+y|^{-(s+\alpha)/2}
 \cdot |u|^{(s-\alpha)/2 -1 }  p_E (x + u)
\, d u \, dy\\
& = 
\frac{\alpha}{2} \int_{-E}^\infty \int_0^\infty
f(y)
\left|E + y \right|^{-(s+\alpha)/2} 
 \cdot u^{(s-\alpha)/2 -1 }  p_E\left(x  + u\right)
\, d u \, dy \\ & \qquad +
\frac{\alpha}{2} \int_{-\infty}^{-E} \int_{-\infty}^0
f(y)
\left|E + y \right|^{-(s+\alpha)/2} 
 \cdot |u|^{(s-\alpha)/2 -1 }  p_E\left(x  + u\right)
\, d u \, dy.
\end{align*}
We observe that this separates the $u$ and $y$ variables, so that we have 
\be\label{compactnessrepresentation}
Tf(x)  = I_1 \cdot F_1(x) + I_2 \cdot F_2(x).
\ee

\noindent Here, $I_1 = I_1 (f)$ and $I_2 = I_2 (f)$ are constants given by 
\begin{flalign*}
I_1 = \frac{\alpha}{2} \int_{-E}^\infty 
f(y)
\left|E + y \right|^{-(s+\alpha) / 2} dy; \qquad I_2 = \displaystyle\frac{\alpha}{2} \displaystyle\int_{-\infty}^{-E} f(y) |E + y|^{-(s+\alpha)/2} dy,
\end{flalign*} 

\noindent whose convergence is guaranteed by the fact that $f \in \mathcal{X}$, with 
\begin{flalign}
	\label{i1i2cf}
	|I_1| + |I_2| \le C \cdot \| f\|.
\end{flalign}

\noindent Moreover, $F_1$ and $F_2$ are functions of $x$ given by
\begin{flalign*}
F_1 (x) = \displaystyle\int_0^{\infty} u^{(s-\alpha)/2 - 1} p_E (x + u) du; \qquad F_2 (x) = \displaystyle\int_{-\infty}^0 |u|^{(s-\alpha)/2 - 1} p_E (x + u) du.
\end{flalign*}

\noindent We have $F_1, F_2 \in \mathcal{X}$ with $\| F_1 \| + \| F_2 \| \le C$, since $\big| p_E (x) \big| \le C \cdot \min \{ 1, |x|^{-\alpha/2-1} \}$ by \Cref{ar}.

Now, we claim that the image of the unit ball $\mathcal B_1 \subset \mathcal X$ under $T$ is relatively compact, which implies the conclusion of the lemma. Let $\{f_n\}_{n=1}^\infty$ be an infinite sequence of functions with $f_n \in \mathcal B_1$ for all $n\in \bbN$. It suffices to exhibit a convergent subsequence of $\{T f_n\}_{n=1}^\infty $. 
We observe that, since each $f_j \in \mathcal B_1$, then by \eqref{i1i2cf} there exists a subsequence $\{g_n\}_{n=1}^\infty$ of $\{f_n\}_{n=1}^\infty$ such that the sequences $\{I_1(g_n)\}_{n=1}^\infty$ and $\{I_2(g_n)\}_{n=1}^\infty$ both converge. The representation \eqref{compactnessrepresentation} then shows that $\{T g_n \}_{n=1}^\infty$ converges in $\mathcal X$, establishing the lemma.
\end{proof}
We have the following proposition stating that the Perron--Frobenius eigenvalue of $T$ (corresponding to a nonnegative eigenvector)  is $\lambda (E,s, \alpha)$. Its proof is similar to the one given in \cite{tarquini2016level} and will be given in \Cref{s:pfeigenvector} below. 

	\begin{prop} 
	
		\label{l:pfeigenvector}
		
		Fix $s \in (\alpha, 1)$ and recall the notation of \Cref{lambdaEsalpha}. There exists a unique positive function $f_E = f_{E, s} \in \mathcal{X}$ such that 
		\begin{flalign*}
			Tf_E (x) = \lambda (E, s, \alpha) \cdot f_E (x) \qquad \text{for all $x \in \mathbb{R}$}.
		\end{flalign*}
	\end{prop}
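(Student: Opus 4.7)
The plan is to exploit the striking structural observation from the proof of \Cref{l:compact}: although $T$ acts on the infinite-dimensional Banach space $\mathcal{X}$, its range is only two-dimensional. Indeed, the decomposition $Tf = I_1(f) F_1 + I_2(f) F_2$ derived there shows that any eigenfunction of $T$ with nonzero eigenvalue must lie in the span of
\begin{flalign*}
F_1(x) = \int_0^{\infty} u^{(s-\alpha)/2-1} p_E(x+u)\, du, \qquad F_2(x) = \int_{-\infty}^{0} |u|^{(s-\alpha)/2-1} p_E(x+u)\, du,
\end{flalign*}
and moreover $T$ acts on this two-dimensional subspace via a $2 \times 2$ matrix $M$ whose entries are the four integrals $I_j(F_k) = \frac{\alpha}{2} \int_{(-1)^{j+1}\cdot \mathbb{R}_+ - E} F_k(y) |E+y|^{-(s+\alpha)/2}\, dy$ for $j,k \in \{1,2\}$. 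Hence the nonzero spectrum of $T$ coincides with the two eigenvalues of $M$, and any corresponding eigenfunction is determined (up to scalar) by the associated eigenvector of $M$.

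My first step will be to compute $M$ explicitly via the Fourier transform. Writing $F_1 = g_- * p_E$ and $F_2 = g_+ * p_E$ where $g_\pm(v) = |v|^{(s-\alpha)/2-1} \one_{\pm v > 0}$, and using the classical formulas for the Fourier transform of one-sided Riesz potentials, one has $\hat{g}_\pm(\xi) = \Gamma\big(\frac{s-\alpha}{2}\big) |\xi|^{(\alpha-s)/2} e^{\mp i \pi (s-\alpha) \sgn(\xi)/4}$ (up to an explicit prefactor). By Plancherel, each $I_j(F_k)$ reduces to an integral of $\widehat{p_E}(\xi) \cdot |\xi|^{\alpha - 1} \cdot e^{i E \xi}$ against explicit phase factors, which I will recognize as linear combinations of $\Re \ell(E)$ and $\Im \ell(E)$ from \Cref{lambdaEalpha}. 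Assembling the four entries, the characteristic polynomial $\det(\lambda I - M)$ will work out to
\begin{flalign*}
\lambda^2 - 2 t_\alpha K_{\alpha,s} \Re \ell(E) \cdot \lambda + K_{\alpha,s}^2 (t_\alpha^2 - t_s^2) \, |\ell(E)|^2 = 0,
\end{flalign*}
exactly the defining equation for $\lambda(E, s, \alpha)$ in \Cref{lambdaEsalpha}; this is the same Fourier computation performed in \cite{tarquini2016level} and I expect this bookkeeping to be the main technical obstacle. The factor $(t_\alpha^2 - t_s^2)$ in the constant term being negative (since $\alpha < s < 1$) will guarantee one positive and one negative root, with $\lambda(E, s, \alpha)$ being the positive one.

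Next I will verify that the eigenvector of $M$ corresponding to $\lambda(E, s, \alpha)$ produces a positive function $f_E = a F_1 + b F_2 \in \mathcal{X}$. Both $F_1$ and $F_2$ are positive (as $p_E > 0$), and the signs of the entries of $M$ combined with the Perron–Frobenius theorem for $2 \times 2$ matrices with positive entries (or a direct sign check of the cofactor relation $b/a = (\lambda - M_{11})/M_{12}$) will show $a, b > 0$. Membership of $f_E$ in $\mathcal{X}$ follows from the decay estimates on $F_1, F_2$ derived by the same calculation used in \Cref{l:Tbounds}, which already showed $\|F_1\| + \|F_2\| \le C$.

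Finally, uniqueness of the positive eigenfunction is a soft consequence of the positivity and compactness already established: by \Cref{l:compact} the operator $T$ is compact, and by the lower bound \eqref{Tlower} the iterate $T^2$ sends any nonnegative nonzero $f \in \mathcal{X}$ to a strictly positive function in $\mathcal{X}$. The Krein–Rutman theorem then asserts that the spectral radius of $T$ is a simple eigenvalue with a one-dimensional eigenspace spanned by a strictly positive eigenfunction. Since we have exhibited $\lambda(E, s, \alpha) > 0$ as an eigenvalue with positive eigenfunction $f_E$, and the only other nonzero eigenvalue is negative (hence smaller in absolute value after checking $\lambda(E,s,\alpha)^2 > K_{\alpha,s}^2 |t_\alpha^2 - t_s^2| |\ell(E)|^2$, which follows from Vieta), it must be the Perron–Frobenius eigenvalue, and $f_E$ is unique up to scalar.
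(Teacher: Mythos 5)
Your proposal is correct in substance but takes a genuinely different route from the paper's. The paper begins by invoking the Krein--Rutman theorem (applied to the strongly positive compact operator $T^2$) to \emph{first} establish that $T$ has a unique positive eigenvalue $\lambda_0$ with a positive eigenfunction $f$; it then Fourier-transforms the eigenfunction equation $Tf = \lambda_0 f$ (Lemmas \ref{flambda1}--\ref{fxie1}), splits $\hat f$ into its $\xi>0$ and $\xi<0$ contributions, and obtains a $2\times 2$ linear system in the unknowns $I_\pm = \frac{1}{\pi}\int_{\pm\mathbb{R}_+} e^{\mathrm{i}E\xi}|\xi|^{(\alpha+s)/2-1}\hat f(\xi)\,d\xi$ whose solvability (Corollary \ref{isumi} plus Lemma \ref{iireal0}) forces $\lambda_0$ to solve the defining quadratic. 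You instead start from the rank-$2$ decomposition $Tf = I_1(f)F_1 + I_2(f)F_2$ that the paper proves only en route to compactness, reduce the entire nonzero spectrum to a $2\times 2$ matrix $M=(I_j(F_k))$, compute $M$ directly, and only afterwards invoke Krein--Rutman for uniqueness. Both routes rest on the same Fourier arithmetic of one-sided Riesz potentials against $\hat p_E$, and the two $2\times 2$ matrices are similar so they share a characteristic polynomial; the difference is which two-dimensional coordinate system one works in (the paper's lives in Fourier space on the eigenfunction, yours lives in physical space on the operator). Your version buys a more explicit eigenfunction $f_E = aF_1 + bF_2$ and a transparent finite-dimensional reduction; the paper's version avoids having to check that $F_1, F_2$ are linearly independent (it instead needs Lemma \ref{iireal0}, i.e.\ $(I_+,I_-)\neq(0,0)$, which plays the analogous role). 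Two small imprecisions worth tightening if you write this up: your ``follows from Vieta'' step showing $\lambda(E,s,\alpha)$ exceeds the negative root in absolute value actually also needs $\Re\ell(E)>0$ (true by Lemma \ref{realimaginaryl}, but not a Vieta identity), though you can bypass this entirely since Krein--Rutman for $T^2$ already identifies the positive eigenvalue with the spectral radius; and your final appeal to Krein--Rutman should be phrased for $T^2$ (the strongly positive iterate), with a short step to descend the simplicity of $r(T^2)$ to uniqueness of the positive eigenpair of $T$, as $T$ itself is only positive, not strongly positive.
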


We next state a consequence of the Krein--Rutman theorem, which will be useful for studying $T$. We first recall some functional analytic terminology. Let $X$ be a Banach space. A subset $K \subset X$ is called a \emph{cone} if it is a closed, convex set such that $c \cdot K \subseteq K$ for all $c > 0$ and $K \cap (-K) = \{ 0 \}$. If the cone $K$ has nonempty interior $K^0$, then it is called a \emph{solid cone}. An operator $S : X \rightarrow X$ is said to be \emph{positive} if $Sx \in K$ for every $x \in K$. It is further said to be \emph{strongly positive} if $Sx \in K^0$ for each $x \in S \setminus \{ 0 \}$. Finally, let $r(S)$ denote the spectral radius of $S$.

\begin{thm}[{\cite[Theorem 19.3]{deimling2010nonlinear}}]\label{t:kr}
Let $X$ be a Banach space, $K \subset X$ be a solid cone, and $S : X \rightarrow X$ be a compact, positive, linear operator with spectral radius $r = r(T)$. Then there exists an eigenvector $v \in K \setminus \{ 0 \}$ of $S$ with eigenvalue $r$, that is, $Sv = r \cdot v$. Moreover, if $S$ is strongly positive, then $r$ is a simple eigenvalue of $S$, and $S$ has no other positive eigenvalue. 
\end{thm}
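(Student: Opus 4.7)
The statement is the classical Krein--Rutman theorem, so the plan is to follow its standard proof, which breaks into three pieces: (a) produce a nonzero positive eigenvector, (b) identify its eigenvalue as $r = r(S)$, and (c) under strong positivity, upgrade to simplicity and uniqueness among positive eigenvalues. I would assume throughout that $r > 0$; the case $r = 0$ is degenerate (the spectrum of the compact $S$ collapses to $\{0\}$) and either requires additional kernel hypotheses or follows trivially, so I set it aside.

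For parts (a) and (b), the route I would take is via the resolvent at the boundary of the spectrum. For $s > r$, the Neumann series $R(s) = (sI - S)^{-1} = \sum_{n \ge 0} s^{-n-1} S^n$ converges absolutely and defines a positive operator on $X$. Fix $u_0 \in K^0$ and set $v_s = R(s) u_0 / \|R(s) u_0\|$. The key claim is that $\|R(s) u_0\| \to \infty$ as $s \downarrow r$: otherwise, the identity $R(s) u_0 = s^{-1} u_0 + s^{-1} S (R(s) u_0)$ combined with compactness of $S$ extracts a subsequential limit $w$ solving $(rI - S) w = u_0$, and since solidity of $K$ forces $X = K - K$ (translating a small ball around $u_0$), one promotes this to invertibility of $rI - S$, contradicting $r$ lying in the spectrum. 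Once blow-up is established, applying $(sI - S)$ to $v_s$ gives $(sI - S) v_s = u_0/\|R(s) u_0\| \to 0$, and the identity $s v_s = S v_s + u_0/\|R(s) u_0\|$ together with compactness of $S$ yields a subsequential limit $v = \lim_n v_{s_n}$; by closedness of $K$, one has $v \in K$ with $\|v\| = 1$ and $S v = r v$, settling (a) and (b) at once.

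For (c), I would first note that strong positivity forces $v = r^{-1} Sv \in K^0$. Given another positive eigenpair $(w, \mu)$ with $w \in K \setminus \{0\}$ and $\mu > 0$, the cleanest route to $\mu = r$ is dualization: the transpose $S^* : X^* \to X^*$ is compact and positive on the dual cone $K^* \subset X^*$, and applying step (a) to $S^*$ (noting $r(S^*) = r(S) = r$) produces $\phi \in K^* \setminus \{0\}$ with $S^* \phi = r \phi$. Any nonzero element of $K^*$ is strictly positive on $K^0$ (if $\phi(y) = 0$ for $y \in K^0$, then $\phi$ vanishes on a ball about $y$, hence everywhere), and $Sw \in K^0$ by strong positivity, so $\phi(Sw) > 0$; combined with $\phi(Sw) = \mu \phi(w) = r \phi(w)$ and $\phi(w) > 0$ (by the same reasoning, since $w = \mu^{-1} Sw \in K^0$), this yields $\mu = r$. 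For geometric simplicity at $r$: given a real $v' \in \ker(rI - S)$ not proportional to $v$, set $\tau_+ = \sup\{\tau \ge 0 : v - \tau v' \in K\}$, which is finite; then $u := v - \tau_+ v'$ lies in $\partial K \setminus \{0\}$ and satisfies $Su = r u$, so by strong positivity $u \in K^0$, contradicting $u \in \partial K$. The complex case reduces to the real one by splitting into real and imaginary parts. The main obstacle in this plan is making the blow-up argument in step (a) fully rigorous, as it rests on a careful interplay of compactness, the spectral-radius characterization, and the generating property of the solid cone; the dualization in (c) likewise hinges on verifying that the dual cone is solid enough to run the existence argument on $S^*$.
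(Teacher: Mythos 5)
The paper does not prove Theorem \ref{t:kr}: it is the classical Krein--Rutman theorem, quoted verbatim from \cite[Theorem 19.3]{deimling2010nonlinear} and used as a black box in the proof of \Cref{l:pfcompute}. There is therefore no in-paper argument to compare against, and a proof was not required in the first place.

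That said, your sketch has a concrete gap in step (c) that would cause it to fail as written. You propose to apply step (a) to $S^*$ acting on the dual cone $K^* \subset X^*$ to extract an eigenfunctional $\phi$ at eigenvalue $r$. But step (a) as you arranged it leans essentially on $K$ being solid, and the dual cone of a solid cone is generally \emph{not} solid: with $X = C[0,1]$ and $K$ the cone of nonnegative functions, $K^*$ is the cone of nonnegative Borel measures inside $M[0,1]$, which has empty norm interior (for any $\mu \geq 0$ and any non-atom $x$ of $\mu$, the perturbation $\mu - \epsilon\delta_x$ exits the cone). So the hypotheses of your step (a) are not met for $S^*$. The usual fix for the equality $\mu = r$ avoids $S^*$ entirely: strong positivity places both $v$ and $w$ in $K^0$, so $\tau^* = \inf\{\tau > 0 : \tau v - w \in K\}$ is finite and positive, and applying $S$ to $\tau^* v - w \in K$ gives $r\tau^* v - \mu w \in K$, hence $r \geq \mu$; the reverse inequality follows by symmetry. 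A second, smaller issue: your boundary argument establishes that $\ker(rI - S)$ is one-dimensional (geometric simplicity), but the statement asserts $r$ is a \emph{simple} eigenvalue, which also rules out a nontrivial Jordan block; that requires an extra step (e.g.\ positivity of the leading Laurent coefficient of $R(\lambda)$ at $\lambda = r$, or an eigenfunctional argument, which reopens the dualization problem). You already flag the resolvent blow-up in step (a) as the delicate point, and indeed the textbook route bypasses it by noting $r$ is a pole of the compact resolvent and that the leading Laurent coefficient, as a limit of the positive operators $(s-r)^k R(s)$ as $s \downarrow r$, is itself positive with range in $\ker(rI - S) \cap K$.
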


With these preparations, we can prove the following corollaries. 

\begin{lem}
	
	\label{t2positivity}
	
	Let. $\mathcal{K} = \{ f \in \mathcal{X}: f \ge 0 \}\subset \mathcal{X}$ denote the cone of nonnegative functions in $\mathcal{X}$. Then, $T$ is positive with respect to $\mathcal{K}$, and $T^2$ is strongly positive.
	\end{lem}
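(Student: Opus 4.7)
The plan is to verify the two claims directly from the integral representation of $T$ in \Cref{toperator} together with the pointwise lower bound \eqref{Tlower} already obtained in \Cref{l:Tbounds}.

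For positivity, I would simply inspect the integrand defining $Tf(x)$. Each factor $f(y)$ (for $f \in \mathcal K$), $|E+y|^{-s}$, $|h|^{s-\alpha-1}$, and $p_E\bigl(x + h^2 (E+y)^{-1}\bigr)$ is nonnegative, the last because $p_E$ is a probability density (see \Cref{pkappae}, \Cref{ar}). Hence $Tf(x) \ge 0$ for all $x\in\bbR$, which gives $Tf\in\mathcal K$, proving $T$ is positive with respect to $\mathcal K$.

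For strong positivity of $T^2$, the key is to identify the interior $\mathcal K^0$ of $\mathcal K$ with respect to the weighted supremum norm $\|f\| = \|f(x)\cdot(1+|x|^{1+(\alpha-s)/2})\|_\infty$ from \eqref{xf}. I claim that $f \in \mathcal K^0$ if and only if there exists $c>0$ such that
\[
 f(x) \ge \frac{c}{1+|x|^{1+(\alpha-s)/2}} \qquad \text{for all } x\in\bbR.
\]
Indeed, if such a $c$ exists, then any $g$ with $\|g-f\| < c$ satisfies $g(x) \ge f(x) - \|g-f\|\cdot (1+|x|^{1+(\alpha-s)/2})^{-1} \ge 0$, so $g \in \mathcal K$; conversely, if such a $c$ fails to exist, one can choose arbitrarily small perturbations (of the form $f - \epsilon (1+|x|^{1+(\alpha-s)/2})^{-1}$, smoothly cut off) that take negative values and hence leave $\mathcal K$.

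Given this characterization, strong positivity of $T^2$ reduces immediately to \eqref{Tlower}. Any $f \in \mathcal K\setminus\{0\}$ is nonnegative with $\int_{-\infty}^{\infty} f(x)\,dx > 0$, which is precisely the hypothesis of the second part of \Cref{l:Tbounds}. That bound provides a constant $c_f > 0$ (depending on $f$) with
\[
 T^2 f(x) \ge \frac{c_f}{1+|x|^{1+(\alpha-s)/2}} \qquad \text{for all } x \in \bbR,
\]
which by the characterization above places $T^2 f$ in $\mathcal K^0$. Thus $T^2$ is strongly positive. There is no substantial obstacle here; the analytic content was absorbed into \Cref{l:Tbounds}, and what remains is the elementary identification of the interior of the cone in the weighted norm topology.
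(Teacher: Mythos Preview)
Your argument is correct and follows essentially the same route as the paper: positivity of $T$ is read off from the nonnegativity of every factor in the integrand of \Cref{toperator}, and strong positivity of $T^2$ follows from the pointwise lower bound \eqref{Tlower} once one identifies the interior of $\mathcal K$ in the weighted norm. The paper is slightly terser (it only exhibits a single interior point of $\mathcal K$ rather than fully characterizing $\mathcal K^0$), but the content is the same.
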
 
	
	\begin{proof}
 Consider the function $g \in \mathcal{K}$ defined by
\bex
g(x) = \frac{1}{1 + |x|^{(\alpha-s)/2 +1} }.
\eex
The open ball $\big\{ f \in \mathcal X : \| g -f \| < \frac{1}{2} \big\}$ is contained in $\mathcal{K}$, so the interior $\mathcal{K}^0$ of $\mathcal{K}$ is nonempty, meaning that $\mathcal{K}$ is a solid cone. 
Given $f \in \mathcal K$, it is immediate from the definition of $T$ that $Tf \in \mathcal K$, so $T$ is positive. Further, given $f \in \mathcal{K}$, \eqref{Tlower} implies that $T^2f \in \mathcal{K}^0$, so $T^2$ is strongly positive. 
	\end{proof}

\begin{cor}\label{l:pfcompute}
We have
\bex
\lim_{n\rightarrow \infty} \| T^n \|^{1/n} = \lambda (E, s, \alpha) .
\eex
where $\| T^n \|$ denotes the norm of the operator $T^n \colon \mathcal X \rightarrow \mathcal X$. 
\end{cor}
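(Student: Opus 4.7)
The plan is to derive this identity from the Gelfand spectral radius formula together with the Krein--Rutman characterization of the leading positive eigenvalue. Recall that for any bounded linear operator $S$ on a Banach space, the spectral radius $r(S)$ satisfies $r(S)=\lim_{n\to\infty}\|S^n\|^{1/n}$. Thus it suffices to show that $r(T)=\lambda(E,s,\alpha)$.

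First, I would apply the Krein--Rutman theorem (\Cref{t:kr}) to the operator $T^2$ acting on $\mathcal{X}$ with the cone $\mathcal{K}=\{f\in\mathcal{X}:f\ge 0\}$. By \Cref{l:compact}, $T$ is compact on $\mathcal{X}$, hence so is $T^2$, and by \Cref{t2positivity} the cone $\mathcal{K}$ is solid and $T^2$ is strongly positive with respect to it. Therefore the spectral radius $r(T^2)$ is a simple eigenvalue of $T^2$, admitting a positive eigenvector in $\mathcal{K}^0$, and $r(T^2)$ is the unique positive eigenvalue of $T^2$.

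Next, I would combine this with \Cref{l:pfeigenvector}, which provides a positive function $f_E\in\mathcal{X}$ satisfying $Tf_E=\lambda(E,s,\alpha)\cdot f_E$ with $\lambda(E,s,\alpha)>0$ (by the first part of \Cref{l:lambdalemma}). Squaring the eigenvalue equation gives $T^2 f_E=\lambda(E,s,\alpha)^2 \cdot f_E$, so $\lambda(E,s,\alpha)^2$ is a positive eigenvalue of $T^2$ with a nonnegative eigenfunction. By the uniqueness statement from Krein--Rutman in the previous paragraph, it follows that $r(T^2)=\lambda(E,s,\alpha)^2$. Since $r(T^2)=r(T)^2$ by the spectral mapping theorem for bounded operators, and both $r(T)$ and $\lambda(E,s,\alpha)$ are nonnegative, we conclude $r(T)=\lambda(E,s,\alpha)$. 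An application of Gelfand's formula then finishes the proof:
\begin{equation*}
\lim_{n\to\infty}\|T^n\|^{1/n}=r(T)=\lambda(E,s,\alpha).
\end{equation*}

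There is no major obstacle here, since all the substantive ingredients (compactness, strong positivity of $T^2$, existence of the positive eigenfunction $f_E$, and positivity of $\lambda(E,s,\alpha)$) have already been established. The only subtlety worth checking carefully is the applicability of Krein--Rutman to $T^2$ rather than to $T$ itself (since $T$ is only known to be positive, not strongly positive), and the use of the spectral mapping identity $r(T^2)=r(T)^2$ to transfer the conclusion back to $T$; both are standard.
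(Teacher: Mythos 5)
Your proof is correct and follows essentially the same route as the paper: apply Krein--Rutman (\Cref{t:kr}) to the compact, strongly positive operator $T^2$, identify $r(T^2)=\lambda(E,s,\alpha)^2$ via the positive eigenfunction from \Cref{l:pfeigenvector}, and then transfer back to $T$ using $r(T^2)=r(T)^2$ together with Gelfand's formula. Your version merely makes the spectral mapping step explicit, which the paper leaves implicit.
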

\begin{proof}
The operator $T^2$ is compact by \Cref{l:compact}, and strongly positive by \Cref{t2positivity}.
Then $T^2$ satisfies the hypothesis of \Cref{t:kr}, and we conclude that $r(T^2) = \lambda(E,s, \alpha)^2$,
where $\lambda(E,s, \alpha)$ is the eigenvalue associated to the positive eigenfunction $f_E$ given by \Cref{l:pfeigenvector}. Since Gelfand's spectral radius formula implies that
\bex
\lim_{n\rightarrow \infty} \| T^{2n} \|^{1/n} = r(T^2) = \lambda(E, s, \alpha)^2,
\eex

\noindent this shows that
\bex
r(T) = \lim_{n\rightarrow \infty} \| T^{n} \|^{1/n} = \lambda(E, s, \alpha)  = \lambda(E, s, \alpha),
\eex 

\noindent establishing the corollary.
\end{proof}

\subsection{Proof of \Cref{l:bootstrap}}\label{s:provebootstrap}


We are now ready to complete the proof of \Cref{l:bootstrap}.

\begin{proof}[Proof of \Cref{l:bootstrap}]
We first define the linear operator $S = S_{E,\alpha,s}$ on $\mathcal X$ by 
\bex
S(f) = \int_{-\infty}^\infty \left| \frac{1 }{E +y  } \right|^s  f(y) \, dy.
\eex
By the definition of the norm \eqref{xf} on $\mathcal X$, we have
\bex
\big|f(y) \big|
\le \| f \| \cdot \big(1 + |y|^{(\alpha-s) /2 + 1}\big)^{-1}.
\eex
It follows that
\begin{align}\label{Sfupper}
\big| S(f) \big| 
&\le \| f \| \int_{-\infty}^\infty \frac{1}{|E + y|^s } 
\frac{1}{1 + |y|^{(\alpha-s)/2 + 1}} \,dy =  C \cdot \| f \|.
\end{align}


We now claim that the sequence $R_{\star} (E + \iu \eta_j)$ converges in law to $R_{\loc}(E)$. It suffices to show that any weak limit point $R(E)$ of the sequence $R_{\star} (E + \iu \eta_j)$ equals $R_{\loc}(E)$. To that end, fix a subsequence $\{ \eta'_j\}_{j=1}^\infty$ of $\{ \eta_j\}_{j=1}^\infty$ such that $R(E + \iu \eta'_j)$ converges to $R(E)$. Then the hypothesis $\lim_{j\rightarrow\infty} \eta_j =0$ and \Cref{l:boundarybasics} together imply that $R(E) = R_{\loc}(E)$, showing that $R_{\loc} (E) = \lim_{j \rightarrow \infty} R_{\star} (E + \mathrm{i} \eta_j)$. Using \Cref{l:PhiE}, we conclude that
$\lim_{j \rightarrow \infty} \Phi_L(s;E + \iu \eta_j ) = 
\Phi_L^{\loc} (s ; E )$ and therefore that 
\be\label{phiint1}
\lim_{j \rightarrow \infty} L^{-1} \log \Phi_L(s;E + \iu \eta_j ) = 
L^{-1} \log \Phi_L^{\loc} (s ; E ) .
\ee

 We now examine the limit $\lim_{L \rightarrow \infty} L^{-1} \log \Phi_L^{\loc} (s ; E) = \lim_{L \rightarrow \infty} (2L)^{-1} \log \Phi_{2L}^{\loc}  (s; E)$. Let $g(x) = p_E (x)$, which satisfies $c \big( 1 + |x|^{(\alpha-s)/2 + 1} \big)^{-1} \le g(x) \le C \big( 1 + |x|^{(\alpha-s)/2 + 1} \big)^{-1}$, for any $x \in \mathbb{R}$, by \Cref{ar}. From \Cref{tlgl}, \Cref{l:Erecursion}, and \eqref{productN2}, we have 
\begin{flalign}
	\label{smomentt} 
\Phi_{2L}^{\loc} (s ; E) = S \left( T^{2L} \left( g \right)  \right).
\end{flalign}

\noindent Further, using \eqref{Sfupper}, we obtain
\bex
\Phi_{2L}^{\loc} (s ; E)  \le C \cdot \left \|  T^{2L} \left( g \right)  \right\| \le C \cdot r(T^{2L}) \cdot \| g\| \le C \cdot r(T^2)^L \cdot \| g \|,
\eex

\noindent where $r(T^2)$ and $r(T^{2L})$ denote the spectral radii of $T$ and $T^{2L}$, respectively. Then,
\begin{align}\label{loglambdaupper}
\lim_{L \rightarrow \infty} (2L)^{-1} \log \Phi_{2L}^{\loc} (s ; E) &\le 
\lim_{L \rightarrow \infty} (2L)^{-1}
\left( \log C   + L \log r(T^2)
+ \log \| g\| \right) \le \log \lambda(E, s, \alpha),
\end{align} 
where we used \Cref{l:pfcompute} in the last line.

For the lower bound, we note that \eqref{Tlower} and the fact that $f_E \in \mathcal{X}$ (by the first statement of \Cref{l:pfeigenvector}) impy that $T^2 g (x) \ge c \cdot f_E(x)$, for each $x \in \mathbb{R}$. Then, by \eqref{smomentt} and \Cref{l:pfeigenvector}, 
\bex
\Phi_L(s ; E) = S (T^L  g)\ge c \cdot
S ( T^{L-2} f_E) \ge 
c \cdot \lambda(E,s,\alpha)^{L-2} \cdot S(f_E) \ge c \cdot \lambda (E, s, \alpha)^{L-2},
\eex
and we conclude 
\bex
\lim_{L \rightarrow \infty} (2L)^{-1} \log \Phi_{2L}^{\loc} (s ; E)\ge
\log \lambda(E, s, \alpha).
\eex
Combining the previous equation with \eqref{loglambdaupper} gives
\be\label{phiLlimit}
\displaystyle\lim_{L \rightarrow \infty} L^{-1} \log \Phi_L^{\loc} (s; E) = \lim_{L \rightarrow \infty} (2L)^{-1} \log \Phi_{2L}^{\loc} (s ; E) 
= \log \lambda(E,s, \alpha).
\ee
From \eqref{limitr0j2}, we have
\be\label{finiteLbdd}
\left| L^{-1} \log \Phi_L(s ; E +\iu \eta_j ) 
- \phi(s; E + \iu \eta_j) 
\right| \le \frac{C}{L}
\ee
for all $k \in \Zplus$. Then using \eqref{phiint1} and \eqref{finiteLbdd}, we have
\bex
L^{-1} \log \Phi_L^{\loc} (s ; E ) - \frac{C}{L} = 
\lim_{j \rightarrow \infty} L^{-1} \log \Phi_L(s ; E +\iu \eta_j ) - \frac{C}{L}
\le \liminf_{j \rightarrow \infty} \phi(s; E + \iu \eta_j),
\eex
and 
\bex
\limsup_{j \rightarrow \infty} \phi(s; E + \iu \eta_j)
\le
\lim_{j \rightarrow \infty} L^{-1} \log \Phi_L(s ; E +\iu \eta_j ) + \frac{C}{L} 
= 
\log \Phi_L^{\loc} (s ; E ) + \frac{C}{L}.
\eex
Taking the limit as $L$ goes to infinity and using \eqref{phiLlimit}, we conclude that
\bex
\lim_{k \rightarrow \infty} \phi(s; E + \iu \eta_j) = \log \lambda(E,s,\alpha),
\eex

\noindent verifying the lemma.
\end{proof}

\section{The Perron--Frobenius Eigenvalue of $T$}\label{s:pfeigenvector}
	
	\label{TEigenvalue}
	
	In this section we establish \Cref{l:pfeigenvector}. First observe, by the Krein--Rutman theorem (\Cref{t:kr}) and the strong positivity of $T^2$ (from \Cref{t2positivity}), that $T$ admits an eigenfunction $f = f_E = f_{E, s} \in \mathcal{X}$ with a nonnegative eigenvalue $\lambda_0 = \lambda_0 (s, E) > 0$; further $\lambda_0$ is the unique positive eigenvalue, and it admits no other eigenvectors. In \Cref{Transformf}, we analyze the Fourier transform $\hat f$, and in \Cref{Lambda0Lambda} we use this analysis to complete the proof of \Cref{l:pfeigenvector}. Finally, in \Cref{LambdaOther}, we provide an alternative expression for $\lambda(E,s,\alpha)$, which will be used in \Cref{s:alphanear1} and \Cref{s:alphanear1uniqueness} below.
	
	We note that the definition \eqref{toperator} of $T$ gives, for any $x \in \mathbb{R}$, that
	\begin{flalign}
		\label{lambda0f}
		\lambda_0 \cdot f (x) = Tf (x) = \displaystyle\frac{\alpha}{2} \displaystyle\int_{\mathbb{R}^2} f (y) \bigg| \displaystyle\frac{1}{E+y} \bigg|^s |h|^{s-\alpha-1} p_E \bigg( x + \displaystyle\frac{h^2}{E+y} \bigg) \, dh\, dy.
	\end{flalign}

	\noindent Observe since $f_E \in \mathcal{X}$ that $f_E \in L^2 (\mathbb{R})$ (by the definition of the norm \eqref{xf} on $\mathcal{X}$, together with the fact that $\frac{\alpha-s}{2} + 1 > \frac{1}{2}$). In particular, the Fourier transform of $f_E$ satisfies $\widehat{f}_E \in L^2 (\mathbb{R})$.

	\subsection{Analysis of $\widehat{f}$}
	
	\label{Transformf}
	
	We recall our convention for the Fourier transform from \Cref{s:fouriertransform}. We then have the following lemma, whose statement and proof are similar to those of \cite[Equation (15)]{tarquini2016level}. In what follows, we will repeatedly use the fact that, for any $r \in (0, 1)$ and $\xi \in \mathbb{R}$, we have  
	\begin{flalign}
		\label{xrexi}
		\displaystyle\int_0^{\infty} e^{\mathrm{i} \xi x} x^{-r} dx = \Gamma (1 - r) \cdot \exp \bigg( \displaystyle\frac{\pi \mathrm{i}}{2} (1-r) \sgn (\xi) \bigg) \cdot |\xi|^{r-1}.
	\end{flalign}

	\begin{lem}
		\label{flambda1}
		
		For any $\xi \in \mathbb{R}$, we have  
		\begin{flalign}
			\label{lambda0fxi} 
			\begin{aligned}
			\lambda_0 \cdot \widehat{f} (\xi) & = \displaystyle\frac{\alpha}{2} \cdot \Gamma \bigg( \displaystyle\frac{s-\alpha}{2} \bigg) \cdot |\xi|^{(\alpha-s)/2} \cdot \widehat{p}_E (\xi) \\
			& \qquad \times \displaystyle\int_{-\infty}^{\infty} \displaystyle\frac{f(y)}{|E+y|^{(s+\alpha)/2}} \exp \bigg( \displaystyle\frac{\pi \mathrm{i} (\alpha-s)}{4} \cdot \sgn \big( \xi (E + y) \big) \bigg) \, dy.
			\end{aligned}
		\end{flalign}
	\end{lem}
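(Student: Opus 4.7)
The plan is to compute $\widehat{f}(\xi) = \int_{\mathbb{R}} e^{\mathrm{i}\xi x} f(x)\, dx$ directly from the eigenvalue equation \eqref{lambda0f}, exchanging the order of integration and evaluating the resulting oscillatory integral in $h$ in closed form via \eqref{xrexi}. Since $f_E \in \mathcal{X} \subset L^1(\mathbb{R}) \cap L^2(\mathbb{R})$ and $p_E$ is a bounded probability density (by \Cref{ar}), standard decay estimates justify all uses of Fubini below.

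First, I would apply $\int_{\mathbb{R}} e^{\mathrm{i}\xi x}(\cdot)\, dx$ to both sides of \eqref{lambda0f} and pull the $x$-integral inside. The translation property of the Fourier transform gives
\begin{equation*}
\int_{-\infty}^\infty e^{\mathrm{i}\xi x} p_E\bigl(x + h^2/(E+y)\bigr)\, dx \;=\; e^{-\mathrm{i}\xi h^2/(E+y)}\, \widehat{p}_E(\xi),
\end{equation*}
so that
\begin{equation*}
\lambda_0 \cdot \widehat{f}(\xi) \;=\; \frac{\alpha}{2}\, \widehat{p}_E(\xi) \int_{\mathbb{R}} f(y) \left|\frac{1}{E+y}\right|^s \left(\int_{-\infty}^\infty |h|^{s-\alpha-1} e^{-\mathrm{i}\xi h^2/(E+y)}\, dh\right) dy.
\end{equation*}

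Next, I would evaluate the inner $h$-integral. Since the integrand is even in $h$, the substitution $t = h^2$ yields
\begin{equation*}
\int_{-\infty}^\infty |h|^{s-\alpha-1} e^{-\mathrm{i}\xi h^2/(E+y)}\, dh \;=\; \int_0^\infty t^{(s-\alpha)/2 - 1} e^{-\mathrm{i}\xi t/(E+y)}\, dt.
\end{equation*}
Here $(s-\alpha)/2 \in (0, 1/2)$, so $r := 1 - (s-\alpha)/2 \in (1/2, 1)$ lies in the range where \eqref{xrexi} applies (with the oscillatory parameter $-\xi/(E+y)$ in place of $\xi$). A direct application of \eqref{xrexi} (plus a small analytic-continuation/limit argument in the sense of tempered distributions, since the $t$-integral is only conditionally convergent) yields
\begin{equation*}
\int_0^\infty t^{(s-\alpha)/2 - 1} e^{-\mathrm{i}\xi t/(E+y)}\, dt \;=\; \Gamma\!\left(\tfrac{s-\alpha}{2}\right) |\xi|^{(\alpha-s)/2}\, |E+y|^{(s-\alpha)/2}\, \exp\!\left(-\tfrac{\pi \mathrm{i}(s-\alpha)}{4}\, \sgn\!\bigl(\xi(E+y)\bigr)\right).
\end{equation*}

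Finally, I would substitute this back and observe $|E+y|^{-s} \cdot |E+y|^{(s-\alpha)/2} = |E+y|^{-(s+\alpha)/2}$, and that $-(s-\alpha) = \alpha-s$ in the exponent. This gives exactly \eqref{lambda0fxi}. The only real obstacle is verifying the applicability of \eqref{xrexi} for a purely imaginary exponential (as opposed to a decaying one): this is handled either by inserting a regulator $e^{-\varepsilon t}$ with $\varepsilon \downarrow 0$ and analytically continuing, or by appealing to the Fourier transform of $t \mapsto t_+^{(s-\alpha)/2-1}$ as a tempered distribution. The remaining interchange of integrals is routine given the integrability of $f$ and the boundedness of $\widehat{p}_E$.
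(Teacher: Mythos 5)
Your overall strategy matches the paper's: Fourier-transform the eigenvalue equation, reduce to an oscillatory integral, and evaluate it via \eqref{xrexi}; the algebra (translation property, $t=h^2$ substitution, the $\sgn$ bookkeeping, and the exponent cleanup) is correct. However, there is a genuine gap in the justification. You assert that the Fubini interchanges are ``routine'' given $f\in L^1$ and the boundedness of $p_E$, but this is not so. Because $s>\alpha$, the weight $|h|^{s-\alpha-1}$ is not integrable over $\mathbb{R}$; after bounding $\int p_E(x+h^2/(E+y))\,dx$ by $1$, the dominating triple integral over $(x,y,h)$ diverges, so Tonelli/Fubini does not apply directly. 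This is precisely why the paper's proof introduces the cutoff $g_N$ (restricting $h\in[-N,N]$): for each $N$ the triple integral is absolutely convergent and the interchange is justified, and only then is $N\to\infty$ taken using \eqref{xrexi} together with $g_N\to f$.

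You do observe that the final $t$-integral is only conditionally convergent and propose a regulator $e^{-\varepsilon t}$, but you place it at the last step (applying \eqref{xrexi}) rather than at the beginning. The regularization is actually needed \emph{before} the interchange — e.g.\ inserting $e^{-\varepsilon h^2}$ in the original $h$-integral — which then makes the iterated integral absolutely convergent and lets Fubini apply, after which one removes the regulator. With the regulator moved to the outset, your argument becomes correct and is essentially the paper's proof with a smooth cutoff in place of the sharp one.
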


	\begin{proof}
		
	It will be useful to begin by implementing a cutoff in the integral (over $h$) on the right side of \eqref{lambda0f} since, as is, it does not define a function in $L^1 (\mathbb{R})$. So, for any integer $N \ge 1$, define the function $g_N \in \mathcal{X}$ through 
	\begin{flalign*}
		\lambda_0 \cdot g_N (x) = \displaystyle\frac{\alpha}{2} \displaystyle\int_{-\infty}^{\infty} f(y) \bigg| \displaystyle\frac{1}{E+y}\bigg|^s \displaystyle\int_{-N}^N |h|^{s-\alpha} p_E \bigg( x + \displaystyle\frac{h^2}{E+y} \bigg) \, dh \, dy,
	\end{flalign*} 
	 
	 \noindent observing by \eqref{lambda0f} that $\lim_{N \rightarrow \infty} g_N = f$. Taking the Fourier transform yields
	\begin{flalign*}
		\lambda_0 \cdot \widehat{g}_N (\xi) = \displaystyle\frac{\alpha}{2} \displaystyle\int_{-\infty}^{\infty} e^{\mathrm{i} x \xi} \displaystyle\int_{-\infty}^{\infty} f (y) \bigg| \displaystyle\frac{1}{E+y} \bigg|^s \displaystyle\int_{-N}^{N} |h|^{s-\alpha-1} p_E \bigg( x + \displaystyle\frac{h^2}{E+y} \bigg) \, dh\, dy\, dx,
	\end{flalign*} 

	\noindent where the integral absolutely converges since $f \in \mathcal{X}$ and $p_E (w) \le C \big( |x| + 1 \big)^{\alpha/2 + 1}$ for any $w \in \mathbb{R}$ (by \Cref{ar}). Changing variables $t = |E+y|^{-1} \cdot h^2$, and then replacing $x$ by $x - t \sgn (E+y)$ yields 
	\begin{flalign}
		\label{lambda0gn} 
		\begin{aligned} 	 
		\lambda_0 \cdot \widehat{g}_N (x) & = \displaystyle\frac{\alpha}{2} \displaystyle\int_{-\infty}^{\infty} \displaystyle\int_{-\infty}^{\infty} \displaystyle\int_0^{N^2} e^{\mathrm{i} x \xi} f(y) \bigg| \displaystyle\frac{1}{E+y} \bigg|^{(\alpha+s)/2} t^{(s-\alpha)/2-1} p_E \big( x + t \sgn (E+y) \big) \, dt \, dy \, dx \\
		& = \displaystyle\frac{\alpha}{2} \displaystyle\int_{-\infty}^{\infty} e^{\mathrm{i} x \xi} p_E (x) dx \cdot \displaystyle\int_{-\infty}^{\infty} f(y) \bigg| \displaystyle\frac{1}{E+y} \bigg|^{(\alpha+s)/2} \displaystyle\int_0^{N^2} e^{-\mathrm{i} t \xi \sgn (E+y)} t^{(s-\alpha)/2-1} \, dy \, dt \\
		& = \displaystyle\frac{\alpha}{2} \cdot \widehat{p}_E (\xi) \displaystyle\int_{-\infty}^{\infty} f(y) \bigg| \displaystyle\frac{1}{E+y} \bigg|^{(\alpha+s)/2} \displaystyle\int_0^{N^2} e^{-\mathrm{i} t \xi \sgn (E+y)} t^{(s-\alpha)/2-1} \, dy \, dt.
		\end{aligned} 
	\end{flalign}

	\noindent By \eqref{xrexi}, we have 
	\begin{flalign*}
		\displaystyle\lim_{N \rightarrow \infty} \displaystyle\int_0^{N^2} e^{\mathrm{i} t \xi \sgn (E+y)} t^{(s-\alpha)/2-1} dt = \Gamma \bigg( \displaystyle\frac{s-\alpha}{2} \bigg) \cdot \exp \bigg( \displaystyle\frac{\pi \mathrm{i} (s-\alpha)}{4} \cdot \sgn \big( \xi (E + y) \big) \bigg) \cdot |\xi|^{(\alpha-s)/2}.
	\end{flalign*} 

	\noindent Inserting this into \eqref{lambda0gn} and using the fact that $\lim_{N \rightarrow \infty} g_N = f_N$ then yields the lemma (observing that the resulting integral converges absolutely, since $f \in \mathcal{X}$).	
	\end{proof}

	The next lemma provides a different form for $\widehat{f} (\xi)$.
	
	\begin{lem} 
		
		\label{fxie1} 
		
		For any $\xi \in \mathbb{R}$, we have 
		\begin{flalign*}
			\lambda_0 \cdot \widehat{f}_E (\xi) = \displaystyle\frac{1}{\pi} \cdot K_{\alpha, s} \cdot |\xi|^{(\alpha-s)/2} \cdot \widehat{p}_E (\xi) \displaystyle\int_{-\infty}^{\infty} e^{\mathrm{i} E \zeta} |\zeta|^{(\alpha+s)/2 - 1} \cdot \big( t_{\alpha} \cdot \mathbbm{1}_{\xi \zeta > 0} + t_s \cdot \mathbbm{1}_{\xi \zeta < 0} \big) \widehat{f} (\zeta)\, d \zeta.
		\end{flalign*}
	\end{lem}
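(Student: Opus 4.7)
The plan is to start from the representation of $\lambda_0 \widehat{f}_E(\xi)$ provided by Lemma \ref{flambda1} and rewrite the integral appearing there, which is over the original variable $y$, as an integral over the dual Fourier variable $\zeta$. The key observation is that the multiplier $|E+y|^{-(s+\alpha)/2}\exp\!\big(\tfrac{\pi i(\alpha-s)}{4}\sgn(\xi(E+y))\big)$ is, up to a shift, the distributional Fourier transform of an explicit function of $\zeta$; dualizing via Parseval converts the $y$-integral of $f$ into the required $\zeta$-integral of $\widehat{f}$.

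First I would change variables $w=E+y$, set $r=(s+\alpha)/2\in(0,1)$ and $b=\tfrac{\pi(\alpha-s)}{4}\sgn\xi$, so that the integrand becomes $f(w-E)\cdot|w|^{-r}e^{ib\sgn w}$. Splitting the integral over $\{w>0\}$ and $\{w<0\}$ and applying \eqref{xrexi} yields, in the sense of tempered distributions,
\begin{equation*}
\int_{-\infty}^{\infty} e^{i\zeta w}\,|w|^{-r}e^{ib\sgn w}\, dw \;=\; 2\Gamma(1-r)\,|\zeta|^{r-1}\cos\!\left(b+\tfrac{\pi(1-r)}{2}\sgn\zeta\right).
\end{equation*}
Combining this with the Parseval-type identity $\int F(w)G(w)\,dw=\tfrac{1}{2\pi}\int \widehat{F}(\zeta)\,\widehat{G}(-\zeta)\,d\zeta$ for $F(w)=f(w-E)$ and $G(w)=|w|^{-r}e^{ib\sgn w}$, using $\widehat{F}(\zeta)=e^{iE\zeta}\widehat{f}(\zeta)$, transforms the integral in Lemma \ref{flambda1} into
\begin{equation*}
\frac{\Gamma(1-r)}{\pi}\int_{-\infty}^{\infty} e^{iE\zeta}\,\widehat{f}(\zeta)\,|\zeta|^{r-1}\cos\!\left(b-\tfrac{\pi(1-r)}{2}\sgn\zeta\right) d\zeta.
\end{equation*}

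Next I would evaluate the cosine factor. With $1-r=(2-s-\alpha)/2$, direct computation shows that on $\{\xi\zeta>0\}$ the argument is $\pm\tfrac{\pi(\alpha-1)}{2}$, giving $\sin(\pi\alpha/2)=t_\alpha$, while on $\{\xi\zeta<0\}$ the argument is $\pm\tfrac{\pi(1-s)}{2}$, giving $\sin(\pi s/2)=t_s$. The cosine therefore equals $t_\alpha\mathbbm{1}_{\xi\zeta>0}+t_s\mathbbm{1}_{\xi\zeta<0}$. Inserting this into Lemma \ref{flambda1} and collecting constants with the identity
\begin{equation*}
\frac{\alpha}{2}\,\Gamma\!\left(\tfrac{s-\alpha}{2}\right)\cdot\frac{1}{\pi}\,\Gamma\!\left(1-\tfrac{s+\alpha}{2}\right)\;=\;\frac{K_{\alpha,s}}{\pi},
\end{equation*}
produces the claimed formula.

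The main obstacle will be justifying the Parseval step rigorously, since $|w|^{-r}e^{ib\sgn w}$ is only a tempered distribution and lies in neither $L^1$ nor $L^2$. As in the proof of Lemma \ref{flambda1}, I would resolve this by regularizing: replace $|w|^{-r}$ with $|w|^{-r}\mathbbm{1}_{\varepsilon\le|w|\le N}$, at which point Fubini applies immediately, and then send $\varepsilon\downarrow 0$ and $N\uparrow\infty$. Convergence on the $y$-side is handled by $f\in\mathcal{X}$ together with dominated convergence (using $r<1$ near $w=0$ and the $|y|^{-(\alpha-s)/2-1}$ decay of $f$ near infinity). Convergence on the $\zeta$-side uses that $\widehat{f}(\zeta)=O(|\zeta|^{(\alpha-s)/2})$ near $0$ (so the integrand behaves like $|\zeta|^{\alpha-1}$, which is integrable) and the super-polynomial decay of $\widehat{p}_E(\zeta)$ at infinity inherited from its being the characteristic function of an $\alpha/2$-stable law, both of which follow from the representation in Lemma \ref{flambda1}.
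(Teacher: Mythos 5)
Your proposal is correct and follows essentially the same route as the paper's proof. What you call a Parseval step is precisely what the paper does by substituting the Fourier inversion formula $f(y)=\frac{1}{2\pi}\int e^{-iy\zeta}\widehat{f}(\zeta)\,d\zeta$ into the integral from Lemma~\ref{flambda1}, swapping the order of integration, and shifting $y\mapsto y-E$; both reduce to the same application of \eqref{xrexi} on each half-line, and your cosine evaluation producing $t_\alpha$ on $\{\xi\zeta>0\}$ and $t_s$ on $\{\xi\zeta<0\}$ matches the paper's exponential combination identity. Your regularization plan (truncating $|w|^{-r}$ and passing to the limit, with absolute convergence of the $\zeta$-side furnished by the $\widehat{p}_E$ decay inherited from \eqref{fxiintegralc}) is the same device the paper uses via its $N$-cutoff.
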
 
	
	\begin{proof} 
	First observe from \Cref{flambda1}, the explicit form \eqref{xtsigma} of $\widehat{p}_E$; and the convergence of the integral in \eqref{lambda0fxi} (by the fact that $f \in \mathcal{X}$) that 
	\begin{flalign}
		\label{fxiintegralc} 
		\big| \widehat{f} (\xi) \big| \le C \exp \big(-c |\xi|^{\alpha/2} \big), \qquad \text{and so $\widehat{f} \in L^1 (\mathbb{R})$}.
	\end{flalign}

	\noindent Next, by \eqref{lambda0fxi} and the fact that $f \in \mathcal{X}$, we obtain 
	\begin{flalign}
		\label{lambda0fxi2} 
		\begin{aligned}
		\lambda_0 \cdot \widehat{f} (\xi) & = \displaystyle\frac{\alpha}{2} \cdot \Gamma \bigg( \displaystyle\frac{s-\alpha}{2} \bigg) \cdot |\xi|^{(\alpha-s)/2} \cdot \widehat{p}_E (\xi) \\
		& \qquad \times \displaystyle\lim_{N \rightarrow \infty} \displaystyle\int_{-N}^N \displaystyle\frac{f(y)}{|E+y|^{(\alpha+s)/2}} \exp \bigg( \displaystyle\frac{\pi \mathrm{i} (\alpha-s)}{4} \cdot \sgn \big( \xi (E+y) \big)\bigg) \, dy.
		\end{aligned}
	\end{flalign}

	\noindent Applying the Fourier inversion identity $f(y) = \frac{1}{2 \pi} \int_{-\infty}^{\infty} e^{-\mathrm{i} y \zeta} \widehat{f} (\zeta)\, d \zeta$, we obtain
	\begin{flalign}
		\label{flambda0xi2n}
		\begin{aligned}
		\lambda_0 \cdot \widehat{f} (\xi) & = \displaystyle\frac{\alpha}{4 \pi} \cdot \Gamma \bigg( \displaystyle\frac{s-\alpha}{2} \bigg) \cdot |\xi|^{(\alpha-s)/2} \cdot \widehat{p}_E (\xi) \\
		& \qquad \times \displaystyle\lim_{N \rightarrow \infty} \displaystyle\int_{-\infty}^{\infty} e^{-\mathrm{i} y \zeta }\displaystyle\int_{-N}^N \displaystyle\frac{\widehat{f} (\zeta)}{|E+y|^{(\alpha+s)/2}} \exp \bigg( \displaystyle\frac{\pi \mathrm{i} (\alpha-s)}{4} \cdot \sgn \big( \xi (E + y) \big) \bigg) \, dy\, d \zeta \\
		& = \displaystyle\frac{\alpha}{4\pi} \cdot \Gamma \bigg( \displaystyle\frac{s-\alpha}{2} \bigg) \cdot |\xi|^{(\alpha-s)/2} \cdot \widehat{p}_E (\xi) \\
		& \qquad \times \displaystyle\lim_{N\rightarrow \infty} \displaystyle\int_{-\infty}^{\infty} e^{\mathrm{i} E \zeta} \widehat{f} (\zeta) \displaystyle\int_{-N-E}^{N-E} e^{-\mathrm{i} y \zeta} |y|^{-(\alpha+s)/2} \exp \bigg( \displaystyle\frac{\pi \mathrm{i} (\alpha-s)}{4} \cdot \sgn (\xi y) \bigg) \, dy \, d \zeta,
		\end{aligned}
	\end{flalign}

	\noindent where in the last equality we changed variables, setting the original $y$ now to $y - E$. Next, observe from \eqref{xrexi} that 
	\begin{flalign*}
		\displaystyle\lim_{N \rightarrow \infty} & \displaystyle\int_{-N-E}^0 e^{-\mathrm{i} y \zeta} |y|^{-(\alpha+s)/2} \exp \bigg( \displaystyle\frac{\pi \mathrm{i} (\alpha-s)}{4} \sgn (\xi y) \bigg) dy \\
		&  = \Gamma \bigg( 1 - \displaystyle\frac{\alpha+s}{2} \bigg) \cdot \exp \bigg( \displaystyle\frac{\pi \mathrm{i}}{2} \Big( 1 - \displaystyle\frac{\alpha+s}{2} \Big) \cdot \sgn (\zeta) \bigg)  \exp \bigg( \displaystyle\frac{\pi \mathrm{i} (s-\alpha)}{4} \cdot \sgn (\xi) \bigg) \cdot |\zeta|^{(\alpha+s)/2 - 1}; \\
		\displaystyle\lim_{N \rightarrow \infty} & \displaystyle\int_0^{N-E} e^{-\mathrm{i} y \zeta} |y|^{-(\alpha+s)/2} \exp \bigg( \displaystyle\frac{\pi \mathrm{i} (\alpha-s)}{4} \sgn (\xi y) \bigg) dy \\
		&  = \Gamma \bigg( \displaystyle\frac{\alpha+s}{2} - 1 \bigg) \cdot \exp \bigg( \displaystyle\frac{\pi \mathrm{i}}{2} \Big( \displaystyle\frac{\alpha+s}{2} - 1 \Big) \cdot \sgn (\zeta) \bigg)  \exp \bigg( \displaystyle\frac{\pi \mathrm{i} (\alpha - s)}{4} \cdot \sgn (\xi) \bigg) \cdot |\zeta|^{(\alpha+s)/2 - 1}.
	\end{flalign*} 

	\noindent Inserting these into \eqref{flambda0xi2n} (and using \eqref{fxiintegralc}), we find 
	\begin{flalign}
		\label{flambda03}
		\begin{aligned}
		\lambda_0 \cdot \widehat{f} (\xi) & = \displaystyle\frac{\alpha}{2\pi} \cdot \Gamma \bigg( \displaystyle\frac{s-\alpha}{2} \bigg) \cdot \Gamma \bigg( 1 - \displaystyle\frac{\alpha+s}{2} \bigg) \cdot |\xi|^{(\alpha-s)/2} \cdot \widehat{p}_E (\xi) \\
		& \qquad \times \displaystyle\int_{-\infty}^{\infty} e^{\mathrm{i} E \zeta} |\zeta|^{(\alpha+s)/2 - 1} \cdot \big( t_{\alpha} \cdot \mathbbm{1}_{\xi \zeta > 0} + t_s \cdot \mathbbm{1}_{\xi \zeta < 0} \big) \cdot \widehat{f}(\zeta) \, d \zeta,
		\end{aligned} 
	\end{flalign} 

	\noindent where we used the facts that 
	\begin{flalign*} 
		\exp \bigg( & \displaystyle\frac{\pi \mathrm{i}}{2} \Big( 1 - \displaystyle\frac{\alpha+s}{2} \Big) \cdot \sgn (\zeta) + \displaystyle\frac{\pi \mathrm{i} (s-\alpha)}{4} \cdot \sgn (\xi) \bigg) \\ 
		& \qquad + \exp \bigg( \displaystyle\frac{\pi \mathrm{i}}{2} \Big( \displaystyle\frac{\alpha+s}{2} - 1 \Big) \cdot \sgn (\zeta) + \displaystyle\frac{\pi \mathrm{i} (\alpha-s)}{4} \cdot \sgn (\xi) \bigg) \\
		& \qquad \qquad \qquad \qquad = 2 \bigg( \sin \Big( \displaystyle\frac{\pi \alpha}{2} \Big) \cdot \mathbbm{1}_{\xi \zeta > 0} + \sin \Big( \displaystyle\frac{\pi s}{2} \Big) \cdot \mathbbm{1}_{\xi \zeta < 0} \bigg),
	\end{flalign*}

	\noindent and recalled the definition of $t_r$ from \eqref{tlrk}. The lemma then follows from combining \eqref{flambda03} with the definition \eqref{tlrk} of $K_{\alpha, s}$. 	 
	 \end{proof}

 	\subsection{Solution for $\lambda_0$} 
	
	\label{Lambda0Lambda} 

	In this section we solve for the eigenvalue $\lambda_0$ of $T$ with eigenvector $f$, continuing to follow \cite{tarquini2016level}. To that end, we denote
	\begin{flalign}
		\label{llii}
		\begin{aligned}
		& I_- = I_- (E) = \displaystyle\frac{1}{\pi} \displaystyle\int_{-\infty}^0 e^{\mathrm{i} E \xi} |\xi|^{(\alpha+s)/2 - 1} \widehat{f} (\xi)\, d \xi; \qquad I_+ = I_+ (E) = \displaystyle\frac{1}{\pi} \displaystyle\int_0^{\infty} e^{\mathrm{i} E \xi} |\xi|^{(\alpha+s)/2-1} \widehat{f} (\xi) \, d \xi; \\
		& \ell_- = \ell_- (E) = \displaystyle\frac{1}{\pi} \displaystyle\int_{-\infty}^0 e^{\mathrm{i} E \xi} |\xi|^{\alpha-1} \widehat{p}_E (\xi)\, d\xi; \qquad \qquad \ell_+ = \ell_+(E) = \displaystyle\frac{1}{\pi} \displaystyle\int_0^{\infty} e^{\mathrm{i} E \xi} |\xi|^{\alpha-1} \widehat{p}_E (\xi) \, d \xi.
		\end{aligned} 
	\end{flalign}

	The following corollary, which follows from \Cref{fxie1}, indicates that $(I_+, I_-)$ satisfy a certain linear relation.
	
	\begin{cor}
		
		\label{isumi}

	For any $E \in \mathbb{R}$, we have 
	\begin{flalign*}
		\lambda_0 \cdot I_+ = K_{\alpha, s} \ell_+ (t_{\alpha} I_+ + t_s I_-); \qquad \lambda_0 \cdot I_- = K_{\alpha, s} \ell_- (t_s I_+ + t_{\alpha} I_-).
	\end{flalign*}
	
	\end{cor}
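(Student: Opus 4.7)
The plan is to derive the two linear relations by integrating the identity from \Cref{fxie1} against suitable kernels, producing the moments $I_\pm$ on the left side and the moments $I_\pm$ combined with $\ell_\pm$ on the right side.

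Specifically, I will take the identity
\begin{flalign*}
\lambda_0 \cdot \widehat{f} (\xi) = \displaystyle\frac{1}{\pi} \cdot K_{\alpha, s} \cdot |\xi|^{(\alpha-s)/2} \cdot \widehat{p}_E (\xi) \displaystyle\int_{-\infty}^{\infty} e^{\mathrm{i} E \zeta} |\zeta|^{(\alpha+s)/2 - 1} \cdot \big( t_{\alpha} \cdot \mathbbm{1}_{\xi \zeta > 0} + t_s \cdot \mathbbm{1}_{\xi \zeta < 0} \big) \widehat{f} (\zeta)\, d \zeta
\end{flalign*}
and multiply both sides by $\pi^{-1} e^{\mathrm{i} E \xi} |\xi|^{(\alpha+s)/2 - 1}$, then integrate either over $\xi > 0$ (to obtain the equation for $I_+$) or over $\xi < 0$ (to obtain the equation for $I_-$). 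The left side immediately becomes $\lambda_0 I_\pm$ by the definition of $I_\pm$ in \eqref{llii}.

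For the right side, the powers of $|\xi|$ combine as $(\alpha-s)/2 + (\alpha+s)/2 - 1 = \alpha - 1$, so after integrating in $\xi$ I recover a factor of $\ell_+$ (respectively $\ell_-$) by the definition in \eqref{llii}. The $\zeta$-integral is handled by splitting it into $\zeta > 0$ and $\zeta < 0$ and observing that for $\xi > 0$, $\mathbbm{1}_{\xi \zeta > 0} = \mathbbm{1}_{\zeta > 0}$ and $\mathbbm{1}_{\xi \zeta < 0} = \mathbbm{1}_{\zeta < 0}$, so the $\zeta$-integral yields $\pi(t_\alpha I_+ + t_s I_-)$; for $\xi < 0$, the roles are reversed and the $\zeta$-integral yields $\pi(t_s I_+ + t_\alpha I_-)$. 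Collecting factors of $\pi$ produces exactly the claimed equations.

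The only substantive issue is justifying the exchange of integration orders (between the outer $\xi$-integral and the inner $\zeta$-integral appearing in \Cref{fxie1}). This follows from an absolute convergence argument using \eqref{fxiintegralc}, which shows that $\widehat{f}$ has Gaussian-type decay at infinity and is integrable, together with the integrability of $|\xi|^{(\alpha+s)/2 - 1} |\widehat{p}_E(\xi)|$ on bounded sets (the singularity at $\xi = 0$ is integrable since $(\alpha+s)/2 > 0$) and its exponential decay at infinity from \eqref{xtsigma}. This makes the double integral absolutely convergent, permitting Fubini. No step is genuinely difficult; this is essentially a bookkeeping computation once the identity from \Cref{fxie1} is in hand.
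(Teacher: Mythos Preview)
Your proposal is correct and follows essentially the same approach as the paper. The only minor difference is order of operations: the paper first observes that for fixed $\sgn(\xi)$ the inner $\zeta$-integral equals the constant $\pi(t_\alpha I_+ + t_s I_-)$ (or $\pi(t_s I_+ + t_\alpha I_-)$), factors it out, and then integrates in $\xi$---so no Fubini is actually needed---whereas you integrate in $\xi$ first and invoke Fubini, which is also valid but slightly less direct.
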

	
	\begin{proof}
		
		By \Cref{fxie1} and \eqref{llii}, we have
		\begin{flalign*}
			 \lambda_0 \cdot |\xi|^{(\alpha+s)/2 - 1} \widehat{f} (\xi) \cdot \mathbbm{1}_{\xi > 0} & = \displaystyle\frac{1}{\pi} \cdot K_{\alpha, s}  \cdot |\xi|^{\alpha-1} \widehat{p}_E (\xi) \cdot (t_{\alpha} I_+ + t_s I_-) \cdot \mathbbm{1}_{\xi > 0}; \\
			 \lambda_0 \cdot |\xi|^{(\alpha+s)/2 - 1} \widehat{f} (\xi) \cdot \mathbbm{1}_{\xi < 0} & = \displaystyle\frac{1}{\pi} \cdot K_{\alpha, s} \cdot |\xi|^{\alpha-1} \widehat{p}_E (\xi) \cdot (t_s I_+ + t_{\alpha} I_-) \cdot \mathbbm{1}_{\xi < 0}.
		\end{flalign*}
		
		\noindent Integrating over $\xi$ and using \eqref{llii} again then yields the lemma.
	\end{proof}

	The following lemma, which makes use of the nonnegativity of $f$, indicates that $(I_+, I_-) \ne (0, 0)$ (meaning that the linear relation between $(I_+, I_-)$ given by \Cref{isumi} is nondegenerate). 
	
	\begin{lem}
		
		\label{iireal0} 
		
		For any $E \in \mathbb{R}$, we have that $(I_+, I_-) \ne (0, 0)$.
	\end{lem}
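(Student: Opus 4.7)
The plan is to argue by contradiction, using \Cref{fxie1} to show that vanishing of both $I_+$ and $I_-$ forces $\widehat{f}_E \equiv 0$, contradicting that $f_E$ is a nonzero eigenfunction (which is where the nonnegativity of $f_E$ enters: it is an honest Perron--Frobenius eigenvector of $T^2$, hence not the zero function in $\mathcal{X}$).

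First I would suppose for the sake of contradiction that $I_+ = 0 = I_-$. By \Cref{fxie1}, for every $\xi \ne 0$ we have
\begin{flalign*}
\lambda_0 \cdot \widehat{f}_E(\xi) = \frac{K_{\alpha,s}}{\pi} |\xi|^{(\alpha-s)/2} \widehat{p}_E(\xi) \int_{-\infty}^\infty e^{\mathrm{i} E\zeta} |\zeta|^{(\alpha+s)/2 - 1} \bigl( t_\alpha \mathbbm{1}_{\xi\zeta > 0} + t_s \mathbbm{1}_{\xi\zeta < 0} \bigr) \widehat{f}_E(\zeta) \, d\zeta.
\end{flalign*}
Splitting the inner integral at $\zeta = 0$ and using \eqref{llii}, the right side equals $K_{\alpha,s} |\xi|^{(\alpha-s)/2} \widehat{p}_E(\xi) \cdot (t_\alpha I_+ + t_s I_-)$ when $\xi > 0$, and $K_{\alpha,s} |\xi|^{(\alpha-s)/2} \widehat{p}_E(\xi) \cdot (t_s I_+ + t_\alpha I_-)$ when $\xi < 0$. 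Under the assumption $(I_+, I_-) = (0, 0)$, both expressions vanish, so $\widehat{f}_E(\xi) = 0$ for every $\xi \ne 0$.

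Next I would translate this into a contradiction. The definition \eqref{xf} of $\|\cdot\|$ together with $s \in (\alpha, 1)$ gives $(\alpha - s)/2 + 1 > 1/2$, so any $f \in \mathcal{X}$ satisfies $|f(x)| \le \|f\| \cdot (1 + |x|^{(\alpha-s)/2 + 1})^{-1}$ and therefore lies in $L^2(\mathbb{R})$. Hence $\widehat{f}_E \in L^2(\mathbb{R})$, and the conclusion of the previous paragraph forces $\widehat{f}_E = 0$ in $L^2$, which by Plancherel yields $f_E = 0$ almost everywhere. However, $f_E$ is the Perron--Frobenius eigenvector of $T^2$ provided by the application of Krein--Rutman to the strongly positive compact operator $T^2$ (\Cref{l:compact} and \Cref{t2positivity}); in particular $f_E$ lies in the interior of the cone $\mathcal{K}$ and is nonzero. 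This contradiction establishes the lemma.

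The argument is essentially a one-step computation once \Cref{fxie1} is in place, so no genuinely hard obstacle arises; the only point requiring care is the verification that $f_E \in L^2$ (so that the Fourier transform is injective), which follows directly from the decay built into the norm on $\mathcal{X}$.
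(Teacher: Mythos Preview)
Your proof is correct and takes a genuinely different route from the paper's. The paper proceeds directly rather than by contradiction: it rewrites $\pi I_+$ as $\mathcal{F}\bigl(\mathbbm{1}_{\xi>0}\,|\xi|^{(\alpha+s)/2-1}\,\widehat{f}\bigr)(E)$, expands this as a convolution, and computes its real part as $\Gamma\bigl(\tfrac{\alpha+s}{2}\bigr)\cos\bigl(\tfrac{\pi}{4}(\alpha+s)\bigr)\cdot\bigl(\widetilde{f}\ast |x|^{-(\alpha+s)/2}\bigr)(E)$ with $\widetilde{f}(x)=f(-x)$. Since $f\ge 0$ and $f\not\equiv 0$, this convolution is strictly positive, yielding the stronger conclusion $\Real I_+>0$. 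Your contradiction argument is more economical: it bypasses the explicit Fourier computation of $\mathcal{F}(\mathbbm{1}_{\xi>0}|\xi|^{(\alpha+s)/2-1})$ and uses only that $\lambda_0>0$ and $f_E\ne 0$, both of which are already recorded at the opening of \Cref{TEigenvalue}. The paper's approach yields a quantitative sign on $I_+$ (not used downstream), while yours reaches exactly the lemma's statement with less work. One minor remark: the step from $\lambda_0\,\widehat{f}_E(\xi)=0$ to $\widehat{f}_E(\xi)=0$ silently uses $\lambda_0>0$, which you might make explicit; and the claim that $f_E\in\mathcal{K}^0$ is true but goes a hair beyond the paper's stated Krein--Rutman (it follows since $T^2 f_E=\lambda_0^2 f_E\in\mathcal{K}^0$ by strong positivity), though you only need $f_E\ne 0$.
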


	\begin{proof}
		
		It suffices to show that $\Real I_+ \ne 0$. To that end, observe that 
		\begin{flalign}
			\label{isum2}
			\pi \cdot I_+ = \mathcal{F} \big( \mathbbm{1}_{\xi > 0} \cdot |\xi|^{(\alpha+s)/2 - 1} \cdot \widehat{f} (\xi) \big) (E) = (2 \pi)^{-1} \cdot \big( \mathcal{F} \big( \mathbbm{1}_{\xi > 0} \cdot |\xi|^{(\alpha+s)/2 - 1} \big) \ast \mathcal{F}^2 f \big) (E).
		\end{flalign}
	
		\noindent Defining $\widetilde{f} \in \mathcal{X}$ by $\widetilde{f}(x) = f(-x)$, observe by Fourier inversion and \eqref{xrexi} that 
		\begin{flalign*}
			\mathcal{F}^2 f = 2\pi \cdot \widetilde{f}; \qquad \mathcal{F} \big( \mathbbm{1}_{\xi > 0} \cdot |\xi|^{(\alpha+s)/2-1} \big) = \Gamma \bigg( \displaystyle\frac{\alpha+s}{2} \bigg) \cdot \exp \bigg( \displaystyle\frac{\pi \mathrm{i}}{4} (s + \alpha) \sgn (x) \bigg) \cdot |x|^{-(\alpha+s)/2}.
		 \end{flalign*} 
	 
	 	\noindent Inserting this, together with the facts that $\widetilde{f} (x) = f(-x) \in \mathbb{R}_{\ge 0}$ and
	 	\begin{flalign*}
	 		\Real \exp \bigg( \displaystyle\frac{\pi \mathrm{i}}{4} (\alpha+s) \sgn(x) \bigg) = \cos \bigg( \displaystyle\frac{\pi}{4} (\alpha+s) \bigg) \in (0, 1),
	 	\end{flalign*}
 	
 		\noindent into \eqref{isum2}, it follows that 
	 	\begin{flalign*}
	 		\pi \cdot \Real I_+ = \Gamma \bigg( \displaystyle\frac{\alpha+s}{2} \bigg) \cos \bigg( \displaystyle\frac{\pi}{4} (\alpha+s) \bigg) \cdot \big( \widetilde{f} \ast |x|^{-(\alpha+s)/2} \big) (E) > 0,
	 	\end{flalign*}
 	
 		\noindent where in the last inequality we further used the fact that $\widetilde{f}(x) = f(x) \in \mathbb{R}_{\ge 0}$ for all $x \in \mathbb{R}$, and that $\widetilde{f} \ne 0$. In particular, this shows that $\Real I_+ \ne 0$, which establishes the lemma.
	\end{proof}
	
	Given \Cref{isumi} and \Cref{iireal0}, we can quickly establish \Cref{l:pfeigenvector}
	
	\begin{proof}[Proof of \Cref{l:pfeigenvector}]
	
	Observe that \Cref{isumi} implies the equation 
	\begin{flalign*}
		\boldsymbol{A} \cdot \boldsymbol{v} = \boldsymbol{0}, \qquad \text{where} \qquad \boldsymbol{A} = \left[ \begin{array}{cc} t_{\alpha} K_{s, \alpha} \ell_+ - \lambda_0 & t_s K_{s,\alpha} \ell_+ \\ t_s K_{s, \alpha} \ell_- & t_{\alpha} K_{s, \alpha} \ell_-  - \lambda_0 \end{array}\right], \qquad \text{and} \qquad \boldsymbol{v} =  \left[ \begin{array}{cc} I_+ \\ I_- \end{array} \right].
	\end{flalign*}

	\noindent By \Cref{iireal0}, this implies that $\det \boldsymbol{A} = 0$, meaning that $\lambda_0$ is the positive (as $\lambda_0$ is the Perron--Frobenius eigenvalue of $T$) solution to
	\begin{flalign*}
		\lambda_0^2 - t_{\alpha} K_{s, \alpha} (\ell_+ + \ell_-)\cdot \lambda_0 + (t_{\alpha}^2 - t_s^2) K_{s,\alpha}^2 \ell_+ \ell_- = 0.
	\end{flalign*}
	
	\noindent This, together with \Cref{lambdaEsalpha} and the facts that $\ell_+ = \ell (E)$ and that $\ell_+ = \overline{\ell_-}$ (since $\widehat{p}_E (x)$ is the conjugate of $\widehat{p}_E (-x)$ for each $x \in \mathbb{R}$, by \eqref{xtsigma}) yields the proposition.	
	\end{proof} 

	\subsection{Alternative Expression for $\lambda (E, s, \alpha)$}
	
	\label{LambdaOther} 
	
	In this section we provide a different expression for the eigenvalue $\lambda (E, s, \alpha)$ from \Cref{lambdaEsalpha}, in terms of the random variable $R_{\loc}$ from \Cref{locre}. 
	
	\begin{lem} 
	
	\label{realimaginaryl}
	
	For every $E \in \mathbb{R}$, we have 
	\begin{flalign*}
		\Real \ell (E) & = \pi^{-1} \cdot \Gamma (\alpha) \cdot \cos \Big( \displaystyle\frac{\pi \alpha}{2} \Big) \cdot \mathbb{E} \Big[ |R_{\loc} (E)|^{\alpha} \Big]; \\
		\Imaginary \ell (E) & =  \pi^{-1} \cdot \Gamma (\alpha) \cdot \sin \Big( \displaystyle\frac{\pi \alpha}{2} \Big) \cdot \mathbb{E} \Big[ \big| R_{\loc} (E) \big|^{\alpha} \cdot \sgn \big(-  R_{\loc} (E) \big) \Big].
	\end{flalign*}
	\end{lem}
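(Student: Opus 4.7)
The plan is to rewrite $\ell(E)$ as an expectation involving $R_{\loc}(E)$ and then evaluate the resulting integral in closed form using the complex-power formula \eqref{xrexi}.

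First, since $\widehat{p}_E(\xi)$ is by definition the characteristic function of $\varkappa_{\loc}(E)$ (see \Cref{pkappae}), and since $R_{\loc}(E) = -(E + \varkappa_{\loc}(E))^{-1}$ by \Cref{locre}, I would write
\begin{flalign*}
e^{\iu E \xi}\, \widehat{p}_E(\xi) = \E\big[ e^{\iu \xi (E + \varkappa_{\loc}(E))} \big] = \E\big[ e^{-\iu \xi / R_{\loc}(E)} \big].
\end{flalign*}
Substituting into the definition \eqref{tlrk} of $\ell(E)$ and (formally) interchanging the expectation with the $\xi$-integral yields
\begin{flalign*}
\ell(E) \;=\; \pi^{-1}\, \E\!\left[ \int_0^{\infty} \xi^{\alpha-1} e^{-\iu \xi / R_{\loc}(E)}\, d\xi \right] \;=\; \pi^{-1}\, \Gamma(\alpha)\, \E\!\left[ \big( \iu / R_{\loc}(E) \big)^{-\alpha} \right],
\end{flalign*}
where the inner integral is evaluated by applying \eqref{xrexi} with $r = 1 - \alpha$ (and $x \to -1/R_{\loc}(E)$).

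To justify the interchange, I would insert an absolutely convergent regulator $e^{-\eps \xi}$ and apply Fubini's theorem to the resulting absolutely integrable expression; writing $W = -1/R_{\loc}(E)$, the inner integral evaluates exactly to $\Gamma(\alpha)(\eps - \iu W)^{-\alpha}$, and I would then pass to the limit $\eps \to 0$ by dominated convergence. The dominating function is $|W|^{-\alpha} = |R_{\loc}(E)|^{\alpha}$, whose expectation is finite: indeed, $\varkappa_{\loc}(E)$ is an $(\alpha/2)$-stable law and hence has bounded density on $\bbR$, so $\P\big[|E + \varkappa_{\loc}(E)| \le t\big] = O(t)$ as $t \to 0$, and together with $\alpha < 1$ this makes $\E\big[|E + \varkappa_{\loc}(E)|^{-\alpha}\big]$ finite. (Alternatively, the first bound of \eqref{q00delta1} passed to the boundary through \Cref{l:boundarybasics} gives the same conclusion.)

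Finally, applying the branch convention from \Cref{s:notation}: for almost every realization $R_{\loc}(E) \ne 0$, one has $\iu/R_{\loc}(E) = |R_{\loc}(E)|^{-1}\exp\big(\iu \pi \sgn(R_{\loc}(E))/2\big)$, so
\begin{flalign*}
\big(\iu / R_{\loc}(E)\big)^{-\alpha} = |R_{\loc}(E)|^{\alpha}\, \exp\!\Big( -\iu \pi \alpha \sgn(R_{\loc}(E))/2 \Big).
\end{flalign*}
Taking real and imaginary parts and using that $\cos$ is even while $\sin$ is odd in $\sgn(R_{\loc}(E))$ produces the two claimed identities, with the minus sign in the imaginary part absorbed into $\sgn(-R_{\loc}(E))$. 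The only non-routine step is the justification of the interchange of limits, and this is handled cleanly by the bounded-density property of the $(\alpha/2)$-stable law combined with $\alpha \in (0,1)$.
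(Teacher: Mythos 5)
Your proof is correct, and it takes a genuinely different organizational route from the paper's. The paper treats $|\xi|^{\alpha-1}$ and $\widehat{p}_E(\xi)$ as a product of two functions in Fourier space, invoking the convolution identity $\mathcal{F}(fg) = \frac{1}{2\pi}\mathcal{F}[f]\ast\mathcal{F}[g]$ together with Fourier inversion $\mathcal{F}^2(p_E) = 2\pi\widetilde{p}_E$; the tail oscillatory integral \eqref{xrexi} enters to compute $\mathcal{F}(\mathbbm{1}_{\xi>0}|\xi|^{\alpha-1})$. You instead unwind $\widehat{p}_E(\xi)$ directly as $\mathbb{E}[e^{\mathrm{i}\xi\varkappa_{\loc}(E)}]$ and pull the expectation through the $\xi$-integral; \eqref{xrexi} then appears as the closed-form evaluation of $\int_0^\infty \xi^{\alpha-1}e^{-\mathrm{i}\xi/R_{\loc}}d\xi$ for each realization. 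Both routes come down to the same application of \eqref{xrexi} and the same branch computation $(\mathrm{i}/R_{\loc})^{-\alpha} = |R_{\loc}|^{\alpha}e^{-\mathrm{i}\pi\alpha\sgn(R_{\loc})/2}$, but your version is the more elementary and the more careful one: the paper's invocation of the $L^2$ convolution theorem is formal (the function $\mathbbm{1}_{\xi>0}|\xi|^{\alpha-1}$ is not in $L^2(\mathbb{R})$ for $\alpha\geq 1/2$, so the identity must be read in a distributional sense), whereas your $\varepsilon$-regularization and two dominated-convergence passages make the interchange fully rigorous. The key finiteness input, $\mathbb{E}[|R_{\loc}(E)|^{\alpha}] = \int p_E(x)|x+E|^{-\alpha}dx < \infty$, is established exactly as you indicate, by combining boundedness of the stable density near $x = -E$ (using $\alpha < 1$) with its $|x|^{-\alpha/2-1}$ decay at infinity; this is also implicitly the integrability fact on which the paper's convolution rests, so no new assumption is introduced. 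The one small notational slip is writing ``$x \to -1/R_{\loc}(E)$'' when invoking \eqref{xrexi}: in that display $x$ is the integration variable, so the substitution you mean is for the external parameter $\xi$, but your regularized derivation makes the intended matching unambiguous.
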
 

	\begin{proof} 
		
	By \eqref{tlrk}, \eqref{xrexi}, and Fourier inversion, we have 
	\begin{flalign*}
		 \ell (E) = \pi^{-1} \cdot \mathcal{F} \big( \mathbbm{1}_{\xi>0} \cdot |\xi|^{\alpha-1} \cdot \widehat{p}_E \big) (E) & = \displaystyle\frac{1}{2\pi^2} \cdot \Big( \mathcal{F} (\mathbbm{1}_{\xi > 0} \cdot |\xi|^{\alpha-1} \big) \ast \mathcal{F}^2 (p_E) \Big) (E) \\
		 & = \pi^{-1} \cdot \Gamma (\alpha) \cdot \Bigg( \bigg( \exp \Big( \displaystyle\frac{\pi \mathrm{i} \alpha}{2} \cdot \sgn (x) \Big) \cdot |x|^{-\alpha} \bigg) \ast \widetilde{p}_E \Bigg) (E),
	\end{flalign*}

	\noindent where we have defined $\widetilde{p}_E : \mathbb{R} \rightarrow \mathbb{R}$ by setting $\widetilde{p}_E (x) = p_E (-x)$ for all $x \in \mathbb{R}$. Thus,
	\begin{flalign*}
		\Real \ell (E) & = \pi^{-1} \cdot \Gamma (\alpha) \cos \Big( \displaystyle\frac{\pi \alpha}{2} \Big) \cdot \big( |x|^{-\alpha} \ast \widetilde{p}_E \big) (E) \\
		& = \pi^{-1} \cdot \Gamma (\alpha) \cdot \cos \Big( \displaystyle\frac{\pi \alpha}{2} \Big) \cdot\displaystyle\int_{-\infty}^{\infty} p_E (x-E) \cdot |x|^{-\alpha} dx \\
		& = \pi^{-1} \cdot \Gamma (\alpha) \cdot \cos \Big( \displaystyle\frac{\pi \alpha}{2} \Big) \cdot \displaystyle\int_{-\infty}^{\infty} p_E (x) \cdot |x+E|^{-\alpha} dx \\
		& = \pi^{-1} \cdot \Gamma (\alpha) \cdot \cos \Big( \displaystyle\frac{\pi \alpha}{2} \Big) \cdot \mathbb{E} \Big[ \big| R_{\loc} (E) \big|^{\alpha} \Big],
	\end{flalign*} 

	\noindent where in the third equality we changed variables (mapping $x$ to $x + E$) and in the fourth we recalled (from \Cref{pkappae}) the definition $p_E$ as the denstiy of $\varkappa_{\loc} (E)$ and (from \Cref{locre}) that $R_{\loc} (E) = - \big(\varkappa_{\loc} (E) + E \big)^{-1}$. This establishes the first statement of the lemma; the proof of the latter is entirely analogous and is thus omitted.
	\end{proof}

	\begin{lem} 
		
		\label{2lambdaEsalpha}
		
		For any real numbers $s \in (\alpha, 1)$ and $E \in \mathbb{R}$, we have
		\begin{flalign*}
			\lambda (E, s, \alpha) & =  \pi^{-1} \cdot K_{\alpha, s} \cdot \Gamma (\alpha) \cdot \bigg(  t_{\alpha} \sqrt{1 - t_{\alpha}^2} \cdot \mathbb{E} \Big[ \big| R_{\loc} (E) \big|^{\alpha} \Big] \\
			& \qquad + \sqrt{t_s^2 (1 - t_{\alpha}^2) \cdot \mathbb{E} \Big[ \big| R_{\loc} (E) \big|^{\alpha} \Big]^2 + t_{\alpha}^2 (t_s^2 - t_{\alpha}^2) \cdot \mathbb{E}  \Big[ \big| R_{\loc} (E) \big|^{\alpha} \cdot \sgn \big( - R_{\loc} (E) \big) \Big]^2} \bigg).
		\end{flalign*}
	\end{lem}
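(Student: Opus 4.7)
The plan is to combine the quadratic equation defining $\lambda(E,s,\alpha)$ in Definition \ref{lambdaEsalpha} with the probabilistic formulas for $\Real\ell(E)$ and $\Imaginary\ell(E)$ furnished by Lemma \ref{realimaginaryl}. No new estimates are needed; the content is entirely algebraic.

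First I would apply the quadratic formula to
\begin{flalign*}
\lambda^2 - 2 t_{\alpha} K_{\alpha,s} \Real \ell(E) \cdot \lambda + K_{\alpha,s}^2 (t_{\alpha}^2 - t_s^2) |\ell(E)|^2 = 0,
\end{flalign*}
selecting the positive root (which exists uniquely by the comment following Definition \ref{lambdaEsalpha}, since the constant term is negative) to obtain
\begin{flalign*}
\lambda(E,s,\alpha) = t_{\alpha} K_{\alpha,s} \Real\ell(E) + K_{\alpha,s} \sqrt{t_{\alpha}^2 (\Real\ell(E))^2 - (t_{\alpha}^2 - t_s^2) |\ell(E)|^2}.
\end{flalign*}

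Next I would simplify the term under the radical using $|\ell(E)|^2 = (\Real\ell(E))^2 + (\Imaginary\ell(E))^2$. The coefficients of $(\Real\ell(E))^2$ collapse as $t_{\alpha}^2 - (t_{\alpha}^2 - t_s^2) = t_s^2$, giving
\begin{flalign*}
\lambda(E,s,\alpha) = t_{\alpha} K_{\alpha,s} \Real\ell(E) + K_{\alpha,s} \sqrt{t_s^2 (\Real\ell(E))^2 + (t_s^2 - t_{\alpha}^2) (\Imaginary\ell(E))^2}.
\end{flalign*}

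Finally I would substitute the expressions from Lemma \ref{realimaginaryl}, using the identity $\cos(\pi\alpha/2) = \sqrt{1-t_{\alpha}^2}$ (valid for $\alpha \in (0,1)$, since $t_{\alpha} = \sin(\pi\alpha/2) \in (0,1)$) to rewrite
\begin{flalign*}
\Real\ell(E) = \pi^{-1} \Gamma(\alpha) \sqrt{1-t_{\alpha}^2}\cdot \mathbb{E}\big[|R_{\loc}(E)|^{\alpha}\big], \qquad \Imaginary\ell(E) = \pi^{-1} \Gamma(\alpha) t_{\alpha} \cdot \mathbb{E}\big[|R_{\loc}(E)|^{\alpha} \sgn(-R_{\loc}(E))\big].
\end{flalign*}
Factoring out $\pi^{-1}\Gamma(\alpha)$ from the first term and $\pi^{-2}\Gamma(\alpha)^2$ from under the square root (i.e., pulling out $\pi^{-1}\Gamma(\alpha)$) yields exactly the claimed formula. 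Since each step is an elementary manipulation, there is no substantive obstacle; the only care required is keeping track of signs to ensure the selected root is positive and matches the convention in Definition \ref{lambdaEsalpha}.
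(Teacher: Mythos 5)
Your proposal is correct and follows exactly the same route as the paper: apply the quadratic formula to the defining equation, simplify the discriminant using $|\ell(E)|^2 = (\Real\ell(E))^2 + (\Imaginary\ell(E))^2$, and substitute the probabilistic expressions from Lemma \ref{realimaginaryl}. The paper's version is just more terse — it states the simplified root directly and cites \eqref{realimaginaryl} without spelling out the trigonometric identities — but the underlying algebra is identical.
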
 

	\begin{proof} 
		
	Denoting $\lambda = \lambda (E, s, \alpha)$, we have from \Cref{lambdaEsalpha} that
	\begin{flalign*}
		\lambda = K_{\alpha, s} \Big( t_{\alpha} \Real \ell (E) + \sqrt{t_s^2 \cdot \big( \Real \ell (E) \big)^2 + (t_s^2 - t_{\alpha}^2) \cdot \big( \Imaginary  \ell (E) \big)^2 } \Big). 
	\end{flalign*}
	
	\noindent This, together with \eqref{realimaginaryl}, yields the lemma. 	
	\end{proof}

\section{Continuity of the Lyapunov Exponent}\label{s:lyapunovcontinuity}

In this section we show \Cref{l:phicontinuity}, which essentially states that $\varphi(s; E) = \limsup_{\eta \rightarrow 0} \varphi (s; E + \mathrm{i} \eta)$ is continuous in $E$ on the region where $\varphi (s; E) < 0$. To that end, it suffices to show two statements. First, $\varphi (s; E + \mathrm{i} \eta)$ is uniformly continuous in $\eta > 0$ for any fixed $E = E_0$ with $\varphi (s; E_0) < 0$. Second, $\varphi (s; E + \mathrm{i} \eta)$ is uniformly continuous in $E$ if $\eta = \eta_0$ uniformly bounded away from $0$. Let us first provide a heuristic for the former, which is more involved.

Recalling the definition \eqref{szl} of $\phi$, it suffices to understand the continuity of $\Phi_L(s; E + \mathrm{i} \eta)$ in $\eta$, which was defined in \eqref{sumsz1} as the sum of the $s$-th powers of off-diagonal resolvent entries. For the purposes of this heuristic, we take $L = 1$ and indicate the continuity of $\Phi_{1} (s; z) = \mathbb{E} \big[ \sum_{v \sim 0} | R_{0v} (z)|^s \big]$. Fixing $\eta_1 > \eta_2 > 0$ and letting $(z_1, z_2) = (E + \mathrm{i} \eta_1, E + \mathrm{i} \eta_2)$, we have since $s < 1$ that 
\begin{align}\label{thediff}
\left|
\displaystyle\sum_{v \in \mathbb{V}  (L)} \big| R_{0v} (z_1) \big|^s
-
\displaystyle\sum_{v \in \mathbb{V}  (L)} \big| R_{0v} (z_1) \big|^s
\right| 
&\le 
\displaystyle\sum_{v \in \mathbb{V}  (L)} \big| R_{0v} (z_1) 
- R_{0v} (z_1) \big|^s.
\end{align}

\noindent In view of the product expansion \Cref{rproduct}, we have that $R_{0v} (z) = -R_{00} (z) T_{0v} R_{vv}^{(0)} (z)$. Thus, to show that $R_{0v} (z_1) \approx R_{0v} (z_2)$, it plausibly suffices to show that $R_{00} (z_1) \approx R_{00} (z_2)$ with high probability (which implies $R_{vv}^{(0)} (z_1) \approx R_{vv}^{(0)} (z_2)$ by symmetry). To that end, we apply the resolvent identity $\bm{A}^{-1} - \bm{B}^{-1} = \bm{A}^{-1} (\bm{B} - \bm{A}) \bm{B}^{-1}$ to write
\begin{flalign}
	\label{r00z1r00z2} 
	\begin{aligned}
\big| R_{00} (z_1) - R_{00} (z_2) \big| & = |z_2-z_1| \displaystyle\sum_{w \in \mathbb{V} } \big| R_{0w} (z_1) R_{w0} (z_2) \big| \\
& \le |z_1 - z_2| \Bigg( \displaystyle\sum_{w \in \mathbb{V}} \big| R_{0w} (z_1) \big|^2 \Bigg)^{1/2} \Bigg( \displaystyle\sum_{w \in \mathbb{V}} \big| R_{0w} (z_2) \big|^2 \Bigg)^{1/2}.
\end{aligned} 
\end{flalign} 

\noindent Applying the Ward identity \eqref{sumrvweta} would give the bound
\begin{align}
	\label{r002} 
\big| R_{00}(z_1) -  R_{00}(z_2) \big| \le |z_1 - z_2| (\eta_1\eta_2) ^{-1/2} 
\big( \Im R_{00}(z_1) \big)^{1/2}
\big( \Im R_{00}(z_2) \big)^{1/2}.
\end{align}

\noindent Assuming for $i \in \{ 1, 2 \}$ that $\Imaginary R_{00} (z_i) \le C$ (that is, it is bounded), this would suggest that $\big| R_{00} (z_1) - R_{00} (z_2) \big| \le C$, which does not quite give a continuity bound; we thus require a minor improvement on the right side of \eqref{r002} (by a factor of $\eta_1^c$, for example). 

Hence, we instead use the assumption $\varphi (s; z_1) < 0$ to deduce that, with high probability, 
\begin{flalign}
	\label{r0wz1} 
	\displaystyle\sum_{w \in \mathbb{V}} \big| R_{0w} (z_1) \big|^2 \le \Bigg( \displaystyle\sum_{w \in \mathbb{V}} \big| R_{0w} (z_1) \big|^s \Bigg)^{2/s} = \Bigg( \displaystyle\sum_{k=0}^{\infty} \displaystyle\sum_{w \in \mathbb{V} (k)} \big| R_{0w} (z_1) \big|^s \Bigg)^{2/s} \le  C.
\end{flalign}

\noindent Inserting this into \eqref{r00z1r00z2} and again applying the Ward identity \eqref{sumrvweta} gives 
\begin{flalign*}
	\big| R_{00} (z_1) - R_{00} (z_2) \big| \le C |z_1 - z_2| \eta_2^{-1/2} \big( \Imaginary R_{00} (z_2) \big)^{1/2},
\end{flalign*}

\noindent which upon again assuming that $\Imaginary R_{00} (z_2) \le C$ gives the required improvement on the right side of \eqref{r002}. Indeed, it implies that $\big| R_{00} (z_1) - R_{00} (z_2) \big| \le C \eta_2^{-1/2} |\eta_1 - \eta_2| \le C \eta_1^{1/4}$ if $\eta_2 \ge \eta_1^{3/2}$, and so by repeatedly applying this bound on the sequence of (for example) $\eta_j = e^{-(3/2)^j}$ tending to $0$ yields the uniform continuity of $R_{00} (E + \mathrm{i} \eta)$ in $\eta$. 

The argument outlined above is not entirely valid in (at least) two ways. First, it assumes that $R_{00} (z_1) \approx R_{00} (z_2)$ implies $R_{0v} (z_1) \approx R_{0v} (z_2)$, but by the product expansion \Cref{rproduct} one should only expect this to be true if $T_{0v}$ is not too large. We cirumvent this issue by first truncating all large $T_{0v}$ and show that omitting them from the fractional moment sums yields a negligible error; we will in fact do the same for all small $T_{0v}$ in order to reduce the number of entries in $\mathbb{V}(L)$, which is summed over in the definition \eqref{sumsz1} of $\Phi_L$. Second, the above heuristic conflates the high probability estimate \eqref{r0wz1} with an expectation bound. We confront this issue by slightly varying the moment parameter $s$ in the above bounds, namely, by comparing $s$-th moments to the $s(1+\kappa)$-th ones and showing that they are not too distant. These are done in \Cref{s:continuitypreliminary}, with \Cref{l:phicutoff} and \Cref{momentkappa} addressing the former and latter issues, respectively. In \Cref{Estimate1} we then use these bounds to show the continuity of $\varphi (s; E + \mathrm{i} \eta)$ in $E$ (\Cref{l:goingsideways}) and $\eta$ (\Cref{l:goingdown} and \Cref{l:goingup}), which are used in \Cref{s:continuityproof} to complete the proof of \Cref{l:phicontinuity}.

Throughout this section, we fix $\alpha \in (0,1)$, and constants $\eps > 0$ and $B > 1$. All complex numbers $z \in \bbH$ discussed here will satisfy $\eps \le | \Re z | \le B$ and $\Im z \in (0,1)$ (including those labeled $z_1$ and $z_2$). All constants below depend on $\alpha$, $\eps$, and $B$, even when not stated explicitly.

\subsection{Preliminary Estimates}
\label{s:continuitypreliminary}

We begin with a fractional moment bound for the resolvent.
\begin{lem}\label{l:qsexpectation}
Fix $s \in (\alpha,1)$ and $\delta \in (0,1)$. There exists a constant $C=C(\delta,s) > 1$  such that the following holds for any $z = E + \iu \eta \in \bbH$ such that $\eps \le |E| \le B$ and $\eta \in (0,1)$. If $\varphi (s; z) < -\delta$, then
\bex
\E \left[ \big( \Im R_{00}( z ) \big)^{s/2} \right] \le C \eta^{s/2}.
\eex
\end{lem}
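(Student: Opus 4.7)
The plan is to combine the Ward identity with subadditivity of $x^{s/2}$ and the exponential decay of $\Phi_L(s;z)$ implied by the assumption $\varphi(s;z) < -\delta$.

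First, by the Ward identity \eqref{sumrvweta} applied at $v = 0$ with $\mathcal{U} = \emptyset$, we have
\[
\Imaginary R_{00}(z) = \eta \sum_{w \in \mathbb{V}} |R_{0w}(z)|^2.
\]
Raising to the $s/2$ power and using the fact that $x \mapsto x^{s/2}$ is subadditive on $\mathbb{R}_{\ge 0}$ (since $s/2 \in (0,1)$), we obtain the deterministic inequality
\[
\bigl(\Imaginary R_{00}(z)\bigr)^{s/2} \le \eta^{s/2} \sum_{w \in \mathbb{V}} |R_{0w}(z)|^{s}.
\]
Taking expectations and partitioning $\mathbb{V} = \bigsqcup_{L\ge 0} \mathbb{V}(L)$ gives
\[
\mathbb{E}\bigl[(\Imaginary R_{00}(z))^{s/2}\bigr] \le \eta^{s/2} \sum_{L=0}^{\infty} \Phi_L(s;z).
\]

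The second step is to bound the series $\sum_{L \ge 0} \Phi_L(s;z)$ under the hypothesis $\varphi(s;z) < -\delta$. For $L = 0$, we have $\Phi_0(s;z) = \mathbb{E}[|R_{00}|^s]$, which is bounded by a constant depending only on $s,\varepsilon,B$ by \Cref{expectationqchi}. For $L \ge 1$, \Cref{limitr0j}(1) yields a constant $C_1 = C_1(s,\varepsilon,B) > 1$ such that
\[
\varphi_L(s;z) \le \varphi(s;z) + \frac{C_1}{L} \le -\delta + \frac{C_1}{L},
\]
and therefore
\[
\Phi_L(s;z) = \exp\bigl(L\,\varphi_L(s;z)\bigr) \le e^{C_1} e^{-\delta L}.
\]
Summing the geometric series gives $\sum_{L\ge 1} \Phi_L(s;z) \le e^{C_1}/(1 - e^{-\delta})$, and combining with the $L = 0$ bound produces a finite constant $C = C(s,\varepsilon,B,\delta)$ such that $\sum_{L \ge 0} \Phi_L(s;z) \le C$. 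Substituting into the previous display yields the claimed bound.

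The argument is essentially immediate once the Ward identity and the exponential decay provided by the definition of $\varphi$ are in hand; no further analysis of the tree structure is required. The one subtle point to check is the use of subadditivity in the first step — this relies crucially on $s/2 < 1$, which is automatic from $s < 1$, so no obstacle arises there. Thus the main (mild) obstacle is simply assembling the three ingredients (Ward identity, subadditivity, and \Cref{limitr0j}(1) together with \Cref{expectationqchi}) cleanly, with no genuine analytic difficulty.
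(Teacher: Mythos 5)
Your proof is correct and follows essentially the same route as the paper's: Ward identity, subadditivity of $x^{s/2}$ to pass from the $\ell^2$ sum to the $\ell^s$ sum, then geometric decay of $\Phi_L(s;z)$ from $\varphi(s;z)<-\delta$ together with the $O(1/L)$ error bound in \Cref{limitr0j}(1). The only difference is that you treat the $L=0$ term explicitly via \Cref{expectationqchi}, which the paper leaves implicit.
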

\begin{proof}
		
		We abbreviate $R_{0v} = R_{0v} (z)$ for any vertex $v \in \mathbb{V}$. By the Ward identity \eqref{sumrvweta}, we have
		
		\bex
		\Im R_{00}( z ) = \eta \sum_{v \in \mathbb{V}} |R_{0v}|^2,
		\eex
		so it suffices to bound the sum in the previous equation.
		We have
		\begin{flalign*}
	\Bigg( \displaystyle\sum_{v \in \mathbb{V}} |R_{0v}|^2 \Bigg)^{s/2} = \Bigg( \displaystyle\sum_{L = 0}^{\infty} \displaystyle\sum_{v \in \mathbb{V} (L)} |R_{0v}|^2 \Bigg)^{s/2} \le \displaystyle\sum_{L = 0}^{\infty} \displaystyle\sum_{v \in \mathbb{V} (L)} |R_{0v}|^{s},
		\end{flalign*}
		
		\noindent where the last bound follows from $s/2 < 1$. Taking expectations, applying \Cref{moment1} (and \eqref{limitr0j2}), and using the fact that $\varphi (s; z) < -\delta$, we deduce the existence of a constant $C > 1$ such that
		\begin{flalign*}
	 \mathbb{E}  \Bigg[ \displaystyle\sum_{L = 0}^{\infty} \displaystyle\sum_{v \in \mathbb{V} (L)} |R_{0j}|^{s} \Bigg] =   \displaystyle\sum_{L = 0}^{\infty} \Phi_L (s; z) \le C  \displaystyle\sum_{L = 0}^{\infty} e^{-\delta L} \le  \delta^{-1} C ,
		\end{flalign*}
which completes the proof.
\end{proof}

The previous lemma quickly implies \Cref{p:imvanish}.

\begin{proof}[Proof of \Cref{p:imvanish}]
	We only prove the first part, since the second is similar.
	By \Cref{l:qsexpectation}, we have $\mathbb{E} \big[ (\Imaginary R_{\star} (E + \iu\eta_j))^s \big] \le C \eta_j^s$ for all $j\in\Zplus$. Then $(\Imaginary R_{\star} (E + \iu \eta_j))^s$ converges to $0$ in expectation as $j$ tends to infinity, which implies it converges to zero in probability.
	\end{proof}

For the next lemma, we recall the definition of the event $\mathscr{D} (v, w ; \omega)$ from  \Cref{br0vw}, and for brevity set $\mathscr{D}(v) = \mathscr{D}(v; \omega) = \mathscr{D} (0, v; \omega)$. We also make the following definition.

	\begin{definition}
		
		\label{br0vw2} 
		
		Fix $z \in \mathbb{H}$; let $v, w \in \mathbb{V}$ be vertices with $v \prec w$; and let $\omega \in (0,1)$ and $\Omega > 1$ be real numbers. For any vertex $u \in \mathbb{V}$ with $v \preceq u \prec w$, define the events $\mathscr{T} (u; v, w) = \mathscr{T} (u; v, w; \omega, \Omega) = \mathscr{T} (u; v, w; \omega, \Omega; z)$ and $\mathscr{T} (v, w) = \mathscr{T} (v, w; \omega, \Omega) = \mathscr{T} (v, w; \omega, \Omega; z)$ by 
		\begin{flalign*}
			\mathscr{T} (u; v, w) = \big\{  \omega \le  |T_{uu_+}| \le \Omega \big\}; \qquad \mathscr{T} (v, w) = \bigcap_{v \preceq u \prec w} \mathscr{T} (u; v, w).
				\end{flalign*}	
We also set $\mathscr{T}(v) = \mathscr{T}(v; \omega, \Omega) = \mathscr{T} (0, v; \omega, \Omega)$.	
\end{definition}

The next lemma shows the vertices $v\in \bbV$ such that $\one_{\mathscr{D}(v)} = 0$ may be neglected in the sum defining $\Phi_L$. 

\begin{lem}\label{l:phicutofflower}
Fix $s \in (\alpha,1)$, $ \theta \in (0,1/2)$, and $L \in \Zplus$.  There exist constants $C(s) > 1$  and $c( s) > 0$ 
such that for any $z = E + \iu \eta \in \bbH$ such that $\eps \le |E| \le B$ and $\eta \in (0,1)$, and any $\omega \in [0, c \theta^C]$, we have
\bex
\left|
\E 
\left[
\sum_{v \in \bbV(L) } 
\left|  R_{0v}(z) \right|^s
\right]
-
\E 
\left[
\sum_{v \in \bbV(L) } 
\left|  R_{0v}(z) \right|^s \one_{\mathscr{D}(v)}
\right]
\right|
 \le \theta L C^{1+L}  .
\eex
\end{lem}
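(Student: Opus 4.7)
The plan is to recognize the difference on the left as
\[
\E\Bigg[\sum_{v \in \bbV(L)} |R_{0v}(z)|^s \cdot \big(1 - \one_{\mathscr{D}(v;\omega)}\big)\Bigg] = \Phi_L(s;z) - \E\Bigg[\sum_{v \in \bbV(L)} |R_{0v}(z)|^s \cdot \one_{\mathscr{D}(v;\omega)}\Bigg],
\]
and then to apply \Cref{ry0estimate} (with its $v$ there set to $0$ here) to control the second term from below. Before doing so, I would first observe the elementary monotonicity property that if $\omega_1 \le \omega_2$ then $\mathscr{D}(v;\omega_1) \supseteq \mathscr{D}(v;\omega_2)$, so $\E\big[\sum_{v \in \bbV(L)} |R_{0v}|^s \one_{\mathscr{D}(v;\omega)}\big]$ is nonincreasing in $\omega$. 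Consequently, it suffices to establish the claimed inequality for $\omega$ equal to its upper endpoint $c\theta^C$, as then the bound for any smaller $\omega \in [0, c\theta^C]$ will follow automatically.

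Next, I would invoke \Cref{ry0estimate} with its parameter $\delta$ set equal to $\theta$. This yields constants $c_1(s) > 0$ and $C_1(s) > 1$ and a threshold $\omega_0 \ge c_1 \theta^{C_1}$ such that
\[
\E\Bigg[\sum_{v \in \bbV(L)} |R_{0v}(z)|^s \cdot \one_{\mathscr{D}(v;\omega_0)}\Bigg] \ge (1-\theta)^L \cdot \Phi_L(s;z).
\]
Setting $c = c_1$ and $C = C_1$ and taking $\omega = c\theta^C \le \omega_0$, the monotonicity observed above transfers this lower bound to our chosen $\omega$. Subtracting from $\Phi_L(s;z)$ and using Bernoulli's inequality $1 - (1-\theta)^L \le L\theta$ gives
\[
\Phi_L(s;z) - \E\Bigg[\sum_{v \in \bbV(L)} |R_{0v}|^s \one_{\mathscr{D}(v;\omega)}\Bigg] \le L\theta \cdot \Phi_L(s;z).
\]

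Finally, I would apply the second part of \Cref{estimatemoment1} (with $s$ fixed, absorbing the factor $(s-\alpha)^{-L}$ into an $s$-dependent constant) to obtain $\Phi_L(s;z) \le C_2^{L+1}$ for some constant $C_2 = C_2(s) > 1$. Enlarging $C$ if necessary so that $C \ge C_2$, this yields the required bound $\theta L C^{1+L}$. I expect no serious obstacle here: the entire argument is a direct consequence of \Cref{ry0estimate} and \Cref{estimatemoment1} combined with the monotonicity of $\one_{\mathscr{D}(v;\omega)}$ in $\omega$; the only care needed is to track that the constants from \Cref{ry0estimate} (specifically the polynomial-in-$\delta$ lower bound on $\omega$) propagate to give the stated polynomial-in-$\theta$ threshold $c\theta^C$.
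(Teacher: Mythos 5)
Your proposal is correct and follows essentially the same route as the paper: apply \Cref{ry0estimate} with $\delta = \theta$ to get the lower bound $(1-\theta)^L \Phi_L(s;z)$, subtract, bound $1-(1-\theta)^L \le L\theta$, and control $\Phi_L(s;z) \le C^{L+1}$ via the second part of \Cref{estimatemoment1}. The one small addition you make --- spelling out explicitly that $\one_{\mathscr{D}(v;\omega)}$ is nonincreasing in $\omega$ so that the bound at the endpoint $\omega = c\theta^C$ transfers to all smaller $\omega$ --- is a legitimate point that the paper leaves implicit, and is a reasonable clarification rather than a different argument.
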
 
\begin{proof}
By \Cref{ry0estimate} and the second part of \Cref{estimatemoment1},
\begin{align*}
&\E 
\left[
\sum_{v \in \bbV(L) } 
\left|  R_{0v}(z) \right|^s
\right]
-
\E 
\left[
\sum_{v \in \bbV(L) } 
\left|  R_{0v}(z) \right|^s \one_{\mathscr{D}(v)}
\right]\\
&
\le 
\big(1 - (1 -\theta)^L )\cdot \E 
\left[
\sum_{v \in \bbV(L) } 
\left|  R_{0v}(z) \right|^s
\right]\le \theta L C^{1+ L},
\end{align*}
which completes the proof. In the last inequality, we used $\theta < 1/2$ and Taylor expansion.
\end{proof}
We next state some useful moment bounds on the sum defining $\Phi_L$, with estimate the $1+\kappa$ moment of this sum for any $\kappa >0$. 
\begin{lem}
	
\label{momentkappa}  
Fix $s\in (\alpha,1)$, $ \omega \in (0,1 )$, $L \in \Zplus$, and $  \Omega\ge 1$.  Fix $\kappa,\chi> 0$ such that 
\be\label{chisconditions}
 s(1+2\kappa)/(1-2\kappa) < 1, \quad \chi > (s-\alpha)/2, \quad \chi (1+2\kappa) / (1 - 2\kappa) < 1.\ee
 There exist constants $C(\chi,s) > 1$  and $c( \chi, s) > 0$ such that the following holds.
Fix $E_1, E_2 \in \bbR$ such that $|E_1|, |E_2|\in [\eps, B]$, and fix $\eta_1, \eta_2 \in (0,1)$.  Set $z_i = E_i + \iu \eta_i$ for $i=1,2$. 
Then we have 
\begin{align}\label{fracmom1}
 \E\left[
\left(\sum_{w\in \mathbb{V}(1)} \sum_{v \in \bbD_{L-1}(w) }
\left|R_{00}(z_1)\right|^\chi \left|
 T_{0w}
R^{(0)}_{w v}(z_1)
\right|^s \one_{\mathscr{D}(v)} \right)^{1+\kappa}
\right]\le C^{1+L}  \omega^{-2  \kappa L }
\end{align}
and
\begin{align}\label{fracmom2}
 \E\left[
\left(
\sum_{w\in \mathbb{V}(1)}
\sum_{v \in \bbD_{L-1}(w) }
\left|R_{00}(z_2) \right|^\chi \left|
 T_{0w }
R^{(0)}_{w v}(z_1)
\right|^s \one_{\mathscr{T}(v)} \right)^{1+\kappa}
\right]\le \Omega \cdot C^{1+L}  \omega^{-2 \kappa L }.
\end{align}
\end{lem}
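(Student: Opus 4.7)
The plan is to combine Hölder's inequality with the fractional moment machinery of \Cref{estimatemoment1} and the Galton--Watson estimate of \Cref{treenumber}. I focus on \eqref{fracmom1}; the argument for \eqref{fracmom2} is analogous and is indicated at the end.

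Identifying summand pairs $(w, v)$ with vertices $v \in \bbV(L)$ (since $w$ is the unique first-step vertex on the path from $0$ to $v$), write $S := \sum_{v \in \bbV(L)} X_v \one_{\mathscr D(v)}$ with $X_v := |R_{00}(z_1)|^\chi |T_{0w} R_{wv}^{(0)}(z_1)|^s$, and let $N := \sum_v \one_{\mathscr D(v)}$. Hölder's inequality applied to the sum $S$ with conjugate exponents $1+\kappa$ and $(1+\kappa)/\kappa$ gives
\begin{equation*}
 S^{1+\kappa} \le N^\kappa \cdot \sum_v X_v^{1+\kappa} \one_{\mathscr D(v)}.
\end{equation*}
The random variable $N$ has the law of the $L$-th level size of a Galton--Watson tree with Poisson offspring of mean $\lambda = \omega^{-\alpha}$: by Poisson thinning, at each vertex the edges of weight $\ge \omega$ form a Poisson point process with total intensity $\alpha \int_\omega^\infty x^{-\alpha - 1}\, dx = \omega^{-\alpha}$. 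Hence by \Cref{treenumber} and Jensen's inequality (since $\kappa < 1/2$), $\E[N^{\kappa r}] \le C \omega^{-\alpha \kappa r L}$ for any fixed $r \ge 1$ with $\kappa r \le 1$, and \eqref{vnk} provides a uniform sub-exponential tail bound suitable for handling slightly larger moments.

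To control $\E\bigl[N^\kappa \sum_v X_v^{1+\kappa} \one_{\mathscr D(v)}\bigr]$, apply Hölder again with conjugate exponents $r, r' > 1$, where $r'$ is chosen close enough to $1$ that the boosted exponents $s'' := s(1+\kappa) r'$ and $\chi'' := \chi(1+\kappa) r'$ both lie strictly in $(0, 1)$. This is possible by \eqref{chisconditions}: at $r' = 1$ one already has $s(1+\kappa) < s(1+2\kappa)/(1-2\kappa) < 1$, and analogously for $\chi$, so positive slack remains. By Jensen's inequality for convex $x^{r'}$,
\begin{equation*}
 \Bigl(\sum_v X_v^{1+\kappa} \one_{\mathscr D(v)}\Bigr)^{r'} \le N^{r'-1} \sum_v X_v^{(1+\kappa) r'} \one_{\mathscr D(v)} = N^{r'-1} |R_{00}|^{\chi''} \sum_v |T_{0w} R_{wv}^{(0)}|^{s''} \one_{\mathscr D(v)},
\end{equation*}
whose expectation is, after a further Hölder step to separate the bounded factor $N^{r'-1}$, dominated by the quantity $\Xi_L(\chi''; s'')$ (in the notation of \Cref{xil}, extended to independent exponents) and so by $C^{L+1}$ via \Cref{chilchil1}, \Cref{xilchi2}, and \Cref{estimatemoment1}. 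Combining gives
\begin{equation*}
 \E[S^{1+\kappa}] \le \E[N^{\kappa r}]^{1/r} \cdot \E\Bigl[\Bigl(\textstyle\sum_v X_v^{1+\kappa}\one_{\mathscr D(v)}\Bigr)^{r'}\Bigr]^{1/r'} \le C^{L+1} \omega^{-\alpha \kappa L} \le C^{L+1} \omega^{-2\kappa L},
\end{equation*}
where the final inequality uses $\alpha < 2$ and $\omega \in (0, 1)$.

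For \eqref{fracmom2}, the same argument applies with $R_{00}(z_2)$ in place of $R_{00}(z_1)$; since \Cref{expectationqchi} yields a bound for $\E[|R_{00}(z)|^{\chi''}]$ uniform in $z$ with $|\Re z| \in [\eps, B]$, an additional Hölder split separating the $z_2$-dependent factor from the remaining $z_1$-dependent path sum does not affect the estimate. The two-sided cutoff $|T_{uu_+}| \le \Omega$ in $\mathscr T$ is used in a single resolvent comparison step to bound one residual term, and contributes the advertised linear factor $\Omega$. The main technical obstacle is the coordination of Hölder exponents at each stage so that every intermediate moment remains strictly inside $(0, 1)$; this is exactly what the conditions \eqref{chisconditions} guarantee, through the slack they provide between $s(1+\kappa)$ and $1$, and between $\chi(1+\kappa)$ and $1$.
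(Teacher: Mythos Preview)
Your overall strategy --- H\"older to decouple the vertex count $N=\sum_v\one_{\mathscr D(v)}$ from the fractional-moment sum, then the Galton--Watson bound for $N$ and \Cref{estimatemoment1}-type bounds for the sum --- is exactly the paper's. But your H\"older exponents create a regress you do not close. After your first H\"older you have $S^{1+\kappa}\le N^\kappa\Sigma$ with $\Sigma=\sum_v X_v^{1+\kappa}\one_{\mathscr D(v)}$; your second H\"older on the expectation produces $\E[\Sigma^{r'}]$ with $r'>1$, and Jensen reintroduces a factor $N^{r'-1}$ coupled to a new sum $\Sigma'$. You write that ``a further H\"older step'' disposes of $N^{r'-1}$, but any such step again raises $\Sigma'$ to a power $>1$ and reintroduces a power of $N$; the iteration does not terminate as written. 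The paper sidesteps this by choosing the \emph{first} H\"older exponent on the sum to be $p=(1+\kappa)/(1-\kappa)$, so that raising to the power $1+\kappa$ leaves the boosted sum to the power $1-\kappa<1$; the second H\"older (conjugate exponents $1/(1-\kappa)$ and $1/\kappa$ on the expectation) then returns exactly the first moment $\E\bigl[\sum_v X_v^{(1+\kappa)/(1-\kappa)}\one_{\mathscr D(v)}\bigr]^{1-\kappa}$, cleanly decoupled from $\E[N^2]^\kappa$. The conditions \eqref{chisconditions} guarantee that the boosted exponents $s(1+\kappa)/(1-\kappa)$ and $\chi(1+\kappa)/(1-\kappa)$ both lie in $(0,1)$, so that first moment is controlled via the Schur complement identity, \Cref{expectationsum1}, and \Cref{estimatemoment1}.

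Your handling of \eqref{fracmom2} has a second gap. A H\"older split separating $|R_{00}(z_2)|^{\chi''}$ from the $z_1$-path sum leaves $\sum_w |T_{0w}|^{s''}(\cdots)$ without the compensating $R_{00}$-denominator; since $s''>\alpha$ this sum has infinite expectation absent the cutoff $|T_{0w}|\le\Omega$, and raising it to a power $>1$ again triggers the regress above. The paper instead keeps $R_{00}(z_2)$ tied to $T_{0w}$ through the Schur identity $R_{00}(z_2)=-\bigl(z_2+T_{0w}^2 R_{ww}^{(0)}(z_2)+K_w\bigr)^{-1}$ and applies the \emph{second} part of \Cref{expectationsum1} with upper cutoff $\delta=\Omega$: this yields a bound proportional to $\Omega^{s(1+\kappa)/(1-\kappa)-\alpha}\,\E[S_w]\le\Omega\cdot C^{L}$, which is the actual origin of the factor $\Omega$. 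For \eqref{fracmom1} all resolvents sit at the same $z_1$, so the first part of \Cref{expectationsum1} applies and no $\Omega$ is incurred.
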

\begin{proof}
We begin with the proof of  \eqref{fracmom2}.
We let $V$ denote the random variable equal to the number of nonzero $\one_{\mathscr{T}(v)}$ at level $L$ of the Poisson weighted infinite tree:
\bex
V = \left| \{ v \in \bbV(L) : \one_{\mathscr{T}(v)} =1  \}\right|.
\eex
Using H\"older's inequality with conjugate exponents 
\bex
p^{-1} = (1-\kappa)(1 + \kappa)^{-1}, \qquad q^{-1} = 2\kappa(1 + \kappa)^{-1},
\eex
 we get
\begin{align*}
&\left(\sum_{w\in \mathbb{V}(1)} \sum_{v \in \bbD_{L-1}(w) }
\left|R_{00}(z_2)\right|^\chi \left| 
 T_{0w}
R^{(0)}_{w v}(z_1)
\right|^s \one_{\mathscr{T}(v)} \right)^{1+\kappa}\\
&\le V^{2\kappa}
\left(\sum_{w\in \mathbb{V}(1)} \sum_{v \in \bbD_{L-1}(w) }
\left|R_{00}(z_2) \right|^{\chi(1+\kappa)/(1-\kappa) }\left|
 T_{0w}
R^{(0)}_{w v}(z_1)
\right|^{s(1+\kappa)/(1-\kappa)} \one_{\mathscr{T}(v)} \right)^{1-\kappa}.\end{align*}
Further using H\"older's inequality with conjugate exponents 
\bex
p^{-1} =  (1 - \kappa) , \qquad q^{-1} = \kappa,
\eex
we have
\begin{align}
&\E\left[ V^{2\kappa}
\left(\sum_{w\in \mathbb{V}(1)} \sum_{v \in \bbD_{L-1}(w) }
\left|R_{00}(z_2) \right|^{\chi(1+\kappa)/(1-\kappa) }\left|
 T_{0w}
R^{(0)}_{w v}(z_1)
\right|^{s(1+\kappa)/(1-\kappa)} \one_{\mathscr{T}(v)} \right)^{1-\kappa} \right]\notag \\
&\le \E \left[ V^2 \right]^{\kappa}\notag \\
&\quad \times 
\E\left[
\sum_{w\in \mathbb{V}(1)} \sum_{v \in \bbD_{L-1}(w) }
\left|R_{00}(z_2) \right|^{\chi(1+\kappa)/(1-\kappa) }\left|
 T_{0w}
R^{(0)}_{w v}(z_1)
\right|^{s(1+\kappa)/(1-\kappa)} \one_{\mathscr{T}(v)}  \right]^{1-\kappa}.\label{banana3}
\end{align}
We observe that $V$ is equal to the number of leaves at level $L$ of a Galton--Watson tree with parameter 
\bex
\lambda = \int_{\omega}^\infty \alpha x^{-\alpha - 1}\, dx = \omega^{-\alpha}.
\eex
By \Cref{treenumber}, we have for any $t \ge  0$  that
\begin{flalign*}
			  \mathbb{P} [ V \ge  t] \le 3 \exp(- t \cdot 2^{-L-1} \omega^{\alpha L}).
		\end{flalign*}
Then 
\begin{align}
\E [ V^2 ] 
\le 2 \cdot 3  \int_0^\infty t \exp(- t \cdot 2^{-L-1} \omega^{\alpha L})\, dt 
\le 24 \cdot 2^{2L} \omega^{-2 \alpha L}.\label{banana1}
\end{align}	
Further, by using the Schur complement formula \eqref{qvv}, we may write 
\begin{align}\label{cherry}
& \sum_{w\in \mathbb{V}(1)} \sum_{v \in \bbD_{L-1}(w) }
\left|R_{00}(z_2) \right|^{\chi(1+\kappa)/(1-\kappa) }\left|
 T_{0w}
R^{(0)}_{w v}(z_1)
\right|^{s(1+\kappa)/(1-\kappa)} \one_{\mathscr{T}(v)}\\
& =  \sum_{w\in \mathbb{V}(1)} \frac{|T_{0w}|^{s(1+\kappa)/(1-\kappa)} \one_{\omega \le |T_{0w}| \le  \Omega } S_w}{| z_2 + T_{0w}^2 R^{(0)}_{w w} + K_w |^{\chi(1+\kappa)/(1-\kappa)} }
\le \sum_{w\in \mathbb{V}(1)} \frac{|T_{0w}|^{s(1+\kappa)/(1-\kappa)} \one_{|T_{0w}| \le \Omega } S_w}{| z_2 + T_{0w}^2 R^{(0)}_{w w} + K_w |^{\chi(1+\kappa)/(1-\kappa)} }\notag,
\end{align}
where we set 
\bex
S_w = \sum_{v \in \bbD_{L-1}(w) } \left| R^{(0)}_{w v}(z_1)
\right|^{s(1+\kappa)/(1-\kappa)}\one_{\mathscr{T}(w, v)}, \qquad
K_w = \sum_{\substack{ u \in \bbV(1)\\ u \neq w }} 
T_{0u}^2 R^{(0)}_{uu}(z_2).
\eex
We observe that the sequence $\{ (K_w, R^{(0)}_{ww},S_w,T_{0w}) \}_{w \in \bbV(1)}$ satisfies \Cref{sqk}. 
Then by \eqref{cherry}, the second part of \Cref{expectationsum1} and the second part of \Cref{estimatemoment1},
\begin{align}\label{banana2}
&\E\left[
\sum_{w\in \mathbb{V}(1)} \sum_{v \in \bbD_{L-1}(w) }
\left|R_{00}(z_2) \right|^{\chi(1+\kappa)/(1-\kappa) }\left|
 T_{0w}
R^{(0)}_{w v}(z_1)
\right|^{s(1+\kappa)/(1-\kappa)} \one_{\mathscr{T}(v)}  \right] \\
&\le 
C \Omega^{s(1+\kappa)/(1-\kappa) - \alpha} \cdot 
\E\left[
\sum_{v \in \bbV(L-1) } \left| R_{0 v}(z_1)
\right|^{s(1+\kappa)/(1-\kappa)}\right]
\le 
\Omega C^{L+1}\notag
\end{align}
for some $C> 1$. 
Inserting \eqref{banana1} and \eqref{banana2} into \eqref{banana3} concludes the proof of \eqref{fracmom2}. 

The proof of \eqref{fracmom1} is similar to \eqref{fracmom2}, so we omit it. However, let us explain why the right side of \eqref{fracmom1} differs from \eqref{fracmom2}, and how this changes the proof. We note that because \eqref{fracmom1} involves only resolvents evaluated at $z_1$, and no resolvent evaluated at $z_2$, the first part of \Cref{expectationsum1} may be used in place of the second part of \Cref{expectationsum1} above (in \eqref{banana2}) when proving \eqref{fracmom1}. This improves the right side of \eqref{banana2} by a factor of $\Omega$, which yields the same improvement of \eqref{fracmom1} over \eqref{fracmom2}.
\end{proof}

\begin{lem}\label{holdershort}
Retain the notation and assumptions of the previous theorem. For any event $\mathscr A$, we have
\bex
\E\left[
\one_{\mathscr A}\sum_{w\in \mathbb{V}(1)} \sum_{v \in \bbD_{L-1}(w) }
\left|R_{00}(z_2) \right|^\chi
\left|
 T_{0w}
R^{(0)}_{w v}(z_1)
\right|^s \one_{\mathscr{T}(v)} 
\right]
\le \P(\mathscr A)^{\kappa /( 1 + \kappa)} 
 \cdot \Omega \cdot C^{1+L} \omega^{- 2 \kappa L}.
\eex
\end{lem}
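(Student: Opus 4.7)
The plan is to derive this lemma as a direct consequence of the bound \eqref{fracmom2} established in Lemma \ref{momentkappa}, by inserting an additional Hölder step to handle the indicator $\one_{\mathscr A}$. Concretely, I would apply Hölder's inequality to the expectation with conjugate exponents $p = (1+\kappa)/\kappa$ and $q = 1+\kappa$, splitting the integrand into the factor $\one_{\mathscr A}$ and the sum
\begin{equation*}
\Sigma := \sum_{w\in \mathbb{V}(1)} \sum_{v \in \bbD_{L-1}(w) }
\left|R_{00}(z_2) \right|^\chi
\left|
 T_{0w}
R^{(0)}_{w v}(z_1)
\right|^s \one_{\mathscr{T}(v)}.
\end{equation*}
This yields
\begin{equation*}
\E\big[ \one_{\mathscr A} \cdot \Sigma \big]
\;\le\; \E\big[\one_{\mathscr A}\big]^{\kappa/(1+\kappa)} \cdot \E\big[\Sigma^{1+\kappa}\big]^{1/(1+\kappa)}
\;=\; \P(\mathscr A)^{\kappa/(1+\kappa)} \cdot \E\big[\Sigma^{1+\kappa}\big]^{1/(1+\kappa)},
\end{equation*}
where the equality uses $\one_{\mathscr A}^{(1+\kappa)/\kappa} = \one_{\mathscr A}$.

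The second factor is then controlled directly by \eqref{fracmom2}, which (since the hypotheses on $\chi$, $s$, $\kappa$, and $\omega, \Omega$ are inherited from Lemma \ref{momentkappa}) gives
\begin{equation*}
\E\big[\Sigma^{1+\kappa}\big] \;\le\; \Omega \cdot C^{1+L}\,\omega^{-2\kappa L}.
\end{equation*}
Taking the $(1+\kappa)$-th root introduces exponents $1/(1+\kappa)$ on $\Omega$, $C^{1+L}$, and $\omega^{-2\kappa L}$, which we absorb into the stated bound by enlarging the constant $C$ (and using $\Omega^{1/(1+\kappa)} \le \Omega$ since $\Omega \ge 1$, and $\omega^{-2\kappa L/(1+\kappa)} \le \omega^{-2\kappa L}$ since $\omega \in (0,1)$).

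There is essentially no obstacle here: the lemma is a one-line corollary of the preceding moment bound, and the only mild bookkeeping concern is that the exponents $1/(1+\kappa)$ produced by Hölder's inequality must be tracked and absorbed into the final constants. Since all quantities on the right side of \eqref{fracmom2} are at least $1$ (once $C$ is chosen large enough) or at most $1$ (for $\omega$), the power $1/(1+\kappa) < 1$ only strengthens the exponential factors in a harmless way, so the bound in the statement holds with the same form up to adjusting $C$.
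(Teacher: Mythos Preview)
Your proposal is correct and essentially identical to the paper's proof: both apply H\"older's inequality with exponents $p=(1+\kappa)/\kappa$ and $q=1+\kappa$ to split off $\one_{\mathscr A}$, then invoke \eqref{fracmom2} to bound the resulting $(1+\kappa)$-moment. The paper simply absorbs the $1/(1+\kappa)$ powers silently (since $\Omega \ge 1$, $C>1$, $\omega \in (0,1)$), exactly as you note in your final paragraph.
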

\begin{proof}
By H\"older's inequality and \eqref{fracmom2},
\begin{align*}
&\E\left[
\one_{\mathscr A}\sum_{w\in \mathbb{V}(1)} \sum_{v \in \bbD_{L-1}(w) }
\left|R_{00}(z_2) \right|^\chi
\left|
 T_{0w}
R^{(0)}_{w v}(z_1)
\right|^s \one_{\mathscr{T}(v)} 
\right]\\
&\le 
\P(\mathscr A)^{\kappa /( 1 + \kappa)} 
\E\left[
\left(\sum_{w\in \mathbb{V}(1)} \sum_{v \in \bbD_{L-1}(w) }
\left|R_{00}(z_2) \right|^\chi
\left|
 T_{0w}
R^{(0)}_{w v}(z_1)
\right|^s \one_{\mathscr{T}(v)}\right)^{1+\kappa} 
\right]^{1/(1+\kappa)}\\
&\le 
\P(\mathscr A)^{\kappa / ( 1 + \kappa)} 
 \cdot \Omega \cdot C^{1+L} \omega^{- 2 \kappa L}.
\end{align*}
This completes the proof.
\end{proof}

We now show that vertices $v\in \bbV$ such that $\one_{\mathscr{T}(v)} = 0$ may be neglected in the sum defining $\Phi_L$, further improving \Cref{l:phicutofflower}.
\begin{lem}\label{l:phicutoff}
Fix $s \in (\alpha,1)$, $ \omega \in (0,1)$, $L \in \Zplus$, and $\Omega\ge 1$. There exist constants $C=C(s) > 1$ and $c(s) >0$ such that for any $z = E + \iu \eta \in \bbH$ such that $\eps \le |E| \le B$ and $\eta \in (0,1)$, we have
\bex
\left|
\E 
\left[
\sum_{v \in \bbV(L) } 
\left|  R_{0v}(z) \right|^s \one_{\mathscr{D}(v)}
\right]
-
\E 
\left[
\sum_{v \in \bbV(L) } 
\left|  R_{0v}(z) \right|^s \one_{\mathscr{T}(v)}
\right]
\right|
 \le L C^{L+1} \Omega^{- c} \omega^{- L }.
\eex
\end{lem}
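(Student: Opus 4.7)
The plan is to apply H\"older's inequality to separate the fractional moment sum from the probability of the ``bad'' global event that some edge weight on a length-$L$ path from the root exceeds $\Omega$. Set
\begin{equation*}
X = \sum_{v \in \mathbb{V}(L)} |R_{0v}(z)|^s \one_{\mathscr{D}(v) \setminus \mathscr{T}(v)}, \qquad X' = \sum_{v \in \mathbb{V}(L)} |R_{0v}(z)|^s \one_{\mathscr{D}(v)},
\end{equation*}
so that $\E[X]$ equals the difference estimated in the lemma. The event $\mathscr{D}(v) \setminus \mathscr{T}(v)$ requires all $|T_{uu_+}| \ge \omega$ along the path from $0$ to $v$ while at least one $|T_{u_- u}|$ exceeds $\Omega$. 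Letting $\mathscr{B} = \bigcup_{v \in \mathbb{V}(L)} \big(\mathscr{D}(v) \setminus \mathscr{T}(v)\big)$, we have $X \le X' \one_{\mathscr{B}}$, and H\"older's inequality with exponents $1 + \kappa$ and $(1 + \kappa)/\kappa$ gives $\E[X] \le \E[X'^{1+\kappa}]^{1/(1+\kappa)} \P[\mathscr{B}]^{\kappa/(1+\kappa)}$ for any small $\kappa > 0$ to be chosen.

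For the first factor, I would invoke \Cref{momentkappa}, specifically \eqref{fracmom1} with $\chi = s$. The conditions \eqref{chisconditions} are satisfied once $\kappa < (1-s)/(2(1+s))$, and by \Cref{rproduct} the integrand $|R_{00}|^s |T_{0 w_1} R_{w_1 v}^{(0)}|^s$ appearing in the lemma coincides with $|R_{0v}|^s$; this yields $\E[X'^{1+\kappa}] \le C^{L+1} \omega^{-2 \kappa L}$. For the second factor, Markov's inequality gives $\P[\mathscr{B}] \le \E[N]$, where $N$ counts the vertices $v \in \mathbb{V}(L)$ satisfying $\mathscr{D}(v) \setminus \mathscr{T}(v)$. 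A union bound over the level $k \in \unn{1}{L}$ at which the large edge appears, together with Campbell's theorem (\Cref{fidentityxi}) applied iteratively through the levels, yields $\E[N] \le L \omega^{-\alpha(L-1)} \Omega^{-\alpha}$: indeed, the expected number of children of any vertex with edge weight $\ge \omega$ is $\omega^{-\alpha}$, and the expected number with weight $> \Omega$ is $\Omega^{-\alpha}$, so the Galton--Watson count of paths of length $L$ with all edges $\ge \omega$ and the $k$-th edge $> \Omega$ is $\omega^{-\alpha(L-1)} \Omega^{-\alpha}$.

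Combining these estimates, the H\"older bound becomes
\begin{equation*}
\E[X] \le C^{L+1} L^{\kappa/(1+\kappa)} \cdot \omega^{-\kappa [2L + \alpha(L-1)]/(1+\kappa)} \cdot \Omega^{-\alpha \kappa/(1+\kappa)}.
\end{equation*}
Choosing $\kappa > 0$ small enough that $(2 + \alpha) \kappa/(1+\kappa) \le 1$ makes the exponent of $\omega$ at least $-L$, so since $\omega \in (0,1)$ the bound is dominated by $L C^{L+1} \omega^{-L} \Omega^{-c}$ with $c = \alpha \kappa / (1+\kappa) > 0$, as required.

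The main obstacle is the delicate parameter balancing. The $(1+\kappa)$-moment bound knows only about $\mathscr{D}(v)$ and ignores the sharper restriction $\mathscr{T}(v)^c$, contributing a harmful $\omega^{-2\kappa L}$ blowup, while the probability bound is oblivious to the resolvent structure and contributes a further $\omega^{-\alpha(L-1)\kappa/(1+\kappa)}$; the parameter $\kappa$ must be tuned small enough to keep their product under $\omega^{-L}$ but positive so as to retain a genuine gain $\Omega^{-\alpha\kappa/(1+\kappa)}$ from the rareness of large edges. A universal choice such as $\kappa = c (1-s)/(1+\alpha)$ for a sufficiently small constant $c$ achieves this uniformly in $L$.
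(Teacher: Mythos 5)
Your proof is correct, and it takes a genuinely different (and arguably cleaner) route than the paper's. The paper decomposes the difference by the level $k$ at which the large edge $|T_{wu}| > \Omega$ appears, defining the quantities $\Delta_k$ in \eqref{DeltaDef}; for each $k$ it iterates the Schur complement and Campbell identities from the root down to level $k$ (the chain from \eqref{pineapple} through \eqref{pineapple2}), and only then applies H\"older's inequality inside a single Campbell integral (\eqref{jalapeno1}) to peel off the tail $\int_\Omega^\infty \alpha t^{-\alpha-1}\,dt = \Omega^{-\alpha}$. You instead separate the bad event globally at the outset: writing $\mathscr{B}$ for the event that some level-$L$ vertex sees an edge exceeding $\Omega$, you bound the difference by a single H\"older step $\E[X'\one_{\mathscr{B}}] \le \E[X'^{1+\kappa}]^{1/(1+\kappa)}\P[\mathscr{B}]^{\kappa/(1+\kappa)}$, then dispatch the first factor with the $(1+\kappa)$-moment bound \eqref{fracmom1} at $\chi = s$ (noting via \Cref{rproduct} that the integrand in that estimate is exactly $|R_{0v}|^s$), and the second with a direct Galton--Watson count $\P[\mathscr{B}] \le \E[N] \le L\,\omega^{-\alpha(L-1)}\Omega^{-\alpha}$. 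Both proofs thus rest on the same two ingredients — the moment bound \eqref{fracmom1} and a H\"older split to extract the $\Omega^{-c}$ gain — but yours avoids the level-by-level $\Delta_k$ decomposition and the repeated Schur-complement reductions entirely, replacing them with Markov's inequality plus a union bound over the position of the large edge. The resulting exponent is $c = \alpha\kappa/(1+\kappa)$, which is in fact what the paper's own computation (\eqref{jalapeno1} and \eqref{jalapeno2}) produces as well; the paper merely records it as $\kappa/(1+\kappa)$, and since constants are allowed to depend on $\alpha$ the discrepancy is cosmetic. Your parameter choice $\kappa < (1-s)/(2(1+s))$ verifies \eqref{chisconditions} with $\chi = s$, and since this already forces $(1+\alpha)\kappa < 1$ the total $\omega$-exponent $-(2+\alpha)\kappa L/(1+\kappa) \ge -L$ is automatic, so the claimed bound $LC^{L+1}\Omega^{-c}\omega^{-L}$ follows.
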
 
\begin{proof} 

For this proof only, we define $\tilde T_{vw}$ for $v,w \in \bbV$ by 
$\tilde T_{vw} = \one_{ |T_{vw}| > \Omega }  T_{vw}$. 
It suffices to show that for each of the $L$ terms 
\begin{align}\label{DeltaDef}
\Delta_k = 
\E 
\left[
\sum_{w \in \bbV(k) } 
\sum_{u \in \bbD(w) } 
\sum_{v \in \bbD(L - k - 1) } 
\left| R_{0w}(z)  \tilde T_{wu }  R^{(w)}_{uv} (z)  
\right|^s \one_{\mathscr{D}(v)}
\right]
\end{align}
defined for $k \in \unn{0}{L-1}$, we have 
\be\label{delt}
\Delta_k \le 
C^{L+1} \Omega^{-c} \omega^{- L}.\ee
This is because \eqref{delt} implies
\begin{align}\label{pepper}
&\left|
\E 
\left[
\sum_{v \in \bbV(L) } 
\left|  R_{0v}(z) \right|^s \one_{\mathscr{D}(v)}
\right]
-
\E 
\left[
\sum_{v \in \bbV(L) } 
\left|  R_{0v}(z) \right|^s \one_{\mathscr{T}(v)}
\right]
\right| \\ &\le 
\E 
\left[
\sum_{v \in \bbV(L) } 
\left|  R_{0v}(z)\one_{\mathscr{D}(v)}
 -  R_{0v}(z)\one_{\mathscr{T}(v)}
 \right|^s
\right] \le \sum_{k=0}^{L-1} \Delta_k \le L C^{L+1} \Omega^{- c} \omega^{- L }\notag
\end{align}
after using the elementary inequality $\big| |x|^s  - |y|^s \big| \le | x -y|^s$, which is the desired conclusion. Hence, we now turn to proving \eqref{delt}.

We first essentially reduce to the case when $k = 0$; this will closely follow the proof of \Cref{chilchil1}. Specifically, using \Cref{rproduct} to expand the $R_{0w}(z)$ term appearing in \eqref{DeltaDef} and the Schur complement formula \eqref{qvv}, we obtain
\begin{align}\label{pineapple}
& 
\E 
\left[
\sum_{w \in \bbV(k) } 
\sum_{u \in \bbD(w) } 
\sum_{v \in \bbD(L - k - 1) } 
\left| R_{0w}(z)  \tilde T_{wu }  R^{(w)}_{uv} (z)  
\right|^s \one_{\mathscr{D}(v)}
\right]\\
&= \E 
\left[
\sum_{r \in \bbV(1)}
\sum_{w \in \bbD_{k-1}(r) } 
\sum_{u \in \bbD(w) } 
\sum_{v \in \bbD(L - k - 1) } 
\left| R_{00}(z) T_{0r} R^{(0)}_{rw}  \tilde T_{wu }  R^{(w)}_{uv} (z)  
\right|^s \one_{\mathscr{D}(v)}
\right]\label{pineapple3} \\
&= 
\E 
\left[
\sum_{r \in \bbV(1)}
\sum_{w \in \bbD_{k-1}(r) } 
\sum_{u \in \bbD(w) } 
\sum_{v \in \bbD(L - k - 1) } 
 \frac{|T_{0r}|^s S_r }{| z + T^2_{0r} R^{(0)}_{rr} + K_r|^s}  
\right],\label{pineapple4}
\end{align}
where in the last equality we set
\bex
S_r = \sum_{w \in \bbD_{k-1}(r) } 
\sum_{u \in \bbD(w) } 
\sum_{v \in \bbD(L - k - 1) }  \left| R^{(0)}_{rw}  \tilde T_{wu }  R^{(w)}_{uv} (z)  
\right|^s \one_{\mathscr{D}(r,v)},\qquad
K_r = \sum_{\substack{u \in \bbV(1)\\ u \neq r}}
T_{0u}^2 R^{(0)}_{uu}.
\eex
We observe that the sequence $\{ (K_r, R^{(0)}_{rr},S_r,T_{0r}) \}_{r \in \bbV(1)}$ satisfies \Cref{sqk}. This permits use of the first part of \Cref{expectationsum1} in \eqref{pineapple4}, which yields 
\begin{align}\notag
&\E 
\left[
\sum_{r \in \bbV(1)}
\sum_{w \in \bbD_{k-1}(r) } 
\sum_{u \in \bbD(w) } 
\sum_{v \in \bbD(L - k - 1) } 
 \frac{|T_{0r}|^s S_r }{| z + T^2_{0r} R^{(0)}_{rr} + K_r|^s}  
\right]\\
&\le \label{rterm}
C \cdot \E\left[ 
\sum_{w \in \bbD_{k-1}(r) } 
\sum_{u \in \bbD(w) } 
\sum_{v \in \bbD(L - k - 1) } \big|R^{(0)}_{rr}(z)\big|^{(\alpha -s)/2} \left| R^{(0)}_{rw}  \tilde T_{wu }  R^{(w)}_{uv} (z)  
\right|^s \one_{\mathscr{D}(r,v)} \right]\\
& \le  C \cdot \E 
\left[
\sum_{w \in \bbV(k-1) } 
\sum_{u \in \bbD(w) } 
\sum_{v \in \bbD(L - k - 1) } 
 \big|R_{00}(z)\big|^{(\alpha -s)/2} \left| R_{0w}(z)  \tilde T_{wu }  R^{(w)}_{uv} (z)  
\right|^s \one_{\mathscr{D}(v)}
\right],\label{pineapple2}
\end{align}
where in \eqref{rterm}, $r \in \bbV(1)$ is an arbitrarily selected vertex (and the choice of $r$ does change the expectation as all $S_r$ are identically distributed).
We now observe that the term \eqref{pineapple2} is essentially the same as \eqref{pineapple}, except with the sum $w\in \bbV(k)$ replaced by $w \in \bbV(k-1)$, and an additional factor of $\big|R_{00}(z)\big|^{(\alpha -s)/2}$.
Therefore, considering the sum \eqref{pineapple2}, we can repeat the computations from \eqref{pineapple} through \eqref{pineapple2} (with the change that \eqref{pineapple3} has a factor of $\big|R_{00}(z)\big|^{(\alpha + s)/2}$ in place of the $\big|R_{00}(z)\big|^{s}$ there, which must be carried through the calculation). Iterating these computations $k-1$ times and putting the resulting bound on \eqref{pineapple} in \eqref{DeltaDef} gives
\begin{align}
\Delta_k 
&\le 
C^k \E 
\left[
\sum_{w \in \bbV(1)}
\sum_{v \in \bbD_{L -k -1}(w) } 
\left| R_{00}(z)\right|^{(\alpha-s)/2} \left| R_{00}(z) \tilde T_{0w}  R^{(0)}_{wv} (z)
\right|^s  \one_{\mathscr{D}(v)}
\right]\notag \\
&=
C^k  
\E 
\left[
\sum_{w \in \bbV(1)}
\sum_{v \in \bbD_{L -k -1}(w) } 
\left| R_{00}(z)\right|^{(\alpha+s)/2} \left| \tilde T_{0w}  R^{(0)}_{wv} (z)
\right|^s  \one_{\mathscr{D}(v)}
\right]\label{thesum}.
\end{align}
Using the Schur complement formula \eqref{qvv} to expand $R_{00}(z)$, we may write the previous line as 
\be\label{campbellsetup}
\Delta_k \le C^k \sum_{w \in \bbV(1)} \frac{|T_{0w}|^s \one_{|T_{j}| > \Omega } S_w}{| z + T_{0w}^2 R^{(0)}_{w w} + K_w |^{(\alpha + s)/2} },
\ee
where 
\bex
S_w = \sum_{v\in \bbD_{L-k-1}(w)} \left| R^{(0)}_{w v}\right|^s \one_{\mathscr{D}(w,v)},
\qquad K_w = \sum_{\substack{ u \in \bbV(1)\\ u \neq w }} 
T_{0u}^2 R^{(0)}_{uu}.
\eex
Then \Cref{fidentityxi} shows that
\begin{align}\label{jalapeno1}
& \sum_{w \in \bbV(1)} \frac{|T_{0w}|^s \one_{|T_{j}| > \Omega } S_w}{| z + T_{0w}^2 R^{(0)}_{w w} + K_w |^{(\alpha + s)/2} } = \int_\Omega^\infty 
\E \left[ 
\frac{t^s S }{\big  | z + t^2 R   + K  \big|^{(\alpha+s)/2}}
\right]
\cdot \alpha  t^{-\alpha -1 } \,dt \\
&\le
\left(  \int^\infty_\Omega \alpha t^{-\alpha -1 } \,dt \right)^{\kappa/(1+\kappa)}
\left(
 \int_0^\infty 
\E \left[ 
\frac{t^{s(1+\kappa)} S^{(1+\kappa)} }{\big ( | z + t^2 R|  +1  \big)^{(\alpha+s)(1+\kappa)/2}}
\right]
\cdot \alpha t^{-\alpha -1 } \,dt
\right)^{(1+\kappa)^{-1}}
,\notag
\end{align}
where we used H\"older's inequality in the last line, and set $\kappa = (1/2)(1 - s)$ so that $s (1 + \kappa ) < 1$. Here the random vector $(K,S,R)$ is defined to have the same joint law as any $(K_w, S_w, R^{(0)}_{ww})$.

We have
\be\label{jalapeno2}
\alpha \int^\infty_\Omega t^{-\alpha -1 } \,dt
= \Omega^{-\alpha}.
\ee
Also, by the first part of \Cref{expectationsum1},
\begin{align}
&\int_0^\infty 
\E \left[ 
\frac{t^{s(1+\kappa)} S^{1+\kappa} }{\big ( | z + t^2 R|  +1  \big)^{(\alpha+s)(1+\kappa)/2}}
\right]
\cdot \alpha t^{-\alpha -1 } \,dt\notag \\
&\le C \cdot \E \left[ | R |^{(\alpha-s)/2} S^{1+\kappa} \right]\notag \\ &= C \cdot
\E 
\left[
\left| R_{00}(z)\right|^{(\alpha - s)/2}
\left(
\sum_{v\in \bbV(L-k-1)} \left| R_{0 v}(z)\right|^s \one_{\mathscr{D}(v)}
\right)^{1+\kappa}
\right]\notag\\
&= C \cdot
\E 
\left[
\left(
\sum_{w\in \bbV(1)}
\sum_{v\in \bbD_{L-k-2}(w)}
\left| R_{00}(z)\right|^{(\alpha - s)(1+\kappa)^{-1}/2}
\left| R_{00}(z) T_{0w}  R^{(0)}_{wv} (z) 
\right|^s\right)^{1+\kappa}
\one_{\mathscr{D}(v)}
\right]\notag\\
&\le C^{L-k + 1} \omega^{-2 \kappa L}.\label{jalapeno3}
\end{align}
where the last inequality follows from \eqref{fracmom1}.

Set $c(s) = \kappa /(1 + \kappa)$. Using \eqref{campbellsetup}, \eqref{jalapeno1}, \eqref{jalapeno2}, and \eqref{jalapeno3}, we obtain $\Delta _k \le  C^{L+1} \Omega^{- c} \omega^{- 2\kappa L}$, proving \eqref{delt}. This completes the proof after recalling \eqref{pepper} and using $\kappa < 1/2$ to show that $\omega^{-2 \kappa L} < \omega^{-L}$.
\end{proof}

 \subsection{Continuity Estimates}
 
 \label{Estimate1}
 
 We now prove a continuity estimate for the truncated version of $\Phi_L$ considered in \Cref{l:phicutoff}, involving only the vertices $v \in \bbV$ such that $\one_{\mathscr{T}(v)} \neq 0$. It will be our primary technical tool for the remainder of this section.
 \label{s:continuityestimates}
\begin{lem}\label{l:cutoffclose}
Fix $s \in (\alpha,1)$, $ \delta,  \omega \in (0,1)$, and $ \Omega \ge 1$. There exist constants $C(\delta, s) > 1$ and $c(s) > 0$ 
such that the following holds. For  $i=1,2$, fix $z_i = E_i + \iu \eta_i \in \bbH$ such that $\eps \le |E_i| \le B$ and $\eta_i \in (0,1)$. Suppose that $\phi(s;z_2) < - \delta$. Then
\bex
\E
\left[
\sum_{v \in \bbV(L)} \left|   
 R_{0v}(z_1) -  R_{0v}(z_2) 
\right|^s \one_{\mathscr{T}(v)}
\right] 
\le L \Omega^2 C^{L+1}|z_1 - z_2|^s (\eta_1 \eta_2)^{-s/2}
\cdot \eta_2^{cs } \omega^{-2 L }.
\eex
\end{lem}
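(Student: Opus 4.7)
The plan is to decompose the difference $R_{0v}(z_1) - R_{0v}(z_2)$ via the product expansion of \Cref{rproduct}, bound the resulting telescoping sum using the Ward identity \eqref{sumrvweta}, and then integrate over the Poisson tree weights via \Cref{fidentityxi}, extracting the extra factor $\eta_2^{cs}$ from the moment decay provided by \Cref{l:qsexpectation}.

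For the path $0 = v_0 \prec v_1 \prec \cdots \prec v_L = v$, I set $\rho_0(z) = R_{00}(z)$ and $\rho_k(z) = T_{v_{k-1}v_k} R_{v_kv_k}^{(v_{k-1})}(z)$ for $k \ge 1$; then \Cref{rproduct} gives $R_{0v}(z) = (-1)^L \prod_{k=0}^L \rho_k(z)$. Telescoping $\prod \rho_k(z_1) - \prod \rho_k(z_2)$ into $L+1$ terms and applying $|a+b|^s \le |a|^s + |b|^s$ (which holds because $s \in (0,1)$) reduces the problem to bounding, for each $k \in \unn{0}{L}$, the contribution
\[
|R_{0v_{k-1}}(z_1)|^s \cdot |T_{v_{k-1}v_k}|^s \cdot \Delta_k^s \cdot |R_{v_kv_k}^{(v_{k-1})}(z_2)|^{-s} \cdot |R_{v_kv}^{(v_{k-1})}(z_2)|^s,
\]
where $\Delta_k = |R_{v_kv_k}^{(v_{k-1})}(z_1) - R_{v_kv_k}^{(v_{k-1})}(z_2)|$ and the reassembly of the suffix uses a second application of \Cref{rproduct}. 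The resolvent identity $A_1^{-1} - A_2^{-1} = A_1^{-1}(A_2-A_1)A_2^{-1}$ applied to $\bm{T}^{(v_{k-1})} - z_i$ together with Cauchy--Schwarz and the Ward identity \eqref{sumrvweta} yields
\[
\Delta_k^s \le |z_1-z_2|^s (\eta_1\eta_2)^{-s/2} \big(\Imaginary R_{v_kv_k}^{(v_{k-1})}(z_1)\big)^{s/2} \big(\Imaginary R_{v_kv_k}^{(v_{k-1})}(z_2)\big)^{s/2}.
\]
The $z_1$-factor is controlled deterministically by $\eta_1^{-s/2}$, and the $z_2$-factor is paired with the inverse resolvent $|R_{v_kv_k}^{(v_{k-1})}(z_2)|^{-s}$ via the Schur complement identity \eqref{qvv}, after which the restricted mixed fractional moment estimates of \Cref{momentkappa} and \Cref{holdershort} supply the needed uniform control. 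The improvement $\eta_2^{cs}$ emerges by interpolating \Cref{l:qsexpectation} (which gives $\E[(\Imaginary R(z_2))^{s/2}] \le C\eta_2^{s/2}$) against the trivial bound $\Imaginary R(z_2) \le \eta_2^{-1}$ at a carefully chosen interior exponent, with $c = c(s) \in (0, 1/2)$ determined by the interpolation parameter. The sum over $v \in \bbV(L) \cap \mathscr T$ is then evaluated level by level via iterated applications of Campbell's identity \eqref{fxi}; the truncation $\mathscr{T}(v)$ confines each Poisson weight to $[\omega, \Omega]$, rendering each integral $\int_\omega^\Omega \alpha t^{s-\alpha-1}\,dt$ finite and producing the factor $\omega^{-2L}$ (from the lower cutoff accumulated over all $L$ levels) alongside the factor $\Omega^2$ (from the upper cutoff on the two edges adjacent to the broken level $k$).

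The principal obstacle is the inverse resolvent entry $|R_{v_kv_k}^{(v_{k-1})}(z_2)|^{-s}$ appearing in the suffix reassembly: its $s$-th moment is infinite due to the $\frac{\alpha}{2}$-stable tail of the Schur complement (since $s > \alpha > \alpha/2$), so it cannot be controlled moment-wise in isolation. The way around this is to combine it with the $(\Imaginary R_{v_kv_k}^{(v_{k-1})}(z_2))^{s/2}$ factor coming from the Ward bound on $\Delta_k^s$ and to treat the combined quantity via the mixed-spectral-parameter moment estimates of \Cref{momentkappa} and \Cref{holdershort}, which were developed for exactly this purpose; this renders the integrand amenable to Campbell's theorem and ultimately yields the claimed bound, with the summation of the $L+1$ telescoped contributions producing the linear prefactor $L$.
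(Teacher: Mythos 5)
Your telescoping decomposition is the same as the paper's (the $\Psi_{L,k}$ there are exactly your hybrid differences, written without the inverse), and your plan for extracting $\eta_2^{cs}$ via a Markov estimate on $\big(\Im R(z_2)\big)^{s/2}$ using \Cref{l:qsexpectation}, followed by \Cref{holdershort} on the bad event, matches the paper. The $\Omega$, $\omega$, and $L$ bookkeeping is also right in spirit. However, your proposed handling of the ``principal obstacle'' is not sound, and this is a genuine gap.

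You write the $k$-th telescoping term with the factor $\big|R_{v_kv_k}^{(v_{k-1})}(z_2)\big|^{-s}\cdot\big|R_{v_kv}^{(v_{k-1})}(z_2)\big|^s$ and propose to absorb the dangerous negative power into the Ward-identity factor $\big(\Im R_{v_kv_k}^{(v_{k-1})}(z_2)\big)^{s/2}$. But by the Schur complement, with $R = R_{v_kv_k}^{(v_{k-1})}(z_2) = -(z_2 + K)^{-1}$, one has
\begin{equation*}
\frac{\big(\Im R\big)^{s/2}}{|R|^s} = \Big(\frac{\Im R}{|R|^2}\Big)^{s/2} = \big(\eta_2 + \Im K\big)^{s/2},
\end{equation*}
and $\Im K$ is a nonnegative $\tfrac{\alpha}{2}$-stable law. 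Since $s > \alpha$, the $\tfrac{s}{2}$-th moment of $\Im K$ is infinite, so this combined quantity has infinite expectation and cannot be controlled. Moreover, \Cref{momentkappa} and \Cref{holdershort} were \emph{not} developed to handle inverse resolvent entries: both compute moments of products of the form $|R_{00}(z_i)|^\chi\, |T_{0w}R^{(0)}_{wv}(z_1)|^s$ with $\chi>0$, never a negative power of a diagonal entry. Citing them as resolving the obstacle is incorrect.

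The fix is purely algebraic and eliminates the obstacle rather than controlling it. The ratio you wrote is just the tail of the product expansion: by \Cref{rproduct} applied at $v_{k+1}$,
\begin{equation*}
\frac{R_{v_kv}^{(v_{k-1})}(z_2)}{R_{v_kv_k}^{(v_{k-1})}(z_2)} = \pm\, T_{v_kv_{k+1}}\, R^{(v_k)}_{v_{k+1}v}(z_2),
\end{equation*}
so the inverse cancels identically and the $k$-th telescoping term becomes a clean product: prefix in $z_1$, the difference $\Delta_k$ at level $k$, then $T_{v_kv_{k+1}}\, R^{(v_k)}_{v_{k+1}v}(z_2)$. This is exactly the form the paper's $\Psi_{L,k}$ has (the paper chooses prefix in $z_2$ and suffix in $z_1$, but the shape is the same), and it is amenable to \Cref{expectationsum1}, \Cref{momentkappa}, and \Cref{holdershort}. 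Once you make this simplification, the rest of your outline goes through along the paper's lines.
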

\begin{proof}
For $k \in \unn{0}{L-2}$, we define
\begin{align}\label{PsiDef}
&\Psi_{L, k} (s; \omega, \Omega; z_1, z_2)\notag \\
&= 
\E\left[ 
\sum_{w \in \bbV(k+1)}
\sum_{v \in \bbD_{L-k-1}(w)}
\left|
 R_{0 w_-}(z_2)  T_{w_- w}  R^{(w_-)}_{w v }(z_1)
-  R_{0 w}(z_2)  T_{w w_+}  R^{(w)}_{w_+ v } (z_1)
\right|^s\one_{\mathscr{T}(v)}
\right].
\end{align}
We also set 
\bex
\Psi_{L, -1} (s; \omega, \Omega; z_1, z_2)=
\E\left[ 
\sum_{w \in \bbV(1)}
\sum_{w \in \bbD_{L-1} (w)}
\left|
 R_{0 v}(z_1)
- R_{0  0 }(z_2)  T_{0 w}  R^{(0)}_{w v}(z_1 )
\right|^s\one_{\mathscr{T}(v)}
\right]
\eex
and
\bex
\Psi_{L, L-1} (s; \omega, \Omega; z_1, z_2)=
\E\left[ 
\sum_{w \in \bbV(L-1)}
\sum_{v \in \bbD(w)}
\left|
 R_{0  w }(z_2)  T_{w v}  R^{(w)}_{vv}(z_1)
- 
 R_{0 v}(z_2)
\right|^s\one_{\mathscr{T}(v)}
\right].
\eex
Then using \Cref{rproduct}, we  have
\be\label{thephisum}
\E
\left[
\sum_{v \in \bbV(L)} \left|   
 R_{0v}(z_1) -  R_{0v}(z_2) 
\right|^s \one_{\mathscr{T}(v)}
\right] \le 
\sum_{j = -1}^{L-1} \Psi_{L, j} (s; \omega, \Omega ; z_1, z_2),
\ee
so to complete the proof, it suffices to bound the sum on the right side of \eqref{thephisum}.

Considering the terms in the sum defining \eqref{PsiDef}, we have using \Cref{rproduct} that 
\begin{align*}
&  R_{0 w_-}(z_2)  T_{w_- w}  R^{(w_-)}_{w v }(z_1)
-  R_{0 w}(z_2)  T_{w w_+}  R^{(w)}_{w_+ v } (z_1)\\
& =  R_{0 w_-}(z_2)  T_{w_- w}  R^{(w_-)}_{w w}(z_1) 
 T_{w w_+}
 R^{(w)}_{w_+ v} (z_1)\\
&\quad -  R_{0 w_-}(z_2)  T_{w_- w}  R^{(w_-)}_{w w}(z_2) 
 T_{w w_+}
 R^{(w)}_{w_+ v} (z_1)\\
&=  R_{0 w_-}(z_2)  T_{w_- w} \big(R^{(w_-)}_{w w}(z_1) -   R^{(w_-)}_{w w}(z_2) \big)
T_{w w_+}
 R^{(w)}_{w_+ v} (z_1).
\end{align*}
Then using the previous line in the definition of $\Psi_{L, k}$ in \eqref{PsiDef} gives
\begin{align*}
&\Psi_{L, k} (s; \omega, \Omega; z_1, z_2)\notag\\
&= 
\E\left[
\sum_{w \in \bbV(k+1)}
\sum_{v \in \bbD_{L-k-1}(w)}
\left|
R_{0 w_-}(z_2)  T_{w_- w} \big(R^{(w_-)}_{w w}(z_1) -   R^{(w_-)}_{w w}(z_2) \big)
T_{w w_+}
 R^{(w)}_{w_+ v} (z_1)
\right|^s \one_{\mathscr{T}(v)}
\right].
\end{align*}
Using the product expansion from \Cref{rproduct}, the Schur complement formula \eqref{qvv} together with the first part of \Cref{expectationsum1} a total of $k$ times, and the second part of \Cref{expectationsum1} once, we obtain that 
(reasoning as in the calculations leading to \eqref{thesum})
\begin{align}
\Psi_{L, k}& (s; \omega, \Omega; z_1, z_2)\notag \\
&\le 
\Omega C^{k+1} \E\left[
\sum_{w \in \bbV(k+1)}
\sum_{v \in \bbD_{L-k-1}(w)}
 \left|\big(R^{(w_-)}_{ww}(z_1) -   R^{(w_-)}_{ww}(z_2) \big)
T_{ww_+}
 R^{(0)}_{w_+ v} (z_1)
\right|^s \one_{\mathscr{T}(v)}
\right]\notag\\
&\le 
\Omega C^{k+1} \E\left[
\sum_{w \in \bbV(1)}
\sum_{v \in \bbD_{L-k-2}(w)}
 \left|\big(R_{00}(z_1) -   R_{00}(z_2) \big)
T_{0w}
 R^{(0)}_{w v} (z_1)
\right|^s \one_{\mathscr{T}(v)}
\right]\label{phiprev}
\end{align}
for some $C > 1$. 
Using the resolvent identity $\bm{A}^{-1} - \bm{B}^{-1} = \bm{A}^{-1} (\bm{B} - \bm{A}) \bm{B}^{-1}$, we write 
\bex
R_{00}(z_1) -  R_{00}(z_2)
= (z_2 - z_1) \sum_{v \in \bbV} R_{0v}(z_1) R_{v0}(z_2).
\eex
 H\"older's inequality and the Ward identity \eqref{sumrvweta} together give the bound
\begin{align}
\left| R_{00}(z_1) -  R_{00}(z_2) \right|
&\le |z_1 - z_2| 
\left(\sum_{v \in \bbV} |R_{0v}(z_1)|^2 \right)^{1/2}
\left(\sum_{v \in \bbV} |R_{0v}(z_2)|^2 \right)^{1/2}\notag
\\
& = |z_1 - z_2| (\eta_1\eta_2) ^{-1/2} 
\big( \Im R_{00}(z_1) \big)^{1/2}
\big( \Im R_{00}(z_2) \big)^{1/2}.\label{holder2}
\end{align}
Inserting \eqref{holder2} into \eqref{phiprev} gives
\begin{align}\notag
&\Psi_{L, k} (s; \omega, \Omega ; z_1, z_2)\le 
\Omega C^{k+1}|z_1 - z_2|^s (\eta_1 \eta_2)^{-s/2}\\
&\times  \E\left[
\sum_{w \in \bbV(1)}
\sum_{v \in \bbD_{L-k-2}(w)}
\big( \Im R_{00}(z_1) \big)^{s/2}
\big( \Im R_{00}(z_2) \big)^{s/2}
\left|
 T_{0w}
R^{(0)}_{w v}(z_1)
\right|^s \one_{\mathscr{T}(v)} \label{intPhi}
\right].
\end{align}

Let $\kappa > 0$ be a parameter, to be fixed later. By Markov's inequality, the assumption $\phi(s;z_2) < - \delta$, and \Cref{l:qsexpectation}, we have  
\be\label{simplemarkov}
\P\left( \big( \Im R_{00}(z_2) \big)^{s/2} > \eta_2^\kappa  \right) \le C \eta_2^{s/2 - \kappa}.
\ee
Define the event 
\bex
\mathscr A = \left\{ \big( \Im R_{00}(z_2) \big)^{s/2} < \eta_2^\kappa \right\}.
\eex
Then by the definition of $\mathscr A$, we have 
\begin{align}\label{ongoodevent}
&\E\left[\one_{\mathscr A}
\sum_{w \in \bbV(1)}
\sum_{v \in \bbD_{L-k-2}(w)}
\big( \Im R_{00}(z_1) \big)^{s/2}
\big( \Im R_{00}(z_2) \big)^{s/2}
\left|
 T_{0w}
R^{(0)}_{w v}(z_1)
\right|^s
\right]
\\
&\le \eta^{\kappa}_2\cdot \E\left[
\sum_{w \in \bbV(1)}
\sum_{v \in \bbD_{L-k-2}(w)}
\big| R_{00}(z_1) \big|^{s/2}
\left|
 T_{0w}
R^{(0)}_{w v}(z_1)
\right|^s
\right]
\le 
\eta^{\kappa}_2
C^{L-k},\notag
\end{align}
where the last inequality follows from iterating \Cref{chilchil1} with $\chi$ there equal to $s/2$, and using \Cref{expectationqchi} (as in the proof of \Cref{estimatemoment1}).

 We next note that the elementary inequality $ab \le a^2 + b^2$ gives 
\bex
\big( \Im R_{00}(z_1) \big)^{s/2} \big( \Im R_{00}(z_2) \big)^{s/2}  \le\big( \Im R_{00}(z_1) \big)^{s}
 +
\big( \Im R_{00}(z_2) \big)^{s} .
\eex
Fix $\kappa>0$ so that \eqref{chisconditions} holds, with $\chi$ there equal to $s/2$. Then the previous line and \Cref{holdershort} give 
\begin{align}
& \E\left[
\one_{\mathscr A^c}
\sum_{w \in \bbV(1)}
\sum_{v \in \bbD_{L-k-2}(w)}
\big( \Im R_{00}(z_1) \big)^{s/2}
\big( \Im R_{00}(z_2) \big)^{s/2}
\left|
 T_{0w}
R^{(0)}_{w v}(z_1)
\right|^s \one_{\mathscr{T}(v)} 
\right]\notag \\
&\le  \E\left[
\one_{\mathscr A^c} \sum_{w \in \bbV(1)}
\sum_{v \in \bbD_{L-k-2}(w)}
\big( \Im R_{00}(z_1) \big)^{s}
\left|
 T_{0w}
R^{(0)}_{w v}(z_1)
\right|^s \one_{\mathscr{T}(v)} 
\right]\notag \\
&+
 \E\left[
\one_{\mathscr A^c}\sum_{w \in \bbV(1)}
\sum_{v \in \bbD_{L-k-2}(w)}
\big( \Im R_{00}(z_2) \big)^{s}
\left|
 T_{0w}
R^{(0)}_{w v}(z_1)
\right|^s \one_{\mathscr{T}(v)} 
\right]\notag \\
&\le \P(\mathscr A^c)^{c_1}  \cdot \Omega \cdot C^{L-k}  \omega^{-2  L }
= \eta_2^{c_1s }
\cdot \Omega \cdot C^{L-k}  \omega^{-2  L }
\label{onbadevent}
\end{align}
for some $c_1(s) > 0$, where we used \eqref{simplemarkov} in the last line. The claimed bound follows after inserting \eqref{ongoodevent} and \eqref{onbadevent} into \eqref{intPhi}, and using \eqref{thephisum}.
\end{proof}

The next lemma states a continuity estimate for $\phi(s; E + \iu \eta)$ as $\eta$ is a fixed and $E$ varies.  
\begin{lem}\label{l:goingsideways}
Fix $s \in (\alpha,1)$ and $ \delta, \eta \in (0,1)$. Then for every $\mathfrak b >0$, there exists $\mathfrak a (\delta, \eta, s) > 0$ such that following holds. For every $E_1, E_2 \in \bbR$ such that $|E_1|, |E_2| \in [\eps , B]$, $|E_1 - E_2| < \mathfrak a$, and $\phi(s;E_2 + \iu \eta) < - \delta$,
we have
\bex
\big|
\phi(s;E_1 + \iu \eta ) - \phi(s;E_2 + \iu \eta)
\big| < \mathfrak b.
\eex
\end{lem}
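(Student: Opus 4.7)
The plan is to deduce this lemma from \Cref{l:cutoffclose}, since that result already contains the key continuity estimate at the level of the $\mathscr{T}$-truncated fractional moment sum. First I would use \eqref{limitr0j2} to reduce the infinite-length problem to a finite-length one: choose $L = L(\mathfrak{b}, s, \eps, B)$ large enough that $\bigl|\varphi(s; E + \iu \eta') - \varphi_L(s; E + \iu \eta')\bigr| \le \mathfrak{b}/3$ uniformly over $\eps \le |E| \le B$, $\eta' \in (0,1)$. The goal then reduces to showing $\bigl|\varphi_L(s; E_1 + \iu\eta) - \varphi_L(s; E_2 + \iu\eta)\bigr| \le \mathfrak{b}/3$.

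Next, since $\varphi_L = L^{-1}\log \Phi_L$, I would convert this into a bound on $\Phi_L$. The second part of \Cref{estimatemoment1} gives a uniform lower bound $\Phi_L(s; z) \ge c_0^L$ for some $c_0 = c_0(s, \eps, B) > 0$. Hence, provided $|\Phi_L(s; z_1) - \Phi_L(s; z_2)|$ is smaller than $\Phi_L(s; z_2)/2$, the elementary inequality $|\log(1+t)| \le 2|t|$ for $|t| \le 1/2$ gives
\[
	\bigl|\varphi_L(s; z_1) - \varphi_L(s; z_2)\bigr| \le 2 L^{-1} c_0^{-L} \cdot \bigl|\Phi_L(s; z_1) - \Phi_L(s; z_2)\bigr|,
\]
so it suffices to prove $|\Phi_L(s; z_1) - \Phi_L(s; z_2)| \le (\mathfrak{b}\, L / 6)\, c_0^L$.

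To control this difference, I would insert the truncated sum $\Phi_L^{\mathscr T}(s; z) = \mathbb{E}\bigl[\sum_{v \in \bbV(L)} |R_{0v}(z)|^s \one_{\mathscr T(v)}\bigr]$ as an intermediate quantity. The two truncation errors are controlled by \Cref{l:phicutofflower} followed by \Cref{l:phicutoff}: I would first choose a small parameter $\theta$ (in terms of $\mathfrak b, L, c_0$), then $\omega \le c\theta^C$ as required, and finally $\Omega$ large (in terms of $\omega, L, \mathfrak b, c_0$) so that each truncation contribution is at most $(\mathfrak{b}\,L/18)\,c_0^L$. For the central term, the inequality $\bigl||x|^s - |y|^s\bigr| \le |x-y|^s$ (valid because $s < 1$) yields
\[
	\bigl|\Phi_L^{\mathscr T}(s; z_1) - \Phi_L^{\mathscr T}(s; z_2)\bigr| \le \mathbb{E}\Bigl[\sum_{v \in \bbV(L)} |R_{0v}(z_1) - R_{0v}(z_2)|^s \one_{\mathscr T(v)}\Bigr],
\]
to which \Cref{l:cutoffclose} applies with $\eta_1 = \eta_2 = \eta$, using the hypothesis $\varphi(s; E_2 + \iu\eta) < -\delta$. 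Its bound takes the form $L \Omega^2 C^{L+1} |E_1 - E_2|^s \, \eta^{-s(1-c)} \omega^{-2L}$; since $L, \Omega, \omega, \eta$ are now all fixed, this is rendered smaller than $(\mathfrak{b}\,L/18)\,c_0^L$ by choosing $\mathfrak{a} = \mathfrak{a}(\delta, \eta, s, \eps, B)$ small.

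The only real obstacle is the sequential choice of parameters $L, \theta, \omega, \Omega, \mathfrak a$: each must be selected small (or large) in terms of those previously fixed, and one must verify that the three error contributions sum to at most $(\mathfrak{b}/3)\,L\,c_0^L$, comfortably below $\Phi_L(s; z_2)/2$. Crucially, the ordering $L$ before $(\theta, \omega, \Omega)$ before $\mathfrak{a}$ is mandatory, since the constants in \Cref{l:phicutofflower}, \Cref{l:phicutoff}, and \Cref{l:cutoffclose} depend on the earlier choices but not conversely; once this bookkeeping is carried out, the argument goes through cleanly.
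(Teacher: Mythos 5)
Your proposal is correct and follows essentially the same route as the paper's proof: fix $L$ via the approximation bound \eqref{limitr0j2}, lower-bound $\Phi_L(s;z_2)$ via the second part of \Cref{estimatemoment1}, insert the $\mathscr{T}$-truncated sum as an intermediate quantity, control the two truncation errors with \Cref{l:phicutofflower} and \Cref{l:phicutoff}, control the central difference with \Cref{l:cutoffclose} plus $\bigl||x|^s-|y|^s\bigr|\le|x-y|^s$, and choose $\theta,\omega,\Omega,\mathfrak a$ in that order. The only differences are cosmetic (slightly different numeric targets for the three error pieces), so there is nothing to change.
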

\begin{proof}
Set $z_1 = E_1 + \iu \eta$ and $z_2 = E_2 + \iu \eta$. We may assume that $|E_1 - E_2| < 1/10$. By the first part of \Cref{limitr0j}, there exists $C_1 > 1$ such that for any $L \in \Zplus$,
\be\label{combineme2}
\left|
\big( 
\phi_L(s;z_1) - \phi_L(s;z_2) 
 \big)
 -
 \big( 
\phi(s;z_1) - \phi(s;z_2) 
 \big)
\right|
\le \frac{C_1}{L}.
\ee
We set $L = \lceil 2 C_1 \mathfrak b^{-1} \rceil$.\footnote{In what follows, we drop the ceiling functions when doing computations with $L$, since it makes no essential difference.} Then it remains to bound the difference $\big| 
\phi_L(s;z_1) - \phi_L(s;z_2) 
 \big|$.

Recalling the definition \eqref{sumsz1}, we have
\begin{align}\label{mango1}
\phi_L(s;z_1)
-
\phi_L(s;z_2) 
=
\log \left(
\frac{\Phi_L(s; z_1)}{\Phi_L(s; z_2)}
\right).
\end{align}
We write
\begin{align}\notag
\left|\log \left(
\frac{\Phi_L(s; z_1)}{\Phi_L(s; z_2)}
\right)\right|
&=
\left|\log \left(
\frac{ \Phi_L(s; z_2) + \big(\Phi_L(s; z_1) - \Phi_L(s; z_2)  \big)}{\Phi_L(s; z_2)}
\right)\right|\\
&\le 
 \frac{ 2 \left|\E
\left[
\sum_{v \in \bbV(L)} \left|   
 R_{0v}(z_1) 
\right|^s
\right]
-
\E
\left[
\sum_{v \in \bbV(L)} \left|   
 R_{0v}(z_2) 
\right|^s
\right]\right|}{\Phi_L(s; z_2)},\label{mango2}
\end{align}
where the inequality is valid under the assumption that 
\be
\left|\E
\left[
\sum_{v \in \bbV(L)} \left|   
 R_{0v}(z_1) 
\right|^s
\right]
-
\E
\left[
\sum_{v \in \bbV(L)} \left|   
 R_{0v}(z_2) 
\right|^s
\right]\right|\le \frac{1}{2}\Phi_L(s; z_2).\label{mango3}
\ee
From the second part of \Cref{estimatemoment1}, there exists $c_1 \in (0,1)$ such that
\be\notag
\Phi_L(s; z_2) \ge c_1^{L+1}.
\ee

Let $\theta \in(0,1/2)$ and $\Omega > 1$ be real parameters, and let $\omega = c_2 \theta^{C_2}$, where $C_2>1$ and $c_2>0$ are the constants given by \Cref{l:phicutofflower}. 
Using \Cref{l:phicutofflower}, \Cref{l:phicutoff}, \Cref{l:cutoffclose}, and the  elementary inequality $\big| |x|^s  - |y|^s \big| \le | x -y|^s$, there exist constants $C_3>1$ and $c_3>0$ such that
\begin{align}
&\left|\E
\left[
\sum_{v \in \bbV(L)} \left|   
 R_{0v}(z_1) 
\right|^s
\right]
-
\E
\left[
\sum_{v \in \bbV(L)} \left|   
 R_{0v}(z_2) 
\right|^s
\right]\right|
\\
&\le \theta L C_3^{1+L} + L C_3^{1+L} \Omega^{-c_3} \omega^{-C_3L} + L C_3^{L+1} \Omega^2 |E_1 - E_2 |^s \eta^{s(c_3-1)}  \omega^{-2L}.
\label{crudebound}
\end{align}
 We set
\be\label{blueberry1}
\theta = \frac{1}{16} \cdot L^{-1} C_3^{-1 - L} c_1^{L+1} \mathfrak b
\ee
and fix $\Omega = \Omega(\theta, \mathfrak b)>1$ large enough so that 
\be\label{blueberry2}
L C_3^{1+L} \Omega^{-c_3} \omega^{-C_3L}
\le \frac{1}{16} \cdot c_1^{L+1} \mathfrak b.
\ee
Finally, we fix $\mathfrak a(\theta,\Omega, \eta, \mathfrak b) > 0$ such that
\be\label{blueberry3}
L C_3^{L+1} \Omega^2 |E_1 - E_2 |^s \eta^{s(c_3-1)}  \omega^{-2L} 
\le \frac{1}{16} \cdot c_1^{L+1} \mathfrak b
\ee
when $|E_1 - E_2 | < \mathfrak{a}$.
With the choices \eqref{blueberry1}, \eqref{blueberry2}, and \eqref{blueberry3}, equation \eqref{crudebound} gives 
\begin{align}\notag
\left|
\E
\left[
\sum_{v \in \bbV(L)} \left|   
 R_{0v}(z_1) 
\right|^s
\right]
-
\E
\left[
\sum_{v \in \bbV(L)} \left|   
 R_{0v}(z_2) 
\right|^s
\right]
\right|
\le \frac{3}{16} \cdot c_1^{L+1} \mathfrak{b}.
\end{align}
Combining the previous line with \eqref{mango1}, \eqref{mango2}, and \eqref{mango3},
we get
\be\notag
\big| \phi_L(s;z_1)
-
\phi_L(s;z_2) \big| < \frac{\mathfrak{b}}{2}.
\ee
Combining the previous line with \eqref{combineme2} and the choice of $L$ below \eqref{combineme2} completes the proof.
\end{proof}
The next lemma provides a continuity estimate for $\phi(s, E + \iu \eta)$ when $E$ is fixed and $\eta$ varies. Given the initial estimate $\phi(s; E + \iu \eta_0) < - \delta$ for some $\delta > 0$ and sufficiently small $\eta_0$, the lemma states that $\limsup_{\eta \rightarrow 0}
\phi(s; E + \iu \eta)$ is negative. In other words, the negativity of $\phi(s, E + \iu \eta_0)$ propagates toward the real axis. 
\begin{lem}\label{l:goingdown}
Fix $s \in (\alpha,1)$ and $ \delta \in (0,1)$. Then for every $\mathfrak b \in (0, \delta)$, there exists $\mathfrak a (\delta, \eps,  B,s) \in (0,1)$ such that following holds. For every $E\in \bbR$ such that $|E| \in [\eps, B]$, and every $\eta_0 \in (0, \mathfrak a)$ such that ${\phi(s;E + \iu \eta_0)} < - \delta$,
we have
\bex
\limsup_{\eta \rightarrow 0}
\phi(s; E + \iu \eta) < - \mathfrak b.
\eex
\end{lem}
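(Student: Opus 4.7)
The plan is to bootstrap the a priori estimate $\phi(s;E+\iu\eta_0)<-\delta$ toward $\eta=0$ along a rapidly decaying sequence $\eta_j\downarrow 0$, by repeated application of the continuity bound of \Cref{l:cutoffclose}. Each step introduces an error of order $\eta_j^{sc/2}$, where $c=c(s)\in(0,1)$ is the constant from that lemma. Because the $\eta_j$ will be constructed to decay super-exponentially in $j$, the sum of these errors is dominated by its first term $\sim \eta_0^{sc/2}$, which can be made smaller than any prescribed fraction of $\delta-\mathfrak{b}$ by taking $\mathfrak{a}$ small.

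Set $\gamma=1+c$ (so $1<\gamma<1+2c$) and define $\eta_j=\eta_0^{\gamma^j}$. Fix intermediate thresholds $\mathfrak{b}<\mathfrak{b}''<\mathfrak{b}'<\delta$. Following the scheme of the proof of \Cref{l:goingsideways}, choose an integer $L$ large enough that the error $C_1/L$ from \Cref{limitr0j}(1) is smaller than $(\mathfrak{b}'-\mathfrak{b}'')/4$, then choose $\theta\in(0,1/2)$, $\omega=c_2\theta^{C_2}$, and $\Omega$ (all depending only on $\mathfrak{b}',s,L$) so that the truncation errors from \Cref{l:phicutofflower} and \Cref{l:phicutoff} contribute a total at most $\tfrac14 c_1^{L+1}(\mathfrak{b}'-\mathfrak{b}'')$, where $c_1$ is the lower bound on $\Phi_L$ from \Cref{estimatemoment1}. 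For $z_1=E+\iu\eta_{j+1}$ and $z_2=E+\iu\eta_j$, the key factor in \Cref{l:cutoffclose} simplifies as
\[
|z_1-z_2|^s(\eta_{j+1}\eta_j)^{-s/2}\eta_j^{cs}\le \eta_j^{s(1-(1+\gamma)/2+c)}=\eta_j^{s(c-(\gamma-1)/2)}=\eta_j^{sc/2}.
\]
Provided that $\phi(s;z_2)<-\mathfrak{b}''$, mimicking the argument between \eqref{mango1} and the end of the proof of \Cref{l:goingsideways} then yields
\[
|\phi_L(s;E+\iu\eta_{j+1})-\phi_L(s;E+\iu\eta_j)|\le K\,\eta_j^{sc/2}
\]
for a constant $K=K(\mathfrak{b},s)$ that is uniform in $j$.

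The iteration is then closed by induction on $j$. The base case $\phi_L(s;E+\iu\eta_0)\le\phi(s;E+\iu\eta_0)+C_1/L<-\mathfrak{b}'$ follows from \Cref{limitr0j}(1) and the hypothesis. For the inductive step, if $\phi_L(s;E+\iu\eta_j)<-\mathfrak{b}'+(\mathfrak{b}'-\mathfrak{b}'')/2$, then $\phi(s;E+\iu\eta_j)<-\mathfrak{b}''$, so the one-step estimate applies; telescoping gives
\[
\phi_L(s;E+\iu\eta_{j+1})\le -\mathfrak{b}'+K\sum_{i=0}^{j}\eta_i^{sc/2}.
\]
The tail sum is dominated by $K'\eta_0^{sc/2}$ because $\eta_i=\eta_0^{\gamma^i}$ decays super-exponentially, so choosing $\mathfrak{a}$ such that $K'\eta_0^{sc/2}<(\mathfrak{b}'-\mathfrak{b}'')/2$ whenever $\eta_0<\mathfrak{a}$ propagates the invariant and shows $\phi(s;E+\iu\eta_j)<-\mathfrak{b}$ for every $j$. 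Applying the same one-step estimate with an arbitrary $\eta\in[\eta_{j+1},\eta_j]$ in place of $\eta_{j+1}$ (using $(\eta\eta_j)^{-s/2}\le(\eta_{j+1}\eta_j)^{-s/2}$) upgrades the bound from the discrete sequence to all sufficiently small $\eta$, giving $\limsup_{\eta\to 0}\phi(s;E+\iu\eta)<-\mathfrak{b}$.

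The main obstacle is ensuring that the auxiliary parameters $L,\theta,\omega,\Omega$ can be chosen once and for all, so that the constant $K$ is genuinely uniform across the infinitely many iteration steps. This is possible precisely because the induction is designed to preserve the fixed intermediate gap $\phi(s;E+\iu\eta_j)<-\mathfrak{b}''$, and so the implicit constants in \Cref{l:cutoffclose}, \Cref{l:phicutofflower}, and \Cref{l:phicutoff} (which depend on this gap and on $s$) remain controlled. A secondary technical point is that the argument of \Cref{l:goingsideways} requires the error of \Cref{l:cutoffclose} to be a small fraction of $\Phi_L(s;z_2)\ge c_1^{L+1}$; this dictates the precise choices of $\theta,\omega,\Omega$ but causes no difficulty in the iteration since these constants depend only on $\mathfrak{b}'$ and $L$, not on $j$.
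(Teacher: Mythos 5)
Your proof has a genuine gap in the one-step estimate. You claim that, after fixing $L$, $\theta$, $\omega$, $\Omega$ once and for all, one obtains $|\phi_L(s;E+\iu\eta_{j+1})-\phi_L(s;E+\iu\eta_j)|\le K\,\eta_j^{sc/2}$ with $K$ uniform in $j$. This is false. Tracing through \eqref{crudebound12}, \eqref{mango1}, \eqref{mango2}, the one-step bound on $|\phi_L(s;z_{j+1})-\phi_L(s;z_j)|$ is controlled by
\[
\frac{2\bigl(\theta L C^{1+L}\;+\;L C^{1+L}\Omega^{-c_3}\omega^{-C_3 L}\;+\;L\Omega^{2}C^{L+1}\omega^{-2L}\cdot\eta_j^{sc/2}\bigr)}{\Phi_L(s;z_j)},
\]
and the first two summands — the truncation errors from \Cref{l:phicutofflower} and \Cref{l:phicutoff} — do not depend on $\eta_j$ at all. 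For fixed parameters they are fixed positive numbers. You arrange them to be at most $\tfrac14 c_1^{L+1}(\mathfrak{b}'-\mathfrak{b}'')$, but this means every step carries an error of up to $\tfrac12(\mathfrak{b}'-\mathfrak{b}'')$, and your iteration has infinitely many steps; the cumulative error $\sum_j\bigl(\tfrac12(\mathfrak{b}'-\mathfrak{b}'')+\text{(decaying term)}\bigr)$ diverges. The telescoping $\phi_L(s;E+\iu\eta_{j+1})\le-\mathfrak{b}'+K\sum_{i\le j}\eta_i^{sc/2}$ simply does not hold. The same problem afflicts the $\phi\leftrightarrow\phi_L$ conversion error $C_1/L$ from \Cref{limitr0j}(1): with $L$ fixed this is a fixed positive number incurred each time you check the hypothesis $\phi(s;z_j)<-\mathfrak{b}''$.

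The paper's proof avoids this by taking $L$, $\omega$, $\Omega$, $\theta$ to depend on $\eta_1$ (see the choices $\omega=\exp(-(-\log\eta_1)^{1/4})$, $\Omega=\exp((-\log\eta_1)^{1/2})$, $L=(-\log\eta_1)^{1/5}$ in \eqref{choices}), so that as $\eta_1\to0$ all three error terms \emph{and} the $C_1/L$ conversion error tend to zero. The resulting one-step bound is $|\phi(s;z_1)-\phi(s;z_2)|\le C_4(-\log\eta_2)^{-1/5}$ as in \eqref{phigoingdown}. Along the super-exponentially decaying sequence $\nu_{k+1}=\nu_k^{c_5^{-1}}$ one has $(-\log\nu_k)^{-1/5}=c_5^{(k-1)/5}(-\log\nu_1)^{-1/5}$, which is a genuinely geometric sequence in $k$, and its sum is $O((-\log\nu_1)^{-1/5})\to 0$ as $\nu_1\to0$. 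Without the $\eta$-dependent choice of the auxiliary parameters there is no small quantity to anchor the geometric series to. If you want to stay closer to your framework, you would at minimum need to re-choose $(L,\theta,\omega,\Omega)$ at each step $j$, shrinking the truncation budget and the $C_1/L$ budget geometrically in $j$, and then check that the factor $L\Omega^2 C^{L+1}\omega^{-2L}/c_1^{L+1}$ does not outpace the decay of $\eta_j^{sc/2}$ — which is essentially what the paper's explicit parameter choices accomplish.
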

\begin{proof}
From the second part of \Cref{estimatemoment1}, there exists $c_1 \in (0,1)$ such that
\be\label{upcondition}
\Phi_L(s; E + \iu \eta) \ge c_1^{L+1}.
\ee
for all $E\in \bbR$ such that $|E| \in [\eps, B]$ and $\eta \in (0,1)$. Let $C_2>1$ and $c_2 > 0$ be the two constants given by \Cref{l:cutoffclose}. 
We begin by considering two points $z_1 = E + \iu \eta_1$ and $z_2 = E + \iu \eta_2$ with $\eta_1, \eta_2 \in (0,1)$ such that $\eta_1 \in [\eta_2^{(1 - c_2s/7)^{-1}}, \eta_2]$, so that $\eta_1 \le \eta_2$. We also suppose that $\phi(s;E + \iu \eta_2) < - \delta/2$. 
We make the choice of parameters
\be\label{choices}
\omega = \exp\left( -  ( -\log \eta_1)^{1/4}  \right),
\quad
\Omega = \exp\left( (- \log \eta_1)^{1/2} \right),
\quad
L = ( - \log \eta_1)^{1/5}.
\ee
We also set $\theta = (c_2^{-1} \omega)^{C_2^{-1}}$.

Using \Cref{l:phicutofflower}, \Cref{l:phicutoff}, \Cref{l:cutoffclose}, and the  elementary inequality $\big| |x|^s  - |y|^s \big| \le | x -y|^s$, we find
\begin{align}\notag
&\left|\E
\left[
\sum_{v \in \bbV(L)} \left|   
 R_{0v}(z_1) 
\right|^s
\right]
-
\E
\left[
\sum_{v \in \bbV(L)} \left|   
 R_{0v}(z_2) 
\right|^s
\right]\right|
\\
&\le \theta L C_3^{1+L} + L C_3^{1+L} \Omega^{-c_3} \omega^{-C_3L} + L C_3^{L+1} \Omega^2 |\eta_1 - \eta_2 |^s (\eta_1 \eta_2)^{-s/2}
\cdot \eta_2^{c_2s } \omega^{-2 L },
\label{crudebound12}
\end{align}
where  $C_3 >1$ and $c_3 >0$ are constants.
Then by \eqref{choices} and \eqref{crudebound2}, there exists $\eta_0(s, c_1, c_2, C_2, c_3, C_3)>0$ such that, if $\eta_2 \in (0 , \eta_0)$, then
\be\label{crudebound2}
\left|\E
\left[
\sum_{v \in \bbV(L)} \left|   
 R_{0v}(z_1) 
\right|^s
\right]
-
\E
\left[
\sum_{v \in \bbV(L)} \left|   
 R_{0v}(z_2) 
\right|^s
\right]\right|
\le \frac{1}{4} \cdot c_1^{L+1}. 
\ee
Using \eqref{upcondition}, and \eqref{combineme2}, \eqref{mango1}, and \eqref{mango2}, we find that there exists $C_4 > 1$ such that 
\be \label{phigoingdown}
\big| \phi(s;z_1) - \phi(s;z_2) \big|
\le \frac{C_4}{L} + \frac{1}{4} \cdot c_1^{L+1}
< C_4 (  - \log \eta_2)^{-1/5},
\ee
where we increased the value of $C_4$ in the second inequality.

Now consider a decreasing sequence $\{\nu_k\}_{k=1}^\infty$ of reals such that $\nu_1 < \eta_0$, $\phi(s; E + \iu \nu_1) < -\delta$, and $\nu_{k+1} = \nu_k^{(1 - c_2s/7)^{-1}}$ for all $k \in \Zplus$. Set $w_k = E + \iu \nu_k$. With $\mathfrak{b}$ chosen as in the statement of this lemma, we will show by induction that there exists $\mathfrak a(\mathfrak b)>0$ such that $\phi(s; w_k) < - \mathfrak b$ for all $k\in \Zplus$ if $\nu_1 < \mathfrak a$.  We may suppose that $\mathfrak b > \delta /2$. 
For brevity, we set $c_5 = 1 - c_2s/7$.

We now claim that $\phi(s; w_j) < - \mathfrak b$ for all $j \in \Zplus$; we will prove this claim by induction. 
For the induction hypothesis at step $n \in \Zplus$, suppose that $\phi(s; w_j) < - \mathfrak b$ holds for all $j \le n$.
We will prove the same estimate holds for $j = n+1$. 

The bound \eqref{phigoingdown} gives, for all $k\le n$, that
\begin{align*}
\left|
\phi(s;w_k) - \phi(s;w_{k+1}) 
\right|  &\le C_4 ( - \log \nu_k)^{-1/5}\\
 &\le C_4 \left( - \log\left( \nu_1^{c_5^{-k}} \right)\right)^{-1/5}
\le C_4 c_5^{k/5} ( - \log \nu_1)^{-1/5}.
\end{align*}
This previous line implies that
\begin{align}\notag
\left|
\phi(s;w_1) - \phi(s;w_{n+1})
\right|
&\le 
\sum_{k=1}^{n} 
\left|
\phi(s;w_k) - \phi(s;w_{k+1})
\right| \\
&\le C_4 ( - \log \nu_1)^{-1} \sum_{k=1}^\infty c_5^k\le C_6 ( - \log \nu_1)^{-1/5}\label{returnto}
\end{align}
for some $C_6(C_4, c_5) >1$. We choose $\mathfrak a$ so that 
\bex
C_6 ( - \log \mathfrak a)^{-1} < \delta - \mathfrak b.
\eex
Then \eqref{returnto} and the assumption that $\nu_1 < \mathfrak{a}$ implies that 
\be\label{returnto2}
\phi(s;w_{n+1})
\le \phi(s;w_1) + C_6 ( - \log \nu_1)^{-1/5} <  - \delta + (\delta - \mathfrak b) < -\mathfrak b.
\ee
This completes the induction step, and shows that $\phi(s; w_j) < - \mathfrak b$ for all $j \in \Zplus$.

We now claim that for any that any $\tilde w = E + \iu \tilde \nu$ with $\tilde \nu \in (0, \mathfrak a)$, we have $
\phi(s; \tilde w) < - \mathfrak b$. To see this, observe that there is a unique index $k \in \Zplus$ such that $\nu_k > \tilde \nu \ge \nu_{k+1}$. Consider the sequence $\{\tilde w_j\}_{j=1}^{k+1}$ defined by $\tilde w_j = w_j$ for $j \neq k+1$, and $\tilde w_{k+1} = E+ \iu \tilde \nu$. Then the same induction argument that gave \eqref{returnto2} also gives 
 \be\label{returnto3}
\phi(s; \tilde w_{k+1})
 < -\mathfrak b.
\ee
This completes the proof.
\end{proof}
The following lemma is in some sense the reverse of the previous one. It shows that if 
\bex\limsup_{\eta \rightarrow 0}
{\phi(s; E + \iu \eta)} < 0,\eex
 then $\phi(s, E + \iu \eta_0)<0$ if $\eta_0$ is chosen sufficiently small, in a way that is independent of the energy $E$ (assuming $|E| \in [\eps, B]$). In other words, the negativity of $\limsup_{\eta \rightarrow 0}\phi(s; E + \iu \eta) $ propagates away from the real axis in a uniform way.
\begin{lem}\label{l:goingup}
Fix $s \in (\alpha,1)$ and $ \delta \in (0,1)$. Then for every $\mathfrak b \in (0, \delta)$, there exists $\mathfrak a (\delta, s) \in (0,1)$ such that following holds. For every $E\in \bbR$ such that $|E| \in [\eps, B]$ and 
\be\label{limsuphypo}
\limsup_{\eta \rightarrow 0}
\phi(s; E + \iu \eta) < - \delta,
\ee
we have
\bex
\sup_{\eta \in (0, \mathfrak{a}]} \varphi (s; E + \mathrm{i} \eta) < -\mathfrak{b}.
\eex
\end{lem}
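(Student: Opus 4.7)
The plan is to argue by contradiction and compactness, leveraging the \emph{stronger} form of \Cref{l:goingdown} (namely, that its proof actually establishes $\phi(s; E + \iu \eta) < -\mathfrak{b}$ uniformly for all $\eta \in (0, \eta_0)$, not merely $\limsup \phi(s;E+\iu \eta) < -\mathfrak{b}$; this is the content of the paragraph surrounding \eqref{returnto3}) together with the joint continuity of $\phi(s; E + \iu \eta)$ in $(E, \eta)$ on the set $\{\epsilon \le |E| \le B, \eta \in (0,1)\}$. The latter follows from the uniform bound $|\phi_L - \phi| \le C/L$ of \Cref{limitr0j}(1) combined with the fact that each $\phi_L(s; E + \iu \eta)$ is a continuous function of $(E, \eta)$ for $\eta > 0$.

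Fix $\mathfrak{b} \in (0, \delta)$ and choose $\gamma \in \big(0, (\delta-\mathfrak{b})/3\big)$. Applying \Cref{l:goingdown} with the parameters there being $(\delta - \gamma, \mathfrak{b} + \gamma)$ produces a uniform constant $\mathfrak{a}_1 = \mathfrak{a}_1(\delta, \mathfrak{b}, \eps, B, s) > 0$. For each $E$ satisfying \eqref{limsuphypo}, introduce
$$
\xi_E \;=\; \sup\bigl\{\eta_0 \in (0, \mathfrak{a}_1] : \phi(s; E + \iu \eta) < -(\delta - \gamma) \text{ for all } \eta \in (0, \eta_0)\bigr\},
$$
which is strictly positive by the hypothesis. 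The lemma will follow once I establish that $\xi_* := \inf_E \xi_E > 0$ over all $E$ with $|E| \in [\eps, B]$ satisfying \eqref{limsuphypo}, for then any $\eta \in (0, \xi_*)$ gives $\phi(s; E + \iu \eta) < -(\delta - \gamma) < -\mathfrak{b}$, so we may take $\mathfrak{a} = \xi_*$.

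Suppose toward contradiction that $\xi_{E_n} \to 0$ for some sequence $\{E_n\}$ of energies satisfying \eqref{limsuphypo}. By compactness extract $E_n \to E_*$ with $|E_*| \in [\eps, B]$. Continuity of $\phi$ in $\eta$ and the supremum definition force $\phi(s; E_n + \iu \xi_{E_n}) = -(\delta - \gamma)$, and applying the stronger form of \Cref{l:goingdown} at the point $(E_n, \xi_{E_n}/2)$ yields $\phi(s; E_n + \iu \eta) < -(\mathfrak{b} + \gamma)$ for all $\eta \in (0, \xi_{E_n}/2)$.

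The main obstacle is to convert the shrinking windows $\xi_{E_n} \to 0$ into a genuine contradiction at $E_*$. I would resolve this by selecting a height $\eta^\dagger > 0$ at which $\phi(s; E_* + \iu \eta^\dagger)$ is sufficiently negative, then transferring this bound back to the $E_n$ via the continuity of $\phi$ in $E$ at the fixed positive height $\eta^\dagger$, which forces $\phi(s; E_n + \iu \eta^\dagger) < -(\delta - \gamma)$ for $n$ large and hence $\xi_{E_n} \ge \eta^\dagger$, a contradiction. The existence of such an $\eta^\dagger$ (equivalently, that $E_*$ inherits a limsup bound of the form $\limsup_{\eta \to 0}\phi(s; E_* + \iu \eta) < -(\delta - \gamma/2)$ from its approximating sequence) is the step I expect to be the most delicate; my intention is to extract it from a diagonal argument that picks a sequence of heights $\tilde\eta_k \downarrow 0$ on which, for sufficiently large $n(k)$, the bound $\phi(s; E_{n(k)} + \iu \tilde\eta_k) < -(\delta - \gamma/2)$ holds (using that $\tilde\eta_k < \xi_{E_{n(k)}}$ is achievable since $\xi_{E_{n(k)}} > 0$), and then passing to the limit by continuity in $E$ at the fixed height $\tilde\eta_k$.
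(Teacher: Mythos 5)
Your overall strategy---compactness plus contradiction, leaning on the ``stronger'' uniform form of \Cref{l:goingdown}---does not close, and the gap is in the very step you flag as delicate: passing from the sequence $(E_n, \tilde\eta_n)$ to the limit point $E_*$.

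The difficulty is the following. When $\xi_{E_n}\to 0$, the only heights at which you control $\phi(s;E_n+\mathrm{i}\eta)$ are $\eta < \xi_{E_n}$, and these heights shrink to zero as $n\to\infty$. To transfer a bound from $E_n$ to $E_*$ at a \emph{fixed} positive height $\tilde\eta_k$, you need the inequality $\tilde\eta_k < \xi_{E_{n(k)}}$; but since $\xi_{E_n}\to 0$, this forces $n(k)$ to stay bounded for each fixed $k$, so there is no infinite subsequence along which you may invoke $E_n\to E_*$ at height $\tilde\eta_k$. The alternative of letting both $n$ and the height vary---say, working at $\tilde\eta_n = \xi_{E_n}/2$---is blocked because the modulus of continuity in $E$ supplied by \Cref{l:goingsideways} degenerates as $\eta\to 0$: the threshold $\mathfrak{a}$ in that lemma scales like a positive power of $\eta$ (this is visible in \eqref{blueberry3}, where the bound involves $\eta^{s(c_3-1)}$ with $c_3<1$). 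Hence there is no guarantee that $|E_n - E_*|$ falls within the tolerance at height $\xi_{E_n}/2$. In short, the set of energies satisfying \eqref{limsuphypo} is not known to be closed, nor is the function $\phi(s;\cdot+\mathrm{i}\eta)$ uniformly equicontinuous in $E$ as $\eta\downarrow 0$, and your contradiction needs one of these.

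What actually makes the lemma true is a quantitative rather than a soft argument. The paper climbs a geometric sequence of heights $\nu_{k+1}=\nu_k^{c_5}$ starting from a tiny $\nu_1$ (which exists pointwise in $E$ by \eqref{limsuphypo}), using the one-step continuity estimate \eqref{phigoingup}, whose error at height $\nu_k$ is $O\bigl((-\log\nu_k)^{-1/5}\bigr)$. The crucial feature is that the \emph{accumulated} error from $\nu_1$ up to $\mathfrak{a}$ telescopes to $O\bigl((-\log\mathfrak{a})^{-1/5}\bigr)$, i.e.\ it depends only on $\mathfrak{a}$ and not on $\nu_1$ or on $E$. This is exactly the uniformity in $E$ that your compactness scheme tries to obtain abstractly and cannot. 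If you want to salvage the proof, the fix is to replace the compactness machinery with that telescoping estimate: apply \eqref{phigoingup} inductively, and observe as in \eqref{apple}--\eqref{achoice1} that the geometric sum of errors is bounded in terms of $\mathfrak{a}$ alone.
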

\begin{proof}
From the second part of \Cref{estimatemoment1}, there exists $c_1 \in (0,1)$ such that
\bex
\Phi_L(s; E + \iu \eta) \ge c_1^{L+1}.
\eex
for all $E\in \bbR$ such that $|E| \in [\eps, B]$ and $\eta \in (0,1)$. Let $C_2>1$ and $c_2 > 0$ be the two constants given by \Cref{l:cutoffclose}. 
We begin by considering two points $z_1 = E + \iu \eta_1$ and $z_2 = E + \iu \eta_2$ with $\eta_1, \eta_2 \in (0,1)$ such that $\eta_1 \in [\eta_2, \eta_2^{(1 - c_2s/7)}]$, so that $\eta_1 \le \eta_2$. We also suppose that $\phi(s;E + \iu \eta_2) < - \delta/2$. 
Then repeating the calculations in \eqref{crudebound}, \eqref{crudebound2}, and \eqref{phigoingdown} (with the same parameter choices \eqref{choices}) shows that there exists $C_3 > 0$ such that 
\be \label{phigoingup}
\big| \phi(s;z_1) - \phi(s;z_2) \big|
< C_3 (  - \log \eta_2)^{-1/5}
\ee
for $\eta < C_3^{-1}$.

 We define a sequence $\{\tilde \nu_k \}_{k=1}^\infty$ in the following way. We let $\tilde \nu_1$ be any positive real number such that 
\be\label{initialcondition}
\phi(s; E + \iu x) < - \delta + \frac{\delta - \mathfrak b}{2}.
\ee
for all $x \in (0, \tilde \nu_1]$. 
The existence of such a $\nu_1$ is guaranteed by the assumption \eqref{limsuphypo}. 
For brevity, we set $c_5 = 1 - c_2 s/7$.
We define $\tilde \nu_k$ for $k \ge 1$ recursively by $\tilde \nu_{k+1} = \tilde \nu_k^{c_5}$. 
Let $\mathfrak a$ be a parameter to be determined later, and let $M \in \Zplus$ be the smallest integer such that $\nu_M \ge \mathfrak a$. We define the sequence $\{\nu_k \}_{k=1}^M$ by $\nu_k = \tilde \nu_k$ for $k < M$ and $\nu_M = \mathfrak a$, and set $w_k = E + \nu_k$.

We now show by induction that $\phi(s;w_k) < - \mathfrak b$ for all $k \in \unn{1}{M}$.  The base case $k=1$ holds by the definition $\nu_1 = \tilde \nu_1$. For the induction step, suppose that $\phi(s;E + \nu_k) < - \mathfrak b$ holds for all $k\le n$. We may suppose that $\mathfrak b > \delta /2$. The bound \eqref{phigoingup} gives,  for all $k\le n$, that
\begin{align*}
\left|
\phi(s;w_k) - \phi(s;w_{k+1}) 
\right|  &\le C_3 ( - \log \nu_k)^{-1/5}\\
 &\le C_3 \left( - \log\left( \nu_1^{c_5^k} \right)\right)^{-1/5}
\le C_3 c_5^{-k/5} ( - \log \nu_1)^{-1/5}.
\end{align*}
This previous line implies that
\begin{align}\notag
\left|
\phi(s;w_1) - \phi(s;w_{n+1})
\right|
&\le 
\sum_{k=1}^{n} 
\left|
\phi(s;w_k) - \phi(s;w_{k+1})
\right| \\
&\le C_3 ( - \log \nu_1)^{-1/5} \sum_{k=1}^M c_5^{-k/5} 
\le  C_3 c_5^{-(M+1)/5} ( - \log \nu_1)^{-1/5}.\label{apple}
\end{align}
Since $\nu_{M-1} < \mathfrak a$, we have
\bex
\nu_1^{c_5^{-M+1}} < \mathfrak a.
\eex
Rearranging the previous line gives 
\bex
c_5^{-(M+ 1)/5}\le  c_5^{-2/5} \left( \frac{ - \log \nu_1}{ - \log \mathfrak a} \right)^{1/5}.
\eex
Inserting this equation in \eqref{apple} gives 
\begin{align*}
\left|
\phi(s;w_1) - \phi(s;w_{n+1})
\right|
\le C_3  c_5^{-2/5}
 ( - \log \mathfrak a)^{-1/5}.
\end{align*}
 Choosing $\mathfrak a(C_3, c_5, \delta, \mathfrak b)> 0 $ sufficiently small in the previous inequality gives 
\be\label{achoice1}
\left|
\phi(s;w_1) - \phi(s;w_{n+1})
\right| \le \frac{\delta - \mathfrak b}{2}.
\ee
Combining the previous line with \eqref{initialcondition} gives
\bex
\phi(s;w_{n+1}) < - \mathfrak b,
\eex
as desired. This completes the induction step.
We conclude that
\be\label{inductionconclude}
\phi(s; E + \iu \mathfrak a ) 
= \phi(s; E + \iu \nu_M )  < - \mathfrak b.
\ee

We now show that $\phi(s; E + \iu \nu ) < - \mathfrak{b}$ for any $\nu \in (0, \mathfrak{a})$. Recall that $\tilde \nu_1$ was defined through \eqref{initialcondition}; if $\nu \le \tilde \nu_1$, then we are done by this inequality. Otherwise, let $m\in \Zplus$ be the unique index such that 
$\nu_m < \nu \le \nu_{m+1}$, and define the sequence $\{\hat \nu_k \}_{k=1}^{m+1}$ by $\hat \nu_k = \nu_k$ for $k \le m$, and $\hat \nu_{m+1} = \nu$. Then the same argument that gave \eqref{inductionconclude} applied to the sequence $\{\hat \nu_k \}_{k=1}^{m+1}$ (with the index $M$ replaced by $m+1$) and our choice of $\mathfrak {a}$ before \eqref{achoice1} together give
\bex
\phi(s; E + \iu \hat \nu_{m+1}) = \phi(s; E + \iu \nu) < - \mathfrak{b}.
\eex
This completes the proof.
\end{proof}
\subsection{Proof of \Cref{l:phicontinuity}}\label{s:continuityproof}
We can now establish \Cref{l:phicontinuity} by combining \Cref{l:goingsideways}, \Cref{l:goingdown}, and \Cref{l:goingup}.
\begin{proof}[Proof of \Cref{l:phicontinuity}]
By \Cref{l:goingdown} and \Cref{l:goingup}, there exists $\delta_1(\kappa, \omega) \in (0, 1) $ such that the following two claims hold for all $E\in I$. 
First, if
\bex
\limsup_{\eta\rightarrow 0}
\phi(s; E + \iu \eta) < -\kappa.
\eex
then for all $\eta \in (0, \delta_1]$,
\be \label{down}
\phi(s; E + \iu \eta) < -\kappa + \frac{\omega}{3}.
\ee
Second, if $\eta \in (0, \delta_1]$ and
\bex
\phi(s; E + \iu \eta) < -\kappa + \frac{2 \omega}{3},
\eex
then
\be\label{up}
\limsup_{\eta\rightarrow 0}
\phi(s; E + \iu \eta) < -\kappa + \eps.
\ee
Next, by \Cref{l:goingsideways}, there exists $\delta_2(\delta_1, \kappa, \omega)>0$ such that, if 
\bex
\phi(s; E_1 + \iu \delta_1/2) < -\kappa + \frac{\omega}{3}
\eex
then
\be\label{sideways}
\big|
\phi(s; E_2 + \iu \delta_1/2) - \phi(s; E_1 + \iu \delta_1/2)
\big|
< \omega/3
\ee
for all $E_1, E_2 \in I$ such that $|E_1 - E_2| < \delta_2$. 

Under the assumption that $|E_1 - E_2| < \delta_2$, using \eqref{down}, \eqref{up}, and \eqref{sideways} with the choice $\eta = \delta_1/2$ gives 
\bex
\limsup_{\eta\rightarrow 0}
\phi(s; E_2 + \iu \eta) < -\kappa + \eps,
\eex
as desired.
\end{proof}

\newpage

\chapter{The Mobility Edge for Large and Small $\alpha$}

\label{Scaling} 
\section{Scaling Near One}\label{s:alphanear1}

The goal of this section is to prove \Cref{l:crudeasymptotic}, which states that any solution in $E$ to $\lambda(E,\alpha)=1$ (recall \eqref{lambdaEalpha}) must scale as $(1-\alpha)^{-1}$, as $\alpha$ tends to $1$. In \Cref{s:aEbEpreliminaries} we state some preliminary estimates on $\alpha$-stable laws. In \Cref{s:aEbEbounds} we estimate certain functionals of $a(E)$ and $b(E)$ (recall \eqref{opaque}), and in \Cref{s:asymptoticconclusion} we use these bounds to prove \Cref{l:crudeasymptotic}. Throughout this section, constants $C > 1$ and $c > 0$ will be independent of $\alpha$.

\subsection{Preliminaries}\label{s:aEbEpreliminaries}

We require the following tail bounds for nonnegative stable laws.
\begin{lem}\label{l:stabletailbounds}
There exists $C>1$ such that the following holds for all $\alpha \in [1/2, 1]$. Let $g_\alpha(x)$ be the density of a nonnegative $\alpha/2$-stable law. Then 
\be\label{stabletailbounds}
g_\alpha(x)  \le C \min(1 , x^{-1 -\alpha/2} ), \qquad
\big| g_\alpha'(x) \big| \le C \min(1 , x^{-2 -\alpha/2} ).
\ee
Further, for $x > C$, we have
\be\label{gprimenegative}
g_\alpha'(x) \le  - C^{-1}  x^{-2 - \alpha/2},
\ee
and
\be\label{gprimenegative2}
\big( x g_\alpha(x) \big)' \le - C^{-1} x^{-1 - \alpha/2}.
\ee
\end{lem}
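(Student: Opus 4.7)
The plan is to base all four bounds on a single explicit integral representation for $g_\alpha$ obtained by inverting the Laplace transform $\mathbb{E}[e^{-tX}] = \exp(-\Gamma(1-\alpha/2) t^{\alpha/2})$ from \eqref{ytsigma}. Deforming the Bromwich contour onto the branch cut along the negative real axis produces the Hankel-type representation
\begin{equation*}
g_\alpha(x) = \frac{1}{\pi} \int_0^\infty e^{-r x}\, e^{-A_\alpha r^{\alpha/2} \cos(\pi\alpha/2)}\, \sin\!\bigl(A_\alpha r^{\alpha/2} \sin(\pi\alpha/2)\bigr)\, dr,
\end{equation*}
where $A_\alpha = \Gamma(1-\alpha/2)$. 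Since $\alpha/2 \in [1/4, 1/2]$, each of $A_\alpha$, $\cos(\pi\alpha/2)$, and $\sin(\pi\alpha/2)$ is bounded above and below by positive absolute constants; this uniformity is what lets the constant $C$ be chosen independently of $\alpha$.

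The uniform bound $g_\alpha(x) \le C$ follows from $|\sin| \le 1$, which reduces the integral to $\int_0^\infty e^{-c r^{\alpha/2}} dr \le C$. For the tail bound $g_\alpha(x) \le C x^{-1-\alpha/2}$, substituting $u = rx$ gives
\begin{equation*}
g_\alpha(x) = \frac{1}{\pi x} \int_0^\infty e^{-u}\, e^{-A_\alpha C_\alpha (u/x)^{\alpha/2}}\, \sin\!\bigl(A_\alpha S_\alpha (u/x)^{\alpha/2}\bigr)\, du
\end{equation*}
with $C_\alpha = \cos(\pi\alpha/2)$, $S_\alpha = \sin(\pi\alpha/2)$, after which $|\sin y| \le |y|$ extracts an extra factor of $x^{-\alpha/2}$. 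The derivative bounds in \eqref{stabletailbounds} are obtained by differentiating under the integral sign (each derivative pulls out a factor of $-r = -u/x$) and repeating these two arguments; integrability of the resulting integrals is checked by the substitution $v = r^{\alpha/2}$, which reduces everything to Gamma functions whose arguments lie in the bounded range $[2, 8]$.

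For the strict sign bounds \eqref{gprimenegative} and \eqref{gprimenegative2}, I would extract the leading-order asymptotic by expanding $\sin y = y + O(y^3)$ and $e^{-y} = 1 + O(y)$ in the small-argument factors of the $u = rx$ representation. Using $\int_0^\infty e^{-u} u^{\alpha/2}\, du = \Gamma(1+\alpha/2)$ together with the reflection identity $\Gamma(1-\alpha/2)\Gamma(1+\alpha/2) = (\alpha/2)\pi/\sin(\pi\alpha/2)$, the leading term collapses to the clean expression
\begin{equation*}
g_\alpha(x) = \tfrac{\alpha}{2}\, x^{-1-\alpha/2} + O\bigl(x^{-1-3\alpha/2}\bigr),
\end{equation*}
and differentiating (again under the integral, legitimately) yields
\begin{equation*}
g_\alpha'(x) = -\tfrac{\alpha}{2}\bigl(1+\tfrac{\alpha}{2}\bigr)\, x^{-2-\alpha/2} + O\bigl(x^{-2-3\alpha/2}\bigr), \qquad \bigl(x g_\alpha(x)\bigr)' = -\tfrac{\alpha^2}{4}\, x^{-1-\alpha/2} + O\bigl(x^{-1-3\alpha/2}\bigr).
\end{equation*}
For $\alpha \ge 1/2$ the leading coefficients $\alpha(1+\alpha/2)/2$ and $\alpha^2/4$ are bounded below by absolute positive constants, while the relative error is $O(x^{-\alpha})$, so taking $x \ge C$ for $C$ a sufficiently large absolute constant makes the error terms less than half the leading terms, which delivers \eqref{gprimenegative} and \eqref{gprimenegative2}.

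The main obstacle will be making the error terms in the expansion genuinely uniform in $\alpha \in [1/2, 1]$. I intend to handle this by splitting the $u$-integral at a threshold such as $u = x^{1/2}$: on $u \le x^{1/2}$ the argument $A_\alpha S_\alpha (u/x)^{\alpha/2}$ is small, so the Taylor remainder $|\sin y - y| \le |y|^3/6$ together with $|e^{-y}-1| \le |y|$ yields a pointwise error bounded by $C (u/x)^{3\alpha/2}$; on $u > x^{1/2}$ the crude bounds $|\sin| \le 1$ and $e^{-A_\alpha C_\alpha (u/x)^{\alpha/2}} \le 1$, combined with the exponential decay $e^{-u}$, produce a contribution that is super-polynomially small in $x$. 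Integrating the first bound against $e^{-u}$ gives an error of order $x^{-1-3\alpha/2}$ uniformly in $\alpha$, and the same strategy applied after differentiating under the integral controls the derivative errors. Once this uniform error control is in place, the stated bounds follow immediately.
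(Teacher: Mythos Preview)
Your proposal is correct and takes a genuinely different route from the paper. The paper works instead from Pollard's convergent series
\[
g_\alpha(x) = \frac{1}{\pi} \sum_{j\ge 1} \frac{(-1)^{j+1}}{j!}\, \Gamma(1+\alpha j/2)\, \sin(\pi\alpha j/2)\, x^{-1-\alpha j/2},
\]
obtains the uniform bounds in \eqref{stabletailbounds} by Fourier inversion of the characteristic function, and proves the tail and sign bounds by isolating the $j=1$ term and bounding the remainder $\sum_{j\ge 2}$ by a geometric-type sum. Your Hankel integral is in fact the representation from which this series is derived (expand the complex exponential and integrate term by term), so the two arguments are close cousins; the series route is a bit cleaner for \eqref{gprimenegative} and \eqref{gprimenegative2} because the error control is a single tail-sum estimate rather than a split-and-Taylor argument. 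One small slip in your writeup: the cross term $(e^{-y_1}-1)\sin y_2$ is $O((u/x)^{\alpha})$, not $O((u/x)^{3\alpha/2})$, so your asymptotic errors are $O(x^{-1-\alpha})$, $O(x^{-2-\alpha})$, $O(x^{-1-\alpha})$ respectively --- but the relative error is still $O(x^{-\alpha/2})$, so the sign conclusions go through unchanged.
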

\begin{proof}
The uniform bounds 
\bex
g_\alpha(x)  \le C, \qquad
\big| g_\alpha'(x) \big| \le C .
\eex
are easily obtained from the Fourier inversion formula, using the explicit representation of $\hat g_\alpha(k)$ given by \eqref{xtsigma} and recalling that the Fourier transform of $g'(x)$ is $\iu k \hat g(k)$.

We recall the following series expansion from \cite[(4)]{pollard1946representation} (see also \cite[(7)]{penson2010exact}). The representation
\bex
g_\alpha(x) = \frac{1}{\pi} \sum_{j=1}^\infty 
\frac{(-1)^{j+1}}{j! x^{1+\alpha j/2}} \Gamma( 1 + \alpha j/2) \sin(\pi \alpha j/2)
\eex
is valid for all $\alpha \in (0,1)$ and $x>0$. For $x>2$, it gives
\bex
\left| g_\alpha(x)\right|
\le \sum_{j=1}^\infty 
\frac{1}{j! x^{1+\alpha j/2}} \Gamma( 1 + j)
\le C x^{-1 - \alpha/2},
\eex
for some $C>1$. 
Differentiating term by term for $x>2$ gives
\bex
g'_\alpha(x) = \frac{1}{\pi x^{2+\alpha/2 }} \sum_{j=1}^\infty 
\frac{(-1)^{j}(1+\alpha j/2)}{j! x^{\alpha (j-1)/2}} \Gamma( 1 + \alpha j/2) \sin(\pi \alpha j/2). 
\eex
A similar bound gives $\big| g_\alpha'(x) \big| \le C  x^{-2 -\alpha/2}$. We next write the summation as
\bex
- (1 + \alpha/2)\Gamma(1+\alpha/2) \sin(\pi \alpha/2 ) + 
\sum_{j=2}^\infty 
\frac{(-1)^{j}(1+\alpha j/2)}{j! x^{\alpha (j-1)/2}} \Gamma( 1 + \alpha j/2) \sin(\pi \alpha j/2),
\eex
and bound, using $\alpha \ge 1/2$,
\bex
\left|
\sum_{j=2}^\infty 
\frac{(-1)^{j}(1+\alpha j/2)}{j! x^{\alpha (j-1)/2}} \Gamma( 1 + \alpha j/2) \sin(\pi \alpha j/2)
\right|
\le
\sum_{j=2}^\infty 
\frac{(1+ j/2)}{j! x^{(j-1)/4}} \Gamma( 1 +  j/2).
\eex
This bound is smaller than 
\bex
\frac{1}{2}(1 + 1/4)\Gamma(1+1/4) \sin(\pi/4 ) \le \frac{1}{2} (1 + \alpha/2)\Gamma(1+\alpha/2) \sin(\pi \alpha/2 )
\eex
for $x > C$ if $C>1$ is chosen large enough and $\alpha \in [1/2,1]$. This proves \eqref{gprimenegative}.
Next, we have
\bex
x g_\alpha(x) = \frac{1}{\pi} \sum_{j=1}^\infty 
\frac{(-1)^{j+1}}{j! x^{\alpha j/2}} \Gamma( 1 + \alpha j/2) \sin(\pi \alpha j/2),
\eex
and differentiating term by term for $x > 2$ gives
\bex
\big(x g_\alpha(x)\big)' = \frac{1}{\pi} \sum_{j=1}^\infty 
\frac{\alpha j}{2} \frac{(-1)^{j}}{j! x^{1+\alpha j/2}} \Gamma( 1 + \alpha j/2) \sin(\pi \alpha j/2).
\eex
We write this as 
\be\label{prv}
- \frac{\alpha}{2\pi} \frac{1}{x^{1 + \alpha/2}} \Gamma(1 + \alpha/2) \sin(\pi \alpha/2)
+ 
\frac{1}{\pi x^{1 + \alpha/2}} \sum_{j=2}^\infty 
\frac{\alpha j}{2} \frac{(-1)^{j}}{j! x^{\alpha (j-1)/2}} \Gamma( 1 + \alpha j/2) \sin(\pi \alpha j/2).
\ee
For every $\eps > 0$, there exists $C(\eps)>1$ such that 
\bex
\left|\sum_{j=2}^\infty 
\frac{\alpha j}{2} \frac{(-1)^{j}}{j! x^{\alpha (j-1)/2}} \Gamma( 1 + \alpha j/2) \sin(\pi \alpha j/2)\right|\le \eps
\eex
for $x > C$ uniformly for $\alpha \in [1/2, 1]$. Together with \eqref{prv}, this proves the final bound \eqref{gprimenegative2}.
\end{proof}

\subsection{Bounds on $a(E)$ and $b(E)$}\label{s:aEbEbounds}
In this section, we develop large $E$ asymptotics for $a(E)$ and $b(E)$ (recall \eqref{opaque}). We begin with a definition.

\begin{definition}
Fix $\gamma \in (0,1)$.
We
define the functions $F_\gamma\colon\bbR\times \bbR_+ \times \bbR_+ \rightarrow \bbR$ and 
$G_\gamma\colon{\bbR \times \bbR_+ \times \bbR_+} \rightarrow \bbR$ by
\begin{flalign}\label{FandG}
F_\gamma (E,x,y) &= \E \left[ \left( E + x^{2/\alpha} S - y^{2/\alpha} T   \right)_-^{-\gamma}  \right]; \qquad G_\gamma(E,x,y) = \E \left[ \left( E + x^{2/\alpha} S - y^{2/\alpha} T   \right)_+^{-\gamma}  \right],
\end{flalign} 
where $S$ and $T$ are independent, nonnegative $\alpha/2$-stable random variables.
\end{definition}
\begin{rem}\label{r:abfixedpointeqn}
We observe that the fixed point equations \eqref{fixedpoint} can be written as 
\bex
a(E) = F_{\alpha/2}(E , a , b), \qquad b(E) = G_{\alpha/2}(E, a, b).
\eex
\end{rem}

\begin{lem}\label{l:abasymptotic}
Suppose $a(E)$ and $b(E)$ solve \eqref{fixedpoint}.
Then there exists $c>0$ such that for all $\alpha \in (1-c, 1)$, $\gamma\in(0,1)$, and $E > c^{-1}$, 
\be\label{Fgammabound}
\left| F_{\gamma}\big(E , a(E) , b(E)\big) \right|  \le  \frac{c^{-1} E^{-\alpha - \gamma}}{1 - \gamma} ,\qquad \left| G_\gamma\big(E, a(E) ,b(E)\big) - E^{ - \gamma} \right| \le  \frac{c^{-1}  E^{-\alpha - \gamma}}{1-\gamma}.
\ee
\end{lem}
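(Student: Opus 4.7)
The plan is to estimate $F_\gamma$ and $G_\gamma$ via the density $f_Y$ of the $\alpha/2$-stable random variable $Y := b^{2/\alpha} T - a^{2/\alpha} S$, using two inputs: an a priori size bound $a(E), b(E) \le C E^{-\alpha/2}$, and the convolution-based density estimate $f_Y(y) \le C b \, y^{-1-\alpha/2}$ for $y \gg b^{2/\alpha}$ (and symmetrically $f_Y(y) \le C a \, |y|^{-1-\alpha/2}$ for $y \ll -a^{2/\alpha}$), both derived from \Cref{l:stabletailbounds}. The density estimate follows by applying $g_\alpha(x) \le C \min(1, x^{-1-\alpha/2})$ to the representation
\[
f_Y(y) = \int_0^\infty b^{-2/\alpha} g_\alpha\!\big((y+u) b^{-2/\alpha}\big)\, a^{-2/\alpha} g_\alpha(u a^{-2/\alpha}) \, du;
\]
a short computation shows that the $b^{-2/\alpha}$ prefactor combines with $(yb^{-2/\alpha})^{-1-\alpha/2}$ to produce exactly $b \cdot y^{-1-\alpha/2}$.

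For the a priori bound on $b$, I would work with the identity $b = \mathbb{E}[(E - Y)_+^{-\alpha/2}]$ and split the integration region into $y \le E/2$ (where $(E-y)^{-\alpha/2} \le (E/2)^{-\alpha/2}$ gives a contribution at most $C E^{-\alpha/2}$) and $y \in [E/2, E]$ (where $f_Y(y) \le C b E^{-1-\alpha/2}$ and $\int_0^{E/2} w^{-\alpha/2}\,dw = (E/2)^{1-\alpha/2}/(1-\alpha/2)$ together give a contribution $\le C b E^{-\alpha}/(1-\alpha/2)$). This self-consistently closes, yielding $b \le C E^{-\alpha/2}$ for $E$ large and $\alpha \in (1-c,1)$ (so that $1/(1-\alpha/2)$ remains bounded). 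The bound $a \le C E^{-\alpha/2}$ follows similarly, since $a = \int_E^\infty (y-E)^{-\alpha/2} f_Y(y)\, dy$ is controlled by $C b E^{-\alpha}$ using the tail density bound.

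The bound on $F_\gamma = \int_E^\infty (y-E)^{-\gamma} f_Y(y)\, dy$ then follows by splitting at $2E$: on $[E, 2E]$ one uses $f_Y(y) \le CbE^{-1-\alpha/2}$ to obtain a contribution $\le C b E^{-\alpha/2 - \gamma}/(1-\gamma)$, and on $[2E, \infty)$ one uses $f_Y(y) \le Cby^{-1-\alpha/2}$ to obtain $\le C b E^{-\alpha/2 - \gamma}$; substituting $b \le C E^{-\alpha/2}$ yields the claim. For $G_\gamma - E^{-\gamma}$, I would decompose
\[
G_\gamma - E^{-\gamma} = \int_{-\infty}^E \big[(E-y)^{-\gamma} - E^{-\gamma}\big] f_Y(y)\, dy - E^{-\gamma} \mathbb{P}(Y > E),
\]
control the tail via $\mathbb{P}(Y > E) \le CbE^{-\alpha/2}$, and for the main integral distinguish the regions $|y| \le E/2$ (the mean value theorem gives $|(E-y)^{-\gamma} - E^{-\gamma}| \le C\gamma |y| E^{-\gamma-1}$, and a truncated first-moment estimate $\mathbb{E}[|Y|\mathbbm{1}_{|Y|\le E/2}] \le C(a+b)E^{1-\alpha/2}$ from the tail $\mathbb{P}(|Y|>t) \le C(a+b) t^{-\alpha/2}$ finishes it), $y \in (E/2, E)$ (integrate $(E-y)^{-\gamma}$ against the density bound exactly as in $F_\gamma$), and $y < -E/2$ (bound $|(E-y)^{-\gamma} - E^{-\gamma}| \le E^{-\gamma}$ and use $\mathbb{P}(Y < -E/2) \le CaE^{-\alpha/2}$).

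The main obstacle is the a priori size bound: the density estimate needed for Step 1 presupposes $b^{2/\alpha} \ll E$ in order to use $g_\alpha(x) \le Cx^{-1-\alpha/2}$, yet this estimate is precisely what is used to conclude $b \le CE^{-\alpha/2}$. Closing the bootstrap requires first showing solutions are finite (from integrability of $(E-Y)_+^{-\alpha/2}$, which holds because $f_Y$ is bounded) and then iterating. A secondary subtlety near $\alpha = 1$ is that the skewness $\beta(Y) = (b-a)/(a+b)$ may approach $\pm 1$, which is why one must work with the convolution representation rather than a generic stable-density bound.
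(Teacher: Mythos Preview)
Your approach matches the paper's proof. The circularity you flag does not exist: since $\min(1, x^{-1-\alpha/2}) \le x^{-1-\alpha/2}$ trivially, \Cref{l:stabletailbounds} gives $g_\alpha(x) \le C x^{-1-\alpha/2}$ for \emph{all} $x > 0$, and hence the convolution bound $f_Y(y) \le C b \, y^{-1-\alpha/2}$ holds for every $y > 0$ with no assumption on the size of $b$. The paper's presentation exploits this by first deriving the $F_\gamma, G_\gamma$ estimates \eqref{Fgammaestimate}--\eqref{Ggammaestimate} in terms of $a$ and $b$ unconditionally, then specializing to $\gamma = \alpha/2$ and invoking the fixed-point equations $a = F_{\alpha/2}$, $b = G_{\alpha/2}$ to close $a \le C b E^{-\alpha}$ and $b \le C E^{-\alpha/2}$; substituting back into the general-$\gamma$ estimates finishes. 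No iteration or bootstrap is needed.
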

\begin{proof}

Throughout this proof, we abbreviate $a = a(E)$ and $b = b(E)$. Set $Y = a^{2/\alpha} S - b^{2/\alpha} T$, where $S$ and $T$ are independent, nonnegative $\alpha/2$-stable random variables, and let $g_\alpha(x)$ be the density of a nonnegative $\alpha/2$-stable law. Then the density of $a^{2/\alpha}S$ is $a^{-2/\alpha} g_\alpha(x a^{-2/\alpha})$, and the density of $-b^{2/\alpha}T$ is $b^{-2/\alpha} g_\alpha( - x b^{-2/\alpha})$. The density $f(y)$ of $Y$ is given by their convolution. For $y > 0$, we bound
\begin{align}
	\begin{aligned}
f(y) &= \int_{\bbR} a^{-2/\alpha} g_\alpha(x a^{-2/\alpha}) \cdot b^{-2/\alpha} g_\alpha\big( - (y-x) b^{-2/\alpha}\big)\, dx  \\ 
&= \int_{y}^\infty a^{-2/\alpha} g_\alpha(x a^{-2/\alpha}) \cdot b^{-2/\alpha} g_\alpha\big( (x-y) b^{-2/\alpha}\big)\, dx\\ & \le  \sup_{x >y} a^{-2/\alpha} g_\alpha(x a^{-2/\alpha}) \le C a  y^{-1 - \alpha/2 },
\end{aligned} 
\label{yuppertail}
\end{align}

\noindent for some constant $C>1$, where for the last inequality we used \Cref{l:stabletailbounds}.
Similarly, 
\be\label{ylowertail}
f(y) \le C b |y|^{-1 - \alpha/2 }, \qquad \text{for $y < 0$}.
\ee

We write the first equation in \eqref{FandG} as
\begin{align}
F_{\gamma} (E , a , b)  &=   \int_{-\infty}^{ -E } |E + y|^{-\gamma}  f(y) \, dy =
\int_{- 3E/2 }^{-E} 
|E + y|^{-\gamma}  f(y) \, dy
 +
\int_{-\infty}^{ - 3E/2 }
|E + y|^{-\gamma}  f(y) \, dy.\label{Ffirstpiece}
\end{align}
Using \eqref{ylowertail} and substituting $y = Ex$ in the first term of \eqref{Ffirstpiece}, we get 
\begin{align*}
\int_{- 3E/2  }^{-E} 
|E + y|^{-\gamma}  f(y) \, dy 
&\le C b E^{- 1 - \alpha/2} \int_{- 3E/2 }^{-E}  |E + y|^{-\gamma} \, dy \\
&\le Cb E^{- 1 - \alpha/2} \int_{-3/2}^{-1} E^{-\gamma} | 1 + x|^{-\gamma} E \, dx \le C(1- \gamma)^{-1}b  E^{-\alpha/2 - \gamma}.
\end{align*}
For the second, 
\begin{align*}
\int_{-\infty}^{ - 3E/2  }
|E + y|^{-\gamma}  f(y) \, dy &= 
\int_{-\infty}^{ - 3/2  }
|E + Ex |^{-\gamma}  f(Ex) \,  E \, dx \\
&= E^{1 - \gamma}
\int_{-\infty}^{ - 3/2  }
|1 + x |^{-\gamma}  f(Ex)  \, dx \\
&\le C  b E^{1 - \gamma}
\int_{-\infty}^{ - 3/2  }
|1 + x |^{-\gamma}  E^{-1 - \alpha/2} |x|^{-1 -\alpha/2}  \, dx \le C b E^{-\alpha/2 - \gamma}.
\end{align*}

\noindent Summing these estimates, we obtain
\be\label{Fgammaestimate}
\big| F_{\gamma} (E , a , b)  \big| \le  C(1- \gamma)^{-1}b  E^{-\alpha/2 - \gamma}.
\ee

Next, we write the second equation in \eqref{FandG} as
\bex
G_{\gamma}(E , a , b) =   \int_{-E}^{\infty}
|E + y|^{-\gamma}  f(y) \, dy.
\eex
We break the interval of integration into the three intervals $[-E, -E/2]$, $[-E/2, E/2]$, and $[E/2, \infty]$. For the first using \eqref{ylowertail} and substituting $y =Ex$, we find
\begin{align*}
\int_{-E  }^{ - E/2 }
|E + y|^{-\gamma}  f(y) \, dy
&\le Cb E^{-1  - \alpha/2} \int_{-E}^{-E/2} |E + y |^{-\gamma}  \, dy\\
&= C b E^{- 1 - \alpha/2} \int_{-1}^{-1/2} E^{-\gamma} | 1 + x|^{-\gamma} E \, dx \le Cb ( 1- \gamma)^{-1} E^{-\alpha/2 - \gamma}.
\end{align*}
Also, using \eqref{yuppertail} and substituting $y =Ex$, we find
\begin{align*}
\int_{E /2}^{ \infty }
|E + y|^{-\gamma}  f(y) \, dy
&\le C a \int_{E/2}^{\infty} |E + y |^{-\gamma} y^{-1 - \alpha/2} \, dy\\
&= C a E^{- 1 - \alpha/2} \int_{1/2}^{\infty} E^{-\gamma} | 1 + x|^{-\gamma} x^{-1 - \alpha/2} E \, dx \le Ca  E^{-\alpha/2 - \gamma}.
\end{align*}

It remains to evaluate 
\bex
\int_{-E/2  }^{ E/2  }
|E + y|^{-\gamma}  f(y) \, dy
=E^{-\gamma}\int_{-E/2  }^{ E/2  }
|1 + y/E|^{-\gamma}  f(y) \, dy.
\eex
We have the Taylor series expansion
\bex
( 1 + x)^{-\gamma} = 1 +  \gamma \cdot O \big( |x| \big)
\eex
for $|x| < 1/2$, where the implicit constant is uniform in $\gamma\in (0,1)$.
Inserting this expansion into the previous integral, the first term yields
\bex
E^{-\gamma}\int_{-E/2  }^{ E/2  } f(y) \, dy = E^{-\gamma} \left ( 1  + (a+b) \cdot O(E^{-\alpha/2}) \right),
\eex
where we used \eqref{yuppertail} and \eqref{ylowertail}.
The error term is bounded by
\begin{align}
C \gamma E^{-1 - \gamma}\int_{-E/2  }^{ E/2  }   |y| f(y) \, dy
& \le 
C \gamma E^{-1 - \gamma}
\left(
\int_{-1 }^{ 1 }   |y| f(y) \, dy +   a \int_{1 }^{ E/2  }  y^{-\alpha/2} dy + b \int_{-E/2 }^{-1  }  |y|^{-\alpha/2} \, dy
\right)\notag
\\
&\le 
C \gamma E^{-1 - \gamma}
\left(
1  +  (a+b) E^{1 - \alpha/2}
\right). 
\end{align}

\noindent Summing these estimates, we obtain
\begin{align}\label{Ggammaestimate}
	\begin{aligned} 
\big| G_{\gamma} (E , a , b) - E^{-\gamma} \big| &\le  
C(a+b) ( 1- \gamma)^{-1} E^{-\alpha/2 - \gamma}\\
& \qquad +  C E^{-\gamma}  (a+b) E^{-\alpha/2} + C \gamma E^{-1 - \gamma}
\left(
1  +  (a+b) E^{1 - \alpha/2}
\right).
\end{aligned} 
\end{align}
With $\gamma = \alpha/2$, we obtain from the assumption that $a = a(E)$ and $b = b(E)$ satisfy \eqref{fixedpoint}, \eqref{Fgammaestimate}, and \eqref{Ggammaestimate} that 
\be\label{Fgammaestimate2}
a \le  Cb  E^{-\alpha},
\ee
and 
\begin{align}\label{Ggammaestimate2}
| b - E^{-\alpha/2} | &\le  
C(a+b)  E^{-\alpha}+  E^{-\alpha/2} \cdot C (a+b) E^{-\alpha/2} + C  E^{-1 - \alpha/2}
\left(
1  +  (a+b) E^{1 - \alpha/2}
\right).
\end{align}
After substituting \eqref{Fgammaestimate2} into \eqref{Ggammaestimate2}, we see that there exists $C > 0$ such that $b < C E^{-\alpha/2}$ (otherwise \eqref{Ggammaestimate2} is a contradiction). 
Putting this bound into \eqref{Fgammaestimate} yields the first claim in \eqref{Fgammabound}. We also note that $b < C E^{-\alpha/2}$ and \eqref{Fgammaestimate2} together yield
\be\label{aplusb}
|a + b| \le C E^{-\alpha/2}.
\ee
The second claim of \eqref{Fgammabound} then follows from inserting \eqref{aplusb} into \eqref{Ggammaestimate}. This completes the proof.
\end{proof}

\begin{lem}\label{l:ablower}
Fix a constant $A > 1$. Then there exists $c(A) > 0$ such that for all $|E| < A$, we have $a(E) > c$ or $b(E) > c$ for all $\alpha \in (1/2,1)$.
\end{lem}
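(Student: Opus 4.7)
The plan is to use the fixed-point equations of \Cref{r:abfixedpointeqn} to lower-bound $a(E)+b(E)$; since $\max(a,b) \ge (a+b)/2$, this will immediately yield the claim. Writing $a = a(E)$, $b = b(E)$, and $Y := a^{2/\alpha} S_1 - b^{2/\alpha} S_2$ with $S_1, S_2$ independent nonnegative $\alpha/2$-stable laws, summing the identities $a = F_{\alpha/2}(E,a,b)$ and $b = G_{\alpha/2}(E,a,b)$ produces the master identity
\begin{equation*}
a + b \;=\; \mathbb{E}\bigl[|E + Y|^{-\alpha/2}\bigr].
\end{equation*}
The whole argument is to show that the right-hand side is bounded below by a positive constant depending only on $A$, by lower-bounding the integrand on a high-probability event.

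The first substantive step is a quantitative concentration bound: when $a+b$ is small, $Y$ is close to zero. Integrating the pointwise density bound $g_\alpha(x) \le C \min(1, x^{-1-\alpha/2})$ from \Cref{l:stabletailbounds} gives $\mathbb{P}[S > t] \le C' t^{-\alpha/2}$ with $C'$ uniform in $\alpha \in [1/2,1]$. Scaling and a union bound then yield
\begin{equation*}
\mathbb{P}\bigl[|Y| > 1\bigr] \;\le\; \mathbb{P}\bigl[a^{2/\alpha} S_1 > 1/2\bigr] + \mathbb{P}\bigl[b^{2/\alpha} S_2 > 1/2\bigr] \;\le\; C(a+b),
\end{equation*}
for some constant $C>1$ independent of $\alpha \in [1/2,1]$ and of $E$. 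On the complementary event $\{|Y| \le 1\}$, the assumption $|E| \le A$ gives $|E+Y| \le A+1$, and since $\alpha/2 < 1/2$ and $A+1 > 1$ we obtain the pointwise lower bound $|E+Y|^{-\alpha/2} \ge (A+1)^{-\alpha/2} \ge (A+1)^{-1/2}$.

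Inserting this lower bound into the master identity gives
\begin{equation*}
a + b \;\ge\; (A+1)^{-1/2} \cdot \mathbb{P}[|Y| \le 1] \;\ge\; (A+1)^{-1/2}\bigl(1 - C(a+b)\bigr),
\end{equation*}
which rearranges to the self-improving bound
\begin{equation*}
a + b \;\ge\; \frac{(A+1)^{-1/2}}{1 + C(A+1)^{-1/2}} \;=:\; 2c(A) > 0,
\end{equation*}
and hence $\max(a(E), b(E)) \ge c(A)$, yielding the lemma. The only substantive point requiring care is the uniformity of the constant $C$ in the stable tail bound over the range $\alpha \in [1/2,1]$, which is precisely the content of \Cref{l:stabletailbounds}; every other step is an elementary manipulation. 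I expect no serious obstacle beyond tracking this uniformity, since all constants produced in the argument depend only on $A$ and on the universal constant from \Cref{l:stabletailbounds}.
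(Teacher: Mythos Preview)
Your proof is correct and follows essentially the same strategy as the paper: bound $\mathbb{P}[|Y|>1]$ linearly in $a+b$ via the stable tail estimate, lower-bound the integrand on the complementary event, and close the loop with a self-improving inequality. The one difference is that you sum the two fixed-point identities to work directly with $a+b=\mathbb{E}[|E+Y|^{-\alpha/2}]$, whereas the paper bounds only $b=\mathbb{E}[(E+Y)_+^{-\alpha/2}]$ on the concentrated event; your version is slightly cleaner since it avoids having to track the sign of $E+Y$ on $\{|Y|\le 1\}$ when $|E|$ is small.
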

\begin{proof} 
By the symmetry of $a(E)$ and $b(E)$, it suffices to consider the case $E>0$. 
Throughout this proof, we abbreviate $a = a(E)$ and $b = b(E)$, and set $Y = a^{2/\alpha} S - b^{2/\alpha} T$, where $S$ and $T$ are independent, nonnegative $\alpha/2$-stable random variables. 
For $t >0$, we have 
\bex
\P[ Y > t] \le \P[ a^{2/\alpha} S  > t] \le C_0 (t a^{-2/\alpha})^{-\alpha/2  } = C_0 t^{-\alpha/2 } a 
\eex
for some $C_0>1$, by \eqref{stabletailbounds}. 
Similarly, 
\bex
\P[ Y < - t]  \le C_0 t^{-\alpha/2 } b.
\eex
We conclude that
\be\label{garlic}
\P\big[ Y \in ( -t, t) \big]  \ge 1 -  C_0 t^{-\alpha/2} (a  + b).
\ee
Together with \eqref{FandG} and \eqref{r:abfixedpointeqn}, the previous line yields \begin{align*}
b(E) = \E \left[ \left( E + Y   \right)_+^{-\alpha/2}  \right]
&\ge 
\E \left[ \left( E + Y   \right)_+^{-\alpha/2} \one_{\{Y \in ( -t, t) \}} \right] \\ &\ge  (E+t)^{-\alpha/2} \big( 1 -  C_0 t^{-\alpha/2} (a  + b)\big).
\end{align*}
Setting $t=1$ and using $E <A$, the previous line gives
\be\label{e:prev1}
(A+1)^{\alpha/2} b + C_0  (a+ b ) \ge 1.
\ee
This shows the theorem, after choosing $c$ depending on $C_0$ and $A$ (since we reach a contradiction if the quantity $\max(a,b)$ tends to zero).
\end{proof}

The proof of the following lemma is similar to the proof of \Cref{l:boundarybasics}, so we omit it.
\begin{lem}\label{l:boundarybasics2}
Fix $E \in \bbR$. 
Then
\bex
F_{\alpha}\big(E, a(E), b(E)\big) = \E\left[ \big( R_\loc(E) \big)^{\alpha}_+   \right], \qquad G_{\alpha}\big(E, a(E), b(E)\big) = \E\left[ \big( R_\loc(E) \big)^{\alpha}_-  \right].
\eex
\end{lem}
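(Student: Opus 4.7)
The plan is to reduce the identities to a pointwise computation from the explicit construction of $R_\loc(E)$. By \Cref{locre} and \Cref{pkappae}, we have the definitional equality
\[
R_\loc(E) = -\bigl(E + a(E)^{2/\alpha} S_1 - b(E)^{2/\alpha} S_2\bigr)^{-1},
\]
where $S_1, S_2$ are independent nonnegative $\alpha/2$-stable laws. Writing $X = E + a(E)^{2/\alpha} S - b(E)^{2/\alpha} T$ with $(S,T)$ as in the definition \eqref{FandG} of $F_\alpha$ and $G_\alpha$, the pair $(S,T)$ has the same joint law as $(S_1,S_2)$, so $R_\loc(E)$ has the same law as $-X^{-1}$. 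This is precisely the identification contained in \eqref{fixedpenultimate} in the proof of \Cref{l:boundarybasics}, which explains why the paper advertises the two arguments as analogous: in both cases the mechanism is to recognize $R_\loc$ as $-X^{-1}$ and then read off the claimed moment identity.

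Granted the representation $R_\loc(E) \stackrel{d}{=} -X^{-1}$, the rest is bookkeeping with the notational convention $(x)^{-\alpha}_{\pm} := \bigl((x^{-1})_{\pm}\bigr)^{\alpha}$ recalled just above \eqref{fixedpoint}. Pointwise one has $(-X^{-1})_+ = (X^{-1})_-$ and $(-X^{-1})_- = (X^{-1})_+$, so raising to the power $\alpha$ and taking expectations,
\[
\E\bigl[(R_\loc(E))_+^\alpha\bigr] = \E\bigl[((X^{-1})_-)^\alpha\bigr] = \E\bigl[(X)_-^{-\alpha}\bigr] = F_\alpha\bigl(E, a(E), b(E)\bigr),
\]
and symmetrically $\E\bigl[(R_\loc(E))_-^\alpha\bigr] = \E\bigl[(X)_+^{-\alpha}\bigr] = G_\alpha\bigl(E, a(E), b(E)\bigr)$, which are the claimed identities.

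I do not anticipate a substantive obstacle; the only routine check is finiteness of the moments appearing on both sides, since the exponent $\alpha$ is at the edge of the integrability range for an $\alpha/2$-stable inverse. Finiteness near $X=0$ follows from the boundedness of the density of $\varkappa_\loc(E)$ (which is $\alpha/2$-stable, hence has a density obeying the bounds in \Cref{l:stabletailbounds}), together with the fact that $\int_0^1 x^{-\alpha}\,dx < \infty$ for $\alpha < 1$; finiteness near $|X| = \infty$ is trivial since $|X|^{-\alpha}$ decays. Once this check is made, the identities are honest equalities between finite quantities. The only possible subtlety would be a degeneracy such as $a(E) = b(E) = 0$ making $X$ deterministic equal to $E$, but this is harmless: if $E \ne 0$, the identities reduce to $(E)^{-\alpha}_{\pm}$, and the case $E = 0$ (where $a, b > 0$ by the second part of \Cref{l:ablower}, or more elementarily by \cite[Lemma 4.3(b)]{bordenave2011spectrum}) is nondegenerate.
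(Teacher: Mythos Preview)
Your proof is correct and is essentially the direct computation the paper has in mind; the paper omits the argument entirely, noting only that it is similar to that of \Cref{l:boundarybasics}, and your identification of $R_\loc(E)$ with $-X^{-1}$ via \Cref{locre} and \Cref{pkappae} followed by the pointwise bookkeeping $(-X^{-1})_\pm = (X^{-1})_\mp$ is exactly that similarity. One small caveat: your appeal to \Cref{l:stabletailbounds} for boundedness of the density of $\varkappa_\loc(E)$ is stated there only for $\alpha \in [1/2,1]$, but the boundedness holds for all $\alpha \in (0,1)$ by Fourier inversion since the characteristic function \eqref{xtsigma} is integrable, so the finiteness check goes through in general.
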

The next lemma provides a useful lower bound on $\E\left[ \big( R_\loc(E) \big)^{\alpha}   \right]$.
\begin{lem}\label{l:Rlower}
Fix a constant $A > 1$. Then there exists $c(A) > 0$ such that for all $|E| < A$, we have $\E\left[ \big( R_\loc(E) \big)^{\alpha}   \right] \ge c$ for all $\alpha \in (1/2,1)$.
\end{lem}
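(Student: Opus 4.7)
The plan is to reduce the problem to \Cref{l:ablower} via a single application of the Cauchy--Schwarz inequality. By the definition of $R_{\loc}(E)$ in \Cref{locre}, we have $|R_\loc(E)|^{\alpha} = |E+\varkappa_\loc(E)|^{-\alpha}$, so the objective is equivalently to bound $\E\big[|E+\varkappa_\loc(E)|^{-\alpha}\big]$ from below uniformly for $|E|\le A$ and $\alpha\in(1/2,1)$.

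The main observation is that $a(E)+b(E)$ is itself a fractional moment of $|E+\varkappa_\loc(E)|$. Indeed, recall from \Cref{r:abfixedpointeqn} and \eqref{fixedpoint} that
\[
a(E)=\E\big[(E+\varkappa_\loc(E))_-^{-\alpha/2}\big], \qquad b(E)=\E\big[(E+\varkappa_\loc(E))_+^{-\alpha/2}\big],
\]
where the shorthand $(x)_{\pm}^{-\alpha/2}=((x^{-1})_{\pm})^{\alpha/2}$ (from the statement of \Cref{l:boundarybasics}) implies $(x)_+^{-\alpha/2}+(x)_-^{-\alpha/2}=|x|^{-\alpha/2}$ for every nonzero real $x$. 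Summing the two identities therefore gives
\[
a(E)+b(E)=\E\big[|E+\varkappa_\loc(E)|^{-\alpha/2}\big].
\]

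Now apply Cauchy--Schwarz (equivalently, Jensen's inequality applied to the convex function $t\mapsto t^2$ and the nonnegative random variable $|E+\varkappa_\loc(E)|^{-\alpha/2}$) to obtain
\[
\E\big[|E+\varkappa_\loc(E)|^{-\alpha}\big]=\E\Big[\big(|E+\varkappa_\loc(E)|^{-\alpha/2}\big)^{2}\Big]\ \ge\ \big(\E\big[|E+\varkappa_\loc(E)|^{-\alpha/2}\big]\big)^{2}=\big(a(E)+b(E)\big)^{2}.
\]
By \Cref{l:ablower}, for $|E|<A$ there is $c_0(A)>0$ (independent of $\alpha\in(1/2,1)$) with $\max(a(E),b(E))\ge c_0$, so $a(E)+b(E)\ge c_0$. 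Combining these yields $\E\big[(R_\loc(E))^{\alpha}\big]\ge c_0^{2}$, completing the proof with $c=c_0^{2}$.

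No substantive obstacle is anticipated: the only nontrivial input is the uniform positivity of $a(E)+b(E)$, which is supplied directly by the already-proved \Cref{l:ablower}, and the inequality itself is a one-line Jensen bound.
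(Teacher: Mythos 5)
Your proof is correct and takes essentially the same approach as the paper: both reduce to \Cref{l:ablower} via a Cauchy--Schwarz/Jensen bound relating $\E\big[|R_\loc(E)|^\alpha\big]$ to $a(E)$ and $b(E)$. The only cosmetic difference is that the paper applies Cauchy--Schwarz separately to the positive and negative parts to obtain $a(E)^2 + b(E)^2 \le \E\big[|R_\loc(E)|^\alpha\big]$, whereas you sum first and apply Jensen to obtain the (slightly stronger, but equally sufficient) bound $(a(E)+b(E))^2 \le \E\big[|R_\loc(E)|^\alpha\big]$.
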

\begin{proof}
We abbreviate $a = a(E)$ and $b = b(E)$, and set $Y = a^{2/\alpha} S - b^{2/\alpha} T$, where $S$ and $T$ are independent, nonnegative $\alpha/2$-stable random variables. 
By \Cref{l:boundarybasics}, \Cref{l:boundarybasics2}, and H\"older's inequality,
\begin{align*}
b(E) = \E \left[ \left( E + Y   \right)_+^{-\alpha/2}  \right]
\le \E \left[ \left( E + Y   \right)_+^{-\alpha}  \right]^{1/2}
=\E\left[ \big( R_\loc(E) \big)^{\alpha}_-  \right]^{1/2}.
\end{align*}
Similarly, 
\begin{align*}
a(E)\le \E\left[ \big( R_\loc(E) \big)^{\alpha}_+ \right]^{1/2}.
\end{align*}
The previous two lines and \Cref{l:ablower} together yield 
\begin{align*}
c \le a(E)^{2} + b(E)^{2} \le \E\left[ \big( R_\loc(E) \big)^{\alpha} \right].
\end{align*}
for $|E| < A$. This completes the proof.
\end{proof}

\subsection{Conclusion}\label{s:asymptoticconclusion}

We require the below lemma, which follows from routine Taylor series approximations.
\begin{lem}\label{l:mobeqasymptotics}
As $\alpha$ tends to $1$, we have
\bex
K_\alpha =  \frac{2 \alpha}{ (1-\alpha)^2} + O\big((1 - \alpha)^{-1}  \big)
, \qquad t_1^2 - t_{\alpha}^2 =  \frac{\pi^2}{4} (\alpha - 1)^{2} + O\big( (1-\alpha)^3  \big).
\eex
\end{lem}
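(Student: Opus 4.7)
The plan is straightforward: both asymptotics are direct Taylor/Laurent expansions, using only the explicit formulas $K_{\alpha} = \tfrac{\alpha}{2}\,\Gamma\!\left(\tfrac{1-\alpha}{2}\right)^2$ and $t_y = \sin(\pi y/2)$ from \eqref{tlrk}.

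For $K_{\alpha}$, I would set $\delta = (1-\alpha)/2$ and use the Laurent expansion $\Gamma(\delta) = \delta^{-1} - \gamma + O(\delta)$ as $\delta \to 0$ (where $\gamma$ is the Euler--Mascheroni constant), which follows from $\Gamma(\delta) = \Gamma(1+\delta)/\delta$ together with $\Gamma(1+\delta) = 1 - \gamma \delta + O(\delta^2)$. Squaring gives $\Gamma\!\left(\tfrac{1-\alpha}{2}\right)^2 = \tfrac{4}{(1-\alpha)^2} + O\!\bigl((1-\alpha)^{-1}\bigr)$, and multiplying by $\alpha/2$ yields the claimed expansion $K_{\alpha} = \tfrac{2\alpha}{(1-\alpha)^2} + O\!\bigl((1-\alpha)^{-1}\bigr)$.

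For $t_1^2 - t_{\alpha}^2$, I would use the trigonometric identity $t_1^2 - t_{\alpha}^2 = 1 - \sin^2(\pi\alpha/2) = \cos^2(\pi\alpha/2)$. Writing $\alpha = 1 - u$ with $u = 1-\alpha$, this becomes $\cos^2\!\bigl(\tfrac{\pi}{2} - \tfrac{\pi u}{2}\bigr) = \sin^2\!\bigl(\tfrac{\pi u}{2}\bigr)$. Taylor expanding $\sin(\pi u/2) = \tfrac{\pi u}{2} + O(u^3)$ and squaring gives $\sin^2(\pi u/2) = \tfrac{\pi^2 u^2}{4} + O(u^4)$, so $t_1^2 - t_{\alpha}^2 = \tfrac{\pi^2}{4}(1-\alpha)^2 + O\!\bigl((1-\alpha)^3\bigr) = \tfrac{\pi^2}{4}(\alpha-1)^2 + O\!\bigl((1-\alpha)^3\bigr)$, as stated.

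There is no real obstacle here; both expansions are elementary and the lemma is a bookkeeping step used to control the asymptotics of the quadratic \eqref{mobilityquadratic} in the regime $\alpha \to 1$, as required for \Cref{s:alphanear1uniqueness}.
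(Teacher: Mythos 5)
Your proof is correct and follows essentially the same route as the paper's: both parts are obtained by Laurent/Taylor expansion of $\Gamma\big(\tfrac{1-\alpha}{2}\big)$ and $\sin(\pi\alpha/2)$ around $\alpha=1$. The only cosmetic difference is that you make the intermediate algebra (the shift $\Gamma(\delta)=\Gamma(1+\delta)/\delta$ and the identity $1-\sin^2(\pi\alpha/2)=\cos^2(\pi\alpha/2)=\sin^2(\pi(1-\alpha)/2)$) explicit, whereas the paper simply asserts the resulting expansions.
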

\begin{proof}
The function $\Gamma(1/2 - \alpha/2)$ has has pole at $\alpha = 1$ of order $1$, with the expansion 
\bex
\Gamma(1/2 - \alpha/2) = \frac{2}{1-\alpha} + O(1).
\eex
Using the definition of $K_\alpha$ in \eqref{tlrk}, we find
\be\label{Kalpha}
K_\alpha = \frac{\alpha}{2} \cdot \Gamma(1/2 - \alpha/2)^2 = \frac{2 \alpha}{ (1-\alpha)^2} + O\big((1 - \alpha)^{-1}  \big).
\ee
This proves the first claim.

Next, we note that
\bex
t_\alpha^2 - t_{1}^2 = (t_\alpha - t_1)(t_\alpha + t_1)
\eex
has a double root at $\alpha = 1$. Using $t_\alpha = \sin(\alpha \pi/2)$ and considering a power series around $\alpha = 1$, we find 
\bex
t_\alpha^2 - t_{1}^2 =  -\frac{\pi^2}{4} (\alpha - 1)^{2} + O\big( (1-\alpha)^3  \big),
\eex
which completes the proof.
\end{proof}
We are now ready to prove the main result of this section. 
\begin{lem}\label{l:crudeasymptotic}
There exist $c, c_0>0$ such that for all $\alpha \in (1-c ,1)$ the following holds.
Any $E_{\mob} \in \bbR$ such that $\lambda(E_{\mob}, \alpha) = 1$ satisfies
\bex
c_0 ( 1- \alpha)^{-1} \le  E_{\mob} \le c_0^{-1} (1- \alpha)^{-1}.
\eex
Further, we have $\lambda(E, \alpha) > 1$ for $E \in \big(0, c_0 (1 - \alpha)^{-1} \big)$ and $\lambda(E, \alpha) < 1$ for
$E \in \big( c_0^{-1} (1 - \alpha)^{-1}, \infty \big)$
\end{lem}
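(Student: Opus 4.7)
The starting point is the explicit representation of $\lambda(E,\alpha)$ obtained by solving the quadratic in \Cref{lambdaEalpha} and using \Cref{realimaginaryl}. Writing $M = \E\big[|R_\loc(E)|^\alpha\big]$ and $N = \E\big[|R_\loc(E)|^\alpha\sgn(-R_\loc(E))\big]$, so that $\Re\ell(E) = \pi^{-1}\Gamma(\alpha)\sqrt{1-t_\alpha^2}\,M$ and $\Im\ell(E) = \pi^{-1}\Gamma(\alpha)\,t_\alpha\,N$, the positive root of \eqref{mobilityquadratic} becomes
\[
\lambda(E,\alpha) \;=\; \pi^{-1}K_\alpha\,\Gamma(\alpha)\sqrt{1-t_\alpha^2}\,\Big[\,t_\alpha M + \sqrt{M^2 + t_\alpha^2 N^2}\,\Big].
\]
By \Cref{l:mobeqasymptotics}, the prefactor satisfies $\pi^{-1}K_\alpha\Gamma(\alpha)\sqrt{1-t_\alpha^2} = (1-\alpha)^{-1}\big(1+o(1)\big)$ as $\alpha\to 1$, and since $|N|\le M$ the bracket is squeezed between $t_\alpha M$ and $(1+\sqrt{2})M$, so the bracket is $\asymp M$ uniformly for $\alpha$ near~$1$. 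Consequently $\lambda(E,\alpha) \asymp M(E,\alpha)/(1-\alpha)$ with absolute implicit constants, and the problem reduces to estimating $M$.

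The next step is to bound $M$ from above and below in the range $E\asymp(1-\alpha)^{-1}$. By \Cref{l:boundarybasics2}, $M = F_\alpha(E,a,b) + G_\alpha(E,a,b)$. For an \emph{upper} bound I apply \Cref{l:abasymptotic} with $\gamma=\alpha$ to get $M \le E^{-\alpha} + 3c^{-1}E^{-2\alpha}/(1-\alpha)$, which is at most $2E^{-\alpha}$ provided $E^{-\alpha}/(1-\alpha)$ is small; at $E = c_0^{-1}(1-\alpha)^{-1}$ this ratio equals $c_0^{\alpha}(1-\alpha)^{\alpha-1} \to c_0$, so the upper bound $M\le 2E^{-\alpha}$ is valid once $c_0$ is small and $\alpha$ is close to~$1$. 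For the \emph{lower} bound I use the H\"older-type inequalities $a(E)^2 \le F_\alpha$ and $b(E)^2 \le G_\alpha$ (applied to the fixed-point equations in \eqref{fixedpoint}), which give $M \ge a(E)^2 + b(E)^2$. For $|E|\le A$ with any fixed $A$, \Cref{l:ablower} then yields $M\ge c(A)^2$. For $E\ge A_0$ with $A_0$ a suitable absolute constant, \Cref{l:abasymptotic} at $\gamma=\alpha/2$ gives $b(E) \ge E^{-\alpha/2}/2$ and hence $M \ge E^{-\alpha}/4$.

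Combining these, I conclude the scaling. Choose $c_0 \in (0,1)$ small. For $E \ge c_0^{-1}(1-\alpha)^{-1}$, the upper bound gives
\[
\lambda(E,\alpha) \;\le\; C(1-\alpha)^{-1}\cdot 2E^{-\alpha} \;\le\; 4C\,c_0^\alpha (1-\alpha)^{\alpha-1},
\]
which tends to $4C c_0$ as $\alpha\to1$ because $(1-\alpha)^{\alpha-1}\to 1$; for $c_0$ sufficiently small this is $<1$. For $E\in(0,c_0(1-\alpha)^{-1}]$, split into $E\le A_0$, where $M\ge c(A_0)^2$ and so $\lambda \gtrsim (1-\alpha)^{-1}\to\infty$, and $E\in[A_0, c_0(1-\alpha)^{-1}]$, where the lower bound $M\ge E^{-\alpha}/4$ yields
\[
\lambda(E,\alpha) \;\ge\; \frac{c\,E^{-\alpha}}{1-\alpha} \;\ge\; c\,c_0^{-\alpha}(1-\alpha)^{\alpha-1} \;\longrightarrow\; c\,c_0^{-1}
\]
as $\alpha\to1$, exceeding~$1$ when $c_0$ is small. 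Using the continuity of $E\mapsto\lambda(E,\alpha)$ from \Cref{l:lambdalemma}, these two inequalities trap every zero $E_\mob$ of $\lambda(\cdot,\alpha)-1$ inside $[c_0(1-\alpha)^{-1},\,c_0^{-1}(1-\alpha)^{-1}]$, proving all three assertions.

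The main delicate point is the matching of asymptotic regimes around $E\sim(1-\alpha)^{-1}$: the expansion in \Cref{l:abasymptotic} becomes a genuine leading-order asymptotic for $M$ only when $E^{-\alpha}/(1-\alpha)$ is small, i.e.\ when $E\gg(1-\alpha)^{-1/\alpha}$, which for $\alpha$ near~$1$ is the \emph{same} scale as our mobility edge. The upper bound on $\lambda$ therefore just barely works, and it is crucial that the constants in \Cref{l:abasymptotic} are independent of $\alpha$, and that $(1-\alpha)^{\alpha-1}\to 1$. For the \emph{lower} bound on $\lambda$ one cannot use the leading-order expansion at all in the relevant range; the argument instead relies on the softer inequality $M\ge a^2+b^2$ combined with the uniform control $b(E)\asymp E^{-\alpha/2}$ valid for all $E\ge A_0$, which is what makes the two-sided bound on $E_\mob$ work.
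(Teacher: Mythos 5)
Your proof is correct, and it follows the same overall route as the paper: the same expression for $\lambda(E,\alpha)$ in terms of $M=\E\big[|R_\loc(E)|^\alpha\big]$ from \Cref{2lambdaEsalpha}, the same prefactor asymptotics from \Cref{l:mobeqasymptotics}, the same small-$E$ input (\Cref{l:ablower}/\Cref{l:Rlower}), and the same large-$E$ expansion (\Cref{l:abasymptotic}). The one point worth emphasizing is that your handling of the lower bound on $\lambda$ in the intermediate range $A_0\le E\le c_0(1-\alpha)^{-1}$ — passing through $M\ge b^2$ by Cauchy–Schwarz on the fixed-point equation and then $b\ge E^{-\alpha/2}/2$ from the $\gamma=\alpha/2$ case of \Cref{l:abasymptotic} — is genuinely the right move, and is cleaner than trying to use the $\gamma=\alpha$ expansion $G_\alpha=E^{-\alpha}+O\big((1-\alpha)^{-1}E^{-2\alpha}\big)$ there: at the scale $E\sim(1-\alpha)^{-1}$ that error term is of the same order as the main term, so it alone cannot certify $\lambda>1$. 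Your $\gamma=\alpha/2$ bound has error $O(E^{-3\alpha/2})$ relative to main term $E^{-\alpha/2}$, which is harmless once $E$ exceeds an absolute constant $A_0$, and is what makes the two-sided trapping of $E_\mob$ close. The paper's own write-up of this step (the chain ending in \eqref{kiwi4} and the remark about $\lambda(E,\alpha)>0$) is noticeably compressed, and your version supplies the actual inequalities $\lambda<1$ on the right and $\lambda>1$ on the left that the continuity argument requires. Two trivial points of hygiene: the lower bound on the bracket $t_\alpha M+\sqrt{M^2+t_\alpha^2 N^2}$ is $M$, not $t_\alpha M$ (immaterial since $t_\alpha\to1$); and you should note that the two smallness requirements on $c_0$ (one from each side) are uniform in $\alpha$ near $1$ because the constants from \Cref{l:abasymptotic} and \Cref{l:mobeqasymptotics} are, so a single $c_0$ suffices.
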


\begin{proof}
First, note that by \Cref{2lambdaEsalpha} and the second part of \Cref{l:lambdalemma}, we have (using $t_1 = 1$) that
		\begin{flalign}
		\lambda(E,\alpha) &= \lim_{s\rightarrow 1} 	\lambda (E, s, \alpha)\notag \\ & =  \pi^{-1} \cdot K_{\alpha} \cdot \Gamma (\alpha) \cdot \bigg(  t_{\alpha} \sqrt{1 - t_{\alpha}^2} \cdot \mathbb{E} \Big[ \big| R_{\loc} (E) \big|^{\alpha} \Big] \label{kiwi} \\
			& \qquad + \sqrt{ (1 - t_{\alpha}^2) \cdot \mathbb{E} \Big[ \big| R_{\loc} (E) \big|^{\alpha} \Big]^2 + t_{\alpha}^2 (1 - t_{\alpha}^2) \cdot \mathbb{E}  \Big[ \big| R_{\loc} (E) \big|^{\alpha} \cdot \sgn \big( - R_{\loc} (E) \big) \Big]^2} \bigg).\notag
		\end{flalign}
For brevity, in this proof  we write $F_\alpha = F_\alpha(E) = F_\alpha(E,a(E), b(E))$ and $G_\alpha = G_\alpha(E) = G_{\alpha} (E,a(E), b(E))$. Recalling \Cref{l:boundarybasics2}, we have
\bex
F_{\alpha} = \E\left[ \big( R_\loc(E) \big)^{\alpha}_+   \right], \qquad G_{\alpha} = \E\left[ \big( R_\loc(E) \big)^{\alpha}_-  \right].
\eex
Putting the previous line into \eqref{kiwi} gives
		\begin{flalign}\notag
		\lambda(E,\alpha) & =  \pi^{-1} \cdot K_{\alpha} \cdot \Gamma (\alpha) \cdot \bigg(  t_{\alpha} \sqrt{1 - t_{\alpha}^2} \cdot (F_\alpha + G_\alpha)  \\
			& \qquad + \sqrt{ (1 - t_{\alpha}^2) \cdot (F_\alpha + G_\alpha)^2 + t_{\alpha}^2 (1 - t_{\alpha}^2) \cdot (- F_\alpha + G_\alpha)^2} \bigg)\notag\\
			& =  \pi^{-1} \cdot K_{\alpha} \cdot \Gamma (\alpha) \cdot \bigg(  t_{\alpha} \sqrt{1 - t_{\alpha}^2} \cdot (F_\alpha + G_\alpha) \label{kiwi3} \\
			& \qquad + \sqrt{1 - t^2_\alpha} \cdot \sqrt{ 2 t_\alpha^2 (F_\alpha^2 + G_\alpha^2) + (1 - t_{\alpha}^2) ( F_\alpha + G_\alpha)^2} \bigg)\notag
		\end{flalign}

Let $c_1>0$ be the constant given by  \Cref{l:abasymptotic} (so that \eqref{Fgammabound} holds if $E > c_1^{-1}$). By \Cref{l:Rlower}, there exists a constant $c_2>0$ such that 
\be F_\alpha + G_\alpha = \E\left[ \big( R_\loc(E) \big)^{\alpha}   \right] \ge c_2\label{kiwi2}\ee for $|E| < c_1^{-1}$. Using  \eqref{kiwi3}, \eqref{kiwi2}, and \Cref{l:mobeqasymptotics}, we find that there exists a constant $c_3 >0$ such that $\lambda(E,\alpha) > 2$ for $|E| < c_1^{-1}$ and $\alpha \in (1- c_3, 1)$. For the rest of the proof, we suppose that $|E| > c_1^{-1}$ and $\alpha \in (1- c_3, 1)$, so that we can apply the conclusions of \Cref{l:abasymptotic}. 

From \eqref{Fgammabound}, we have  
\be\label{somebounds}
0\le F_\alpha \le C (1-\alpha) E^{-2\alpha} ,
\qquad |G_\alpha - E^{-\alpha} | \le C (1-\alpha)^{-1} E^{-2\alpha}.
\ee
The conclusion of the lemma will now follow from combining the previous equation with \eqref{kiwi3}. We give only the details of the proof of the upper bound on $E_{\mob}$, since the lower bound is similar. Inserting \eqref{somebounds} into \eqref{kiwi} and using the second estimate in \Cref{l:mobeqasymptotics} gives
\begin{align}
\lambda(E,\alpha)&\ge 
\pi^{-1} \cdot K_{\alpha} \cdot \Gamma (\alpha) \cdot  t_{\alpha} \sqrt{1 - t_{\alpha}^2} \cdot \big(E^{-\alpha} -C (1-\alpha)^{-1} E^{-2\alpha} \big). \label{kiwi4}
\end{align}
From \Cref{l:mobeqasymptotics}, we have 
\bex
 K_{\alpha} \cdot \Gamma (\alpha) \cdot  t_{\alpha} \sqrt{1 - t_{\alpha}^2} = \frac{\pi \alpha}{(1-\alpha)}  + O(1)
\eex
as $\alpha$ tends to $1$. Then from the previous equation and \eqref{kiwi4}, there exists a constant $c>0$ such that 
\be
\lambda(E,\alpha) \ge c (1-\alpha)^{-1} E^{-\alpha} - C (1-\alpha)^{-2} E^{-2\alpha}.
\ee
The previous equation shows that there exists $C_0>1$ such that $\lambda(E,\alpha) > 0$ if $E> C_0 (1-\alpha)^{-1/\alpha}$. This proves the claimed upper bound on $E_{\mob}$ and completes the proof after noticing that \bex2 (1-\alpha)^{-1/\alpha} >(1-\alpha)^{-1}\eex for $\alpha$ sufficiently close to $1$ (and decreasing $c_3$ if necessary).\end{proof}

\section{Uniqueness Near One}
\label{s:alphanear1uniqueness}

In this section we collect some derivative bounds, which we then use to prove the uniqueness of the mobility edge for $\alpha$ near $1$. In \Cref{s:dxy} and \Cref{DerivativesG} we consider the derivatives of $F_\gamma$ and $G_\gamma$ in $x$ and $y$, and in \Cref{s:dE} we consider the derivative in $E$. \Cref{s:dAB} and \Cref{s:dtildeAB} consider derivatives of certain fractional moments of the resolvent $R_\star$, and \Cref{s:proveuniqueness} concludes with the proof of \Cref{t:main2}. Throughout this section, constants $c > 0$ and $C > 1$ will be independent of $\alpha$.

\subsection{Derivatives in $x$ and $y$ of $F$}\label{s:dxy}

We begin with the following integral estimate that will be used to bound derivatives of $F_{\gamma}$ and $G_{\gamma}$. 

\begin{lem}
There exists a constant $C>1$ such that the following holds for all $\alpha \in (0,1)$. 
Let $g(s)$ denote the density of a nonnegative $\alpha/2$-stable law.
For $x, E>0$ and $\gamma \in (0,1)$, we have 
\be\label{already!}
 \int_0^\infty s (E + x^{2/\alpha} s)^{-\gamma-1}g(s)\, ds
\le Cx^{-2/\alpha + 1}E^{-\alpha/2 - \gamma}.
\ee
\end{lem}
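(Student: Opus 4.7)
The plan is to split the integration at the crossover point $s_0 = E x^{-2/\alpha}$, where the two terms $E$ and $x^{2/\alpha} s$ inside the parentheses balance, and estimate each piece using the complementary lower bound on $E + x^{2/\alpha} s$ together with the two regimes of the density $g$ supplied by \Cref{l:stabletailbounds}. (Although the lemma is stated for all $\alpha \in (0,1)$, the context is $\alpha$ close to $1$, so the bounds of \Cref{l:stabletailbounds} apply and all $\alpha$-dependent prefactors such as $(\gamma+\alpha/2)^{-1}$ and $(1-\alpha/2)^{-1}$ will stay uniformly bounded.)

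For the tail $s \geq s_0$, I would bound $E + x^{2/\alpha} s \ge x^{2/\alpha} s$ and apply $g(s) \leq C s^{-1-\alpha/2}$ to obtain
\begin{equation*}
\int_{s_0}^\infty s\, (E + x^{2/\alpha} s)^{-\gamma-1}\, g(s)\, ds \;\le\; C\, x^{-(2/\alpha)(\gamma+1)} \int_{s_0}^\infty s^{-\gamma - \alpha/2 - 1}\, ds,
\end{equation*}
which after explicit integration and plugging in $s_0 = E x^{-2/\alpha}$ simplifies to a constant multiple of $x^{-2/\alpha + 1} E^{-\alpha/2 - \gamma}$, as desired.

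For the head $s \le s_0$, I would bound $E + x^{2/\alpha} s \ge E$ to pull out $E^{-\gamma-1}$, reducing the task to estimating $\int_0^{s_0} s\, g(s)\, ds$. This estimate splits into two cases according to the size of $s_0$. When $s_0 \le 1$, the uniform bound $g(s) \le C$ yields $\int_0^{s_0} s\, g(s)\, ds \le C s_0^2$, and since $s_0 \le 1$ is equivalent to $E \le x^{2/\alpha}$, the resulting $CE^{1-\gamma} x^{-4/\alpha}$ converts to the desired $C x^{-2/\alpha + 1} E^{-\alpha/2 - \gamma}$ via the inequality $E^{1+\alpha/2} \le x^{2/\alpha + 1}$. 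When $s_0 > 1$, splitting $\int_0^{s_0}$ further at $s=1$ and using $g(s) \le C s^{-1-\alpha/2}$ on $[1,s_0]$ gives $\int_0^{s_0} s\, g(s)\, ds \le C (1-\alpha/2)^{-1} s_0^{1-\alpha/2}$; multiplying by $E^{-\gamma-1}$ and expanding $s_0^{1-\alpha/2} = (E x^{-2/\alpha})^{1-\alpha/2}$ immediately produces $C x^{-2/\alpha + 1} E^{-\alpha/2 - \gamma}$.

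Summing the tail and head contributions completes the argument. The only obstacle is organizational: keeping the case split for the head piece clean and verifying that the $\alpha$-dependent prefactors produced by the integrations remain bounded in the regime $\alpha$ near $1$ (where $\alpha/2 \in [1/4, 1/2]$ keeps both $\gamma+\alpha/2$ and $1-\alpha/2$ bounded away from $0$). No step involves anything beyond direct integration and elementary inequalities.
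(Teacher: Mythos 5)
Your proof is correct and follows essentially the same route as the paper's: split at the crossover $s_0 = E x^{-2/\alpha}$, bound $E + x^{2/\alpha}s$ below by the dominant term on each side, and invoke the tail and near-origin regimes of the stable density from \Cref{l:stabletailbounds}. The only difference is that for the head piece the paper avoids your secondary case split on $s_0 \lessgtr 1$ by simply using $s g(s) \le C s^{-\alpha/2}$ uniformly on $(0,s_0)$ (valid for all $s>0$, since $\min(1,s^{-1-\alpha/2})=1$ when $s<1$), which gives $\int_0^{s_0} s\,g(s)\,ds \le C s_0^{1-\alpha/2}$ in one step.
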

\begin{proof}

We have
\bex
(E + x^{2/\alpha} s)^{-\gamma-1}
\le \max(E, x^{2/\alpha} s)^{-\gamma-1}
\eex
We split the integral on the left side of \eqref{already!} at $s =  x^{-2/\alpha} E$ and use the bound $s g(s) \le C s^{-\alpha/2}$ from \Cref{l:stabletailbounds} to obtain
\begin{align*}
 \int_0^{x^{-2/\alpha} E} s (E + x^{2/\alpha} s)^{-\gamma-1}g(s)\, ds 
&\le C E^{-1 - \gamma} \int_0^{ x^{-2/\alpha} E} s^{-\alpha/2} \,ds\\
&\le 
C E^{-1 - \gamma} \cdot x^{ - 2/\alpha  + 1}  E^{ 1 - \alpha/2} = C x^{-2/\alpha + 1} E^{-\alpha/2 - \gamma}.
\end{align*}
The second piece is 
\begin{align*}
 \int_{x^{-2/\alpha} E}^\infty s (E + x^{2/\alpha} s)^{-\gamma-1}g(s)\, ds &\le
\int_{x^{-2/\alpha} E}^\infty s (x^{2/\alpha} s)^{-\gamma-1}g(s)\, ds
\\ & \le x^{-(2/\alpha)(\gamma + 1)}\int_{x^{-2/\alpha} E}^\infty s^{-\gamma }g(s)\, ds\\
& \le C x^{-(2/\alpha)(\gamma + 1)}
\int_{x^{-2/\alpha} E}^\infty s^{-1  - \alpha/2 -\gamma} \, ds\\
& \le C x^{-(2/\alpha)(\gamma + 1)} (x^{-2/\alpha} E)^{-\alpha/2 - \gamma} \le C x^{-2/\alpha + 1} E^{-\alpha/2 - \gamma} .
\end{align*}
This completes the proof.
\end{proof}

The following two lemmas then bound the derivatives of $F$ with respect to $x$ and $y$.

\begin{lem}\label{l:Fx}
There exists a constant $c \in (0, 1)$ such that for all $\alpha \in (1-c, 1)$, $\gamma \in (0,1)$ and $E > c^{-1}$, we have 
\bex
|\partial_x F_\gamma (E, x,y) | \le \frac{  y E^{-1 - \alpha }}{c(1- \gamma)}, \qquad \text{for all $x \le 1$ and $y \le 1$}.
\eex
 
\end{lem}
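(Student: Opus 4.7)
The plan is to exploit the convolution representation of the density of $Y := x^{2/\alpha} S - y^{2/\alpha} T$ to reduce the problem to estimates on the stable density $g_\alpha$ and its derivatives. Writing $f_{x,y}(u)$ for the density of $Y$, one has for $u < 0$ the representation
\[
f_{x,y}(u) = y^{-2/\alpha} \int_0^\infty g_\alpha(s) \, g_\alpha\bigl((x^{2/\alpha} s - u)/y^{2/\alpha}\bigr)\, ds,
\]
so that $F_\gamma(E, x, y) = \int_0^\infty v^{-\gamma} f_{x,y}(-E - v)\, dv$. The naive formula for $\partial_x F_\gamma$ obtained by differentiating inside the expectation would involve $(E + x^{2/\alpha} S - y^{2/\alpha} T)_-^{-\gamma - 1}$, which is not integrable near the boundary $\{E + x^{2/\alpha} S = y^{2/\alpha} T\}$; so the first step is to move this singularity off the integrand via integration by parts in $v$. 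Since $1 - \gamma > 0$ and $f_{x,y}(-E-v) = O(v^{-1-\alpha/2})$ as $v \to \infty$ (by \Cref{l:abasymptotic}-type tail analysis), both boundary terms vanish, yielding
\[
F_\gamma(E, x, y) \;=\; \frac{1}{1-\gamma} \int_0^\infty v^{1-\gamma} f'_{x,y}(-E - v)\, dv,
\]
where $f'_{x,y}$ denotes the derivative with respect to the argument. This representation is now well-behaved enough that differentiation under the integral sign in $x$ is justified, giving $\partial_x F_\gamma = (1-\gamma)^{-1} \int v^{1-\gamma} \, \partial_x f'_{x,y}(-E-v)\, dv$.

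The next step is to bound $|\partial_x f'_{x,y}(-E-v)|$ pointwise. Differentiating the convolution formula yields
\[
\partial_x f'_{x,y}(-E-v) \;=\; -\tfrac{2}{\alpha}\, x^{2/\alpha - 1} y^{-6/\alpha} \int_0^\infty g_\alpha(s)\, s\, g_\alpha''\!\bigl((x^{2/\alpha} s + E + v)/y^{2/\alpha}\bigr)\, ds.
\]
Differentiating the Pollard series for $g_\alpha$ term by term (as in the proof of \Cref{l:stabletailbounds}) gives $|g_\alpha''(w)| \le C w^{-3 - \alpha/2}$ for $w \geq C$, which applies since $y \le 1$ and $E \ge c^{-1}$ force the argument to be at least $E$. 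Inserting this bound and pulling out the $y$ factors produces $|\partial_x f'_{x,y}(-E-v)| \le C x^{2/\alpha - 1} y \int_0^\infty g_\alpha(s)\, s\, (x^{2/\alpha} s + E + v)^{-3-\alpha/2}\, ds$. The integral estimate \eqref{already!} (whose proof is easily seen to extend to the exponent $\gamma + 1 = 3 + \alpha/2$ without modification) then yields $C x^{-2/\alpha + 1}(E+v)^{-2-\alpha}$, so the $x$-dependence cancels and we arrive at $|\partial_x f'_{x,y}(-E-v)| \le C y\, (E + v)^{-2 - \alpha}$.

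Substituting back and splitting the integration at $v = E$, the bound becomes
\[
|\partial_x F_\gamma(E,x,y)| \;\le\; \frac{Cy}{1-\gamma}\!\int_0^\infty\! v^{1-\gamma}(E+v)^{-2-\alpha}\, dv \;\le\; \frac{C y \, E^{-1-\alpha}}{c(1-\gamma)},
\]
after using the change of variable $v = Eu$ and bounding the resulting Beta integral uniformly in $\gamma \in (0,1)$ and $\alpha$ near $1$. The main obstacle in this plan is the IBP step: one must track carefully that the boundary contribution at $v = 0$ vanishes (requiring $\gamma < 1$ strictly, which is reflected in the $(1-\gamma)^{-1}$ factor in the final bound) and that $f'_{x,y}$ enjoys the same $y$-linear decay as $f_{x,y}$, which comes out of the $y^{-4/\alpha}$ factor combined with the $(2+\alpha/2)$-th power decay of $g'_\alpha$ evaluated at arguments $\ge E / y^{2/\alpha}$. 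The symmetric analysis (replacing $g_\alpha''$ by $g_\alpha'$ at an appropriate step) will handle $\partial_y F_\gamma$ in the subsequent lemma.
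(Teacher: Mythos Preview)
Your approach is essentially the paper's, with one extra step that turns out to be unnecessary. The paper makes the same substitution $u = y^{2/\alpha}t - E - x^{2/\alpha}s$ (your $v$) to write
\[
F_\gamma(E,x,y) = y^{-2/\alpha}\int_0^\infty\!\int_0^\infty u^{-\gamma}\,g_\alpha\bigl(y^{-2/\alpha}(u+E+x^{2/\alpha}s)\bigr)\,g_\alpha(s)\,du\,ds,
\]
and then differentiates in $x$ \emph{directly}: since the $x$-dependence now sits in the smooth factor $g_\alpha$ rather than in $u^{-\gamma}$, no non-integrable singularity arises, and only the bound $|g_\alpha'(w)|\le Cw^{-2-\alpha/2}$ from \Cref{l:stabletailbounds} is needed. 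Your preliminary integration by parts in $v$ trades $u^{-\gamma}$ for $u^{1-\gamma}/(1-\gamma)$ at the cost of requiring a $g_\alpha''$ bound (not stated in \Cref{l:stabletailbounds}, though derivable by the same Pollard-series argument). This works, but the paper's route is shorter and avoids the need to extend the stable-density estimates.

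One arithmetic slip to flag: the substitution $v=Eu$ in your final integral gives
\[
\int_0^\infty v^{1-\gamma}(E+v)^{-2-\alpha}\,dv \;=\; E^{-\gamma-\alpha}\,B(2-\gamma,\gamma+\alpha),
\]
so the power of $E$ you actually obtain is $-\gamma-\alpha$, not $-1-\alpha$. (For what it is worth, the paper's own proof, after splitting the $u$-integral at $u=1$, produces a contribution $Cy\,E^{-\alpha}$ from the $u\ge 1$ piece, which is likewise weaker than the stated $E^{-1-\alpha}$ for $\gamma$ bounded away from $1$; the downstream uses in \Cref{l:abE} and \Cref{l:tildeaE} take $\gamma\in\{\alpha/2,\alpha\}$ with $\alpha$ close to $1$, where this discrepancy is harmless.)
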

\begin{proof}
By the definition of $F_\gamma(E,x,y)$, we have
\bex
F_\gamma(E, x,y) =  \int \int_{x^{2/\alpha}s - y^{2/\alpha} t < -E }
 |E  + x^{2/\alpha}  s - y^{2/\alpha}  t|^{-\gamma}  g(t) g(s) \, dt\, ds,
\eex
where $g$ is the density of the one-sided $\alpha/2$-stable law. We now rewrite the limits of integration. For the outer integral, we must have $s >0$ for $g(s)$ to be nonzero. In the interior integral, the range of $t$ on which the integrand is supported is 
\begin{flalign*} 
	\{ t \in \mathbb{R}: -y^{2/\alpha} t < - E - x^{2/\alpha} s \} = \big\{ t \in \mathbb{R}: t > y^{-2/\alpha} ( E +  x^{2/\alpha} s ) \big\}.
\end{flalign*} 

\noindent Hence,
\be\label{Flimitsrewrite}
F_\gamma(E, x,y) =  \int_{0}^\infty g(s)
\int_{y^{-2/\alpha} ( E +  x^{2/\alpha} s )}^\infty
 (- E  -  x^{2/\alpha}  s + y^{2/\alpha}  t)^{-\gamma}   g(t) \, dt\, ds.
\ee

\noindent Changing variables $u = y^{2/\alpha} t - E - x^{2/\alpha} s$, we obtain
\be\label{Fu}
F_\gamma(E,x,y) = y^{-2/\alpha}  \int_0^\infty
 \int_{ 0 }^\infty
 u^{-\gamma} 
g\big(y^{-2/\alpha} (u +E  + x^{2/\alpha} s )\big) g(s) \, du \, ds.
\ee

We differentiate \eqref{Fu} in $x$ to obtain
\bex
\partial_x F_\gamma (E, x,y) = (2/ \alpha) y^{-4/\alpha} x^{2/\alpha-1 }\int_0^\infty s  \int_{ 0 }^\infty
u^{-\gamma}
g'(y^{-2/\alpha} (u +E  + x^{2/\alpha} s ))   g(s)  \, du \,ds.
\eex
Since $y \le 1$ and $E > c^{-1} \ge 1$, we have $y^{-2/\alpha} (u +E  + x^{2/\alpha} s ) \ge 1$. Then using  the estimate $|g'(w)| \le C w^{-\alpha/2 -2 }$ (which holds by \Cref{l:stabletailbounds}), the interior integral is bounded in absolute value by
\begin{align*}
	C y^{-4/\alpha} x^{2/\alpha-1 } & \int_{ 0 }^\infty
u^{-\gamma}
\big(y^{-2/\alpha} (u +E  + x^{2/\alpha} s )\big)^{-\alpha/2 -2}    \, du
\\ &= C y x^{2/\alpha-1 }  \int_{ 0 }^\infty
u^{-\gamma}
(u +E  + x^{2/\alpha} s )^{-\alpha/2 -2}    \, du.
\end{align*}
Therefore, it suffices to bound
\be\label{Fusplit}
 y x^{2/\alpha-1 }\int_0^\infty s g(s)  \int_{ 0 }^\infty
u^{-\gamma}
(u +E  + x^{2/\alpha} s )^{-\alpha/2 -2}    \, du \, ds
\ee
to complete the proof.

We now split the interior integral in \eqref{Fusplit} at $u=1$. For $u \in (0,1)$, we can bound the integrand using
\bex
(u +E  + x^{2/\alpha} s )^{-\alpha/2 -2} \le
(E  + x^{2/\alpha} s ) ^{-\alpha/2 - 2}  \le 
 \max (E, x^{2/\alpha} s)^{-\alpha/2 - 2},
\eex
which to leads to 
\begin{align}\label{parts1}
y x^{2/\alpha-1 } & \int_0^\infty s g(s)  \int_{ 0 }^1
u^{-\gamma}
(u +E  + x^{2/\alpha} s )^{-\alpha/2 -2}    \, du \, ds\\
&\le C ( 1- \gamma)^{-1}y x^{2/\alpha-1 } 
\int_0^\infty   \max(E, x^{2/\alpha} s)^{-\alpha/2 - 2} s g(s) \, ds\notag
\end{align}
after integrating in $u$. 
We split the integral in $s$ at $s = x^{-2/\alpha} E$. The contribution from $s <  x^{-2/\alpha} E$ is at most 
\be\label{slowerregime}
C ( 1- \gamma)^{-1} y x^{2/\alpha-1 }  E^{-\alpha/2 - 2 } \int_0^{x^{-2/\alpha} E}  s g(s) ds.
\ee

\noindent Again applying \Cref{l:stabletailbounds},
\bex
\int_0^{x^{-2/\alpha} E}  s g(s) ds \le C + C \int_1^{x^{-2/\alpha} E}  
s^{-\alpha/2} ds \le C( 1 + x^{1 - 2/\alpha} E^{-\alpha/2 + 1}).
\eex
The contribution from \eqref{slowerregime} is then bounded by
\begin{align}
C(1-\gamma)^{-1} y x^{2/\alpha-1 }  E^{-\alpha/2 - 2 }  (1 + x^{1 - 2/\alpha} E^{-\alpha/2 + 1}   )&\le C(1-\gamma)^{-1} y (x^{2/\alpha-1 }  E^{-\alpha/2 - 2 } 
+ E^{-1 - \alpha})\notag \\
&\le C(1-\gamma)^{-1} y E^{-1 - \alpha},\label{lemon1}
\end{align}
where we used the assumptions that $x\le 1$ and $E > c^{-1} \ge 1$. The contribution from $s >  x^{-2/\alpha} E$ in \eqref{parts1} is at most
\begin{align}
C(1-\gamma)^{-1} y x^{2/\alpha-1 } \int_{x^{-2/\alpha} E}^\infty (x^{2/\alpha} s )^{-\alpha/2 -2} s^{-\alpha/2} ds\notag
&=
C(1-\gamma)^{-1} y x^{-2/\alpha-2 }  \int_{x^{-2/\alpha} E}^\infty 
s^{-\alpha -2} ds\notag \\
& = C(1-\gamma)^{-1} y x^{-2/\alpha-2 }  (x^{-2/\alpha} E )^{-\alpha -1}\notag \\
&= C(1-\gamma)^{-1} y  E^{-\alpha -1}.\label{lemon2}
\end{align}

We next consider the $u\ge 1$ part of \eqref{Fusplit}. This can be bounded by
\begin{align*}
&C y x^{2/\alpha-1 }\int_0^\infty s g(s)  \int_{1}^\infty
(u +E  + x^{2/\alpha} s )^{-\alpha/2 -2}    \, du \, ds
\le
C y x^{2/\alpha-1 }\int_0^\infty s g(s) 
( E  + x^{2/\alpha} s )^{-\alpha/2 -1}  \, ds.
\end{align*}
Using \eqref{already!}, we bound this by $Cy E^{-\alpha}$. Combining the estimates \eqref{lemon1} and \eqref{lemon2} (for the $u\le 1$ part of \eqref{Fusplit}) and the previous line (for the $u\ge 1$ part) bounds \eqref{Fusplit} and completes the proof.
\end{proof}

\begin{lem}\label{l:Fy}
	
There exists a constant $c \in (0, 1)$ such that for all $\alpha \in (1-c, 1)$, $\gamma\in(0,1)$, and $E > c^{-1}$, we have 
\bex
 |\partial_y  F_\gamma(E, x,y)|  \le 
c^{-1} (1 - \gamma)^{-1} E^{-\alpha/2 - \gamma} \eex
and 
\be\label{Fyispositive}
\partial_y  F_\gamma(E, x,y) \ge 0 
\ee
for $y \le 1$. 
\end{lem}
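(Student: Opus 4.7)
The plan is to differentiate the formula
\[
F_\gamma(E,x,y) = y^{-2/\alpha}  \int_0^\infty g(s)
 \int_{ 0 }^\infty
 u^{-\gamma}
g\big(y^{-2/\alpha} (u +E  + x^{2/\alpha} s )\big) \, du \, ds
\]
from \eqref{Fu} under the integral sign. Writing $w = w(u,s;y) = y^{-2/\alpha}(u + E + x^{2/\alpha} s)$, so $\partial_y w = -(2/\alpha)\, y^{-1} w$, a short computation gives
\[
\partial_y \bigl[y^{-2/\alpha} g(w)\bigr] \;=\; -\tfrac{2}{\alpha}\, y^{-2/\alpha - 1} \bigl(g(w) + w\,g'(w)\bigr) \;=\; -\tfrac{2}{\alpha}\, y^{-2/\alpha - 1}\, \bigl(w\,g(w)\bigr)'.
\]
The decay bounds on $g$ and $g'$ from \Cref{l:stabletailbounds} justify differentiating under the integral (the integrand is dominated by $u^{-\gamma}(u+E)^{-1-\alpha/2}$), yielding
\[
\partial_y F_\gamma(E,x,y) \;=\; -\tfrac{2}{\alpha}\, y^{-2/\alpha - 1} \int_0^\infty g(s) \int_0^\infty u^{-\gamma}\,\bigl(w\,g(w)\bigr)'\, du\, ds.
\]

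For the positivity claim \eqref{Fyispositive}, observe that $y \le 1$ and $E > c^{-1}$ force $w \ge y^{-2/\alpha} E \ge E \ge c^{-1}$ throughout the region of integration. Choosing $c$ small enough in terms of the constant $C$ of \eqref{gprimenegative2} guarantees $w > C$ everywhere, so $\bigl(w\,g(w)\bigr)' \le -c\, w^{-1-\alpha/2} < 0$. The outer sign $-(2/\alpha)y^{-2/\alpha-1}$ is negative, so the integrand and the prefactor cancel in sign, giving $\partial_y F_\gamma \ge 0$.

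For the quantitative upper bound, the estimates in \Cref{l:stabletailbounds} give $|g(w) + w g'(w)| \le C w^{-1 - \alpha/2}$ for $w > C$, hence for all $w$ appearing in the integral. Substituting and using $w^{-1-\alpha/2} = y^{1+2/\alpha}(u + E + x^{2/\alpha} s)^{-1-\alpha/2}$, the powers of $y$ cancel and we obtain
\[
|\partial_y F_\gamma(E,x,y)| \;\le\; C \int_0^\infty g(s) \int_0^\infty u^{-\gamma}\, (u + E + x^{2/\alpha} s)^{-1 - \alpha/2}\, du\, ds.
\]
Splitting the inner integral at $u = E + x^{2/\alpha} s$ bounds it by $C(1-\gamma)^{-1}(E + x^{2/\alpha} s)^{-\gamma - \alpha/2}$; using $E + x^{2/\alpha} s \ge E$ and $\int_0^\infty g(s)\,ds = 1$ then yields the desired bound $C(1-\gamma)^{-1} E^{-\gamma - \alpha/2}$.

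No step is particularly hard; the one point of care is ensuring the differentiation-under-the-integral is legitimate (handled by the tail bounds on $g,g'$) and that the threshold in \eqref{gprimenegative2} is met uniformly on the integration domain, which is exactly what the hypotheses $y \le 1$ and $E > c^{-1}$ provide.
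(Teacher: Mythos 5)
Your proposal is correct and follows essentially the same route as the paper's proof; the only difference is that you start from the already-transformed representation~\eqref{Fu} rather than from the original two-variable convolution form, but after undoing the substitution $u = y^{2/\alpha}t - E - x^{2/\alpha}s$ your formula for $\partial_y F_\gamma$ coincides exactly with the paper's expression involving $(t\,g(t))'$, and the positivity and magnitude bounds proceed identically.
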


\begin{proof}
Recall from \eqref{FandG} that
\bex
 F_\gamma(E, x,y)  =  \int \int_{x^{2/\alpha}s - y^{2/\alpha} t < -E }
 (E  + x^{2/\alpha}  s - y^{2/\alpha}  t)^{-\gamma}  g(s) g(t) \, ds\, dt .
\eex
Setting $v = y^{2/\alpha}  t$ gives
\bex
F_\gamma(E, x,y)  =  \int \int_{x^{2/\alpha} s - v < -E }
 |E  + x^{2/\alpha} s - v |^{-\gamma}  g(s) g(y^{-2/\alpha } v) y^{-2/\alpha} \, ds \, dv.
\eex

\noindent Since
\bex
\frac{d}{dy} \big( g(y^{-2/\alpha } v) y^{-2/\alpha} \big) = - \frac{2}{\alpha}g'( y^{-2/\alpha} v) \cdot y^{-1 - 4/\alpha} v  - \frac{2}{\alpha} y^{-1 -2/\alpha} \cdot g(y^{-2/\alpha } v),
\eex

\noindent we have 
\begin{flalign*}
	\partial_y F_{\gamma} (E, x, y) = - \displaystyle\frac{2}{\alpha} & \displaystyle\int \displaystyle\int_{x^{2/\alpha} s -v < -E} |E + x^{2/\alpha} s- v|^{-\gamma} g(s) \\
	& \qquad \qquad \times y^{-1-2/\alpha}  \big( g' (y^{-2/\alpha} v) v y^{-2/\alpha} + g (y^{-2/\alpha} v) \big) ds dv.
\end{flalign*}

\noindent 

\noindent Recalling $v = y^{2/\alpha}  t$, we find 
\bex
 \partial_y F_{\gamma} (E, x, y) = - \frac{2}{\alpha y } \int \int_{x^{2/\alpha}s - y^{2/\alpha} t < -E }
  |E  + x^{2/\alpha}  s - y^{2/\alpha}  t|^{-\gamma}  g(s) 
 [g( t )t]'
 \, ds \, dt.
\eex
after adding both terms. Rewriting the limits of integration as in \eqref{Flimitsrewrite} gives
\be\label{Flimitsrewrite2}
 \partial_y  F_\gamma(E,x,y) = - \frac{2}{\alpha y } \int_{0}^\infty
\int_{y^{-2/\alpha} ( E +  x^{2/\alpha} s )}^\infty
  |E  + x^{2/\alpha}  s - y^{2/\alpha}  t|^{-\gamma}  (tg(t))' 
g(s)
 \, ds \, dt.
\ee
We observe that $y^{-2/\alpha}(E + x^{2/\alpha} s ) \ge E$ for $y \le 1$ and $x \ge 0$. We also note that there exists $C>1$ such that $\big(t g(t)\big)' < 0$ for $t > C$, where $C$ is uniform in $\alpha \in (1/2, 1)$, by \Cref{l:stabletailbounds}. Hence the previous expression is positive, showing \eqref{Fyispositive}.

Using \Cref{l:stabletailbounds} to bound $\big|\big( t g(t) \big)' \big| \le g(t) + t \big| g'(t) \big| \le Ct^{-1-\alpha/2}$, we find from \eqref{Flimitsrewrite2} that
\bex
\big| \partial_y F_{\gamma} (E, x, y) \big| \le Cy^{-1} \int_{0}^\infty
\int_{y^{-2/\alpha} ( E +  x^{2/\alpha} s )}^\infty
  |E  + x^{2/\alpha}  s - y^{2/\alpha}  t|^{-\gamma} t^{-1 - \alpha/2} 
g(s)
 \, ds \, dt.
\eex

\noindent Changing variables $w =y^{2/\alpha}(E + x^{2/\alpha}s )^{-1} t$, it follows that 
\begin{align}
\big| \partial_y F_{\gamma} (E, x, y) \big| \le C y^{-1} &\int_{0}^\infty
\int_{1}^\infty
  \big|E  + x^{2/\alpha}  s - (E  + x^{2/\alpha}  s) w \big|^{-\gamma}  (w y^{-2/\alpha} \big(E + x^{2/\alpha} s) \big) ^{-1 - \alpha/2} \notag \\ 
  & \qquad \qquad \times y^{-2/\alpha} (E + x^{2/\alpha} s) g(s) ds dw \notag \\
 &= C  \int_{0}^\infty
(E  + x^{2/\alpha}  s)^{-\alpha/2 - \gamma} g(s)
\int_{1}^\infty
 w^{-1 - \alpha/2}  (w -1) ^{-\gamma}  
\, dw  \, ds .\label{129}
\end{align}
We note that
\be\label{wintegral!}
\int_{1}^\infty
 w^{-1 - \alpha/2}  (w -1) ^{-\gamma}  
\, dw  \le C ( 1 - \gamma)^{-1}.
\ee
Then we use the bound $(E  + x^{2/\alpha}  s)^{-\alpha/2 - \gamma} \le E^{-\alpha/2 - \gamma}$, \eqref{wintegral!}, and the fact that $g$ is a probability density
in \eqref{129} to obtain the desired bound $\big| \partial_y F_{\gamma} (E, x, y) \big| \le C (1 - \gamma)^{-1} E^{-\alpha/2 - \gamma}$. 
\end{proof}

\subsection{Derivatives in $x$ and $y$ of $G$} 

\label{DerivativesG} 

In this section we establish the below two lemmas that bound the derivatives of $G$ with respect to $x$ and $y$, which are parallel to those of \Cref{l:Fx} and \Cref{l:Fy}.
\begin{lem}\label{l:Gx}
There exists a constant $c \in (0, 1)$ such that for all $\alpha \in (1-c, 1)$ and $E > c^{-1}$, we have 
\begin{align}\label{gxfinal}
|\partial_x G_\gamma(E,x,y) | &\le 
C (1 - \gamma)^{-1} E^{-\alpha/2 - \gamma}  
+ C ( 1- \gamma)^{-2}  E^{-\alpha/2 -2 \gamma} y\\
&+ C E^{-1 - \alpha/2 - \gamma} y^{-2/\alpha}
+ C E^{-\gamma} y.
\end{align}
\end{lem}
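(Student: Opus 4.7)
My plan is to mirror the proof of \Cref{l:Fx}, starting from the definition \eqref{FandG} of $G_\gamma$ and using a sequence of substitutions to expose the $x$-dependence in a form amenable to differentiation. Writing the constraint $x^{2/\alpha}s - y^{2/\alpha}t > -E$ as $t < y^{-2/\alpha}(E + x^{2/\alpha}s)$, then substituting $u = y^{2/\alpha} t$ followed by $v = E + x^{2/\alpha}s - u$, I would rewrite
\begin{equation*}
G_\gamma(E,x,y) = y^{-2/\alpha} \int_0^\infty g(s) \int_0^{E + x^{2/\alpha}s} v^{-\gamma}\, g\bigl(y^{-2/\alpha}(E + x^{2/\alpha}s - v)\bigr)\, dv\, ds,
\end{equation*}
where $g$ denotes the $\alpha/2$-stable density. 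The crucial observation is that differentiating in $x$ by Leibniz produces a boundary contribution at $v = E + x^{2/\alpha}s$ equal to $(E+x^{2/\alpha}s)^{-\gamma} g(0) \cdot (2/\alpha)x^{2/\alpha-1}s$, which \emph{vanishes} because a positive $\alpha/2$-stable density satisfies $g(0)=0$ (as can be seen from the Pollard series used in the proof of \Cref{l:stabletailbounds}, or directly from the super-exponential decay of $g$ near the origin). Thus only the integrand contributes, and
\begin{equation*}
\partial_x G_\gamma = \tfrac{2}{\alpha}\, x^{2/\alpha-1} y^{-4/\alpha} \int_0^\infty s\, g(s) \int_0^{E + x^{2/\alpha}s} v^{-\gamma}\, g'\bigl(y^{-2/\alpha}(E + x^{2/\alpha}s - v)\bigr)\, dv\, ds.
\end{equation*}

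The plan is then to change back via $w = y^{-2/\alpha}(E + x^{2/\alpha}s - v)$ to recast the inner integral as a $w$-integral over $[0, y^{-2/\alpha}(E + x^{2/\alpha}s)]$ of $(E + x^{2/\alpha}s - y^{2/\alpha}w)^{-\gamma} g'(w)$, and then split the $w$-integration into three pieces: $w \in [0,1]$ where $|g'(w)| \le C$ by \Cref{l:stabletailbounds}; an intermediate tail region where $|g'(w)| \le C w^{-2-\alpha/2}$ and the factor $(E + x^{2/\alpha}s - y^{2/\alpha}w)^{-\gamma}$ is comparable to $(E + x^{2/\alpha}s)^{-\gamma}$; and a small neighborhood of the upper endpoint $w = y^{-2/\alpha}(E + x^{2/\alpha}s)$ where the singularity of $(E + x^{2/\alpha}s - y^{2/\alpha}w)^{-\gamma}$ is integrated against $g'$, yielding the factor $(1-\gamma)^{-1}$ after changing variables $w = (1-r)\, y^{-2/\alpha}(E+x^{2/\alpha}s)$ and bounding the resulting $\int_0^{1/2} r^{-\gamma}\, dr$. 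The outer $s$-integral would then be split at $s = x^{-2/\alpha}E$, using $g(s) \le C\min(1, s^{-1-\alpha/2})$ to separate the regime where $E$ dominates from the regime where $x^{2/\alpha}s$ dominates in $E + x^{2/\alpha}s$, exactly as in the proof of \Cref{l:Fx} (cf.\ the split after \eqref{slowerregime}). Summing the contributions from each combination of regimes would produce the four terms on the right-hand side of \eqref{gxfinal}.

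The main obstacle compared to \Cref{l:Fx} is twofold. First, the analogous $w$-integration there was over $[0,\infty)$ with no endpoint singularity and a nonnegative integrand, whereas here the finite upper limit together with the factor $(E + x^{2/\alpha}s - y^{2/\alpha}w)^{-\gamma}$ forces a delicate treatment of the endpoint and is the source of both the $(1-\gamma)^{-1}$ factor in the first term and the $(1-\gamma)^{-2}$ factor in the second (the latter arising when the endpoint singularity in $w$ is combined with a further factor of $(1-\gamma)^{-1}$ from the $s$-integration against $(E + x^{2/\alpha}s)^{-\gamma-\ldots}$). Second, $g'$ is sign-changing and not uniformly dominated by a single monotone envelope on $[0,\infty)$, so absolute values cannot be pulled inside the integrand before the regional split; the tail sign information from \eqref{gprimenegative} must be used only where needed. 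Tracking these effects simultaneously with the explicit $y^{-2/\alpha}$ prefactor in the argument of $g'$—which is what produces the somewhat surprising $E^{-1-\alpha/2-\gamma}\, y^{-2/\alpha}$ term in \eqref{gxfinal}—is the principal bookkeeping difficulty, and is what explains why the bound splits into four summands rather than a single clean expression as in \Cref{l:Fx}.
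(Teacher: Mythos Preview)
Your representation of $G_\gamma$ and the single-term formula for $\partial_x G_\gamma$ are correct: the boundary term does vanish because $g(0^+)=0$ for the positive $\alpha/2$-stable density, and one can check via integration by parts that your expression agrees with the paper's two terms \eqref{diffresult1}$+$\eqref{put}. However, the bounding strategy you outline has a genuine gap. After taking absolute values on your pieces $w\in[0,1]$ and the ``intermediate tail'', you obtain for the inner integral only the bound $C(E+x^{2/\alpha}s)^{-\gamma}$. Feeding this into the outer integral yields
\[
x^{2/\alpha-1}y^{-2/\alpha}\int_0^\infty s\,g(s)\,(E+x^{2/\alpha}s)^{-\gamma}\,ds,
\]
whose large-$s$ tail behaves like $s^{-\alpha/2-\gamma}$; this \emph{diverges} whenever $\gamma+\alpha/2\le 1$, in particular for $\gamma=\alpha/2$, which is precisely the case used in \Cref{l:abE}. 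The integral against $s\,g(s)$ needs at least $(E+x^{2/\alpha}s)^{-\gamma-1}$ of decay (as in \eqref{already!}), and your piecewise absolute-value bound on $g'$ only provides $(E+x^{2/\alpha}s)^{-\gamma}$.

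The missing ingredient is cancellation: your pieces 1 and 2 nearly telescope, since $\int_0^{B/2} g'(w)\,dw = g(B/2)$ is of order $B^{-1-\alpha/2}$ rather than $O(1)$. Capturing this requires integrating by parts on $[0,B/2]$ (writing $\int(A-y^{2/\alpha}w)^{-\gamma}g'(w)\,dw$ as a boundary term plus $\gamma y^{2/\alpha}\int(A-y^{2/\alpha}w)^{-\gamma-1}g(w)\,dw$), which supplies the extra factor of $A^{-1}$ needed for convergence. But this integration by parts essentially reconstitutes the paper's two-term decomposition: the paper's substitution $v=(E+x^{2/\alpha}s)^{-1}y^{2/\alpha}t\in[0,1]$ places the $x$-dependence in the prefactor $(E+x^{2/\alpha}s)^{1-\gamma}$ and inside $g$, so that differentiation directly produces one term with $g$ and an explicit $A^{-\gamma-1}$-type weight (namely \eqref{diffresult1}) and one term with $g'$ carrying an extra factor of $A$ (namely \eqref{put}), each of which can be bounded in absolute value without losing convergence. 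You correctly identified that sign-changing $g'$ would be the main obstacle, but splitting into regions and then taking absolute values on each region still discards the cancellation.
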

\begin{proof}
From the definition \eqref{FandG}, we have
\bex
G_\gamma(E,x,y) =  \int \int_{x^{2/\alpha}s - y^{2/\alpha} t > -E }
 (E  + x^{2/\alpha}  s - y^{2/\alpha}  t)^{-\gamma}  g(s) g(t) \, dt \, ds 
\eex
where $g$ is the density of the nonnegative $\alpha/2$-stable law.

We now rewrite the limits of integration. The integrand of the outer integral is supported on $\{ s >0 \}$. For the interior integral, the integrand is supported on $\big\{ t \in \mathbb{R}_{> 0} : t < y^{-2/\alpha} ( E +  x^{2/\alpha} s ) \big\}$. This shows that
\be\label{Gintegralx}
G_\gamma(E,x,y)  = \int_{0}^\infty
\int_0^{y^{-2/\alpha} ( E +  x^{2/\alpha} s )}
 (E  + x^{2/\alpha}  s - y^{2/\alpha}  t)^{-\gamma}  g(s) g(t) \, dt\, ds.
\ee
In the interior integral, we set $ v = (E + x^{2/\alpha} s)^{-1} y^{2/\alpha} t$ to obtain
\begin{flalign}
G_{\gamma} (E, x, y) &=  y^{-2/\alpha}\int_0^\infty g(s)  (E + x^{2/\alpha} s)^{1-\gamma} \int_0^1\label{diffme2}
 (1  -   v )^{-\gamma} 
 g\big(y^{-2/\alpha} (E + x^{2/\alpha} s) v\big) \, dv \,ds.
\end{flalign}

We differentiate \eqref{diffme2} in $x$ and obtain two terms. The first is 
\be\label{diffresult1}
\displaystyle\frac{2}{\alpha} (1 - \gamma) y^{-2/\alpha} x^{2/\alpha -1}\int_0^\infty g(s)   (E + x^{2/\alpha} s)^{-\gamma} s \int_0^1
 (1  -   v )^{-\gamma} 
 g\big(y^{-2/\alpha} (E + x^{2/\alpha} s) v\big) \, dv\,ds,
\ee
and the second is
\be\label{put}
\frac{2}{\alpha}\cdot 
y^{-4/\alpha} x^{2/\alpha-1}\int_0^\infty g(s) (E + x^{2/\alpha} s)^{1-\gamma}s  \int_0^1
(1  -   v )^{-\gamma}   v g'\big(y^{-2/\alpha} (E + x^{2/\alpha} s) v\big) \, dv.
\ee

We begin by analyzing \eqref{diffresult1}.
We split the integral in $v$ at $v = (E (E + x^{2/\alpha} s ))^{-1}$.
For $v < (E (E + x^{2/\alpha} s ))^{-1}$, we obtain
\begin{align*}
\displaystyle\frac{2}{\alpha} (1 - \gamma) y^{-2/\alpha}& x^{2/\alpha -1} \int_0^\infty g(s)   (E + x^{2/\alpha} s)^{-\gamma} s \int_0^{(E (E + x^{2/\alpha} s ))^{-1}}
 (1  -   v )^{-\gamma} 
 g(y^{-2/\alpha} (E + x^{2/\alpha} s) v) \, dv\,ds
\\ \le& C
y^{-2/\alpha} x^{2/\alpha -1}\int_0^\infty g(s)   (E + x^{2/\alpha} s)^{-\gamma} s \big( E (E + x^{2/\alpha} s ) \big)^{-1} ds
\\ \le & C E^{-1}   y^{-2/\alpha}
x^{2/\alpha -1}\int_0^\infty g(s)   (E + x^{2/\alpha} s)^{-1-\gamma} s \,ds,
\end{align*}

\noindent where in the second bound we used the fact that $g(w) \le C$ for $w = y^{-2/\alpha}(E + x^{2/\alpha} s)v$ (from \eqref{stabletailbounds} and  that $E > c^{-1} > 2$ (by imposing that $c < \frac{1}{2}$). To bound the integral in $s$, we use \eqref{already!} to find 
\begin{align}
 E^{-1} &  y^{-2/\alpha}
x^{2/\alpha -1} \int_0^\infty g(s) (E + x^{2/\alpha} s)^{-\alpha/2-1} s\, ds \notag
\\
 &\le  C  E^{-1}   y^{-2/\alpha}
x^{2/\alpha -1} x^{-2/\alpha + 1}E^{-\alpha/2 - \gamma} \le 
 C  E^{-1-\alpha/2 - \gamma} y^{-2/\alpha}.\label{2ndtrm}
\end{align}
This yields the third term in the claimed bound \eqref{gxfinal}.

We now consider the case $v > (E (E + x^{2/\alpha} s ))^{-1}$ in \eqref{diffresult1}, which gives
\begin{align*}
&(1 - \gamma)y^{-2/\alpha} x^{2/\alpha -1}\int_0^\infty g(s) (E + x^{2/\alpha} s)^{-\gamma}s \int_{(E (E + x^{2/\alpha} s ))^{-1}}^1
 (1  -   v )^{-\gamma} 
 g(y^{-2/\alpha}(E + x^{2/\alpha} s) v) \, dv\, ds \notag 
\\ &\le 
C(1 - \gamma) y^{-2/\alpha} x^{2/\alpha -1}\int_0^\infty g(s) (E + x^{2/\alpha} s)^{-\gamma}s  \int_{(E (E + x^{2/\alpha} s ))^{-1}}^1
 (1  -   v )^{-\gamma} 
 (y^{-2/\alpha} (E + x^{2/\alpha} s) v)^{-1 - \alpha/2} \, dv\, ds \notag
\\ 
&\le 
 C (1 - \gamma) y x^{2/\alpha -1}\int_0^\infty g(s) (E + x^{2/\alpha} s)^{-1 -\alpha/2 - \gamma }s \int_{(E (E + x^{2/\alpha} s ))^{-1}}^1
 (1  -   v )^{-\gamma} 
v^{-1 - \alpha/2} \, dv\, ds
\\& \le 
 Cy  x^{2/\alpha -1}\int_0^\infty g(s) (E + x^{2/\alpha} s)^{1 -\alpha/2 - \gamma}s
 (E (E + x^{2/\alpha} s ))^{\alpha/2} \, ds
\\ &\le Cy  x^{2/\alpha -1} E^{\alpha/2} 
  \int_0^\infty  g(s) (E + x^{2/\alpha} s)^{-1 -\gamma}s \\
 & \le C y E^{ - \gamma} .
\end{align*}
In the last inequality, we used \eqref{already!}. This gives the last term in \eqref{gxfinal}. 

We now consider \eqref{put}. We split the integral where the argument of $g'$ is $1$, which leads to the condition
\bex
1 = y^{-2/\alpha} (E + x^{2/\alpha} s) v.
\eex
This gives
\bex
v = y^{2/\alpha} (E + x^{2/\alpha} s)^{-1}.
\eex
Using $|g'| \le C$ from \Cref{l:stabletailbounds}, we find
\begin{align}\label{vcontrib1}
\int_0^{y^{2/\alpha} (E + x^{2/\alpha} s)^{-1}}
(1  -   v )^{-\gamma}   v g'\big(y^{-2/\alpha} (E + x^{2/\alpha} s) v\big) \, dv
&\le  \int_0^{y^{2/\alpha} (E + x^{2/\alpha} s)^{-1}}
v \, dv\\ &= C y^{4/\alpha} (E + x^{2/\alpha} s)^{-2} \notag.
\end{align}
Putting this in \eqref{put} and using \eqref{already!} shows that the $v  < y^{2/\alpha} (E + x^{2/\alpha} s)^{-1}$ contribution from  
\eqref{vcontrib1} to \eqref{put} is bounded by
\bex
x^{2/\alpha -1} \int_0^\infty s (E + x^{2/\alpha} s)^{-1 - \alpha/2} \, ds \le C E^{-\alpha/2 - \gamma}.
\eex

We are left with the contribution from $v > y^{2/\alpha} (E + x^{2/\alpha} s)^{-1}$, which is 
\bex
y^{-4/\alpha} x^{2/\alpha-1} 
\int_0^\infty g(s) (E + x^{2/\alpha} s)^{1-\gamma}s 
 \int_{y^{2/\alpha} (E + x^{2/\alpha} s)^{-1}}^1
(1  -   v )^{-\gamma}   v g'\big(y^{-2/\alpha} (E + x^{2/\alpha} s) v\big) \, dv.
\eex
On this interval we use $|g'(x) |\le x^{-2 - \alpha/2}$ from \Cref{l:stabletailbounds}. After taking absolute value, we get the bound
\begin{align}\notag
&y^{-4/\alpha} x^{2/\alpha-1} \int_0^\infty g(s) (E + x^{2/\alpha} s)^{1-\gamma} s \int_{y^{2/\alpha} (E + x^{2/\alpha} s)^{-1}}^1
(1  -   v )^{-\gamma}   v \big(y^{-2/\alpha} (E + x^{2/\alpha} s) v\big)^{-2 - \alpha/2} \, dv\,ds\\
&= y x^{2/\alpha-1} \int_0^\infty g(s) (E + x^{2/\alpha} s)^{-1  - \alpha/2 - \gamma} s \int_{y^{2/\alpha} (E + x^{2/\alpha} s)^{-1}}^1
(1  -   v )^{-\gamma}   v^{-1-\alpha/2}  \, dv\, ds.\label{vpieces1}
\end{align}

 We first consider the piece of the integral in $v$ with $v \in (1/2 ,1)$.
Using 
\bex
\int_{1/2}^1
(1  -   v )^{-\gamma}   v^{-1-\alpha/2}  \, dv \le C( 1- \gamma)^{-1},
\eex
we obtain
\begin{align*}
&y x^{2/\alpha-1} \int_0^\infty g(s) (E + x^{2/\alpha} s)^{-1  - \alpha/2 - \gamma} s \int_{1/2}^1
(1  -   v )^{-\gamma}   v^{-1-\alpha/2}  \, dv\,ds \\
&\le C E^{-\gamma}( 1- \gamma)^{-1} y x^{2/\alpha -1} \int_0^\infty s (E + x^{2/\alpha} s)^{-1 - \alpha/2}g(s)\, ds\\
&\le C (1 - \gamma)^{-2} y E^{-\alpha/2 - 2 \gamma},
\end{align*}
where the last inequality follows from \eqref{already!}.
This corresponds to the second term in \eqref{gxfinal}.

The contribution from $v < 1/2$ in \eqref{vpieces1} is bounded by
\begin{align*}
& C y x^{2/\alpha-1} \int_0^\infty g(s) (E + x^{2/\alpha} s)^{-1  - \alpha/2 - \gamma}s \int_{y^{2/\alpha} (E + x^{2/\alpha} s)^{-1}}^{1/2}
  v^{-1-\alpha/2}  \, dv \, ds\\
&\le C y x^{2/\alpha-1} \int_0^\infty g(s) (E + x^{2/\alpha} s)^{-1  - \alpha/2 - \gamma} s 
\big( y^{2/\alpha} (E + x^{2/\alpha} s)^{-1} \big) ^{-\alpha/2}\, ds\\
& = x^{2/\alpha-1}\int_0^\infty g(s) (E + x^{2/\alpha} s)^{-1  - \gamma}s \, ds\\
&\le C( 1 - \gamma)^{-1} E^{-\alpha/2 - \gamma}.
\end{align*}
This corresponds to the first term in \eqref{gxfinal}, and completes the proof.
\end{proof}
\begin{lem}\label{l:Gy}
There exists $c \in \big( 0, \frac{1}{2} \big)$ such that for all $\alpha \in (1-c, 1)$, $\gamma\in(0,1)$, $x \le 1$, and $E > c^{-1}$, we have 
\be\label{Gy1}
|\partial_y G(E,x,y)|  \le c^{-1} \left( (1- \gamma)^{-1}E^{-\alpha/2 - \gamma} + E^{-\gamma} \right)
\ee
and
\bex
\partial_y G(E,x,y) \ge c (1 - \gamma)^{-1} E^{-\alpha/2 - \gamma}  + O(E^{-\gamma} ).
\eex
\end{lem}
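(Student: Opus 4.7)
The plan is to follow closely the strategy used for the bound on $\partial_y F_\gamma$ in \Cref{l:Fy}. Starting from the representation
\[
G_\gamma(E,x,y) = \int_0^\infty g(s) \int_0^{E+x^{2/\alpha}s} (E+x^{2/\alpha}s - v)^{-\gamma}\, g(y^{-2/\alpha} v)\, y^{-2/\alpha}\, dv\, ds,
\]
obtained by substituting $v = y^{2/\alpha} t$ in the inner integral of the expression \eqref{Gintegralx}, I would differentiate in $y$ under the integral sign. Using the identity
\[
\frac{d}{dy}\bigl[g(y^{-2/\alpha}v)\, y^{-2/\alpha}\bigr] = -\frac{2}{\alpha y}\,(tg(t))'\Big|_{t=y^{-2/\alpha}v}
\]
already exploited in the proof of \Cref{l:Fy}, and reverting the change of variables, I obtain
\[
\partial_y G_\gamma(E,x,y) = -\frac{2}{\alpha y}\int_0^\infty g(s)\int_0^{T^*(s)}\bigl(A(s)-y^{2/\alpha}t\bigr)^{-\gamma}\bigl(tg(t)\bigr)'\,dt\,ds,
\]
with $A(s)=E+x^{2/\alpha}s$ and $T^*(s)=y^{-2/\alpha}A(s)$.

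The next step is to split the inner $t$-integral at the threshold $t=C_0$ from \Cref{l:stabletailbounds}, above which \eqref{gprimenegative2} gives $(tg)'(t)\le -C_0^{-1}t^{-1-\alpha/2}<0$. On the tail piece $t\in[C_0,T^*(s)]$, the integrand has a definite negative sign. I would then apply the change of variables $w=y^{2/\alpha}t/A(s)$ (exactly as in \eqref{129} of the proof of \Cref{l:Fy}), and use \Cref{l:stabletailbounds} together with the elementary estimate $\int_\delta^1 (1-w)^{-\gamma}w^{-1-\alpha/2}\,dw\le C\bigl(\delta^{-\alpha/2}+(1-\gamma)^{-1}\bigr)$ with $\delta=C_0y^{2/\alpha}/A(s)$. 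This gives, after dividing by $y$ and integrating in $s$, a contribution bounded above by $c^{-1}(1-\gamma)^{-1}E^{-\alpha/2-\gamma}$ and bounded below by $c(1-\gamma)^{-1}E^{-\alpha/2-\gamma}$; the latter uses that $\int_{1/2}^1 (1-w)^{-\gamma}w^{-1-\alpha/2}dw\ge c(1-\gamma)^{-1}$ and \Cref{l:Rlower}-type positivity of the $s$-integral on $\{s\le x^{-2/\alpha}E\}$.

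On the bounded piece $t\in[0,C_0]$, $|(tg)'|$ is uniformly bounded by \Cref{l:stabletailbounds}. Since $y^{2/\alpha}t\le y^{2/\alpha}C_0$ is small compared to $E$ when $E>c^{-1}$ and $y$ is not too large, $A(s)-y^{2/\alpha}t\ge A(s)/2$ on this range, and a direct estimate yields a contribution of size $O(E^{-\gamma})$ to $y\,\partial_y G_\gamma$. The $y^{-1}$ prefactor is where the main difficulty lies: the naive bound produces a $y^{-1}E^{-\gamma}$ term that would be unacceptable. The key point is that the apparent blow-up as $y\to 0$ must cancel between the two pieces. To see this, I would perform a single integration by parts against $\tfrac{d}{dt}[(A-y^{2/\alpha}t)^{1-\gamma}]=-(1-\gamma)y^{2/\alpha}(A-y^{2/\alpha}t)^{-\gamma}$, which after a careful analysis (using that the boundary term at $t=T^*(s)$ vanishes since $1-\gamma>0$, and that the boundary term at $t=0$ yields exactly the leading $O(E^{-\gamma})$ error absorbed into the stated bound) converts the apparent singularity into the error term $E^{-\gamma}$ in \eqref{Gy1}.

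The main obstacle is precisely this last cancellation: a naive splitting and absolute-value estimate produces a $y^{-1}E^{-\gamma}$ contribution, so one must exploit the definite sign of $(tg)'$ on $[C_0,T^*]$ together with the integration-by-parts identity above to see that the boundary contribution at $t=C_0$ from the small-$t$ piece cancels against the corresponding boundary term of the large-$t$ piece. Once this cancellation is made explicit, the upper bound \eqref{Gy1} follows directly, and the lower bound is produced by the strictly negative integrand on $[C_0,T^*(s)]$, whose size was estimated from below in the second step above, with the small-$t$ piece absorbed into the $O(E^{-\gamma})$ error term.
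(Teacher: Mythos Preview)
Your overall setup is correct and matches the paper: the derivative formula
\[
\partial_y G_\gamma(E,x,y)=-\frac{2}{\alpha y}\int_0^\infty g(s)\int_0^{T^*(s)}\bigl(A(s)-y^{2/\alpha}t\bigr)^{-\gamma}\bigl(tg(t)\bigr)'\,dt\,ds
\]
is right, and using the sign of $(tg)'$ on the tail together with the change of variables $w=y^{2/\alpha}t/A(s)$ is exactly what is needed for the lower bound. The gap is in how you handle the $y^{-1}$ prefactor.

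Splitting the inner integral at a \emph{fixed} point $C_0$ does not work cleanly. On the tail piece $[C_0,T^*]$, after the substitution $w=y^{2/\alpha}t/A$ the lower limit becomes $\delta=C_0 y^{2/\alpha}/A$, and the resulting factor $\int_\delta^{1/2}w^{-1-\alpha/2}\,dw\sim \delta^{-\alpha/2}\sim y^{-1}A^{\alpha/2}$ produces a $y^{-1}A^{-\gamma}$ contribution that does not cancel against anything concrete in your small-$t$ analysis. More seriously, the integration by parts you propose against $\tfrac{d}{dt}\bigl[(A-y^{2/\alpha}t)^{1-\gamma}\bigr]$ goes in the wrong direction: it \emph{divides} by $(1-\gamma)y^{2/\alpha}$ rather than multiplying, so after the outer $y^{-1}$ you are left with $y^{-1-2/\alpha}$ times an integral involving $(tg)''$, which is strictly worse. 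Your claim that the boundary term at $t=0$ ``yields exactly the leading $O(E^{-\gamma})$ error'' is also incorrect, since $(tg)'|_{t=0}=g(0)=0$ for a one-sided stable density, so both boundary terms vanish and no such error term is produced this way.

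The paper resolves the $y^{-1}$ issue by splitting instead at the $y$-dependent point $t=y^{-2/\alpha}$. On $[y^{-2/\alpha},T^*]$ the lower $w$-limit becomes $A^{-1}$ (independent of $y$), so the $\int w^{-1-\alpha/2}\,dw$ divergence is cut off at scale $E^{-1}$ rather than $y^{2/\alpha}/A$, giving directly the $(1-\gamma)^{-1}E^{-\alpha/2-\gamma}+E^{-\gamma}$ terms with no $y$-dependence. On $[0,y^{-2/\alpha}]$ one integrates by parts in the \emph{other} direction, with $u=(A-y^{2/\alpha}t)^{-\gamma}$ and $v=tg(t)$: the main term picks up a factor $\gamma y^{2/\alpha}$ which kills $y^{-1}$, and the boundary term at $t=y^{-2/\alpha}$ is controlled using the tail decay $g(y^{-2/\alpha})\le C y^{2/\alpha+1}$ from \Cref{l:stabletailbounds}, yielding exactly the $O(E^{-\gamma})$ error. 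Once you relocate the split to $y^{-2/\alpha}$ and reverse the roles in the integration by parts, your argument for the lower bound via $(tg)'\le -c\,t^{-1-\alpha/2}$ on the tail goes through as you describe.
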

\begin{proof}
Analogously to \eqref{Flimitsrewrite2}, we have
\be\label{firsttt}
 \partial_y G_\gamma(E,x,y) = - \frac{2}{\alpha y }  \int_{0}^\infty
\int_0^{y^{-2/\alpha} ( E +  x^{2/\alpha} s )}
  (E  + x^{2/\alpha}  s - y^{2/\alpha}  t)^{-\gamma}  \big(tg(t)\big)' 
g(s)
 \, dt \, ds.
\ee
We split the integral in $t$ into two parts at $t= y^{-2/\alpha}$.
We claim that the contribution from $t< y^{-2/\alpha}$ in \eqref{firsttt} satisfies
\be\label{firstt}
\Bigg| \frac{2}{\alpha y }  \int_{0}^\infty
\int_0^{y^{-2/\alpha}}
  (E  + x^{2/\alpha}  s - y^{2/\alpha}  t)^{-\gamma}  \big(tg(t)\big)' 
g(s)
 \, ds \, dt \Bigg| \le C E^{-\gamma}.
\ee
We integrate by parts with respect to $t$ in the integral on the left side; then the boundary term at $t=0$ vanishes, and the boundary term at $t=y^{-2/\alpha}$ is in absolute value, after using \Cref{l:stabletailbounds} to bound $g(y^{-2/\alpha}) \le Cy^{2/\alpha + 1}$, at most 
\begin{align*}
Cy^{-1} \int_{0}^\infty
(E  + x^{2/\alpha}  s - 1)^{-\gamma}  y^{-2/\alpha} g(y^{-2/\alpha}) g(s)
\, ds
&\le  C \int_{0}^\infty
(E  + x^{2/\alpha}  s - 1)^{-\gamma} g(s)
\, ds\\
&\le  C \int_{0}^\infty
(E- 1 )^{-\gamma} g(s)
\, ds \le C E^{-\gamma},
\end{align*}

\noindent where in the last two bounds we used the facts that $E - 1 \ge \frac{E}{2}$ (as $E > c^{-1} \ge 2$) and that $g$ is a probability density function. From \Cref{l:stabletailbounds}, we have $tg(t) \le C t^{-\alpha/2}$. Then the main term from integrating by parts satisfies the absolute value bound
\begin{align*}
\displaystyle\frac{4 \gamma}{\alpha^2} \Bigg|  y^{2/\alpha-1} & \int_{0}^\infty 
\int_0^{y^{-2/\alpha}}
  (E  + x^{2/\alpha}  s - y^{2/\alpha}  t)^{-\gamma-1}  tg(t)
g(s)
 \, ds \, dt \Bigg|\\
&\le C y^{2/\alpha-1} \int_{0}^\infty
\int_0^{y^{-2/\alpha}}
  (E  + x^{2/\alpha}  s - y^{2/\alpha}  t)^{-\gamma-1}  t^{-\alpha/2}
g(s)
 \, dt \, ds \\
& \le C y^{2/\alpha-1} y^{-2/\alpha}
 \int_{0}^\infty ( E + x^{2/\alpha}s) 
 \int_0^{ ( E + x^{2/\alpha}s)^{-1}}
 (E  + x^{2/\alpha}  s - ( E + x^{2/\alpha}s) w)^{-\gamma-1}\\ 
 & \qquad \qquad \qquad \qquad \qquad \times  (y^{-2/\alpha} ( E + x^{2/\alpha}s) w)^{-\alpha/2}
 g(s)
 \, ds \, dw,
 \end{align*}

\noindent where in the second bound we changed variables $t = y^{-2/\alpha} ( E + x^{2/\alpha}s) w$. Hence,   
\begin{align*}
\displaystyle\frac{4 \gamma}{\alpha^2} \Bigg|  & y^{2/\alpha-1} \int_{0}^\infty 
\int_0^{y^{-2/\alpha}}
(E  + x^{2/\alpha}  s - y^{2/\alpha}  t)^{-\gamma-1}  tg(t)
g(s)
\, ds \, dt \Bigg| \\ 
& \le 
C \int_{0}^\infty ( E + x^{2/\alpha}s)^{-\alpha/2 - \gamma}
\int_0^{( E + x^{2/\alpha}s)^{-1}}
  (1-  w)^{-\gamma-1}  w^{-\alpha/2}
g(s)
 \, dw \, ds
\\ & \le
C E^{-\alpha/2 - \gamma}  
\int_0^{ E^{-1} }
  (1-  w)^{-\alpha/2-1}  w^{-\alpha/2}
 \, dw  \le 
C E^{-\alpha/2 - \gamma}   \int_0^{E^{-1}}
   w^{-\alpha/2}
 \, dw \le  C E^{-1 - \gamma},
\end{align*}

\noindent where in the second bound we used the fact that $g$ is a probability density. This confirms \eqref{firstt}.

The contribution from $t> y^{-2/\alpha}$ in \eqref{firsttt} is 
\be\label{2t}
 - \frac{2}{\alpha y }  \int_{0}^\infty
\int_{y^{-2/\alpha}}^{y^{-2/\alpha} ( E +  x^{2/\alpha} s )}
  (E  + x^{2/\alpha}  s - y^{2/\alpha}  t)^{-\gamma}  \big(tg(t)\big)' 
g(s)
 \, ds \, dt.
\ee
We bound this in absolute value using \Cref{l:stabletailbounds} by
\bex
 \frac{C}{ y }  \int_{0}^\infty
\int_{y^{-2/\alpha}}^{y^{-2/\alpha} ( E +  x^{2/\alpha} s )}
  (E  + x^{2/\alpha}  s - y^{2/\alpha}  t)^{-\gamma}  t^{-1-\alpha/2}
g(s)
\, dt
 \, ds.
\eex

Setting $w =y^{2/\alpha} (E + x^{2/\alpha} s)^{-1} t$, the above integral is bounded by 
\begin{align}
 C \int_0^\infty &
(E + x^{2/\alpha} s)^{-\alpha/2 - \gamma}
\int_{(E + x^{2/\alpha} s)^{-1}}^{  1 }
  (1-  v)^{-\gamma}  v^{-1 - \alpha/2}
g(s)
\, dv
 \, ds \notag
\\
&\le
C\left((1 - \gamma)^{-1} \int_{0}^\infty
(E + x^{2/\alpha} s)^{  -\alpha/2 - \gamma}
g(s)
 \, ds 
 +
\int_{0}^\infty
(E + x^{2/\alpha} s)^{  - \gamma}
g(s)
 \, ds \right)\label{intsplit}
\\
&\le
(1- \gamma)^{-1}E^{-\alpha/2 - \gamma} + E^{-\gamma}.\notag
\end{align}

\noindent To deduce the first bound of \eqref{intsplit}, we split the integral in $v$ at the point $v=1/2$ and bounded each piece separately; to deduce the second, we used the fact that $g$ is a probability density function. Together with \eqref{firsttt}, this completes the proof of \eqref{Gy1}. 

Next, we will find a lower bound on \eqref{2t}. Observe for $t \ge C$ that by \Cref{l:stabletailbounds} we have $\big( t g(t) \big)' \le - c t^{-1 - \alpha/2 }$. Then \eqref{2t} is lower bounded by 
\bex
  \frac{2c}{\alpha y }  \int_{0}^\infty
\int_{y^{-2/\alpha}}^{y^{-2/\alpha} ( E +  x^{2/\alpha} s )}
  (E  + x^{2/\alpha}  s - y^{2/\alpha}  t)^{-\gamma} 
 t^{-1 - \alpha/2}
g(s)
 \, ds \, dt.
\eex
for some $c > 0$. Again substituting $w =y^{2/\alpha} (E + x^{2/\alpha} s)^{-1} t$, the previous line becomes
\be\label{lowerboundme}
\frac{2c}{\alpha} \int_0^\infty
(E + x^{2/\alpha} s)^{-\alpha/2 - \gamma}
\int_{(E + x^{2/\alpha} s)^{-1}}^{  1 }
  (1-  v)^{-\gamma}  v^{-1 - \alpha/2}
g(s)
\, dv
 \, ds.
\ee

\noindent Since $E > c^{-1} \ge 2$, we have the lower bound
\begin{align*}
\int_{(E + x^{2/\alpha} s)^{-1}}^{  1 }
  (1-  v)^{-\gamma}  v^{-1 - \alpha/2}
\, dv
&\ge 
\int_{1/2}^{  1 }
  (1-  v)^{-\gamma}  v^{-1 - \alpha/2}
\, dv \ge c (1 - \gamma)^{-1} .
\end{align*}
Then we see that \eqref{lowerboundme} is lower bounded by
\begin{align*}
\frac{c}{\alpha(1- \gamma)} \int_0^\infty
(E + x^{2/\alpha} s)^{-\alpha/2 - \gamma}
g(s)
 \, ds
 &\ge 
\frac{c}{\alpha(1- \gamma)} \int_0^1
(E + x^{2/\alpha} s)^{-\alpha/2 - \gamma}
 \, ds \ge 
  \frac{c}{\alpha(1- \gamma)} E^{ - \alpha/2 - \gamma},
\end{align*}

\noindent where in the last bounds we used the fact that $g(s)$ is uniformly bounded below on the compact interval $[0, 1]$ (as $g(x) > 0$ there), and that $E + x^{2/\alpha} s \le E + 1 \le 2E$ for $x \le 1$ and $s \le 1$. Together with \eqref{firsttt}, this completes the proof of the lemma.
\end{proof}

\subsection{Derivatives in $E$}\label{s:dE}

In this section we establish the following three lemmas. The first bounds the derivatives in $E$ of the function $F_{\gamma}$, and the second and third bound those in $E$ of $G_{\gamma}$.

\begin{lem}\label{l:FE}
There exists a constant $c \in (0, 1)$ such that for all $\alpha \in (1-c, 1)$, $\gamma \in (0,1)$, $E > c^{-1}$, and $x, y \le 1$, we have 
\be\label{fefirst}
| \partial_E F_\gamma (E,x,y)| \le  c^{-1} ( 1- \gamma)^{-1} y E^{-1 - \alpha/2 - \gamma}.\ee
and
\be\label{fesecond}
\partial_E F_\gamma (E,x,y) \le - c ( 1- \gamma)^{-1} y E^{-1 - \alpha/2 - \gamma}
+ c^{-1} y E^{-1 - \alpha/2 - \gamma}.
\ee
\end{lem}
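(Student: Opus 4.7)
My plan is to mimic the proof of \Cref{l:Fy}, but exploiting the simpler dependence on $E$. Applying the change of variables $u = y^{2/\alpha} t - E - x^{2/\alpha} s$ exactly as in \eqref{Fu}, I would rewrite
\begin{equation*}
F_\gamma(E,x,y) = y^{-2/\alpha} \int_0^\infty \int_0^\infty u^{-\gamma}\, g\bigl(y^{-2/\alpha}(u + E + x^{2/\alpha} s)\bigr)\, g(s)\, du\, ds.
\end{equation*}
Since the integrand is integrable at $u = 0$ (because $\gamma < 1$) and $g$ is smooth and rapidly decaying, differentiation under the integral in $E$ is justified and yields
\begin{equation*}
\partial_E F_\gamma(E,x,y) = y^{-4/\alpha} \int_0^\infty \int_0^\infty u^{-\gamma}\, g'\bigl(y^{-2/\alpha}(u + E + x^{2/\alpha} s)\bigr)\, g(s)\, du\, ds.
\end{equation*}
The key observation is that, for $y \le 1$ and $E > c^{-1}$ with $c$ small enough, the argument of $g'$ is always at least $E$, so it lies in the regime where \Cref{l:stabletailbounds} provides both a sign and a quantitative decay rate for $g'$.

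For the upper bound \eqref{fefirst}, I would apply $|g'(w)| \le C w^{-2-\alpha/2}$, which after pulling the $y^{-2/\alpha}$ factor outside gives an extra $y^{4/\alpha + 1}$ that cancels with $y^{-4/\alpha}$ to produce the expected prefactor $y$. The resulting $u$ integral
\begin{equation*}
\int_0^\infty u^{-\gamma}(u + E + x^{2/\alpha} s)^{-2-\alpha/2}\, du,
\end{equation*}
after the substitution $u = (E + x^{2/\alpha} s) v$, equals $(E + x^{2/\alpha} s)^{-1-\alpha/2-\gamma}\, B(1-\gamma, 1 + \alpha/2 + \gamma)$, and the beta function is bounded above by $C(1-\gamma)^{-1}$ uniformly in $\alpha$ close to $1$. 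Using $(E + x^{2/\alpha} s)^{-1-\alpha/2-\gamma} \le E^{-1-\alpha/2-\gamma}$ and that $g$ is a probability density then produces \eqref{fefirst}.

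For the lower bound \eqref{fesecond}, I would use \eqref{gprimenegative}, which gives $g'(w) \le -c\, w^{-2-\alpha/2}$ for $w > C$. Choosing $c$ small enough so that $c^{-1} > C$ ensures this sign estimate is valid throughout the domain of integration. Retracing the computation above with this lower bound produces
\begin{equation*}
\partial_E F_\gamma \le -c\, y \int_0^\infty (E + x^{2/\alpha} s)^{-1-\alpha/2-\gamma}\, g(s)\, ds \cdot \int_0^\infty v^{-\gamma}(1 + v)^{-2-\alpha/2}\, dv.
\end{equation*}
The $v$-integral is lower bounded by $c(1-\gamma)^{-1}$ (the divergence as $\gamma \to 1$ comes from $v$ near $0$), while restricting the $s$-integral to $s \in [0,1]$, where $g(s)$ is uniformly bounded below, yields a lower bound of $c E^{-1-\alpha/2-\gamma}$ on the $s$-integral. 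The positive error term $c^{-1} y E^{-1-\alpha/2-\gamma}$ in \eqref{fesecond} absorbs the discrepancy between the $(1-\gamma)^{-1}$-divergent lower bound one gets uniformly and the possibly smaller true value of the beta integral away from $\gamma = 1$; concretely, it collects any unsigned remainder from splitting the $u$-integral near $u=0$ (where $g'$ need not be exactly comparable to $-c w^{-2-\alpha/2}$) and from the bounded-$s$ truncation. The main obstacle is simply ensuring the constants can be chosen so that the decay-based sign estimate of \Cref{l:stabletailbounds} applies throughout the region of integration, which is precisely why the hypothesis $y \le 1$ and $E > c^{-1}$ appears; once this is arranged, the two bounds are parallel and both reduce to evaluating the same explicit beta integral.
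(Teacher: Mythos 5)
Your proof is correct and takes a genuinely different, and in fact cleaner, route than the paper. The paper makes the change of variables $w = y^{2/\alpha}(E + x^{2/\alpha}s)^{-1}t$, which leaves the $E$-dependence in two places: an explicit $(E + x^{2/\alpha}s)^{1-\gamma}$ prefactor and the argument of $g$. Differentiating thus produces two contributions, labeled \eqref{FE1} and \eqref{FE2}; the first has no definite sign and is bounded in absolute value by $CyE^{-1-\alpha/2-\gamma}$ (see \eqref{estimatege0}), while the second carries the negative sign and the $(1-\gamma)^{-1}$ blow-up. The positive error term in \eqref{fesecond} is precisely the allowance for the sign-indefinite \eqref{FE1} term. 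You instead reuse the substitution $u = y^{2/\alpha}t - E - x^{2/\alpha}s$ from \eqref{Fu}, which absorbs all $E$-dependence into the single $g(\cdot)$ factor, so differentiating yields a single term whose sign is controlled outright by \eqref{gprimenegative}. This buys you a simpler computation (one term, one explicit beta integral) and a slightly stronger conclusion: since the argument $y^{-2/\alpha}(u + E + x^{2/\alpha}s) \ge E > c^{-1}$ throughout the domain of integration, the bound $g'(w) \le -cw^{-2-\alpha/2}$ applies uniformly, and you obtain $\partial_E F_\gamma \le -c(1-\gamma)^{-1}yE^{-1-\alpha/2-\gamma}$ with no positive error term at all, which of course implies \eqref{fesecond}. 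Your final paragraph hedging about the error term "absorbing remainders from splitting the $u$-integral near $u = 0$" is therefore unnecessary and slightly misleading: there is no regime inside the domain of integration where the sign estimate on $g'$ fails, so there is no unsigned remainder to collect. The error term exists in the statement only because the paper's decomposition forces it; your route dispenses with it.
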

\begin{proof}
We recall from \eqref{FandG} that
\bex
F_\gamma(E, x,y) = 
 \int_{0}^\infty
\int_{y^{-2/\alpha} ( E +  x^{2/\alpha} s )}^\infty
 (- E  -  x^{2/\alpha}  s + y^{2/\alpha}  t)^{-\gamma}  g(s) g(t) \, dt\, ds.
\eex
We set $w = y^{2/\alpha} (E + x^{2/\alpha} s)^{-1} t$. Then 
\begin{align*}
F(E, x,y) &= 
y^{-2/\alpha} \int_{0}^\infty
(E + x^{2/\alpha} s)^{1 - \gamma}
\int_1^\infty
 (w- 1 )^{-\gamma}  g(s) g\big( w y^{-2/\alpha} (E+ x^{2/\alpha} s) \big) \, dw\, ds.
\end{align*}
When we differentiate in $E$, there are two contributions to $\partial_E F(E,x,y)$:
\be\label{FE1}
(1 - \gamma) y^{-2/\alpha} \int_{0}^\infty
(E + x^{2/\alpha} s)^{ - \gamma}
\int_1^\infty
 (w- 1 )^{-\gamma}  g(s) g\big( w y^{-2/\alpha} (E+ x^{2/\alpha} s) \big) \, dw\, ds,
\ee
and 
\be\label{FE2}
y^{-2/\alpha} \int_{0}^\infty
(E + x^{2/\alpha} s)^{1 - \gamma}
\int_1^\infty
 (w- 1 )^{-\gamma}  g(s) g'\big( w y^{-2/\alpha} (E+ x^{2/\alpha} s) \big)  w y^{-2/\alpha} \, dw\, ds.
\ee

We first bound \eqref{FE1}. Using the facts that $ w y^{-2/\alpha} (E+ x^{2/\alpha} s) > 1$ for $w \ge 1$ (which holds by the assumptions that $y\le 1$ and $E > c^{-1} \ge 1$) and that $g(x) \le C x^{-1 - \alpha/2}$ for $x \ge 1$ (by \Cref{l:stabletailbounds}), we see that \eqref{FE1} is bounded in absolute value by 
\begin{align}
	\label{estimatege0} 
	\begin{aligned}
&C(1-\gamma) y^{-2/\alpha} \int_{0}^\infty
(E + x^{2/\alpha} s)^{ - \gamma}
\int_1^\infty
 (w- 1 )^{-\gamma}  g(s) \big( w y^{-2/\alpha} (E+ x^{2/\alpha} s) \big)^{-1 - \alpha/2} \, dw\, ds\\
&\le C(1-\gamma) y  \int_{0}^\infty
(E + x^{2/\alpha} s)^{ -1 - \alpha/2 - \gamma} 
g(s) 
\int_1^\infty
 (w- 1 )^{-\gamma}    w^{-1 - \alpha/2}    \, dw\, ds\\
&\le Cy  \int_{0}^\infty
(E + x^{2/\alpha} s)^{ -1 - \alpha/2 - \gamma} 
g(s)\, ds\le  C y E^{-1 - \alpha/2 - \gamma},
\end{aligned}
\end{align}

\noindent where in the last estimate we used the facts that $g$ is a probability density function and that $(E + x^{2/\alpha} s)^{-1-\alpha/2-\gamma} \le E^{-1-\alpha/2-\gamma}$. 

Next, we bound \eqref{FE2} in absolute value using the fact (from \Cref{l:stabletailbounds}) that $|g'(x)| \le C x^{-2 - \alpha/2}$ for $x \ge 1$. This yields that \eqref{FE2} is bounded by
\begin{align}
	\label{estimatege2}
	\begin{aligned} 
C y^{-4 /\alpha} \int_{0}^\infty &(E + x^{2/\alpha} s)^{1 - \gamma}
\int_1^\infty
 (w- 1 )^{-\gamma}  w g(s) ( w y^{-2/\alpha} (E+ x^{2/\alpha} s) )^{-2 - \alpha/2}  \, dw\, ds\\
& = 
C y \int_{0}^\infty
(E + x^{2/\alpha} s)^{-1 - \alpha/2 - \gamma} g(s) 
\int_1^\infty
 (w- 1 )^{-\gamma}  w^{-1 - \alpha/2}  \, dw\, ds\\
 & \le  C (1 - \gamma)^{-1} y\int_{0}^\infty
(E + x^{2/\alpha} s)^{-1 -\alpha/2 -  \gamma} g(s)  \,ds \le C(1 - \gamma)^{-1} y E^{-1 - \alpha/2 - \gamma},
\end{aligned} 
\end{align}

\noindent where the last estimates again follows from the that fact $g$ is a probability density function and that $(E + x^{2/\alpha} s)^{-1-\alpha/2-\gamma} \le E^{-1-\alpha/2-\gamma}$.
Summing \eqref{estimatege0} and \eqref{estimatege2} verifies \eqref{fefirst}. 

To prove \eqref{fesecond}, we first recall (from \Cref{l:stabletailbounds}) that $g'(x) \le  -c  x^{-2 - \alpha/2}$ for $x \ge C$.
We then see that \eqref{FE2} is negative and upper bounded by 
\begin{align*}
-c & y^{-4 /\alpha} \int_{0}^\infty
(E + x^{2/\alpha} s)^{1 - \gamma}
\int_1^\infty
 (w- 1 )^{-\gamma}  w g(s) ( w y^{-2/\alpha} (E+ x^{2/\alpha} s) )^{-2 - \alpha/2}  \, dw\, ds\\
 &= 
- c y \int_{0}^\infty
(E + x^{2/\alpha} s)^{-1 - \alpha/2 - \gamma} g(s) 
\int_1^\infty
 (w- 1 )^{-\gamma}  w^{-1 - \alpha/2}  \, dw\, ds\\
 &=
- c(1- \gamma)^{-1} y \int_{0}^\infty
(E + x^{2/\alpha} s)^{-1 - \alpha/2 - \gamma} g(s) 
\, ds \le - c (1 - \gamma)^{-1} y E^{-1 - \alpha/2 - \gamma},
\end{align*}

\noindent following the same reasoning as in \eqref{estimatege2}. Summing with our estimate \eqref{estimatege0} for \eqref{FE1} completes the proof.
\end{proof}

\begin{lem}\label{l:GE}
There exists a constant $c \in \big(0, \frac{1}{2} \big)$ such that for all $\alpha \in (1-c, 1)$, $\gamma\in(0,1)$, $E > c^{-1}$, and $y \le 1$, we have 
\bex
\left| \partial_E G(E,x,y)  \right| \le c^{-1} E^{-1 - \gamma}\big(1 + (1-\gamma)^{-1} y E^{-\alpha/2}\big) .
\eex
\end{lem}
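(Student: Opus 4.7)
The plan is to mirror the strategy used in \Cref{l:FE} (derivatives of $F$ in $E$) combined with the techniques already developed in the proof of \Cref{l:Gy}, namely the substitution $w = y^{2/\alpha}(E+x^{2/\alpha}s)^{-1} t$ that converts the integral defining $G_\gamma$ to the representation \eqref{diffme2}, after which differentiation in $E$ becomes explicit.

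Starting from \eqref{diffme2}, differentiating under the integral in $E$ produces two terms: a ``polynomial'' term $I_1$ coming from differentiating $(E+x^{2/\alpha}s)^{1-\gamma}$, and a ``derivative'' term $I_2$ coming from differentiating the argument of $g$. Explicitly,
\begin{align*}
I_1 &= (1-\gamma)\, y^{-2/\alpha} \int_0^\infty (E+x^{2/\alpha}s)^{-\gamma} g(s) \int_0^1 (1-w)^{-\gamma} g(Aw)\, dw\, ds,\\
I_2 &= y^{-4/\alpha} \int_0^\infty (E+x^{2/\alpha}s)^{1-\gamma} g(s) \int_0^1 (1-w)^{-\gamma} w\, g'(Aw)\, dw\, ds,
\end{align*}
where $A = A(s) = y^{-2/\alpha}(E+x^{2/\alpha}s)$. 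Note $A \ge E \ge c^{-1}$ since $y \le 1$.

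For each of $I_1$ and $I_2$, I will substitute $u = Aw$ in the inner integral and split into $u \in [0, A/2]$ and $u \in [A/2, A]$, as in the proof of \Cref{l:Gy}. On $[0, A/2]$ the factor $(1-u/A)^{-\gamma}$ is bounded by $2$, and one controls the $u$-integral using $g(u),|g'(u)u| \le C\min(1, u^{-1-\alpha/2})$ (obtained from \Cref{l:stabletailbounds}). On $[A/2, A]$ one uses $g(u) \le C A^{-1-\alpha/2}$ and $|g'(u)u| \le C A^{-1-\alpha/2}$, together with $\int_{A/2}^A (1-u/A)^{-\gamma} du \le C A (1-\gamma)^{-1}$. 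The outer integral over $s$ is then bounded by pulling out $(E+x^{2/\alpha}s)^{-\gamma}$ (respectively $(E+x^{2/\alpha}s)^{-\alpha/2-\gamma}$) and using that $g$ is a probability density, yielding factors of $E^{-\gamma}$ and $E^{-\alpha/2-\gamma}$ respectively. After accounting for the prefactors $y^{-2/\alpha}$ and $y^{-4/\alpha}$ (which combine with $A^{-1}$ and $A^{-1-\alpha/2}$ to produce clean powers of $y$), both $I_1$ and $I_2$ yield contributions of the form $C E^{-1-\gamma} + C(1-\gamma)^{-1} y E^{-1-\alpha/2-\gamma}$, matching the claimed bound.

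The main point of care will be the bookkeeping for the piece $u \in [A/2, A]$, where the singularity $(1-u/A)^{-\gamma}$ is responsible for the only appearance of $(1-\gamma)^{-1}$ in the final bound; this must be paired with the decay $A^{-\alpha/2} = y (E + x^{2/\alpha} s)^{-\alpha/2}$ so as to produce the factor $y E^{-\alpha/2}$ rather than a bare $(1-\gamma)^{-1}$ on the $E^{-1-\gamma}$ term. The argument is essentially routine once this split is in place, and no new ideas beyond those already used in \Cref{l:FE} and \Cref{l:Gy} are required.
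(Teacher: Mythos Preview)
Your proposal is correct and takes essentially the same approach as the paper. The paper also starts from \eqref{diffme2}, differentiates in $E$ to obtain exactly your $I_1$ and $I_2$ (labeled \eqref{GE1} and \eqref{GE2}), and splits each inner integral according to whether the argument of $g$ (or $g'$) is below or above $1$ and whether $v$ is below or above $1/2$; your substitution $u = Aw$ simply relabels these thresholds as $u = 1$ and $u = A/2$, and the bookkeeping you describe for the $(1-\gamma)^{-1}$ factor matches the paper's.
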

\begin{proof}
From \eqref{diffme2}, we have
\bex
G(E,x,y) = 
y^{-2/\alpha}\int_0^\infty  (E + x^{2/\alpha} s)^{1-\gamma} \int_0^1
 (1  -   v )^{-\gamma} 
 g\big(y^{-2/\alpha} (E + x^{2/\alpha} s) v\big) g(s) \, dv \, ds.
\eex
There are two contributions from differentiating in $E$. The first is 
\be\label{GE1}
( 1- \gamma) y^{-2/\alpha}\int_0^\infty  (E + x^{2/\alpha} s)^{-\gamma} \int_0^1
 (1  -   v )^{-\gamma} 
 g\big(y^{-2/\alpha} (E + x^{2/\alpha} s) v\big)g(s) \, dv\, ds.
\ee
The second is 
\be\label{GE2}
y^{-4 /\alpha}\int_0^\infty  (E + x^{2/\alpha} s)^{1-\gamma} \int_0^1
 (1  -   v )^{-\gamma} 
 g'\big(y^{-2/\alpha} (E + x^{2/\alpha} s) v\big) v g(s)  \, dv\, ds. 
\ee
We begin by bounding \eqref{GE1} in absolute value. We divide the interval of integration at $v = (E + x^{2/\alpha} s)^{-1} y^{2/\alpha}$. This results in two integrals. The contribution from $v < (E + x^{2/\alpha} s)^{-1} y^{2/\alpha}$ is 
\begin{flalign}
	\label{Gprev1}
	\begin{aligned}
( 1- \gamma) & y^{-2/\alpha} \int_0^\infty  (E + x^{2/\alpha} s)^{-\gamma} \int_0^{(E + x^{2/\alpha} s)^{-1} y^{2/\alpha}}
 (1  -   v )^{-\gamma} g\big(y^{-2/\alpha} (E + x^{2/\alpha} s) v\big) g(s) \, dv\, ds\\
 &\le C ( 1- \gamma) y^{-2/\alpha}\int_0^\infty  (E + x^{2/\alpha} s)^{-\gamma} \int_0^{(E + x^{2/\alpha} s)^{-1} y^{2/\alpha}}
 (1  -   v )^{-\gamma} g(s)  \, dv\, ds \\
 &\le C( 1- \gamma) \int_0^\infty  (E + x^{2/\alpha} s)^{-1 -\gamma} g(s) \, ds \le C( 1- \gamma) E^{ -1 - \gamma}. 
\end{aligned}
\end{flalign}

\noindent To deduce the first inequality, we used \Cref{l:stabletailbounds} to bound $g \big( y^{-2/\alpha} v (E + x^{2/\alpha})\big) \le C$; to deduce the second, we used the fact that $y^{2/\alpha} (E+x^{2/\alpha} s)^{-1} \le E^{-1} \le \frac{1}{2}$ (as $y \le 1$ and $E \ge c^{-1} \ge 2$); and to deduce the third we used the facts that $(E + x^{2/\alpha} s)^{-1-\gamma} \le E^{-1-\gamma}$ and that $g$ is a probability density function.

By similar reasoning, the contribution from $v > (E + x^{2/\alpha} s)^{-1} y^{2/\alpha}$ in \eqref{GE1} is 
\begin{align*}
( 1- &\gamma)y^{-2/\alpha}\int_0^\infty  (E + x^{2/\alpha} s)^{-\gamma} \int_{(E + x^{2/\alpha} s)^{-1} y^{2/\alpha}}^1
 (1  -   v )^{-\gamma}  g\big(y^{-2/\alpha} (E + x^{2/\alpha} s) v\big) g(s) \, dv\, ds \\
&\le
C( 1- \gamma) y^{-2/\alpha}\int_0^\infty  (E + x^{2/\alpha} s)^{-\gamma} g(s) \\
& \qquad \qquad \qquad \qquad \times \int_{(E + x^{2/\alpha} s)^{-1} y^{2/\alpha}}^1
 (1  -   v )^{-\gamma}  \big(y^{-2/\alpha} (E + x^{2/\alpha} s) v\big)^{-1 - \alpha/2}  \, dv\, ds \\
&\le
C( 1- \gamma) y\int_0^\infty  (E + x^{2/\alpha} s)^{-1 -\alpha/2 - \gamma} g(s) \int_{(E + x^{2/\alpha} s)^{-1} y^{2/\alpha}}^1
 (1  -   v )^{-\gamma}  v^{-1 - \alpha/2}  \, dv\, ds,
\end{align*}

\noindent where in the first inequality we used \Cref{l:stabletailbounds} to now bound $g (w) \le C w^{-1-\alpha/2}$ at $w = y^{-2/\alpha} v (E + x^{2/\alpha} s)$.  Restricting the integral to $v > 1/2$ gives
\begin{align}
	\label{integraly2} 
	\begin{aligned}
C ( 1- \gamma) & y\int_0^\infty  (E + x^{2/\alpha} s)^{-1 -\alpha/2 - \gamma} g(s) \int_{1/2}^1
 (1  -   v )^{-\gamma}  v^{-1 - \alpha/2}  \, dv\, ds\\
 &\le 
C( 1- \gamma) y\int_0^\infty  E ^{-1 -\alpha/2 - \gamma} g(s) (1 - \gamma)^{-1} \, ds =Cy E^{-1 - \alpha/2 - \gamma}.
\end{aligned} 
\end{align}

\noindent Integrating over the complementary region $v < 1/2$ yields
\begin{align}
	\label{integraly3} 
	\begin{aligned}
	C (1 & - \gamma) y\int_0^\infty  (E + x^{2/\alpha} s)^{-1 -\alpha/2 - \gamma} g(s) \int_{(E + x^{2/\alpha} s)^{-1} y^{2/\alpha}}^{1/2}
 (1  -   v )^{-\gamma}  v^{-1 - \alpha/2}  \, dv\, ds\\
&\le
C y\int_0^\infty  (E + x^{2/\alpha} s)^{-1 -\alpha/2 - \gamma} g(s) \int_{(E + x^{2/\alpha} s)^{-1} y^{2/\alpha}}^{1/2}v^{-1 - \alpha/2}  \, dv\, ds\\
&\le
C y\int_0^\infty  (E + x^{2/\alpha} s)^{-1 -\alpha/2 - \gamma} g(s)
\big( y^{2/\alpha} (E + x^{2/\alpha} s)^{-1} \big)^{-\alpha/2} ds \le
C \int_0^\infty  E^{-1  - \gamma} g(s)  ds = C  E^{ - 1 - \gamma}.
\end{aligned} 
\end{align} 

\noindent Summing \eqref{Gprev1}, \eqref{integraly2}, and \eqref{integraly3} yields 
\begin{flalign*}
	\Bigg| (1-\gamma) y^{-2/\alpha} \displaystyle\int_0^{\infty} (E + x^{2/\alpha} s)^{-\gamma} \displaystyle\int_0^1 & (1 - v)^{-\gamma} g \big( y^{-2/\alpha} (E + x^{2/\alpha} s) v \big) g(s) dv ds \Bigg| \\
	& \le C  E^{-1-\gamma} + Cy E^{-1-\alpha/2-\gamma},	
\end{flalign*}
which gives a bound for \eqref{GE1}.

We next examine \eqref{GE2} by again splitting the interval of integration at $v = (E + x^{2/\alpha} s)^{-1} y^{2/\alpha}$.
The first piece is 
\be\label{gefirst}
y^{-4 /\alpha}\int_0^\infty  (E + x^{2/\alpha} s)^{1-\gamma} \int_0^{(E + x^{2/\alpha} s)^{-1} y^{2/\alpha}}
 (1  -   v )^{-\gamma} 
 g'\big(y^{-2/\alpha} (E + x^{2/\alpha} s) v\big) v g(s)  \, dv\, ds. 
\ee
Using $| g'(x) | \le C$ from \Cref{l:stabletailbounds}, and $(E + x^{2/\alpha} s)^{-1} y^{2/\alpha} \le 1/2$, we bound this in absolute value by
\begin{align*}
&C y^{-4 /\alpha}\int_0^\infty  (E + x^{2/\alpha} s)^{1-\gamma} 
g(s) \int_0^{(E + x^{2/\alpha} s)^{-1} y^{2/\alpha}}
 (1  -   v )^{-\gamma}   v  \, dv\, ds\\
&\le C y^{-4 /\alpha}\int_0^\infty  (E + x^{2/\alpha} s)^{1-\gamma}
g(s) \int_0^{ (E + x^{2/\alpha} s)^{-1} y^{2/\alpha}} v  \, dv\,  ds\\
&\le  C y^{-4 /\alpha}\int_0^\infty  (E + x^{2/\alpha} s)^{1-\gamma}
g(s) ( (E + x^{2/\alpha} s)^{-1} y^{2/\alpha})^2  \, ds\\
&\le  C \int_0^\infty  (E + x^{2/\alpha} s)^{-1-\gamma}
g(s)   \, ds\\
&\le C E^{-1 - \gamma}.
\end{align*}

The second piece of \eqref{GE2} is 
\begin{align*}
&y^{-4 /\alpha}\int_0^\infty  (E + x^{2/\alpha} s)^{1-\gamma} \int_{(E + x^{2/\alpha} s)^{-1} y^{2/\alpha}}^1
 (1  -   v )^{-\gamma} 
 g'\big(y^{-2/\alpha} (E + x^{2/\alpha} s) v\big) v g(s)  \, dv\, ds\\
&\le Cy^{-4 /\alpha}\int_0^\infty  (E + x^{2/\alpha} s)^{1-\gamma} \int_{(E + x^{2/\alpha} s)^{-1} y^{2/\alpha}}^1
 (1  -   v )^{-\gamma} 
 \big(y^{-2/\alpha} (E + x^{2/\alpha} s) v\big)^{-2 - \alpha/2} v g(s)  \, dv\, ds\notag \\
&\le
Cy \int_0^\infty  (E + x^{2/\alpha} s)^{- 1-\alpha/2 - \gamma}g(s) \int_{(E + x^{2/\alpha} s)^{-1} y^{2/\alpha}}^1
 (1  -   v )^{-\gamma}  v^{-1 - \alpha/2}  \, dv\, ds.\notag
\end{align*}
The contribution from $v \in [1/2 ,1 ]$ is bounded by $C y (1- \gamma)^{-1}E^{-1- \alpha/2 - \gamma}$, after integrating in $s$. Further, for $v < 1/2$, we have 
\begin{align*}
&y \int_0^\infty  (E + x^{2/\alpha} s)^{- 1-\alpha/2 - \gamma}g(s) \int_{(E + x^{2/\alpha} s)^{-1} y^{2/\alpha}}^{1/2}
 (1  -   v )^{-\gamma}  v^{-1 - \alpha/2}  \, dv\, ds\\
&\le 
C y \int_0^\infty  (E + x^{2/\alpha} s)^{- 1-\alpha/2 - \gamma}g(s) \int_{(E + x^{2/\alpha} s)^{-1} y^{2/\alpha}}^{1/2} v^{-1 - \alpha/2}  \, dv\, ds\\
&\le 
C y \int_0^\infty  (E + x^{2/\alpha} s)^{- 1-\alpha/2 - \gamma}g(s) \big((E + x^{2/\alpha} s)^{-1} y^{2/\alpha} \big)^{-\alpha/2} \, ds\\
&\le 
C \int_0^\infty  (E + x^{2/\alpha} s)^{- 1-\gamma }g(s)\, ds\\
&\le C E^{-1 - \gamma}.
\end{align*}
This finishes the bound for \eqref{GE2} and completes the proof of the upper bound.
\end{proof}

\begin{lem}\label{l:GE2}
There exists $c>0$ such that for all $\alpha \in (1-c, 1)$, $E > c^{-1}$, $\gamma\in(0,1)$, and $x, y \le 1/2$, we have 
\bex
 \partial_E G_\gamma(E,x,y) \le - c \gamma  E^{-1-\gamma}  +  c^{-1} E^{-1 - \gamma - \alpha/4} .\eex
\end{lem}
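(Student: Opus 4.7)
The plan is to compute $\partial_E G_\gamma$ explicitly in a form where the sign becomes transparent, and to extract the main negative contribution by exploiting the unimodality of $\alpha/2$-stable densities.

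First I would let $Y := x^{2/\alpha} S - y^{2/\alpha} T$ with density $f(z)$ and use the substitution $u = E + z$ to write
$$G_\gamma(E, x, y) = \int_0^\infty u^{-\gamma} f(u - E)\, du.$$
The convolution derivative estimates (obtained from \Cref{l:stabletailbounds}, analogously to the proofs of \eqref{yuppertail} and \eqref{ylowertail}) give $|f'(z)| \le C \min(1, |z|^{-2-\alpha/2})$, which justifies differentiation under the integral sign for $\gamma < 1$ and yields $\partial_E G_\gamma = -\int_0^\infty u^{-\gamma} f'(u-E)\, du$. Integrating by parts with $V(u) = u^{-\gamma}$ and $W(u) = f(u-E) - f(-E)$—using $W(u) = O(u)$ near $u=0$ so that $u^{-\gamma} W(u) \to 0$ at both endpoints for $\gamma < 1$—produces the clean identity
\begin{equation}\label{planid}
\partial_E G_\gamma(E, x, y) = -\gamma \int_0^\infty u^{-\gamma-1}\bigl(f(u-E) - f(-E)\bigr)\, du.
\end{equation}

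The crucial structural input is that $Y$ is itself an $\alpha/2$-stable random variable, so by Yamazato's theorem its density $f$ is unimodal. The mode $m$ of $f$ can be located in a bounded interval depending only on $x, y$ and $\alpha$: since $x, y \le 1/2$ and $\alpha$ is near $1$, the scaling parameter of $Y$ is $O(1)$ and hence $|m| \le C$ uniformly. Therefore, for $E > c^{-1}$ with $c$ small enough, the mode lies in $(-E/2, E/2)$ and $f$ is monotonically nondecreasing on the interval $[-E, -E/2]$. Consequently $f(u-E) - f(-E) \ge 0$ for every $u \in [0, E/2]$, and the portion of the integral in \eqref{planid} over $[0, E/2]$ contributes nonpositively to the right-hand side. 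Discarding it yields the upper bound
$$\partial_E G_\gamma \le -\gamma \int_{E/2}^\infty u^{-\gamma-1} f(u-E)\, du + f(-E)\,(E/2)^{-\gamma}.$$

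From here the proof is a routine estimation. The main negative contribution comes from $u \in [E/2, 3E/2]$, where $u^{-\gamma-1} \ge c E^{-\gamma-1}$ and $\int_{E/2}^{3E/2} f(u-E)\, du = \P[Y \in [-E/2, E/2]] \ge 1/2$ for $E$ large (using $\P[|Y| > E/2] \le C(x+y) E^{-\alpha/2}$ from \eqref{yuppertail}, \eqref{ylowertail} together with $x, y \le 1/2$); this produces the main term $-c\gamma E^{-1-\gamma}$. On $[3E/2, \infty)$ the bound $f(u-E) \le Cx(u-E)^{-1-\alpha/2}$ from \eqref{yuppertail} gives a positive error of order $x\gamma E^{-1-\gamma-\alpha/2}$, and the boundary term is controlled by $f(-E)(E/2)^{-\gamma} \le Cy E^{-1-\gamma-\alpha/2}$ via \eqref{ylowertail}. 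Combining these and using $x, y \le 1/2$ yields the bound
$$\partial_E G_\gamma \le -c\gamma E^{-1-\gamma} + C E^{-1-\gamma-\alpha/2},$$
which implies the claimed estimate since $E^{-\alpha/2} \le E^{-\alpha/4}$ for $E \ge 1$.

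The main obstacle is establishing the monotonicity of $f$ on $[-E, -E/2]$. This rests on two ingredients: Yamazato's theorem on the unimodality of stable densities, and a uniform-in-$E$ bound on the mode $m$ of $Y$ (the mode should scale like the stability parameter of $Y$, which is $O(1)$ under $x, y \le 1/2$). Without this observation, a direct integration-by-parts estimate picks up an unmanageable factor of $(1-\gamma)^{-1}$ as $\gamma \to 1^-$; the unimodality trick eliminates this factor by removing the problematic region $[0, E/2]$ from the upper bound entirely.
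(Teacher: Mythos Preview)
Your proof is correct and takes a genuinely different route from the paper. The paper keeps the double-integral representation in $(s,t)$, differentiates, and algebraically reorganizes the two resulting terms so that the integrand in $t$ becomes $(tg(t))'$; it then splits the $t$-integral at a parameter $M$, integrates by parts on $[0,M]$, and bounds the boundary term and tail using the series expansion of $g$ from \Cref{l:stabletailbounds}, finally choosing $M=E^{1/2}$ to produce the error $E^{-\alpha/4}$. Your approach is more conceptual: you collapse to the one-dimensional density $f$ of $Y$, derive the identity $\partial_E G_\gamma=-\gamma\int_0^\infty u^{-\gamma-1}(f(u-E)-f(-E))\,du$ via integration by parts, and then invoke Yamazato's unimodality theorem to discard the region $[0,E/2]$ with the correct sign. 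This avoids the splitting parameter $M$ entirely and actually yields the sharper error $E^{-\alpha/2}$. The price is that you appeal to an external structural fact (unimodality of stable densities) and must justify that the mode of $Y$ is uniformly $O(1)$ for $\alpha$ near $1$ and $x,y\le 1/2$; this is true---the mode of a standardized $\alpha/2$-stable law varies continuously in $(\alpha,\beta)$ on the compact parameter range, and the scale of $Y$ is $(x+y)^{2/\alpha}\le 1$---but you should state it more carefully. A minor comment: the contribution from $u\in[3E/2,\infty)$ is negative, not a ``positive error,'' so you may simply drop it.
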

\begin{proof}
From \eqref{diffme2}, we have
\bex
G(E,x,y) = 
y^{-2/\alpha}\int_0^\infty  (E + x^{2/\alpha} s)^{1-\gamma} \int_0^1
 (1  -   v )^{-\gamma} 
 g\big(y^{-2/\alpha} (E + x^{2/\alpha} s) v\big) g(s) \, dv \, ds.
\eex
There are two contributions from differentiating in $E$. The first is 
\be\label{GE11}
( 1- \gamma) y^{-2/\alpha}\int_0^\infty  (E + x^{2/\alpha} s)^{-\gamma} \int_0^1
 (1  -   v )^{-\gamma} 
 g\big(y^{-2/\alpha} (E + x^{2/\alpha} s) v\big)g(s) \, dv\, ds.
\ee
The second is 
\be\label{GE21}
y^{-4 /\alpha}\int_0^\infty  (E + x^{2/\alpha} s)^{1-\gamma} \int_0^1
 (1  -   v )^{-\gamma} 
 g'\big(y^{-2/\alpha} (E + x^{2/\alpha} s) v\big) v g(s)  \, dv\, ds. 
\ee
Making the change of variables $t = y^{-2/\alpha} (E + x^{2/\alpha} s ) v$ and summing \eqref{GE11} and \eqref{GE21}, we obtain
\begin{align}
\partial_E G(E,x,y) =& 
( 1- \gamma) \int_0^\infty  (E + x^{2/\alpha} s)^{-1-\gamma} \int_0^{y^{-2/\alpha} (E + x^{2/\alpha} s )}
\big(1  -   y^{2/\alpha} (E + x^{2/\alpha} s )^{-1} t \big)^{-\gamma} 
 g(t)g(s) \, dt\, ds\notag
\\
&+ 
\int_0^\infty  (E + x^{2/\alpha} s)^{-1-\gamma} \int_0^{y^{-2/\alpha} (E + x^{2/\alpha} s )}
 \big(1  -   y^{2/\alpha} (E + x^{2/\alpha} s )^{-1}t \big)^{-\gamma} 
 g'(t) t g(s)  \, dt\, ds\notag \\
=&
 - \gamma \int_0^\infty  (E + x^{2/\alpha} s)^{-1-\gamma} \int_0^{y^{-2/\alpha} (E + x^{2/\alpha} s )}
\big(1  -   y^{2/\alpha} (E + x^{2/\alpha} s )^{-1} t \big)^{-\gamma} 
 g(t)g(s) \, dt\, ds \label{iron1} \\
 & + \int_0^\infty  (E + x^{2/\alpha} s)^{-1-\gamma} \int_0^{y^{-2/\alpha} (E + x^{2/\alpha} s )}
\big(1  -   y^{2/\alpha} (E + x^{2/\alpha} s )^{-1} t \big)^{-\gamma} 
 \big(g(t)t \big)' g(s) \, dt\, ds.\label{iron2}
\end{align}
Considering \eqref{iron1}, we have from \Cref{l:stabletailbounds} that
\begin{align}
- &\gamma \int_0^\infty  (E + x^{2/\alpha} s)^{-1-\gamma} \int_0^{y^{-2/\alpha} (E + x^{2/\alpha} s )}
\big(1  -   y^{2/\alpha} (E + x^{2/\alpha} s )^{-1} t \big)^{-\gamma}g(t)g(s) \, dt\, ds\notag \\
&\le 
- \gamma \int_0^\infty  (E + x^{2/\alpha} s)^{-1-\gamma} \int_0^{y^{-2/\alpha} (E + x^{2/\alpha} s )}
g(t)g(s) \, dt\, ds\notag \\
& - c \gamma  E^{-1-\gamma}  \int_0^{y^{-2/\alpha} (E + x^{2/\alpha} s )}
g(t) \, dt \le  - \frac{c \gamma  }{2} E^{-1-\gamma},\label{mercury4}
\end{align} 
if $E >C_0$, for some constants $C_0 > 1$ and $c>0$.

For the term \eqref{iron2}, we divide the integral in $t$ into the sum of integrals of $[0,M]$ and $\big[M, y^{-2/\alpha} (E + x^{2/\alpha} s )\big]$, where $M$ is a parameter to be chosen later. Suppose that $M \in (C_1, E)$, where $C_1$ is the constant given by \Cref{l:stabletailbounds}. For the integral over $[0,M]$, we set
\bex
u = \big(1  -   y^{2/\alpha} (E + x^{2/\alpha} s )^{-1} t \big)^{-\gamma}, \qquad w = g(t) t,
\eex
and use integration by parts in the form 
\bex
\int u\, dw =  uw - \int w\, du. 
\eex
Using the previous line in \eqref{iron2}, and noting that the boundary term at $0$ vanishes, we get 
\begin{align}\label{mercury3}
\int_0^{M}&
\big(1  -   y^{2/\alpha} (E + x^{2/\alpha} s )^{-1} t \big)^{-\gamma} 
 \big(g(t)t \big)'  \, dt\\
 =& \big(1  -   y^{2/\alpha} (E + x^{2/\alpha} s )^{-1} M \big)^{-\gamma} \cdot g(M) M\notag \\
 & -  \gamma \int_0^{M} \big(1  -   y^{2/\alpha} (E + x^{2/\alpha} s )^{-1} t \big)^{-1-\gamma} y^{2/\alpha} (E + x^{2/\alpha} s )^{-1}    g(t) t \, dt.\notag\\
 \le& \big(1  -   y^{2/\alpha} (E + x^{2/\alpha} s )^{-1} M \big)^{-\gamma} \cdot g(M) M.\notag
\end{align}
Under the assumptions on $M$ and $y$, and using \Cref{l:stabletailbounds}, there exists $C>0$ such that
\be\label{mercury1}
\big(1  -   y^{2/\alpha} (E + x^{2/\alpha} s )^{-1} M \big)^{-\gamma} \cdot g(M) M \le  C M^{-\alpha/2}.
\ee
Then inserting \eqref{mercury1} into \eqref{mercury3}, we see that 
\begin{align}\notag
 &\int_0^\infty  (E + x^{2/\alpha} s)^{-1-\gamma} \int_0^{M}
\big(1  -   y^{2/\alpha} (E + x^{2/\alpha} s )^{-1} t \big)^{-\gamma} 
 \big(g(t)t \big)' g(s) \, dt\, ds\\
 &\le C M^{-\alpha/2} \int_0^\infty  (E + x^{2/\alpha} s)^{-1-\gamma} g(s)\, ds 
\le  C M^{-\alpha/2} E^{-1 - \gamma}.\label{mercury5}
\end{align}
Next, we consider the integral in $v$ over $\big[M, y^{-2/\alpha} (E + x^{2/\alpha} s )\big]$ in \eqref{iron2}. Again using \Cref{l:stabletailbounds} to bound $\big( g(t) t \big)'$, and setting $t = y^{-2/\alpha} (E + x^{2/\alpha} s ) v$, we obtain 
\begin{align*}
&\left|\int_0^\infty  (E + x^{2/\alpha} s)^{-1-\gamma}\int_M^{y^{-2/\alpha} (E + x^{2/\alpha} s )}
\big(1  -   y^{2/\alpha} (E + x^{2/\alpha} s )^{-1} t \big)^{-\gamma} 
 \big(g(t)t \big)' g(s) \, dt\, ds\right|\\
& \le  
C \int_0^\infty  (E + x^{2/\alpha} s)^{-1-\gamma}\int_M^{y^{-2/\alpha} (E + x^{2/\alpha} s )}
\big(1  -   y^{2/\alpha} (E + x^{2/\alpha} s )^{-1} t \big)^{-\gamma} 
 t^{-1 - \alpha/2}  g(s) \, dt\, ds\\
 &= 
C y \int_0^\infty  (E + x^{2/\alpha} s)^{-1-\gamma - \alpha/2}\int_{My^{2/\alpha} (E + x^{2/\alpha} s )^{-1}}^{1}
(1  -   v  )^{-\gamma} 
 v^{-1 - \alpha/2}  g(s) \, dv\, ds\\
 &\le 
C y \int_0^\infty  (E + x^{2/\alpha} s)^{-1-\gamma - \alpha/2} \big(M^{-\alpha/2} y^{-1} (E + x^{2/\alpha} s )^{\alpha/2}   \big)g(s)\, ds
\le C M^{-\alpha/2} E^{-1 - \gamma}.
\end{align*}
We complete the proof by taking $M= E^{1/2}$, and combining the previous line with \eqref{mercury4} and \eqref{mercury5}.
\end{proof}

\subsection{Derivatives of $a(E)$ and $b(E)$}\label{s:dAB}

\begin{lem}\label{uniqueness}
There exists a constant $c \in (0, 1)$ such that, for all $\alpha \in (1-c, 1)$ and $E > c^{-1}$, the  function $E \mapsto \big(a(E), b(E)\big)$ is differentiable.
\end{lem}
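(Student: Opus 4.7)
The natural approach is via the implicit function theorem, applied to the system of fixed-point equations from \Cref{r:abfixedpointeqn}. Define
\[
\Phi(E,x,y) \;=\; \bigl(\, x - F_{\alpha/2}(E,x,y),\; y - G_{\alpha/2}(E,x,y)\,\bigr).
\]
By \Cref{l:boundarybasics} and \Cref{r:abfixedpointeqn}, the map $E \mapsto \big(a(E),b(E)\big)$ satisfies $\Phi\big(E,a(E),b(E)\big)=0$. The plan is to show that at such zeros, the Jacobian $J \coloneqq \partial_{(x,y)}\Phi$ is invertible (in fact close to the identity), so that the implicit function theorem produces a differentiable local selection, which must then agree with our $(a,b)$.

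First, I would verify that $F_{\alpha/2}$ and $G_{\alpha/2}$ are $C^1$ in $(E,x,y)$ on the open region $\{E>1,\; 0<x,y<1\}$. This follows from differentiation under the integral sign in \eqref{FandG}: the stable density $g_\alpha$ from \Cref{l:stabletailbounds} is smooth and satisfies uniform tail and derivative bounds, and the explicit partial derivative estimates in \Cref{l:Fx}, \Cref{l:Fy}, \Cref{l:Gx}, \Cref{l:Gy}, \Cref{l:FE}, and \Cref{l:GE} provide dominating integrable majorants that justify this exchange. In particular, the partial derivatives exist and are continuous on this region.

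Next, I would pin down the location of $\big(a(E),b(E)\big)$. From \Cref{l:abasymptotic} (with $\gamma=\alpha/2$) we have $a(E)\le C(1-\alpha)^{-1}E^{-3\alpha/2}$ and $|b(E)-E^{-\alpha/2}|\le C(1-\alpha)^{-1}E^{-3\alpha/2}$, so, after possibly shrinking $c$ so that $E>c^{-1}$ is large enough and $\alpha \in (1-c,1)$, we may assume $a(E), b(E) \in (0,1/2)$. The derivative bounds of \Cref{l:Fx}--\Cref{l:Gy} (with $\gamma=\alpha/2$, noting $(1-\gamma)^{-1}=2/(2-\alpha)\le 4$) then give
\[
|\partial_x F_{\alpha/2}|,\;|\partial_y F_{\alpha/2}|,\;|\partial_x G_{\alpha/2}|,\;|\partial_y G_{\alpha/2}| \;\le\; C\, E^{-\alpha/2-\alpha/2}\;=\; C\,E^{-\alpha}
\]
at the point $(a(E),b(E))$. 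Enlarging $c^{-1}$ so that $C E^{-\alpha}\le 1/4$ forces
\[
J \;=\; \mathrm{Id} + O(E^{-\alpha})
\]
in the operator norm, hence $J$ is invertible with $\|J^{-1}\|\le 2$.

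With the Jacobian bound in hand, the implicit function theorem yields a unique $C^1$ local solution $E\mapsto(\tilde a(E),\tilde b(E))$ to $\Phi(E,\tilde a,\tilde b)=0$ near each point $\big(E_0,a(E_0),b(E_0)\big)$. Because $\big(a(E),b(E)\big)$ already satisfies $\Phi=0$ and lies in the small neighborhood where the local solution is unique, it must coincide with $(\tilde a,\tilde b)$ there, and differentiability follows. The main technical obstacle I anticipate is confirming that $(a,b)$ is continuous enough at $E_0$ to sit in this neighborhood in the first place; the cleanest way to bypass this is to use that the contraction-like estimate $\|J-\mathrm{Id}\|\le 1/2$ uniformly for $(x,y)\in (0,1/2]^2$ and $E>c^{-1}$ (which follows directly from the derivative bounds above, since these bounds hold for all $x,y\le 1$, not only at the fixed point) forces the implicit function theorem solution to be unique globally in $(0,1/2]^2$, thereby automatically capturing $(a,b)$.
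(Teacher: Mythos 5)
Your overall approach---apply the implicit function theorem to the fixed-point defect $(x,y)\mapsto\big(F_{\alpha/2}(E,x,y)-x,\;G_{\alpha/2}(E,x,y)-y\big)$, show the Jacobian is a small perturbation of $-\mathrm{Id}$ at $(E,a(E),b(E))$, and conclude---is the same route the paper takes, and the main-line argument is correct. However, your proposed ``cleanest'' fix for the continuity worry contains a real error. You claim that the derivative bounds from \Cref{l:Fx}--\Cref{l:Gy} make $\|J-\mathrm{Id}\|\le 1/2$ \emph{uniformly} over $(x,y)\in(0,1/2]^2$, and that this forces global uniqueness of the zero set in that square. But \Cref{l:Gx} bounds $\partial_x G_\gamma$ by a sum of terms one of which is $C\,E^{-1-\alpha/2-\gamma}\,y^{-2/\alpha}$, and $y^{-2/\alpha}\to\infty$ as $y\to 0^+$. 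At the actual fixed point this is harmless because $b(E)\approx E^{-\alpha/2}$ by \Cref{l:abasymptotic}, so $y^{-2/\alpha}\approx E$ and the whole term is $O(E^{-\alpha})$---that is the point of restricting to the fixed point in the first place. But your uniform-over-the-square claim is false, and with it the global uniqueness argument you built on it.

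Fortunately the continuity concern you raise already has a clean resolution in the paper: $a(E)$ and $b(E)$ are defined through the decomposition \eqref{opaque} of $y(E)$, and $y(z)$ extends continuously to $\overline{\bbH}$ by \Cref{l:boundaryvalues} (this continuity of $a(\cdot),b(\cdot)$ is invoked explicitly in the proof of \Cref{l:lambdalemma}). Since $E\mapsto(a(E),b(E))$ is a continuous curve of zeros of the defect map and the Jacobian in $(x,y)$ is invertible along this curve, the implicit function theorem identifies the local $C^1$ solution with $(a,b)$ near each $E_0$ and gives differentiability. You should replace your global-contraction patch with this observation; everything else in your argument stands.
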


\begin{proof}
Consider the function $H(E,x, y)$ defined by 
\bex
H(E,x,y) = \left( F_{\alpha/2}(E,x,y) - x, G_{\alpha/2}(E,x,y) - y  \right).
\eex
By \Cref{r:abfixedpointeqn}, we have $H(E, a, b) = 0$ for all $E \in \bbR$. 
The Jacobian matrix of $H$ is given by
\begin{align*}
&\begin{bmatrix}
\partial_E (F(E,x,y) - x)  & \partial_x (F(E,x,y) - x) & \partial_y (F(E,x,y) - x) \\
\partial_E (G(E,x,y) - y) &\partial_x (G(E,x,y) - y)  & \partial_y (G(E,x,y) - y) 
\end{bmatrix}\\
&=\begin{bmatrix}
\partial_E F(E,x,y)   & \partial_x F(E,x,y) - 1 & \partial_y F(E,x,y)  \\
\partial_E G(E,x,y) &\partial_x G(E,x,y)  & \partial_y G(E,x,y) - 1
\end{bmatrix}.
\end{align*}
Using \Cref{l:abasymptotic}, \Cref{l:Fx}, \Cref{l:Fy}, \Cref{l:Gx}, and \Cref{l:Gy}, there exists $c>0$ such that the right $2\times 2$ submatrix evaluated at $(E, x, y) = (E, a(E), b(E))$ can be written as 
\bex
\begin{bmatrix}
O(E^{-1/9}) - 1 & O(E^{-1/9})  \\
O(E^{-1/9}) & O(E^{-1/9}) - 1
\end{bmatrix}
\eex
for $\alpha \in (1-c, 1)$.
Hence this matrix is invertible for $E \ge c^{-1}$, after possibly decreasing $c$. The conclusion now follows from the implicit function theorem.
\end{proof}
\begin{lem}\label{l:abE}
There exists $c>0$ such that for all $\alpha \in (1-c, 1)$ and $E > c^{-1}$, 
\be\label{Ederiv1}
\big|a'(E)\big| \le c^{-1} E^{- 1 - 3\alpha/2},\qquad \big|b'(E)\big| \le c^{-1} E^{- 1 - \alpha/2}.
\ee
\end{lem}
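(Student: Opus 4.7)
The plan is to differentiate the fixed point equations of \Cref{r:abfixedpointeqn}, namely
\begin{equation*}
a(E) = F_{\alpha/2}\big(E, a(E), b(E)\big), \qquad b(E) = G_{\alpha/2}\big(E, a(E), b(E)\big),
\end{equation*}
in $E$ and invert the resulting linear system. Differentiability of $E \mapsto \big(a(E), b(E)\big)$ is already guaranteed by \Cref{uniqueness} for $\alpha$ sufficiently close to $1$ and $E$ sufficiently large. Applying the chain rule yields
\begin{equation*}
\begin{bmatrix} 1 - \partial_x F_{\alpha/2} & -\partial_y F_{\alpha/2} \\ -\partial_x G_{\alpha/2} & 1 - \partial_y G_{\alpha/2} \end{bmatrix} \begin{bmatrix} a'(E) \\ b'(E) \end{bmatrix} = \begin{bmatrix} \partial_E F_{\alpha/2} \\ \partial_E G_{\alpha/2} \end{bmatrix},
\end{equation*}
where all partial derivatives are evaluated at $(E, a(E), b(E))$.

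The next step is to argue that the coefficient matrix is close to the identity for $E$ large and $\alpha$ near $1$, so that it is invertible with an inverse of uniformly bounded norm. By \Cref{l:abasymptotic}, we have $a(E), b(E) \le 1$ for $E > c^{-1}$, which places us in the range of applicability of \Cref{l:Fx}, \Cref{l:Fy}, \Cref{l:Gx}, and \Cref{l:Gy}. At $\gamma = \alpha/2$, these lemmas give the estimates
\begin{align*}
|\partial_x F_{\alpha/2}| & \le C b(E) E^{-1-\alpha} \le C E^{-1-3\alpha/2}, \qquad |\partial_y F_{\alpha/2}| \le C E^{-\alpha}, \\
|\partial_x G_{\alpha/2}| & \le C E^{-\alpha} + C b(E) \le C E^{-\alpha/2}, \qquad |\partial_y G_{\alpha/2}| \le C E^{-\alpha/2},
\end{align*}
using $b(E) \le C E^{-\alpha/2}$ from \Cref{l:abasymptotic} to simplify the various bounds (some terms from \Cref{l:Gx} involve $y^{-2/\alpha}$, but since here $y = b(E) \sim E^{-\alpha/2}$ these produce only $E^{-\alpha/2-\gamma}$-type contributions). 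In particular all four entries are $o(1)$ as $E \to \infty$, so the matrix is close to the identity and its inverse has norm $1 + o(1)$.

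The conclusion now follows from Cramer's rule combined with the $E$-derivative bounds of \Cref{l:FE} and \Cref{l:GE}, which at $\gamma = \alpha/2$ give
\begin{equation*}
|\partial_E F_{\alpha/2}| \le C b(E) E^{-1-\alpha} \le C E^{-1-3\alpha/2}, \qquad |\partial_E G_{\alpha/2}| \le C E^{-1-\alpha/2},
\end{equation*}
again using $b(E) \le CE^{-\alpha/2}$. Solving the linear system yields
\begin{equation*}
a'(E) = (1+o(1))\, \partial_E F_{\alpha/2} + O\big(|\partial_y F_{\alpha/2}|\cdot |\partial_E G_{\alpha/2}|\big), \qquad b'(E) = (1+o(1))\, \partial_E G_{\alpha/2} + O\big(|\partial_x G_{\alpha/2}|\cdot |\partial_E F_{\alpha/2}|\big),
\end{equation*}
and inserting the estimates above gives $|a'(E)| \le C E^{-1-3\alpha/2}$ and $|b'(E)| \le C E^{-1-\alpha/2}$, as required. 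There is no real obstacle here beyond bookkeeping; the only subtle point is verifying that the relevant hypotheses ($x, y \le 1$, $E \ge c^{-1}$) of the earlier derivative lemmas hold simultaneously at $(E, a(E), b(E))$, which is ensured by taking $\alpha$ close enough to $1$ and $E$ large enough so that \Cref{l:abasymptotic} applies.
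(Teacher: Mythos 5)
Your proof is correct and follows essentially the same route as the paper's: differentiate the fixed-point equations from \Cref{r:abfixedpointeqn}, bound the resulting coefficient matrix via \Cref{l:Fx}, \Cref{l:Fy}, \Cref{l:Gx}, \Cref{l:Gy}, bound the right-hand side via \Cref{l:FE} and \Cref{l:GE}, and invert the near-identity $2\times 2$ system. The only cosmetic difference is that you expand the inverse via Cramer's rule (and track a crude $|\partial_x G_{\alpha/2}| \le CE^{-\alpha/2}$, where the paper uses the sharper $O(E^{-\alpha})$), but the resulting bounds are identical.
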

\begin{proof}
Using $a(E) = F_{\alpha/2}\big(E, a(E), b(E)\big)$, we have
\begin{align*}
a'(E) &= \partial_E F_{\alpha/2} \big(E,a(E), b(E)\big) 
\\ &+  \partial_x F_{\alpha/2}\big(E, a(E), b(E)\big) a'(E) 
\\ &+ \partial_y F_{\alpha/2}\big(E, a(E), b(E)\big) b'(E).
\end{align*}
Using \Cref{l:abasymptotic}, \Cref{l:Fx}, \Cref{l:Fy}, and \Cref{l:FE}, the previous line yields
\be\label{peach1}
\left(1 + O(E^{- 1 - 3\alpha/2}) \right) a'(E)
+ O( E^{-\alpha}) b'(E) 
= O(E^{- 1 - 3\alpha/2}).
\ee
Similarly expanding $b'(E) = \partial_E \Big(G_{\alpha/2} \big(E,a(E), b(E)\big) \Big) $ using the chain rule and applying \Cref{l:abasymptotic}, \Cref{l:Gx}, \Cref{l:Gy}, and \Cref{l:GE}, we obtain
\be\label{peach2}
O(E^{-\alpha}) a'(E) + \left(1  + O(E^{-\alpha/2})   \right) b'(E) = O (E^{-1 - \alpha/2}).
\ee
The equations \eqref{peach1} and \eqref{peach2} can be written as the system
\bex
\begin{bmatrix}
1 + O(E^{- 1 - 3\alpha/2})  & O( E^{-\alpha})\\
O(E^{-\alpha})  & 1  + O(E^{-\alpha/2})
\end{bmatrix}
\begin{bmatrix}
a'(E) \\
b'(E) 
\end{bmatrix}
=
\begin{bmatrix}
O(E^{- 1 - 3\alpha/2}) \\
O (E^{-1 - \alpha/2})
\end{bmatrix}.
\eex
We invert this system to obtain
\begin{align*}
\begin{bmatrix}
a'(E) \\
b'(E) 
\end{bmatrix}
&=
\frac{1}{ 1 + O(E^{-\alpha/2})} 
\begin{bmatrix}
1  + O(E^{-\alpha/2})  & O( E^{-\alpha})\\
O(E^{-\alpha})  & 1 + O(E^{- 1 - 3\alpha/2}) 
\end{bmatrix}
\begin{bmatrix}
O(E^{- 1 - 3\alpha/2}) \\
O (E^{-1 - \alpha/2})
\end{bmatrix}\\
&=\begin{bmatrix}
O(E^{- 1 - 3\alpha/2}) \\
O (E^{-1 - \alpha/2})
\end{bmatrix}.
\end{align*}
This shows \eqref{Ederiv1}.
\end{proof}
We next show that $b'(E)$ is negative for sufficiently large $E$. 
\begin{lem}\label{l:bprimenegative}
There exists $c>0$ such that for $\alpha \in (1-c, 1)$,
the following holds. Then for every $D>1$, there exists $C(D) >1$ such that 
\bex
 b'(E) \le 0.
\eex
for all $E$ such that $D^{-1}(1 - \alpha)^{-1} \le  E \le D(1-\alpha)^{-1}$  and $E \ge C(D)$.
\end{lem}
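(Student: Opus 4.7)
The plan is to implicit-differentiate the fixed-point relation $b(E) = G_{\alpha/2}\bigl(E, a(E), b(E)\bigr)$ from \Cref{r:abfixedpointeqn} and show that in the specified window the dominant contribution comes from $\partial_E G_{\alpha/2}$, whose sign is determined by \Cref{l:GE2}. The differentiation (using \Cref{uniqueness}) gives
\[
b'(E)\bigl(1 - \partial_y G_{\alpha/2}\bigr) = \partial_E G_{\alpha/2} + \partial_x G_{\alpha/2} \cdot a'(E),
\]
where all partial derivatives are evaluated at $\bigl(E, a(E), b(E)\bigr)$. To show $b'(E) \le 0$, I will verify that the left-hand factor is positive and that the right-hand side is negative in the regime $E \in [D^{-1}(1-\alpha)^{-1}, D(1-\alpha)^{-1}]$ with $E \ge C(D)$.

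First I would record that, by \Cref{l:abasymptotic}, $a(E) \le C(1-\alpha)^{-1} E^{-2\alpha}$ and $b(E) \le E^{-\alpha/2} + C(1-\alpha)^{-1} E^{-2\alpha}$, both of which are at most $1/2$ once $C(D)$ is taken sufficiently large (depending on $D$). In particular, $b(E) \asymp E^{-\alpha/2}$, so $b(E)^{-2/\alpha} \asymp E$, a scaling I will use repeatedly. With these smallness bounds in hand, \Cref{l:Gy} applied at $\gamma = \alpha/2$ gives $|\partial_y G_{\alpha/2}| \le C E^{-\alpha/2}$, which is at most $1/2$ for $E \ge C(D)$, so $1 - \partial_y G_{\alpha/2} \ge 1/2 > 0$.

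Next I would produce the main negative contribution. \Cref{l:GE2} (applicable since $a, b \le 1/2$) at $\gamma = \alpha/2$ yields
\[
\partial_E G_{\alpha/2} \le -c\,\tfrac{\alpha}{2}\,E^{-1-\alpha/2} + c^{-1} E^{-1-\alpha/2-\alpha/4},
\]
so for $E \ge C(D)$ large enough the error term is absorbed and $\partial_E G_{\alpha/2} \le -c'\, E^{-1-\alpha/2}$ for some fixed $c' = c'(\alpha) > 0$. The correction term $\partial_x G_{\alpha/2} \cdot a'(E)$ is then bounded by combining \Cref{l:Gx} (at $\gamma = \alpha/2$, using $b(E)^{-2/\alpha} \asymp E$) to get $|\partial_x G_{\alpha/2}| \le C E^{-\alpha}$ with \Cref{l:abE}'s bound $|a'(E)| \le C E^{-1-3\alpha/2}$, yielding $|\partial_x G_{\alpha/2} \cdot a'(E)| \le C E^{-1-5\alpha/2}$, strictly smaller than $E^{-1-\alpha/2}$ for $E$ large. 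Substituting these estimates into the identity above then gives $b'(E) \le 0$.

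The main subtlety — rather than a genuine obstacle — is bookkeeping the dependence on $D$: all of the ``large $E$'' thresholds above must be compatible with the constraint $E \le D(1-\alpha)^{-1}$, so $C(D)$ must be chosen after fixing $D$ and (implicitly) the admissible range of $\alpha$. Since the applicable cited lemmas only require $E \ge c^{-1}$ for a universal $c$, and the remaining conditions ($a, b \le 1/2$, error terms absorbed) all amount to finitely many inequalities of the form $E \ge C(D)$, the constant $C(D)$ is obtained by taking the maximum of these thresholds.
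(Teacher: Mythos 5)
Your proposal is correct and follows essentially the same route as the paper: differentiate the fixed-point relation $b = G_{\alpha/2}(E,a,b)$, show $\partial_E G_{\alpha/2}$ is negative of order $E^{-1-\alpha/2}$ via \Cref{l:GE2} (after checking $a,b\le 1/2$ via \eqref{Fgammabound}), and dominate the $\partial_x G_{\alpha/2}\cdot a'$ and $\partial_y G_{\alpha/2}$ corrections via \Cref{l:Gx}, \Cref{l:Gy}, and \Cref{l:abE}. The only cosmetic difference is that you move $\partial_y G_{\alpha/2}\cdot b'$ to the left and divide by $1-\partial_y G_{\alpha/2}$, while the paper instead uses the a priori bound $|b'(E)|\le CE^{-1-\alpha/2}$ from \Cref{l:abE} to show that term is subdominant directly; both are valid.
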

\begin{proof}
Differentiating $b(E) = G_{\alpha/2}\big(E, a(E), b(E)\big)$, where the equality follows from \Cref{r:abfixedpointeqn}, we have
\begin{align}
b'(E)  &= \partial_E G_{\alpha/2} \big(E,a(E), b(E)\big) \label{dassum}
\\ &+  \partial_x G_{\alpha/2}\big(E, a(E), b(E)\big) a'(E)\notag\\
&+ \partial_y G_{\alpha/2}\big(E, a(E), b(E)\big) b'(E).\notag
\end{align}
Using \Cref{l:GE2}, \eqref{Fgammabound} to bound $b(E)= G_{\alpha/2}\big(E, a(E), b(E)\big)$, and the assumption on $E$, we get 
\be\label{ledt}
\partial_E G_{\alpha/2} \big(E,a(E), b(E)\big)
\le - c E^{ -1 - \alpha/2}.
\ee
Further using 
\eqref{Fgammabound},  \Cref{l:Gx}, and \Cref{l:Gy}, we obtain 
\begin{align*}
\left| \partial_x G_{\alpha/2}\big(E, a(E), b(E)\big) a'(E)\right|
\le C E^{-\alpha} \cdot E^{-1 - 3\alpha/2},
\end{align*}
and 
\bex
\left|\partial_y G_{\alpha/2}\big(E, a(E), b(E)\big) b'(E)\right|
\le  E^{-\alpha/2} \cdot E^{-1 - \alpha/2}.
\eex
Combining these bounds completes the proof after choosing $C(D)$ sufficiently large and using $E > C(D)$, since \eqref{ledt} represents the leading order term in the sum \eqref{dassum} for $\alpha > 3/4$. 
\end{proof}

\subsection{Derivatives of Fractional Powers of the Resolvent}\label{s:dtildeAB}
We begin by defining functions $\tilde a (E)$ and $\tilde b (E)$. 
\begin{definition}
Let $S$ and $T$ be nonnegative $\alpha/2$-stable random variables.
We define
\begin{align}\label{abtildedef}
\tilde a(E) &= \E \left[ \left( E + a(E)^{2/\alpha} S - b(E)^{2/\alpha} T   \right)_-^{-\alpha}  \right],\\
\tilde b(E) &= \E \left[ \left( E + a(E)^{2/\alpha} S - b(E)^{2/\alpha} T   \right)_+^{-\alpha}  \right].\notag
\end{align}
\end{definition}
We remark that the definitions of $F_\gamma$ and $G_\gamma$ in \eqref{FandG} imply that
\be\label{tildedefs}
\tilde a(E) = F_{\alpha}(E, a(E), b(E)), \qquad \tilde b(E) = B_{\alpha}(E, a(E), b(E)).
\ee

\noindent We now derive bounds on the derivatives of $\tilde a(E)$ and $\tilde b(E)$. 

\begin{lem}\label{l:tildeaE}
There exists $c>0$ such that for all $\alpha \in (1-c, 1)$,
the following holds. 
For every $D>1$, there exists $c_1(D) >0$ such that 
\bex
\tilde a'(E) \le - c_1 E^{-2\alpha}.
\eex
for all $E$ such that $D^{-1}(1 - \alpha)^{-1} \le  E \le D(1-\alpha)^{-1}$   and $E \ge c_1^{-1}$.
\end{lem}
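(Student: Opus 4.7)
The plan is to differentiate the identity $\tilde a(E) = F_\alpha\big(E, a(E), b(E)\big)$ from \eqref{tildedefs} via the chain rule, which is justified by \Cref{uniqueness}, and then to show that the $\partial_E$ contribution provides the claimed negative bound while the two correction terms are either absorbed or of strictly lower order. Writing
\begin{flalign*}
\tilde a'(E) = \partial_E F_\alpha\big(E,a(E),b(E)\big) + \partial_x F_\alpha\big(E,a(E),b(E)\big) \cdot a'(E) + \partial_y F_\alpha\big(E,a(E),b(E)\big) \cdot b'(E),
\end{flalign*}
our main analysis will be of the first term. Applying \Cref{l:FE} with $\gamma = \alpha$ (valid for $E$ large, since $a(E),b(E)\le 1$ for large $E$ by \Cref{l:abasymptotic}) gives
\begin{flalign*}
\partial_E F_\alpha\big(E,a(E),b(E)\big) \le \big(c^{-1} - c(1-\alpha)^{-1}\big)\cdot b(E)\cdot E^{-1-3\alpha/2}.
\end{flalign*}
For $\alpha$ close enough to $1$, the factor $c^{-1} - c(1-\alpha)^{-1}$ is at most $-\tfrac{c}{2}(1-\alpha)^{-1}$. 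Combined with the lower bound $b(E) \ge c\, E^{-\alpha/2}$ (which follows from the second estimate of \Cref{l:abasymptotic} for $E \ge c^{-1}$ since $b = G_{\alpha/2}$) and the range hypothesis $(1-\alpha)^{-1} \ge D^{-1} E$, this yields
\begin{flalign*}
\partial_E F_\alpha\big(E,a(E),b(E)\big) \le -c'(D)\cdot E^{-2\alpha}
\end{flalign*}
for some $c'(D)>0$.

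The two correction terms will be handled as follows. For the $\partial_x F_\alpha \cdot a'(E)$ term, \Cref{l:Fx} together with the bound $b(E)\le C E^{-\alpha/2}$ gives $|\partial_x F_\alpha| \le C(1-\alpha)^{-1}E^{-1-3\alpha/2}$, and \Cref{l:abE} gives $|a'(E)| \le C E^{-1-3\alpha/2}$. Hence the product is $O\big((1-\alpha)^{-1} E^{-2-3\alpha}\big) = O\big(E^{-1-3\alpha}\big)$ in the prescribed range; since $E^{-1-3\alpha}/E^{-2\alpha} = E^{-1-\alpha} \to 0$, choosing $E \ge C(D)$ sufficiently large absorbs this term into half of the main negative bound. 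For the $\partial_y F_\alpha \cdot b'(E)$ term, the key observation is that \eqref{Fyispositive} in \Cref{l:Fy} gives $\partial_y F_\alpha \ge 0$, while \Cref{l:bprimenegative} (applicable in precisely our range $E \in [D^{-1}(1-\alpha)^{-1}, D(1-\alpha)^{-1}]$ with $E \ge C(D)$) gives $b'(E) \le 0$, so this term is $\le 0$ and therefore only helps.

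Assembling these three estimates will give $\tilde a'(E) \le -\tfrac{1}{2}c'(D)\,E^{-2\alpha}$ under the stated conditions, yielding the lemma after renaming $c_1$. The main technical point is simply the sign-matching in the second correction term: without knowing a priori that $b'(E) \le 0$, one could only estimate $|\partial_y F_\alpha \cdot b'(E)|$ using the uniform bounds, and since that combination is of the same order $E^{-2\alpha}$ as the leading term, the argument would fail. This is precisely why \Cref{l:bprimenegative} was established beforehand, and it is the only nontrivial input needed here.
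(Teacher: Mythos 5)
Your proposal is correct and follows essentially the same route as the paper: differentiate $\tilde a = F_\alpha(E,a,b)$ via the chain rule, extract the negative leading term from \eqref{fesecond} with $\gamma=\alpha$, bound $b(E)\sim E^{-\alpha/2}$ via \Cref{l:abasymptotic}, use the range hypothesis $(1-\alpha)^{-1}\ge D^{-1}E$ to convert this into $-c'(D)E^{-2\alpha}$, show the $\partial_x F_\alpha\cdot a'$ term is of strictly lower order, and kill the $\partial_y F_\alpha\cdot b'$ term by the sign pairing $\partial_y F_\alpha\ge 0$ (\eqref{Fyispositive}) with $b'(E)\le 0$ (\Cref{l:bprimenegative}). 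You also correctly flag the sign observation as the nontrivial point, since by magnitude alone that term is of the same order $E^{-2\alpha}$.
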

\begin{proof}
We have 
 $\tilde a(E) = F_{\alpha}(E, a(E), b(E))
$ from \eqref{tildedefs}. Then from the chain rule, we obtain
\begin{align}
\tilde a'(E) &= \partial_E F_{\alpha} (E,a(E), b(E)) \label{onion1}
\\ &+  \partial_x F_{\alpha}(E, a(E), b(E)) a'(E) \notag
\\ &+ \partial_y F_{\alpha}(E, a(E), b(E)) b'(E).\notag
\end{align}
Using  \eqref{fesecond}, $D (1 - \alpha)^{-1} \ge  E $, and \eqref{Fgammabound} to bound $b(E)= G_{\alpha/2}\big(E, a(E), b(E)\big)$, we have
\begin{align}
 \partial_E F_\gamma (E,x,y) &\le  -c ( 1- \alpha)^{-1} b(E) E^{-1 -3 \alpha/2} + C b(E) E^{-1 - 3 \alpha/2} \notag \\
&\le - c E^{-2 \alpha} + C E^{-1 - 2\alpha}\label{onion2}
\end{align}
for some constants  $c(D) > 0$ and $C(D)>1$.
Further, using \Cref{l:Fx} and \Cref{l:abE}, we have
\be\label{onion3}
|\partial_x F_{\alpha}(E, a(E), b(E)) a'(E) | 
\le C (1 - \alpha)^{-1} E^{-1 - 3\alpha/2}  \cdot E^{-1 - 3\alpha/2}.
\ee
Finally, we observe that
\be\label{onion4}
\partial_y F_{\alpha}(E, a(E), b(E)) b'(E) \le 0
\ee
as a consequence of \eqref{Fyispositive} and \Cref{l:bprimenegative}.
Combining \eqref{onion1}, \eqref{onion2}, \eqref{onion3}, and \eqref{onion4} completes the proof.
\end{proof}

\begin{lem}\label{l:tildebE}
There exists $c>0$ such that for all $\alpha \in (1-c, 1)$,
the following holds. 
For every $D>1$, there exists $c_1(D) >0$ such that 
\bex
\tilde b'(E) \le - c_1 E^{-3/2 - \alpha/2}.
\eex
for all $E$ such that $D^{-1}(1 - \alpha)^{-1} \le  E \le D(1-\alpha)^{-1}$  and $E \ge c_1^{-1}$.
\end{lem}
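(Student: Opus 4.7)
The proof proceeds in direct parallel with that of \Cref{l:tildeaE}, now using $\tilde b(E) = G_\alpha\big(E, a(E), b(E)\big)$ from \eqref{tildedefs}. The chain rule yields the decomposition
\begin{align*}
\tilde b'(E)
&= \partial_E G_\alpha\big(E, a(E), b(E)\big) \\
&\quad + \partial_x G_\alpha\big(E, a(E), b(E)\big) \cdot a'(E) \\
&\quad + \partial_y G_\alpha\big(E, a(E), b(E)\big) \cdot b'(E).
\end{align*}
The plan is to show the first term provides the main negative contribution of order $-E^{-1-\alpha}$, to absorb the second term as a lower-order error, to observe that the third term is nonpositive, and finally to convert the bound $\tilde b'(E) \le -c_1 E^{-1-\alpha}$ to the claimed form.

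For the main term, I would apply \Cref{l:GE2} with $\gamma = \alpha$. The hypothesis $x, y \le 1/2$ is met at $(x, y) = \big(a(E), b(E)\big)$ because \Cref{l:abasymptotic} gives $a(E) \le C(1-\alpha) E^{-2\alpha}$ and $b(E) \le C E^{-\alpha/2}$, both of which are at most $1/2$ for $E$ large. This yields
\begin{equation*}
\partial_E G_\alpha\big(E, a(E), b(E)\big) \le - c\alpha E^{-1-\alpha} + c^{-1} E^{-1 - \alpha - \alpha/4},
\end{equation*}
so that for $E$ sufficiently large (depending on $D$, $\alpha$) the first summand dominates and gives $\partial_E G_\alpha \le -\tfrac{c\alpha}{2} E^{-1-\alpha}$.

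For the second term, I would use \Cref{l:Gx} with $\gamma = \alpha$, combined with the bounds from \Cref{l:abasymptotic} on $a(E), b(E)$ and the range condition $(1-\alpha)^{-1} \le D E$. Inserting $y = b(E) \le C E^{-\alpha/2}$ and $y^{-2/\alpha} \le CE$ into the four terms of \Cref{l:Gx}, each factor of $(1-\alpha)^{-k}$ gets absorbed as $E^k$, and the dominant contribution is $C(D) E^{1 - 3\alpha/2}$. Combining with $|a'(E)| \le C E^{-1-3\alpha/2}$ from \Cref{l:abE}, we obtain
\begin{equation*}
\big| \partial_x G_\alpha \cdot a'(E) \big| \le C(D)\, E^{-3\alpha},
\end{equation*}
which is indeed lower order than $E^{-1-\alpha}$, since their ratio is $E^{1-2\alpha}$, tending to $0$ for $\alpha$ near $1$ and $E \ge c_1^{-1}$ taken sufficiently large. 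For the third term, the lower bound in \Cref{l:Gy} applied at $\gamma = \alpha$ yields $\partial_y G_\alpha\big(E, a(E), b(E)\big) \ge 0$ for $E$ large, while \Cref{l:bprimenegative} gives $b'(E) \le 0$ in the range considered; hence the product is nonpositive.

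Summing these three estimates gives $\tilde b'(E) \le -c_1 E^{-1-\alpha}$ for some $c_1(D) > 0$ and all sufficiently large $E$ in the prescribed range. To obtain the stated bound, I observe that for $\alpha \le 1$ and $E \ge 1$, the inequality $E^{-1-\alpha} \ge E^{-3/2 - \alpha/2}$ holds (since $-1-\alpha \ge -3/2 - \alpha/2$ is equivalent to $\alpha \le 1$), which converts $-c_1 E^{-1-\alpha} \le -c_1 E^{-3/2 - \alpha/2}$, completing the proof. The main technical delicacy is the bookkeeping in the second term: one must verify that each divergent factor $(1-\alpha)^{-k}$ arising from \Cref{l:Gx} at $\gamma = \alpha$ is tamed by the corresponding power of $E$ made available by the range assumption, so that the net error is strictly subleading with respect to $E^{-1-\alpha}$; once this is organized, the rest of the argument is purely mechanical.
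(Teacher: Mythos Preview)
Your proposal is correct and follows essentially the same approach as the paper: the chain rule decomposition of $\tilde b'(E)$, the use of \Cref{l:GE2} for the leading negative term, \Cref{l:Gx} and \Cref{l:abE} to bound the $\partial_x G_\alpha \cdot a'(E)$ contribution by $C E^{-3\alpha}$, and the sign argument via \Cref{l:Gy} and \Cref{l:bprimenegative} for the third term. The only addition is that you make explicit the final passage from $-c_1 E^{-1-\alpha}$ to $-c_1 E^{-3/2-\alpha/2}$, which the paper leaves implicit.
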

\begin{proof}
Using \eqref{tildedefs}, we have
$
\tilde b(E) = G_{\alpha}(E, a(E), b(E))$. 
Then the chain rule gives
\begin{align}
\tilde b'(E) &= \partial_E G_{\alpha} (E,a(E), b(E)) \label{starfruit3}
\\ &+  \partial_x G_{\alpha}(E, a(E), b(E)) a'(E) \notag
\\ &+ \partial_y G_{\alpha}(E, a(E), b(E)) b'(E).\notag
\end{align}
Using the assumption on $E$ and 
\eqref{Fgammabound} to bound $b(E)= G_{\alpha/2}\big(E, a(E), b(E)\big)$, and \Cref{l:GE2}, we have
\begin{align}
 \partial_E G\big(E, a(E),b(E)\big) &\le   - c E^{-1  - \alpha} +  C   E^{-1 - \alpha - \alpha/4 }, \label{starfruit1}
\end{align}
and from \Cref{l:Gx} and  \Cref{l:abE}, we have
\begin{align}
|\partial_x G_{\alpha}(E, a(E), b(E)) a'(E) | 
&\le C  \left( E^{1 - 3\alpha/2} + E^{2 - 3\alpha} + E^{-3\alpha/2} \right)   \cdot E^{-1 - 3\alpha/2}\le CE^{-3 \alpha}.\label{starfruit2}
\end{align}
Finally, from \Cref{l:Gy}, we have 
\bex
\partial_y G_{\alpha}(E,a(E), b(E)) \ge c ( 1 - \alpha)^{-1} E^{-\alpha -1/2 } + O(E^{ -1/2 - \alpha/2}) \ge 0,
\eex
and from \Cref{l:bprimenegative} we have $b'(E) \le 0$, which implies $\partial_y G_{\alpha}(E, a(E), b(E)) b'(E) \le 0$. Combining this inequality with \eqref{starfruit3}, \eqref{starfruit1}, and \eqref{starfruit2} completes the proof.
\end{proof}

\subsection{Uniqueness of the Mobility Edge}\label{s:proveuniqueness}

We are now ready to prove the first part of \Cref{t:main2}.

\begin{proof}[Proof of \Cref{t:main2}(1)]

By \Cref{l:crudeasymptotic}, there exists $c>0$ such that  any solution to $\lambda(E,\alpha) = 1$ satisfies
\be\label{therange}
c ( 1- \alpha)^{-1} \le  E \le c^{-1} (1- \alpha)^{-1}.
\ee
Further, we have $\lambda(E,\alpha) > 1$ for $0 < E < c(1 - \alpha)^{-1}$ and $\lambda(E,\alpha) < 1$  for $E > c^{-1} (1-\alpha)^{-1}$, so there is at least one $E$ in the range \eqref{therange} such that $\lambda(E,\alpha) = 1$ (using the continuity of $E \mapsto \lambda(E,\alpha)$ provided by the third part of \Cref{l:lambdalemma}). Therefore, it suffices to show that $E\mapsto \lambda(E,\alpha)$ is strictly decreasing in \eqref{therange}, since this implies there is exactly one solution to $\lambda(E,\alpha) = 1$.

For brevity, in this proof  we write $F_\alpha = F_\alpha(E) = F_\alpha(E,a(E), b(E))$ and $G_\alpha = G_\alpha(E) = G_{\alpha} (E,a(E), b(E))$. We recall from \eqref{kiwi3} that
		\begin{flalign}\label{qkiwi}
		\lambda(E,\alpha) 
			& =  \pi^{-1} \cdot K_{\alpha} \cdot \Gamma (\alpha) \cdot \bigg(  t_{\alpha} \sqrt{1 - t_{\alpha}^2} \cdot (F_\alpha + G_\alpha)  \\
			& \qquad + \sqrt{1 - t^2_\alpha} \cdot \sqrt{ 2 t_\alpha^2 (F_\alpha + G_\alpha)^2 + (1 - t_{\alpha}^2) ( F_\alpha^2 + G_\alpha^2)} \bigg).
		\end{flalign}
		
Recalling that $\tilde a(E) = F_\alpha(E)$ and $\tilde b(E) = G_\alpha(E)$ (see \eqref{tildedefs}), and using \Cref{l:tildeaE} and \Cref{l:tildebE}, we find that $F_\alpha(E)$ and $G_\alpha(E)$ are decreasing for $E$ in the region \eqref{therange}. Hence $(F_\alpha + G_\alpha)^2$ and $F_\alpha^2 + G_\alpha^2$ are also decreasing in this range, and we conclude from \eqref{qkiwi} that $\lambda(E,\alpha)$ is decreasing for such $E$ (since all coefficients in \eqref{qkiwi} are positive). We noted previously that this claim is enough to establish the theorem, so the proof is complete.
\end{proof}

	\section{Scaling Near Zero} 
	
	\label{Alpha0Scaling}
	
	In this section we analyze the how any solution to the equation $\lambda (E, \alpha) = 1$ (recall \Cref{lambdaEalpha}) scales when $\alpha$ is small. Throughout, we assume that $\alpha \in \big( 0, \frac{1}{20} \big)$, even when not explicitly stated. We recall from \eqref{FandG} that, for any real numbers $\gamma \in (0, 1)$ and $E, x, y > 0$, we define
	\begin{flalign}
		\label{fggamma} 
		F_{\gamma} (E, x, y) = \mathbb{E} \Big[ (E + x^{2/\alpha} S - y^{2/\alpha} T)_-^{\gamma} \Big]; \qquad G_{\gamma} (E, x, y) = \mathbb{E} \Big[ (E+x^{2/\alpha} S - y^{2/\alpha} T)_+^{-\gamma} \Big],
	\end{flalign}
	
	\noindent where $S$ and $T$ are positive $\frac{\alpha}{2}$-stable laws. We further recall from \eqref{r:abfixedpointeqn} that, under this notation, $(a, b) = \big( a(E), b(E) \big)$ (recall \eqref{opaque}) solves the system
	\begin{flalign}
		\label{fba} 
		a = F_{\alpha/2} (E, a, b); \qquad b = F_{\alpha/2} (E, a, b).
	\end{flalign}
	
	\noindent It will be convenient to parameterize $E = u^{2/\alpha}$ and view $a$, $b$, $F$, and $G$ as functions of $u$; we will do this throughout, often abbreviating $a = a(u^{2/\alpha})$ and $b = b(u^{2/\alpha})$ without comment. We begin in \Cref{H0Halpha} by explaining how, for $\alpha$ small, $\frac{\alpha}{2} \log S$ behaves as a Gumbel random variable \cite{SDCS}. We then provide some initial estimates on $a$ and $b$ in \Cref{InitialabAlpha0}. In \Cref{FFGG} we state an estimate for the error in replacing $(F, G)$ with more explicit quantities (in terms of Gumbel random variables), which is proven in \Cref{Replaceh0halpha20} and \Cref{FFGGalpha0Estimate0}. We then establish the scaling for the mobility edge in \Cref{Alpha0E0Scale}. Throughout this section, constants $c > 0$ and $C > 1$ will be independent of $\alpha$.
	
	\subsection{Approximation by Gumbel Random Variables}
	
	\label{H0Halpha}
	
	In this section, we quantify the sense in which the logarithm of a positive $\frac{\alpha}{2}$-stable law can be approximated by a Gumbel random variable. To that end, we begin with the following definition, where $\Upsilon_{\alpha/2}$ below measures this error. 
	
	\begin{definition}
		
		\label{halpha}
		
		Fix $\alpha \in (0, 1)$. Let $S$ be a positive $\frac{\alpha}{2}$-stable law; let $W_{\alpha/2} = \frac{\alpha}{2} \log S$; and let $h_{\alpha/2} (x)$ denote the probability density function of $W_{\alpha/2}$. Further let $W_0$ denote a Gumbel random variable, and let $h_0 (x) = e^{-x - e^{-x}}$ denote its probability density function. Define $\Upsilon_{\alpha/2} (x) : \mathbb{R} \rightarrow \mathbb{R}$ so that $h_{\alpha/2} (x) = h_0 (x) + \alpha^2 \cdot \Upsilon_{\alpha/2} (x)$.
	\end{definition}
	
	We begin with the following lemma providing the characterstic functions of $W_{\alpha/2}$ and $W_0$.
	
	\begin{lem}
		
		\label{eitw0walpha2}
		
		For any $\alpha \in (0, 1)$ and $t \in \mathbb{R}$, we have 
		\begin{flalign*}
			\mathbb{E} [e^{\mathrm{i} t W_{\alpha/2}}] = \displaystyle\frac{\Gamma (1-\mathrm{i} t)}{\Gamma \big( 1 - \frac{\mathrm{i} t \alpha}{2} \big)} \cdot \Gamma \Big(1 - \displaystyle\frac{\alpha}{2} \Big)^{\mathrm{i} t}; \qquad \mathbb{E} [ e^{\mathrm{i} t W_0}] = \Gamma (1 - \mathrm{i}t).
		\end{flalign*}
	\end{lem}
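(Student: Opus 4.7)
The plan is to compute the two characteristic functions by direct calculation, starting with the easier Gumbel case and then using the Laplace transform identity \eqref{ytsigma} for the stable case.

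For the Gumbel characteristic function $\mathbb{E}[e^{\mathrm{i} t W_0}]$, I would plug the explicit density $h_0(x) = e^{-x - e^{-x}}$ into the defining integral and make the substitution $u = e^{-x}$. This turns the integral over $\mathbb{R}$ into $\int_0^\infty u^{-\mathrm{i} t} e^{-u}\, du$, which is $\Gamma(1 - \mathrm{i} t)$ by the standard analytic continuation of the Gamma function (the integral converges absolutely since the $u^{-\mathrm{i}t}$ factor has modulus one).

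For $W_{\alpha/2}$, observe that $\mathbb{E}[e^{\mathrm{i} t W_{\alpha/2}}] = \mathbb{E}[S^{\mathrm{i} t\alpha/2}]$, so it suffices to compute the Mellin-type quantity $\mathbb{E}[S^{-z}]$ for complex $z$. I would begin with $\Re z > 0$, where the identity
\begin{equation*}
x^{-z} = \frac{1}{\Gamma(z)} \int_0^\infty u^{z-1} e^{-ux}\, du
\end{equation*}
combined with Fubini and the Laplace transform \eqref{ytsigma} of $S$ gives
\begin{equation*}
\mathbb{E}[S^{-z}] = \frac{1}{\Gamma(z)} \int_0^\infty u^{z-1} \exp\big(-\Gamma(1-\alpha/2) u^{\alpha/2}\big)\, du.
\end{equation*}
The substitution $v = \Gamma(1-\alpha/2) u^{\alpha/2}$ converts this into another Gamma integral, and after simplification using the duplication identity $\Gamma(w+1) = w\Gamma(w)$ (applied to both $\Gamma(2z/\alpha)$ and $\Gamma(z)$) one obtains
\begin{equation*}
\mathbb{E}[S^{-z}] = \frac{\Gamma(2z/\alpha + 1)}{\Gamma(z+1)} \cdot \Gamma(1-\alpha/2)^{-2z/\alpha}.
\end{equation*}
Setting $z = -\mathrm{i} t \alpha/2$ and extending by analytic continuation from $\Re z > 0$ to the imaginary axis yields the claimed formula.

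The only nontrivial point is the analytic continuation step: I need to know that both sides are holomorphic in a strip containing the imaginary axis and that the left-hand side $z \mapsto \mathbb{E}[S^{-z}]$ extends continuously down to $\Re z = 0$. This follows from dominated convergence together with the tail bound on the density of $S$ from \Cref{l:stabletailbounds}, which guarantees that all moments $\mathbb{E}[S^{-z}]$ with $\Re z$ in a neighborhood of $0$ are finite and holomorphic in $z$. I expect this continuation argument to be the main (albeit still routine) obstacle; the rest is algebraic manipulation of Gamma functions.
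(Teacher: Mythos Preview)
Your proposal is correct and follows essentially the same route as the paper: compute $\mathbb{E}[S^{-z}]$ for $\Re z > 0$ via the integral representation of $x^{-z}$ combined with the Laplace transform \eqref{ytsigma}, reduce to a Gamma integral by substitution, and then analytically continue to the imaginary axis. The paper parametrizes by $k$ (computing $\mathbb{E}[e^{-kW_{\alpha/2}}]$ for real $k>0$) rather than by $z$, but this is purely notational; your Gumbel computation is also the one the paper has in mind when it says ``follows from direct integration.'' One small remark: \Cref{l:stabletailbounds} is stated only for $\alpha\in[1/2,1]$, so if you want to cite it for the analytic continuation you should note that the relevant moment bound (finiteness of $\mathbb{E}[S^{-z}]$ for $\Re z$ in a strip around $0$) holds for all $\alpha\in(0,1)$ by the same argument.
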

	
	\begin{proof}

		The expression for $\mathbb{E} [ e^{\mathrm{i} t W_0}]$ follows from direct integration, so we omit the proof. For the characteristic function of $W_{\alpha/2}$, we recall the identity
		$$
		x^{-k} = \frac{1}{\Gamma(k)}\int_0^\infty e^{- t x} t^{k-1} dt,
		$$
		which holds for all $x >0$ and $k >0$,  
		Using \eqref{ytsigma}, we deduce that for real $k > 0$,
		\begin{eqnarray*}
			\mathbb{E} [e^{- k W_{\alpha/2} }] = \mathbb{E} [S^{-k\alpha/2}] &= &\frac{1}{\Gamma(k\alpha/2)}\int_0^\infty \mathbb{E} [e^{- t S} t^{k\alpha/2 - 1}]\, dt \\
			& = &  \frac{1}{\Gamma(k\alpha/2)}\int_0^\infty e^{- \Gamma(1-\alpha/2) t^{\alpha/2}} t^{k\alpha/2-1} dt  \\
			& = & \frac{2}{\alpha\Gamma(k\alpha/2)\Gamma(1-\alpha/2)^k}\int_0^\infty e^{- t} t^{k -1} dt \\
			& = &\frac{2\cdot \Gamma(k)}{\alpha\Gamma(k\alpha/2)\Gamma(1-\alpha/2)^k}.
		\end{eqnarray*}
		Recall that $z \Gamma(z) = \Gamma(z+1)$. Then the previous display gives
		\begin{equation}\label{eq:negmomS}
			\mathbb{E} [e^{-k W_{\alpha/2}}] = \mathbb{E} [S^{-k\alpha/2}] = \frac{\Gamma(1+k)}{\Gamma(1+k\alpha/2)\Gamma(1-\alpha/2)^k}.
		\end{equation}
		For all real $t$, \eqref{eq:negmomS} implies by analytic continuation that
		$$
		\mathbb{E} \left[ e^{\iu t W_{\alpha/2}} \right]    
		= 
		\frac{\Gamma (1-\mathrm{i} t) \cdot \Gamma(1-\alpha/2)^{\iu t}}{\Gamma(1-\iu t \alpha/2)},
		$$
		which completes the proof.
	\end{proof}

	Lemma \ref{eitw0walpha2} indicates that, as $\alpha$ tends to $0$, the characteristic function of $W_{\alpha/2}$ converges to that of $W_0$; in particular, $W_{\alpha/2}$ converges weakly to a Gumbel distribution.\footnote{In fact, it suggests that $W_{\alpha/2}$ can be written exactly (for any $\alpha \in (0, 1)$) as the sum of a Gumbel distribution and another independent random variable; this is indeed shown to be the case as \cite[Corollary 4.1]{SDCS}.} The following lemma quantifies this convergence by bounding (derivatives of) $\Upsilon_{\alpha/2}$.	
	
	\begin{lem}
		
		\label{w0walpha} 
		
		For any positive integers $p, q \ge 0$, there exists a constant $C = C(p, q) > 1$ such that
		\begin{flalign}
			\label{h0halpha21}
			\displaystyle\sup_{x \in \mathbb{R}} \big| \partial_x^q \Upsilon_{\alpha/2} (x) \big| \le C; \qquad \displaystyle\int_{-\infty}^{\infty} |x|^p \cdot \big|\partial_x^q \Upsilon_{\alpha/2} (x) \big| dx < C.
		\end{flalign}
		
	\end{lem}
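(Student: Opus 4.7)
My plan is to establish the bounds by analyzing the Fourier transform $\hat\Upsilon_{\alpha/2}$, extracting sufficient decay in $t$ for both pointwise control of $\Upsilon_{\alpha/2}$ and its derivatives, and for control of polynomial moments.

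By \Cref{halpha} and the Fourier inversion formula, together with \Cref{eitw0walpha2},
\begin{flalign*}
	\alpha^2 \hat\Upsilon_{\alpha/2}(t) = \hat h_{\alpha/2}(t) - \hat h_0(t) = \Gamma(1-\mathrm{i} t)\cdot \Phi_\alpha(t), \quad \text{where} \quad \Phi_\alpha(t) = \frac{\Gamma(1-\alpha/2)^{\mathrm{i} t}}{\Gamma(1-\mathrm{i} t\alpha/2)} - 1.
\end{flalign*}
The first step is to show that, uniformly in $\alpha \in (0, 1/20)$ and $t \in \mathbb{R}$ with $|t| \alpha \le 1$, one has the bound $|\Phi_\alpha(t)| \le C \alpha^2 (1+|t|)^2$, together with analogous bounds on its derivatives $\big|\partial_t^k \Phi_\alpha(t)\big| \le C_k \alpha^2 (1+|t|)^{k+2}$. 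This is achieved by writing $\log\Phi_\alpha$ as a power series using the identity $\log \Gamma(1-z) = \gamma z + \sum_{k \ge 2} \zeta(k) z^k / k$ (valid for $|z|<1$), and observing that the linear-in-$\alpha$ terms cancel exactly because the expansion $\mathrm{i} t \log \Gamma(1-\alpha/2) - \log\Gamma(1-\mathrm{i} t\alpha/2)$ has vanishing coefficient of $\alpha^1$. The residual is $O(\alpha^2)$ with a polynomial-in-$t$ prefactor, and the derivatives behave similarly since each $\partial_t$ brings down a factor of $\alpha \log\Gamma(1-\alpha/2)$ or $\alpha$ times a digamma term, both of which are bounded.

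The second step is to handle the complementary regime $|t|\alpha \ge 1$, in which $|\Gamma(1-\mathrm{i} t\alpha/2)|$ is bounded away from $0$ and from $\infty$ after dividing by its modulus asymptotics, while $|\Gamma(1-\alpha/2)^{\mathrm{i} t}| = 1$. Hence $|\Phi_\alpha(t)| \le C$ uniformly, and derivatives in $t$ pick up factors of $\alpha \cdot |\log|\Gamma(1-\alpha/2)||$ or $\alpha$ times the digamma function, which remain polynomially bounded in $t$. Combining the two regimes, I obtain the bound
\begin{flalign*}
	\big|\partial_t^k \hat\Upsilon_{\alpha/2}(t)\big| \le C_k\cdot (1+|t|)^{k+2}\cdot \big|\Gamma(1-\mathrm{i} t)\big| \cdot \alpha^{\min(0, -k)} \cdot \big(1 + \alpha^{-2}\one_{|t|\alpha \ge 1}\big),
\end{flalign*}
where crucially $|\Gamma(1-\mathrm{i} t)|$ decays like $|t|^{1/2} e^{-\pi|t|/2}$ as $|t| \to \infty$ (by the Stirling estimate or the reflection formula $|\Gamma(1+\mathrm{i} t)|^2 = \pi t/\sinh(\pi t)$). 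This exponential decay absorbs all polynomial factors, including the factor $\alpha^{-2}$ on the region $|t| \ge \alpha^{-1}$, uniformly in $\alpha \in (0, 1/20)$.

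For the conclusion, the first bound in \eqref{h0halpha21} follows from $\big| \partial_x^q \Upsilon_{\alpha/2}(x) \big| \le \frac{1}{2\pi}\int |t|^q |\hat\Upsilon_{\alpha/2}(t)| \, dt$, which is finite by the decay above. For the second (moment) bound, I integrate by parts $p$ times in the Fourier inversion formula to write $x^p \partial_x^q \Upsilon_{\alpha/2}(x) = \frac{\mathrm{i}^{p-q}}{2\pi}\int e^{-\mathrm{i} tx} \partial_t^p(t^q \hat\Upsilon_{\alpha/2}(t))\, dt$, and then apply the analogous derivative bound together with the $L^1$-decay in $t$. The main obstacle is verifying the uniformity in $\alpha$ of the $\alpha^{-2}$ factor cancellation; it is handled precisely by the exponential decay of $|\Gamma(1-\mathrm{i} t)|$ once $|t| \ge \alpha^{-1}$, since $e^{-\pi/(2\alpha)}$ dominates $\alpha^{-N}$ for any fixed $N$ as $\alpha \to 0$.
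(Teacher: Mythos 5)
Your overall strategy (characteristic functions, Fourier inversion, showing the coefficient of $\alpha^1$ in $\mathrm{i}t\log\Gamma(1-\alpha/2) - \log\Gamma(1-\mathrm{i}t\alpha/2)$ vanishes, and letting the decay of $\Gamma(1-\mathrm{i}t)$ absorb polynomial growth) is the same as the paper's, and your first-regime bound $|\Phi_\alpha(t)| \le C\alpha^2(1+|t|)^2$ for $|t|\alpha \le 1$ via the cancellation of the linear term is correct. But the second regime is wrong as stated. You claim ``$|\Gamma(1-\mathrm{i}t\alpha/2)|$ is bounded away from $0$'' and hence ``$|\Phi_\alpha(t)|\le C$ uniformly'' on $|t|\alpha \ge 1$. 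Neither is true: $|\Gamma(1-\mathrm{i}x)|^2 = \pi x/\sinh(\pi x)$, so $|\Gamma(1-\mathrm{i}t\alpha/2)| \sim (\pi\alpha|t|/2)^{1/2}e^{-\pi\alpha|t|/4}$ tends to zero exponentially in $\alpha|t|$, and consequently $|\Phi_\alpha(t)| = |\Gamma(1-\alpha/2)^{\mathrm{i}t}/\Gamma(1-\mathrm{i}t\alpha/2) - 1| \sim |\Gamma(1-\mathrm{i}t\alpha/2)|^{-1}$ grows like $e^{\pi\alpha|t|/4}$. Your final display then omits this exponentially growing factor: the bound involving only $|\Gamma(1-\mathrm{i}t)|$ and $(1+|t|)^{k+2}$ is not a valid upper bound on $|\partial_t^k\hat\Upsilon_{\alpha/2}(t)|$.

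The conclusion is salvageable because $|\Gamma(1-\mathrm{i}t)|\sim e^{-\pi|t|/2}$ decays strictly faster than $|\Gamma(1-\mathrm{i}t\alpha/2)|^{-1}\sim e^{\pi\alpha|t|/4}$ grows (since $\alpha < 1 < 2$), so the product still has the required decay — but your write-up never tracks this cancellation. The paper sidesteps the issue by choosing the other factorization, namely $\hat h_{\alpha/2} - \hat h_0 = \frac{\Gamma(1-\mathrm{i}t)}{\Gamma(1-\mathrm{i}t\alpha/2)}\big[\Gamma(1-\alpha/2)^{\mathrm{i}t} - \Gamma(1-\mathrm{i}t\alpha/2)\big]$, and computing the ratio's modulus exactly as $\big(\frac{2}{\alpha}\cdot\frac{\sinh(\pi\alpha t/2)}{\sinh(\pi t)}\big)^{1/2}$; this single quantity already packages the competition between the two gamma factors and is manifestly $O(1)$ for $|t|\lesssim 1$ and exponentially small for $|t|\gtrsim 1$, uniformly in $\alpha$. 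If you keep your factorization you must carry the factor $|\Gamma(1-\mathrm{i}t\alpha/2)|^{-1}$ through the estimate and show explicitly that $|\Gamma(1-\mathrm{i}t)/\Gamma(1-\mathrm{i}t\alpha/2)|$ is integrable against $(1+|t|)^{p+q+2}$ with an $\alpha$-uniform bound; as written, that step is missing and the stated intermediate inequality is false.
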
 
	
	\begin{proof}

		\noindent Recall for any $x \in \mathbb{R}$ that  
		\begin{flalign*}
			\big|\Gamma (1 - \mathrm{i} x) \big|^2 = \displaystyle\frac{\pi x}{\sinh(\pi x)},
		\end{flalign*}
		
		\noindent which implies that 
		\begin{flalign}
			\label{estimate1it0}
			\Bigg| \displaystyle\frac{\Gamma (1 - \mathrm{i} t)}{\Gamma \big( 1 - \frac{\mathrm{i} t \alpha}{2} \big)} \Bigg| = \bigg( \displaystyle\frac{2}{\alpha} \cdot \displaystyle\frac{\sinh \big (\frac{\pi \alpha t}{2}\big)}{\sinh (\pi t)} \bigg)^{1/2}.
		\end{flalign}
		
		\noindent These, together with a Taylor expansion, yields
		\begin{flalign*}
			\Bigg| \Gamma \bigg(1 - \frac{\alpha}{2} \bigg)^{\mathrm{i}t} - \Gamma \bigg( 1 - \frac{\mathrm{i} t \alpha}{2} \bigg)  \Bigg| \le C \alpha^2 \big( |t| + |t|^2 \big),
		\end{flalign*}
		
		\noindent uniformly for $t \in \mathbb{R}$. Thus, \Cref{eitw0walpha2} and Fourier inversion gives
		\begin{flalign*}
			\alpha^2 \cdot \displaystyle\sup_{x \in \mathbb{R}} \big| \partial_x^q \Upsilon_{\alpha/2} (x) \big| = \displaystyle\sup_{x \in \mathbb{R}} \big| \partial_x^q h_{\alpha/2} (x) - \partial_x^q h_0 (x) \big| \le \displaystyle\int_{-\infty}^{\infty} t^q \big| \mathbb{E} [ e^{\mathrm{i} t W_0}] - \mathbb{E} [ e^{\mathrm{i} t W_{\alpha/2}}] \big| dt \le C \alpha^2,
		\end{flalign*} 
		
		\noindent which verifies the first bound in \eqref{h0halpha21}.  
		
		To establish the latter, denoting the digamma function $\psi_k = \partial_x^k \big( \log \Gamma(x) \big)$; recall from Section 1.1 of \cite[Section 1.1]{TSF} that for any integer $k \in [0, p+q]$ we have the estimates
		\begin{flalign*}
			\big| \psi_0 (1 + \mathrm{i} x) - \log (1 + \mathrm{i} x) \big| \le \displaystyle\frac{C}{|x| + 1}; \qquad \bigg| \psi_k (1 + \mathrm{i} x) + (-1)^k \displaystyle\frac{k!}{(1 + \mathrm{i} x)^k} \bigg| \le \displaystyle\frac{C}{\big( |x| + 1 \big)^k}, 
		\end{flalign*}
		
		\noindent uniformly in $x \in \mathbb{R}$. Together with \eqref{estimate1it0}, these yield 
		\begin{flalign}
			\label{estimate1it1} 
			\Bigg| \partial_t^k \bigg( \displaystyle\frac{\Gamma (1 - \mathrm{i} t)}{\Gamma \big( 1 - \frac{\mathrm{i}t\alpha}{2} \big)} \bigg) \Bigg| \le C \log \big( |t| + 3 \big)^k \cdot \bigg( \displaystyle\frac{2}{\alpha} \cdot \displaystyle\frac{\sinh \big( \frac{\pi \alpha t}{2} \big)}{\sinh (\pi t)} \bigg)^{1/2}.
		\end{flalign}
		
		\noindent From a Taylor expansion (and again \eqref{estimate1it0}), we also have 
		\begin{flalign}
			\label{estimate1it2}
			\Bigg| \partial_t^k \bigg( \Gamma \Big( 1 - \displaystyle\frac{\alpha}{2} \Big)^{\mathrm{i} t} - \Gamma \Big( 1 - \displaystyle\frac{\mathrm{i} t \alpha}{2} \Big) \bigg) \Bigg| \le C \alpha^2 \big( |t|^k + |t| \big).
		\end{flalign}
		
		\noindent Together \Cref{eitw0walpha2}, Fourier inversion, \eqref{estimate1it1}, and \eqref{estimate1it2} yield
		\begin{flalign*}
			\alpha^2 \displaystyle\int_{-\infty}^{\infty} |x|^p \cdot \big| \partial_x^q \Upsilon_{\alpha/2} (x) \big| dx & = \displaystyle\int_{-\infty}^{\infty} |t|^p \cdot \big| \partial_x^q h_{\alpha/2} (x) - \partial_x^q h_0 (x) \big| \\
			& \le C \displaystyle\sum_{0 \le k+l \le p+q} \displaystyle\int_{-\infty}^{\infty} |t|^l \cdot \Big| \partial_t^k \big( \mathbb{E} [e^{\mathrm{i} t W_0}] - \mathbb{E} [e^{\mathrm{i} t W_{\alpha/2}}] \big) \Big| dt \le C \alpha^2,
		\end{flalign*}
		
		\noindent from which we deduce the second statement of \eqref{h0halpha21}.
	\end{proof}
	
	Recall that the total variational distance between two real random variables $(X, Y)$, with probability density functions $(f, g)$ respectively, is defined by
	\begin{flalign*}
		d_{\TV} (X, Y) = \displaystyle\int_{-\infty}^{\infty} \big( f(x) - g(x) \big)_+ dx.
	\end{flalign*}

	We then have the following two corollaries. The first bounds the total variation distance between $W_{\alpha/2}$ and $W_0$; the second bounds integrals of various densities (and their derivatives and differences) against a certain fractional moment.
	
	\begin{cor}
		
		\label{w0walphavariation} 
		
		There exists a constant $C > 1$ such that $d_{\TV} (W_{\alpha/2}, W_0) \le C \alpha^2$.
	\end{cor}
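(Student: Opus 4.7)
The plan is to deduce this corollary directly from \Cref{w0walpha} together with the decomposition $h_{\alpha/2}(x) = h_0(x) + \alpha^2 \cdot \Upsilon_{\alpha/2}(x)$ furnished by \Cref{halpha}. This is essentially a one-step consequence: the main work (controlling $\Upsilon_{\alpha/2}$) has already been done.

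Concretely, I would start from the definition
\begin{flalign*}
d_{\TV}(W_{\alpha/2}, W_0) = \int_{-\infty}^{\infty} \big( h_{\alpha/2}(x) - h_0(x) \big)_+ \, dx,
\end{flalign*}
substitute $h_{\alpha/2}(x) - h_0(x) = \alpha^2 \cdot \Upsilon_{\alpha/2}(x)$, and bound the positive part by the absolute value to obtain
\begin{flalign*}
d_{\TV}(W_{\alpha/2}, W_0) \le \alpha^2 \int_{-\infty}^{\infty} \big| \Upsilon_{\alpha/2}(x) \big| \, dx.
\end{flalign*}
Applying the second bound of \eqref{h0halpha21} in \Cref{w0walpha} with $p = q = 0$ gives $\int_{-\infty}^{\infty} |\Upsilon_{\alpha/2}(x)| \, dx \le C$ for some absolute constant $C > 1$, which yields the claimed estimate $d_{\TV}(W_{\alpha/2}, W_0) \le C \alpha^2$.

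Since this argument is essentially tautological once \Cref{w0walpha} is in hand, there is no substantive obstacle; the only point to verify is that the $L^1$ bound on $\Upsilon_{\alpha/2}$ (rather than, say, merely an $L^\infty$ bound) is indeed what is supplied by the $p = q = 0$ case of the integral estimate in \eqref{h0halpha21}, which it is. No additional properties of stable laws or Fourier analysis are needed beyond what was established for \Cref{w0walpha}.
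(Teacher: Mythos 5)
Your proof is correct and takes essentially the same route as the paper: both substitute $h_{\alpha/2} - h_0 = \alpha^2 \Upsilon_{\alpha/2}$ and apply the $L^1$ bound on $\Upsilon_{\alpha/2}$ from the $p=q=0$ case of \eqref{h0halpha21}. The only cosmetic difference is that the paper rewrites $d_{\TV}$ as $\tfrac{1}{2}\int |h_{\alpha/2}-h_0|$ (valid since both are probability densities) while you bound $(\,\cdot\,)_+$ directly by $|\,\cdot\,|$; both give the same estimate.
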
 
	
	\begin{proof} 
		
		This follows from the fact that 
		\begin{flalign*}
			d_{\TV} (W_{\alpha/2}, W_0) = \displaystyle\frac{1}{2} \displaystyle\int_{-\infty}^{\infty} \big| h_{\alpha/2} (x) - h_0 (x) \big| dx = \displaystyle\frac{\alpha^2}{2} \displaystyle\int_{-\infty}^{\infty} \big| \Upsilon_{\alpha/2} (x) \big| dx < C \alpha^2,
		\end{flalign*}
		
		\noindent where in the last bound we used the second estimate in \Cref{h0halpha21}.
	\end{proof} 
	
	\begin{cor}
		
		\label{integralk} 
		
		There exists a constant $C > 1$ such that the following holds for any $\gamma \in \big[ \frac{\alpha}{2}, \alpha \big]$. For any index $f \in \{ h_{\alpha/2}, h_0, \Upsilon_{\alpha/2} \}$, we have 
		\begin{flalign*} 
			\displaystyle\sup_{K \in \mathbb{R}} \displaystyle\int_{-\infty}^{\infty} (e^{2t/\alpha} - 1)^{-\gamma} \big| f (t+K) \big| dt < C; \qquad \displaystyle\sup_{K \in \mathbb{R}} \displaystyle\int_{-\infty}^{\infty} (e^{2t/\alpha} - 1)^{-\gamma} \big| f' (t+K) \big| \le C.
		\end{flalign*} 
	\end{cor}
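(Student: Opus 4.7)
The integrand $(e^{2t/\alpha}-1)^{-\gamma}$ is only real for $t>0$, so the integrals should be interpreted as over $(0,\infty)$. The plan is to split the integration domain at $t=1$ and bound $(e^{2t/\alpha}-1)^{-\gamma}$ separately on each piece, then pair this with the uniform $L^\infty$ bounds on $f$ and $f'$ that follow from \Cref{w0walpha} (and from direct computation in the case of $h_0$). Since $K$ only enters through $f(t+K)$ or $f'(t+K)$ and all relevant pointwise bounds are translation-invariant, the resulting estimate will automatically be uniform in $K$.

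On the small-$t$ side, I would use the elementary inequality $e^x - 1 \ge x$ for $x \ge 0$ to obtain
\begin{equation*}
(e^{2t/\alpha}-1)^{-\gamma} \le (2t/\alpha)^{-\gamma} = (\alpha/2)^{\gamma}\, t^{-\gamma} \le t^{-\gamma}, \qquad t \in (0,1],
\end{equation*}
where the last step uses $\alpha/2 \le 1$ and $\gamma > 0$. Since $\gamma \le \alpha \le 1/20$, the integral $\int_0^1 t^{-\gamma}\, dt = (1-\gamma)^{-1}$ is bounded by a universal constant. On the large-$t$ side, for $\alpha \le 1/20$ and $t \ge 1$ we have $e^{2t/\alpha} \ge 2$, so $e^{2t/\alpha}-1 \ge \tfrac{1}{2} e^{2t/\alpha}$, giving
\begin{equation*}
(e^{2t/\alpha}-1)^{-\gamma} \le 2^{\gamma}\, e^{-2\gamma t/\alpha} \le 2\, e^{-t}, \qquad t \ge 1,
\end{equation*}
where the last bound uses the hypothesis $\gamma \ge \alpha/2$, so that $2\gamma/\alpha \ge 1$. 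Thus $\int_1^\infty (e^{2t/\alpha}-1)^{-\gamma}\, dt$ is bounded by a universal constant.

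It then remains to verify that $\|f\|_\infty$ and $\|f'\|_\infty$ are bounded by a constant independent of $\alpha$ for each $f \in \{h_{\alpha/2}, h_0, \Upsilon_{\alpha/2}\}$. For $h_0(x) = e^{-x-e^{-x}}$ and $h_0'(x) = -h_0(x)(1-e^{-x})$, this is immediate by direct inspection. For $\Upsilon_{\alpha/2}$ and $\partial_x \Upsilon_{\alpha/2}$, the uniform bound is precisely the first half of \Cref{w0walpha} with $q = 0$ and $q = 1$. Since $h_{\alpha/2} = h_0 + \alpha^2 \Upsilon_{\alpha/2}$, the corresponding bounds for $h_{\alpha/2}$ and $h_{\alpha/2}'$ follow by the triangle inequality and the fact that $\alpha^2 \le 1$. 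Multiplying the pointwise bound $\|f\|_\infty \le C$ (resp.\ $\|f'\|_\infty \le C$) by the integrability of $(e^{2t/\alpha}-1)^{-\gamma}$ on $(0,\infty)$ established above and summing the two pieces completes the proof.

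I do not anticipate a substantive obstacle; the only mild subtlety is making sure the two competing regimes of $\gamma$ (as small as $\alpha/2$ and as large as $\alpha$) are handled in a way that produces constants independent of $\alpha$, which is why both the near-$0$ estimate (which benefits from $\gamma \le \alpha \le 1/20$ so that $1-\gamma$ stays away from $0$) and the tail estimate (which crucially uses $\gamma \ge \alpha/2$ to convert $e^{-2\gamma t/\alpha}$ into a uniform exponential decay) are needed.
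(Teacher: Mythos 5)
Your proof is correct and close in spirit to the paper's argument, which likewise splits the integral near the singularity of the weight and away from it. The one genuine difference is in the tail: the paper bounds the weight $(e^{2t/\alpha}-1)^{-\gamma}$ by an absolute constant on $|t| \ge 1$ and then invokes the $L^1$ bound $\int_{\mathbb{R}} |f| \le C$ from \Cref{w0walpha}, whereas you bound the weight by $2e^{-t}$ on $t \ge 1$ (this is precisely where the hypothesis $\gamma \ge \alpha/2$ is used) and pair it only with $\|f\|_\infty$. Your route is therefore slightly more self-contained, relying on the pointwise bounds of \Cref{w0walpha} alone and never needing the $L^1$ half of that lemma, at the cost of having to extract the exponential decay of the weight explicitly. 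Your reading of the integral as being over $(0,\infty)$ is also the natural one; the paper implicitly interprets the factor as $|e^{2t/\alpha}-1|^{-\gamma}$ for $t<0$ and splits at $|t|=1$, but in every application the $t$-integration is in fact over $(0,\infty)$ after the relevant change of variables, so nothing is lost.
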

	
	\begin{proof}
		
		We only establish the first bound, since the proof of the latter is very similar. To that end, observe from \Cref{w0walpha} that 
		\begin{flalign*} 
			\sup_{x \in \mathbb{R}} \big| f (x) \big| \le C; \qquad \displaystyle\sup_{x \in \mathbb{R}} \big| f' (x) \big| \le C,
		\end{flalign*} 
		
		\noindent for $f = \Upsilon_{\alpha/2}$. Since the same bounds hold for $f = h_0$, they also hold for $f = h_{\alpha/2}$ (upon replacing $C$ with $2C$ if necessary). Hence,
		\begin{flalign*}
			\displaystyle\sup_{K \in \mathbb{R}} \displaystyle\int_{-\infty}^{\infty} (e^{2t/\alpha} - 1)^{-\gamma} f (t + K) dt = C \displaystyle\int_{-1}^1 (e^{2t/\alpha} - 1)^{-\gamma} dt + \displaystyle\sup_{K \in \mathbb{R}} \displaystyle\int_{|t| \ge 1} f (t+K) dt \le C,
		\end{flalign*}
		
		\noindent  where in the last bound we again used \Cref{w0walpha}; this establishes the lemma.	
	\end{proof} 
	
	We conclude with the following two tail bounds on $h_{\alpha/2} (x)$ and $h_0 (x)$ for $x$ negative.
	
	\begin{lem} 
		
		\label{halpha2h0xsmall}
		
		There exists a constant $C > 1$ such that, for any $x \ge 0$, we have 
		\begin{flalign*}
			\displaystyle\int_{-\infty}^{-x} \big( h_{\alpha/2} (y) + h_0 (y) \big) dy \le 4 e^{-e^x}.
		\end{flalign*}
	\end{lem}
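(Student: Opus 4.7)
The plan is to treat the two terms separately as left tail probabilities and to bound each by (a constant multiple of) $e^{-e^x}$. The Gumbel contribution admits an exact closed-form evaluation, while the stable contribution I will handle via a Chernoff bound built from the explicit Laplace transform in \eqref{ytsigma}.

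First I would compute $\int_{-\infty}^{-x} h_0(y)\, dy$ exactly. Substituting $u = e^{-y}$ (so that $du = -e^{-y}\,dy$) transforms the integral into $\int_{e^x}^{\infty} e^{-u}\,du = e^{-e^x}$, giving the identity $\mathbb{P}[W_0 \le -x] = e^{-e^x}$.

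Next, for the stable term I would use the identification $W_{\alpha/2} = (\alpha/2)\log S$ with $S$ a nonnegative $\alpha/2$-stable random variable (with scaling parameter $1$) to write
\bex
\mathbb{P}[W_{\alpha/2} \le -x] \;=\; \mathbb{P}\bigl[S \le e^{-2x/\alpha}\bigr].
\eex
Chernoff's inequality then gives, for any $\lambda > 0$, the bound $\mathbb{P}[S \le s] \le e^{\lambda s}\,\mathbb{E}[e^{-\lambda S}]$, and \eqref{ytsigma} provides $\mathbb{E}[e^{-\lambda S}] = \exp\bigl(-\Gamma(1-\alpha/2)\,\lambda^{\alpha/2}\bigr)$. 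Choosing $\lambda = e^{2x/\alpha}$ (so that $\lambda s = 1$ when $s = e^{-2x/\alpha}$) I obtain
\bex
\mathbb{P}[W_{\alpha/2} \le -x] \;\le\; \exp\!\bigl(1 - \Gamma(1-\alpha/2)\, e^{x}\bigr) \;\le\; e \cdot e^{-e^x},
\eex
where the last inequality uses $\Gamma(1-\alpha/2) \ge \Gamma(1) = 1$ (since $\Gamma' = \Gamma\psi$ and $\psi(x) \le \psi(1) = -\gamma < 0$ on $(0,1]$, so $\Gamma$ is strictly decreasing on $(0,1]$).

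Summing the Gumbel and stable contributions yields $(1+e)\, e^{-e^x} < 4\,e^{-e^x}$, which is the desired inequality. There is no real obstacle: the only judgment call is the Chernoff parameter, and the choice $\lambda = e^{2x/\alpha}$ is essentially forced if one wants a clean constant (any $\lambda$ of the same order in $e^{2x/\alpha}$ would work equally well, at the cost of a larger numerical constant on the right side).
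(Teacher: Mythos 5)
Your proof is correct and follows essentially the same route as the paper: an exact evaluation of the Gumbel tail as $e^{-e^x}$, and a Markov (Chernoff) bound for $\mathbb{P}[S \le e^{-2x/\alpha}]$ with exponential parameter $\lambda = e^{2x/\alpha}$ combined with the Laplace transform \eqref{ytsigma} and the monotonicity bound $\Gamma(1-\alpha/2)\ge 1$. The only cosmetic difference is that the paper absorbs the resulting factor $e$ into the blunter constant $3$, whereas you keep it explicitly.
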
 
	
	\begin{proof}
		
		We have $\int_{-\infty}^{-x} h_0 (y) dy = e^{- e^x}$ by \Cref{halpha} and integration. So, it suffices to show that 
		\begin{flalign} 
			\label{walpha2x} 
			\mathbb{P} [W_{\alpha/2} \le -x] = \int_{-\infty}^{-x} h_{\alpha/2} (y) dy \le 3 e^{-e^x},
		\end{flalign} 
		
		\noindent which follows from the bound 
		\begin{flalign*}
			\mathbb{P} [W_{\alpha/2} \le -x] = \mathbb{P} [S \le e^{-2x/\alpha}] \le 3 \cdot \mathbb{E} \big[ \exp (-e^{2x/\alpha} S) \big] = 3 \cdot \exp \big( - \Gamma (1 - \alpha) \cdot e^x \big) \le 3 e^{-e^x}.
		\end{flalign*}  
		
		\noindent Here, to deduce the first statement we applied \eqref{halpha}; to deduce the second we applied a Markov estimate; to deduce the third we applied \eqref{ytsigma}; and to deduce the fourth we used the fact that $\Gamma (1 - \alpha) \ge 1$. 	
	\end{proof}
	
	\begin{cor}
		
		\label{exponentialhh} 
		
		There exists a constant $C > 1$ such that, for any $\kappa \in [0, 2]$, we have 
		\begin{flalign*}
			\displaystyle\int_{-\infty}^{\infty} e^{-\kappa s} \Upsilon_{\alpha/2} (s) ds \le C |\log \alpha|^2.
		\end{flalign*}
	\end{cor}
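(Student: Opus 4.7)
The plan is to evaluate the integral exactly, by recognizing it as a rescaled difference of two Mellin/moment generating transforms whose closed forms have already appeared in the excerpt. The stated bound $C|\log \alpha|^2$ is in fact rather lax: the integral will turn out to be $O(1)$ uniformly in $\kappa \in [0,2]$, so in particular $\le C |\log \alpha|^2$ for small $\alpha$.

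First, I would use the definition in \Cref{halpha} to rewrite
\[
\int_{-\infty}^{\infty} e^{-\kappa s}\, \Upsilon_{\alpha/2}(s)\, ds \;=\; \alpha^{-2}\Big(\mathbb{E}\bigl[e^{-\kappa W_{\alpha/2}}\bigr] - \mathbb{E}\bigl[e^{-\kappa W_0}\bigr]\Big),
\]
after verifying that both expectations are finite (the right tails of $W_{\alpha/2}$ and $W_0$ decay like $e^{-s}$, and their left tails decay doubly exponentially by \Cref{halpha2h0xsmall}, so $\kappa \in [0,2]$ causes no issue of convergence, using also $\kappa \alpha/2 < 1$ for the $\alpha/2$-stable side).

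Next, I would compute both expectations explicitly. The Gumbel moment generating function is obtained directly from $h_0(s) = e^{-s - e^{-s}}$ by the substitution $t = e^{-s}$, giving $\mathbb{E}[e^{-\kappa W_0}] = \int_0^\infty t^\kappa e^{-t}\,dt = \Gamma(1+\kappa)$. For the stable side, since $e^{-\kappa W_{\alpha/2}} = S^{-\kappa \alpha/2}$, the formula \eqref{eq:negmomS} from the proof of \Cref{eitw0walpha2} yields
\[
\mathbb{E}\bigl[e^{-\kappa W_{\alpha/2}}\bigr] \;=\; \mathbb{E}\bigl[S^{-\kappa\alpha/2}\bigr] \;=\; \frac{\Gamma(1+\kappa)}{\Gamma(1+\kappa\alpha/2)\,\Gamma(1-\alpha/2)^{\kappa}}.
\]
Plugging in and factoring $\Gamma(1+\kappa)$, the integral equals
\[
\alpha^{-2}\,\Gamma(1+\kappa)\left(\frac{1}{\Gamma(1+\kappa\alpha/2)\,\Gamma(1-\alpha/2)^{\kappa}} - 1\right).
\]

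The final step is a Taylor expansion of $\log \Gamma(1+x) = -\gamma x + O(x^2)$ near $x=0$, where the implicit constant is uniform for bounded $x$. This gives
\[
\log \Gamma(1+\kappa\alpha/2) = -\gamma \kappa \alpha/2 + O(\alpha^2), \qquad \kappa \log \Gamma(1-\alpha/2) = \gamma \kappa \alpha/2 + O(\alpha^2),
\]
with the $O(\alpha^2)$ constants depending polynomially on $\kappa$, hence bounded uniformly for $\kappa \in [0,2]$. The leading $\pm \gamma\kappa\alpha/2$ contributions cancel, so the product $\Gamma(1+\kappa\alpha/2)\Gamma(1-\alpha/2)^{\kappa}$ equals $1 + O(\alpha^2)$, giving the bracketed factor above as $O(\alpha^2)$. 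Since $\Gamma(1+\kappa)$ is bounded for $\kappa \in [0,2]$, the integral is $O(1)$, which is dominated by $C|\log \alpha|^2$ for $\alpha \in (0, 1/20)$ and yields the claim. There is no real obstacle here; the only care needed is in checking that the Taylor remainder constants are uniform in $\kappa$ on the compact interval $[0,2]$, which is immediate from the smoothness of $\log \Gamma$ on a neighborhood of $1$.
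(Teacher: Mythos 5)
Your computation is correct and proves the corollary as literally stated, by a genuinely different route than the paper. The paper splits the integral at $s=-A$ with $e^{A} = 10|\log\alpha|$, controls the far left tail $s\le -A$ using the double-exponential decay of $h_{\alpha/2}+h_0$ (\Cref{halpha2h0xsmall}), bounds the middle window by the uniform bound on $\Upsilon_{\alpha/2}$ (\Cref{w0walpha}), and pays the factor $e^{\kappa A}\le 100|\log\alpha|^2$ for the weight on that window. Your approach instead recognizes the integral as $\alpha^{-2}\bigl(\mathbb{E}[S^{-\kappa\alpha/2}]-\Gamma(1+\kappa)\bigr)$, evaluates both exactly via \eqref{eq:negmomS} and a substitution, and Taylor-expands $\log\Gamma$ to extract the cancellation of the $O(\alpha)$ terms — which is clean, exact, and shows the signed integral is in fact $O(1)$ uniformly in $\kappa\in[0,2]$, a strictly sharper conclusion. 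Your remarks about uniformity of the Taylor remainders in $\kappa$ on $[0,2]$ are also correct.

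There is, however, one point you should be aware of before treating your proof as a drop-in replacement. The paper's proof, by splitting the domain and bounding each piece nonnegatively, actually establishes the stronger inequality
\begin{equation*}
\int_{-\infty}^{\infty} e^{-\kappa s}\,\big|\Upsilon_{\alpha/2}(s)\big|\,ds \;\le\; C\,|\log\alpha|^2,
\end{equation*}
and this unsigned bound is what is actually invoked in the proof of \Cref{ffgg0}, where the integrand carries $\big|\Upsilon_{\alpha/2}(s-\log x)\big|$ rather than $\Upsilon_{\alpha/2}(s-\log x)$. Your exact-computation method is fundamentally a statement about the signed integral; the $O(\alpha^2)$ smallness you obtain rests on cancellation between the first-order terms $\mp\gamma\kappa\alpha/2$, and such cancellation cannot survive insertion of an absolute value into the integrand. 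Indeed the unsigned integral need not be $O(1)$: with the exponential weight $e^{-\kappa s}$ active on a window of width $\log|\log\alpha|$ to the left of zero, the paper's bound $C|\log\alpha|^2$ is essentially the right order. So while your argument proves the corollary as written, it would not suffice to support the downstream application; the paper's phrasing is mildly imprecise, and its own proof quietly delivers the absolute-value version.
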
 
	
	\begin{proof}
		By \Cref{w0walpha} and \Cref{halpha2h0xsmall}, we have for any real number $A > 0$ that
		\begin{flalign*}
			\displaystyle\int_{-\infty}^{\infty} e^{-\kappa s} \Upsilon_{\alpha/2} (s) ds & \le \alpha^{-2} \displaystyle\int_{-\infty}^{-A} e^{-\kappa s} \big( h_{\alpha/2} (s) + h_0 (s) \big) ds + \displaystyle\int_{-A}^0 e^{-\kappa s} \Upsilon_{\alpha/2} (s) ds + C \\
			& \le C \alpha^{-2} \displaystyle\int_{-\infty}^{-A} e^{-\kappa s -e^{-s}} ds + C e^{\kappa A} = C \displaystyle\int_{e^A}^{\infty} v^{\kappa} e^{-v} dv + C e^{\kappa A}
		\end{flalign*} 
		
		\noindent where in the last statement we changed variables $v = e^{-s}$. Since for $B = e^A$ we have 
		\begin{flalign*}
			\displaystyle\int_B^{\infty} v^{\kappa} e^{-v} dv = e^{-B/2} \displaystyle\int_B^{\infty} v^{\kappa} e^{-v/2} dv = 2^{\kappa + 1} e^{-B/2} \cdot \Gamma (\kappa) \le C e^{-B/2},
		\end{flalign*}
		
		\noindent it follows that for $e^A = 10 |\log \alpha|$ that
		\begin{flalign*}
			\displaystyle\int_{-\infty}^{\infty} e^{-\kappa s} \Upsilon_{\alpha/2} (s) ds \le \alpha^{-2} C e^{-e^A/2} + C e^{\kappa A} \le C \alpha^3 + C |\log \alpha|^2,
		\end{flalign*}
		
		\noindent which verifies the lemma.
	\end{proof}

	\subsection{Initial Bounds on $a$ and $b$}
	
	\label{InitialabAlpha0} 
	
	In this section we show that $a(E_0)$ and $b(E_0)$ (throughout this section, we recall their definitions from \eqref{opaque}) are bounded above and below if $\lambda (E_0, \alpha) = 1$. We begin with the following proposition, approximating the $\alpha$-th moment of $R_{\star} (E_0)$ if $\lambda (E_0, \alpha) = 1$.
	\begin{prop}
		
		\label{lambdaalpha0} 
		
		There exists a constant $C > 1$ such that the following holds. Fix a real number $E_0 > 0$ such that $\lambda (E_0, \alpha) = 1$. Denoting $c_{\star} = 4 \log 2 + \pi$, we have
		\begin{flalign*}
			\bigg| \mathbb{E} \Big[ \big| R_{\star} (E) \big|^{\alpha} \Big] - (2 - c_{\star} \alpha) \bigg| \le C \alpha^2.
		\end{flalign*}
	\end{prop}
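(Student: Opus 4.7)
The plan is to directly invert the equation $\lambda(E_0,\alpha)=1$ using the explicit closed-form for $\lambda$ supplied by \Cref{2lambdaEsalpha}. Taking $s\to 1$ there (so $t_s=1$, hence $t_s^2-t_\alpha^2=1-t_\alpha^2$) and factoring out $\sqrt{1-t_\alpha^2}$ gives
\begin{equation*}
\lambda(E_0,\alpha) \;=\; \pi^{-1}\cdot K_\alpha\cdot \Gamma(\alpha)\cdot \sqrt{1-t_\alpha^2}\;\Bigl(t_\alpha M + \sqrt{M^2+t_\alpha^2 N^2}\Bigr),
\end{equation*}
where I abbreviate $M=\mathbb{E}[|R_{\loc}(E_0)|^\alpha]$ and $N=\mathbb{E}\big[|R_{\loc}(E_0)|^\alpha\,\sgn(-R_{\loc}(E_0))\big]$. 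The key structural point is the bound $|N|\le M$, which follows immediately from $|\sgn(-R_{\loc})|\le 1$; this will let me treat every occurrence of $N$ as a harmless $O(M)$ term once multiplied by $t_\alpha=O(\alpha)$.

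Next I would Taylor expand the deterministic prefactor in $\alpha$. Using $\Gamma(\alpha)=\alpha^{-1}(1-\gamma\alpha+O(\alpha^2))$ together with $\Gamma((1-\alpha)/2)=\sqrt{\pi}\bigl(1+\tfrac{\alpha}{2}(\gamma+2\log 2)\bigr)+O(\alpha^2)$ (obtained from $\psi_0(1/2)=-\gamma-2\log 2$), one finds
\begin{equation*}
K_\alpha\Gamma(\alpha)=\tfrac{\pi}{2}\bigl(1+2\alpha\log 2\bigr)+O(\alpha^2),
\end{equation*}
after the $\gamma\alpha$ contributions cancel. Since $t_\alpha=\sin(\pi\alpha/2)=\pi\alpha/2+O(\alpha^3)$ and $\sqrt{1-t_\alpha^2}=1+O(\alpha^2)$, the full deterministic prefactor reads $\tfrac12\bigl(1+2\alpha\log 2\bigr)+O(\alpha^2)$. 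The $M$-dependent factor simplifies using $|N|\le M$ via $\sqrt{M^2+t_\alpha^2N^2}=M+O(\alpha^2M)$, yielding
\begin{equation*}
t_\alpha M + \sqrt{M^2+t_\alpha^2 N^2}\;=\;M\bigl(1+\tfrac{\pi\alpha}{2}\bigr)+O(\alpha^2 M).
\end{equation*}

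Before combining these, I need an a priori two-sided bound $c\le M\le C$ to absorb the $O(\alpha^2 M)$ error into $O(\alpha^2)$. This is extracted from the equation $\lambda(E_0,\alpha)=1$ itself: since the deterministic prefactor lies in a compact subset of $(0,\infty)$ for small $\alpha$, and since $M\le t_\alpha M+\sqrt{M^2+t_\alpha^2 N^2}\le 3M$, the identity $\lambda=1$ forces $M$ to lie in a fixed compact interval away from $0$ and $\infty$ for all sufficiently small $\alpha$.

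With this bound in hand, combining the expansions produces
\begin{equation*}
1\;=\;\lambda(E_0,\alpha)\;=\;\tfrac{M}{2}\bigl(1+\alpha(2\log 2+\tfrac{\pi}{2})\bigr)+O(\alpha^2),
\end{equation*}
and solving for $M$ gives $M=2-\alpha(4\log 2+\pi)+O(\alpha^2)=2-c_\star \alpha+O(\alpha^2)$, as required. The entire argument is essentially a bookkeeping exercise: the only non-calculational input is the a priori compactness of $M$, and the only conceptual point is that the $N$-dependence is quadratically suppressed because it only enters multiplied by $t_\alpha^2$. There is no real obstacle — the constant $c_\star=4\log 2+\pi$ materializes once one tracks cancellations between the pole of $\Gamma(\alpha)$ and the $\psi_0(1/2)$ expansion of $K_\alpha$.
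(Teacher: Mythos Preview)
Your proposal is correct and follows essentially the same route as the paper's proof: start from \Cref{2lambdaEsalpha} at $s=1$, Taylor expand $K_\alpha\Gamma(\alpha)$, $t_\alpha$, and $\sqrt{1-t_\alpha^2}$, use $|N|\le M$ to kill the $N$-dependence at order $\alpha^2$, and solve the resulting linear equation for $M$. Your handling of the a priori two-sided bound on $M$ (extracted from $\lambda=1$ itself before absorbing $O(\alpha^2 M)$ into $O(\alpha^2)$) is slightly more explicit than the paper's, but the argument is otherwise identical.
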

	
	\begin{proof}
		
		Throughout this proof, we abbreviate $R_{\star} = R_{\star} (E_0)$ and set
		\begin{flalign}
			\label{alpharab}
			A = \mathbb{E}\big[ |R|^{\alpha} \big]; \qquad B = \mathbb{E}\big[ |R|^{\alpha} \sgn (-R) \big].
		\end{flalign}
		
		\noindent  By \Cref{2lambdaEsalpha}, we have 
		\begin{flalign*}
			\pi^{-1} \cdot K_{\alpha} \cdot \Gamma (\alpha) \cdot (1 - t_{\alpha}^2)^{1/2} \cdot \big( t_{\alpha} A + \sqrt{A^2 + t_{\alpha}^2 B^2} \big) = \lambda(E_0, \alpha) = 1.
		\end{flalign*}
		
		\noindent Using the definitions \eqref{tlrk} of $K_{\alpha}$ and $t_{\alpha}$, and applying the Taylor expansion for $t_{\alpha} = \frac{\pi \alpha}{2} + O(\alpha^3)$, it follows that
		\begin{flalign*}
			\displaystyle\frac{\alpha}{2 \pi} \cdot \Gamma \bigg( \displaystyle\frac{1-\alpha}{2} \bigg)^2 \cdot \Gamma (\alpha) \cdot \big( 1 + O(\alpha^2) \big) \cdot \bigg( \Big( \displaystyle\frac{\pi \alpha}{2} + O (\alpha^3) \Big) \cdot A + \sqrt{A^2 + \big(\alpha^2 + O(\alpha^4) \big) \cdot B^2} \bigg) = 1.
		\end{flalign*}
		
		\noindent Further using the fact that $\alpha \Gamma (\alpha) = \Gamma (1 + \alpha)$; applying Taylor expansions 
		\begin{flalign*} 
			\Gamma (1 + \alpha) = 1 + \alpha \Gamma' (1) + O(\alpha^2); \qquad \Gamma \bigg( \frac{1-\alpha}{2} \bigg) = \Gamma \bigg( \frac{1}{2} \bigg) - \frac{\alpha}{2} \cdot \Gamma' \bigg( \frac{1}{2} \bigg) + O(\alpha^2);
		\end{flalign*} 
		
		\noindent and using the fact that $\Gamma \big( \frac{1}{2} \big) = \pi^{1/2}$, we deduce
		\begin{flalign*}
			\pi^{-1} \cdot \big(1 + \alpha \cdot \Gamma' (1) \big) \cdot \bigg( \pi^{1/2} - \frac{\alpha}{2} \cdot \Gamma' (1/2) \bigg)^2 \cdot \bigg( \displaystyle\frac{\pi \alpha}{2} \cdot A + \sqrt{A^2 + \alpha^2 B^2} \bigg) = 2 + O (\alpha^2),
		\end{flalign*}
		
		\noindent and thus 
		\begin{flalign*}
			\displaystyle\frac{\pi \alpha}{2} \cdot A + \sqrt{A^2 + \alpha^2 B^2} = 2 +  2 \pi^{-1/2} \alpha \cdot \Gamma' \Big( \displaystyle\frac{1}{2} \Big) - 2 \alpha \cdot \Gamma' (1) + O(\alpha^2) = 2 - 4 \alpha \log 2 + O(\alpha^2).
		\end{flalign*}
		
		\noindent Since $|B| \le |A|$ from the definition \eqref{alpharab} of $A$ and $B$, it follows that 
		\begin{flalign*}
			\bigg( 1 + \displaystyle\frac{\pi \alpha}{2} \bigg) \cdot A = 2 - 4 \alpha \log 2 + O(\alpha^2),
		\end{flalign*}
		
		\noindent which yields the proposition. 
	\end{proof} 
	
	The next lemma states that $E_0 \le 1$ if $\lambda (E_0, \alpha) = 1$ (and $\alpha$ is sufficiently small).
	
	\begin{lem}
		
		\label{lambdaalpha01} 
		
		There exists a constant $c>0$ such that the following holds for $\alpha \in (0,c)$. Any positive real number $E_0 > 0$ such that $\lambda (E_0, \alpha) = 1$ satisfies $E_0 \le 1$. 
	\end{lem}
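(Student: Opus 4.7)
The plan is to derive a contradiction with \Cref{lambdaalpha0} under the assumption $E_0 > 1$. That proposition gives $\mathbb{E}\big[\lvert R_\star(E_0)\rvert^{\alpha}\big] = 2 - c_\star \alpha + O(\alpha^2)$, and by \Cref{l:boundarybasics} (applied to the fact that $\liminf_{\eta\to 0}\Im R_\star(E_0+\iu\eta)=0$ at any point where $\lambda(E,\alpha)<1$, combined with an initial localization input for large $|E|$) this equals $\mathbb{E}\big[\lvert E_0 + \varkappa_\loc\rvert^{-\alpha}\big]$, where $\varkappa_\loc = a(E_0)^{2/\alpha}S - b(E_0)^{2/\alpha}T$ with $S,T$ independent nonnegative $\alpha/2$-stable laws. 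Applying Cauchy--Schwarz to $a+b = \mathbb{E}\big[\lvert E_0+\varkappa_\loc\rvert^{-\alpha/2}\big]$ gives the a priori bound $a+b \le \sqrt{2}\,(1 + O(\alpha))$, which will be the main structural estimate used below.

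To exploit the assumption $E_0>1$, I substitute $E_0 = u^{2/\alpha}$ with $u>1$ and $W = (\alpha/2)\log S$, $V = (\alpha/2)\log T$, which are close to Gumbel distributions by \Cref{w0walphavariation} and \Cref{w0walpha}. This rewrites
\[
E_0 + \varkappa_\loc \;=\; u^{2/\alpha} + (ae^{W})^{2/\alpha} - (be^{V})^{2/\alpha},
\]
and I set $M = \max(u, ae^{W}, be^{V}) \ge u > 1$. On the ``well-separated'' event $\mathscr{E}$ where the second-largest element of $\{u, ae^{W}, be^{V}\}$ is at most $M/2$, the inequality $2^{2/\alpha}\cdot(1/2)^{2/\alpha} \le 1/2$ (valid for $\alpha$ small) yields $\big\lvert u^{2/\alpha}+(ae^{W})^{2/\alpha}-(be^{V})^{2/\alpha}\big\rvert \ge M^{2/\alpha}/2$, so that $\lvert R_\loc\rvert^{\alpha} \le 2^{\alpha} M^{-2} \le 2^{\alpha} u^{-2} < 1 + O(\alpha)$ on $\mathscr{E}$. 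Hence the contribution to $\mathbb{E}[\lvert R_\loc\rvert^{\alpha}]$ from $\mathscr{E}$ is at most $1 + O(\alpha)$.

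The hard part will be the resonance event $\mathscr{E}^{c}$, where two of $\{u, ae^{W}, be^{V}\}$ lie within a factor of two and near-cancellation can make $\lvert R_\loc\rvert^{\alpha}$ arbitrarily large. I plan to split $\mathscr{E}^{c}$ into three sub-events according to which pair is resonant. In each case, I condition on the non-resonant variable and integrate $\lvert \cdot \rvert^{-\alpha}$ against the density of the resonant stable combination, using the $L^{\infty}$ bound $\lVert p_{\varkappa_\loc}\rVert_\infty \le C(a+b)^{-2/\alpha}$ derived from the scale $\sigma = (a+b)^{2/\alpha}$ of $\varkappa_\loc$ via Fourier inversion of its characteristic function, together with the Gumbel approximation bounds of \Cref{w0walpha} and the tail estimates of \Cref{halpha2h0xsmall}. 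The a priori inequality $a+b \le \sqrt{2}$ will then force the resonance contribution to be $o(1)$ uniformly in the parameters, so that altogether $\mathbb{E}[\lvert R_\loc\rvert^{\alpha}] \le 1 + o(1) < 2 - c_\star \alpha/2$ for $\alpha$ sufficiently small, contradicting \Cref{lambdaalpha0} and forcing $E_0 \le 1$.
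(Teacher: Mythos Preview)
Your overall contradiction strategy matches the paper's: assume $E_0>1$, use \Cref{lambdaalpha0} to pin $\mathbb{E}\big[|R_{\loc}(E_0)|^\alpha\big]$ near $2$, and then show this expectation is actually at most $1+o(1)$. The a priori bound $a+b\le\sqrt{2}(1+O(\alpha))$ via Cauchy--Schwarz is fine (the paper uses the cruder $a+b\le 4$ from $x\le 1+x^2$). Also, you do not need any localization input to identify the moment with $\mathbb{E}\big[|E_0+\varkappa_{\loc}|^{-\alpha}\big]$: the quantity in \Cref{lambdaalpha0} is already expressed through $R_{\loc}$ (via \Cref{2lambdaEsalpha} and \Cref{locre}), which is defined directly from $a(E),b(E)$.

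There is a genuine gap in your resonance step. The bound you state, $\|p_{\varkappa_{\loc}}\|_\infty\le C(a+b)^{-2/\alpha}$, is correct, but it moves in the wrong direction for your purposes: an \emph{upper} bound on $a+b$ only gives a \emph{lower} bound on $(a+b)^{-2/\alpha}$, so the sup-density could be enormous (you have no lower bound on $a+b$ at this stage, and \Cref{abestimatealpha0} assumes $E_0\le 1$). Moreover, your three pairwise-resonance sub-events do not isolate the pole: the singularity sits at $(be^V)^{2/\alpha}=u^{2/\alpha}+(ae^W)^{2/\alpha}$, not at ``two of the three within a factor of two,'' so for instance the sub-event $u\approx ae^W$ has no cancellation but need not have small probability.

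The paper's route avoids all of this and is much shorter. It writes $Y=\varkappa_{\loc}(E_0)$ and splits $\mathbb{E}\big[|E_0+Y|^{-\alpha}\big]$ at distance $\alpha$ from the pole. Off the pole, $|E_0+Y|^{-\alpha}\le\alpha^{-\alpha}\to 1$; near the pole one uses the \emph{tail} density bound $f(y)\le C(a+b)\,|y|^{-1-\alpha/2}$ for $|y|\ge\tfrac12$ (from the convolution structure of $\varkappa_{\loc}$), which only needs $a+b$ bounded above and is evaluated at $|y|\approx E_0>1$. This gives $\mathbb{E}[\cdot]\le \alpha^{-\alpha}+C\alpha<\tfrac74$ for small $\alpha$, contradicting \Cref{lambdaalpha0}. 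If you replace your global $L^\infty$ density bound by this tail bound (and drop the Gumbel decomposition entirely), your argument collapses to the paper's.
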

	
	\begin{proof}
		
		Suppose for the sake of contradiction that there exists some $E_0 > 1$ satisfying $\lambda (E_0, \alpha) = 1$. By the elementary inequality $x \le 1 + x^2$, we have by \eqref{fba} and \Cref{lambdaalpha0} that, for $\alpha \in (0,c)$,
		\begin{flalign} 
			\label{abap}
			a + b = \mathbb{E} \Big[ \big| R_{\star} (E_0) \big|^{\alpha/2} \Big]
			\le \mathbb{E} \Big[ \big| R_{\star} (E_0) \big|^{\alpha} \Big] + 1
			\le 4.
		\end{flalign} 
		
		\noindent Further, recalling the variables $S$ and $T$ from \eqref{fggamma}, and setting $Y = a^{2/\alpha} S - b^{2/\alpha} T$, we have 
		\begin{flalign}
			\label{cow}
			\mathbb{E} \big[ |E_0 + Y|^{-\alpha} \big] = \mathbb{E} \Big[ \big| R_{\star} (E_0) \big|^{\alpha} \Big] \ge \displaystyle\frac{7}{4}
		\end{flalign}
		
		\noindent where in the last inequality we applied \eqref{lambdaalpha0}. Using H\"older's inequality, we write
		\begin{align}
			\mathbb{E} \big[ |E_0 + Y|^{-\alpha} \big]\notag
			&= \mathbb{E} \big[ \mathbbm{1}_{Y \notin [E_0 - \alpha, E_0 + \alpha]} \cdot |E_0 + Y|^{-\alpha} \big] + \mathbb{E} \big[ \mathbbm{1}_{Y \in [E_0 - \alpha, E_0 + \alpha]} \cdot |E_0 + Y|^{-\alpha} \big]\\
			&
			\le \alpha^{-\alpha/2} + \mathbb{E} \big[ \mathbbm{1}_{Y \in [E_0 - \alpha, E_0 + \alpha]} \cdot |E_0 + Y|^{-\alpha} \big].
			\label{goat}
		\end{align}
		
		\noindent Letting $g$ denote the probability density function of $y$, we have $\sup_{|s| \ge 1/2} f(s) \le C$ by \eqref{abap}. Hence,
		\begin{flalign*}
			\mathbb{E} \big[ \mathbbm{1}_{Y \in [E_0 - \alpha, E_0 + \alpha]} \cdot |E_0 + Y|^{-\alpha} \big] \le C \alpha,
		\end{flalign*}
		
		\noindent With \eqref{goat}, this contradicts \eqref{cow} after taking $\alpha$ sufficiently small, establishing the lemma.
	\end{proof}

	We further require the following lemma bounding inverse moments of stable laws, which will be useful for obtaining an upper bound on $a + b$.

	\begin{lem}
		
		\label{avestimatek}
		
		There exists a constant $C > 1$ such that the following holds for any $k \in \mathbb{R}$ such that $0< k \le \frac{1}{\alpha} \le 20$.  For any real numbers $A, v \ge 0$, we have
		\begin{flalign}\label{fjt}
			\mathbb{E} \big[ |A^{2/\alpha} - v^{2/\alpha} S|^{-k\alpha/2} \big] + \mathbb{E} \big[ | A^{2/\alpha} + v^{2/\alpha} S|^{-k\alpha/2} \big] \le C \cdot \max ( A, v )^{-k}.
		\end{flalign} 
	\end{lem}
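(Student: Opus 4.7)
The plan is to split \eqref{fjt} into the two expectations and handle them separately: the ``plus'' expectation by a direct positivity argument combined with the negative moment formula for $S$, and the ``minus'' expectation by reducing to the basic integral estimate \Cref{integralk} through a substitution to the $W = (\alpha/2)\log S$ variable.

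For $\mathbb{E}\big[|A^{2/\alpha} + v^{2/\alpha} S|^{-k\alpha/2}\big]$, the nonnegativity of $v^{2/\alpha} S$ gives $A^{2/\alpha} + v^{2/\alpha} S \ge \max\{A^{2/\alpha}, v^{2/\alpha} S\}$, so the integrand is pointwise bounded by $\min\{A^{-k}, v^{-k} S^{-k\alpha/2}\}$. Taking expectations and keeping the first term when $A \ge v$, the second when $A < v$, yields $\le \min\{A^{-k}, v^{-k} \mathbb{E}[S^{-k\alpha/2}]\}$. The key observation is that the explicit formula $\mathbb{E}[S^{-k\alpha/2}] = \Gamma(1+k)/\big(\Gamma(1+k\alpha/2)\,\Gamma(1-\alpha/2)^k\big)$ established in the proof of \Cref{eitw0walpha2} is uniformly bounded in the stated parameter range: $\Gamma(1+k) \le \Gamma(21)$, $\Gamma(1+k\alpha/2) \ge c > 0$ since $k\alpha/2 \le 1/2$, and $\Gamma(1-\alpha/2)^k$ stays bounded since $k\alpha \le 1$.

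For the ``minus'' expectation, substitute $W = (\alpha/2)\log S$, write $v^{2/\alpha} S = (ve^W)^{2/\alpha}$, and factor out $\max\{A, ve^w\}$ to obtain
\begin{flalign*}
\mathbb{E}\big[|A^{2/\alpha} - v^{2/\alpha} S|^{-k\alpha/2}\big] = \int_{-\infty}^{\infty} \max\{A, ve^w\}^{-k}\,\big|1 - e^{-2|w-\ell|/\alpha}\big|^{-k\alpha/2}\, h_{\alpha/2}(w)\, dw,
\end{flalign*}
where $\ell = \log(A/v)$. The integrable singularity at $w = \ell$ is treated using the identity $(1-e^{-2t/\alpha})^{-k\alpha/2} = e^{kt}(e^{2t/\alpha}-1)^{-k\alpha/2}$ for $t > 0$: on the side $w > \ell$, $\max\{A, ve^w\} = ve^w = A e^{w-\ell}$, so the prefactor $e^{-k(w-\ell)}$ exactly cancels the $e^{k(w-\ell)}$ from the identity, reducing the contribution to $\int_0^\infty (e^{2t/\alpha}-1)^{-k\alpha/2}\, h_{\alpha/2}(t+\ell)\, dt$, which is uniformly bounded in $\ell$, $k$, and $\alpha$ by a variant of \Cref{integralk}. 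On the side $w < \ell$, $\max\{A, ve^w\} = A$, and one bounds $\int_0^\infty (1-e^{-2t/\alpha})^{-k\alpha/2}\, h_{\alpha/2}(\ell-t)\, dt$ by splitting at $t = 1$, using $(1-e^{-2t/\alpha})^{-k\alpha/2} \lesssim (t/\alpha)^{-k\alpha/2}$ for $t \in (0,1)$ and the integrability $\int_0^1 t^{-k\alpha/2}\,dt \le 2$ afforded by $k\alpha/2 \le 1/2$, while for $t \ge 1$ the integrand is $O(1)$ and the $h_{\alpha/2}$ mass remaining is $O(1)$.

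The main obstacle is matching the integral bounds to the target scaling $\max(A,v)^{-k}$ in the regime $A < v$: on $w < \ell$ one has only $\max\{A,ve^w\}^{-k} = A^{-k}$, which is larger than the desired $v^{-k}$. The needed improvement must come from the decay of $h_{\alpha/2}(\ell - t)$ as $\ell \to -\infty$, combined with the Gumbel approximation $h_{\alpha/2} = h_0 + \alpha^2 \Upsilon_{\alpha/2}$ from \Cref{w0walpha} and the super-exponential bound $h_0(s) \le e^{s - e^{-s}}$ implicit in \Cref{halpha2h0xsmall}. Carefully tracking this tradeoff between the $A^{-k}$ prefactor, the $e^{-e^{-\ell}}$-type Gumbel decay at $w \approx \ell$, and uniformity in $k$ and $\alpha$ within the stated parameter range is the most delicate step; in particular, one exploits that the condition $k\alpha \le 1$ keeps all the relevant fractional exponents in the regime where the singular integral estimates remain stable.
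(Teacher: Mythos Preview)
Your handling of the ``plus'' term is correct and matches the paper. For the ``minus'' term, the substitution $W = \frac{\alpha}{2}\log S$ and the factorization $|A^{2/\alpha} - (ve^w)^{2/\alpha}|^{-k\alpha/2} = \max\{A, ve^w\}^{-k}\,(1 - e^{-2|w-\ell|/\alpha})^{-k\alpha/2}$ is a legitimate alternative to the paper's layer-cake representation $\int_0^\infty \mathbb{P}(|A^{2/\alpha}-S| \le t^{-2/(k\alpha)})\,dt$, and the bounds you sketch on both $w$-halves give $C A^{-k}$, which suffices when $A \ge v$. The gap is the regime $A < v$ (i.e.\ $\ell < 0$), which you flag---but note first that it affects \emph{both} halves: after your cancellation on $w > \ell$ the contribution is $A^{-k}\int_0^\infty (e^{2t/\alpha}-1)^{-k\alpha/2}h_{\alpha/2}(t+\ell)\,dt$, and a uniform bound on the integral still gives only $CA^{-k}$, not $Cv^{-k}$. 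More seriously, the remedy you propose fails: the decomposition $h_{\alpha/2} = h_0 + \alpha^2\Upsilon_{\alpha/2}$ from \Cref{w0walpha} yields only $|\Upsilon_{\alpha/2}| \le C$ pointwise, so near the singularity $t \approx 0$ the $\alpha^2\Upsilon_{\alpha/2}(\ell-t)$ piece contributes $O(\alpha^2)$ to the integral regardless of $\ell$, producing $A^{-k}\cdot O(\alpha^2)$, which is not $O(v^{-k})$ once $(A/v)^k \ll \alpha^2$. The super-exponential estimate of \Cref{halpha2h0xsmall} is a CDF bound and cannot be paired against the singular factor without pointwise control of the density.

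The missing ingredient is a direct pointwise bound $h_{\alpha/2}(x) \le C_r e^{rx}$ for $x \le 0$, uniform in $\alpha$. The paper derives exactly this inside its own proof: Markov's inequality on $\mathbb{E}[e^{-rW_{\alpha/2}}]$ (finite by the moment formula in \Cref{eitw0walpha2}) gives $\mathbb{P}(W_{\alpha/2} \le x) \le C_r e^{rx}$, and the uniform Lipschitz estimate $|h'_{\alpha/2}| \le C$ (itself from \Cref{w0walpha}) upgrades this to a pointwise density estimate. With $h_{\alpha/2}(x) \le C e^{rx}$ for some $r \ge k$, both halves of your integral pick up a factor $e^{k\ell} = (A/v)^k$, converting $A^{-k}$ into $v^{-k}$ as required. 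The paper's layer-cake route needs exactly the same pointwise bound at the analogous step, so once this lemma is supplied your substitution approach works equally well.
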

	
	\begin{proof}
		We bound each term on the left side of \eqref{fjt} separately. The claimed bound on the second term follows directly from \eqref{eq:negmomS} after bounding 
		\begin{flalign}
			| A^{2/\alpha} + v^{2/\alpha} S|^{-k\alpha/2} \le \max\big(A^{2/\alpha},
			|v^{2/\alpha} S| \big)^{-k\alpha/2}.
		\end{flalign}
		We now turn to bounding the first term on the left side of \eqref{fjt}. We can assume without loss of generality that $v = 1$ and $A>0$. We write
		$$
		\mathbb{E} \big[ | A^{2/\alpha} -  S |^{-k\alpha/2}\big] = \int_0 ^\infty \mathbb{P}\big( | A^{2/\alpha} -  S | \leq t^{-2/(k\alpha)}  \big)\,  dt. 
		$$
		We decompose this integral over $(0,2^{\alpha k/2}A^{-k})$ and $(2^{\alpha k/2}A^{-k},\infty)$. For the first term, we find
		\begin{eqnarray}
			\int_{0}^{2^{\alpha k/2} A^{-k} } \mathbb{P}\big( | A^{2/\alpha} -  S | \leq t^{-2/(k\alpha)}  \big)\, dt 
			& \leq &\int_{0}^{2^{\alpha k/2} A^{-k}} \mathbb{P}\big( S \leq A^{2/\alpha} + t^{-2/(k\alpha)}  \big)\, dt\notag \\
			&  \leq & \int_{0}^{\infty} \mathbb{P}\big( S \leq (2 +1) t^{-2/(k\alpha)}  \big)\, dt \notag \\
			& = & 3^{k \alpha/2} \cdot \mathbb{E} [S^{-k\alpha/2}] \le C,\label{minpr}
		\end{eqnarray}
		where in the last inequality we used \eqref{eq:negmomS} and the assumed bound on $k$.
		We also have the trivial bound $\mathbb{P}\big( | A^{2/\alpha} -  S | \leq t^{-2/(k\alpha)}  \big)  \leq 1$. We deduce using this estimate, the assumed bound on $k$, and \eqref{minpr} that 
		$$
		\int_{0}^{2^{\alpha k/2}A^{-k}} \mathbb{P}\big( | A^{2/\alpha} -  S | \leq t^{-2/(k\alpha)}  \big)\, dt \leq \min (C , 2^{\alpha k/2} A^{-k}) \leq C \max(A,1)^{-k}. 
		$$
		
		Recall that we denote the density of $\frac{\alpha}{2} \log S$ by $h_{\alpha/2}$ (see \Cref{halpha}). 
		For the integral over $(2^{\alpha k/2}A^{-k},\infty)$, we write
		\begin{eqnarray}
			\int_{2^{\alpha k/2} A^{-k}}^\infty \mathbb{P}\big( | A^{2/\alpha} -  S | \leq t^{-2/(k\alpha)}  \big) dt 
			& = & \int_{2^{\alpha k/2} A^{-k}}^\infty  \int_{\frac{\alpha}{2} \log ( A^{2/\alpha} - t^{-2/(k\alpha)})}^{\frac{\alpha}{2} \log ( A^{2/\alpha} + t^{-2/(k\alpha)})} h_{\alpha/2} (x) \, dx \, dt \nonumber \\
			& \leq & \int_{2^{\alpha k/2} A^{-k}}^\infty  \int_{\log A - \delta }^{\log A + \delta }h_{\alpha/2} (x) \, dx \, dt, \label{eq:2ndint}
		\end{eqnarray}
		where we set $\delta =  2\alpha (At^{1/k})^{-2/\alpha}$ and we have used that $\log (1 + x) \leq x$  and $\log (1- x)\geq x /(1-x_0)$ if $0 \leq x \leq x_0 < 1$ for $x = (At^{1/k})^{-2/\alpha}$ and $x_0  = 1/2$. 
		By \Cref{w0walpha}, we have $h_{\alpha/2}(x) \leq C$ for all $\alpha \in  (0, \frac{1}{20})$. We deduce that 
		$$
		\int_{2^{\alpha k/2} A^{-k}}^\infty  \int_{\log A - \delta }^{\log A + \delta }h_{\alpha/2} (x) \, dx \, dt \leq  C \alpha \int_{2^{\alpha k/2} A^{-k}}^\infty   (At^{1/k})^{-2/\alpha} \, dt  = C \cdot \frac{2^{ \alpha k}  \alpha }{\frac{2}{\alpha k} - 1}\cdot  \frac{1}{A^k} \leq \frac{C}{A^k}.
		$$
		
		The proof of the lemma will thus be complete if we prove that the integral \eqref{eq:2ndint} is also bounded uniformly in $A$ and $\alpha$ by a constant. 
		For this, we need an upper bound on $h_{\alpha/2}(x)$ that depends on $x$ but remains uniform in $\alpha$. To that end, note that by Markov's inequality and \eqref{eq:negmomS},  we have for all $r,x >0 $ and $0 < \alpha < 1/2$ that 
		\begin{flalign}\label{gcdf}
			\mathbb{P}(W_{\alpha/2} \leq  x) = \mathbb{P}( e^{-r W_{\alpha/2}} \geq e^{-rx}) \leq e^{rx }\cdot \mathbb{E} [e^{-k W_{\alpha/2}}] \leq C_r e^{rx},
		\end{flalign}
		where $C_r>1$ is a constant that depends only on $r$, and we used \Cref{eitw0walpha2} in the last inequality. 
		By \Cref{w0walpha}, there exists $C>1$ such that for any $x,y>0$, 
		\begin{flalign}\label{lss}
			h_{\alpha/2} (x) \leq h_{\alpha/2} (y) + C |x-y|.
		\end{flalign}
		Then by \eqref{gcdf}, we have for any $\theta >0$ that
		$$
		C_r e^{rx} \geq \int_{-\infty}^{x} h_{\alpha/2}(y) \, dy \geq   \int_{x - \theta}^{x} h_{\alpha/2}(y) \, dy \geq \theta h_{\alpha/2}(x) - \frac{c \theta^2}{2},
		$$ 
		where we used \eqref{lss} in the last inequality. 
		Choosing $\theta = e^{rx/2}$, we deduce that for any $r >0$, there exists a constant $C'_r>1$ such that 
		\begin{flalign}
			h_{\alpha/2}(x) \leq C'_r e^{rx}. 
		\end{flalign}
		Putting this last estimate in  \eqref{eq:2ndint} and setting $r=k$, we find
		\begin{align}
			\int_{2^{\alpha k/2}A^{-k}}^\infty \mathbb{P}\big( | A^{2/\alpha} -  S | \leq t^{-2/(k\alpha)}  \big)\,  dt \notag
			&\le  C'_k \int_{2^{\alpha k/2}A^{-k}}^\infty  \int_{\log A - \delta }^{\log A + \delta }  e^{kx} \, dx \, dt \\ &= \frac{C'_k A^k }{k} \notag \int_{2^{\alpha k/2}A^{-k}} ^\infty (e^{k\delta} - e^{-k\delta}) \, dt\\
			&\le  \frac{C'_k}{k}  \frac{2^{\alpha k/2} e \alpha }{\frac{2}{\alpha k} - 1},\label{lastintegral}
		\end{align}
		where we increased the value of $C'_k$ in the last line.
		In the last inequality we used the definition of $\delta$ to see that 
		\begin{flalign}k \delta  = 2k \alpha (At^{1/k})^{-2/\alpha} \leq 2^{1-k/2} k \alpha \leq 1, \text{ which implies } e^{k\delta} - e^{-k\delta} \leq 2 e \delta k.
		\end{flalign}
		%
		Using \eqref{lastintegral}, we conclude that the integral \eqref{eq:2ndint} is uniformly bounded in $A$ for all $\alpha \in (0, \frac{1}{20})$ and $k\le \alpha^{-1}$. This completes the proof of the lemma. 
	\end{proof} 
	
	We can now establish the following lemma bounding $a(E_0)$ and $b(E_0)$ from above and below.
	
	\begin{lem}
		\label{abestimatealpha0} 
		
		There exist constants $c > 0$ and $C > 1$ such that, if $\alpha \in (0, c)$ and $E_0 \le 1$, then  
		\begin{flalign*} 
			C^{-1} \le a(E_0) \le C; \qquad C^{-1} \le b(E_0) \le C.
		\end{flalign*} 
		
	\end{lem}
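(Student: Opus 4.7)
The plan is to derive all four bounds from the single observation that $a+b$ and $\mathbb{E}[|R_\star(E_0)|^\alpha]$ are both expectations of $|E_0+Y|^{-\gamma}$, with $Y=a^{2/\alpha}S-b^{2/\alpha}T$, at the exponents $\gamma=\alpha/2$ and $\gamma=\alpha$ respectively; and to control these conditional on one of the two stable variables by applying \Cref{avestimatek}.

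For the upper bounds, I would use \eqref{fba} and \eqref{fggamma} to write
\[
a+b \;=\; F_{\alpha/2}(E_0,a,b)+G_{\alpha/2}(E_0,a,b) \;=\; \mathbb{E}\bigl[|E_0+a^{2/\alpha}S-b^{2/\alpha}T|^{-\alpha/2}\bigr].
\]
Conditioning on $T$, I set $A^{2/\alpha}=|E_0-b^{2/\alpha}T|$; then $|E_0+Y|$ equals either $|A^{2/\alpha}-a^{2/\alpha}S|$ or $|A^{2/\alpha}+a^{2/\alpha}S|$ depending on the sign of $b^{2/\alpha}T-E_0$. In either case, \Cref{avestimatek} with $k=1$ (which requires only $\alpha\le 1$) gives
\[
\mathbb{E}_S\bigl[|E_0+Y|^{-\alpha/2}\bigm|T\bigr] \;\le\; C\max(A,a)^{-1} \;\le\; Ca^{-1},
\]
and integrating against $T$ yields $a+b\le Ca^{-1}$. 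A symmetric argument conditioning on $S$ gives $a+b\le Cb^{-1}$. Combining, $\max(a,b)(a+b)\le C$, so $\max(a,b)\le\sqrt{C}$ and both $a,b$ are bounded above.

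For the lower bounds, I would use \Cref{lambdaalpha0}: since $\lambda(E_0,\alpha)=1$ (implicit in the hypothesis of the entire section, which presumes we are at the mobility edge; otherwise the conclusion for generic $E_0\le 1$ follows from the same computation applied to $\mathbb{E}[|E_0+Y|^{-\alpha}]$ bounded below via \Cref{l:Rlower}-type reasoning), we have $\mathbb{E}\bigl[|R_\star(E_0)|^\alpha\bigr]\ge 2-c_\star\alpha-C\alpha^2\ge 1$ for $\alpha$ small. Since $\mathbb{E}[|R_\star|^\alpha]=\mathbb{E}[|E_0+Y|^{-\alpha}]$, I then apply \Cref{avestimatek} with $k=2$ (valid because $\alpha<1/20$ so $k\alpha\le 1$), conditioning on $T$ exactly as above, to obtain
\[
\mathbb{E}_S\bigl[|E_0+Y|^{-\alpha}\bigm|T\bigr] \;\le\; C\max(A,a)^{-2} \;\le\; Ca^{-2}.
\]
Integrating over $T$ gives $1\le\mathbb{E}[|R_\star|^\alpha]\le Ca^{-2}$, hence $a\ge c>0$; the symmetric argument conditioning on $S$ gives $b\ge c$.

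The main structural point — and the only nontrivial one — is recognizing that \Cref{avestimatek} is robust enough to handle both sign cases (the $A^{2/\alpha}\pm v^{2/\alpha}S$ form) and therefore applies after conditioning on either stable variable, which is what converts the two fixed-point identities for $a$ and $b$ into a symmetric coupled system. Verifying that $k=2$ is admissible under the hypothesis $\alpha<1/20$ is the one place the smallness of $\alpha$ enters (beyond \Cref{lambdaalpha0}); no fine stable-density estimates or Gumbel approximations are needed for this lemma, which is by design, since the later sections will want to feed these crude bounds into the refined asymptotic analysis.
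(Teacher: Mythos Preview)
Your upper bound argument is correct and is exactly the paper's: both of you condition on one stable variable, apply \Cref{avestimatek} with $k=1$, and conclude $a+b\le C\min(a,b)^{-1}$.

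The lower bound has a genuine gap. You assert that $\lambda(E_0,\alpha)=1$ is ``implicit in the hypothesis of the entire section,'' but it is not: the lemma is stated for arbitrary $E_0\le 1$, and it is invoked in \Cref{abdalpha0} for $E_0$ with $u=E_0^{\alpha/2}$ in a prescribed range, \emph{without} any assumption on $\lambda(E_0,\alpha)$. So \Cref{lambdaalpha0} is unavailable, and your primary route to $\mathbb{E}[|R_{\loc}(E_0)|^\alpha]\ge 1$ collapses. Your parenthetical fallback --- ``\Cref{l:Rlower}-type reasoning'' --- is the right instinct, but \Cref{l:Rlower} and its prerequisite \Cref{l:ablower} are stated and proved only for $\alpha\in(1/2,1)$ (relying on \Cref{l:stabletailbounds}); you would need to redo both for small $\alpha$. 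That is not hard: the tail bound $\mathbb{P}[S>s]\le Cs^{-\alpha/2}$ holds with a constant uniform in $\alpha$ (e.g.\ via Markov on $1-e^{-S/s}$), which carries the \Cref{l:ablower} argument through to give $\max(a,b)\ge c$; then H\"older gives $\mathbb{E}[|R_{\loc}|^\alpha]\ge a^2+b^2\ge c^2$, and your $k=2$ application of \Cref{avestimatek} finishes. But none of this is written.

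For comparison, the paper does not go through $\mathbb{E}[|R_{\loc}|^\alpha]$ at all for the lower bounds. Instead it splits into three cases according to the ordering of $a,b,u$, and in each case exhibits an explicit event (e.g.\ $\{T\le S\}$, or $\{2\le T\le 4,\,S\le 1\}$) on which the integrand in the fixed-point identity for the smaller of $a,b$ can be bounded below directly. This avoids any appeal to the $\alpha$-moment of $R_{\loc}$ and never uses \Cref{avestimatek} a second time. Your completed argument would be more symmetric and systematic; the paper's is more hands-on but self-contained within the present section.
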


	\begin{proof} 
		
		Throughout this proof, we abbreviate $a = a(E_0)$ and $b = b (E_0)$, and we also set $E_0 = u^{2/\alpha}$. We first show the upper bounds on $a$ and $b$. To that end, observe from \eqref{fggamma} that 
		\begin{flalign}
			\label{abu} 
			a + b = \mathbb{E} \big[ |u^{2/\alpha} + a^{2/\alpha} S - b^{2/\alpha} T|^{-\alpha/2} \big].
		\end{flalign}
	
		\noindent Moreover, from \eqref{avestimatek}, we have that 
		\begin{flalign*} 
			\mathbb{E} \big[ |u^{2/\alpha} + x^{2/\alpha} S - y^{2/\alpha} T|^{-\alpha/2} \big]	\le C \min \{ x^{-1}, y^{-1} \}, \qquad \text{for any $x, y \ge 0$},
		\end{flalign*}
	
		\noindent which together with \eqref{abu} implies that $a + b \le C \min \{ a^{-1}, b^{-1} \}$, meaning that $a + b \le C$. This verifies the upper bounds on $a$ and $b$. 
		
		Hence, it remains to show the lower bounds. We fix a constant $C_+ \geq 1$ such that $\sigma = a + b \leq C_+$ for all $0 < \alpha < 1/20$. 
		Assume first that $a \geq b$, and set $\Omega_1 = \{ T \leq S \}$. Using symmetry, we deduce that $\mathbb{P}(\Omega_1) = 1/2$.
		We have that 
		\begin{align*}
			b &= 
			\mathbb{E} \Big[ (E+a^{2/\alpha} S - b^{2/\alpha} T)_+^{-\alpha/2} \Big]
			\\ &\ge 
			\mathbb{E} \Big[ \mathbbm{1}_{\Omega_1} \cdot (E+a^{2/\alpha} S - b^{2/\alpha} T)^{-\alpha/2} \Big]
			\ge \frac{1}{2}\cdot \mathbb{E} \Big[  (E+a^{2/\alpha} S)^{-\alpha/2} \Big]
			\ge c a^{-1}  \ge c C_{+}^{-1}.
		\end{align*}
		for some $c>0$. This completes the analysis of the case $a \ge b$.
		
		Next, suppose that $ b \geq a$ and $b \geq u$. We consider the event 
		\begin{flalign*} 
			\Omega_2 = \{ 2 \leq T \leq 4,  S  \leq 1\}.
		\end{flalign*} 
		By Corollary \ref{w0walphavariation}, there exists $c>0$ such that
		$P(\Omega_2) > c$. Then
		\begin{align*}
			a &= 
			\mathbb{E} \Big[ (E+a^{2/\alpha} S - b^{2/\alpha} T)_-^{-\alpha/2} \Big]\\
			&\ge 
			\mathbb{E} \Big[\mathbbm{1}_{\Omega_2} \cdot ( u^{2/\alpha} +a^{2/\alpha} S - b^{2/\alpha} T)_-^{-\alpha/2} \Big]\\
			&=
			\mathbb{E} \Big[\mathbbm{1}_{\Omega_2} \cdot ( b^{2/\alpha} T - u^{2/\alpha} - a^{2/\alpha} S )^{-\alpha/2} \Big]\\
			&\ge \mathbb{E} \Big[\mathbbm{1}_{\Omega_2} \cdot ( b^{2/\alpha} T - u^{2/\alpha} )^{-\alpha/2} \Big]
			\ge c ( 4 b^{2/\alpha} - u^{2/\alpha})^{-\alpha/2} \ge c b^{-1} \ge c C_{+}^{-1}. 
		\end{align*}
		This completes the analysis of the case where $b\ge a$ and $b \ge u$.

		Finally, suppose that $ u \geq b \geq a$. We first consider the event $\Omega_3 = \{ T  \leq 1\}$. 
		By Corollary \ref{w0walphavariation}, there exists $c>0$ such that
		$P(\Omega_3 ) > c$. 
		We deduce that 
		\begin{align}
			b &= 
			\mathbb{E} \Big[ (E+a^{2/\alpha} S - b^{2/\alpha} T)_+^{-\alpha/2} \Big]
			\ge 
			\mathbb{E} \Big[\mathbbm{1}_{\Omega_3} \cdot (E+a^{2/\alpha} S - b^{2/\alpha} T)^{-\alpha/2} \Big] \ge c u^{-1} \ge c,\label{lowerbest}
		\end{align}
		where we used $E=u^{2/\alpha}$ and $u\in[0,1]$ in the last inequality.
		Next,  we consider the event 
		\begin{flalign*}
			\Omega_4 = \{ S \leq 1/2, 2 u^{2/\alpha} \leq b^{2/\alpha} T   \}.
		\end{flalign*}
		By Corollary \ref{w0walphavariation}, \eqref{lowerbest}, and the assumption that $u\le 1$, there exists $c>0$ such that
		$P(\Omega_4 ) > c$.
		Then have 
		\begin{align*}
			a = 
			\mathbb{E} \Big[ (E+a^{2/\alpha} S - b^{2/\alpha} T)_-^{-\alpha/2} \Big] & \ge 
			\mathbb{E} \Big[\mathbbm{1}_{\Omega_4} \cdot ( u^{2/\alpha} +a^{2/\alpha} S - b^{2/\alpha} T)_-^{-\alpha/2} \Big] \\
			& \ge \mathbb{E} [ \mathbbm{1}_{\Omega_4} \cdot u^{-1}] \ge c u^{-1} \ge c.
		\end{align*}
		This completes the analysis of the case $ u \geq b \geq a$, and therefore completes proof.
	\end{proof}

	\subsection{Replacement by Gumbel Random Variables} 
	
	\label{FFGG} 
	
	In this section we approximate the functions $F_{\gamma}$ and $G_{\gamma}$ from \eqref{fggamma} by explicit quantities that can be viewed as functionals of Gumbel random variables. In particular, observe from \eqref{fggamma} and \Cref{halpha} that
	\begin{flalign}
		\label{fggammaintegralalpha0} 
		\begin{aligned}
			F_{\gamma} (u^{2/\alpha}, x, y) & = \displaystyle\int_{-\infty}^{\infty} \displaystyle\int_{(\alpha/2) \log (y^{-2/\alpha} u^{2/\alpha} + x^{2/\alpha} y^{-2/\alpha} e^{2s/\alpha})}^{\infty} |u^{2/\alpha} + x^{2/\alpha} e^{2s/\alpha} - y^{2/\alpha} e^{2t/\alpha}|^{-\gamma} \\ 
			& \qquad \qquad \qquad \qquad \qquad \qquad \qquad \qquad \qquad \times h_{\alpha/2} (s) h_{\alpha/2} (t) dt ds \\
			G_{\gamma} (u^{2/\alpha}, x, y) & = \displaystyle\int_{-\infty}^{\infty} \displaystyle\int_{-\infty}^{(\alpha/2) \log (y^{-2/\alpha} u^{2/\alpha} + x^{2/\alpha} y^{-2/\alpha} e^{2s/\alpha})} |u^{2/\alpha} + x^{2/\alpha} e^{2s/\alpha} - y^{2/\alpha} e^{2t/\alpha}|^{-\gamma} \\ 
			& \qquad \qquad \qquad \qquad \qquad \qquad \qquad \qquad \qquad \times h_{\alpha/2} (s) h_{\alpha/2} (t) dt ds. 
		\end{aligned} 
	\end{flalign}
	
	The following definition essentially formally replaces $\alpha$ in \eqref{fggammaintegralalpha0} with $0$.

	\begin{definition}
		
		\label{fuxyguxy2}
		
		For any real numbers $\gamma \in (0, 1)$ and $x, y, u \ge 0$, set 
		\begin{flalign*}
			\widetilde{F}_{\gamma} (u, x, y) = y^{-2\gamma / \alpha} \displaystyle\int_{-\infty}^{\infty} \displaystyle\int_{\log \max \{ u/y, x e^s/y \}}^{\infty}  e^{-2\gamma t/\alpha} h_0 (s) h_0 (t) ds dt; \\
			\widetilde{G}_{\gamma} (u, x, y) = \displaystyle\int_{-\infty}^{\infty} \displaystyle\int_{-\infty}^{\log \max \{ u/y, x e^s / y \}} \max \{ u, x e^s \}^{-2 \gamma / \alpha} h_0 (s) h_0 (t) ds dt. 
		\end{flalign*}
		
	\end{definition} 
	
	We establish the following proposition in \Cref{FFGGalpha0Estimate0} below, indicating that one may replace $(F, G)$ with $(\widetilde{F}, \widetilde{G})$ (the latter of which are explicit by \Cref{fgu}).

	\begin{prop}
		
		\label{gammaffgg0}
		
		There exists a constant $C > 1$ such that, for any real number $\gamma \in \big[ \frac{\alpha}{2}, \alpha \big]$ and index $\mathfrak{Y} \in \{ F, G \}$, we have 
		\begin{flalign*}
			\big| \mathfrak{Y}_{\gamma} (u^{2/\alpha}, x, y) - \widetilde{\mathfrak{Y}}_{\gamma} (u, x, y) \big| \le \displaystyle\frac{C \alpha^2 |\log \alpha|^2}{x^{2\gamma/\alpha}}.
		\end{flalign*} 
	\end{prop}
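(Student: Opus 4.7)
The plan is to decompose $\mathfrak{Y}_\gamma - \widetilde{\mathfrak{Y}}_\gamma$ into a \emph{density-replacement error}, obtained by substituting $h_0$ for $h_{\alpha/2}$ in the integral representation \eqref{fggammaintegralalpha0}, and a \emph{structural error}, comparing the resulting all-$h_0$ integral to $\widetilde{\mathfrak{Y}}_\gamma$. We handle $\mathfrak Y = F$ in detail; the case $\mathfrak Y = G$ is parallel after interchanging the interior and exterior integration regions.

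The common first step is a change of variable $t \mapsto t' + L(s)$ in \eqref{fggammaintegralalpha0}, where $L(s) = \tfrac{\alpha}{2}\log\!\big((u^{2/\alpha}+x^{2/\alpha}e^{2s/\alpha})/y^{2/\alpha}\big)$, which produces the singular factor $(e^{2t'/\alpha}-1)^{-\gamma}$ and the $s$-dependent prefactor $(u^{2/\alpha}+x^{2/\alpha}e^{2s/\alpha})^{-\gamma}$; the latter is pointwise bounded by $x^{-2\gamma/\alpha}e^{-2\gamma s/\alpha}$ after discarding the $u^{2/\alpha}$ term. The analogous substitution $t \mapsto t' + L_0(s)$ in $\widetilde F_\gamma$, with $L_0(s) = \log\max(u/y,\, xe^s/y)$, yields the smooth $t'$-profile $e^{-2\gamma t'/\alpha}$ and prefactor $\max(u, xe^s)^{-2\gamma/\alpha}$; a direct calculation shows $L(s) - L_0(s) \in [0, (\alpha/2)\log 2]$.

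For the density-replacement error, we write $h_{\alpha/2} = h_0 + \alpha^2\Upsilon_{\alpha/2}$ and expand; three of the four resulting terms contain at least one factor of $\Upsilon_{\alpha/2}$. In each, the $t'$-integral against $(e^{2t'/\alpha}-1)^{-\gamma}$ is $O(1)$ by \Cref{integralk}, while the $s$-integral against the prefactor is $O(x^{-2\gamma/\alpha}|\log\alpha|^2)$ by \Cref{exponentialhh} (using the prefactor bound and the fact that $2\gamma/\alpha \in [1,2]$ since $\gamma \in [\alpha/2,\alpha]$). The explicit $\alpha^2$ from $\alpha^2\Upsilon_{\alpha/2}$ then contributes $O(\alpha^2|\log\alpha|^2\, x^{-2\gamma/\alpha})$.

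The remaining all-$h_0$ integral must be compared directly to $\widetilde F_\gamma$, with three sources of discrepancy: (i) the amplitude $(u^{2/\alpha}+x^{2/\alpha}e^{2s/\alpha})^{-\gamma}$ versus $\max(u,xe^s)^{-2\gamma/\alpha}$, nontrivial only in an $s$-window of width $O(\alpha)$ about $\log(u/x)$ because the ratio equals $(1+(\min/\max)^{2/\alpha})^{-\gamma}$ and the minimum-to-maximum ratio is exponentially small outside this window; (ii) the $t'$-profile difference, for which the identity $(e^{2t'/\alpha}-1)^{-\gamma} = e^{-2\gamma t'/\alpha}(1-e^{-2t'/\alpha})^{-\gamma}$ localizes the discrepancy to $t' = O(\alpha)$; and (iii) the $O(\alpha)$ shift between $L(s)$ and $L_0(s)$, handled by the Lipschitz bound on $h_0$ from \Cref{w0walpha}. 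The main obstacle will be the bookkeeping that delivers an $O(\alpha^2 x^{-2\gamma/\alpha})$ bound from each source: although naively each factor is only $O(\alpha)$, in each case the narrow support of the discrepancy (in $s$ or in $t'$) combines with the $O(\alpha)$ size of the conjugate integral to extract the second factor of $\alpha$, producing the required scaling.
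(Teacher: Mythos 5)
Your decomposition matches the paper's exactly: first replace $h_{\alpha/2}$ by $h_0$ (this is the paper's \Cref{ffgg0}, yielding the intermediate quantity $\breve{\mathfrak{Y}}_\gamma$), then compare the all-$h_0$ integral to $\widetilde{\mathfrak{Y}}_\gamma$ by separately treating the amplitude replacement $(u^{2/\alpha}+x^{2/\alpha}e^{2s/\alpha})^{-\gamma}$ versus $\max(u,xe^s)^{-2\gamma/\alpha}$, the $t'$-profile $(e^{2t'/\alpha}-1)^{-\gamma}$ versus $e^{-2\gamma t'/\alpha}$, and the shift $L(s)$ versus $L_0(s)$ --- these are precisely the three steps the paper encodes via the interpolants $\widetilde{F}_{\gamma,1}$, $\widetilde{F}_{\gamma,2}$, estimated via \Cref{talpha0} and \Cref{exponentialsu}. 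You also correctly identify \Cref{integralk} and \Cref{exponentialhh} as the tools for the density-replacement piece, which is the dominant contribution of order $\alpha^2|\log\alpha|^2$.

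One caveat: the "bookkeeping" paragraph at the end is a heuristic outline rather than a proof, and the window-width claim is slightly imprecise. The amplitude discrepancy is controlled not by a hard $O(\alpha)$-width window but by the fact that $\varpi^{2/\alpha}\le C\alpha^2$ once $|s-\log u|\ge C\alpha|\log\alpha|$; this gives $O(\alpha\cdot\alpha|\log\alpha|)+O(\alpha^2)=O(\alpha^2|\log\alpha|)$ per structural term, not $O(\alpha^2)$. This is still subdominant to the $\alpha^2|\log\alpha|^2$ from the density replacement, so the final bound is unaffected, but if you fill in the details you should track those logarithmic factors. Similarly, for the $t'$-profile your localization to $t'=O(\alpha)$ should be quantified via the three-regime estimate of \Cref{talpha0}, which again yields $O(\alpha^2|\log\alpha|)$ rather than $O(\alpha^2)$ once integrated.
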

	
	The following two lemmas explicitly evaluate $\widetilde{F}_{\alpha/2}$, $\widetilde{G}_{\alpha/2}$, and $\widetilde{F}_{\alpha} + \widetilde{G}_{\alpha}$.

	\begin{lem} 
		
		\label{fgu} 
		
		For any real numbers $u, x, y \ge 0$, we have
		\begin{flalign*}
			\widetilde{F}_{\alpha/2} (u, x, y) & = \displaystyle\frac{y}{(x+y)^2} - e^{-(x+y)/u} \bigg( \displaystyle\frac{y}{(x+y)^2} + \displaystyle\frac{y}{(x+y) u} \bigg); \\
			\widetilde{G}_{\alpha/2} (u, x, y) & = \displaystyle\frac{x}{(x+y)^2} + e^{-(x+y)/u} \bigg( \displaystyle\frac{y}{u(x+y)} - \displaystyle\frac{x}{(x+y)^2} \bigg).
		\end{flalign*}
		
	\end{lem}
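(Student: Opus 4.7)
The plan is to exploit the fact that with $\gamma = \alpha/2$, the exponent $2\gamma/\alpha$ appearing in \Cref{fuxyguxy2} collapses to $1$, so the integrals reduce to elementary expressions in $e^{-s}$ and $e^{-t}$ after substituting the explicit Gumbel density $h_0(t) = e^{-t-e^{-t}}$. Set $a(s) = \log\max\{u/y, xe^s/y\}$ and split the $s$-integration at $s_0 = \log(u/x)$; on $s \le s_0$ one has $a(s) = \log(u/y)$ (so $e^{-a(s)} = y/u$), and on $s > s_0$ one has $a(s) = \log(xe^s/y)$ (so $e^{-a(s)} = ye^{-s}/x$).

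For $\widetilde{F}_{\alpha/2}$, the inner $t$-integral $\int_{a(s)}^\infty e^{-2t - e^{-t}}\,dt$ becomes $\int_0^{e^{-a(s)}} v e^{-v}\,dv$ under $v = e^{-t}$, integrating by parts to $1 - (1+e^{-a(s)})e^{-e^{-a(s)}}$. On $s\le s_0$ this gives a constant (in $s$) multiple of $y^{-1}$, and the remaining $s$-integral against $h_0$ yields $\int_{-\infty}^{s_0} h_0(s)\,ds = e^{-x/u}$. On $s > s_0$, substitute $w = ye^{-s}/x$, using $h_0(s)\,ds = -(xw/y)e^{-xw/y}\,dw/w$ and $ds = -dw/w$, which turns the outer integral into $\frac{x}{y}\int_0^{y/u} (1+w) e^{-\beta w}\,dw$ with $\beta = (x+y)/y$; a standard integration by parts evaluates this in closed form with the exponential factor $e^{-\beta \cdot y/u} = e^{-(x+y)/u}$. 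For $\widetilde{G}_{\alpha/2}$ the $t$-integral is even simpler: $\int_{-\infty}^{a(s)} h_0(t)\,dt = e^{-e^{-a(s)}}$, so the $t$-dependence factors out and the same split-and-substitute strategy applied to the $s$-integral produces an analogous pair of closed-form pieces involving $\int_0^{y/u} w e^{-\beta w}\,dw$.

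The final step is to collect terms. The non-exponential (``$1$'') contributions in $\widetilde{F}_{\alpha/2}$ combine via $(x+y)^2 - x(x+2y) = y^2$ to $y/(x+y)^2$, the $e^{-x/u}$ pieces from the two halves of the $s$-integral cancel exactly, and the $e^{-(x+y)/u}$ coefficients regroup using $1 - x/(x+y) = y/(x+y)$ into $-y/(x+y)^2 - y/(u(x+y))$; the computation for $\widetilde{G}_{\alpha/2}$ is parallel. The argument is entirely elementary once the substitutions are set up, so the only real obstacle is the bookkeeping: tracking the several cancellations that occur at $s = s_0$ between the two halves of the split and verifying that the $e^{-x/u}$ terms vanish (which must happen, since the boundary point $s_0$ is defined by the continuity of $a(s)$). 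I expect no conceptual difficulty beyond ensuring the signs and the boundary values at $w = 0$ and $w = y/u$ are tracked correctly.
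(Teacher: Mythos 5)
Your computation is correct and follows essentially the same route as the paper: both split the $s$-integral at the point where $xe^s = u$ and use the exponential change of variables (in $t$ and in $s$, or equivalently $v=e^{-t}$ and $w$) to turn the Gumbel densities into elementary integrals, then integrate by parts and collect terms, with the same cancellations of the $e^{-x/u}$ pieces and regrouping of the $e^{-(x+y)/u}$ coefficients. The only difference is cosmetic — the paper applies the substitutions $(s,t)\mapsto(e^{-s},e^{-t})$ before splitting, while you split first and substitute within each piece.
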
 
	
	\begin{proof}
		
		Changing variables from $(s, t)$ to $(e^{-s}, e^{-t})$ in \Cref{fuxyguxy2} yields
		\begin{flalign*}
			\widetilde{F}_{\alpha/2} (u, x, y) &= \displaystyle\frac{1}{y} \displaystyle\int_0^{\infty} \displaystyle\int_0^{\min \{ y/u, ys/x \}} t e^{-s-t} ds dt \\
			& = \displaystyle\frac{1}{y} \displaystyle\int_{x/u}^{\infty} \displaystyle\int_0^{y/u} t e^{-s-t} ds dt + \displaystyle\frac{1}{y} \displaystyle\int_0^{x/u} \displaystyle\int_0^{ys/x} t e^{-s-t} ds dt \\
			& = \displaystyle\frac{1}{y} \displaystyle\int_{x/u}^{\infty} e^{-s} \big( 1 - (yu^{-1} + 1) e^{-y/u} \big) ds + \displaystyle\frac{1}{y} \displaystyle\int_0^{x/u} e^{-s} \big( 1 - (yx^{-1} s + 1) e^{-ys/x} \big) ds \\
			& = \displaystyle\frac{e^{-x/u}}{y} \big( 1 - (yu^{-1} + 1) e^{-y/u} \big) + \displaystyle\frac{1}{y} (1 - e^{-x/u}) \\
			& \qquad - \displaystyle\frac{x}{y(x+y)} \bigg( 1 - e^{-(x+y)/u} + \displaystyle\frac{y}{x+y} - \displaystyle\frac{y(u+x+y)}{u(x+y)} e^{-(x+y)/u} \bigg),
		\end{flalign*}
		
		\noindent which gives the first statement of the lemma.	Again changing variables from $(s, t)$ to $(e^{-s}, e^{-t})$ in \Cref{fuxyguxy2} gives
		\begin{flalign*}
			\widetilde{G}_{\alpha/2} (u, x, y) & = \displaystyle\int_0^{\infty} \displaystyle\int_{\min \{ y/u, ys/x \}}^{\infty} \max \{ u, s^{-1} x \}^{-1} e^{-s-t} ds dt \\
			& = \displaystyle\int_0^{x/u} \displaystyle\int_{ys/x}^{\infty} sx^{-1} e^{-s-t} ds dt + \displaystyle\int_{x/u}^{\infty} \displaystyle\int_{y/u}^{\infty} u^{-1} e^{-s-t} ds dt \\
			& = \displaystyle\frac{1}{x} \displaystyle\int_0^{x/u} se^{-s-ys/x} ds + \displaystyle\frac{e^{-y/u}}{u} \displaystyle\int_{x/u}^{\infty} e^{-s} ds \\
			& = \displaystyle\frac{x}{(x+y)^2} \bigg( 1 - \displaystyle\frac{x+y+u}{u} e^{-(x+y)/u} \bigg) + \displaystyle\frac{1}{u} e^{-(x+y)/u},
		\end{flalign*}
		
		\noindent which gives the second statement of the lemma.
	\end{proof} 
	
	\begin{lem}
		
		\label{falphagalpha0} 
		
		For any real numbers $u, x, y \ge 0$, we have 
		\begin{flalign*}
			\widetilde{F}_{\alpha} (u, x, y) + \widetilde{G}_{\alpha} (u, x, y) = \displaystyle\frac{2}{(x+y)^2} - 2 \bigg( \displaystyle\frac{1}{u(x+y)} + \displaystyle\frac{1}{(x+y)^2} \bigg) e^{-(x+y)/u}.
		\end{flalign*}
	\end{lem}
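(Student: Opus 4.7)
The plan is to avoid the explicit, case-by-case computation used in \Cref{fgu} (which separately evaluated $\widetilde F_{\alpha/2}$ and $\widetilde G_{\alpha/2}$) and instead find a clean probabilistic representation for the sum. Observe that in \Cref{fuxyguxy2} the two domains of integration are complementary in the $t$-variable, separated by $t_0 = \log\max\{u/y,\,xe^s/y\}$; at the boundary one has $ye^{t_0} = \max(u,xe^s)$, so the two integrands agree there. Moreover, on $\{t \ge t_0\}$ we have $ye^t \ge \max(u,xe^s)$, so the $\widetilde F_\gamma$-integrand $(ye^t)^{-2\gamma/\alpha}$ may be written as $\max(ye^t,u,xe^s)^{-2\gamma/\alpha}$, and similarly on $\{t \le t_0\}$ the $\widetilde G_\gamma$-integrand $\max(u,xe^s)^{-2\gamma/\alpha}$ equals $\max(ye^t,u,xe^s)^{-2\gamma/\alpha}$. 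Letting $S,T$ be independent standard Gumbel random variables (so that $h_0$ is their common density), this yields the unified representation
\[
\widetilde F_\gamma(u,x,y) + \widetilde G_\gamma(u,x,y) \;=\; \mathbb{E}\bigl[\max(ye^T,\,u,\,xe^S)^{-2\gamma/\alpha}\bigr].
\]

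Next, I will change variables via $V = e^{-S}$ and $W = e^{-T}$, which are independent $\mathrm{Exp}(1)$ random variables (as already used implicitly in the proof of \Cref{fgu}). Writing $\beta = 2\gamma/\alpha$ and using $\max(a,b,c)^{-1} = \min(a^{-1},b^{-1},c^{-1})$, the representation becomes
\[
\widetilde F_\gamma(u,x,y) + \widetilde G_\gamma(u,x,y) \;=\; \mathbb{E}\bigl[M^{\beta}\bigr], \qquad M \;:=\; \min\bigl(W/y,\ 1/u,\ V/x\bigr).
\]
As a consistency check, for $\gamma = \alpha/2$ (so $\beta = 1$), the identity $\mathbb{E}[M] = \int_0^\infty \mathbb{P}[M>a]\,da$ with $\mathbb{P}[M>a] = \mathbf{1}_{a<1/u}\,e^{-a(x+y)}$ recovers the value $(1-e^{-(x+y)/u})/(x+y)$ obtained by summing the two formulas of \Cref{fgu}.

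Specializing now to $\gamma = \alpha$ so that $\beta = 2$, I will use the layer-cake formula $\mathbb{E}[M^2] = 2\int_0^\infty a\,\mathbb{P}[M>a]\,da$. By independence,
\[
\mathbb{P}[M>a] \;=\; \mathbf{1}_{a < 1/u}\cdot \mathbb{P}[W>ay]\cdot \mathbb{P}[V>ax] \;=\; \mathbf{1}_{a<1/u}\cdot e^{-a(x+y)},
\]
so the problem reduces to evaluating $2\int_0^{1/u} a\, e^{-a(x+y)}\,da$. Integration by parts with $z = x+y$ gives $\int_0^{1/u} a e^{-az}\,da = z^{-2}\bigl[1 - (1+z/u)e^{-z/u}\bigr]$, and multiplying by $2$ yields exactly $\tfrac{2}{(x+y)^2} - 2\bigl(\tfrac{1}{u(x+y)} + \tfrac{1}{(x+y)^2}\bigr)e^{-(x+y)/u}$, which is the claimed formula.

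There is no real obstacle here: once the probabilistic representation $\mathbb{E}[M^\beta]$ above is recognized, the remaining computation is a routine tail integral over two independent exponentials restricted by the deterministic constraint $a < 1/u$. The only subtle step is verifying that the $\widetilde F_\gamma$ and $\widetilde G_\gamma$ integrands combine into a single $\max(\cdot,\cdot,\cdot)^{-2\gamma/\alpha}$, which one could alternatively observe by noting that the domains of integration in \Cref{fuxyguxy2} are exactly the two sides of the inequality determining which of $ye^t$ and $\max(u,xe^s)$ is larger.
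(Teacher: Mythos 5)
Your proof is correct and follows essentially the same route as the paper's: both recognize that $\widetilde F_\alpha + \widetilde G_\alpha$ combine into $\mathbb{E}\big[\max(u, xe^S, ye^T)^{-2}\big]$ over i.i.d.\ Gumbel $(S,T)$, pass to independent exponentials, and evaluate the second moment of a capped minimum. The only cosmetic difference is in the final integral: you use the tail formula $\mathbb{E}[M^2] = 2\int_0^\infty a\,\mathbb{P}(M>a)\,da$, whereas the paper first identifies $\min(x^{-1}S', y^{-1}T')$ as an exponential of rate $x+y$ and integrates against its density; both are routine and give the same answer.
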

	
	\begin{proof} 
		Changing variables from $(s, t)$ to $(e^{-s}, e^{-t})$ in \Cref{fuxyguxy2}, we obtain  
		\begin{flalign*}
			\widetilde{F}_{\alpha} (u, x, y) + \widetilde{G}_{\alpha} (u, x, y) & = \displaystyle\int_{-\infty}^{\infty} \displaystyle\int_{-\infty}^{\infty} \max \{ u, xe^s, ye^t \}^{-2} h_0 (s) h_0 (t) ds dt \\
			& = \displaystyle\int_0^{\infty} \displaystyle\int_0^{\infty} \max \bigg\{ u, \displaystyle\frac{x}{s}, \displaystyle\frac{y}{t} \bigg\}^{-2}  e^{-s-t} ds dt = \mathbb{E} \Big[ \min \{ u^{-1}, x^{-1} S', y^{-1} T' \}^2 \Big],
		\end{flalign*} 
		
		\noindent where $(S', T')$ are independent exponential random variables, with probability density function $e^{-x} dx$. Letting $V = \min \{ x^{-1} S', y^{-1} T' \}$, we find that $V$ is a rescaled exponential random variable, with cummulative density function $\mathbb{P} [V \ge t] = \mathbb{P} [S' \ge xt] \cdot \mathbb{P} [T' \ge yt] = e^{-(x+y) t}$. Hence,
		\begin{flalign*}
			\widetilde{F}_{\alpha} (u, x, y) + \widetilde{G}_{\alpha} (u, x, y) & = \mathbb{E} \big[ \min \{ u^{-1}, V \}^2 \big] \\
			& = \displaystyle\int_0^{1/u} (x+y) v^2 e^{-(x+y) v} dv + u^{-2} \displaystyle\int_{1/u}^{\infty} (x+y) e^{-(x+y) v} dv \\
			& = \displaystyle\frac{1}{(x+y)^2} \displaystyle\int_0^{(x+y)/u} v^2 e^{-v} dv + u^{-2} e^{-(x+y)/u} \\
			& = \displaystyle\frac{1}{(x+y)^2} \bigg( 2 - \bigg( \displaystyle\frac{(x+y)^2}{u^2} + \displaystyle\frac{2(x+y)}{u} + 2 \Big) e^{-(x+y)/u} \bigg) \\
			& \qquad + u^{-2} e^{-(x+y)/u},
		\end{flalign*} 
		
		\noindent from which we deduce the lemma.
	\end{proof}

	\subsection{Replacement of $h_{\alpha/2}$ With $h_0$}
	
	\label{Replaceh0halpha20} 
	
	The following definition replaces the densities $h_{\alpha/2}$ appearing in \eqref{fggammaintegralalpha0} with $h_0$.   
	
	\begin{definition} 
		
		\label{hhfg} 
		
		For any real numbers $\gamma \in \big[ \frac{\alpha}{2}, \alpha \big]$ and $x, y, u \ge 0$ satisfying \eqref{xyualpha0}, set
		\begin{flalign*}
			\breve{F}_{\gamma} (u^{2/\alpha}, x, y) & = \displaystyle\int_{-\infty}^{\infty} \displaystyle\int_{(\alpha/2) \log (y^{-2/\alpha} u^{2/\alpha} + x^{2/\alpha} y^{-2/\alpha} e^{2s/\alpha})}^{\infty}  |u^{2/\alpha} + x^{2/\alpha} e^{2s/\alpha} - y^{2/\alpha} e^{2t/\alpha}|^{-\gamma} \\ 
			& \qquad \qquad \qquad \qquad \qquad \qquad \qquad \qquad \qquad \times h_0 (s) h_0 (t) dt ds \\
			\breve{G}_{\gamma} (u^{2/\alpha}, x, y) & = \displaystyle\int_{-\infty}^{\infty} \displaystyle\int_{-\infty}^{(\alpha/2) \log (y^{-2/\alpha} u^{2/\alpha} + x^{2/\alpha} y^{-2/\alpha} e^{2s/\alpha})} |u^{2/\alpha} + x^{2/\alpha} e^{2s/\alpha} - y^{2/\alpha} e^{2t/\alpha}|^{-\gamma} \\ 
			& \qquad \qquad \qquad \qquad \qquad \qquad \qquad \qquad \qquad \times h_0 (s) h_0 (t) dt ds.
		\end{flalign*}
		
	\end{definition}

	In this section we establish the below lemma showing that $(F_{\gamma}, G_{\gamma}) \approx (\breve{F}_{\gamma}, \breve{G}_{\gamma})$. 
	
	\begin{lem} 
		
		\label{ffgg0} 
		
		There exists a constant $C > 1$ such that, for real number $\gamma \in \big[ \frac{\alpha}{2}, \alpha \big]$ and any index $\mathfrak{Y} \in \{ F, G \}$, we have 
		\begin{flalign*}
			\big| \mathfrak{Y}_{\gamma} (u^{2/\alpha}, x, y) - \breve{\mathfrak{Y}}_{\gamma}  (u^{2/\alpha}, x, y) \big| \le  \displaystyle\frac{C \alpha^2 |\log \alpha|^2}{x^{2\gamma / \alpha}}.
		\end{flalign*}
	\end{lem}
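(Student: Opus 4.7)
The plan is to expand the difference of densities using the decomposition $h_{\alpha/2} = h_0 + \alpha^2 \Upsilon_{\alpha/2}$ from \Cref{halpha}, which gives
\begin{flalign*}
h_{\alpha/2}(s) h_{\alpha/2}(t) - h_0(s) h_0(t) = \alpha^2 \big[ h_{\alpha/2}(s) \Upsilon_{\alpha/2}(t) + \Upsilon_{\alpha/2}(s) h_0(t) \big].
\end{flalign*}
Thus $\mathfrak{Y}_\gamma - \breve{\mathfrak{Y}}_\gamma$ splits as $\alpha^2$ times a sum of two double integrals, each carrying a factor of $\Upsilon_{\alpha/2}$ in one of its variables. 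It then suffices to bound each of these contributions by $C |\log \alpha|^2 \cdot x^{-2\gamma/\alpha}$.

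For the inner (that is, $t$-) integration, I would set $K = K(s) = \frac{\alpha}{2} \log ( y^{-2/\alpha} u^{2/\alpha} + x^{2/\alpha} y^{-2/\alpha} e^{2s/\alpha} )$ and factor
\begin{flalign*}
\big| u^{2/\alpha} + x^{2/\alpha} e^{2s/\alpha} - y^{2/\alpha} e^{2t/\alpha} \big|^{-\gamma} = \big( y^{2/\alpha} e^{2K/\alpha} \big)^{-\gamma} \cdot \big| e^{2(t-K)/\alpha} - 1 \big|^{-\gamma},
\end{flalign*}
observing that $y^{2/\alpha} e^{2K/\alpha} = u^{2/\alpha} + x^{2/\alpha} e^{2s/\alpha}$. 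After the substitution $r = t - K(s)$, the region of integration becomes $r > 0$ in the $\mathfrak{Y} = F$ case and $r < 0$ in the $\mathfrak{Y} = G$ case, and in both cases the $t$-integrand takes the form $(u^{2/\alpha} + x^{2/\alpha} e^{2s/\alpha})^{-\gamma} \cdot |e^{2r/\alpha} - 1|^{-\gamma} \cdot f(r + K(s))$ for $f \in \{ h_0, h_{\alpha/2}, \Upsilon_{\alpha/2} \}$. Applying \Cref{integralk} bounds the inner integration by $C \cdot (u^{2/\alpha} + x^{2/\alpha} e^{2s/\alpha})^{-\gamma}$ uniformly in $s$ (equivalently, uniformly in $K$).

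This reduces the problem to controlling outer integrals of the form $\int (u^{2/\alpha} + x^{2/\alpha} e^{2s/\alpha})^{-\gamma} \cdot |\Upsilon_{\alpha/2}(s)|\, ds$, together with analogous integrals against $h_0$ and $h_{\alpha/2}$. Bounding $(u^{2/\alpha} + x^{2/\alpha} e^{2s/\alpha})^{-\gamma} \le x^{-2\gamma/\alpha} e^{-2\gamma s/\alpha}$, and observing that $2\gamma/\alpha \in [1,2]$ for $\gamma \in [\alpha/2, \alpha]$, the key integral $\int e^{-2\gamma s/\alpha} \cdot |\Upsilon_{\alpha/2}(s)|\, ds$ is bounded by $C |\log \alpha|^2$ via \Cref{exponentialhh}, while the corresponding integrals against $h_0$ and $h_{\alpha/2}$ are bounded by an absolute constant (since these are probability densities with controlled exponential moments of bounded order). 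Summing the contributions gives the claimed estimate $C \alpha^2 |\log \alpha|^2 \cdot x^{-2\gamma/\alpha}$.

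The principal technical point will be the uniform (in $s$, $\gamma$, and $\alpha$) application of \Cref{integralk} to the absolute-value kernel $|e^{2r/\alpha} - 1|^{-\gamma}$ stitched across $r > 0$ (from $F$) and $r < 0$ (from $G$); the fact that the two regimes marry cleanly into a single integral over $\mathbb{R}$ is what makes this step go through. A secondary concern is ensuring that the polynomial $|\log \alpha|^2$ growth in \Cref{exponentialhh} is unaffected by differentiation or by the auxiliary factor $h_{\alpha/2}(s)$ (when $\Upsilon_{\alpha/2}$ sits in the $t$-variable), but this follows directly from the uniform bound on $h_{\alpha/2}$ in \Cref{w0walpha}.
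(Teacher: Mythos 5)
Your proof is correct and follows essentially the same route as the paper's: decompose the product difference via $h_{\alpha/2}=h_0+\alpha^2\Upsilon_{\alpha/2}$, normalize the $t$-variable by the boundary $K(s)$ so that the kernel becomes $|e^{2r/\alpha}-1|^{-\gamma}$, bound the inner integral uniformly via \Cref{integralk}, and absorb the outer $s$-integral against $\Upsilon_{\alpha/2}$ via \Cref{exponentialhh}. The only differences are cosmetic: you use a two-term telescope where the paper uses a three-term one (keeping the $\alpha^4$ cross term), and you carry the $x$-dependence through the bound $(u^{2/\alpha}+x^{2/\alpha}e^{2s/\alpha})^{-\gamma}\le x^{-2\gamma/\alpha}e^{-2\gamma s/\alpha}$ rather than shifting $s\mapsto s-\log x$ and pulling the factor out at the end as the paper does. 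Both are equivalent. One remark: the parenthetical about "differentiation" in your final paragraph is extraneous here — there is no differentiation in this lemma (that concern belongs to the later \Cref{ffgg}); and the contribution of the $h_{\alpha/2}(s)$ factor is controlled by the bounded negative exponential moment $\mathbb{E}[e^{-kW_{\alpha/2}}]$ for $k\in[1,2]$, which is the content of \eqref{eq:negmomS} rather than the pointwise bound in \Cref{w0walpha}.
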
 
	
	\begin{proof}
		
		We only analyze the case $\mathfrak{Y} = F$, as the proof is entirely analogous if $\mathfrak{Y} = G$. By changing variables from $(s, t)$ to $\big( s - \log x, t - \log y + \frac{\alpha}{2} \log (u^{2/\alpha} + e^{2s/\alpha}) \big)$ in \eqref{fggammaintegralalpha0}, we obtain
		\begin{flalign}
			\label{fgammaintegral2alpha00} 
			\begin{aligned}
				F_{\gamma} (u^{2/\alpha}, x, y) & =  \displaystyle\int_0^{\infty} \displaystyle\int_{-\infty}^{\infty} (u^{2/\alpha} + e^{2s/\alpha})^{-\gamma} (e^{2t/\alpha} - 1)^{-\gamma} h_{\alpha/2} (s - \log x) \\
				& \qquad \qquad \quad \times h_{\alpha/2} \bigg( t - \log y + \displaystyle\frac{\alpha}{2} \log (u^{2/\alpha} + e^{2s/\alpha}) \bigg) ds dt.
			\end{aligned}
		\end{flalign}
		
		\noindent and, by similar reasoning, 
		\begin{flalign*}
			\breve{F}_{\gamma} (u^{2/\alpha}, x, y) & = \displaystyle\int_0^{\infty} \displaystyle\int_{-\infty}^{\infty} (u^{2/\alpha} + e^{2s/\alpha})^{-\gamma} (e^{2t/\alpha} - 1)^{-\gamma} h_0 (s - \log x) \\
			& \qquad \qquad \qquad \quad  \times h_0 \bigg(t - \log y + \displaystyle\frac{\alpha}{2} \log (u^{2/\alpha} + e^{2s/\alpha}) \bigg) ds dt.
		\end{flalign*}
		
		\noindent Subtracting; using the bound $(u^{2/\alpha} + e^{2s/\alpha})^{-\gamma} \le e^{-2s\gamma / \alpha}$; and recalling $\Upsilon_{\alpha/2}$ from \Cref{halpha} yields
		\begin{flalign*}
			\big| &  F_{\gamma} (u^{2/\alpha}, x, y) - \breve{F}_{\gamma} (u^{2/\alpha}, x, y) \big| \\
			&   \le \alpha^2  \displaystyle\int_{-\infty}^{\infty} \displaystyle\int_{-\infty}^{\infty} e^{-2s\gamma / \alpha} (e^{2t/\alpha} - 1)^{-\gamma} h_0 (s - \log x) \Bigg| \Upsilon_{\alpha/2} \bigg(t - \log y + \displaystyle\frac{\alpha}{2} \log (u^{2/\alpha} + e^{2s/\alpha}) \bigg) \Bigg| ds dt \\
			& \quad   + \alpha^2  \displaystyle\int_{-\infty}^{\infty} \displaystyle\int_{-\infty}^{\infty} e^{-2s\gamma / \alpha} (e^{2t/\alpha} - 1)^{-\gamma} \big| \Upsilon_{\alpha/2} (s-\log x) \big| h_0 \bigg(t - \log y + \displaystyle\frac{\alpha}{2} \log (u^{2/\alpha} + e^{2s/\alpha}) \bigg) ds dt \\
			&   \quad + \alpha^4 \displaystyle\int_{-\infty}^{\infty} \displaystyle\int_{-\infty}^{\infty} e^{-2s \gamma / \alpha} (e^{2t/\alpha} - 1)^{-\gamma} \\
			& \qquad \qquad \qquad \times \Bigg| \Upsilon_{\alpha/2} (s - \log x) \Upsilon_{\alpha/2} \bigg(t - \log y + \displaystyle\frac{\alpha}{2} \log (u^{2/\alpha} + e^{2s/\alpha}) \bigg) \Bigg| ds dt.
		\end{flalign*} 
		
		\noindent Using \Cref{integralk} to integrate in $t$, it follows that 
		\begin{flalign*}
			\big|  F_{\gamma} (u^{2/\alpha}, x, y) - \breve{F}_{\gamma} (u^{2/\alpha}, x, y) \big| & \le C \alpha^2 \displaystyle\int_{-\infty}^{\infty} e^{-2s \gamma / \alpha} \Big( \big| h_0 (s - \log x) \big| + \big| \Upsilon_{\alpha/2} (s - \log x) \big| \Big) ds \\
			& \le \displaystyle\frac{C \alpha^2}{x^{2 \gamma / \alpha}} \displaystyle\int_{-\infty}^{\infty} \displaystyle\frac{C \alpha^2 |\log \alpha|^2}{x^{2\gamma/\alpha}},
		\end{flalign*}
		
		\noindent where in the second bound we changed variables from $s$ to $s - \log x$, and in the third we used \eqref{exponentialhh}. This establishes the lemma.
	\end{proof}

	\subsection{Replacement of $(\breve{F}, \breve{G})$ With $(\widetilde{F}, \widetilde{G})$}
	
	\label{FFGGalpha0Estimate0}

	In this section we establish \Cref{gammaffgg0}. To that end, we first require the following definition. 
	
	\begin{definition} 
		
		\label{psi} 
		
		For any real numbers $u, s \ge 0$, set  
		\begin{flalign}
			\label{psius} 
			\psi_{\alpha/2} (u, s) = (u^{2/\alpha} + e^{2s/\alpha})^{-\alpha/2}; \qquad \psi_0 (u, s) = \max \{ u, e^s \}^{-1}.
		\end{flalign}
		
	\end{definition} 
	
	\noindent We then have the following two lemmas approximating powers of sums of exponentials by exponentials. 
	
	\begin{lem} 
		
		\label{talpha0} 
		
		There exists a constant $C > 1$ such that, for any $\gamma \in \big[ \frac{\alpha}{2}, \alpha \big]$, we have
		\begin{flalign}
			\label{talphae} 
			\begin{aligned} 
				\big| (e^{2t/\alpha} - 1)^{-\gamma} - e^{-2t\gamma / \alpha} \big| & \le  Ct^{-\gamma} \cdot \mathbbm{1}_{t \le \alpha^3} + C \alpha |\log \alpha| \cdot \mathbbm{1}_{t \le \alpha} + C \alpha e^{-2t / \alpha} \cdot \mathbbm{1}_{t > \alpha},  \qquad \quad \text{if $t \ge 0$}; \\
				\big| (1 - e^{2t/\alpha})^{-\gamma} - 1 \big| & \le C |t|^{-\gamma} \cdot \mathbbm{1}_{t \ge -\alpha^3} + C \alpha |\log \alpha| \cdot \mathbbm{1}_{t \ge -\alpha} +  C \alpha e^{2t/\alpha} \cdot \mathbbm{1}_{t \le -\alpha}, \qquad     \text{if $t \le 0$}.
			\end{aligned}
		\end{flalign}
		
	\end{lem}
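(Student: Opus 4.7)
The plan is to reduce both inequalities to a single scalar estimate by algebraic factoring, then split into three regimes. For the first inequality ($t\ge 0$) write
\[
(e^{2t/\alpha}-1)^{-\gamma} - e^{-2t\gamma/\alpha} = e^{-2t\gamma/\alpha}\,\bigl[(1-e^{-2t/\alpha})^{-\gamma}-1\bigr],
\]
and note the prefactor $e^{-2t\gamma/\alpha}$ is bounded by $1$. For the second inequality ($t\le 0$) substitute $s=-t\ge 0$ and observe
\[
(1-e^{2t/\alpha})^{-\gamma}-1 = (1-e^{-2s/\alpha})^{-\gamma}-1.
\]
Thus in both cases it suffices to estimate $\Psi(x):=(1-e^{-x})^{-\gamma}-1$ at $x=2t/\alpha$ or $x=2s/\alpha$.

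I would split into three regimes according to $x\le 2\alpha^2$, $2\alpha^2<x\le 2$, and $x>2$. In the first regime ($t\le \alpha^3$), the elementary bound $1-e^{-x}\ge x/2$ for $x\in[0,1]$ gives $\Psi(x)\le (x/2)^{-\gamma}=(\alpha/(2t))^{\gamma}\cdot t^{-\gamma}\cdot \alpha^{-\gamma}\cdot \alpha^{\gamma}$ — collecting constants and using that $\alpha^{\gamma}=e^{\gamma\log\alpha}$ is uniformly bounded above and below for $\gamma\in[\alpha/2,\alpha]$ and $\alpha\in(0,1/20)$ (since $|\gamma\log\alpha|\le \alpha|\log\alpha|\to 0$) yields $\Psi(x)\le Ct^{-\gamma}$. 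In the third regime ($t>\alpha$, so $x>2$), set $y=e^{-x}\in(0,e^{-2})$ and Taylor expand $(1-y)^{-\gamma}-1=\gamma y+O(\gamma^{2}y^{2})$ uniformly in $y\le e^{-2}$ and $\gamma\in(0,1)$, so $\Psi(x)\le C\gamma e^{-x}\le C\alpha e^{-2t/\alpha}$.

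The intermediate regime ($\alpha^3<t\le \alpha$, i.e.\ $2\alpha^2<x\le 2$) is where the $C\alpha|\log\alpha|$ term arises. I would write
\[
\Psi(x)=\exp\!\bigl(-\gamma\log(1-e^{-x})\bigr)-1,
\]
observe that $1-e^{-x}\ge cx$ on $[0,2]$ so $|\log(1-e^{-x})|\le|\log x|+C\le C|\log\alpha|$, and hence
\[
\gamma|\log(1-e^{-x})|\le C\alpha|\log\alpha|\ll 1
\]
for $\alpha$ small. Applying $|e^{y}-1|\le 2|y|$ for $|y|\le 1$ gives $\Psi(x)\le C\alpha|\log\alpha|$ directly.

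Putting the three regime bounds together, and using $e^{-2t\gamma/\alpha}\le 1$ to reinstate the prefactor for the first inequality, produces the three indicator terms on the right side of \eqref{talphae} for $t\ge 0$; the same bounds expressed in $s=|t|$ handle the second inequality for $t\le 0$. There is no real obstacle here — the argument is routine calculus — and the only point worth a second look is the uniform two-sided boundedness of $\alpha^{\gamma}$ and of constants arising from $(1-y)^{-\gamma}-1=\gamma y+O(\gamma^{2}y^{2})$ across $\gamma\in[\alpha/2,\alpha]$ as $\alpha\downarrow 0$, both of which are elementary.
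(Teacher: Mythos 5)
Your proposal is correct and follows essentially the same route as the paper: factor out $e^{-2t\gamma/\alpha}$ to reduce both inequalities to estimating $(1-e^{-x})^{-\gamma}-1$, then split into the same three regimes with cutoffs at $t=\alpha^{3}$ and $t=\alpha$, using $1-e^{-x}\asymp x$ near $0$ and a Taylor/exponential estimate in the middle and far regimes. The only difference is cosmetic — you isolate the auxiliary function $\Psi$ and spell out the uniform boundedness of $\alpha^{\gamma}$, whereas the paper works with $t$ directly — but the content is identical.
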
 
	
	\begin{proof}
		
		Observe that the second statement of the lemma implies the first, since 
		\begin{flalign*} 
			\big| (e^{2t/\alpha} - 1)^{-\gamma} - e^{-2t\gamma/\alpha} \big| = e^{-2t\gamma / \alpha} \cdot \big| (1 - e^{-2t/\alpha})^{-\gamma} - 1 \big| \le \big| (1 - e^{-2t/\alpha})^{-\gamma} - 1 \big|,
		\end{flalign*}
		
		\noindent so it suffices to establish the second. To that end, first assume that $t \le -\alpha$. Then, a Taylor expansion implies that $(1 - e^{2t/\alpha})^{-\gamma} = 1 - \gamma e^{2t/\alpha} + O(\gamma^2)$, which since $\gamma \le \alpha$ implies the second estimate in \eqref{talphae}. Next, assume that $-\alpha \le t \le -\alpha^3$. Then, a Taylor expansion yields 
		\begin{flalign*} 
			(1 - e^{2t/\alpha})^{-\gamma} = \bigg( \frac{2|t|}{\alpha} + O \Big( \frac{t^2}{\alpha^2} \Big) \bigg)^{-\gamma} = 1 + O \big( \alpha |\log \alpha| \big),
		\end{flalign*} 
		
		\noindent from which we again deduce the second estimate in \eqref{talphae}. If instead $t \ge -\alpha^3$, then a Taylor expansion gives $(1 - e^{2t/\alpha})^{-\gamma} = O (t^{-\gamma} \cdot \alpha^{-\gamma}) = O (t^{-\gamma})$, which establishes \eqref{talphae} in this last case.
	\end{proof}

	\begin{lem} 
		
		\label{exponentialsu} 
		
		There exists a constant $C > 1$ such that the following holds for any $s \ge 0$. Denoting 
		\begin{flalign}
			\label{su0}  
			\varpi = \varpi (s, u) = \frac{e^s}{u} \cdot \mathbbm{1}_{u \ge e^s} + \frac{u}{e^s} \cdot \mathbbm{1}_{u < e^s} \le 1,
		\end{flalign}
		
		\noindent we have for any real number $\gamma \in \big[ \frac{\alpha}{2}, \alpha \big]$ that 
		\begin{flalign}
			\label{psialpha2psi0} 
			\begin{aligned} 
				& \big| \log \psi_{\alpha/2} (u,s)  - \log \psi_0 (u, s) \big|\le C \alpha \varpi^{2/\alpha}; \\
				& \big| \psi_{\alpha/2} (u, s)^{2\gamma/\alpha} - \psi_0 (u, s)^{2\gamma / \alpha} \big| \le C\alpha \varpi^{2/\alpha} \cdot \psi_0 (u, s)^{2 \gamma / \alpha}; \\
				& \big| \partial_u \psi_{\alpha/2} (u, s)^{2\gamma / \alpha} - \partial_u \psi_0 (u, s)^{2\gamma/\alpha} \big| \le C u^{-2\gamma / \alpha - 1} \varpi^{2/\alpha}. 
			\end{aligned} 
		\end{flalign}
		
	\end{lem}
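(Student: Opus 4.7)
The plan is to factor out the larger of $u$ and $e^s$ so that both $\psi_{\alpha/2}$ and $\psi_0$ are expressed in terms of the single small parameter $\varpi^{2/\alpha}$, and then use Taylor expansion of $(1+x)^{-\gamma}$ around $x=0$. Specifically, by the symmetry of $(u,e^s) \mapsto (a^{2/\alpha}+b^{2/\alpha})^{-\alpha/2}$ under swapping $(a,b)=(u,e^s)$, we have in both cases
\begin{flalign*}
\psi_{\alpha/2}(u,s) = \max\{u,e^s\}^{-1}\bigl(1+\varpi^{2/\alpha}\bigr)^{-\alpha/2} = \psi_0(u,s)\cdot\bigl(1+\varpi^{2/\alpha}\bigr)^{-\alpha/2}.
\end{flalign*}
This representation is the engine of the entire proof, and it immediately gives $\log\psi_{\alpha/2}-\log\psi_0 = -\tfrac{\alpha}{2}\log(1+\varpi^{2/\alpha})$. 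Since $\varpi\le 1$ and $\log(1+x)\le x$ for $x\ge 0$, the first bound follows with constant $\tfrac{1}{2}$.

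For the second bound, I would raise the relation above to the $2\gamma/\alpha$ power to obtain
\begin{flalign*}
\psi_{\alpha/2}(u,s)^{2\gamma/\alpha} - \psi_0(u,s)^{2\gamma/\alpha} = \psi_0(u,s)^{2\gamma/\alpha}\bigl[(1+\varpi^{2/\alpha})^{-\gamma}-1\bigr],
\end{flalign*}
and then apply the elementary estimate $|(1+x)^{-\gamma}-1|\le \gamma x$ valid for $x\in[0,1]$, with $x=\varpi^{2/\alpha}$. Using $\gamma\le\alpha$ absorbs the prefactor into the constant $C$.

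For the third bound, I would split into the two cases $u\ge e^s$ and $u<e^s$, since $\partial_u\psi_0^{2\gamma/\alpha}$ is $-\tfrac{2\gamma}{\alpha}u^{-2\gamma/\alpha-1}$ in the first case and $0$ in the second. In the first case, a direct computation gives
\begin{flalign*}
\partial_u\psi_{\alpha/2}^{2\gamma/\alpha} = -\tfrac{2\gamma}{\alpha}u^{-2\gamma/\alpha-1}(1+\varpi^{2/\alpha})^{-\gamma-1},
\end{flalign*}
so the difference is $\tfrac{2\gamma}{\alpha}u^{-2\gamma/\alpha-1}\bigl[1-(1+\varpi^{2/\alpha})^{-\gamma-1}\bigr]$, which is bounded by $Cu^{-2\gamma/\alpha-1}\varpi^{2/\alpha}$ by the same Taylor estimate (now with exponent $-\gamma-1$, still bounded since $\gamma\le\alpha\le 1$). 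In the second case, I would bound $|\partial_u\psi_{\alpha/2}^{2\gamma/\alpha}|$ directly: computing the derivative and factoring out $e^{-2(\gamma+1)s/\alpha}$ produces $\tfrac{2\gamma}{\alpha}u^{-1}\varpi^{2/\alpha}e^{-2\gamma s/\alpha}$, and then the inequality $e^{-2\gamma s/\alpha}\le u^{-2\gamma/\alpha}$ (which holds precisely because $u<e^s$ in this case) converts this into the required form.

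The hardest step is the second case of part (3), where one must verify that the "lossy" replacement $e^{-2\gamma s/\alpha}\le u^{-2\gamma/\alpha}$ does not destroy the bound. The remaining pieces are essentially routine Taylor expansions, but one must track the constants carefully since $2\gamma/\alpha$ can be as large as $2$, which is why the estimate $|(1+x)^{-\gamma-1}-1|\le (\gamma+1)x$ (rather than $\gamma x$) is needed in the first case of part (3).
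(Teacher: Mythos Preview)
Your proposal is correct and follows essentially the same approach as the paper: both hinge on the factorization $\psi_{\alpha/2}(u,s) = \psi_0(u,s)\,(1+\varpi^{2/\alpha})^{-\alpha/2}$ and the Taylor bound $|(1+x)^{-\beta}-1|\le C\beta x$ for $x\in[0,1]$. The only organizational differences are that the paper deduces part (1) from part (2) at $\gamma=\alpha/2$ rather than taking logarithms directly, and for part (3) the paper keeps the single formula $\partial_u\psi_{\alpha/2}^{2\gamma/\alpha} = -\tfrac{2\gamma}{\alpha}u^{-2\gamma/\alpha-1}(1+u^{-2/\alpha}e^{2s/\alpha})^{-\gamma-1}$ and bounds $\bigl|(1+u^{-2/\alpha}e^{2s/\alpha})^{-\gamma-1} - \mathbbm{1}_{u>e^s}\bigr|\le C\varpi^{2/\alpha}$ in one stroke, whereas you split into the two cases and factor out the dominant term in each; your case $u<e^s$ with the replacement $e^{-2\gamma s/\alpha}\le u^{-2\gamma/\alpha}$ is exactly the content of the paper's unified bound in that regime.
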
 
	
	\begin{proof} 
		
		Since $\varpi \le 1$, the first statement follows from the second (at $\gamma = \frac{\alpha}{2}$). To verify the second, we estimate  
		\begin{flalign*}
			\psi_{\alpha/2} (u, s)^{2\gamma/\alpha} - \psi_0 (u, s)^{2 \gamma / \alpha} = \psi_0 (u,s)^{2 \gamma / \alpha} \cdot \big| (1 + \varpi^{2/\alpha})^{-\gamma} - 1 \big| \le C \alpha \varpi^{2/\alpha} \cdot \psi_0 (u, s)^{2 \gamma / \alpha}.
		\end{flalign*}
		
		\noindent So, it remains to verify the third, to which end observe that 
		\begin{flalign*}
			\partial_u \psi_{\alpha/2} (u, s)^{2 \gamma / \alpha} &  = -\displaystyle\frac{2 \gamma}{\alpha} u^{-2 \gamma /\alpha-1} \cdot (1 + u^{-2/\alpha} e^{2s/\alpha})^{-\gamma-1}; \\
			\partial_u \psi_0 (u, s)^{2 \gamma / \alpha} & = - \displaystyle\frac{2\gamma}{\alpha} u^{-2\gamma / \alpha - 1} \cdot \mathbbm{1}_{u > e^s}.
		\end{flalign*} 
		
		\noindent This, together with the bounds $\gamma \le \alpha$ and 
		\begin{flalign*}
			\big| (1 + u^{-2/\alpha} e^{2s/\alpha})^{-\gamma - 1} - \mathbbm{1}_{u > e^s} \big| \le C \varpi^{2/\alpha}.
		\end{flalign*}
		
		\noindent yields the third statement of the lemma.
	\end{proof}
	
	Now we can establish \Cref{gammaffgg0}. 
	
	\begin{proof}[Proof of \Cref{gammaffgg0}]
		In view of \Cref{ffgg0}, it suffices to show that 
		\begin{flalign}
			\label{hzk00}
			\big|  \breve{\mathfrak{Y}}_{\gamma} (u^{2/\alpha}, x, y) - \widetilde{\mathfrak{Y}}_{\gamma} (u, x, y) \big| \le \displaystyle\frac{C \alpha^2 |\log \alpha|^3}{x^{2 \gamma / \alpha}}.
		\end{flalign}
		
		\noindent As in the proof of \Cref{ffgg0}, we assume $\mathfrak{Y} = F$, as the proof when $\mathfrak{Y} = G$ is entirely analogous (using the second bound in \eqref{talphae} instead of the first below). Following \eqref{fgammaintegral2alpha00} and using \eqref{psius}, we find 
		\begin{flalign}
			\label{fgammafgamma0} 
			\begin{aligned}
				\breve{F}_{\gamma} (u^{2/\alpha}, x, y) & = \displaystyle\int_0^{\infty} \displaystyle\int_{-\infty}^{\infty} (e^{2t/\alpha} - 1)^{-\gamma} \psi_{\alpha/2} (u, s)^{2 \gamma / \alpha} h_0 (s - \log x) \\
				& \qquad \qquad \qquad \times h_0 \big( t - \log y + \log \psi_{\alpha/2} (u,s) \big) ds dt; \\
				\widetilde{F}_{\gamma} (u, x, y) & = \displaystyle\int_0^{\infty} \displaystyle\int_{-\infty}^{\infty} e^{-2t\gamma / \alpha} \psi_0 (u, s)^{2\gamma / \alpha} h_0 (s - \log x) h_0 \big( t - \log y - \log \psi_0 (u, s) \big) ds dt.
			\end{aligned} 
		\end{flalign}
		
		\noindent Further setting
		\begin{flalign*}
			\widetilde{F}_{\gamma, 1} (u, x, y) & = \displaystyle\int_0^{\infty} \displaystyle\int_{-\infty}^{\infty} e^{-2t \gamma/\alpha} \psi_{\alpha/2} (u, s)^{2\gamma / \alpha} h_0 (s - \log x) h_0 \big( t - \log y - \log \psi_{\alpha/2} (u, s) \big) ds dt; \\
			\widetilde{F}_{\gamma, 2} (u, x, y) & = \displaystyle\int_0^{\infty} \displaystyle\int_{-\infty}^{\infty} e^{-2t \gamma / \alpha} \psi_0 (u, s)^{2\gamma / \alpha} h_0 (s - \log x) h_0 \big( t - \log y - \log \psi_{\alpha/2} (u, s) \big) ds dt,
		\end{flalign*}
		
		\noindent we have 
		\begin{flalign}
			\label{ff1f20}
			\begin{aligned}
				\big|  \breve{F}_{\gamma} (u^{2/\alpha}, x, y) -  \widetilde{F}_{\gamma} (u, x, y) \big| & \le \big|  \breve{F}_{\gamma} (u^{2/\alpha}, x, y) -  \widetilde{F}_{\gamma, 1} (u, x, y) \big|  + \big|  \widetilde{F}_{\gamma, 1} (u, x, y) -  \widetilde{F}_{\gamma, 2} (u, x, y) \big| \\
				& \qquad + \big|  \widetilde{F}_{\gamma, 2} (u, x, y) - \widetilde{F}_{\gamma} (u, x, y) \big|. 
			\end{aligned}
		\end{flalign} 
		
		To bound the first term on the right side of \eqref{ff1f20}, observe since $\psi_{\alpha/2} (u, s) \le e^{-s}$ that 
		\begin{flalign}
			\label{fgammafgamma10} 
			\begin{aligned} 
				\big|  \breve{F}_{\gamma} (u^{2/\alpha}, x, y) -  \widetilde{F}_{\gamma, 1} (u, x, y) \big| & \le \displaystyle\int_0^{\infty} \displaystyle\int_{-\infty}^{\infty} e^{-2s\gamma / \alpha} h_0 (s - \log x) \cdot \big| (e^{2t/\alpha} - 1)^{-\gamma} - e^{-2t\gamma / \alpha} \big| \\
				& \qquad \qquad \qquad \qquad \times  h_0 \big( t - \log y + \log \psi_{\alpha/2} (u, s) \big) ds dt.
			\end{aligned} 
		\end{flalign}
		
		\noindent By \Cref{talpha0} and the fact that $\sup_{w \in \mathbb{R}} h_0 (w) \le C$, we have  
		\begin{flalign*}
			\begin{aligned} 
				\displaystyle\int_0^{\infty} & \big| (e^{2t/\alpha} - 1)^{-\gamma} - e^{-2t\gamma / \alpha} \big| \cdot h_0 \big( t - \log y + \log \psi_{\alpha/2} (u, s) \big) dt \\
				& \le C \displaystyle\int_0^{\infty} \big( |t|^{-\gamma} \cdot \mathbbm{1}_{t \le \alpha^3} + \alpha |\log \alpha| \cdot \mathbbm{1}_{t \le \alpha} + \alpha e^{-2t / \alpha} \cdot \mathbbm{1}_{t > \alpha} \big) dt \le C \alpha^2 |\log \alpha|,
			\end{aligned}
		\end{flalign*}
		
		\noindent which, together with \eqref{fgammafgamma10} and the change of variables (sending $s$ to $s + \log x$), implies 
		\begin{flalign}
			\label{ffgamma10}
			\big|  \breve{F}_{\gamma} (u^{2/\alpha}, x, y) - \widetilde{F}_{\gamma, 1} (u, x, y) \big| \le \displaystyle\int_{-\infty}^{\infty} e^{-2s\gamma / \alpha} h_0 (s - \log x) ds \le  \displaystyle\frac{C \alpha^2}{x^{2 \gamma / \alpha}}.
		\end{flalign}
		
		To bound the second term in \eqref{ff1f20}, observe from \Cref{integralk} that 
		\begin{flalign*} 
			\big|  \widetilde{F}_{\gamma, 1} (u, x, y) -  \widetilde{F}_{\gamma, 2} (u, x, y) \big| &  \le \displaystyle\int_0^{\infty} \displaystyle\int_{-\infty}^{\infty} e^{-2t \gamma / \alpha}  \cdot h_0 \big(t - \log y + \log \psi_{\alpha/2} (u, s) \big) \\
			& \qquad \qquad \quad \times h_0 (s - \log x) \cdot \big| \psi_{\alpha/2} (u, s)^{2 \gamma / \alpha} - \psi_0 (u, s)^{2\gamma / \alpha} \big| ds dt \\
			& \qquad \le C \displaystyle\int_{-\infty}^{\infty} h_0 (s-\log x) \cdot \big| \psi_{\alpha/2} (u, s)^{2\gamma / \alpha} - \psi_0 (u, s)^{2 \gamma / \alpha} \big| ds dt.
		\end{flalign*} 
		
		\noindent This; \Cref{exponentialsu}; the bound $\psi_0 (u, s) \le e^{-s}$; the fact that $\varpi^{2/\alpha} \le C \alpha^2$ (recall \eqref{su0}) when $|s - \log u| \ge C \alpha |\log \alpha|$ (as then $\varpi \le 1 - C \alpha |\log \alpha| + O \big( \alpha^2 |\log \alpha| \big)$); the estimate $\sup_{w \in \mathbb{R}} h_0 (w) \le C$ then together yield
		\begin{flalign}
			\label{fgamma1gamma0} 
			\begin{aligned}
				\big| &  \widetilde{F}_{\gamma, 1} (u, x, y) -  \widetilde{F}_{\gamma, 2} (u, x, y) \big| \\
				& \le C \alpha \displaystyle\int_{|s - \log u| \le C \alpha |\log \alpha|} \psi_0 (u, s)^{2\gamma / \alpha}  h_0 (s - \log x) ds + C \alpha^2 \displaystyle\int_{-\infty}^{\infty} \psi_0 (u, s)^{2\gamma / \alpha} h_0 (s - \log x) ds \\
				& \le C \alpha \displaystyle\int_{|s - \log u| \le C \alpha |\log \alpha|} e^{-2 s \gamma / \alpha} h_0 (s - \log x) ds + C \alpha^2 \displaystyle\int_{-\infty}^{\infty} e^{-2 s \gamma / \alpha} h_0 (s - \log x) ds \\
				& \le \displaystyle\frac{C \alpha}{x^{2 \gamma / \alpha}} \displaystyle\int_{|s - \log u + \log x| \le C \alpha |\log \alpha|} e^{-2s \gamma / \alpha} h_0 (s) ds + \displaystyle\frac{C \alpha^2}{x^{2 \gamma / \alpha}} \displaystyle\int_{-\infty}^{\infty} e^{-2s \gamma / \alpha} h_0 (s) ds  \le \displaystyle\frac{C \alpha^2 |\log \alpha|}{x^{2 \gamma / \alpha}}.
			\end{aligned}
		\end{flalign}
		
		To bound the third term in \eqref{ff1f2}, we first use the fact that $h_0$ is $1$-Lipschitz to find that 
		\begin{flalign*}
			\big|  \widetilde{F}_{\gamma, 2} & (u, x, y) - \widetilde{F}_{\gamma} (u, x, y) \big| \\ 
			& \le \displaystyle\int_0^{\infty} \displaystyle\int_{-\infty}^{\infty} e^{-2t \gamma / \alpha} \cdot \psi_0 (u, s)^{2 \gamma / \alpha} \cdot h_0 (s - \log x) | \cdot \big| \log \psi_{\alpha/2} (u, s) - \log \psi_0 (u, s) \big| ds dt \\ 
			& = \displaystyle\frac{\alpha}{2 \gamma} \displaystyle\int_{-\infty}^{\infty} \psi_0 (u, s)^{2\gamma / \alpha} \cdot h_0 (s - \log x) \cdot \big| \log \psi_{\alpha/2} (u, s) - \log \psi_0 (u, s) \big|ds.
		\end{flalign*}
		
		\noindent By \Cref{exponentialsu}, the bound $\varpi^{2/\alpha} \le C \alpha^2$ (recall \eqref{su0}) when $|s - \log u| \ge C \alpha |\log \alpha|$ (by the above), and the fact that $\psi_0 (u, s) \le e^{-s}$, it follows analogously to in \eqref{fgamma1gamma0} that 
		\begin{flalign}
			\label{fgamma2gamma0} 
			\big|  \widetilde{F}_{\gamma, 2} (u, x, y) - \widetilde{F}_{\gamma} (u, x, y) \big| \le \displaystyle\frac{C \alpha}{\gamma} \displaystyle\int_{-\infty}^{\infty} e^{-2s \gamma / \alpha} h_0 (s - \log x) \cdot \varpi^{2/\alpha} ds \le \displaystyle\frac{C \alpha^2 |\log \alpha|}{x^{2 \gamma / \alpha}}.
		\end{flalign}
		
		\noindent The proposition then follows from combining \eqref{ff1f20}, \eqref{ffgamma10}, \eqref{fgamma1gamma0}, and \eqref{fgamma2gamma0}.
	\end{proof} 
	
	\subsection{Scaling of the Mobility Edge}
	
	\label{Alpha0E0Scale}
	
	In this section we establish the scaling for a mobility edge $E_0$ if $\alpha$ is small; in what follows, we recall $a(E)$ and $b(E)$ from \Cref{opaque}.

	\begin{thm}
		
		\label{e0alpha0u} 
		
		There exist constants $c > 0$ and $C > 1$ such that the following holds for $\alpha \in (0, c)$. Letting $E_0 > 0$ be some real number such that $\lambda (E_0, \alpha) = 1$, we have
		\begin{flalign*} 
			\bigg( \displaystyle\frac{1}{|\log \alpha|} - \displaystyle\frac{C \log |\log \alpha|}{|\log \alpha|^2} \bigg)^{2/\alpha} \le E_0 \le \bigg( \displaystyle\frac{1}{|\log \alpha|} + \displaystyle\frac{C \log |\log \alpha|}{|\log \alpha|^2} \bigg)^{2/\alpha}.
		\end{flalign*}
		
	\end{thm}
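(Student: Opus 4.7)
The plan is to reduce the equation $\lambda(E_0,\alpha)=1$ to an asymptotically explicit scalar equation by combining the two preceding ingredients of this section: the probabilistic identity $\mathbb{E}[|R_\star(E_0)|^\alpha]=2-c_\star\alpha+O(\alpha^2)$ from \Cref{lambdaalpha0}, and the Gumbel replacements \Cref{gammaffgg0} together with the explicit evaluations \Cref{fgu} and \Cref{falphagalpha0}. Parameterize $E_0=u^{2/\alpha}$; by \Cref{lambdaalpha01} we have $u\in(0,1]$, and by \Cref{abestimatealpha0} we have $a,b=\Theta(1)$, so $\sigma:=a(E_0)+b(E_0)\in[c,C]$. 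The goal is then to show $u=|\log\alpha|^{-1}+O\!\left(|\log\alpha|^{-2}\log|\log\alpha|\right)$, and the statement of the theorem follows by raising to the $2/\alpha$-th power.

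First, I would extract the two scalar equations. Summing the fixed-point relations $a=F_{\alpha/2}(E_0,a,b)$ and $b=G_{\alpha/2}(E_0,a,b)$ from \eqref{fba} and applying \Cref{gammaffgg0} (with $\gamma=\alpha/2$) together with the explicit formula \Cref{fgu} gives
\begin{equation*}
\sigma=\widetilde{F}_{\alpha/2}(u,a,b)+\widetilde{G}_{\alpha/2}(u,a,b)+O(\alpha^2|\log\alpha|^2)=\frac{1-e^{-\sigma/u}}{\sigma}+O(\alpha^2|\log\alpha|^2),
\end{equation*}
i.e.\ $\sigma^2=1-e^{-\sigma/u}+O(\alpha^2|\log\alpha|^2)$. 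Similarly, combining \Cref{lambdaalpha0} with \Cref{l:boundarybasics2}, \Cref{gammaffgg0} (with $\gamma=\alpha$) and \Cref{falphagalpha0} yields
\begin{equation*}
2-c_\star\alpha=\frac{2}{\sigma^2}\left(1-\Bigl(1+\tfrac{\sigma}{u}\Bigr)e^{-\sigma/u}\right)+O(\alpha^2|\log\alpha|^2).
\end{equation*}

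Next, I would eliminate $e^{-\sigma/u}$ from these two equations. Substituting $e^{-\sigma/u}=1-\sigma^2+O(\alpha^2|\log\alpha|^2)$ into the second equation and simplifying, the leading constant $2$ cancels and one obtains
\begin{equation*}
c_\star\alpha=\frac{2(1-\sigma^2)}{u\sigma}+O(\alpha^2|\log\alpha|^2).
\end{equation*}
Setting $v:=\sigma/u$, so that $1-\sigma^2=e^{-v}+O(\alpha^2|\log\alpha|^2)$ and $u=\sigma/v$, the last display rewrites as
\begin{equation*}
\frac{e^v-1}{v}=\frac{2}{c_\star\alpha}+O(|\log\alpha|^2),
\end{equation*}
a single scalar transcendental equation for $v$. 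Solving it asymptotically by taking logarithms gives $v=|\log\alpha|+\log|\log\alpha|+O(1)$; because $\sigma^2=1-e^{-v}$, it also follows that $\sigma=1-\tfrac{1}{2}e^{-v}+O(e^{-2v})=1+O(\alpha|\log\alpha|)$. Then $u=\sigma/v$ yields the claimed two-sided bound
\begin{equation*}
u=\frac{1}{|\log\alpha|}+O\!\left(\frac{\log|\log\alpha|}{|\log\alpha|^2}\right).
\end{equation*}

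The main obstacle is the bootstrap that makes the above derivation rigorous rather than formal. At the start we only know $\sigma=\Theta(1)$ and $u\in(0,1]$; we do not know a priori that $e^{-\sigma/u}$ is small, which is the whole source of the cancellation in the second equation. I would handle this in two passes. First, from $\sigma^2=1-e^{-\sigma/u}+O(\alpha^2|\log\alpha|^2)$ and $\sigma\in[c,C]$ one gets an a priori lower bound $\sigma\ge c_0$ for some $c_0>0$, hence $e^{-\sigma/u}\le 1-c_0^2+o(1)$, which is not yet small enough. Feeding this into the second equation forces $2(1-\sigma^2)/(u\sigma)=c_\star\alpha+o(\alpha)$, which (using $\sigma$ bounded) gives $1-\sigma^2=O(\alpha)$ and therefore $e^{-\sigma/u}=O(\alpha)$. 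This is the crucial self-improving step: it shows $\sigma/u\gtrsim|\log\alpha|$, so that $u\lesssim|\log\alpha|^{-1}$, and in particular $u$ is small enough that all remainder terms of the form $O(\alpha^2|\log\alpha|^2)$ are indeed lower order than the leading contributions $\Theta(\alpha)$ being compared. A second pass with this improved knowledge then yields the sharp asymptotics $v=|\log\alpha|+\log|\log\alpha|+O(1)$ stated above, and hence the theorem. The remainder of the argument is routine Taylor expansion.
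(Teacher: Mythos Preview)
Your approach is correct and essentially identical to the paper's: derive the two scalar equations $\sigma^2 = 1 - e^{-\sigma/u} + O(\alpha^2|\log\alpha|^2)$ and $2 - c_\star\alpha = \widetilde F_\alpha + \widetilde G_\alpha + O(\alpha^2|\log\alpha|^2)$ from \Cref{gammaffgg0}, \Cref{fgu}, \Cref{falphagalpha0} and \Cref{lambdaalpha0}, eliminate to obtain $e^{-\sigma/u}/(u\sigma) = \tfrac{c_\star}{2}\alpha + O(\alpha^2|\log\alpha|^2)$, and then bootstrap $\sigma\to 1$ and solve for $u$. The paper writes $d$ for your $\sigma$ and works directly with $e^{-d/u}u^{-1} = \Theta(\alpha)$ rather than introducing $v=\sigma/u$ and the form $(e^v-1)/v$, but the content is the same.
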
 
	
	\begin{proof}[Proof of \Cref{e0alpha0u}]
		
		By \eqref{fba}, \Cref{gammaffgg0}, \Cref{fgu}, \Cref{lambdaalpha01}, and \Cref{abestimatealpha0}, we have 
		\begin{flalign}
			\label{abalpha0} 
			\begin{aligned}
				a & = \displaystyle\frac{b}{(a+b)^2} - e^{-(a+b)/u} \bigg( \displaystyle\frac{b}{(a+b)^2} + \displaystyle\frac{b}{u(a+b)} \bigg) + O \big( \alpha^2 |\log \alpha|^2 \big); \\ 
				b & = \displaystyle\frac{a}{(a+b)^2} + e^{-(a+b)/u} \bigg( \displaystyle\frac{b}{u(a+b)} - \displaystyle\frac{a}{(a+b)^2} \bigg) + O \big( \alpha^2 |\log \alpha|^2 \big),
			\end{aligned}  
		\end{flalign} 
		
		\noindent where we have set $a = a(E_0)$, $b = b(E_0)$, and $E_0 = u^{2/\alpha}$.  Summing these two equations, and setting $d = d(E_0) = a + b$, we find
		\begin{flalign}
			\label{d1alpha0}
			d = d^{-1} ( 1 - e^{-d/u}) + O \big( \alpha^2 |\log \alpha|^2 \big).
		\end{flalign}
		
		\noindent Moreover, applying \Cref{lambdaalpha0}, \eqref{fggamma}, \Cref{gammaffgg0}, and \Cref{falphagalpha0} yields
		\begin{flalign}
			\label{d2alpha0}
			\begin{aligned}
				2 - c_{\star} \alpha + O (\alpha^2) = \mathbb{E} \Big[ \big| R(E_0) \big|^{\alpha} \Big] & = F_{\alpha} (E_0, a, b) + G_{\alpha} (E_0, a, b) \\
				& = 2d^{-2} - 2 e^{-d/u} ( u^{-1} d^{-1} + d^{-2}) + O \big( \alpha^2 |\log \alpha|^2 \big).
			\end{aligned} 
		\end{flalign}
		
		\noindent where $c_{\star} = 4 \log 2 + \pi$. Inserting \eqref{d1alpha0} into \eqref{d2alpha0}, and using the fact that $c \le d \le C$ (by \Cref{abestimatealpha0}) yields 
		\begin{flalign}
			\label{d3alpha0} 
			e^{-d/u} u^{-1} =  c_{\star} \alpha d + O \big( \alpha^2 |\log \alpha|^2 \big).
		\end{flalign}
		
		\noindent Again since $c \le d \le C$ (by \Cref{abestimatealpha0}), this implies that $c |\log \alpha|^{-1} \le u \le C |\log \alpha|^{-1}$. Inserting this bound into \eqref{d1alpha0}, it follows that $|d-1| \le C \alpha^c$, which by \eqref{d3alpha0} yields 
		\begin{flalign*}
			\displaystyle\frac{1}{|\log \alpha|} - \displaystyle\frac{C \log |\log \alpha|}{|\log \alpha|^2} \le u \le \displaystyle\frac{1}{|\log \alpha|} + \displaystyle\frac{C \log |\log \alpha|}{|\log \alpha|^2},
		\end{flalign*}
		
		\noindent which establishes the theorem.
	\end{proof}

	We further include the following lemma, which will be useful in \Cref{E0Unique} below.

	\begin{lem}
		
		\label{abdalpha0} 
		
		There exist constants $c > 0$ and $C > 1$ such that the following holds for $\alpha \in (0, c)$. Let $E_0 > 0$ be a real number; abbreviate $a = a(E_0)$ and $b = b(E_0)$; and set $u = E_0^{\alpha/2}$. If 
		\begin{flalign}
			\label{ualpha0} 
			\displaystyle\frac{9}{10} \cdot |\log \alpha|^{-1} \le u \le \displaystyle\frac{11}{10} \cdot |\log \alpha|^{-1},
		\end{flalign}
		
		\noindent then 
		\begin{flalign*} 
			\bigg| a - \displaystyle\frac{1}{2}  \bigg| \le C \alpha^{8/9}; \qquad \bigg|b - \displaystyle\frac{1}{2} \bigg| \le C \alpha^{8/9}.
		\end{flalign*} 
	\end{lem}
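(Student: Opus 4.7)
The plan is to adapt the derivation of $(a,b)$ given in the proof of \Cref{e0alpha0u}, but to read it in the opposite direction: rather than using $\lambda(E_0,\alpha)=1$ to determine $u$, we use the assumed range \eqref{ualpha0} of $u$ to determine $(a,b)$. All quantitative inputs are already in place: \Cref{lambdaalpha01} forces $E_0\le 1$ once $\alpha$ is small enough (which is the case under \eqref{ualpha0}), so \Cref{abestimatealpha0} applies and yields $c\le a,b\le C$; and \Cref{gammaffgg0} together with \Cref{fgu} rewrites the fixed-point system \eqref{fba} as \eqref{abalpha0}, with an additive error $O(\alpha^2|\log\alpha|^2)$.

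Writing $d=a+b$, summing the two equations in \eqref{abalpha0} gives, exactly as in the derivation of \eqref{d1alpha0}, the scalar equation
\begin{equation*}
d^{2} = 1-e^{-d/u}+O\!\bigl(\alpha^{2}|\log\alpha|^{2}\bigr).
\end{equation*}
The first task is to show that this forces $d=1+O(\alpha^{8/9})$. From the crude bound $d\ge c_0>0$ supplied by \Cref{abestimatealpha0}, and from $u\le \tfrac{11}{10}|\log\alpha|^{-1}$, we get $e^{-d/u}\le \alpha^{10 c_0/11}$, hence $d\ge 1-\alpha^{10c_0/11}-O(\alpha^2|\log\alpha|^2)$. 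Feeding this improved bound back in upgrades the exponent: a single bootstrap yields $d\ge 1-\alpha^{10/11-o(1)}$. On the other hand, the same identity gives $d\le 1+O(\alpha^2|\log\alpha|^2)$ automatically. Since $10/11>8/9$, for $\alpha$ small we conclude $|d-1|\le C\alpha^{8/9}$, and simultaneously $e^{-d/u}\le C\alpha^{8/9}$.

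To extract the difference, subtract the two equations in \eqref{abalpha0}. A direct computation using $a=\tfrac{b}{d^{2}}(1-e^{-d/u})-\tfrac{b}{ud}e^{-d/u}+O(\alpha^{2}|\log\alpha|^{2})$ and its companion for $b$ gives
\begin{equation*}
(b-a)\!\left(1+\frac{1-e^{-d/u}}{d^{2}}\right)=\frac{2b\,e^{-d/u}}{u d}+O\!\bigl(\alpha^{2}|\log\alpha|^{2}\bigr).
\end{equation*}
With $d=1+O(\alpha^{8/9})$, $b\le C$, and $u^{-1}\le \tfrac{10}{9}|\log\alpha|$, the right-hand side is bounded by $C|\log\alpha|\cdot \alpha^{10/11-o(1)}+O(\alpha^{2}|\log\alpha|^{2})$, while the factor in parentheses is $\ge c>0$. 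Therefore $|b-a|\le C\alpha^{8/9}$. Combining this with $a+b=1+O(\alpha^{8/9})$ and solving for $a$ and $b$ yields the claimed bounds $|a-\tfrac12|,|b-\tfrac12|\le C\alpha^{8/9}$.

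The only step that requires any real care is the bootstrap in the middle paragraph: we need the crude lower bound $d\ge c_0$ from \Cref{abestimatealpha0} to produce any nontrivial decay of $e^{-d/u}$, and only after this decay is inserted back into the scalar equation does one obtain the near-optimal exponent $10/11-o(1)$. All other manipulations are routine algebraic consequences of the already-established approximations \eqref{abalpha0} and \Cref{fgu}, together with the fact that $u\sim |\log\alpha|^{-1}$ makes $e^{-d/u}$ a very small polynomial power of $\alpha$ that easily dominates the $O(\alpha^{2}|\log\alpha|^{2})$ error terms carried along from \Cref{gammaffgg0}.
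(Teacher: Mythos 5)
Your argument follows essentially the same route as the paper's: derive the approximate system \eqref{abalpha0} from \Cref{gammaffgg0} and \Cref{fgu}, sum to get the scalar identity \eqref{d1alpha0} for $d=a+b$, bootstrap the crude lower bound $d\ge c_0$ to $|d-1|\le C\alpha^{8/9}$, use the two equations to show $|b-a|\le C\alpha^{8/9}$ (you do this by explicit subtraction; the paper phrases it as a pair of near-equalities $a=b(1+O(\alpha^{8/9}))+O(\alpha^{8/9})$), and conclude. The numerology — $e^{-d/u}\le C\alpha^{10/11-o(1)}$ dominating the $8/9$ threshold after one bootstrap, and absorbing the $|\log\alpha|$ factor via $10/11-8/9>0$ — is all sound.

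There is, however, one genuine error in the setup. You invoke \Cref{lambdaalpha01} to obtain $E_0\le 1$, but that lemma is conditioned on the hypothesis $\lambda(E_0,\alpha)=1$, which is \emph{not} assumed in the statement of \Cref{abdalpha0}: the present lemma takes an arbitrary $E_0>0$ whose reparametrization $u=E_0^{\alpha/2}$ lies in the window \eqref{ualpha0}. As written, your citation does not apply and leaves a gap in the deduction that \Cref{abestimatealpha0} (and hence \eqref{abalpha0}) is available. The fix is immediate and does not use \Cref{lambdaalpha01} at all: from \eqref{ualpha0}, $u\le\tfrac{11}{10}|\log\alpha|^{-1}<1$ once $\alpha$ is small, and therefore $E_0=u^{2/\alpha}<1$. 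With that substitution the rest of your proof stands.
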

	
	\begin{proof}
		
		Throughout this proof, we once again set $d = a + b$. Applying \eqref{fba}, \Cref{gammaffgg0}, \Cref{fgu}, \eqref{ualpha0}, and \eqref{abestimatealpha0} yields \eqref{abalpha0}. This yields \eqref{d1alpha0}, which with \Cref{abestimatealpha0} (and \eqref{ualpha0}), gives $|d-1| \le C \alpha^c$; inserting this again into \eqref{d1alpha0} and applying \eqref{ualpha0}, we obtain $|a + b - 1| \le C \alpha^{8/9}$. Applying these bounds in \eqref{abalpha0} yields
		\begin{flalign*}
			a & = b \big( 1 + O (\alpha^{8/9}) \big) + O \big( \alpha^{8/9} \big); \\
			b \big( 1 - O ( \alpha^{8/9}) \big) & = a \big( 1 + O(\alpha^{8/9})  \big) + O \big( \alpha^{8/9} \big).
		\end{flalign*}
		
		\noindent This, together with the fact that $|a + b - 1| \le C \alpha^{8/9}$, implies the lemma.
	\end{proof}

	\section{Uniqueness Near Zero} 
	
	\label{E0Unique}
	
	In this section we establish the second part of \Cref{t:main2}, showing for $\alpha$ small that there exists a unique real number $E_{\mob} > 0$ such that $\lambda (E_{\mob}, \alpha) = 1$. As in \Cref{Alpha0Scaling}, this will proceed by approximating the functions $F$ and $G$ from \eqref{fggamma} by more explicit quantities, though now we must also approximate their derivatives. Throughout this section, we adopt the notation and conventions described at the beginning of \Cref{Alpha0Scaling}. We further assume (which will be justified by \Cref{e0alpha0u} and \Cref{abdalpha0}) in the below that 
	\begin{flalign}
		\label{xyualpha0} 
		\gamma \in \bigg[ \frac{\alpha}{2}, \alpha \bigg]; \qquad x, y \in \bigg[ \displaystyle\frac{1}{4}, \displaystyle\frac{3}{4} \bigg]; \qquad \displaystyle\frac{99}{100} \cdot |\log \alpha|^{-1} \le u \le \displaystyle\frac{101}{100} \cdot |\log \alpha|^{-1},
	\end{flalign}
	
	\noindent which will be in effect even when not stated explicitly. In \Cref{Replaceh0halpha2} we estimate the error in replacing the derivatives of $(F, G)$ with those of $(\breve{F}, \breve{G})$ (recall \Cref{hhfg}); in \Cref{FFGGalpha0Estimate} we estimate the error in replacing the derivatives of $(\breve{F}, \breve{G})$ by $(\widetilde{F}, \widetilde{G})$ (recall \Cref{fuxyguxy2}). We then bound the derivatives of $a$ and $b$ (recall \Cref{opaque}) in \Cref{EstimateAlpha0ab} and establish the second part of \Cref{t:main2} in \Cref{Alpha0Unique}. Throughout this section, constants $c > 0$ and $C > 1$ will be independent of $\alpha$.

	\subsection{Replacement of $(F, G)$ With $(\breve{F}, \breve{G})$}
	
	\label{Replaceh0halpha2} 
	
	In this section we establish the below lemma stating that the first derivatives of $(F_{\gamma}, G_{\gamma})$ can be approximated by those of $(\breve{F}_{\gamma}, \breve{G}_{\gamma})$, where we recall the definition of the latter from \Cref{hhfg}. 
	
	\begin{lem} 
		
		\label{ffgg} 
		
		There exists a constant $C > 1$ such that, for any indices $\mathfrak{z} \in \{ u, x, y \}$ and $\mathfrak{Y} \in \{ F, G \}$, we have 
		\begin{flalign*}
			\big| \partial_{\mathfrak{z}} \ \mathfrak{Y}_{\gamma} (u^{2/\alpha}, x, y) - \partial_{\mathfrak{z}} \breve{\mathfrak{Y}}_{\gamma}  (u^{2/\alpha}, x, y) \big| \le \displaystyle\frac{C \alpha^2}{\mathfrak{z} u^2}.
		\end{flalign*}
	\end{lem}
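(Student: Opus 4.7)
The plan is to imitate the proof of \Cref{ffgg0}, now carrying out the analysis for first derivatives instead of for the functions themselves. We consider only the case $\mathfrak{Y} = F$, since the case $\mathfrak{Y} = G$ is entirely analogous. Starting from the representation \eqref{fgammaintegral2alpha00} and the analogous one for $\breve{F}_\gamma$ (obtained by replacing each $h_{\alpha/2}$ with $h_0$), I would differentiate under the integral sign in each variable $\mathfrak{z} \in \{u, x, y\}$. For each $\mathfrak{z}$ this produces a sum of at most two or three integrals in which either $h_{\alpha/2}$ or its derivative $h'_{\alpha/2}$ appears (and likewise $h_0$ or $h'_0$ in the $\breve{F}_\gamma$ version), possibly together with differentiated copies of the factor $(u^{2/\alpha}+e^{2s/\alpha})^{-\gamma}$.

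I would then form the term-by-term difference $\partial_\mathfrak{z} F_\gamma - \partial_\mathfrak{z} \breve{F}_\gamma$ and use $h_{\alpha/2} - h_0 = \alpha^2 \Upsilon_{\alpha/2}$ (along with the corresponding identity for derivatives) to extract a prefactor $\alpha^2$ from each resulting integral. Each such integral is then bounded by the template of the proof of \Cref{ffgg0}: integrate out $t$ using \Cref{integralk} applied to $(e^{2t/\alpha}-1)^{-\gamma}$ against $h_{\alpha/2}$, $h_0$, or one of their derivatives; use the pointwise estimate $(u^{2/\alpha}+e^{2s/\alpha})^{-\gamma} \le e^{-2s\gamma/\alpha}$; and then apply the substitution $s \mapsto s + \log x$ together with \Cref{exponentialhh} to bound $\int e^{-2s\gamma/\alpha} |\Upsilon_{\alpha/2}(s-\log x)|\, ds \le C|\log\alpha|^2 x^{-2\gamma/\alpha}$. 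Since $x \in [1/4, 3/4]$ and $2\gamma/\alpha \le 2$ by \eqref{xyualpha0}, the factor $x^{-2\gamma/\alpha}$ is bounded by a constant; and since $u \ge c|\log\alpha|^{-1}$ from \eqref{xyualpha0}, the factor $|\log\alpha|^2$ is absorbed into $u^{-2}$.

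For $\mathfrak{z} \in \{x, y\}$, the extra factor $\mathfrak{z}^{-1}$ in the target bound arises directly from the chain rule applied to the argument $s - \log x$ (respectively $t - \log y + \frac{\alpha}{2}\log(u^{2/\alpha}+e^{2s/\alpha})$) of the relevant copy of $h_{\alpha/2}$ or $h_0$. The main obstacle is the remaining case $\mathfrak{z} = u$, in which $\partial_u$ acts in two places: on the factor $(u^{2/\alpha}+e^{2s/\alpha})^{-\gamma}$, producing a contribution proportional to $u^{-1} \cdot \varphi(s, u) \cdot (u^{2/\alpha}+e^{2s/\alpha})^{-\gamma}$ with $\varphi(s, u) = u^{2/\alpha}/(u^{2/\alpha}+e^{2s/\alpha}) \in [0, 1]$; and on the term $\frac{\alpha}{2}\log(u^{2/\alpha}+e^{2s/\alpha})$ appearing in the argument of the second copy of $h_{\alpha/2}$, producing a contribution proportional to $u^{-1} \cdot \varphi(s, u) \cdot h'_{\alpha/2}(\cdot)$. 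In each case the differentiation produces a clean factor of $u^{-1}$ (after absorbing $2\gamma/\alpha \le 2$ into constants) multiplied by a quantity bounded in absolute value by $1$; combined with the $u^{-2}$ obtained from the $s$-integration as above, this yields the required $u^{-3}$ bound. No new tools beyond \Cref{w0walpha}, \Cref{integralk}, and \Cref{exponentialhh} are needed; the main challenge is simply to keep track of these extra factors carefully through each of the finitely many terms produced by differentiation.
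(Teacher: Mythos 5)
Your overall architecture is the right one — differentiate \eqref{fgammaintegral2alpha00} (and its $\breve{F}$ counterpart) under the integral sign, form the difference term-by-term, pull out $\alpha^2$ via $h_{\alpha/2} - h_0 = \alpha^2\Upsilon_{\alpha/2}$, integrate out $t$ with \Cref{integralk}, and then control the remaining $s$-integral. Your accounting of where each $\mathfrak{z}^{-1}$ factor comes from (chain rule on $s-\log x$ or $t - \log y + \cdots$ for $\mathfrak{z}\in\{x,y\}$, and the factor $u^{-1}\varphi(s,u)$ with $\varphi\le 1$ for $\mathfrak{z}=u$) is also exactly right and matches the paper.

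However, there is a gap in the step where you bound the $s$-integral. You propose to re-use the template of \Cref{ffgg0}: bound $(u^{2/\alpha}+e^{2s/\alpha})^{-\gamma}\le e^{-2s\gamma/\alpha}$ and then invoke \Cref{exponentialhh}. In \Cref{ffgg0} this works because no differentiation has occurred, so the only functions appearing in the $s$-variable are $h_0$ and $\Upsilon_{\alpha/2}$, and \Cref{exponentialhh} bounds $\int e^{-\kappa s}\Upsilon_{\alpha/2}(s)\,ds$. But once you differentiate in $x$ (and likewise after the $t$-integration in the $u$ case), the integrand in $s$ picks up $\Upsilon_{\alpha/2}'(s-\log x)$. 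You would then need $\int e^{-\kappa s}\big|\Upsilon_{\alpha/2}'(s)\big|\,ds \lesssim |\log\alpha|^2$, for which the paper provides no tool: \Cref{exponentialhh} is stated only for $\Upsilon_{\alpha/2}$ and its proof leans on the Gumbel-type tail bound \Cref{halpha2h0xsmall} for $h_{\alpha/2}$ and $h_0$; proving the analogue for $\Upsilon_{\alpha/2}'$ would require new super-exponential tail control on $h_{\alpha/2}'(s)$ as $s\to-\infty$, which is not in the paper.

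The paper avoids this entirely by using a different pointwise bound at this stage: $(u^{2/\alpha}+e^{2s/\alpha})^{-\gamma} \le u^{-2\gamma/\alpha} \le u^{-2}$, valid because $\gamma\le\alpha$ forces $2\gamma/\alpha\le 2$ and $u\le 1$ under \eqref{xyualpha0}. This extracts the required $u^{-2}$ up front, uniformly in $s$, leaving an $s$-integral of $|h_0'|+|\Upsilon_{\alpha/2}|+|\Upsilon_{\alpha/2}'|$ with no exponential weight, which is exactly what the $q\ge 1$ cases of \Cref{w0walpha} control. (The $u^{-2}$ route also gives the sharper constant directly, whereas your route produces $|\log\alpha|^2$ and needs the translation $|\log\alpha|^2\asymp u^{-2}$.) So the fix is small — replace $e^{-2s\gamma/\alpha}$ by $u^{-2}$ before integrating in $s$ and finish with \Cref{w0walpha} — but as written your argument for $\mathfrak{z}=x$ (and the $u^{-1}\varphi\cdot u^{-2}$ reduction you sketch for $\mathfrak{z}=u$) rests on an estimate that the paper's toolbox does not supply.
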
 
	
	\begin{proof}
		
		We only analyze the case $\mathfrak{Y} = F$, as the proof is entirely analogous if $\mathfrak{Y} = G$. Let us first address the case when $\mathfrak{z} \in \{ x, y \}$; these situations are very similar, so we only consider $\mathfrak{z} = x$. By \eqref{fgammaintegral2alpha00}, we have
		\begin{flalign*}
			\partial_x F_{\gamma} (u^{2/\alpha}, x, y) & = - \displaystyle\frac{1}{x} \displaystyle\int_0^{\infty} \displaystyle\int_{-\infty}^{\infty} (u^{2/\alpha} + e^{2s/\alpha})^{-\gamma} (e^{2t/\alpha} - 1)^{-\gamma} h_{\alpha/2}' (s - \log x) \\
			& \qquad \qquad \qquad \quad  \times h_{\alpha/2} \bigg(t - \log y + \displaystyle\frac{\alpha}{2} \log (u^{2/\alpha} + e^{2s/\alpha}) \bigg) dt ds,
		\end{flalign*} 
		
		\noindent and, by similar reasoning, 
		\begin{flalign*}
			\partial_x \breve{F}_{\gamma} (u^{2/\alpha}, x, y) & = - \displaystyle\frac{1}{x} \displaystyle\int_0^{\infty} \displaystyle\int_{-\infty}^{\infty} (u^{2/\alpha} + e^{2s/\alpha})^{-\gamma} (e^{2t/\alpha} - 1)^{-\gamma} h_0' (s - \log x) \\
			& \qquad \qquad \qquad \quad  \times h_0 \bigg(t - \log y + \displaystyle\frac{\alpha}{2} \log (u^{2/\alpha} + e^{2s/\alpha}) \bigg) ds dt.
		\end{flalign*}
		
		\noindent Subtracting; using the bound $(u^{2/\alpha} + e^{2s/\alpha})^{-\gamma} \le u^{-2\gamma / \alpha} \le u^{-2}$ (as $\gamma \le \alpha$); and recalling $\Upsilon_{\alpha/2}$ from \Cref{halpha} yields
		\begin{flalign*}
			\big| & \partial_x F_{\gamma} (u^{2/\alpha}, x, y) - \partial_x \breve{F}_{\gamma} (u^{2/\alpha}, x, y) \big| \\
			&   \le \displaystyle\frac{\alpha^2}{xu^2}  \displaystyle\int_{-\infty}^{\infty} \displaystyle\int_{-\infty}^{\infty} (e^{2t/\alpha} - 1)^{-\gamma} \Bigg| h_0' (s - \log x) \Upsilon_{\alpha/2} \bigg(t - \log y + \displaystyle\frac{\alpha}{2} \log (u^{2/\alpha} + e^{2s/\alpha}) \bigg) \Bigg| ds dt \\
			& \quad   + \displaystyle\frac{\alpha^2}{xu^2}  \displaystyle\int_{-\infty}^{\infty} \displaystyle\int_{-\infty}^{\infty} (e^{2t/\alpha} - 1)^{-\gamma} \big| \Upsilon_{\alpha/2}' (s-\log x) \big| h_0 \bigg(t - \log y + \displaystyle\frac{\alpha}{2} \log (u^{2/\alpha} + e^{2s/\alpha}) \bigg) ds dt \\
			&   \quad + \displaystyle\frac{\alpha^4}{xu^2} \displaystyle\int_{-\infty}^{\infty} \displaystyle\int_{-\infty}^{\infty} (e^{2t/\alpha} - 1)^{-\gamma} \Bigg| \Upsilon_{\alpha/2}' (s - \log x) \Upsilon_{\alpha/2} \bigg(t - \log y + \displaystyle\frac{\alpha}{2} \log (u^{2/\alpha} + e^{2s/\alpha}) \bigg) \Bigg| ds dt.
		\end{flalign*} 
		
		\noindent Using \Cref{integralk} to integrate in $t$, it follows that 
		\begin{flalign*}
			\big| \partial_x & F_{\gamma} (u^{2/\alpha}, x, y) - \partial_x \breve{F}_{\gamma} (u^{2/\alpha}, x, y) \big| \\
			& \le \displaystyle\frac{C \alpha^2}{xu^2} \displaystyle\int_{-\infty}^{\infty} \Big( \big| h_0' (s - \log x) \big| + \big| \Upsilon_{\alpha/2} (s - \log x) \big| + \big| \Upsilon_{\alpha/2}' (s - \log x) \big| \Big) ds \le \displaystyle\frac{C \alpha^2}{xu^2},
		\end{flalign*}
		
		\noindent where in the last bound we applied \Cref{w0walpha}. This establishes the $z = x$ case of the lemma. 
		
		Next, we address the case $\mathfrak{z} = u$. Differentiating \eqref{fgammaintegral2alpha00} in $u$, we obtain
		\begin{flalign*}
			\partial_u F_{\gamma} (u^{2/\alpha}, x, y) & =  \displaystyle\int_0^{\infty} \displaystyle\int_{-\infty}^{\infty} u^{2/\alpha-1} (u^{2/\alpha} + e^{2s/\alpha})^{-\gamma-1} (e^{2t/\alpha} - 1)^{-\gamma} h_{\alpha/2} (s - \log x) \\
			& \qquad \qquad \quad \times \Bigg( h_{\alpha/2}' \bigg( t - \log y + \displaystyle\frac{\alpha}{2} \log (u^{2/\alpha} + e^{2s/\alpha}) \bigg) \\
			& \qquad \qquad \qquad \qquad - \displaystyle\frac{2\gamma}{\alpha} \cdot h_{\alpha/2} \bigg( t - \log y + \displaystyle\frac{\alpha}{2} \log (u^{2/\alpha} + e^{2s/\alpha}) \bigg) ds dt.
		\end{flalign*}
		
		\noindent By similar reasoning,  
		\begin{flalign*}
			\partial_u \breve{F}_{\gamma} (u^{2/\alpha}, x, y) & = \displaystyle\int_0^{\infty} \displaystyle\int_{-\infty}^{\infty} u^{2/\alpha - 1} (u^{2/\alpha} + e^{2s/\alpha})^{-\gamma-1} (e^{2t/\alpha} - 1)^{-\gamma} h_0 (s - \log x)  \\
			& \qquad \qquad \quad \times \Bigg( h_0' \bigg( t - \log y + \displaystyle\frac{\alpha}{2} \log (u^{2/\alpha} + e^{2s/\alpha}) \bigg) \\
			& \qquad \qquad \qquad \qquad - \displaystyle\frac{2\gamma}{\alpha} \cdot h_0 \bigg( t - \log y + \displaystyle\frac{\alpha}{2} \log (u^{2/\alpha} + e^{2s/\alpha}) \bigg) \Bigg) ds dt.
		\end{flalign*}
		
		\noindent Subtracting, and using the bound
		\begin{flalign*}	
			u^{2/\alpha-1} (u^{2/\alpha} + e^{2s/\alpha})^{-\gamma-1} \le u^{-2\gamma/\alpha-1} \le u^{-3},
		\end{flalign*} 
		
		\noindent  it follows that for $\gamma \le \alpha$ that
		\begin{flalign*}
			\big| & \partial_u F_{\gamma} (u^{2/\alpha}, x, y) - \partial_u \breve{F}_{\gamma} (u^{2/\alpha}, x, y) \big| \\
			& \le \displaystyle\frac{\alpha^2}{u^3} \displaystyle\int_0^{\infty} \displaystyle\int_{-\infty}^{\infty} (e^{2t/\alpha}-1)^{-\gamma} \bigg| \Upsilon_{\alpha/2} (s-\log x) h_{\alpha/2}' \Big(t - \log y + \displaystyle\frac{\alpha}{2} \log (u^{2/\alpha} + e^{2s/\alpha}) \Big) \bigg| ds dt \\
			& \quad + \displaystyle\frac{\alpha^2}{u^3} \displaystyle\int_0^{\infty} \displaystyle\int_{-\infty}^{\infty} (e^{2t/\alpha}-1)^{-\gamma} h_0 (s-\log x) \bigg| \Upsilon_{\alpha/2}' \Big(t - \log y + \displaystyle\frac{\alpha}{2} \log (u^{2/\alpha} + e^{2s/\alpha}) \Big) \bigg| ds dt \\	
			& \quad + \displaystyle\frac{2 \alpha^2}{u^3} \displaystyle\int_0^{\infty} \displaystyle\int_{-\infty}^{\infty} (e^{2t/\alpha}-1)^{-\gamma} \big| \Upsilon_{\alpha/2} (s - \log x) \big| h_0  \Big( t - \log y + \displaystyle\frac{\alpha}{2} \log (u^{2/\alpha} + e^{2s/\alpha}) \Big) ds dt \\	
			& \quad + \displaystyle\frac{2 \alpha^2}{u^3} \displaystyle\int_0^{\infty} \displaystyle\int_{-\infty}^{\infty} (e^{2t/\alpha}-1)^{-\gamma} h_0 (s-\log x) \bigg| \Upsilon_{\alpha/2} \Big(t - \log y + \displaystyle\frac{\alpha}{2} \log (u^{2/\alpha} + e^{2s/\alpha}) \Big) \bigg| ds dt \\
			& \quad + \displaystyle\frac{\alpha^4}{u^3} \displaystyle\int_0^{\infty} \displaystyle\int_{-\infty}^{\infty} (e^{2t/\alpha}-1)^{-\gamma} \bigg| \Upsilon_{\alpha/2} (s-\log x) \Upsilon_{\alpha/2}' \Big(t - \log y + \displaystyle\frac{\alpha}{2} \log (u^{2/\alpha} + e^{2s/\alpha}) \Big) \bigg| ds dt \\
			& \quad +  \displaystyle\frac{2 \alpha^4}{u^3} \displaystyle\int_0^{\infty} \displaystyle\int_{-\infty}^{\infty} (e^{2t/\alpha}-1)^{-\gamma} \bigg| \Upsilon_{\alpha/2} (s-\log x) \Upsilon_{\alpha/2} \Big(t - \log y + \displaystyle\frac{\alpha}{2} \log (u^{2/\alpha} + e^{2s/\alpha}) \Big) \bigg| ds dt.
		\end{flalign*}
		
		\noindent Again using \Cref{integralk} to first integrate with respect to $t$, and then \Cref{w0walpha} to integrate with respect to $s$, we obtain
		\begin{flalign*}
			\big| \partial_u F_{\gamma} (u^{2/\alpha}, x, y) - \partial_u \breve{F}_{\gamma} (u^{2/\alpha}, x, y) \big| & \le \displaystyle\frac{C \alpha^2}{u^3} \displaystyle\int_{-\infty}^{\infty} \Big( h_0 (s - \log x) + \big| \Upsilon_{\alpha/2} (s - \log x) \big| \Big) ds \le \displaystyle\frac{C \alpha^2}{u^3}.
		\end{flalign*}
		
		\noindent thus establishing the lemma.
	\end{proof}

	\subsection{Replacement of $(\breve{F}, \breve{G})$ With $(\widetilde{F}, \widetilde{G})$}
	
	\label{FFGGalpha0Estimate} 
	
	In this section we establish the following two propositions, indicating that one may replace derivatives of $(F, G)$ with those of $(\widetilde{F}, \widetilde{G})$ (whose definitions we recall from \Cref{fuxyguxy2}).

	\begin{prop}
		
		\label{gammaffgg}
		
		There exists a constant $C > 1$ such that, for any indices $\mathfrak{z} \in \{ x, y \}$ and $\mathfrak{Y} \in \{ F, G \}$, we have 
		\begin{flalign*}
			\big| \partial_{\mathfrak{z}}^k \mathfrak{Y}_{\gamma} (u^{2/\alpha}, x, y) - \partial_{\mathfrak{z}}^k \widetilde{\mathfrak{Y}}_{\gamma} (u, x, y) \big| \le  \displaystyle\frac{C \alpha^2 |\log \alpha|}{\mathfrak{z} u^2}.
		\end{flalign*} 
	\end{prop}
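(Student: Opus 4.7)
The plan is to mirror the strategy used for the corresponding zeroth--order statement \Cref{gammaffgg0}, inserting a derivative on both sides throughout. We treat the case $\mathfrak{Y} = F$; the argument for $\mathfrak{Y} = G$ is entirely analogous. Starting from the triangle inequality
\begin{flalign*}
\big| \partial_{\mathfrak{z}} F_\gamma (u^{2/\alpha}, x, y) - \partial_{\mathfrak{z}} \widetilde F_\gamma (u, x, y) \big|
\le \big| \partial_{\mathfrak{z}} F_\gamma - \partial_{\mathfrak{z}} \breve F_\gamma \big|
+ \big| \partial_{\mathfrak{z}} \breve F_\gamma - \partial_{\mathfrak{z}} \widetilde F_\gamma \big|,
\end{flalign*}
the first summand is already controlled by \Cref{ffgg}, which yields a bound of size $C \alpha^2/(\mathfrak{z} u^2)$. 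It therefore suffices to handle the second summand, i.e.\ to compare the $\mathfrak{z}$--derivatives of the representations in \eqref{fgammafgamma0}.

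Recall that after the change of variables leading to \eqref{fgammafgamma0}, the entire $x$--dependence of $\breve F_\gamma$ and $\widetilde F_\gamma$ is encoded in $h_0(s-\log x)$ while the entire $y$--dependence is encoded in $h_0(t-\log y \pm \log \psi_{\#}(u,s))$ with $\# \in \{ \alpha/2, 0\}$. Differentiating in $\mathfrak{z} \in \{x,y\}$ simply produces a factor $-\mathfrak{z}^{-1}$ and replaces the relevant $h_0$ by $h_0'$, with no new dependence on $(s,t)$ outside the density. Since $h_0'$ enjoys the same uniform pointwise bound, the same integrability against $(e^{2t/\alpha}-1)^{-\gamma}$, and the same Lipschitz property as $h_0$ (by \Cref{w0walpha} and \Cref{integralk}), all of the auxiliary estimates used in the proof of \Cref{gammaffgg0} remain valid verbatim after the substitution $h_0 \leftrightarrow h_0'$.

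Concretely, we introduce intermediate quantities $\widetilde F_{\gamma,1}(u,x,y)$ and $\widetilde F_{\gamma,2}(u,x,y)$ exactly as in \Cref{FFGGalpha0Estimate0} and split
\begin{flalign*}
\big| \partial_{\mathfrak{z}} \breve F_\gamma - \partial_{\mathfrak{z}} \widetilde F_\gamma \big|
\le \big| \partial_{\mathfrak{z}} \breve F_\gamma - \partial_{\mathfrak{z}} \widetilde F_{\gamma,1} \big|
+ \big| \partial_{\mathfrak{z}} \widetilde F_{\gamma,1} - \partial_{\mathfrak{z}} \widetilde F_{\gamma,2} \big|
+ \big| \partial_{\mathfrak{z}} \widetilde F_{\gamma,2} - \partial_{\mathfrak{z}} \widetilde F_\gamma \big|.
\end{flalign*}
The first piece is estimated via \Cref{talpha0} (replacing $(e^{2t/\alpha}-1)^{-\gamma}$ by $e^{-2t\gamma/\alpha}$), exactly as in \eqref{ffgamma10}, with the pointwise bound on $h_0$ replaced by that on $h_0'$; this yields a contribution of order $C \alpha^2/(\mathfrak{z} x^{2\gamma/\alpha})$. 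The second piece uses the $\psi_{\alpha/2}^{2\gamma/\alpha}$ vs.\ $\psi_0^{2\gamma/\alpha}$ comparison from \Cref{exponentialsu}, exactly as in \eqref{fgamma1gamma0}. The third piece uses the Lipschitz property of $h_0'$ (again by \Cref{w0walpha}) combined with the $\log\psi_{\alpha/2}$ vs.\ $\log\psi_0$ bound of \Cref{exponentialsu}, exactly as in \eqref{fgamma2gamma0}.

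The main bookkeeping issue, and the only place that the bound worsens by a factor of $|\log\alpha|$ compared to \Cref{gammaffgg0}, arises in the third piece: the $h_0'$ in the $t$--integrand is slightly more concentrated than $h_0$, so when combined with the narrow window $|s - \log u| \le C \alpha |\log\alpha|$ on which $\varpi^{2/\alpha}$ is not already $O(\alpha^2)$, integrating against $e^{-2s\gamma/\alpha} h_0'(s-\log x)$ produces an extra logarithmic factor relative to the corresponding step in the proof of \Cref{gammaffgg0}. Tracking this factor carefully (together with the $\mathfrak{z}^{-1}$ produced by the chain rule and the fact that $\mathfrak{z}^{-2\gamma/\alpha} \le C$ in the regime \eqref{xyualpha0}) yields the claimed bound $C \alpha^2 |\log \alpha|/(\mathfrak{z} u^2)$. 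Assembling the three pieces with the contribution of \Cref{ffgg} concludes the proof.
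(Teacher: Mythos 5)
Your overall strategy is the same as the paper's: reduce via \Cref{ffgg} to comparing $\partial_{\mathfrak{z}} \breve{\mathfrak{Y}}_{\gamma}$ with $\partial_{\mathfrak{z}} \widetilde{\mathfrak{Y}}_{\gamma}$, insert the intermediate quantities $\widetilde F_{\gamma,1}, \widetilde F_{\gamma,2}$, and control the three pieces using \Cref{talpha0}, \Cref{exponentialsu}, \Cref{integralk}, and \Cref{w0walpha}, noting that $h_0'$ enjoys the same uniform bound and Lipschitz property as $h_0$. That matches the paper's decomposition \eqref{ff1f2}--\eqref{fgamma2gamma} exactly, and the argument is sound.

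However, your bookkeeping narrative is imprecise in a way worth flagging. First, the factor $|\log\alpha|$ in the target is \emph{not} something that appears only in the third piece: already in the first piece the $t$--integral $\int_0^\infty |(e^{2t/\alpha}-1)^{-\gamma} - e^{-2t\gamma/\alpha}| h_0(\cdot)\,dt$ is bounded via \Cref{talpha0} by $C\alpha^2|\log\alpha|$, with the $|\log\alpha|$ coming from the middle term $\alpha|\log\alpha|\cdot\mathbbm 1_{t\le\alpha}$, and the same logarithmic window $|s-\log u|\le C\alpha|\log\alpha|$ of \eqref{su0} produces a factor $\alpha|\log\alpha|$ in the second piece as well; so your claim that the first piece yields $C\alpha^2/(\mathfrak z\, x^{2\gamma/\alpha})$ with no $\log$ is slightly off (though still under the target). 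Second, and more importantly, your proposal never explains where the $u^{-2}$ in the target denominator comes from. In the paper's proof of the derivative estimate, one deliberately bounds $\psi_{\#}(u,s)^{2\gamma/\alpha} \le u^{-2\gamma/\alpha} \le u^{-2}$ (using $\gamma \le \alpha$), whereas the zeroth--order proof of \Cref{gammaffgg0} uses $\psi_{\#}(u,s) \le e^{-s}$ and a change of variables to land on $x^{-2\gamma/\alpha}$; you say ``exactly as in \eqref{fgamma1gamma0}'' which would give the latter. Under \eqref{xyualpha0} the relation $x^{-2\gamma/\alpha} \le C \le Cu^{-2}$ makes either route sufficient, but a correct write-up should note this explicitly rather than appeal to an unexplained ``extra logarithmic factor from $h_0'$ concentration,'' which is not what is happening.
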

	
	\begin{prop}
		
		\label{gammaffggu} 
		
		There exists a constant $C > 1$ such that, for any index $\mathfrak{Y} \in \{ F, G \}$, we have 
		\begin{flalign*}
			\big| \partial_u \mathfrak{Y}_{\gamma} (u^{2/\alpha},x, y) - \partial_u \widetilde{\mathfrak{Y}}_{\gamma} (u^{2/\alpha}, x, y) \big| \le C \alpha^{7/6}.
		\end{flalign*}
	\end{prop}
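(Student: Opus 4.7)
\medskip

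\noindent\textbf{Proof proposal for \Cref{gammaffggu}.} The plan is to follow the template established in the proof of \Cref{gammaffgg0}, but now tracking the $u$-derivative through the chain of intermediate quantities. First I will insert $\breve{\mathfrak{Y}}_\gamma$ from \Cref{hhfg} and apply the triangle inequality
\[
\big|\partial_u\mathfrak{Y}_\gamma(u^{2/\alpha},x,y)-\partial_u\widetilde{\mathfrak{Y}}_\gamma(u,x,y)\big|
\le \big|\partial_u\mathfrak{Y}_\gamma-\partial_u\breve{\mathfrak{Y}}_\gamma\big|
+\big|\partial_u\breve{\mathfrak{Y}}_\gamma-\partial_u\widetilde{\mathfrak{Y}}_\gamma\big|,
\]
so it suffices to bound each piece by $C\alpha^{7/6}$. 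Throughout, I will restrict to parameters satisfying \eqref{xyualpha0}, so in particular $u\asymp|\log\alpha|^{-1}$; this slack is what will let me absorb the $|\log\alpha|^{O(1)}$ factors that appear below into the weaker exponent $7/6$ (instead of $2$).

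The first term is immediate from the $\mathfrak{z}=u$ case of \Cref{ffgg}: the bound there gives $C\alpha^2/u^3\le C\alpha^2|\log\alpha|^3$, which is dominated by $\alpha^{7/6}$ for $\alpha$ small since $\alpha^{5/6}|\log\alpha|^3\to 0$. So this contribution is harmless.

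For the second piece I will differentiate the product representations used in \eqref{fgammafgamma0} (where $\psi_{\alpha/2}$ and $\psi_0$ are as in \Cref{psi}). Writing
\[
\breve{F}_\gamma
=\int_0^\infty\!\!\int_{-\infty}^\infty(e^{2t/\alpha}-1)^{-\gamma}\,\psi_{\alpha/2}(u,s)^{2\gamma/\alpha}\,h_0(s-\log x)\,h_0\!\left(t-\log y-\log\psi_{\alpha/2}(u,s)\right)ds\,dt,
\]
and similarly for $\widetilde F_\gamma$ with $(\psi_{\alpha/2},h_0((\ldots)-\log\psi_{\alpha/2}))$ replaced by $(\psi_0,h_0((\ldots)-\log\psi_0))$ and the $t$-factor $(e^{2t/\alpha}-1)^{-\gamma}$ by $e^{-2t\gamma/\alpha}$, the chain rule produces two kinds of terms: one with $\partial_u\psi_{\alpha/2}^{2\gamma/\alpha}$ (respectively $\partial_u\psi_0^{2\gamma/\alpha}$) and one with $\psi_{\alpha/2}^{2\gamma/\alpha}\cdot h_0'(\cdots)\cdot\partial_u\log\psi_{\alpha/2}$ (respectively for $\psi_0$). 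I will then interpolate through three intermediate quantities, each replacing one of \emph{(i)} $(e^{2t/\alpha}-1)^{-\gamma}$ by $e^{-2t\gamma/\alpha}$, \emph{(ii)} $h_0(\cdot-\log\psi_{\alpha/2})$ by $h_0(\cdot-\log\psi_0)$ and $\partial_u\log\psi_{\alpha/2}$ by $\partial_u\log\psi_0$, and \emph{(iii)} $\psi_{\alpha/2}^{2\gamma/\alpha}$ by $\psi_0^{2\gamma/\alpha}$ (together with its $u$-derivative). Step \emph{(i)} is controlled by \Cref{talpha0} and the $t$-integral estimates of \Cref{integralk}, exactly as in the bound \eqref{ffgamma10} of \Cref{gammaffgg0}, giving $O(\alpha^2|\log\alpha|)$ after also paying a factor $u^{-1}\lesssim|\log\alpha|$ produced by differentiation. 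Steps \emph{(ii)} and \emph{(iii)} are controlled by \Cref{exponentialsu} via the three estimates on $\log\psi_{\alpha/2}-\log\psi_0$, $\psi_{\alpha/2}^{2\gamma/\alpha}-\psi_0^{2\gamma/\alpha}$, and $\partial_u\psi_{\alpha/2}^{2\gamma/\alpha}-\partial_u\psi_0^{2\gamma/\alpha}$; combined with the Lipschitz and integrability bounds for $h_0,h_0'$ from \Cref{w0walpha}, these are then dominated by the dichotomy used in \eqref{fgamma1gamma0}--\eqref{fgamma2gamma0}: on $|s-\log u|\ge C\alpha|\log\alpha|$ one has $\varpi^{2/\alpha}\le C\alpha^2$, while the complementary window has length $C\alpha|\log\alpha|$, so each term contributes at most $C\alpha^2|\log\alpha|^C$ after also paying the $u^{-1}\lesssim|\log\alpha|$ produced by $\partial_u$.

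Assembling these bounds gives $|\partial_u\breve{\mathfrak{Y}}_\gamma-\partial_u\widetilde{\mathfrak{Y}}_\gamma|\le C\alpha^2|\log\alpha|^C\le C\alpha^{7/6}$, which with the first bullet completes the proof. The main technical obstacle is bookkeeping: $\partial_u$ now acts on \emph{both} the prefactor $\psi_*^{2\gamma/\alpha}$ and the argument of $h_0$, so unlike in \Cref{gammaffgg0} one must track a mixed Leibniz expansion and use the full third estimate of \Cref{exponentialsu} on $\partial_u\psi_{\alpha/2}^{2\gamma/\alpha}-\partial_u\psi_0^{2\gamma/\alpha}$ (not merely the zeroth-order ones); the reason the target is only $\alpha^{7/6}$ rather than $\alpha^2|\log\alpha|^{O(1)}$ is precisely to absorb the factor $u^{-1}\lesssim|\log\alpha|$ produced by $\partial_u$, together with the $|\log\alpha|$ weight lost when bounding the near-resonance window $|s-\log u|\le C\alpha|\log\alpha|$.
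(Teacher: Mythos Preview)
Your overall strategy is right, and the first three replacement steps do give $O(\alpha^2|\log\alpha|^C)$ exactly as you describe. The gap is in the last replacement, where you swap $\partial_u\psi_{\alpha/2}$ for $\partial_u\psi_0$ (or equivalently $\partial_u\psi_{\alpha/2}^{2\gamma/\alpha}$ for $\partial_u\psi_0^{2\gamma/\alpha}$). Look again at the third bound in \Cref{exponentialsu}: it reads $|\partial_u\psi_{\alpha/2}^{2\gamma/\alpha}-\partial_u\psi_0^{2\gamma/\alpha}|\le Cu^{-2\gamma/\alpha-1}\varpi^{2/\alpha}$, with \emph{no} prefactor of $\alpha$ (unlike the first two bounds). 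Your dichotomy therefore only delivers, on the near-resonance window $|s-\log u|\le C\alpha|\log\alpha|$, the contribution $(\text{window length})\cdot u^{-O(1)}\cdot 1=\alpha|\log\alpha|^{O(1)}$, not $\alpha^2|\log\alpha|^{O(1)}$. And $\alpha|\log\alpha|^{O(1)}$ is \emph{not} bounded by $C\alpha^{7/6}$. Your diagnosis of why the target is $\alpha^{7/6}$ is accordingly also off: if every step really gave $\alpha^2|\log\alpha|^C$, then $\alpha^{7/6}$ would be a gross overestimate, not a concession.

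What rescues this step in the paper is a further observation you have not used: on the near-resonance window one has $s\approx\log u\approx-\log|\log\alpha|$, so $s-\log x$ is large and negative (recall $x\in[1/4,3/4]$), and the \emph{double-exponential} left tail of the Gumbel density kicks in: $h_0(s-\log x)=xe^{-s}\exp(-xe^{-s})\le e^{-s-e^{-s}/4}$, which at $s\approx\log u$ is of order $u^{-1}e^{-1/(4u)}\lesssim|\log\alpha|\cdot\alpha^{1/5}$ under \eqref{xyualpha0}. This extra factor $\alpha^{1/5}$ (roughly) is exactly the missing piece: combined with the window length $\alpha|\log\alpha|$ and the polynomial $u^{-O(1)}=|\log\alpha|^{O(1)}$, it gives $\alpha^{6/5}|\log\alpha|^{O(1)}\le C\alpha^{7/6}$. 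The paper isolates this step as a separate intermediate term $\mathfrak{X}_3$ precisely to exploit this decay; the other three interpolations are handled by the crude $|h_0|\le C$ bound as you outline.
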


	\begin{proof}[Proof of \Cref{gammaffgg}]
		In view of \Cref{ffgg}, it suffices to show that 
		\begin{flalign}
			\label{hzk0} 
			\big| \partial_{\mathfrak{z}} \breve{\mathfrak{Y}}_{\gamma} (u^{2/\alpha}, x, y) - \partial_{\mathfrak{z}} \widetilde{\mathfrak{Y}}_{\gamma} (u, x, y) \big| \le \displaystyle\frac{C \alpha^2 |\log \alpha|}{\mathfrak{z} u^2}.
		\end{flalign}
		
		\noindent We again  assume $\mathfrak{Y} = F)$ and only address the case $\mathfrak{z} = x$, as the proofs in the remaining situations are entirely analogous (for the proof when $H = G$, one uses the second bound in \eqref{talphae} instead of the first below). Following \eqref{fgammaintegral2alpha00} and using \eqref{psius}, we find 
		\begin{flalign}
			\label{fgammafgamma} 
			\begin{aligned}
				\breve{F}_{\gamma} (u^{2/\alpha}, x, y) & = \displaystyle\int_0^{\infty} \displaystyle\int_{-\infty}^{\infty} (e^{2t/\alpha} - 1)^{-\gamma} \psi_{\alpha/2} (u, s)^{2 \gamma / \alpha} h_0 (s - \log x) \\
				& \qquad \qquad \qquad \times h_0 \big( t - \log y + \log \psi_{\alpha/2} (u,s) \big) ds dt; \\
				\widetilde{F}_{\gamma} (u, x, y) & = \displaystyle\int_0^{\infty} \displaystyle\int_{-\infty}^{\infty} e^{-2t\gamma / \alpha} \psi_0 (u, s)^{2\gamma / \alpha} h_0 (s - \log x) h_0 \big( t - \log y - \log \psi_0 (u, s) \big) ds dt.
			\end{aligned} 
		\end{flalign}
		
		\noindent Further setting
		\begin{flalign*}
			\widetilde{F}_{\gamma, 1} (u, x, y) & = \displaystyle\int_0^{\infty} \displaystyle\int_{-\infty}^{\infty} e^{-2t \gamma/\alpha} \psi_{\alpha/2} (u, s)^{2\gamma / \alpha} h_0 (s - \log x) h_0 \big( t - \log y - \log \psi_{\alpha/2} (u, s) \big) ds dt; \\
			\widetilde{F}_{\gamma, 2} (u, x, y) & = \displaystyle\int_0^{\infty} \displaystyle\int_{-\infty}^{\infty} e^{-2t \gamma / \alpha} \psi_0 (u, s)^{2\gamma / \alpha} h_0 (s - \log x) h_0 \big( t - \log y - \log \psi_{\alpha/2} (u, s) \big) ds dt,
		\end{flalign*}
		
		\noindent we have 
		\begin{flalign}
			\label{ff1f2}
			\begin{aligned}
				\big| \partial_x \breve{F}_{\gamma} (u^{2/\alpha}, x, y) - \partial_x \widetilde{F}_{\gamma} (u, x, y) \big| & \le \big| \partial_x \breve{F}_{\gamma} (u^{2/\alpha}, x, y) - \partial_x \widetilde{F}_{\gamma, 1} (u, x, y) \big| \\
				& \qquad  + \big| \partial_x \widetilde{F}_{\gamma, 1} (u, x, y) - \partial_x \widetilde{F}_{\gamma, 2} (u, x, y) \big| \\
				& \qquad + \big| \partial_x \widetilde{F}_{\gamma, 2} (u, x, y) - \partial_x \widetilde{F}_{\gamma} (u, x, y) \big|. 
			\end{aligned}
		\end{flalign} 
		
		To bound the first term on the right side of \eqref{ff1f2}, observe from the bounds $\psi_{\alpha/2} (u, s) \le u^{-1}$ and $\gamma \le \alpha$ that 
		\begin{flalign}
			\label{fgammafgamma1} 
			\begin{aligned} 
				\big| \partial_x \breve{F}_{\gamma} (u^{2/\alpha}, x, y) - \partial_x \widetilde{F}_{\gamma, 1} (u, x, y) \big| & \le \displaystyle\frac{1}{xu^2} \displaystyle\int_0^{\infty} \displaystyle\int_{-\infty}^{\infty} \big| h_0' (s - \log x) \big| \cdot \big| (e^{2t/\alpha} - 1)^{-\gamma} - e^{-2t\gamma / \alpha} \big| \\
				& \qquad \qquad \qquad \qquad \times  h_0 \big( t - \log y + \log \psi_{\alpha/2} (u, s) \big) ds dt.
			\end{aligned} 
		\end{flalign}
		
		\noindent By \Cref{talpha0} and the fact that $\sup_{w \in \mathbb{R}} h_0 (w) \le C$, we have  
		\begin{flalign}
			\label{t2alpha1t2alpha} 
			\begin{aligned} 
				\displaystyle\int_0^{\infty} & \big| (e^{2t/\alpha} - 1)^{-\gamma} - e^{-2t\gamma / \alpha} \big| \cdot h_0 \big( t - \log y + \log \psi_{\alpha/2} (u, s) \big) dt \\
				& \le C \displaystyle\int_0^{\infty} \big( |t|^{-\gamma} \cdot \mathbbm{1}_{t \le \alpha^3} + \alpha |\log \alpha| \cdot \mathbbm{1}_{t \le \alpha} + \alpha e^{-2t / \alpha} \cdot \mathbbm{1}_{t > \alpha} \big) dt \le C \alpha^2 |\log \alpha|,
			\end{aligned}
		\end{flalign}
		
		\noindent which together with \Cref{integralk} and \eqref{fgammafgamma1} shows that 
		\begin{flalign}
			\label{ffgamma1}
			\big| \partial_x \breve{F}_{\gamma} (u^{2/\alpha}, x, y) - \partial_x \widetilde{F}_{\gamma, 1} (u, x, y) \big| \le \displaystyle\frac{C \alpha^2 |\log \alpha|}{x u^2}.
		\end{flalign}
		
		To bound the second term in \eqref{ff1f2}, observe from \Cref{integralk} that 
		\begin{flalign*} 
			& \big| \partial_x \widetilde{F}_{\gamma, 1} (u, x, y) - \partial_x \widetilde{F}_{\gamma, 2} (u, x, y) \big| \\
			& \qquad  \le \displaystyle\frac{1}{x} \displaystyle\int_0^{\infty} \displaystyle\int_{-\infty}^{\infty} e^{-2t \gamma / \alpha}  \cdot h_0 \big(t - \log y + \log \psi_{\alpha/2} (u, s) \big) \\
			& \qquad \qquad \qquad \qquad \times \big| h_0' (s - \log x) \big| \cdot \big| \psi_{\alpha/2} (u, s)^{2 \gamma / \alpha} - \psi_0 (u, s)^{2\gamma / \alpha} \big| ds dt \\
			& \qquad \le \displaystyle\frac{C}{x} \displaystyle\int_{-\infty}^{\infty} \big| h_0' (s-\log x) \big| \cdot \big| \psi_{\alpha/2} (u, s)^{2\gamma / \alpha} - \psi_0 (u, s)^{2 \gamma / \alpha} \big| ds dt.
		\end{flalign*} 
		
		\noindent This; \Cref{exponentialsu}; the bound $\psi_0 (u, s) \le u^{-1}$; the facts that $\gamma \le \alpha$ and $\varpi^{2/\alpha} \le C \alpha^2$ (recall \eqref{su0}) when $|s - \log u| \ge C \alpha |\log \alpha|$ (as then $\varpi \le 1 - C \alpha |\log \alpha| + O \big(\alpha^2 |\log \alpha| \big)$); and the estimate $\sup_{w \in \mathbb{R}} \big| h_0' (w) \big| \le C$ then together yield
		\begin{flalign}
			\label{fgamma1gamma} 
			\begin{aligned}
				\big| \partial_x \widetilde{F}_{\gamma, 1} (u, x, y) - \partial_x \widetilde{F}_{\gamma, 2} (u, x, y) \big| & \le \displaystyle\frac{C \alpha}{x} \displaystyle\int_{|s - \log u| \le C \alpha |\log \alpha|} \psi_0 (u, s)^{2\gamma / \alpha} \big| h_0' (s - \log x) \big| ds \\
				& \qquad + \displaystyle\frac{C \alpha^2}{x} \displaystyle\int_{-\infty}^{\infty} \psi_0 (u, s)^{2\gamma / \alpha} \big| h_0' (s - \log x) \big| ds \le \displaystyle\frac{C \alpha^2 |\log \alpha|}{x u^2}.
			\end{aligned}
		\end{flalign}
		
		To bound the third term in \eqref{ff1f2}, we first use the fact that $h_0$ is $1$-Lipschitz to find that 
		\begin{flalign*}
			\big| \partial_x \widetilde{F}_{\gamma, 2} & (u, x, y) - \partial_x \widetilde{F}_{\gamma} (u, x, y) \big| \\ 
			& \le \displaystyle\frac{1}{x} \displaystyle\int_0^{\infty} \displaystyle\int_{-\infty}^{\infty} e^{-2t \gamma / \alpha} \cdot \psi_0 (u, s)^{2 \gamma / \alpha} \cdot \big| h_0' (s - \log x) \big| \\ 
			& \qquad \qquad \qquad \times \big| \log \psi_{\alpha/2} (u, s) - \log \psi_0 (u, s) \big| ds dt \\ 
			& = \displaystyle\frac{\alpha}{2 \gamma x} \displaystyle\int_{-\infty}^{\infty} \psi_0 (u, s)^{2\gamma / \alpha} \cdot \big| h_0' (s - \log x) \big| \cdot \big| \log \psi_{\alpha/2} (u, s)^{2 \gamma / \alpha} - \log \psi_0 (u, s)^{2 \gamma / \alpha} \big|ds.
		\end{flalign*}
		
		\noindent By \Cref{exponentialsu}, the bound $\varpi^{2/\alpha} \le C \alpha^2$ (recall \eqref{su0}) when $|s - \log u| \ge C \alpha |\log \alpha|$ (by the above), and the fact that $\psi_0 (u, s) \le u^{-1}$, it follows analogously to in \eqref{fgamma1gamma} that 
		\begin{flalign}
			\label{fgamma2gamma} 
			\big| \partial_x \widetilde{F}_{\gamma, 2} (u, x, y) - \partial_x \widetilde{F}_{\gamma} (u, x, y) \big| \le \displaystyle\frac{C \alpha}{u^{2 \gamma / \alpha} x} \displaystyle\int_{-\infty}^{\infty} \big| h_0' (s - \log x) \big| \varpi^{2/\alpha} ds \le \displaystyle\frac{C \alpha^2 |\log \alpha|}{u^2 x}.
		\end{flalign}
		
		\noindent The proposition then follows from combining \eqref{ff1f2}, \eqref{ffgamma1}, \eqref{fgamma1gamma}, and \eqref{fgamma2gamma}.
	\end{proof} 
	
	\begin{proof}[Proof of \Cref{gammaffggu}]
		
		The proof of this proposition is similar to that of \Cref{gammaffgg}; the differences can be attributed to the fact that the third bound in \eqref{psialpha2psi0} lacks a power of $\alpha$ when compared to the first and second parts. To this address this, we will use the decay of $h_0$ and the assumption \eqref{xyualpha0}. First observe by \Cref{ffgg} (and \eqref{xyualpha0}) that it suffices to show for any index $\mathfrak{Y} \in \{ F, G \}$ that 
		\begin{flalign*} 
			\big| \partial_u \breve{\mathfrak{Y}}_{\gamma} (u^{2/\alpha}, x, y) - \partial_u \widetilde{\mathfrak{Y}} (u, x, y) \big| \le C \alpha^{7/6}.
		\end{flalign*} 
		
		\noindent We will only consider the case $\mathfrak{Y} = F$, as the case $\mathfrak{Y} = G$ is entirely analogous. By \eqref{fgammafgamma}, we have
		\begin{flalign*}
			\partial_u \breve{F}_{\gamma} (u^{2/\alpha}, x, y) & = \displaystyle\int_0^{\infty} \displaystyle\int_{-\infty}^{\infty} (e^{2t/\alpha} - 1)^{-\gamma} \cdot h_0 (s - \log x) \cdot \psi_{\alpha/2} (u)^{2 \gamma / \alpha - 1} \cdot  \partial_u \psi_{\alpha/2} (u) \\
			& \qquad \times \bigg( h_0' \big(t - \log y + \log \psi_{\alpha/2} (u, s) \big) + \displaystyle\frac{2 \gamma}{\alpha}  \cdot h_0 \big( t - \log y + \log \psi_{\alpha/2} (u, s) \big)  \bigg) ds dt;\\
			\partial_u \widetilde{F}_{\gamma} (u, x, y) & = \displaystyle\int_0^{\infty} \displaystyle\int_{-\infty}^{\infty} e^{-2t \gamma/\alpha} \cdot h_0 (s - \log x) \cdot \psi_0 (u)^{2 \gamma / \alpha - 1} \cdot \partial_u \psi_0 (u) \\
			& \qquad \quad \times \bigg( h_0' \big( t - \log y + \log \psi_0 (u, s) \big) + \displaystyle\frac{2\gamma}{\alpha} \cdot h_0 \big( t - \log y + \log \psi_0 (u, s) \big) \bigg) ds dt.		
		\end{flalign*}
		
		\noindent Further define
		\begin{flalign*}
			\mathfrak{X}_1 & = \displaystyle\int_0^{\infty} \displaystyle\int_{-\infty}^{\infty} e^{-2t \gamma /\alpha} \cdot h_0 (s - \log x) \cdot \psi_{\alpha/2} (u, s)^{2 \gamma / \alpha - 1} \cdot  \partial_u \psi_{\alpha/2} (u, s) \\
			& \qquad \times \bigg( h_0' \big(t - \log y + \log \psi_{\alpha/2} (u, s) \big) + \displaystyle\frac{2 \gamma}{\alpha}  \cdot h_0 \big( t - \log y + \log \psi_{\alpha/2} (u, s) \big)  \bigg) ds dt; \\
			\mathfrak{X}_2 & = \displaystyle\int_0^{\infty} \displaystyle\int_{-\infty}^{\infty} e^{-2 t \gamma/\alpha} \cdot h_0 (s - \log x) \cdot \psi_0 (u, s)^{2 \gamma / \alpha - 1} \cdot  \partial_u \psi_{\alpha/2} (u, s) \\
			& \qquad \times \bigg( h_0' \big(t - \log y + \log \psi_{\alpha/2} (u, s) \big) + \displaystyle\frac{2 \gamma}{\alpha}  \cdot h_0 \big( t - \log y + \log \psi_{\alpha/2} (u, s) \big)  \bigg) ds dt;\\
			\mathfrak{X}_3 & = \displaystyle\int_0^{\infty} \displaystyle\int_{-\infty}^{\infty} e^{-2 t \gamma/\alpha} \cdot h_0 (s - \log x) \cdot \psi_0 (u, s)^{2 \gamma / \alpha - 1} \cdot  \partial_u \psi_{\alpha/2} (u, s) \\
			& \qquad \times \bigg( h_0' \big(t - \log y + \log \psi_0 (u, s) \big) + \displaystyle\frac{2 \gamma}{\alpha}  \cdot h_0 \big( t - \log y + \log \psi_0 (u, s) \big)  \bigg) ds dt,
		\end{flalign*}
		
		\noindent and we have 
		\begin{flalign}
			\label{fx1x2x3f}
			\begin{aligned}
				\big| \partial_u \breve{F}_{\gamma} (u^{2/\alpha}, x, y) - \partial_u \widetilde{F}_{\gamma} (u, x, y) \big| & \le \big| \partial_u \breve{F}_{\gamma} (u^{2/\alpha}, x, y) - \mathfrak{X}_1 \big| + |\mathfrak{X}_1 - \mathfrak{X}_2| + |\mathfrak{X}_2 - \mathfrak{X}_3|  \\
				& \qquad +  \big| \mathfrak{X}_3 - \partial_u \widetilde{F}_{\gamma} (u, x, y) \big|.
			\end{aligned}
		\end{flalign}
		
		We will show that 
		\begin{flalign}
			\label{fx1x2x3}
			\begin{aligned} 
				\big| \partial_u \breve{F}_{\gamma} (u, x, y) - \mathfrak{X}_1 \big| \le C u^{-5} \alpha^2; & \qquad |\mathfrak{X}_1 - \mathfrak{X}_2|  \le C u^{-5} \alpha^2; \qquad |\mathfrak{X}_2 - \mathfrak{X}_3| \le C u^{-5} \alpha^2; \\
				&  \big| \mathfrak{X}_3 - \widetilde{F}_{\gamma} (u, x, y) \big| \le C \alpha^{7/6}.
			\end{aligned} 
		\end{flalign}
		
		\noindent The proofs of the first, second, and third bounds in \eqref{fx1x2x3} are similar to those of \eqref{ffgamma1}, \eqref{fgamma1gamma}, and \eqref{fgamma2gamma}. So, let us only detail the verification of the first (and fourth) bound(s) in \eqref{fx1x2x3}. To that end, observe that 
		\begin{flalign}
			\label{fugammaxyx1}
			\begin{aligned} 
				\big| \partial_u \breve{F}_{\gamma} (u, x, y) - \mathfrak{X}_1 \big| & \le \displaystyle\int_0^{\infty} \displaystyle\int_{-\infty}^{\infty} \big| (e^{2t / \alpha} - 1)^{-\gamma} - e^{-2t\gamma / \alpha} \big| \cdot h_0 (s - \log x) \psi_{\alpha/2} (u)^{2\gamma / \alpha- 1}  \big| \partial_u \psi_{\alpha/2} (u) \big| \\
				& \qquad \times \bigg( \Big| h_0' \big( t - \log y + \psi_{\alpha/2} (u, s) \big) \Big| + \displaystyle\frac{2 \gamma}{\alpha} \cdot h_0 \big( t - \log y + \psi_{\alpha/2} (u, s) \big) \bigg) ds dt \\
				& \le \displaystyle\frac{C}{u^4} \displaystyle\int_0^{\infty} \displaystyle\int_{-\infty}^{\infty} \big| ( e^{2t/\alpha} - 1)^{-\gamma} - e^{-2t \gamma / \alpha} \big| \cdot h_0 (s - \log x) \\
				&\qquad \qquad \quad \times \bigg( \Big| h_0' \big( t - \log y + \psi_{\alpha/2} (u, s) \big) \Big| + h_0 \big( t - \log y + \psi_{\alpha/2} (u, s) \big) \bigg) ds dt,
			\end{aligned}	
		\end{flalign}
		
		\noindent where in the second inequality we used the facts that 
		\begin{flalign}
			\label{psialpha2} 
			\big| \psi_{\alpha/2} (u) \big| + \big| \psi_0 (u) \big| \le 2u^{-1}; \qquad \big| \partial_u \psi_{\alpha/2} (u) \big| + \big| \partial_u \psi_0 (u) \big| \le \frac{4 \gamma}{\alpha} \cdot u^{-2\gamma - 1} \le 4 u^{-3}.
		\end{flalign}
		
		\noindent As in \eqref{t2alpha1t2alpha}, we have since $\sup_{w \in \mathbb{R}} \big( \big| h_0 (w) \big| + \big| h_0' (w) \big| \big) \le C$ that 
		\begin{flalign*}
			\displaystyle\sup_{K \in \mathbb{R}} \displaystyle\int_0^{\infty} & \big| (e^{2t/\alpha} - 1)^{-\gamma} - e^{-2t\gamma / \alpha} \big| \cdot \Big( h_0 (t - K) + \big| h_0' (t - K) \big| \Big) dt \\
			& \le C \displaystyle\int_0^{\infty} \big( t^{-\gamma} \cdot \mathbbm{1}_{t \le \alpha^3} + \alpha |\log \alpha| \cdot \mathbbm{1}_{t \le \alpha} + \alpha e^{-2t/\alpha} \cdot \mathbbm{1}_{t > \alpha} \big) dt  \le  C \alpha^2 |\log \alpha|,
		\end{flalign*} 
		
		\noindent which together with \eqref{fugammaxyx1}, \Cref{integralk}, and \eqref{xyualpha0} implies the first bound in \eqref{fx1x2x3}. As mentioned previously, the proofs of the second and third are very similar to those of \eqref{fgamma1gamma} and \eqref{fgamma2gamma}, respectively, and are thus omitted.
		
		To establish the fourth, observe that 
		\begin{flalign*}
			\big| \mathfrak{X}_3 - \partial_u \widetilde{F}_{\gamma} (u, x, y) \big| & \le 2 \displaystyle\int_0^{\infty} \displaystyle\int_{-\infty}^{\infty} e^{-2t\gamma / \alpha} \cdot h_0 (s - \log x) \cdot \psi_0 (u, s)^{2 \gamma / \alpha} \cdot \big| \partial_u \psi_{\alpha/2} (u, s) - \partial_u \psi_0 (u, s) \big| \\
			& \qquad \times \bigg( \Big|  h_0' \big( t - \log y + \log \psi_0 (u, s) \big) \Big| + h_0 \big( t - \log y + \log \psi_0 (u, s) \big) \bigg) ds dt.
		\end{flalign*}
		
		\noindent Using \Cref{integralk} to bound the integral with respect to $t$, it follows that 
		\begin{flalign*}
			\big| \mathfrak{X}_3 - \partial_u \widetilde{F}_{\gamma} (u, x, y) \big| & \le C \displaystyle\int_{-\infty}^{\infty} h_0 (s - \log x) \cdot \psi_0 (u, s)^{2\gamma / \alpha} \cdot \big| \partial_u \psi_{\alpha/2} (u, s) - \partial_u \psi_0 (u, s) \big| ds dt \\
			& \le C u^{-2} \displaystyle\int_{-\infty}^{\infty} e^{-s - e^{-s}/4} \cdot \big| \partial_u \psi_{\alpha/2} (u, s) - \partial_u \psi_0 (u, s) \big| ds,
		\end{flalign*}
		
		\noindent where in the last inequality we used \eqref{psialpha2}; the fact that $\gamma \le \alpha$; the explicit form for $h_0 (w) = e^{-w - e^{-w}}$; and the fact that $x \ge \frac{1}{4}$ (from \eqref{xyualpha0}). Applying \Cref{exponentialsu} and the fact that $\varpi^{2/\alpha} \le \alpha^2$ (recall \eqref{su0}) if $|s - \log u| > C \alpha |\log \alpha|$ (as then $\varpi \le 1 - C \alpha |\log \alpha| + O \big( \alpha^2 |\log \alpha| \big)$), it follows for sufficiently small $\alpha$ (the bound holds by compactness for $\alpha$ bounded away from $0$) that 
		\begin{flalign*}
			\big| \mathfrak{X}_3 - \partial_u \widetilde{F}_{\gamma} (u, s) \big| & \le C u^{-2 \gamma / \alpha - 3} \bigg( \displaystyle\int_{|s - \log u| \le C \alpha |\log \alpha|} e^{-s - e^{-s} / 4} ds + \alpha^2 \displaystyle\int_{-\infty}^{\infty} e^{-s - e^{-s/4}} ds \bigg) \\
			& \le C u^{-5} \Big( \alpha |\log \alpha| \cdot \displaystyle\max_{|s - \log u| < 1/100} e^{-s - e^{-s}/4} ds +  \alpha^2 \Big) \\
			& \le C u^{-5} \Big( \alpha |\log \alpha| \cdot e^{1/5u -\log u} + \alpha^2 \big) \le C \alpha^{7/6},
		\end{flalign*}
		
		\noindent where in the last two estimates we applied the bound on $u$ from \eqref{xyualpha0}. This verifies \eqref{fx1x2x3}, which together with \eqref{xyualpha0} and \eqref{fx1x2x3f} yields the proposition. 
	\end{proof}

	\subsection{Derivative Estimates for $a$ and $b$}
	
	\label{EstimateAlpha0ab}

	In this section we use \eqref{fba}, \Cref{gammaffgg}, and \Cref{gammaffggu} to establish the following bounds on the derivatives of $a (u^{2/\alpha})$ and $b (u^{2/\alpha})$ (recall \eqref{opaque}). 
	
	\begin{prop}
		
		\label{abderivativeu0}
		
		There exists constants $c > 0$ and $C > 1$ such that for $\alpha \in (0, c)$, we have 
		\begin{flalign*}
			\bigg| \partial_u a (u^{2/\alpha}) + \displaystyle\frac{e^{-1/u}}{4u^3} \bigg| \le C \alpha^{7/6} ; \qquad \bigg| \partial_u b - \displaystyle\frac{e^{-1/u}}{4u^3} (1 - 2u) \bigg| \le C \alpha^{7/6}.
		\end{flalign*}
		
	\end{prop}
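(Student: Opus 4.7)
The plan is to differentiate the fixed-point system \eqref{fba} with respect to $u$, convert this into a linear system for the pair $\big(\partial_u a(u^{2/\alpha}),\partial_u b(u^{2/\alpha})\big)$, replace every appearance of $F_{\alpha/2}$ and $G_{\alpha/2}$ (and their derivatives) by the explicit Gumbel-level quantities $\widetilde F_{\alpha/2}$ and $\widetilde G_{\alpha/2}$ from \Cref{fuxyguxy2} via \Cref{gammaffgg} and \Cref{gammaffggu}, and finally solve the resulting nearly-diagonal $2\times 2$ system. Throughout we will freely use \Cref{abdalpha0}, which under the standing assumption \eqref{xyualpha0} provides $a=\tfrac12+O(\alpha^{8/9})$ and $b=\tfrac12+O(\alpha^{8/9})$, so that we may evaluate the explicit derivatives of $\widetilde F_{\alpha/2}$ and $\widetilde G_{\alpha/2}$ at $(x,y)=(\tfrac12,\tfrac12)$ and absorb the $O(\alpha^{8/9})$ displacement into an error term after noting that these derivatives are $C^1$ in $(x,y)$ on a neighborhood of $(\tfrac12,\tfrac12)$.

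First, viewing $a,b$ as differentiable functions of $u$ (this differentiability follows, as in \Cref{uniqueness}, from the implicit function theorem applied to the map $(u,a,b)\mapsto (F_{\alpha/2}-a, G_{\alpha/2}-b)$, whose Jacobian in $(a,b)$ we will see below is a small perturbation of $\mathrm{diag}(2,2)$), differentiating \eqref{fba} in $u$ yields
\begin{equation*}
\begin{pmatrix} 1-\partial_x F_{\alpha/2} & -\partial_y F_{\alpha/2}\\[1mm] -\partial_x G_{\alpha/2} & 1-\partial_y G_{\alpha/2}\end{pmatrix}\begin{pmatrix}\partial_u a\\[1mm] \partial_u b\end{pmatrix} \;=\; \begin{pmatrix}\partial_u F_{\alpha/2}\\[1mm] \partial_u G_{\alpha/2}\end{pmatrix},
\end{equation*}
where all partials of $F_{\alpha/2},G_{\alpha/2}$ are evaluated at $\big(u^{2/\alpha},a(u^{2/\alpha}),b(u^{2/\alpha})\big)$. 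From the explicit formulas in \Cref{fgu} together with $a=b=\tfrac12+O(\alpha^{8/9})$, direct differentiation gives $\partial_x\widetilde F_{\alpha/2}=-1+O(e^{-1/u})$, $\partial_y\widetilde F_{\alpha/2}=O(e^{-1/u})$, $\partial_x\widetilde G_{\alpha/2}=O(e^{-1/u})$, and $\partial_y\widetilde G_{\alpha/2}=-1+O(e^{-1/u})$ at $(\tfrac12,\tfrac12)$; combining with \Cref{gammaffgg} (which contributes $O(\alpha^2|\log\alpha|/u^2)=O(\alpha^{2}|\log\alpha|^3)$) and a Lipschitz perturbation of size $O(\alpha^{8/9})$ in the $(x,y)$ arguments, the system matrix equals $2\,\mathrm{Id}+\mathcal{E}$ with $\|\mathcal{E}\|=O(\alpha^{8/9})$, hence is invertible with inverse $\tfrac12\mathrm{Id}+O(\alpha^{8/9})$.

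Next, evaluating the derivatives of the explicit formulas in \Cref{fgu} at $(x,y)=(\tfrac12,\tfrac12)$ (so that $x+y=1$) one finds after a short calculation
\begin{equation*}
\partial_u \widetilde F_{\alpha/2}\big|_{(u,1/2,1/2)} \;=\; -\frac{e^{-1/u}}{2u^3},\qquad \partial_u \widetilde G_{\alpha/2}\big|_{(u,1/2,1/2)} \;=\; \frac{e^{-1/u}(1-2u)}{2u^3}.
\end{equation*}
\Cref{gammaffggu} then gives $\partial_u F_{\alpha/2}=-e^{-1/u}/(2u^3)+O(\alpha^{7/6})$ and $\partial_u G_{\alpha/2}=e^{-1/u}(1-2u)/(2u^3)+O(\alpha^{7/6})$ at the shifted point, and the Lipschitz control of $\partial_u\widetilde F_{\alpha/2},\partial_u\widetilde G_{\alpha/2}$ in $(x,y)$ absorbs the $O(\alpha^{8/9})$ displacement in $(a,b)$ (the derivatives of these Lipschitz constants are themselves proportional to $e^{-1/u}/u^k\lesssim \alpha|\log\alpha|^k$, so the induced error is $O(\alpha\cdot\alpha^{8/9}|\log\alpha|^k)\ll\alpha^{7/6}$). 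Multiplying by the inverse matrix $\tfrac12\mathrm{Id}+O(\alpha^{8/9})$ produces the claimed leading terms $-e^{-1/u}/(4u^3)$ and $e^{-1/u}(1-2u)/(4u^3)$ with residual error $O(\alpha^{7/6})+O(\alpha^{8/9}\cdot e^{-1/u}/u^3)=O(\alpha^{7/6})$, since $u\sim|\log\alpha|^{-1}$ forces $e^{-1/u}/u^3=O(\alpha|\log\alpha|^3)$ and $\alpha^{8/9}\cdot\alpha|\log\alpha|^3=o(\alpha^{7/6})$.

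The main obstacle is the bookkeeping in the last step: the ``leading'' quantities $e^{-1/u}/(4u^3)$ are only of order $\alpha|\log\alpha|^3$, so one must verify that every perturbation — the $O(\alpha^2|\log\alpha|/\mathfrak{z}u^2)$ error in \Cref{gammaffgg}, the $O(\alpha^{7/6})$ error in \Cref{gammaffggu}, the $O(\alpha^{8/9})$ shift of $(a,b)$ away from $(\tfrac12,\tfrac12)$, and the inversion of the near-diagonal $2\times 2$ matrix — contributes at most $O(\alpha^{7/6})$ after multiplication by the inverse. The two delicate contributions are the shift of $(a,b)$ through $\partial_u\widetilde F_{\alpha/2}$ (which requires checking that the Lipschitz constants of $\partial_u\widetilde F_{\alpha/2}$ and $\partial_u\widetilde G_{\alpha/2}$ in $(x,y)$ are no worse than $e^{-1/u}/u^{O(1)}$, easily read off from \Cref{fgu}) and the off-diagonal contribution from $\partial_y\widetilde F_{\alpha/2}=O(e^{-1/u})$ multiplied by $\partial_u b$; the latter is self-referential but resolves through the explicit $2\times 2$ inversion since the off-diagonal entries of the system matrix are themselves $O(\alpha|\log\alpha|^3)$.
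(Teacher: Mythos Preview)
Your proposal is correct and follows essentially the same approach as the paper: differentiate the fixed-point system \eqref{fba}, replace the derivatives of $F_{\alpha/2},G_{\alpha/2}$ by those of $\widetilde F_{\alpha/2},\widetilde G_{\alpha/2}$ via \Cref{gammaffgg} and \Cref{gammaffggu}, use \Cref{abdalpha0} to evaluate at $(x,y)\approx(\tfrac12,\tfrac12)$, and solve the resulting near-diagonal $2\times2$ system. The paper packages the approximation error as $a=\widetilde F_{\alpha/2}(u,a,b)+\alpha^{7/6}\mathfrak f(u,a,b)$ with $\mathfrak f$ having bounded value and bounded first derivatives (which is exactly what \Cref{gammaffgg0}, \Cref{gammaffgg}, and \Cref{gammaffggu} provide), then differentiates that identity directly and simplifies; your matrix formulation is an equivalent repackaging of the same computation, and your explicit evaluations $\partial_u\widetilde F_{\alpha/2}|_{(u,1/2,1/2)}=-e^{-1/u}/(2u^3)$ and $\partial_u\widetilde G_{\alpha/2}|_{(u,1/2,1/2)}=e^{-1/u}(1-2u)/(2u^3)$ match the paper's right-hand sides.
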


	\begin{proof}
		
		Throughout this proof, we abbreviate $a = a(u^{2/\alpha})$ and $b = b(u^{2/\alpha})$; we also set $d = d(u) = a + b$. Then, \Cref{e0alpha0u} and \Cref{abdalpha0} together imply that $(a, b, u)$ satisfy the constraints \eqref{xyualpha0} on $(x, y, u)$. Next, by \eqref{fba}, \Cref{gammaffgg} (with \eqref{xyualpha0}), \Cref{gammaffggu}, and \Cref{fgu}, there exist functions $\mathfrak{f} (u, x, y)$ and $\mathfrak{g} (u, x, y)$ such that 
		\begin{flalign}
			\label{fwgwhw} 
			\displaystyle\sup_{w \in \mathbb{R}^3} \Big( \big| \mathfrak{f} (w) \big| + \big| \mathfrak{g} (w) \big| \Big) \le C; \qquad \displaystyle\sup_{w \in \mathbb{R}^3} \displaystyle\max_{\mathfrak{z} \in \{ u, x, y \}} \Big( \big| \partial_{\mathfrak{z}} \mathfrak{f} (w) \big| + \big| \partial_{\mathfrak{z}} \mathfrak{g} (w) \big|\Big) \le C,
		\end{flalign}
		
		\noindent and 
		\begin{flalign*} 
			a = F_{\alpha/2} (u^{2/\alpha}, a, b) & = \widetilde{F}_{\alpha/2} (u, a, b) + \alpha^{7/6} \cdot \mathfrak{f} (u, a, b) \\
			&  = \displaystyle\frac{b}{(a+b)^2} - e^{-(a+b)/u} \bigg( \displaystyle\frac{b}{(a+b)^2} + \displaystyle\frac{b}{(a+b) u} \bigg) + \alpha^{7/6} \cdot \mathfrak{f} (u, a, b),
		\end{flalign*}
		
		\noindent and 
		\begin{flalign*}
			b = G_{\alpha/2} (u^{2/\alpha}, a, b) & = \widetilde{G}_{\alpha/2} (u, a, b) + \alpha^{7/6} \cdot \mathfrak{g} (u, a, b) \\
			& = \displaystyle\frac{a}{(a+b)^2} + e^{-(a+b)/u} \bigg( \displaystyle\frac{b}{(a+b) u} - \displaystyle\frac{a}{(a+b)^2}\bigg) + \alpha^{7/6} \cdot \mathfrak{g} (u, a, b). 
		\end{flalign*}
		
		Setting $a' = \partial_u a (u^{2/\alpha})$, $b' = \partial_u b (u^{2/\alpha})$, and $d' = d' (u)$, we obtain from differentiating \eqref{a01} and \eqref{b01} that 
		\begin{flalign}
			\label{a01} 
			\begin{aligned} 
				a' & = \displaystyle\frac{b'}{d^2} - \displaystyle\frac{2b (a'+b')}{d^3} - e^{-d / u} \bigg( \displaystyle\frac{d}{u^2} - \displaystyle\frac{d'}{u} \bigg) \bigg( \displaystyle\frac{b}{d^2} + \displaystyle\frac{b}{du} \bigg) - e^{-d/u} \bigg( \displaystyle\frac{b'}{d^2} - \displaystyle\frac{2b d'}{d^3} + \displaystyle\frac{b'}{du} - \displaystyle\frac{bd'}{d^2 u} - \displaystyle\frac{b}{du^2} \bigg)  \\
				& \qquad + \alpha^{7/6} \cdot \big( \partial_u \mathfrak{f} (u, a, b) + a' \cdot \partial_a \mathfrak{f} (u, a, b) + b' \cdot \partial_b \mathfrak{f} (u, a, b) \big),
			\end{aligned} 
		\end{flalign}
		
		\noindent and 
		\begin{flalign}
			\label{b01} 
			\begin{aligned} 
				b' & = \displaystyle\frac{a'}{d^2} - \displaystyle\frac{2a(a'+b')}{d^3} + e^{-d/u} \bigg( \displaystyle\frac{d}{u^2} - \displaystyle\frac{d'}{u} \bigg) \bigg( \displaystyle\frac{b}{du} - \displaystyle\frac{a}{d^2} \bigg) + e^{-d/u} \bigg( \displaystyle\frac{b'}{du} - \displaystyle\frac{bd'}{d^2 u} - \displaystyle\frac{a}{du^2} - \displaystyle\frac{a'}{d^2} + \displaystyle\frac{2ad'}{d^3} \bigg) \\
				& \qquad + \alpha^{7/6} \cdot \big( \partial_u \mathfrak{g} (u, a, b) + a' \cdot \partial_a \mathfrak{g} (u, a, b) + b' \cdot \partial_b \mathfrak{g} (u, a, b) \big).
			\end{aligned} 
		\end{flalign} 
		
		Using the bounds $\big| a - \frac{1}{2} \big| + \big| b - \frac{1}{2} \big| \le C \alpha^2 |\log \alpha|$ from \eqref{abdalpha0}, together with the estimate on $u$ from \eqref{xyualpha0}, it follows from \eqref{a01} that 
		\begin{flalign*}
			a' \cdot \big( 2 + O (u^{-2} e^{-d/u}) + O (\alpha^{7/6}) \big) + b \cdot \big( & O(u^{-2} e^{-d/u}) + O (\alpha^{7/6}) \big) \\
			& = e^{-d/u} bu^{-3} + O(\alpha^{7/6}) = \frac{e^{-d/u}}{2u^3} + O(\alpha^{7/6}),
		\end{flalign*}
		
		\noindent and from \eqref{b01} that 
		\begin{flalign*}
			a' \cdot \big( O(u^{-2} e^{-d/u}) + O (\alpha^{7/6}) & \big) + b \cdot \big( 2 + O(u^{-2} e^{-d/u}) + O (\alpha^{7/6}) \big) \\
			& = e^{-d/u} u^{-3} \big( b - 2au d^{-1}  \big) + O(\alpha^{7/6}) = \displaystyle\frac{e^{-d/u}}{2u^3} ( 1 - 2u) + O(\alpha^{7/6}).
		\end{flalign*}
		
		\noindent The proposition then follows from solving this linear system for $(a', b')$ (and using \eqref{xyualpha0}). 
	\end{proof}
	
	\subsection{Uniqueness of the Mobility Edge}
	
	\label{Alpha0Unique}
	
	In this section we establish the second part of \Cref{t:main2}. Throughout, we abbreviate $a = a(u^{2/\alpha})$ and $b = b(u^{2/\alpha})$ and denote 
	\begin{flalign*} 
		H(u) = F_{\alpha} (u^{2/\alpha}, a, b) + G_{\alpha} (u^{2/\alpha}, a, b); \qquad \widetilde{H} (u) = \widetilde{F}_{\alpha} (u, a, b) + \widetilde{G}_{\alpha} (u, a, b). 
	\end{flalign*} 
	
	We begin with the following lemma that bounds the derivative of $\widetilde{H}$ (which is explicit from \Cref{falphagalpha0}).
	
	\begin{lem} 
		
		\label{derivativeh0}
		
		There exist constants $c > 0$ and $C > 1$ such that, for any $\alpha \in (0, c)$, we have $\partial_u \widetilde{H} (u) \le (Cu^{-2} - cu^{-3}) e^{-1/u}$.
		
	\end{lem}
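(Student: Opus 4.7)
The plan is to combine the explicit formula for $\widetilde{F}_\alpha + \widetilde{G}_\alpha$ from \Cref{falphagalpha0} with the asymptotics for $a'$ and $b'$ from \Cref{abderivativeu0}. Writing $d = d(u) = a(u^{2/\alpha}) + b(u^{2/\alpha})$, \Cref{falphagalpha0} gives the closed form
\begin{flalign*}
\widetilde{H}(u) = \frac{2}{d^2} - \frac{2}{ud}\, e^{-d/u} - \frac{2}{d^2}\, e^{-d/u}.
\end{flalign*}
Applying the chain rule in $u$ (with $d$ itself depending on $u$ through $d'(u) = \partial_u a(u^{2/\alpha}) + \partial_u b(u^{2/\alpha})$) and observing that the $-\frac{2}{u^2 d} e^{-d/u}$ produced by $\partial_u (1/ud)$ cancels with the $+\frac{2}{d u^2} e^{-d/u}$ produced when differentiating $e^{-d/u}$ in the $\frac{2}{d^2} e^{-d/u}$ term yields
\begin{flalign*}
\partial_u \widetilde{H}(u) = -\frac{4 d'}{d^3} - \frac{2 e^{-d/u}}{u^3} + e^{-d/u}\left(\frac{4 d'}{u d^2} + \frac{2 d'}{u^2 d} + \frac{4 d'}{d^3}\right).
\end{flalign*}

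The second step is to substitute the estimates from \Cref{abderivativeu0} and \Cref{abdalpha0}. Adding the asymptotics for $a'$ and $b'$ there collapses the leading parts, giving $d'(u) = -\tfrac{1}{2 u^2} e^{-1/u} + O(\alpha^{7/6})$, while $|d - 1| \le C \alpha^{8/9}$ implies $e^{-d/u} = e^{-1/u}(1 + O(\alpha^{8/9}/u))$. Inserting these, the $-4 d'/d^3$ term contributes $2 u^{-2} e^{-1/u} + O(\alpha^{7/6})$; the $-2 u^{-3} e^{-d/u}$ term contributes $-2 u^{-3} e^{-1/u} + O(\alpha^{8/9} u^{-4} e^{-1/u})$; and each of the three summands inside the parenthesis carries a $d' \cdot e^{-d/u}$, hence is of order $u^{-4} e^{-2/u}$.

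Finally, one compares magnitudes using the range $u \in \big[\tfrac{99}{100} |\log \alpha|^{-1}, \tfrac{101}{100} |\log \alpha|^{-1}\big]$ from \eqref{xyualpha0}. In this regime $e^{-1/u} \le \alpha^{99/100}$, and so $u^{-4} e^{-2/u} \le \alpha^{99/100} \cdot u^{-4} e^{-1/u}$; both this and $\alpha^{8/9} u^{-4} e^{-1/u}$ are dominated by $\omega \cdot u^{-3} e^{-1/u}$ for any prescribed $\omega > 0$ once $\alpha$ is small, and $\alpha^{7/6} \ll \alpha |\log \alpha|^3 \lesssim u^{-3} e^{-1/u}$ as well. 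Combining all contributions,
\begin{flalign*}
\partial_u \widetilde{H}(u) = \frac{2 e^{-1/u}}{u^2} - \frac{2 e^{-1/u}}{u^3} + \text{(smaller)} \le (C u^{-2} - c u^{-3}) e^{-1/u}
\end{flalign*}
for suitable $C > 2$ and $c < 2$, which is the claim. The main task here is purely bookkeeping: there is no analytic obstacle because every non-leading term is suppressed either by an extra factor of $e^{-1/u}$ (making it doubly exponentially small on the relevant range of $u$) or by a quantitative power of $\alpha$ furnished by \Cref{abderivativeu0} and \Cref{abdalpha0}.
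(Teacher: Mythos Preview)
Your proof is correct and follows essentially the same approach as the paper: differentiate the explicit formula from \Cref{falphagalpha0} via the chain rule (treating $d=a+b$ as a function of $u$), substitute the estimates $|d-1|\le C\alpha^{8/9}$ and $d'=-\tfrac{1}{2u^2}e^{-1/u}+O(\alpha^{7/6})$ from \Cref{abdalpha0} and \Cref{abderivativeu0}, and then use the standing constraint \eqref{xyualpha0} on $u$ to verify that every residual term is $o(u^{-3}e^{-1/u})$. Your bookkeeping of the error magnitudes is in fact more explicit than the paper's, which simply cites those three inputs after displaying the differentiated expression.
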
 
	
	\begin{proof}
		
		First observe from \eqref{xyualpha0} and \Cref{abdalpha0} that $(a, b)$ satisfy the constraints \eqref{xyualpha0} on $(x, y)$. Then, denoting $d = a+b$, we have from \Cref{falphagalpha0} that  
		\begin{flalign*}
			\partial_u \widetilde{H} (u) & = 2 \bigg( \displaystyle\frac{1}{ud} + \displaystyle\frac{1}{d^2} \bigg) e^{-d/u} \cdot \partial_u d - \displaystyle\frac{2d}{u^2} \bigg( \displaystyle\frac{1}{ud} + \displaystyle\frac{1}{d^2} \bigg) e^{-d/u} \\
			& \qquad + 2 \bigg( \displaystyle\frac{1}{du^2} + \displaystyle\frac{\partial_u d}{d^2 u} + \displaystyle\frac{2 \partial_u d}{d^3} \bigg) e^{-d/u} - \displaystyle\frac{4}{d^3} \partial_u d.
		\end{flalign*}
		
		\noindent This, together with \Cref{abderivativeu0}, \Cref{abdalpha0}, and \eqref{xyualpha0}, yields the lemma.
	\end{proof}
	
	The next corollary estimates derivatives of $F_{\alpha/2}$, $G_{\alpha/2}$, and $H$. 
	
	\begin{cor}
		
		\label{fguestimate}
		
		There exist constants $c > 0$ and $C > 1$ such that the following holds for any $\alpha \in (0, c)$. Setting $F(u) = F_{\alpha/2} (u^{2/\alpha}, a, b)$ and $G(u) = G_{\alpha/2} (u^{2/\alpha}, a, b)$, we have
		\begin{flalign}
			\label{ugufhu} 
			\partial_u H(u) \le (Cu^{-2} - cu^{-3}) e^{-1/u} + C \alpha^{7/6}; \qquad \big| \partial_u F (u) \big| \le C \alpha^{1/2}; \qquad \big| \partial_u G(u) \big| \le C \alpha^{1/2}.
		\end{flalign}
	\end{cor}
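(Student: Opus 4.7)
The bounds on $\partial_u F(u)$ and $\partial_u G(u)$ will follow almost immediately from the fixed-point equations \eqref{fba}: since $F(u) = F_{\alpha/2}(u^{2/\alpha}, a, b) = a(u^{2/\alpha})$ and $G(u) = G_{\alpha/2}(u^{2/\alpha}, a, b) = b(u^{2/\alpha})$, we can apply \Cref{abderivativeu0} directly. Together with the bound $99/(100|\log\alpha|) \le u \le 101/(100|\log\alpha|)$ from \eqref{xyualpha0}, this yields $|\partial_u F(u)|, |\partial_u G(u)| \le e^{-1/u}/(4u^3) + C\alpha^{7/6} \le C\alpha^{100/99}|\log\alpha|^3 + C\alpha^{7/6} \le C\alpha^{1/2}$, where the last inequality holds once $\alpha$ is small enough that the polylogarithmic factor is absorbed.

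For the estimate on $\partial_u H(u)$, I will use the chain rule, writing $\Phi(u,x,y) = F_\alpha(u^{2/\alpha},x,y) + G_\alpha(u^{2/\alpha},x,y)$ and $\widetilde\Phi(u,x,y) = \widetilde F_\alpha(u,x,y) + \widetilde G_\alpha(u,x,y)$, so that $H(u) = \Phi(u,a,b)$ and $\widetilde H(u) = \widetilde\Phi(u,a,b)$ with $a = a(u^{2/\alpha})$, $b = b(u^{2/\alpha})$. Expanding
\begin{equation*}
\partial_u H - \partial_u \widetilde H = (\partial_u \Phi - \partial_u \widetilde\Phi) + (\partial_x \Phi - \partial_x \widetilde\Phi)\cdot a' + (\partial_y \Phi - \partial_y \widetilde\Phi) \cdot b',
\end{equation*}
where $a' = \partial_u a(u^{2/\alpha})$ and $b' = \partial_u b(u^{2/\alpha})$, each partial difference can be bounded by \Cref{gammaffgg} and \Cref{gammaffggu}: the $\partial_u$ term contributes $O(\alpha^{7/6})$, while the $\partial_x$ and $\partial_y$ terms contribute $O(\alpha^2|\log\alpha|/(au^2))$ and $O(\alpha^2|\log\alpha|/(bu^2))$ respectively. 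Since $a,b \in [1/4,3/4]$ by \Cref{abdalpha0} and $u^{-2} \le C|\log\alpha|^2$, the last two errors are $O(\alpha^2|\log\alpha|^3)$, which is absorbed into $O(\alpha^{7/6})$. For these estimates to be useful, I also need $|a'|$ and $|b'|$ to be bounded, which follows from the previous paragraph (in fact $|a'|, |b'| \le C\alpha^{1/2}$).

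Combining this error estimate with \Cref{derivativeh0}, which gives $\partial_u \widetilde H(u) \le (Cu^{-2} - cu^{-3})e^{-1/u}$, produces the desired bound
\begin{equation*}
\partial_u H(u) \le (Cu^{-2} - cu^{-3})e^{-1/u} + C\alpha^{7/6}.
\end{equation*}
No step here is particularly delicate; the main bookkeeping issue is to verify that all polylogarithmic factors in $|\log\alpha|$ generated by the scaling $u \sim |\log\alpha|^{-1}$ are indeed absorbed into the available powers of $\alpha$. Once this is checked, the entire argument is a direct consequence of the chain rule together with the preparatory propositions \Cref{gammaffgg}, \Cref{gammaffggu}, \Cref{abderivativeu0}, and \Cref{derivativeh0}.
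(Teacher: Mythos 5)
Your argument for the $\partial_u H$ bound follows the paper's route: a chain-rule decomposition of $\partial_u H - \partial_u \widetilde H$ with each piece controlled by \Cref{gammaffgg} or \Cref{gammaffggu}, then combined with \Cref{derivativeh0}. You write the decomposition with all three differences $\partial_{\mathfrak z}\Phi - \partial_{\mathfrak z}\widetilde\Phi$ in place, which is the correct bookkeeping: the paper's displayed chain drops the tilde on the $\partial_a$ and $\partial_b$ terms, and read literally that would not close, since $\partial_a\widetilde H_\alpha$ contains the $-4/(a+b)^3$ contribution and is of order one rather than exponentially small; you have implicitly corrected that. One small slip: $u \le \frac{101}{100}|\log\alpha|^{-1}$ gives $e^{-1/u}\le \alpha^{100/101}$, not $\alpha^{100/99}$ (the latter is the lower endpoint), but $\alpha^{100/101}|\log\alpha|^3 = O(\alpha^{1/2})$ still holds, so this is immaterial. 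For $|\partial_u F|$ and $|\partial_u G|$ you take a genuinely shorter route: reading the index $\alpha/2$ literally, $F(u)=F_{\alpha/2}(u^{2/\alpha},a,b)=a(u^{2/\alpha})$ is exactly the fixed-point identity \eqref{fba}, so the bound is a one-line corollary of \Cref{abderivativeu0}. This is valid for the corollary exactly as stated and is cleaner than what the paper's citations (\Cref{fgu} together with \Cref{gammaffgg}, \Cref{gammaffggu}, \Cref{abderivativeu0}) suggest, namely comparing $\partial_u F$ to $\partial_u\widetilde F_{\alpha/2}$ and reading the derivative off \Cref{fgu}. Be aware, though, that your shortcut is pinned to the index $\alpha/2$: in the downstream application (proof of the second part of \Cref{t:main2}) the quantities called $F,G$ carry index $\alpha$, so that $F=F_\alpha(u^{2/\alpha},a,b)=\tilde a(u^{2/\alpha})$ by \eqref{tildedefs} rather than $a(u^{2/\alpha})$, and for that reading the fixed-point shortcut no longer applies and the comparison route is needed. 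The corollary statement and its later invocation appear to disagree on the index; your proof is correct for the former.
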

	
	\begin{proof}
		
		As in the proof of \Cref{derivativeh0}, observe from \eqref{xyualpha0} and \Cref{abdalpha0} that $(a, b)$ satisfy the constraints \eqref{xyualpha0} on $(x, y)$. Letting $a' = \partial_u a(u^{2/\alpha})$, $b' = \partial_u b(u^{2/\alpha})$, and $d' = a' + b'$, we find
		\begin{flalign*}
			\big| \partial_u H(u) - \partial_u \widetilde{H}_{\alpha} (u, a, b) \big| & \le \big| \partial_u H_{\alpha} (u, a, b) - \partial_u \widetilde{H}_{\alpha} (u, a, b) \big| + |a'| \cdot \big| \partial_a H_{\alpha} (u, a, b) \big| \\
			& \qquad + |b'| \cdot \big| \partial_b H_{\alpha} (u, a, b) \big| \\
			& \le O(\alpha^{7/6}) + \big( |a'| + |b'| \big) \cdot \Big( \big| \partial_a H_{\alpha} (u, a, b) \big| + \big| \partial_b H_{\alpha} (u, a, b) \big|\Big) \\
			& \le O (\alpha^{7/6}) + O (\alpha^{9/10}) \cdot \Big( \big| \partial_b H_{\alpha} (u, a, b) \big| + \big| \partial_b H_{\alpha} (u, a, b) \big|  \Big)\\
			& \le O(\alpha^{7/6}) + O (\alpha^{9/10}) \cdot \Big( u^{-3} e^{-(a+b)/u} \cdot \big( |a'| + |b'| + 1 \big) \Big) = O(\alpha^{7/6}),
		\end{flalign*}
		
		\noindent where in the first bound we applied \Cref{gammaffggu}; in the second we applied \Cref{abderivativeu0} and \eqref{xyualpha0}; and in the third we applied \Cref{falphagalpha0}, \Cref{abderivativeu0}, and \eqref{xyualpha0}. Together with \Cref{derivativeh0}, this yields the first statement of \eqref{ugufhu}. The proofs of the second and third are very similar (following from \Cref{fgu}, \Cref{gammaffgg}, \Cref{gammaffggu}, and \Cref{abderivativeu0}) and are thus omitted.
	\end{proof}
	
	We can now establish the second part of \Cref{t:main2}.
	
	\begin{proof}[Proof of \Cref{t:main2}(2)]
		
		The scaling \eqref{ealpha0} of $E_{\mob}$ was verified by \Cref{e0alpha0u}, so it suffices to show uniqueness of $E_{\mob}$. By \Cref{e0alpha0u}, It suffices to show that $\lambda (u^{2/\alpha}, \alpha)$ is decreasing in $u$ on the interval 
		\begin{flalign}
			\label{ualphainterval}  
			u \in \bigg[ \frac{99}{100} \cdot |\log \alpha|^{-1}, \frac{101}{100} \cdot |\log \alpha|^{-1} \bigg],
		\end{flalign}
		
		\noindent to which end we must show that 
		\begin{flalign*}
			\mathfrak{L} (u) = \sin \Big( \displaystyle\frac{\pi \alpha}{2} \Big) \cdot (F+G) + \sqrt{(F+G)^2 + \sin \Big( \displaystyle\frac{\pi \alpha}{2} \Big) \cdot (F-G)^2},
		\end{flalign*} 
		
		\noindent is decreasing in $u$; here, we have abbreviated $F = F(u) = F_{\alpha} (u^{2/\alpha}, a, b)$ and $G = G(u) = G_{\alpha} (u^{2/\alpha}, a, b)$ (upon setting $a = a(u^{2/\alpha})$ and $b = b(u^{2/\alpha})$). Observe that 
		\begin{flalign*}
			\mathfrak{L}' (u) & = \sin \Big( \displaystyle\frac{\alpha}{2} \Big) \cdot (F' + G') + (F+G)(F'+G') \cdot  \bigg( (F+G)^2 + \sin^2 \Big( \displaystyle\frac{\pi \alpha}{2} \Big) \cdot (F-G)^2 \bigg)^{-1/2} \\
			& \qquad + \sin^2 \Big( \displaystyle\frac{\pi \alpha}{2} \Big)^2 \cdot (F-G)(F'-G') \bigg( (F+G)^2 + \sin^2 \Big( \displaystyle\frac{\pi \alpha}{2} \Big) \cdot (F-G)^2 \bigg)^{-1/2}.
		\end{flalign*}
		
		\noindent Using the fact that $|F-G| \le |F+G|$ and $F+G = 2 + O(\alpha)$ by \Cref{lambdaalpha0}, it follows that 
		\begin{flalign*}
			\mathfrak{L}' (u) \le F'+G' + C \alpha \big( |F'| + |G'| \big) + C \alpha^2 |F' - G'| \le F'+G' + C \alpha \big( |F'| + |G'| \big).
		\end{flalign*}
		
		\noindent By \Cref{fguestimate}, it follows for sufficiently small $\alpha$ that
		\begin{flalign*}
			\mathfrak{L}' (u) \le (Cu^{-2} - cu^{-3}) u^{-1/\alpha} + C \alpha^{7/6} < 0,
		\end{flalign*}
		
		\noindent where in the last bound we used \eqref{ualphainterval}. This shows that $\mathfrak{L}' (u)$ is decreasing, thereby establishing the theorem. 
	\end{proof}

	\chapter{Appendices}

	\section{Eigenvector Localization and Delocalization Criteria}
		
		\label{a:ELarge}
		
	This appendix is devoted to the proof of \Cref{l:loccriteria}. \Cref{a:localization} and \Cref{a:delocalization} prove the criteria for eigenvector localization and delocalization, respectively, and the proof is concluded in \Cref{a:conclusion}.
	
	 Given $s \in \bbR_+$ and an $N\times N$ symmetric matrix $\boldsymbol{M}$, we recall the quantity $P_I(j)$ from \Cref{jwi} and define $Q_I(s) = Q_I(s; \boldsymbol{M})$ by
	\be
	Q_I(s) = \frac{1}{N} \sum_{j=1}^N P_I(j)^s.
	\ee
Note that under \Cref{jwi}, we have the shorthand $Q_I = Q_I(2)$.

\subsection{Localization}\label{a:localization}

The proofs in this section are similar to that of \cite[Lemma 5.9]{bordenave2013localization} and \cite[Theorem 1.1]{bordenave2017delocalization}, so we only outline them.

	\begin{lem} \label{huw2} 
	 Fix $\alpha, s, \delta \in (0,1)$ and $E \in \mathbb{R}\setminus\{0\}$ such that $\lim_{\eta \rightarrow 0} \Im R_\star(E + \iu \eta) = 0$ in probability. Let $\boldsymbol{H} = \boldsymbol{H}_N$ denote an $N \times N$ L\'{e}vy matrix with parameter $\alpha$, and define the interval $I(\eta) = [E - \eta, E + \eta]$. There exists $\eta_0(\alpha, s, \delta, E) > 0$ such that if $\eta \in(0,\eta_0)$, then
		\begin{flalign*}
			\displaystyle\lim_{N \rightarrow \infty} \mathbb{P} \Bigg[ Q_{I(\eta)} \bigg(\displaystyle\frac{s}{2}; \boldsymbol{H} \bigg) \le \delta \Bigg] = 1.
		\end{flalign*}
	
	\end{lem}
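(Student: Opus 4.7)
The plan is to connect the inverse participation ratio $Q_{I(\eta)}(s/2)$ to fractional moments of $\Imaginary G_{jj}(E + \iu\eta)$, and then to use the convergence $G_{jj}(z) \to R_\star(z)$ in law (Lemma \ref{gr}) together with the hypothesis $\Im R_\star(E + \iu\eta) \to 0$ in probability to conclude.

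\medskip

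\noindent\textbf{Step 1: Spectral bound on $P_I(j)$.} Using the spectral decomposition $\Im G_{jj}(E + \iu\eta) = \eta \sum_i |u_i(j)|^2 \big( (\lambda_i - E)^2 + \eta^2 \big)^{-1}$ and restricting the sum to eigenvalues in $I(\eta) = [E - \eta, E + \eta]$, where $(\lambda_i - E)^2 + \eta^2 \le 2\eta^2$, one obtains the deterministic bound
\begin{flalign*}
\sum_{\lambda_i \in \Lambda_{I(\eta)}} |u_i(j)|^2 \le 2\eta \cdot \Im G_{jj}(E + \iu \eta),
\end{flalign*}
so that by \Cref{jwi},
\begin{flalign*}
P_{I(\eta)}(j) = \frac{N}{|\Lambda_{I(\eta)}|} \sum_{\lambda_i \in \Lambda_{I(\eta)}} |u_i(j)|^2 \le \frac{2 N \eta}{|\Lambda_{I(\eta)}|} \cdot \Im G_{jj}(E + \iu \eta).
\end{flalign*}

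\noindent\textbf{Step 2: Control of the local eigenvalue count.} By \cite[Theorem 1.1]{arous2008spectrum}, $N^{-1} |\Lambda_{I(\eta)}|$ concentrates around $\mu_\alpha(I(\eta)) \asymp 2\eta \cdot \rho_\alpha(E)$, where $\rho_\alpha$ is the density of the limiting spectral measure (strictly positive at $E \neq 0$). Therefore, on an event of probability tending to $1$, $N^{-1}|\Lambda_{I(\eta)}| \ge c(E) \eta$, giving $P_{I(\eta)}(j) \le C(E) \cdot \Im G_{jj}(E + \iu\eta)$ uniformly in $j$. Consequently, by the symmetry of the entries,
\begin{flalign*}
\E\big[ Q_{I(\eta)}(s/2) \big] \le C(E)^{s/2} \cdot \E\big[ (\Im G_{11}(E + \iu\eta))^{s/2} \big] + o(1),
\end{flalign*}
as $N \to \infty$.

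\medskip

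\noindent\textbf{Step 3: Passage to the local weak limit.} By \Cref{gr}, $G_{11}(E + \iu\eta) \Rightarrow R_\star(E + \iu\eta)$ in law as $N \to \infty$. To promote this to convergence of $s/2$-moments of the imaginary part, we use uniform integrability: for any fixed $\chi \in (s/2, 1)$, the $\chi$-th moments $\E\big[ |G_{11}(E + \iu\eta)|^\chi \big]$ are uniformly bounded in $N$ and $\eta$ by an analog of \Cref{expectationqchi} (whose proof passes to the finite-$N$ setting via the Schur complement identity and the stable tail of the entry distribution; this is carried out in \cite{bordenave2011spectrum}). Hence,
\begin{flalign*}
\limsup_{N \to \infty} \E\big[ (\Im G_{11}(E + \iu\eta))^{s/2} \big] \le \E\big[ (\Im R_\star(E + \iu\eta))^{s/2} \big].
\end{flalign*}

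\noindent\textbf{Step 4: Using the hypothesis.} By assumption $\Im R_\star(E + \iu\eta) \to 0$ in probability as $\eta \to 0$. Since $(\Im R_\star)^{s/2}$ is uniformly integrable in $\eta \in (0,1)$ (by \Cref{expectationqchi} with any $\chi \in (s/2, 1)$), we conclude $\E[(\Im R_\star(E + \iu\eta))^{s/2}] \to 0$. Thus there exists $\eta_0(\alpha, s, \delta, E) > 0$ such that for $\eta \in (0, \eta_0)$,
\begin{flalign*}
\limsup_{N \to \infty} \E\big[ Q_{I(\eta)}(s/2) \big] \le \frac{\delta^2}{2},
\end{flalign*}
and Markov's inequality completes the proof: $\P[Q_{I(\eta)}(s/2) > \delta] \le \delta / 2 + o(1)$.

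\medskip

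The routine issues are cosmetic; the subtle step is \textbf{Step 3}, where one needs a uniform-in-$N$ fractional moment bound on $G_{11}$ that is not stated in this paper but is an analog of \Cref{expectationqchi}. The hypothesis $E \neq 0$ is essential: without it, $|\Lambda_{I(\eta)}|/N$ may degenerate, and the relevant fractional-moment bounds on $G_{11}$ become more delicate (cf.\ the role of $E \neq 0$ in \Cref{ar} and \Cref{rrealimaginary}).
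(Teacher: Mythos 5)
Your proof follows essentially the same route as the paper's: the spectral bound $P_{I(\eta)}(j) \le \tfrac{2N\eta}{|\Lambda_{I(\eta)}|}\Im G_{jj}(E+\iu\eta)$, the local-law lower bound on $|\Lambda_{I(\eta)}|$ from \cite{arous2008spectrum}, convergence $G_{11}\to R_\star$ in law, and uniform integrability of $(\Im R_\star(E+\iu\eta))^{s/2}$ in $\eta$ via \Cref{expectationqchi} to use the hypothesis $\Im R_\star\to 0$ in probability. Two minor remarks.

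First, and most usefully: your Step~3 is harder than it needs to be. You flag a gap there — a ``uniform-in-$N$ fractional moment bound on $G_{11}$'' that the paper does not state — but no such bound is needed. The limit $N\to\infty$ is taken at \emph{fixed} $\eta > 0$, and by the second part of \Cref{q12} one has the deterministic estimate $|G_{11}(E+\iu\eta)| \le \eta^{-1}$ for all $N$. Thus $(\Im G_{11})^{s/2}$ is a bounded sequence of random variables, so convergence in law $G_{11}\Rightarrow R_\star$ immediately upgrades to $\lim_{N\to\infty}\E[(\Im G_{11}(E+\iu\eta))^{s/2}] = \E[(\Im R_\star(E+\iu\eta))^{s/2}]$ (in fact equality, not just $\limsup\le$). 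This is exactly what the paper does, and it removes the only gap you identified. Uniform integrability in the parameter $\eta$ \emph{is} genuinely needed — but only in Step~4, where $\eta\to 0$, and there \Cref{expectationqchi} (applied to $R_\star$, not $G_{11}$) provides it, as you correctly use.

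Second, a small imprecision in Step~2: you write $\E[Q_{I(\eta)}(s/2)] \le C(E)^{s/2}\,\E[(\Im G_{11})^{s/2}] + o(1)$, but the bound $P_{I(\eta)}(j)\le C(E)\,\Im G_{jj}$ only holds on the high-probability event $\{|\Lambda_{I(\eta)}|\ge c(E)\eta N\}$, and on the complement you have no a priori bound on $Q_{I(\eta)}(s/2)$. The paper sidesteps this by working with probabilities directly (a union bound combining the concentration estimate \cite[Lemma~C.4]{bordenave2013localization} with the eigenvalue-count estimate), never taking $\E[Q_I]$. Your Markov argument in Step~4 can be repaired by the same device — bound $\P[Q_{I(\eta)}(s/2)>\delta]\le\P[\text{bad event}]+\P[\{Q_{I(\eta)}(s/2)>\delta\}\cap\{\text{good event}\}]$ and apply Markov only to the second term — but as written the displayed expectation inequality is not justified.
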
 
	
\begin{proof}
Set $z = E + \mathrm{i} \eta$, and denote $\boldsymbol{G} = \boldsymbol{G} (z) = (\boldsymbol{H} - z)^{-1}$, so that 
		 \begin{flalign}
		 	\label{giizu} 
		 	G_{ij} (z) = \displaystyle\sum_{k = 1}^N \displaystyle\frac{u_k (i) u_k (j)}{z - \lambda_k}
		 \end{flalign}
	 
	 	\noindent by the spectral theorem, where $G_{ij} = G_{ij} (z)$ denotes the $(i, j)$-entry of $\boldsymbol{G}$. Then observe for any index $j \in \unn{1}{N}$ that
	\begin{flalign*}
		\displaystyle\frac{|\Lambda_I|}{2 N \eta} \cdot P_I (j) = \displaystyle\frac{1}{2 \eta} \displaystyle\sum_{\lambda_k \in \Lambda_I} \big| u_k (j) \big|^2 \le \displaystyle\sum_{k = 1}^N \displaystyle\frac{\eta \big| u_k (j) \big|^2}{|z - \lambda_k|^2} = \Imaginary G_{jj}.
	\end{flalign*}

	\noindent In the second statement we used the fact that 
	\bex
	|z - \lambda_k| \le \big((\Imaginary z)^2 + |E - \lambda_k|^2 \big)^{1/2} \le 2^{1/2} \eta
	\eex 
	for $\lambda_k \in I = [E - \eta, E + \eta]$, and in the third we used \eqref{giizu}. Thus, 
	\begin{flalign}
		\label{sumwijs}
		Q_I \bigg( \displaystyle\frac{s}{2} \bigg) = \displaystyle\frac{1}{N} \displaystyle\sum_{j = 1}^N P_I (j)^{s/2} \le \displaystyle\frac{1}{N} \bigg( \displaystyle\frac{2N \eta}{|\Lambda_I|} \bigg)^{s/2} \displaystyle\sum_{j = 1}^N (\Imaginary G_{jj})^{s/2}.
	\end{flalign}

	Now we apply a concentration estimate for $N^{-1} \sum_{j = 1}^N (\Imaginary G_{jj})^{s/2}$. Specifically, \cite[Lemma C.4]{bordenave2013localization} implies the existence of a constant $C_1 = C_1 (s) > 1$ such that for $\eta \ge (\log N)^{-1}$ we have 
	\begin{flalign}
		\label{giis} 
	\mathbb{P} \Bigg[ \bigg| \displaystyle\frac{1}{N} \displaystyle\sum_{j = 1}^N (\Imaginary G_{jj})^{s/2} - \mathbb{E} \big[ (\Imaginary G_{11})^{s/2} \big] \bigg| \le \eta \Bigg] \ge 1 - C_1 N^{-100}.
	\end{flalign} 

	\noindent Moreover, it follows from \cite[Theorem 1.1, Theorem 1.6]{arous2008spectrum} that there exists a constant $c_1 = c_1 (E) \in (0, 1)$ such that 
	\begin{flalign}
		\label{lambdaic1}
		\lim_{N \rightarrow \infty} \mathbb{P} \big[ |\Lambda_I| \ge c_1 N \eta \big] = 1.
	\end{flalign}

	\noindent By \eqref{sumwijs}, \eqref{giis}, and \eqref{lambdaic1}, we obtain
	\begin{flalign}
	\label{wijsum} 
	\displaystyle\lim_{N \rightarrow \infty} \mathbb{P} \Bigg[ Q_I \bigg( \displaystyle\frac{s}{2} \bigg) \le 2c_1^{-1} \cdot \mathbb{E} \big[ (\Imaginary G_{11})^{s/2} \big] + \eta \Bigg] \ge 1.
	\end{flalign}

	From \cite[Theorem 2.2, Proposition 2.6]{bordenave2011spectrum}, the random variable $G_{11} = G_{11} (z)$ converges to $R_{\star} = R_{\star} (z)$ in probability when $z$ is fixed. This, together with the deterministic bound $|G_{11}| \le \eta^{-1}$ (recall the second part of \Cref{q12}), yields 
	\begin{flalign}\label{appstarcvg}
		\displaystyle\lim_{N \rightarrow \infty} \mathbb{E} \big[ (\Imaginary G_{ii})^{s/2} \big] = \mathbb{E} \big[ (\Imaginary R_{\star})^{s/2} \big]
	\end{flalign} 
for any $z \in \bbH$. 
Observe that by \Cref{expectationqchi}, the sequence $\big\{ (\Imaginary R_{\star}(E + \iu \eta))^{s/2} \big\}_{\eta \in (0,1)}$ is uniformly integrable, since its $(1+\eps)$-moments are uniformly bounded for some $\eps >0$. 
 Then the hypothesis that $\lim_{\eta \rightarrow 0} \Im R_\star(E + \iu \eta) = 0$ yields a constant $c_2 = c_2 (s, \delta) > 0$ such that for $\eta \in (0, c_2)$ we have $\mathbb{E} \big[ (\Imaginary R_{\star})^{s/2} \big] < \frac{c_1 \delta}{8}$. 
This implies for sufficiently large $N$ that 
	\begin{flalign*}
		\mathbb{E} \big[ (\Imaginary G_{ii})^{s/2} \big] \le \displaystyle\frac{c_1 \delta}{4}. 
	\end{flalign*}

	\noindent Combining this with \eqref{wijsum} and taking $\eta < \frac{\delta}{2}$ (by imposing $\eta_0 < \frac{\delta}{2}$) yields the proposition. 
\end{proof}

\begin{prop}\label{l:localizationmatrix}
Retain the notation and hypotheses of \Cref{huw2}. For every $D>0$, there  exists $\eta_0(\alpha,  D, E) > 0$ such that if $\eta \in(0,\eta_0)$, then

		\begin{flalign*}
			\displaystyle\lim_{N \rightarrow \infty} \mathbb{P} \Big[ Q_{I(\eta)} \left( \boldsymbol{H} \right) \ge D \Big] = 1.
		\end{flalign*}
	
\end{prop}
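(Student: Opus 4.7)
The plan is to deduce Proposition \ref{l:localizationmatrix} from Lemma \ref{huw2} via a purely deterministic interpolation (H\"older) inequality that upgrades a small upper bound on $Q_I(s/2)$ to a large lower bound on $Q_I = Q_I(2)$. The key observation is the normalization identity
\begin{equation*}
\sum_{j=1}^N P_I(j) = \frac{N}{|\Lambda_I|}\sum_{j=1}^N\sum_{\lambda_i\in\Lambda_I}|u_i(j)|^2 = \frac{N}{|\Lambda_I|}\cdot|\Lambda_I| = N,
\end{equation*}
which holds deterministically (whenever $|\Lambda_I|\ge 1$) since each $\boldsymbol{u}_i$ is a unit vector.

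First I would fix any $s\in(0,1)$ (say $s=1/2$) and set $a=s/2\in(0,1/2)$. By log-convexity of $\ell^p$-norms, applied with exponents $a<1<2$, or equivalently by H\"older with the pair $(p,q) = (2-a,(2-a)/(1-a))$ decomposing $P_I(j) = P_I(j)^{a/p}\cdot P_I(j)^{1-a/p}$, we obtain the deterministic inequality
\begin{equation*}
\sum_{j=1}^N P_I(j) \le \Bigg(\sum_{j=1}^N P_I(j)^{a}\Bigg)^{1/(2-a)}\Bigg(\sum_{j=1}^N P_I(j)^2\Bigg)^{(1-a)/(2-a)}.
\end{equation*}
Raising this to the $(2-a)$-th power and using $\sum_j P_I(j)=N$ gives
\begin{equation*}
N^{2-a} \le \big(N\cdot Q_I(a)\big)\cdot\big(N\cdot Q_I(2)\big)^{1-a},
\end{equation*}
which rearranges to $Q_I(2)\ge Q_I(a)^{-1/(1-a)}$.

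Given $D>0$, I would then set $\delta = D^{-(1-a)} = D^{-(2-s)/2}$ and apply Lemma \ref{huw2} with this $\delta$, obtaining $\eta_0(\alpha,s,\delta,E)>0$ so that for every $\eta\in(0,\eta_0)$, $Q_{I(\eta)}(s/2)\le\delta$ with probability tending to $1$. On this event, the display above yields $Q_{I(\eta)}\ge\delta^{-1/(1-a)} = D$, finishing the proof. A minor technical point is that the interpolation identity requires $|\Lambda_{I(\eta)}|\ge 1$, but this holds with probability tending to $1$ by \cite[Theorem~1.1, Theorem~1.6]{arous2008spectrum} (as already used in \eqref{lambdaic1}), so it may be freely intersected with the Lemma~\ref{huw2} event.

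The proof is essentially a one-line H\"older argument, so there is no serious obstacle; the only thing to keep track of is choosing $s$ bounded away from $1$ so that $1-a$ stays bounded below and the exponent $1/(1-a)$ is harmless, and making sure $\delta$ (hence $\eta_0$) is picked after $D$ rather than before. In particular no further information about eigenvectors or resolvents is needed beyond what Lemma \ref{huw2} already provides.
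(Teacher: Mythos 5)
Your proof is correct and takes essentially the same approach as the paper's: after choosing $p=2-s/2$, your Hölder/interpolation step is algebraically identical to the paper's (which writes it as $\sum_k(NP_I(k))^\eps(NP_I(k))^{1-\eps}$ with $\eps=s/(4-s)$), and both arrive at $1\le Q_I(s/2)\cdot Q_I(2)^{1-s/2}$, i.e.\ $Q_I(2)\ge Q_I(s/2)^{-1/(1-s/2)}$, before invoking Lemma~\ref{huw2} with $\delta=D^{-(1-s/2)}$.
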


\begin{proof}
Let $\eps \in (0,1)$ and $p, q \in \bbR_+$ such that $p^{-1} + q^{-1}=1$ be parameters. By H\"older's inequality,
\begin{align}\label{duality1}
N = \sum_{k=1}^N P_I(k) &= 
\frac{1}{N} \sum_{k=1}^N \left( N P_I(k) \right)^\eps 
\left( N P_I(k) \right)^{1-\eps}\\
&\le 
\left(N^{\eps p -1} \sum_{k=1}^N P_I (k)^{\eps p}  \right)^{1/p}
\left(
N^{(1-\eps)q - 1} 
\sum_{k=1}^N P_I (k)^{(1-\eps) q } \right)^{1/q}\notag \\
& = 
\big(N^{\eps p} Q_I(\eps p)  \big)^{1/p}
\left(
N^{(1-\eps)q } 
Q_I\big( (1-\eps) q \big) \right)^{1/q} \notag.
\end{align}
With $s \in(0,1)$ a parameter, we set
\bex
\eps = \frac{s}{4-s}, \qquad p = 2 - \frac{s}{2}.
\eex
These choices imply 
\bex
\eps p  = \frac{s}{2}, \quad (1-\eps) q = 2, \quad \frac{p}{q} = 1 - \frac{s}{2}.\eex
Then taking $p$-th powers in \eqref{duality1} gives
\bex
1 \le \big( Q_I(s/2) \big)
\big( Q_I(2)  \big)^{1 - s/2}.
\eex
Using the conclusion of \Cref{huw2} for an arbitrary value of $s \in (0,1)$, we obtain the desired result.
\end{proof}

\subsection{Delocalization}\label{a:delocalization}

	\begin{prop} 
		
		\label{l:delocalizationmatrix} 
	 Fix $\alpha, s \in (0,1)$ and $E \in \mathbb{R}$  such that $\liminf_{\eta \rightarrow 0} \mathbb{P} \big[ \Imaginary R_{\star} (E + \mathrm{i} \eta) > c \big] > c$ for some $c>0$.  Let $\boldsymbol{H} = \boldsymbol{H}_N$ denote an $N \times N$ L\'{e}vy matrix with parameter $\alpha$, and define the interval $I(\eta) = [E - \eta, E + \eta]$. There exist constants $\eta_0(\alpha, s, E) > 0$ and $C(\alpha,s,E)>1$ such that for all $\eta \in(0,\eta_0)$, we have
		\begin{flalign*}
			\displaystyle\lim_{N \rightarrow \infty} \mathbb{P} \big[ Q_{I(\eta)} ( \boldsymbol{H} ) \le C \big] = 1.
		\end{flalign*}
	
	\end{prop}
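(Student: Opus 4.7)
The proposed proof mirrors the argument for Proposition A.1 with upper and lower bounds exchanged. I would first use the spectral identity \eqref{giizu} together with the observation that $|z - \lambda_k|^2 \leq 2\eta^2$ for $\lambda_k \in I(\eta) = [E-\eta, E+\eta]$ to derive the pointwise bound
\begin{flalign*}
P_I(j) \leq \frac{2N\eta}{|\Lambda_I|} \Im G_{jj}(E+i\eta).
\end{flalign*}
Combining this with the high-probability lower bound $|\Lambda_I| \geq cN\eta$ (valid in the bulk by \cite[Theorem 1.1, Theorem 1.6]{arous2008spectrum}) reduces the proposition to controlling $N^{-1}\sum_j (\Im G_{jj}(E+i\eta))^2$ uniformly in $N$ and $\eta$ with high probability.

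Next, the hypothesis enters as follows. By \Cref{gr}, $\Im G_{jj}(E+i\eta)$ converges in distribution to $\Im R_\star(E+i\eta)$, and the hypothesis together with \Cref{l:upgrade} gives $\liminf_{\eta \to 0} \Im R_\star > 0$ in probability. Via \Cref{rrealimaginary}, this implies a uniform lower bound on the scaling parameter $\sigma(\vartheta_0)$, and hence on $\Im V_j$ where $V_j$ is the Schur complement random variable appearing in \eqref{q00equation}. On the event $\{\Im V_j \geq c\}$, which carries probability at least $c$ and (by a symmetry and concentration argument analogous to \cite[Lemma C.4]{bordenave2013localization} used in \Cref{huw2}) is satisfied by at least a positive fraction of the indices $j$ with high probability, the Schur identity implies the deterministic bound $\Im G_{jj} \leq 1/c$. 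I would then combine this uniform control on typical indices with a fractional-moment bound from \Cref{expectationqchi} on $\Im G_{jj}$ over the atypical complementary indices.

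The main obstacle is the contribution from the atypical indices $\{\Im V_j < c\}$. Since $\mathbb{E}[(\Im R_\star)^2]$ is generically infinite for L\'evy matrices with $\alpha < 2$, this contribution cannot be bounded by its expectation directly. To handle it, I would use the stable-law density estimates of \Cref{ar} (which show that $\Im V_j$ has bounded density on compact sets) to decompose $\sum_j (\Im G_{jj})^2 \one_{\Im V_j < c}$ further according to the size of $\Im V_j$, then interpolate between the trivial bound $\Im G_{jj} \leq 1/\eta$ and the fractional moment bound for $s \in (\alpha,1)$. This interpolation follows the template of \cite[Theorem 1.1]{bordenave2017delocalization}, to which the authors explicitly refer, and closing it requires a careful choice of thresholds in terms of $\eta$, $\alpha$, and $s$. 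The resulting estimate yields $Q_{I(\eta)} \leq C$ with high probability for $\eta \in (0, \eta_0)$ with $\eta_0$ and $C$ depending only on $\alpha$, $s$, and $E$.
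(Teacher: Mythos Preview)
Your first two steps (the pointwise bound $P_I(j)\le \frac{2N\eta}{|\Lambda_I|}\Im G_{jj}$ and the use of $|\Lambda_I|\ge cN\eta$) match the paper exactly, and you correctly extract from the hypothesis that $\sigma(\vartheta_0)$ is bounded below. But the claim that drives the rest of your argument --- that $\mathbb{E}[(\Im R_\star)^2]$ is ``generically infinite'' --- is false under the hypothesis of this proposition, and this is precisely where the paper's proof becomes much simpler than yours.

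The key observation you are missing is the deterministic bound
\[
(\Im R_\star)^2=\frac{(\eta+\vartheta_0)^2}{|z+\varkappa_0+\mathrm{i}\vartheta_0|^4}\le \frac{1}{\vartheta_0^2}.
\]
Since $\vartheta_0$ is a nonnegative $\alpha/2$-stable law with scaling parameter bounded below (this is exactly what the hypothesis guarantees, as you note), its density decays faster than any polynomial near $0$, so $\mathbb{E}[\vartheta_0^{-2}]<\infty$. Hence $\mathbb{E}[(\Im R_\star)^2]\le C$ uniformly in small $\eta$. The paper then simply applies the concentration estimate \cite[Lemma~C.3]{bordenave2013localization} to $N^{-1}\sum_j(\Im G_{jj})^2$ around $\mathbb{E}[(\Im G_{11})^2]$, passes to the limit $\mathbb{E}[(\Im G_{11})^2]\to\mathbb{E}[(\Im R_\star)^2]$, and is done.

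Your proposed decomposition into typical/atypical indices is therefore unnecessary, and parts of it are problematic as written: the appeal to \Cref{expectationqchi} only controls $\chi$-th moments for $\chi<1$, not second moments, and ``bounded density on compact sets'' for $\vartheta_0$ is not the right property --- what matters (and what you would eventually need anyway) is the super-polynomial decay of the positive stable density at $0$, which is exactly the input that makes the direct second-moment bound work in one line.
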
 
\begin{proof}
We compute
\begin{align*}
P_I(j)
= 
\frac{N}{|\lambda_I|} \sum_{\lambda_i \in \Lambda_I}
\big | u_i(j) \big|^2 
\le
\frac{N}{|\Lambda_I|} \sum_{i=1}^N
\frac{ 2 \eta^2 \big | u_i(j) \big|^2 }
{\eta^2 + (\Lambda_i - E)^2} 
=\frac{2 N \eta }{|\Lambda_I|}\Im G_{jj}(E + \iu \eta).
\end{align*}
Then
\be\label{aboveA9}
Q_I = \displaystyle\frac{1}{N} \displaystyle\sum_{j = 1}^N P_I (j)^2
\le
\frac{ 4 N \eta^2}{|\Lambda_I|^2} \sum_{j=1}^N \big( \Im G_{jj}(z)\big)^2.
\ee
Now  \cite[Lemma C.3]{bordenave2013localization} implies the existence of a constant $C_1 = C_1 (s) > 1$ such that for $\eta \ge (\log N)^{-1}$ we have 
	\begin{flalign*}
	\mathbb{P} \Bigg[ \bigg| \displaystyle\frac{1}{N} \displaystyle\sum_{j = 1}^N (\Imaginary G_{jj})^{2} - \mathbb{E} \big[ (\Imaginary G_{11})^{2} \big] \bigg| \le \eta \Bigg] \ge 1 - C_1 N^{-100}.
	\end{flalign*} 
Together with \eqref{lambdaic1} and \eqref{aboveA9}, we obtain 
	\begin{flalign}
	\label{wijsum2} 
	\displaystyle\lim_{N \rightarrow \infty} \mathbb{P} \left[ Q_I  \le 4c_1^{-2} \cdot \mathbb{E} \big[ (\Imaginary G_{11})^{2} \big] + \eta \right] =  1.
	\end{flalign}
From \cite[Theorem 2.2, Proposition 2.6]{bordenave2011spectrum}, the random variable $G_{11} = G_{11} (z)$ converges to $R_{\star} = R_{\star} (z)$ in probability when $z$ is fixed. Together with the deterministic bound $|G_{11}| \le \eta^{-1}$ (recall the second part of \Cref{q12}), this yields 
	\begin{flalign*}
		\displaystyle\lim_{N \rightarrow \infty} \mathbb{E} \big[ (\Imaginary G_{11})^{2} \big] = \mathbb{E} \big[ (\Imaginary R_{\star})^{2} \big].
	\end{flalign*} 	
	
 By \Cref{rrealimaginary}, the hypothesis  that $\liminf_{\eta \rightarrow 0} \mathbb{P} \big[ \Imaginary R_{\star} (E + \mathrm{i} \eta) > c \big] > c$ implies the existence of a constant $c_2 > 0$ such that $\sigma\big(\vartheta_0(z)\big) > c_2$ for $\eta \in (0, c_2)$. Using \eqref{kappatheta1}, we write
\be
\big( \Im R_\star (z) \big)^2 = \frac{\big( \eta + \vartheta_0(z) \big)^2}{|z + \varkappa_0(z) + \iu \vartheta_0(z)|^4} \le \frac{1}{\vartheta_0(z)^2},
\ee
from which we conclude the existence of a constant $C_2$ such that 
\be\label{IMRUPPER}
\E\left[ \big( \Im R_\star (E + \iu \eta) \big)^2 \right] \le C_2
\ee
for $\eta \in (0, c_2)$. In making the previous assertion, we used $\sigma\big(\vartheta_0(z)\big) > c_2$ and that the density of a one-sided $\alpha/2$-stable decays faster than any polynomial near $0$ (which can be deduced from the explicit form \eqref{xtsigma} of the characteristic function; see also \cite[Lemma B.3]{bordenave2013localization}). Then using \eqref{appstarcvg} and \eqref{IMRUPPER}, we deduce the existence of constants  $C_3, N_0 > 1$ such that
\bex
2c_1^{-1} \cdot \mathbb{E} \big[ (\Imaginary G_{11})^{2} \big] + \eta \le C_3
\eex
if $\eta \in (0, c_2)$ and $N \ge N_0$. Inserting this bound into \eqref{wijsum2} completes the proof.
\end{proof}

\subsection{Conclusion}\label{a:conclusion}

We can now give the proof of \Cref{l:loccriteria}.

\begin{proof}[Proof of \Cref{l:loccriteria}]
This follows immediately from \Cref{l:localizationmatrix} and \Cref{l:delocalizationmatrix}.
\end{proof}

\section{Boundary Values of $y(z)$}\label{b:bvals}

This appendix states some important facts about the boundary values of the quantity $y(z)$. In \Cref{s:boundaryvalues} we show the existence of boundary values of $y(z)$, and in \Cref{b:pfll} we prove \Cref{l:lambdalemma}.

\subsection{Boundary Values}\label{s:boundaryvalues}
Recall our convention for the function $z \mapsto z^a$ from \Cref{s:notation}, and recall from \Cref{s:PWITresults} the definitions $\bbK = \{ z \in \bbC: \Re z > 0\}$, 
\bex
y(z) = \E \left[ \left(- \iu R_\star(z)   \right)^{\alpha/2}  \right],
\qquad
\varphi_{\alpha,z}(x) = \frac{1}{\Gamma(\alpha/2)} \int_0^\infty t^{\alpha/2 -1} e^{\mathrm{i} t z} \exp \left( -\Gamma(1-\alpha/2) t^{\alpha/2} x\right)\, dt,
\eex
and the equation 
\be\label{fpb1}
y(z) = \phi_{\alpha,z}( y(z)).
\ee
In \cite{belinschi2009spectral}, this relation is formulated in a slightly different way. The authors introduce the entire function
\bex
g_\alpha(x) = \int_0^\infty t^{\alpha/2 -1 } e^{-t} \exp( - t^{\alpha/2} x ) \, dt
\eex
and the function $Y(z)\colon \bbH \rightarrow \bbC$ is defined as the unique analytic solution (on $\bbH$) to 
\be\label{fpb2}
z^{\alpha} Y(z) = \frac{\iu^\alpha \Gamma\left( 1 - \frac{\alpha}{2} \right)}{\Gamma\left( \frac{\alpha}{2} \right)} \cdot g_\alpha\big( Y(z) \big).
\ee
Comparing \eqref{fpb1} to \eqref{fpb2}, we deduce the relation
\be\label{darelation}
y(z) = \frac{ z^{\alpha/2}  Y(z)}{\iu^{\alpha/2} \Gamma\left( 1 - \frac{\alpha}{2} \right)}.
\ee

We now show that the boundary values of $y(z)$ exist.

\begin{lem}\label{l:boundaryvalues}
For every $\alpha \in (0,1)$, the function $y\colon \bbH \rightarrow \bbC$ has a continuous extension $y\colon \overline{\bbH} \rightarrow \bbC$.
\end{lem}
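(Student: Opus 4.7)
The plan is to transfer continuity from the function $Y$ of \cite{belinschi2009spectral} to $y$ via the algebraic identity \eqref{darelation}. The proof of \cite[Theorem 1.4]{belinschi2009spectral} constructs $Y\colon\bbH\to\bbC$ as the unique analytic solution to the fixed-point equation \eqref{fpb2} and, en route, establishes that $Y$ admits a continuous extension $Y\colon\overline{\bbH}\to\bbC$. Granting this input, the factor $z\mapsto z^{\alpha/2}$ is continuous on $\overline{\bbH}$ by the branch convention from \Cref{s:notation}, and $\iu^{\alpha/2}\,\Gamma(1-\alpha/2)$ is a fixed nonzero constant, so the right-hand side of \eqref{darelation} is a continuous function of $z\in\overline{\bbH}$.

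Concretely, for any $z_0\in\overline{\bbH}$ and any sequence $\{z_n\}\subset\bbH$ with $z_n\to z_0$, I would apply \eqref{darelation} to obtain
\begin{equation*}
y(z_n) \;=\; \frac{z_n^{\alpha/2}\,Y(z_n)}{\iu^{\alpha/2}\,\Gamma(1-\alpha/2)} \;\longrightarrow\; \frac{z_0^{\alpha/2}\,Y(z_0)}{\iu^{\alpha/2}\,\Gamma(1-\alpha/2)},
\end{equation*}
where the limit exists because both $z\mapsto z^{\alpha/2}$ and the extension of $Y$ are continuous at $z_0$. Defining the extension by this common limit (which is independent of the approximating sequence) yields a continuous map $y\colon\overline{\bbH}\to\bbC$. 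The only point to check for well-posedness is $z_0=0$, where $z_0^{\alpha/2}=0$ forces the product to vanish regardless of the value of $Y(0)$; consistency with the formula \eqref{yrepresentation} is then immediate from \cite[Lemma 4.3(b)]{bordenave2011spectrum}, which identifies the law of $R_\star(\iu\eta)$ and directly gives $\E[(-\iu R_\star(\iu\eta))^{\alpha/2}]\to 0$ as $\eta\to 0$.

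The main obstacle is conceptual rather than technical: it lies in verifying that the fixed-point argument of \cite[Theorem 1.4]{belinschi2009spectral} genuinely produces a continuous extension of $Y$ to all of $\overline{\bbH}$, as opposed to merely pointwise boundary values on a full-measure subset of $\bbR$. This one extracts by inspecting the uniform compactness and equicontinuity estimates in the proof of that theorem, which imply that the family $\{Y(\cdot+\iu\eta)\}_{\eta>0}$ is normal on every compact subset of $\bbR$ and that all subsequential limits coincide. Once that is recorded, the present lemma follows immediately by plugging into \eqref{darelation}; no independent analysis of the recursive distributional equation for $R_\star$ is required.
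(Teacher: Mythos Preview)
Your argument for boundary points $E\neq 0$ is essentially the paper's: both invoke \eqref{darelation} together with the continuity of $Y$ from \cite{belinschi2009spectral} (the paper cites \cite[Proposition 1.1]{belinschi2009spectral} rather than Theorem 1.4). The issue is your treatment of $E=0$.

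Your claim that $Y$ extends continuously to \emph{all} of $\overline{\bbH}$, including $0$, is false: in fact $Y(z)\to\infty$ as $z\to 0$ in $\bbH$. Consequently the product $z^{\alpha/2}Y(z)$ has a \emph{nonzero} limit, and $y(0)\neq 0$. The paper handles this by passing to $h_\alpha(x)=1-\tfrac{\alpha}{2}xg_\alpha(x)$ and using the identity
\[
h_\alpha\big(Y(z)\big)=1-\tfrac{\alpha}{2}\,\Gamma\!\left(\tfrac{\alpha}{2}\right)\Gamma\!\left(1-\tfrac{\alpha}{2}\right)y(z)^2,
\]
together with the fact (extracted from the proof of \cite[Proposition 1.1]{belinschi2009spectral}) that $h_\alpha(Y(z))\to 0$ as $z\to 0$. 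This yields $y(0)^2=\big[\tfrac{\alpha}{2}\Gamma(\tfrac{\alpha}{2})\Gamma(1-\tfrac{\alpha}{2})\big]^{-1}>0$. Your consistency check via \cite[Lemma 4.3(b)]{bordenave2011spectrum} is also incorrect: that lemma says $R_\star(\iu\eta)$ is purely imaginary, not that its $\alpha/2$-moment vanishes in the limit; indeed the paper remarks that the same reference can be used to recover the nonzero value of $y(0)$. So the gap is precisely at the origin: you need an argument that does not rely on the finiteness of $Y(0)$.
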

\begin{proof}
For points $E \in \bbR$ such that $E\neq 0$, this is a direct consequence of \eqref{darelation} and \cite[Proposition 1.1]{belinschi2009spectral}. It remains to consider the point $E=0$. Define $h_\alpha(x) = 1 - \frac{\alpha}{2} x g_\alpha(x)$. Then by \eqref{fpb2},
\bex
h_\alpha\big( Y(z) \big) = 1 - \frac{\alpha \Gamma \left( \frac{\alpha}{2} \right)}{2 \iu^\alpha \Gamma\left( 1 - \frac{\alpha}{2} \right) } z^\alpha Y(z)^2
= 1 - \frac{\alpha }{2 } \Gamma \left( \frac{\alpha}{2} \right) \Gamma\left( 1 - \frac{\alpha}{2} \right) y(z)^2,
\eex
where the last equality follows from \eqref{darelation}.
In the proof of \cite[Proposition 1.1]{belinschi2009spectral}, it is shown that 
\bex
\lim_{z \rightarrow 0} h_\alpha\big( Y(z) \big)  = 0,
\eex
where the limit is taken over $z \in \bbH$. The completes the proof.
\end{proof}
\begin{rem}
The previous proof further shows that 
\be\label{y0}
y(0) =  \bigg[  \frac{\alpha }{2 } \Gamma \left( \frac{\alpha}{2} \right) \Gamma\left( 1 - \frac{\alpha}{2} \right) \bigg]^{-1/2}
=   \left[  \frac{\alpha }{2 } \frac{\pi}{ \sin \left( \pi \alpha/2 \right)} \right]^{-1/2} = \sqrt{ \frac{2 \sin \left( \pi \alpha/2 \right)}{\pi \alpha } },
\ee
where the second equality uses Euler's reflection formula for the Gamma function.\footnote{We remark that $y(0)$ may alternatively be computed using \cite[Lemma 4.3(b)]{bordenave2011spectrum} and equation (4.9) of that reference.}
\end{rem}

\subsection{Proof of \Cref{l:lambdalemma}}\label{b:pfll}
We recall that the quantity $\lambda(E,\alpha)$ was defined in \Cref{lambdaEalpha}.
\begin{lem}\label{l:lambda>1}
For all $\alpha \in (0,1)$, we have $\lambda(0, \alpha) > 1$.
\end{lem}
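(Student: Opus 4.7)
The plan is to exploit the symmetry at $E = 0$ to collapse \eqref{mobilityquadratic} into an explicit quantity that can be elementarily bounded below by $1$. The first step is to observe that $R_\star(\iu\eta)$ is almost surely purely imaginary for all $\eta>0$ (by \cite[Lemma 4.3(b)]{bordenave2011spectrum}), so $y(0)$ is real and is given by \eqref{y0}. Matching this against the decomposition $y(0) = (-\iu)^{\alpha/2}a(0) + (\iu)^{\alpha/2}b(0)$, the vanishing of the imaginary part forces $a(0) = b(0) =: a_0$, and a short calculation identifies $a_0^2 = \tan(\pi\alpha/4)/(\pi\alpha)$. As a consequence, $\varkappa_{\loc}(0) = a_0^{2/\alpha}(S_1 - S_2)$ is a symmetric $\alpha/2$-stable law; its Fourier transform $\hat p_0(\xi) = \exp(-c|\xi|^{\alpha/2})$ is therefore real and positive for an explicit constant $c = \pi a_0/(\sin(\pi\alpha/4)\Gamma(\alpha/2))$, so $\Im\ell(0) = 0$ and $\Re\ell(0) > 0$.

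With $\ell(0)$ real and positive, \eqref{mobilityquadratic} collapses to the explicit positive root $\lambda(0,\alpha) = K_\alpha\,\ell(0)\,(1+t_\alpha)$. I will then compute $\ell(0) = 2/(\pi\alpha c^2)$ by the substitution $u = c\xi^{\alpha/2}$ in the defining integral, plug in the values of $c$ and $a_0$ above, and simplify using the double-angle identity $2\sin(\pi\alpha/4)\cos(\pi\alpha/4) = t_\alpha$ to obtain $\ell(0) = t_\alpha\Gamma(\alpha/2)^2/\pi^2$. Applying Euler's reflection formula $\Gamma(\alpha/2)\Gamma(1-\alpha/2) = \pi/t_\alpha$ to absorb $\Gamma(\alpha/2)^2$ then yields the compact identity
\[
\lambda(0,\alpha) = \frac{\alpha(1+t_\alpha)}{2 t_\alpha}\cdot\frac{\Gamma((1-\alpha)/2)^2}{\Gamma(1-\alpha/2)^2}.
\]

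The final step is to bound this explicit expression strictly below by $1$. Setting $G(\alpha) = \Gamma((1-\alpha)/2)/\Gamma(1-\alpha/2)$, differentiation gives
\[
(\log G)'(\alpha) = \tfrac{1}{2}\bigl(\psi(1-\alpha/2) - \psi((1-\alpha)/2)\bigr) > 0,
\]
since the digamma function $\psi$ is strictly increasing on $(0,\infty)$ and $1 - \alpha/2 > (1-\alpha)/2$ for $\alpha \in (0,1)$. Hence $G$ is increasing and $G(\alpha)^2 \ge G(0)^2 = \pi$. Combining this with the elementary strict inequality $t_\alpha = \sin(\pi\alpha/2) < \pi\alpha/2$, I get
\[
\lambda(0,\alpha) \;\ge\; \frac{\pi\alpha(1+t_\alpha)}{2 t_\alpha} \;>\; \frac{\pi\alpha}{2 t_\alpha} \;>\; 1.
\]
The only mildly delicate piece is the simplification producing $\ell(0) = t_\alpha\Gamma(\alpha/2)^2/\pi^2$ and the subsequent collapse to the Gamma-ratio formula; both are pure trigonometric/Gamma-identity manipulations using only ingredients already present in the paper, so no new technical tools are required.
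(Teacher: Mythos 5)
Your proof is correct and is essentially the paper's own argument: both reduce to the explicit formula $\lambda(0,\alpha) = K_\alpha\,\ell(0)\,(1+t_\alpha)$ via the observation that $a(0)=b(0)$ (so $\ell(0)$ is real positive), compute $\ell(0)=t_\alpha\Gamma(\alpha/2)^2/\pi^2$, apply Euler reflection to reach $\lambda(0,\alpha)=\frac{\alpha(1+t_\alpha)}{2t_\alpha}\frac{\Gamma((1-\alpha)/2)^2}{\Gamma(1-\alpha/2)^2}$, and then bound the three factors by $1+t_\alpha>1$, $\alpha/(2t_\alpha)>1/\pi$, and $\Gamma((1-\alpha)/2)^2/\Gamma(1-\alpha/2)^2\ge\pi$. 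The only cosmetic difference is that you justify the Gamma-ratio bound explicitly via monotonicity of the digamma function, whereas the paper invokes ``convexity of $\Gamma$'' somewhat loosely — your phrasing is actually the cleaner statement of the same log-convexity fact.
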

\begin{proof}
We have 
\bex
y(0) = \frac{y(0)}{ (\iu)^{\alpha/2} + (-\iu)^{\alpha/2} }
\big(
(\iu)^{\alpha/2} + (-\iu)^{\alpha/2}
\big)
= \frac{y(0)}{
2\cos\left( \alpha \pi /4  \right)
}
\big(
(\iu)^{\alpha/2} + (-\iu)^{\alpha/2}
\big).
\eex
Then from \eqref{opaque}, we have
\bex
a(0) = b(0) = \frac{y(0)}{
2\cos\left( \alpha \pi /4  \right)
},
\eex
so by the definition of $\varkappa_\loc$ and \eqref{sumalpha},
\bex
\varkappa_\loc(0) = a(0)^{2/\alpha} \left(S_1 - S_2  \right)
= a(0)^{2/\alpha} S_0,
\eex
where $S_0$ is a symmetric $\alpha/2$-stable law with scaling parameter $\sigma(S_0)=2^{2/\alpha}$. 

Recalling \eqref{xtsigma}, we have
\bex
\hat p_0(k)
=
\exp
\left(
- \frac{\pi  a(0)}{\sin(\pi \alpha/4) \Gamma(\alpha/2) } 
|k|^{\alpha/2}
\right).
\eex
Recalling the definition of $\ell(E)$ from \eqref{tlrk}, we have
\be\label{ell0}
\ell(0)
= \frac{1}{\pi} \int_0^\infty
k^{\alpha-1} \hat p_0(k)\, dk =
\frac{1}{\pi} \int_0^\infty
k^{\alpha-1} 
\exp
\left(
- \frac{\pi  a(0) }{\sin(\pi \alpha/4) \Gamma(\alpha/2) } 
\cdot k^{\alpha /2}
\right)
\, dk.
\ee
We compute using \eqref{y0} that
\begin{align}
\frac{\pi  a(0) }{\sin(\pi \alpha/4) \Gamma(\alpha/2) } 
&= 
\frac{y(0)}{
2\cos\left( \alpha \pi /4 \right)
}\cdot 
\frac{\pi  }{\sin(\pi \alpha/4) \Gamma(\alpha/2) } \notag
\\ &= 
\left(
\frac{2 \sin \left( \pi \alpha/2 \right)}{\pi \alpha }
\right)^{1/2} 
\frac{1}{
2\cos\left( \alpha \pi /4 \right)
}
\frac{\pi  }{\sin(\pi \alpha/4) \Gamma(\alpha/2) }.\label{ell0compute}
\end{align}
For any parameter $A > 0$, we have
\begin{align*}\int_0^\infty
k^{\alpha-1} 
\exp
\left(
- A  k^{\alpha /2}
\right)
\, dk  = \frac{2}{\alpha} \int_0^\infty m \exp\left( - A m \right)\, dm  = \frac{2}{ \alpha A^2},
\end{align*}
where we used the change of variables $m = k^{\alpha/2}$ in the first equality, so that $dm = (\alpha/2) k^{\alpha/2 -1 } \, dk$.

We conclude from the previous line, \eqref{ell0}, and \eqref{ell0compute} that 
\bex
\ell(0) 
= \frac{2}{\alpha \pi} \left[
\left(
\frac{2 \sin \left( \pi \alpha/2 \right)}{\pi \alpha }
\right)^{1/2} 
\frac{1}{
2\cos\left( \alpha \pi /4 \right)
}
\frac{\pi  }{\sin(\pi \alpha/4) \Gamma(\alpha/2) }
\right]^{-2}.
\eex
We recall that $\lambda(0,\alpha)$ was defined in \eqref{mobilityquadratic} the largest solution of
\be
\label{mobilityquadratic3}
K_\alpha^2 ( t_\alpha^2 - t_{1}^2) \ell(0)^2 - 2 t_\alpha K_\alpha \ell(0) \lambda(0,\alpha)+ \lambda(0,\alpha)^2=0,
\ee
with $
t_\alpha = \sin(\alpha \pi/2)$ and $
K_\alpha = \frac{\alpha}{2} \Gamma(1/2 - \alpha/2)^2$.
The quadratic formula applied to \eqref{mobilityquadratic3} yields 
\bex
\lambda(0,\alpha) = \ell(0) K_\alpha ( t_\alpha + 1).
\eex
We write this as
\begin{align}\label{lambda0exp}
\lambda(0,\alpha) &= 
 \frac{\alpha }{2 \pi^2} \Gamma\left( \frac{1}{2} - \frac{\alpha}{2}\right)^2 \Gamma\left(\frac{\alpha}{2} \right)^{2}
\big( \sin(\pi \alpha/2) + 1  \big) 
\sin(\pi \alpha/2),
\end{align}
where we simplified the previous expression using $2 \sin(x) \cos(x) = \sin(2x)$. Using Euler's reflection formula for the gamma function gives
\bex
\Gamma\left( \frac{\alpha}{2} \right) = \frac{\pi}{ \Gamma\left( 1 - \frac{\alpha}{2} \right)  \sin(\pi \alpha/2).
}
\eex
Putting this identity in \eqref{lambda0exp} gives
\begin{equation}\label{lambda0exp2}
\lambda(0,\alpha) =
 \frac{\alpha }{2 \sin(\pi \alpha/2)} \Gamma\left( \frac{1}{2} - \frac{\alpha}{2}\right)^2 \Gamma\left( 1 - \frac{\alpha}{2}\right)^{-2}
\big( \sin(\pi \alpha/2) + 1  \big).
\end{equation}

It remains to argue that $\lambda(0,\alpha) > 1$.  We also use the following facts, which may be proved through elementary calculus. For $x\in (0,1)$, we have

\bex
\ \sin(\pi x/2) + 1  >  1,\eex
\bex
 \frac{x }{2 \sin(\pi x/2)} >  \frac{1}{\pi},
\eex
\bex
 \Gamma\left( \frac{1}{2} - \frac{x}{2}\right)  \ge \sqrt{\pi} \cdot \Gamma\left( 1 - \frac{x}{2}\right),
\eex
where the last inequality follows from the convexity of $\Gamma$ and the fact that $\Gamma(1/2)/\Gamma(1) = \sqrt{\pi}$. 
Inserting these bounds into \eqref{lambda0exp2} gives $\lambda(0,\alpha) > 1$ for $\alpha \in (0,1)$, as desired.
\end{proof}

We can now prove \Cref{l:lambdalemma}.
\begin{proof}[Proof of \Cref{l:lambdalemma}]
The first and second parts of the theorem is immediate from the definitions of $\lambda(E,s,\alpha)$ and $\lambda(E,\alpha)$, the quadratic formula, and the continuity of the coefficient of \eqref{mobilityquadratic} in $s$. 
For the third part of the theorem, the continuity in $s$ is again clear from the definition of $\lambda(E,s,\alpha)$. For the continuity in $E$ of $\lambda(E,s,\alpha)$, we recall 
from \Cref{2lambdaEsalpha} that 
\begin{flalign*}
			\lambda (E, s, \alpha) & =  \pi^{-1} \cdot K_{\alpha, s} \cdot \Gamma (\alpha) \cdot \bigg(  t_{\alpha} \sqrt{1 - t_{\alpha}^2} \cdot \mathbb{E} \Big[ \big| R_{\loc} (E) \big|^{\alpha} \Big] \\
			& \qquad + \sqrt{t_s^2 (1 - t_{\alpha}^2) \cdot \mathbb{E} \Big[ \big| R_{\loc} (E) \big|^{\alpha} \Big]^2 + t_{\alpha}^2 (t_s^2 - t_{\alpha}^2) \cdot \mathbb{E}  \Big[ \big| R_{\loc} (E) \big|^{\alpha} \cdot \sgn \big( - R_{\loc} (E) \big) \Big]^2} \bigg).
		\end{flalign*}
Then to prove the continuity of $\lambda (E, s, \alpha)$ in $E$, it suffices to establish the continuity of the two quantities 
\be\label{twotwo}
\Big[ \big| R_{\loc} (E) \big|^{\alpha} \Big], \qquad
\Big[ \big| R_{\loc} (E) \big|^{\alpha} \cdot \sgn \big( - R_{\loc} (E) \big) \Big]^2.
\ee
From \Cref{l:boundarybasics2}, we have 
\begin{align}\label{appc1}
\E \big[ |R_\loc(E)|^{\alpha} \big] &=\E \left[ \big(R_\loc(E)\big)_+^{\alpha} \right] 
+ \E \left[ \big(R_\loc(E)\big)_-^{\alpha} \right]  \\
&=
F_{\alpha}\big(E, a(E), b(E)\big)\notag
+ G_{\alpha}\big(E, a(E), b(E)\big),
\end{align}
and 
\begin{align}\label{appc2}
\mathbb{E} \Big[ \big| R_{\loc} (E) \big|^{\alpha} \cdot \sgn \big( - R_{\loc} (E) \big) \Big]
&= 
- E \left[ \big(R_\loc(E)\big)_+^{\alpha} \right] 
+ \E \left[ \big(R_\loc(E)\big)_-^{\alpha} \right]\\
&=  F_{\alpha}\big(E, a(E), b(E)\big) 
+ G_{\alpha}\big(E, a(E), b(E)\big),\notag
\end{align}
where we recall the functions $F_\gamma$ and $G_\gamma$ defined in \eqref{FandG}. Recall also that $a(E)$ and $b(E)$ are continuous; this follows from their definitions in \eqref{opaque} and the continuity of $y(E)$ on $\bbR$ given by \Cref{l:boundaryvalues}. We observe that $F_\alpha$ and $G_\alpha$ are continuous in each of their arguments; in fact, they are differentiable in each argument, as shown in \Cref{l:Fx}, \Cref{l:Gy}, \Cref{l:Gx}, \Cref{l:Gy}, \Cref{l:FE}, and \Cref{l:GE}. We conclude from this, the representations \eqref{appc1} and \eqref{appc2}, and the continuity of $a(E)$ and $b(E)$, that the quantities in \eqref{twotwo} are continuous in $E$, and hence that $\lambda(E,\alpha)$ is continuous in $E$.

The proof of the continuity of $\lambda(E,\alpha)$ in $E$ is similar, and this completes the proof of the third claim. 
The fourth part of the lemma follows directly from \Cref{l:lambda>1}.
\end{proof}

\printbibliography

\end{document}